\newcommand{\Deriv}{{\rm D}}
\newcommand{\escapingcomposant}{escaping composant}
\newcommand{\Escapingcomposant}{Escaping composant}
\newcommand{\anguine}{anguine}
\newcommand{\Anguine}{Anguine}
\numberwithin{equation}{section}
\theoremstyle{changebreak}
\newtheorem{thm}{Theorem}[section]%
\newtheorem{lem}[thm]{Lemma}%
\newtheorem{cor}[thm]{Corollary}%
\newtheorem{prop}[thm]{Proposition}%
\newtheorem{obs}[thm]{Observation}%
\newcommand{\uarrow}[1]{\underleftarrow{#1}}
\newcommand{\HH}{\mathbb{H}}
\newcommand{\interior}{\operatorname{int}}
\theoremstyle{defnbreak}
\newtheorem{rmk}[thm]{Remark}%
\newtheorem{defn}[thm]{Definition}%
\newcommand{\invlim}{\varprojlim}
\newcommand{\T}{\mathcal{T}}
\newcommand{\B}{\mathcal{B}}
\newcommand{\real}{\operatorname{real}}
\title[Arc-like continua in Julia sets]{Arc-like continua, \\ Julia sets of entire functions, \\ and Eremenko's Conjecture}
\author[Lasse Rempe]{Lasse Rempe\orcidlink{0000-0001-8032-8580}} 
\address{\noindent Dept. of Mathematics \\ The University of Manchester \\ Manchester \\ M13 9PL \\ UK }
\email{lasse.rempe@manchester.ac.uk}
\thanks{Partially supported by EPSRC Fellowship EP/E052851/1 and a Philip Leverhulme Prize.}
\subjclass[2020]{Primary 37F10; Secondary 30D05, 37B45, 54F15, 54H20}
\newcounter{inductionstep}
\newcounter{inductionsubstep}
\newcounter{tempcounter}
\newcommand{\begininductiveconstruction}{\setcounter{inductionstep}{1}\setcounter{inductionsubstep}{0}}
\newcommand{\nextinductivestep}{\stepcounter{inductionstep}\setcounter{inductionsubstep}{0}}
\newenvironment{enuminduction}{\stepcounter{inductionsubstep}\begin{enumerate}[label=(I\arabic{inductionstep}.\arabic*),
     start=\value{inductionsubstep},after=\setcounter{inductionsubstep}{\value{enumi}}
     ]}{\end{enumerate}}
\newenvironment{equinduction}
 {%
  \setcounter{tempcounter}{\value{equation}}%
  \setcounter{equation}{\value{inductionsubstep}}%
  \let\saved@theequation\theequation
  \let\saved@theHequation\theHequation
   \renewcommand{\theequation}{I\arabic{inductionstep}.\arabic{equation}}%
  \renewcommand{\theHequation}{I\arabic{inductionstep}.\arabic{equation}}%
  \equation
 }
 {%
  \endequation
  \setcounter{inductionsubstep}{\value{equation}}%
  \setcounter{equation}{\value{tempcounter}}%
  \let\theequation\saved@theequation
  \let\theHequation\saved@theHequation
 }
\newenvironment{aligninduction}
 {%
  \setcounter{tempcounter}{\value{equation}}%
  \setcounter{equation}{\value{inductionsubstep}}%
  \let\saved@theequation\theequation
  \let\saved@theHequation\theHequation
   \renewcommand{\theequation}{I\arabic{inductionstep}.\arabic{equation}}%
  \renewcommand{\theHequation}{I\arabic{inductionstep}.\arabic{equation}}%
  \align
 }
 {%
  \endalign
  \setcounter{inductionsubstep}{\value{equation}}%
  \setcounter{equation}{\value{tempcounter}}%
  \let\theequation\saved@theequation
  \let\theHequation\saved@theHequation
 }
\newenvironment{enumequ}{\stepcounter{equation}\begin{enumerate}[label={(\thesection.\arabic{enumi})},
     start=\value{equation},after=\setcounter{equation}{\value{enumi}}
     ]}{\end{enumerate}}
\renewcommand{\A}{\mathcal{A}}
\newcommand{\classS}{\mathcal{S}}
\newcommand{\F}{\mathcal{F}}
\newcommand{\G}{\mathcal{G}}
\newcommand{\Blog}{\mathcal{B}_{\log}}
\newcommand{\BlogP}{\Blog^{\operatorname{p}}}
\newcommand{\s}{\underline{s}}
\newcommand{\V}{\mathcal{V}}
\newcommand{\CH}{\hat{C}}
\newcommand{\Jsh}{\hat{J}_{\s}}
\newcommand{\Js}{J_{\s}}
\newcommand{\sep}{\operatorname{sep}}
\newcommand{\id}{\operatorname{id}}
\begin{document} 

\begin{abstract}
 A transcendental entire function that is hyperbolic with
  connected Fatou set is said to be 
   ``of disjoint type''. 
   A good understanding of these functions is known to have wider implications; 
    for example, a disjoint-type function provides
     a model for the dynamics near infinity of all maps in the same parameter space.

 The goal
     of this memoir is to study the topological properties of the Julia sets of entire functions of disjoint type.
    In particular, we give a detailed description of the possible topology of their connected components.
   More precisely, let $C$ be a connected component of such a Julia set,
   and consider the \emph{Julia continuum} $\hat{C} \defeq  C\cup\{\infty\}$. We show that $\infty$ is a terminal point of $\hat{C}$, and that
   $\hat{C}$ has span zero in the
   sense of Lelek; under a mild additional geometric assumption the continuum $\hat{C}$ is arc-like. (Whether every span zero continuum is
   also arc-like was a famous question in continuum theory, 
   posed by Lelek in 1961, and only recently resolved in the negative by work of Hoehn.) Conversely, 
  we construct a single disjoint-type entire function $f$ with the remarkable property that each arc-like continuum with at least one terminal point 
     is realised as a Julia continuum of $f$.
   We remark that the class of arc-like continua with terminal points is uncountable. It includes, in particular, the 
  $\sin(1/x)$-curve, the Knaster bucket-handle and the pseudo-arc, so these can all occur as Julia continua of a disjoint-type entire function. 

  We also give similar descriptions of the possible topology of Julia continua that 
   contain periodic points or points with bounded orbits, and answer a 
   question of Bara\'nski and Karpi\'nska by showing that Julia continua need not
   contain points that are accessible from the Fatou set.
   Furthermore, we construct a disjoint-type entire function whose Julia 
   set has connected components on which the iterates tend to infinity
   pointwise, but not uniformly. This property is related to a famous conjecture of Eremenko concerning escaping sets of entire functions.
\end{abstract}

\maketitle

\pagebreak

\tableofcontents

\pagebreak

\section{Introduction}\label{sec:intro}
  
  We consider the iteration of transcendental entire functions; i.e.\ of non-polynomial 
   holomorphic self-maps of the complex plane. 
   This topic was founded by Fatou in a seminal article of 1926 \cite{fatou}, and gives
   rise to many beautiful phenomena and interesting questions. In addition, the past decade
   has seen an increasing influence of transcendental phenomena on 
   the fields of rational and polynomial dynamics.
   For example, work of Inou and Shishikura as well as of Buff and Ch\'eritat implies that  
   certain well-known features of transcendental dynamics occur naturally near non-linearisable
   fixed points of
   quadratic polynomials 
     \cite{shishikuratalkcantorbouquets,shishikurarempemodels,cheraghicantorbouquets}.
     Hence results and arguments in 
   this context are now often motivated by properties first discovered in 
   transcendental dynamics. Similarly, Dudko and Lyubich 
   have applied recent techniques from
   transcendental dynamics directly to renormalisation fixed points,
   in order to obtain local connectivity of the Mandelbrot set at certain 
   parameters~\cite{dudkolyubichpacman}.
   Thus it is to be hoped that a better understanding of the 
   transcendental case will lead to further insights in the polynomial
   and rational setting as well. 

 In this memoir, we study a particular class of transcendental entire functions, namely those that are of \emph{disjoint type}; i.e.\ 
  hyperbolic with connected Fatou set.
  To provide the required definitions, 
   recall that the \emph{Fatou set} $F(f)$ of a transcendental entire function $f$ consists of those points $z$ that have a neighbourhood on 
   which the family of iterates 
    \[ f^n \defeq  \underbrace{f\circ\dots\circ f}_{n\text{ times}} \]
   is equicontinuous with respect to the spherical metric. (I.e., these are the points where 
   small perturbations of the starting point $z$ result only in small changes
   of $f^n(z)$, independently of $n$.) Its complement $J(f) \defeq  \C\setminus F(f)$ is called the \emph{Julia set}; it
   is the set on which $f$ exhibits ``chaotic'' behaviour. We also recall that the set $S(f)$ of (finite) \emph{singular values} is the closure of all
   critical and asymptotic values of $f$ in $\C$. Equivalently, it is the smallest closed set $S$ such that
   $f\colon \C\setminus f^{-1}(S)\to \C\setminus S$ is a covering map. 

 \begin{defn}[Hyperbolicity and disjoint type]
  An entire function $f$ is called \emph{hyperbolic} if the set $S(f)$ is bounded and every point in $S(f)$ tends to an attracting periodic 
   cycle of $f$ under iteration. If $f$ is 
   hyperbolic and furthermore $F(f)$ is connected, then we say that $f$ is of \emph{disjoint type}. 
 \end{defn}

As a simple example, consider the maps 
    \[ S_{\lambda}(z) \defeq  \lambda \sin(z), \qquad \lambda \in (0,1). \]
   It is elementary to verify that both singular values $\pm \lambda$, and indeed all
   real starting values, tend to the fixed point $0$ under iteration. In particular, $S_{\lambda}$ is 
   hyperbolic, and it is not difficult to deduce that the Fatou set consists only of the
   immediate basin of attraction of $0$. So each $S_{\lambda}$ is of disjoint type. 
   Fatou \cite[p.~369]{fatou} observed already in  1926 that $J(S_{\lambda})$ contains infinitely many curves on which the iterates
   tend to infinity (namely, iterated preimages of an infinite piece of the imaginary axis), and asked whether
   this is true for more general classes of functions. In fact, the entire set $J(S_{\lambda})$ can be written
   as an uncountable union of arcs, often called ``hairs'', each connecting a finite endpoint with $\infty$.\footnote{%
  To our knowledge, this fact (for $\lambda< 0.85$) was first proved explicitly by Aarts and Oversteegen \cite[Theorem 5.7]{aartsoversteegen}. Devaney and Tangerman \cite{devaneytangerman} had previously discussed at least the existence of
    ``Cantor bouquets'' of arcs in the Julia set, and the proof that the whole Julia set has this property is analogous to the
    proof for the disjoint-type exponential maps $z\mapsto \lambda e^z$ with $0<\lambda<1/e$, first established in
   \cite[p.~50]{devaneykrych}; see also 
   \cite{devgoldberg}.}
   Every point on one of these arcs, with the possible exception of the finite endpoint, tends to infinity under
   iteration. This led Eremenko
   \cite{alexescaping} to strengthen Fatou's question by asking whether, for an arbitrary entire function $f$, every point of the \emph{escaping set}
  \[ I(f) \defeq  \{z\in\C\colon f^n(z)\to\infty\} \]
   could be connected to infinity by an arc in $I(f)$.  

 It turns out that the situation is not as simple as suggested by these questions, even for a disjoint-type entire function $f$. Indeed, while 
  for such $f$ the answer to Eremenko's question is positive if the function
   additionally 
   has finite order of growth in the sense of classical function 
   theory (a result obtained independently in \cite{baranskihyperbolic} and \cite[Theorem 1.2]{strahlen}), there is a disjoint-type
    entire function 
   $f$ such that $J(f)\supset I(f)$ contains no arcs at all \cite[Theorem~8.4]{strahlen}. 
   This shows that there is a wide variety of possible topological types of components of $J(f)$,
   even for $f$ of disjoint type.

 In view of this fact, it may seem
   surprising that it is nonetheless possible to give an essentially
    complete solution to the question of
   which topological objects can arise in this manner. This solution, which will be achieved by combining 
   concepts and techniques from the study of topological continua (non-empty compact, connected metric spaces) 
  with the modern theory of transcendental dynamics,
   is the main result of this memoir. 

 More precisely, let 
  $f$ be of disjoint type. Then it is easy to see (Corollary~\ref{cor:uncountablymany}) that the Julia set $J(f)$ 
  is a union of uncountably many connected components,
  each of which is closed and unbounded. If $C$ is such a component, we call
  the continuum $\CH \defeq  C\cup\{\infty\}$ a \emph{Julia continuum} of $f$. 
  In the
  case of $z\mapsto \lambda \sin(z)$ with $\lambda\in (0,1)$, every Julia continuum is an arc, while in
  the example of \cite[Theorem~8.4]{strahlen}, every Julia continuum is a non-degenerate continuum that
  contains no arcs.

To obtain our most precise statement about the possible topology of Julia continua, we 
  impose a mild
  additional function-theoretic restriction on the entire function under consideration. 

\begin{defn}[{Bounded slope {\cite[Definition 5.1]{strahlen}}}]
  An entire function $f$ is said to have \emph{bounded slope} if there exists a curve 
    $\gamma\colon [0,\infty)\to\C$ such that $|f(\gamma(t))|\to\infty$ as $t\to\infty$ and 
    such that
      \[ \limsup_{t\to\infty} \frac{|\arg(\gamma(t))|}{\log|\gamma(t)|} < \infty. \]
\end{defn}
\begin{remark}[Remark 1]
 Intuitively, the condition says that the total spiralling of 
   the \emph{tracts} of $f$ (i.e., the connected components of the set of points where $f$ is large) 
  is no larger than that of a logarithmic spiral. Equivalently, under a logarithmic transformation, each such tract is contained in a fixed sector
  around the real axis (see Definition~\ref{defn:boundedslopelog}); this is the reason for the terminology ``bounded slope''. 
\end{remark}
\begin{remark}[Remark 2]
  Any disjoint-type entire function that is real on the real axis has bounded slope. So does the 
   counterexample to Eremenko's question constructed in \cite{strahlen}, and, as far as we 
   are aware, any 
   specific example or family of entire functions whose dynamics has been considered in the past. 
   Furthermore, suppose that $f$ and $g$ belong to the Eremenko--Lyubich class  $\B$ (see below) and that 
   additionally  $g$ has bounded slope~-- for example, $g\in\B$ has 
   finite order of growth, or is real on the real axis. Then $f\circ g$ also has bounded slope~-- regardless of the
   function-theoretic properties of $f$. 
   For these reasons, the 
   restriction is indeed rather mild. 
\end{remark}
\begin{remark}[Remark 3]
  We use ``bounded slope'' in this and the next section mainly for convenience, as it
    is an established property that is easy to define. However, in every one of these
    results, it can be replaced by a more technical, but far more general condition: 
    having ``{\anguine} tracts'' in the sense of Definition \ref{defn:arcliketracts}.  
\end{remark}

\begin{figure}
  \subfloat[The arc]{\includegraphics[width=.3\textwidth]{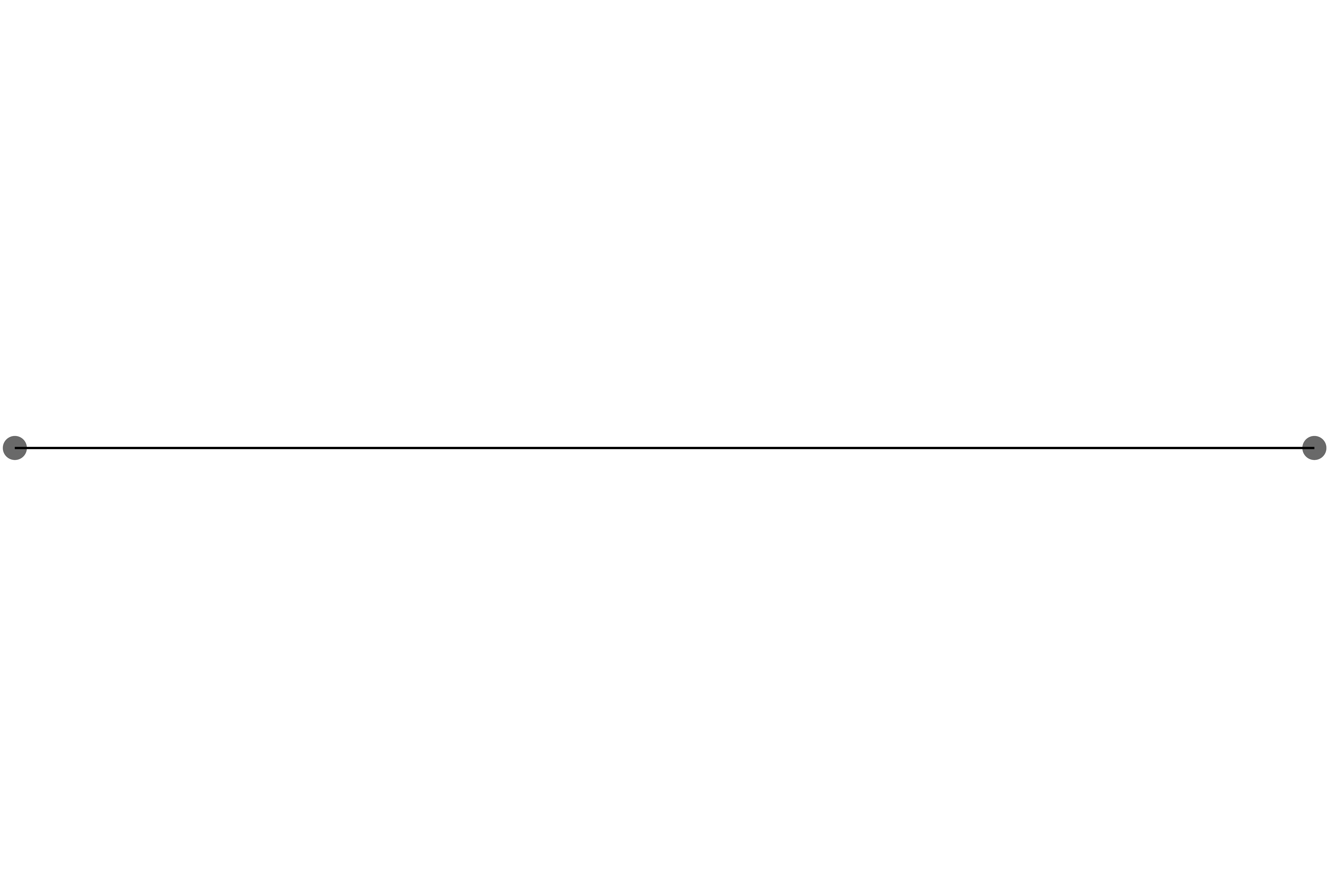}}\hfill
  \subfloat[%
   The $\sin(1/x)$-continuum]{\includegraphics[width=.6\textwidth]{sinecurve}\label{subfig:sinecurve}}\\
  \subfloat[Knaster bucket-handle]{%
    \includegraphics[width=.3\textwidth]{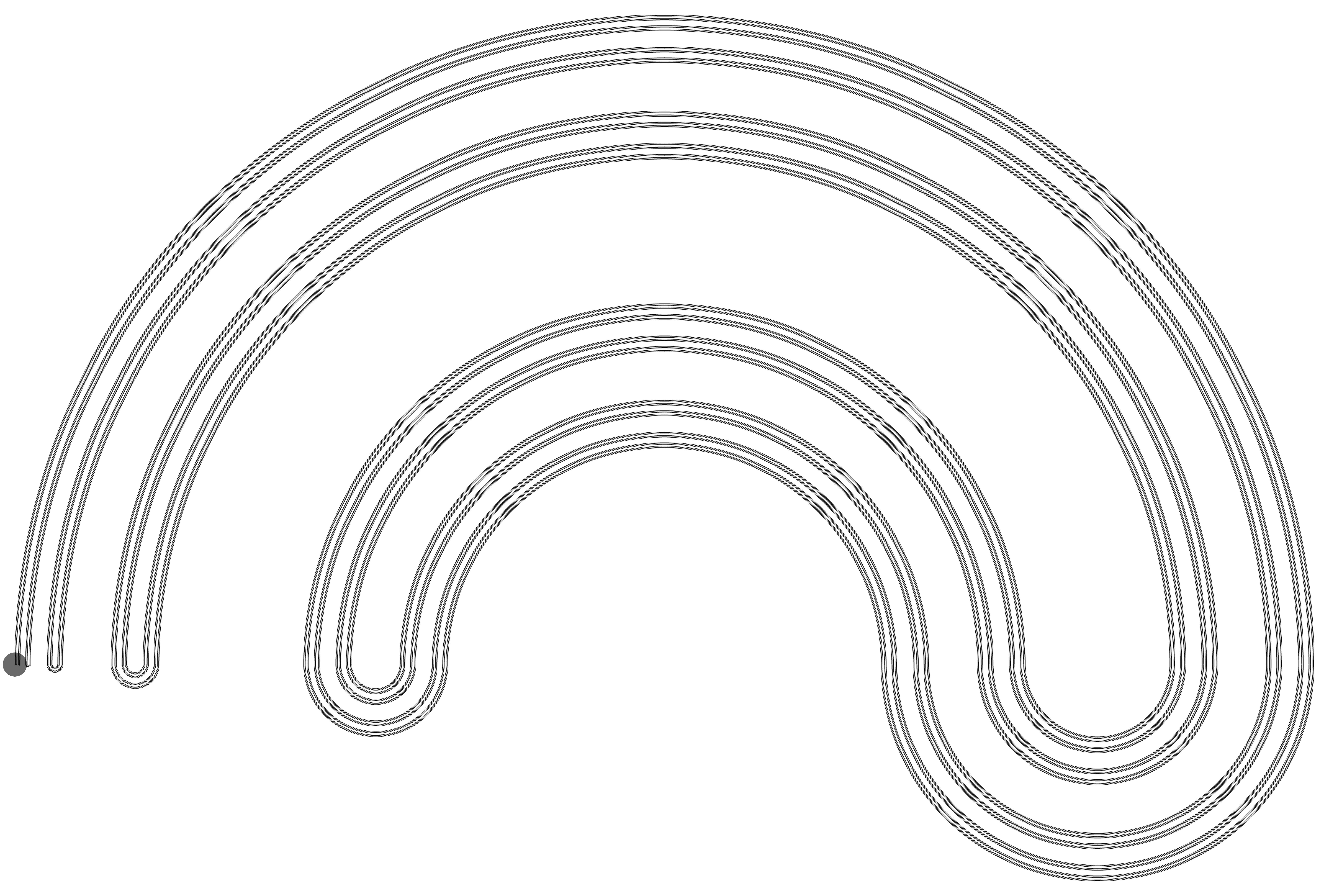}\label{subfig:knaster}}\hfill
  \subfloat[A double bucket-handle]{%
    \includegraphics[width=.6\textwidth]{doubleknaster}}
 \caption{Some examples of arc-like continua; terminal points are marked by grey circles.
   (The numbers of terminal points in these continua are two, three, one and zero, respectively.)%
    \label{fig:arclike}}
\end{figure}

 We can now state our main theorem: The possible Julia continua of disjoint-type entire functions of bounded 
    slope are precisely the \emph{arc-like continua having terminal points}. Essentially, a continuum is \emph{arc-like} if 
    can be turned into an arc by collapsing subsets of small diameter to points, while a terminal point is a natural
    generalisation of the end-point of an arc. (See Figure~\ref{fig:arclike} for examples, and
    Definition~\ref{defn:topologicalproperties} and Section~\ref{sec:topology} for formal
    definitions and further discussion.)  The class of arc-like continua is
    extremely rich (e.g., there are
    uncountably many pairwise non-homeomorphic arc-like continua) and has been studied 
    intensively by topologists since the work of  Bing \cite{bingsnakelike} in  the~1950s. 

\begin{thm}[Topology of bounded-slope Julia continua] \label{thm:mainarclike}
  Let $\CH$ be a Julia continuum of a disjoint-type entire function $f$ of bounded slope. Then
    $\CH$ is arc-like and $\infty$ is a terminal point of $\CH$. 

  Conversely, there exists a disjoint-type entire function $f$ having bounded slope with the following property. If $X$ is any arc-like continuum
    having a terminal point $x\in X$, 
     then there exists a Julia continuum $\CH$ of $f$ and a homeomorphism $\psi\colon X\to\CH$ such that
       $\psi(x)=\infty$.
\end{thm}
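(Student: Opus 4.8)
The plan is to organise both directions around the Eremenko-Lyubich logarithmic model and the associated symbolic dynamics. After the standard logarithmic change of variable a disjoint-type $f$ becomes a map $F\in\Blog$ on a disjoint union $\T=\bigsqcup T$ of \emph{tracts} (unbounded simply connected domains), with each $F|_T\colon T\to\H$ a conformal isomorphism onto a half-plane $\H$ and with $\overline{\T}\subseteq\H$; the dynamical consequence of ``disjoint type'' that we use is that $F$ is uniformly expanding along $J(f)$ for the hyperbolic metric of $\T$. Components of $J(f)$ lift to the sets $J_{\s}=\{z\in\T\colon F^n(z)\in T_{s_n}\text{ for all }n\ge 0\}$ indexed by external addresses $\s=s_0s_1\dots$; writing $\CH_{\s}$ for the associated Julia continuum, there are well-defined contracting inverse branches $\iota_{\s,n}\colon\CH_{\sigma^n\s}\to\CH_{\s}$ of $F^n$. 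The function-theoretic hypothesis enters only through the \emph{shape} of the tracts: as Remark~2 indicates, ``bounded slope'' may be replaced throughout by having \emph{{\anguine} tracts} (Definition~\ref{defn:arcliketracts}), whose role is exactly to provide the geometric control on the uniformisers $\psi_T\colon T\to\H$ needed below.

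For the forward direction I would first reduce ``bounded slope'' to ``{\anguine} tracts'' (this reduction, and the continuum-theoretic facts quoted below, belong to the preliminary sections and to Section~\ref{sec:topology}). To show that $\CH_{\s}$ is arc-like I would exhibit, for each $\varepsilon>0$, an $\varepsilon$-map $\phi\colon\CH_{\s}\to[0,1]$, as follows. For each tract $T$ fix once and for all a continuous surjection $\rho_T\colon\overline{T}\cup\{\infty\}\to[0,1]$ (depending only on $\psi_T$) with $\rho_T(\infty)=0$; given $\varepsilon$, take $N$ large and set $\phi\defeq\rho_{T_{s_N}}\circ F^N|_{\CH_{\s}}$. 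Then $\phi(\infty)=0$, and each fibre $\phi^{-1}(t)$ is the image under the single branch $\iota_{\s,N}$ of a $\rho_{T_{s_N}}$-fibre inside $\CH_{\sigma^N\s}$; the $N$-fold hyperbolic contraction of $\iota_{\s,N}$ (from disjoint type), \emph{together with} the control on the tract geometry supplied by anguinity, forces $\operatorname{diam}\phi^{-1}(t)\to0$ in the spherical metric, uniformly in $t$, as $N\to\infty$. Hence $\CH_{\s}$ is chainable, i.e.\ arc-like, and the standard criterion then identifies $\infty$ (the point sent to $0$ by all of these maps) as a terminal point of $\CH_{\s}$ (see Section~\ref{sec:topology}). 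The one step with genuine geometric content is that anguinity converts hyperbolic contraction into spherical contraction uniformly over all tracts and addresses; this is also where mere ``disjoint type'' is not enough, matching the known counterexample whose Julia set contains no arcs.

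For the converse I would run this in reverse. On the continuum side I would use the topological normal form (a lemma belonging to Section~\ref{sec:topology}): every arc-like continuum $X$ with a terminal point $x$ is homeomorphic to $\invlim_n([0,1],g_n)$ with each $g_n\colon[0,1]\to[0,1]$ a surjection that fixes $0$ and is a homeomorphism near $0$, the terminal point corresponding to the thread $(0,0,\dots)$. On the dynamical side I would build a \emph{single} disjoint-type $F\in\Blog$ of bounded slope whose tracts $\{T_w\}$ are indexed by finite words over a countable alphabet, each $\overline{T_w}\cup\{\infty\}$ anguine with $\infty$ at its ``end'', and with the uniformisers $\psi_{T_w}$ designed so that, as $\s$ runs over admissible addresses, the continua $\CH_{\s}$ realise $\invlim_n([0,1],g_n)$ for $(g_n)$ ranging over all sequences of, say, piecewise-linear endpoint-fixing surjections of $[0,1]$; an approximation theorem for inverse limits (in the spirit of M.\ Brown) then upgrades ``piecewise-linear'' to ``arbitrary continuous'', which suffices by the normal form. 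Given $X$, one picks a compatible address $\s$, reads off $\CH_{\s}\cong X$ with $\infty\mapsto x$, and $\exp$ transports this back to a Julia continuum of the entire function $f$, preserving the homeomorphism type and fixing $\infty$. To produce such an $F$ concretely I would prescribe a geometrically tame model on a union of thin half-strips---from which both disjoint type and bounded slope are immediate---and either take $F$ directly or, to obtain a genuine transcendental \emph{entire} function with the prescribed tracts, run a model-and-approximate construction in the style of Bishop.

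The forward direction is essentially bookkeeping once the logarithmic model, the contracting-branch description of $\CH_{\s}$, and the arc structure anguinity forces on it are in place. The weight of Theorem~\ref{thm:mainarclike}, and the main obstacle I anticipate, lies in the converse: one must isolate an inverse-limit normal form for arc-like continua with a terminal point that is amenable to realisation, and---the real crux---design a \emph{single} function whose inter-tract dynamics is at once \emph{geometrically tame} (bounded slope, so the forward direction's hypotheses hold) and \emph{combinatorially universal} among endpoint-fixing self-maps of the arc; these requirements pull in opposite directions, and reconciling them is where the construction becomes delicate. Capturing genuinely wild targets such as the pseudo-arc relies on the approximation theorem to bridge from the explicitly realised bonding sequences to the abstract inverse limit, and producing an honest entire function rather than merely a map in $\Blog$ requires the transcendental approximation machinery; I expect the bulk of the paper's technical sections to be devoted to exactly these points.
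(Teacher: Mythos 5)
Your forward direction is essentially the paper's argument. Bounded slope gives anguine tracts (Definition~\ref{defn:arcliketracts}), the function $\phi_T$ from that definition plays the role of your $\rho_T$, and the composites $g_j=\phi_{T_j}\circ F^j$ (Proposition~\ref{prop:arcliketracts}) have fibres of hyperbolic diameter at most $K\Lambda^{-j}$, so they are $\eps$-maps for $j$ large. One remark on routing: for the terminality of $\infty$ the paper's proof of Theorem~\ref{thm:mainarclike} cites Theorem~\ref{thm:infty}, which works for \emph{all} disjoint-type maps via a separation-number argument independent of tract geometry; but the remark following Proposition~\ref{prop:arcliketracts} confirms that your shortcut---reading terminality directly off the $\eps$-maps $g_j$, which send $\infty$ to an endpoint of the arc, via Proposition~\ref{prop:arclikecharacterization}---is valid in the anguine case, which is all the bounded-slope hypothesis requires.

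For the converse, first drop the extra condition that each bonding map be a homeomorphism near the terminal endpoint: Proposition~\ref{prop:arclikecharacterization} needs only surjections $g_n\colon[0,1]\to[0,1]$ fixing $1$, and the local-homeomorphism condition is neither used nor obviously achievable in general. Beyond that, your sketch identifies the right architecture but leaves the genuine content unconstructed, and it helps to know where the paper concentrates the work. First, the technical engine is a shadowing/pseudo-conjugacy principle for expanding inverse systems (Proposition~\ref{prop:conjugacyprinciple}); this is what converts ``the tract geometry mimics $(g_k)$'' into an actual homeomorphism onto $\invlim(g_k)$. Second, the paper's model has a \emph{single} tract up to $2\pi i\Z$-translation, a central strip with side channels $U_k$ whose shape encodes the folding pattern of $g_k$, rather than a hierarchy of tracts indexed by finite words; Remark~\ref{rmk:infinitelymanytracts} mentions the multi-tract alternative as a possible simplification, but it is not what the paper adopts. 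Third, universality of one $F$ comes from fixing a countable generating set $\G$ of piecewise-linear bonding maps (Proposition~\ref{prop:countable}, which is the role played by Brown's approximation in your account) and then enumerating all \emph{finite} sequences in $\G$ so that each \emph{infinite} sequence corresponds to an admissible address of the single function (Theorem~\ref{thm:allinone}). The passage to an honest entire function via Bishop's realisation theorem (Theorem~\ref{thm:realization}) is as you describe, with the caveat that the approximation must be checked to preserve bounded slope (Remark~\ref{rmk:preserved}).
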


  Let us remark on a striking property of the function  $f$ whose existence is asserted in 
    Theorem~\ref{thm:mainarclike}.
    Suppose that $F_{\lambda}(x)=\lambda x(1-x)$ 
     is a member of the famous \emph{logistic family} of interval 
     maps, for  $\lambda\in [1,4]$. Then the \emph{inverse limit space} of  $F_{\lambda}$~-- that is, the space of
    all backward orbits~-- is an arc-like continuum having a terminal point (see Proposition~\ref{prop:arclikecharacterization}). It is known \cite[Theorem~1.2]{ingramconjecture} 
    that the topology of this continuum
    determines $F_{\lambda}$ up to a natural notion of conjugacy. 
    Hence the topology of $J(f)$, for the single disjoint-type
    function  $f$, contains all information about the topological 
    dynamics of all members of the logistic family.  

   In the case where $f$ does not have bounded slope (or even  
    ``{\anguine} tracts''), 
    we obtain the following result. 
\begin{thm}[Topology of Julia continua] \label{thm:mainspanzero}
  Let $\CH$ be a Julia continuum of a disjoint-type entire function $f$. Then
    $\CH$ has span zero and $\infty$ is a terminal point of $\CH$. 
\end{thm}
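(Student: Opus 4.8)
The plan is to work in logarithmic coordinates. Since $f$ is of disjoint type, a logarithmic change of variable replaces $f$ by a map $F\colon\mathcal{T}\to H$ of the Eremenko--Lyubich class $\Blog$: here $H$ is a right half-plane, the domain $\mathcal{T}=F^{-1}(H)$ is a disjoint union of unbounded Jordan domains (``tracts''), $F$ maps each tract conformally onto $H$, and, crucially for disjoint type, $\overline{\mathcal{T}}\subseteq H$. In the sphere, each compactified tract $\hat{T}\defeq\overline{T}\cup\{\infty\}$ is a closed topological disc with $\infty\in\partial\hat{T}$, and (up to the semiconjugacy $\exp$) the Julia set of $f$ is the set of points that stay in $\mathcal{T}$ under all iterates of $F$, its components being the components of that set together with $\infty$. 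Distinct tract closures being disjoint, every point of a component $C$ has the same external address $\underline{s}=s_0s_1s_2\dots$, and $\hat{C}=C\cup\{\infty\}$ is homeomorphic to (a subcontinuum of) $\hat{C}_{\underline{s}}\defeq\bigcap_{n\ge0}\hat{C}^{\,n}_{\underline{s}}$, where $\hat{C}^{\,0}_{\underline{s}}$ is the closed half-plane together with $\infty$ and $\hat{C}^{\,n}_{\underline{s}}$ is its pullback along the branch of $F^{-n}$ prescribed by $\underline{s}$. Each $\hat{C}^{\,n}_{\underline{s}}$ is a closed disc with $\infty$ on its boundary, the sequence is nested and decreasing, $F^{n}$ restricts to a homeomorphism $\hat{C}^{\,n}_{\underline{s}}\to\hat{H}$, and consequently $F$ restricts to a homeomorphism of $\hat{C}$ onto the Julia continuum $\hat{C}_{(1)}$ with address $\sigma\underline{s}$, fixing $\infty$.

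Beyond this combinatorial picture I will use one dynamical input: the standard expansion estimate for disjoint-type $\Blog$-maps, namely that there is a conformal metric near $\overline{\mathcal{T}}$ in which $F$ expands lengths on $\mathcal{T}$ by a uniform factor $\Lambda>1$; equivalently, the inverse branches $F^{-1}_{s}\colon\hat{H}\to\hat{T}_{s}$ contract this metric by $\Lambda^{-1}$. Iterating, the discs $\hat{C}^{\,n}_{\underline{s}}$ become ``infinitely long but vanishingly thin'': outside any fixed compact subset of $\C$, their extent transverse to the escaping direction tends to $0$ as $n\to\infty$, while their reach towards $\infty$ does not. (In the bounded-slope setting the tracts, and hence the discs $\hat{C}^{\,n}_{\underline{s}}$, are in addition $\anguine$, so that they can be covered by short chains; that is the extra ingredient that upgrades the conclusion of Theorem~\ref{thm:mainspanzero} to the arc-likeness of Theorem~\ref{thm:mainarclike}. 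For Theorem~\ref{thm:mainspanzero} only the transverse thinness above is needed.)

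With this in place I would argue as follows. \emph{$\infty$ is a terminal point:} given subcontinua $K,L\subseteq\hat{C}$ with $\infty\in K\cap L$, I must show $K\subseteq L$ or $L\subseteq K$. The $F$-invariance of the picture allows one to ``zoom in'' at $\infty$ by applying $F^{n}$: since the relevant part of $\hat{C}^{\,n}_{\underline{s}}$ near $\infty$ becomes transversally thin, a sufficiently small neighbourhood of $\infty$ in $\hat{C}$ is pinched into an essentially one-dimensional strand, forcing any two subcontinua of $\hat{C}$ through $\infty$ that lie in such a neighbourhood to be comparable; a connectedness argument then removes the smallness restriction. \emph{$\hat{C}$ has span zero:} suppose for contradiction there is a subcontinuum $Z\subseteq\hat{C}\times\hat{C}$ with $\pi_{1}(Z)=\pi_{2}(Z)=\hat{C}$ and $Z\cap\Delta=\emptyset$, where $\Delta$ is the diagonal. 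Because $F$ is injective on $\hat{C}^{\,n}_{\underline{s}}$ and preserves the diagonal, $Z_{n}\defeq(F\times F)^{n}(Z)$ is again a subcontinuum of $\hat{C}_{(n)}\times\hat{C}_{(n)}$ with full, equal projections and disjoint from the diagonal, and uniform expansion forces a point of $Z_{n}$ whose two coordinates are separated by at least $c\Lambda^{n}$ (for a fixed $c>0$) in the expanding metric; once $n$ is large this is incompatible with $Z_{n}$ being connected, surjective onto $\hat{C}_{(n)}$, and contained in the thin tube $\hat{C}^{\,n}_{\underline{s}}\times\hat{C}^{\,n}_{\underline{s}}$ (equivalently, one verifies Lelek's no-crossing criterion directly on the thin-tube model, at every scale $\epsilon$, using $n=n(\epsilon)$ large).

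The main obstacle is this last step in the non-bounded-slope case: when the tracts are geometrically wild, the approximating continua $\hat{C}^{\,n}_{\underline{s}}$ are genuinely two-dimensional and need not be arc-like, so one cannot simply invoke ``arc-like $\Rightarrow$ span zero''. One has to show that transverse collapse at every scale --- rather than chainability --- already precludes a diagonal-avoiding surjective continuum in the square, and that the notion of ``transverse thinness'' can be made precise and robust enough, using only the conformal expansion estimate, to drive both the terminality argument and the span-zero argument. That is the technical heart of the proof.
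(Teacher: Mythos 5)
You have the right framework — logarithmic coordinates, external addresses, the nested pulled-back tracts $\hat{C}^{\,n}_{\underline{s}}$, and the expansion estimate as the dynamical engine — and you have honestly flagged the gap: you do not actually prove the key fact that would make the push-forward / pull-back scheme close up. That gap is real, and I want to warn you that the ``transverse thinness'' heuristic you propose for filling it will not work and is not what the paper does.

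First, the thinness picture is simply wrong in general. For a disjoint-type map of unbounded slope, the pulled-back discs $\hat{C}^{\,n}_{\underline{s}}$ need not become ``vanishingly thin transverse to the escaping direction''; a single logarithmic tract may wind back and forth and contain points whose imaginary parts differ by far more than $2\pi$, and these wild shapes persist under pull-back. This is exactly the situation illustrated in the paper's Figure~\ref{fig:windingtract}, and it is the reason Julia continua can fail to be arc-like. So an argument that, at its core, says ``the tube is thin, hence one-dimensional near $\infty$'' cannot succeed; it would essentially be reproving arc-likeness, which is false here. Relatedly, your span-zero argument is pointed the wrong way: you push $Z$ forward and try to derive a contradiction from having points far off the diagonal; but nothing about a ``thin tube'' forbids a diagonal-avoiding, fully-projecting continuum by itself. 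The productive direction is the opposite one: produce, at each late time $n$, a point of $Z_n$ that is \emph{close} to the diagonal (within $2\pi$), then pull back under $F^{-n}$ and use expansion so that a subsequential limit lies \emph{on} the diagonal.

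What actually produces that close-to-diagonal point is not geometry but a mod-$2$ topological crossing argument exploiting the single property you have not used: a logarithmic tract is disjoint from its own $2\pi i\Z$-translates. From this alone one shows that any two points of a continuum $A\subset T\cup\{\infty\}$ cannot ``swap positions'' inside $T$ without passing within distance $2\pi$ of each other. The paper formalises this as a parity-of-crossings invariant $\sep_T(a,b;z)$ for the vertical segment $I_z$ of length $4\pi$ (Proposition~\ref{prop:sep}), deduces the ``moving along a connected set'' lemma (Corollary~\ref{cor:movingpoints}), and then proves Proposition~\ref{prop:boundedspan}: any subcontinuum $X\subset(T\cup\{\infty\})^2$ projecting equally to both coordinates contains a point $(z,w)$ with $w\in I_z$. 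That is the missing lemma. Both Theorem~\ref{thm:spanzero} (span zero) and Theorem~\ref{thm:infty} ($\infty$ terminal) then follow from it by pushing forward, applying bounded span of the $n$-th tract, and pulling back with expansion (Proposition~\ref{prop:expansion} and Lemma~\ref{lem:separationoforbits}) — exactly the surrounding scaffolding you already set up. In short: the expansion half of your plan is sound, but the geometric ``thinness'' half must be replaced by the $2\pi i$-periodicity crossing argument, and the span-zero step should aim to hit the diagonal, not to escape it.
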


 Here ``span zero'' (see Definition~\ref{defn:topologicalproperties}) is another famous concept from continuum theory, which means intuitively that 
 two points that exchange their position by travelling within $X$ must meet each other. 
  It is well-known
   \cite{lelekspan}
   that every arc-like continuum has span zero.
   A long-standing question, posed by Lelek \cite[Problem~1]{lelekproblems} in 1971 and featured on many subsequent 
    problem lists in 
   topology, asked whether every continuum with span
   zero must be arc-like. This would imply that Theorem~\ref{thm:mainarclike} gives a 
   complete classification of Julia continua.
    However, Hoehn \cite{hoehn} constructed a counterexample to Lelek's 
    problem in 2011. 
    The conjecture does hold when the continuum in question is either 
    \emph{hereditarily decomposable}~ \cite{bingsnakelike} or 
    \emph{hereditarily indecomposable} \cite{hoehnoversteegenpseudoarc}.
    (A continuum is \emph{decomposable} if it can be written as the union of 
      two proper subcontinua; it is \emph{hereditarily} (in-)decomposable if
      all its non-degenerate subcontinua are also (in-)decomposable. 
     See Definition~\ref{defn:indecomposable}.)

Since any Julia continuum different from those constructed in Theorem~\ref{thm:mainarclike} 
    would be of span zero but not arc-like, it would 
    be of considerable topological interest. It is conceivable that one can construct a disjoint-type entire function having
   a Julia continuum of this type, thus yielding a new  proof of Hoehn's theorem mentioned above.
    We do not pursue this investigation here. 

 We prove a number of additional and more precise results, which are stated and discussed in  
    Section~\ref{sec:intro2}.     To conclude the introduction, we 
    discuss the wider significance 
    of the class of disjoint-type entire functions, and then mention two easily-stated 
    important applications of our results and methods.

\subsection*{Significance of disjoint-type functions}
   In the study of dynamical systems, hyperbolic systems 
    (also referred to as \emph{Axiom A}, using Smale's terminology) 
    are those that exhibit the simplest behaviour; understanding the hyperbolic case is usually the first step in 
    building a more general theory. Since entire functions of disjoint type are those hyperbolic functions that have 
    the simplest combinatorial structure, their properties are of intrinsic interest, and their detailed understanding
    provides an important step towards understanding larger classes. However, our results also
    have direct consequences far beyond the disjoint-type case. Indeed, 
    consider the \emph{Eremenko--Lyubich class}
    \[ \B \defeq  \{f\colon \C\to\C\text{ transcendental entire}\colon S(f) \text{ is bounded}\}. \]
    By definition, this class contains all hyperbolic functions, 
    as well as the particularly interesting \emph{Speiser class} $\classS$ of functions for which $S(f)$ is finite. 
    We remark that, for $f\in\B$, always $I(f)\subset J(f)$ \cite{alexmisha}. 

    If $f\in\B$, then the map
    \begin{equation}\label{eqn:rescalingdisjointtype} f_{\lambda}\colon\C\to\C;\quad z\mapsto \lambda f(z) \end{equation}
    is of disjoint type for sufficiently small $\lambda$ \cite[\S5, p.\ 261]{boettcher}. If additionally the
    original function $f$ is hyperbolic, then by \cite[Theorem~5.2]{boettcher}, the 
   dynamics of $f$ on its Julia set $J(f)$
    can be obtained, via a suitable semi-conjugacy, as 
     a quotient of that of $f_{\lambda}$ on $J(f_{\lambda})$. 
    This result has been generalised, first by Mihaljevi\'c-Brandt \cite{helenaconjugacy}
    and more recently by
     Alhabib, Sixsmith and the author \cite{geometricallyfinite} to important 
     larger classes of entire functions (more precisely: geometrically finite
     entire functions whose local backward branches on the Julia set have uniformly
     bounded degree). 
     
    In each of these cases, the semi-conjugacy in question restricts to 
     a homeomorphism on each connected component of $J(f_{\lambda})$. Hence 
     all of our  
     results concerning the topology of Julia components for the disjoint-type function  $f_{\lambda}$ apply 
     directly to the corresponding subsets of $J(f)$ also. 

  Furthermore, \cite{DHlanding} introduces the notion of
   periodic ``filaments''~-- a generalisation of the above-mentioned ``hairs''~-- for entire functions   
    with bounded postsingular sets. This includes all hyperbolic functions, and 
    more generally all  geometrically finite entire functions.
   These filaments promise to play an important role in the 
   combinatorial study of class $\B$ entire functions; indeed, they have already
   been used to prove the existence and uniqueness of (homotopy) Hubbard trees for
   general post-singularly finite entire functions \cite{pfrangthesis}. 
   Our results on disjoint-type Julia continua apply directly to (landing) filaments;
    indeed, any periodic filament together with its landing point
    is homeomorphic 
    to a corresponding component of $J(f_{\lambda})$, with $f_{\lambda}$ from~\eqref{eqn:rescalingdisjointtype} of disjoint type. 

     Yet more generally, for any $f\in\B$, it is shown in~\cite[Theorem~1.1]{boettcher}
    that $f$ and the disjoint-type function $f_{\lambda}$ have the
    same dynamics \emph{near infinity}. It follows that our results, which apply
    \emph{a priori} only to the
    disjoint-type function $f_{\lambda}$,
    also have interesting consequences for general  $f\in\B$. 

 \subsection*{Two applications}    
  A famous example of a continuum (i.e., a non-empty compact, connected metric space) that contains no arcs is 
   provided by the 
   \emph{pseudo-arc} (see Definition \ref{defn:topologicalproperties}). The pseudo-arc 
   is the unique hereditarily indecomposable arc-like continuum 
   and has the intriguing property of being
  homeomorphic to each  of its non-degenerate subcontinua. Theorem~\ref{thm:mainarclike} shows that 
    the pseudo-arc can arise in the Julia set of a transcendental entire function; as
   far as we are aware, this is the first time that a dynamically defined subset of the Julia set 
    of an entire or meromorphic function has been shown to be a pseudo-arc. 
     In fact, we show even more:

\begin{thm}[Pseudo-arcs in Julia sets]\label{thm:pseudoarcs}
 There exists a disjoint-type
   entire function $f\colon\C\to\C$ such that, for every connected component $C$ of $J(f)$,
   the set $C\cup\{\infty\}$ is a pseudo-arc. 
\end{thm}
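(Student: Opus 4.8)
The plan is to revisit the construction underlying the \emph{realisation} part of Theorem~\ref{thm:mainarclike} and to feed it a single, carefully chosen topological model in place of a universal family. Recall (Proposition~\ref{prop:arclikecharacterization}) that an arc-like continuum with a terminal point can be described as an inverse limit $\invlim([0,1],g_n)$ of unit intervals with surjective bonding maps satisfying $g_n(0)=0$, the terminal point being $(0,0,\dots)$. The proof of the realisation part of Theorem~\ref{thm:mainarclike} constructs $f$ by prescribing, on a countable collection of \anguine{} (in particular bounded-slope) tracts, a universal sequence of such bonding maps, arranged so that the Julia continuum $\hat J_{\underline s}$ with external itinerary $\underline s$ is homeomorphic to the inverse limit taken along the subsequence of models selected by $\underline s$. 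For the present theorem I would instead feed this construction a \emph{constant} sequence: fix once and for all a single surjection $g\colon[0,1]\to[0,1]$ with $g(0)=0$ such that $\invlim([0,1],g)$ is the pseudo-arc. Such a map exists by a classical theorem of Henderson; equivalently, by Bing's characterisation of the pseudo-arc as the unique hereditarily indecomposable arc-like continuum (\cite{bingsnakelike}; see Definition~\ref{defn:indecomposable}), it suffices to choose $g$ so that for every $\varepsilon>0$ some composition $g^{\circ k}$ is $\varepsilon$-crooked.

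Feeding this $g$ into the construction yields a disjoint-type entire function $f$ of bounded slope, because the function-theoretic part of the argument is insensitive to \emph{which} interval maps are being modelled~-- using a single fixed one only simplifies the bookkeeping. By the structure theorem underpinning Theorem~\ref{thm:mainarclike}, the Julia set $J(f)$ is the disjoint union of the continua $J_{\underline s}$ over the admissible itineraries $\underline s$, each $J_{\underline s}$ is a connected component of $J(f)$, and $\hat J_{\underline s}=J_{\underline s}\cup\{\infty\}$ is homeomorphic to the inverse limit of $[0,1]$ with bonding maps read off from $\underline s$. Since every bonding map now equals $g$, each $\hat J_{\underline s}$ is homeomorphic to $\invlim([0,1],g)$, that is, to the pseudo-arc, with $\infty$ corresponding to $(0,0,\dots)$; in particular $\infty$ is a terminal point, as it must be by Theorem~\ref{thm:mainspanzero} and as is automatic from the fact that every point of a hereditarily indecomposable continuum is terminal. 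As every connected component $C$ of $J(f)$ equals some $J_{\underline s}$, the set $C\cup\{\infty\}$ is a pseudo-arc, which is exactly the assertion.

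Two points will require care. The first is to check that Henderson's map, or a convenient piecewise-linear variant of it, meets the technical conditions that the construction imposes on its interval models~-- prescribed behaviour near the endpoints, and whatever compatibility is needed to keep the tracts \anguine{}; this should be routine, precisely because the construction was engineered to accommodate an arbitrary universal family, hence in particular each single member of such a family. The second point, which is the conceptual heart of the matter, is to ensure that \emph{every} component of $J(f)$ is captured by the itinerary model $\hat J_{\underline s}$~-- including the components that meet the Fatou boundary at fixed points, periodic points, or points with bounded orbit, whose itineraries are eventually periodic and therefore involve only finitely many of the tract models. This is exactly where the constant-sequence choice is essential: an inverse limit whose bonding maps are all equal to $g$ is the pseudo-arc regardless of the itinerary, so no arcs, points, or other degenerate continua can occur as exceptional components; by contrast, the naive alternative of using a sequence of distinct maps with crookedness tending to zero would fail for the eventually periodic itineraries forced by periodic points. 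Apart from this, the function-theoretic construction is identical to that of Theorem~\ref{thm:mainarclike}.
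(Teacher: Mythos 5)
You have correctly identified where the conceptual difficulty lies (the final paragraph), but your proposed resolution does not work, and the gap is genuine.

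The construction underlying Theorem~\ref{thm:mainarclike} (Section~\ref{sec:arclikeexistence}) produces a function $F$ with a \emph{single} tract $T$ (up to $2\pi i\Z$-translates) consisting of a central strip with ``side channels'' $U_k$, together with one carefully constructed external address $\s$ of the special form~\eqref{eqn:formofs}, namely $\s = s(0)\,0^{N_1}\,s(1)\,0^{N_2}\dots$. The pseudo-conjugacy to $\invlim(g_k)$ holds \emph{only} for that address: the long blocks $0^{N_k}$ transport the iterate to the correct real part before it ``enters'' the $k$-th side channel, and it is precisely this timing that makes the $k$-th backward branch mimic $g_k$. For an arbitrary address~-- say the constant address $\s'=0^\infty$, or any other bounded or periodic address~-- the Julia continuum $\hat J_{\s'}$ is \emph{not} identified with any inverse limit of the maps $g_k$ by this construction. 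Its topology is governed by the geometry of the tract $T$ itself: for $\s'=0^\infty$ it is the invariant continuum in $T$, and by Theorem~\ref{thm:invariantarclike} it is the inverse limit of a single interval map $h$ that is read off from the shape of $T$, not from the $g_k$. Nothing in the construction forces this $h$ to produce a pseudo-arc. So ``constant sequence $g_k=g$'' does not imply ``all components are pseudo-arcs''; it only controls the single continuum at the distinguished address.

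There is also a second, independent obstacle that your outline does not address. The one-tract construction of Section~\ref{sec:arclikeexistence} has ``irises'' (the openings of size $\chi_k$), and the resulting tract does \emph{not} have bounded decorations~-- the paper points this out explicitly in Remark~\ref{rmk:alternativeproof}, and Corollary~\ref{cor:unboundedrequired} shows this failure is unavoidable for that design. The mechanism the paper actually uses to get from ``one Julia continuum is a pseudo-arc'' to ``\emph{all} Julia continua are pseudo-arcs'' is Corollary~\ref{cor:pseudoarcs}, whose proof hinges on Proposition~\ref{prop:epsilondensesubcontinua} (the $\eps$-dense-subcontinua argument), and that proposition requires bounded slope \emph{and} bounded decorations. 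The paper therefore does not reuse the Section~\ref{sec:arclikeexistence} construction at all: it instead invokes Theorem~\ref{thm:periodicexistenceprecise} (the periodic construction of Section~\ref{sec:periodicexistence}, designed precisely to have a single tract with both bounded slope and bounded decorations) to realise Henderson's pseudo-arc as the \emph{invariant} Julia continuum, and only then propagates hereditary indecomposability to every address via Corollary~\ref{cor:pseudoarcs}. Your proposal is missing both the ``right'' tract construction and the propagation step, which are the two nontrivial ingredients of the theorem.
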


   Observe that this result sharpens the aforementioned \cite[Theorem 8.4]{strahlen}. We prove 
    Theorem~\ref{thm:pseudoarcs} as a special case of much more general results; since this 
    manuscript was first circulated, a simpler and more direct proof of Theorem~\ref{thm:pseudoarcs}
    was developed by Benitez and the author~\cite{pseudoarcs}.

 A further motivation for studying the topological dynamics of disjoint-type functions comes from a second question asked by
  Eremenko in \cite{alexescaping}: \emph{Is every connected component of $I(f)$ unbounded?} This problem is now known as 
  \emph{Eremenko's Conjecture}, and has remained open despite considerable attention.\footnote{%
   Since this memoir was submitted, Eremenko's conjecture has been resolved in the negative for
   general transcendental entire functions~\cite{eremenkosconjecture}. It remains an open problem whether
   Eremenko's conjecture holds for all functions in the class $\B$.} For disjoint-type maps, and indeed for 
  any entire function with bounded postsingular set, it is known that the answer is positive \cite{eremenkoproperty}. However, the disjoint-type
  case nonetheless has a role to play in the study of Eremenko's Conjecture. 
   Indeed, as discussed in
   \cite[Section 7]{boettcher}, we may strengthen the question slightly by asking
   which entire functions have the following property:
 \begin{enumerate}
   \item[(UE)]
    For all  $z\in I(f)$, there is a connected and unbounded set $A\subset\C$ with $z\in A$ such that $f^n|_A\to\infty$ \emph{uniformly}.
  \end{enumerate}
  
  If there exists a counterexample $f$ to Eremenko's Conjecture in the class $\B$, then clearly $f$ cannot satisfy property (UE). 
    In this case, it
    follows from \cite[p.\ 265]{boettcher} that (UE) fails for \emph{every} map of the form $f_{\lambda} \defeq  \lambda f$. As noted above, $f_{\lambda}$ is
    of disjoint type for $\lambda$ sufficiently
    small, so we see that any counterexample $f\in\B$ would need to be closely related to a disjoint-type function for which (UE) fails. 
    The author suggested in \cite{eremenkoproperty,boettcher} 
   that it might be possible to construct such an example; 
   in this memoir we realise this construction for the first time.  
   Indeed, as we discuss in more detail in the next section, 
   there is
   a surprisingly close relationship between the topology of the Julia 
   set and the
   existence of points $z\in I(f)$ for which (UE) fails. Hence our results 
   allow us to give a good description of the circumstances in which 
   such points exist at all, which is likely to be important in any attempt
   to resolve Eremenko's Conjecture. In particular, we can prove
   the following, which considerably strengthens the examples alluded to in \cite{eremenkoproperty,boettcher}.
  
\begin{thm}[Non-uniform escape to infinity] \label{thm:nonuniform}
  There is a disjoint-type entire function $f$ and an escaping point
    $z\in I(f)$ with the following property. If $A\subset I(f)$ is connected and
    $\{z\}\subsetneq A$, then 
     \[ \liminf_{n\to\infty} \inf_{\zeta\in A} |f^n(\zeta)| < \infty. \]
\end{thm}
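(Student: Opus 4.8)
The plan is to obtain the function $f$ from (a refinement of) the realisation part of Theorem~\ref{thm:mainarclike}, combined with the correspondence between the topology of a Julia continuum and the failure of property (UE) that is developed in Section~\ref{sec:intro2}.

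First I would identify a model $(X,x)$ — an arc-like continuum with terminal point $x$ — whose associated dynamics forces non-uniform escape at some non-terminal point. The natural candidate is a continuum built from an escaping ``ray'' accumulating onto a proper non-degenerate subcontinuum $K$ (for instance a $\sin(1/x)$-type continuum, with $x$ an endpoint of the limit bar $K$), together with a designated point $z\in K$. By the description of the \escapingcomposant{} of a Julia continuum in Section~\ref{sec:intro2}, when such a model is realised as $\CH$ the escaping points that can be joined to one another inside $\CH$ lie, roughly, along the ray; hence any connected escaping $A$ with $\{z\}\subsetneq A\subset\CH$ must contain points arbitrarily far out along the ray, where it is wound tightly around $K$.

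Next I would run the inductive tract construction underlying Theorem~\ref{thm:mainarclike} to build a single disjoint-type entire function $f$ having a Julia continuum $\CH=\Jsh$, for a suitable external address $\s$, of the prescribed homeomorphism type. The essential point, which goes \emph{beyond} the purely topological conclusion of Theorem~\ref{thm:mainarclike}, is that the construction must be carried out with simultaneous control of the \emph{escape rates} of orbits inside $\CH$: I would arrange the tract data so that there are radii $R_k\to\infty$ and iterates $n_k\to\infty$ such that the part of $\CH$ of modulus $\ge R_k$ (which, by the winding, contains a full turn of the ray near $K$) is mapped by $f^{n_k}$ into a fixed bounded disc $\{|w|\le M\}$, while $z$ itself — via its itinerary — still escapes. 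Given any connected escaping $A$ with $\{z\}\subsetneq A$, connectedness forces $A$ to reach modulus $\ge R_k$ for every large $k$, so $A$ contains a point $\zeta_k$ with $|f^{n_k}(\zeta_k)|\le M$; letting $k\to\infty$ then yields $\liminf_n\inf_{\zeta\in A}|f^n(\zeta)|\le M<\infty$, as required.

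The main obstacle is precisely this need to control dynamics and not merely topology: the induction must keep track, at every stage, of which pieces of $J(f)$ lie on the relevant Julia continuum \emph{and} of how far their images travel, without destroying the disjoint-type property or the arc-likeness of $\CH$. In particular one has to ensure that the ``returning'' pieces genuinely lie on every connected escaping set through $z$ (so that $A$ cannot avoid them), while simultaneously keeping the forward orbit of $z$ escaping; this constrains the choice of the address $\s$ and of the nesting of the tracts, and is where the bulk of the work lies.
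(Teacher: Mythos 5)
Your high-level strategy matches the paper's: realise a carefully chosen arc-like continuum as a Julia continuum, with additional control on escape rates so that some point $z$ escapes while no non-degenerate connected escaping set through $z$ escapes uniformly. However, both your choice of model and the argument you extract from it have a concrete gap.

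You propose a $\sin(1/x)$-type continuum, with $z$ placed in the limit bar $K$ and with $\infty$ at an endpoint of $K$, and then argue that ``any connected escaping $A$ with $\{z\}\subsetneq A\subset\CH$ must contain points arbitrarily far out along the ray.'' That last claim is not a consequence of the composant structure, and in fact fails: a non-degenerate connected $A\ni z$ can simply be a subinterval of $K$, which never meets the ray at all. Theorem~\ref{thm:composants} (via Proposition~\ref{prop:composantsanduniformescape} and Corollary~\ref{cor:uniformeremenko}) identifies $\mu_{\s}(\infty)$ with the \emph{topological composant of $\infty$}, and this controls which points can be connected to $\infty$ by an \emph{unbounded} uniformly escaping set; it says nothing directly about bounded connected escaping sets sitting inside a proper subcontinuum like $K$. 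Worse, if you put $\infty$ at one end of the limit bar and $z$ in its interior, the composant of $\infty$ in the $\sin(1/x)$-continuum is everything except the free endpoint of the ray, so $z$ \emph{does} lie in $\mu_{\s}(\infty)$ and the conclusion of the theorem would actually be false for that $z$. If you instead put $\infty$ at the ray endpoint, the complement of the composant of $\infty$ is all of $K$, so $z\in K$ is at least a plausible candidate --- but now nothing rules out a non-degenerate $A\subset K$ escaping uniformly, and you would need genuine additional control of the dynamics on $K$ to exclude it, which you don't indicate how to obtain.

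The paper sidesteps all of this by choosing the continuum to be an \emph{arc} (realised as $\invlim(g)$ for a specific piecewise-linear interval map $g$ whose inverse limit is an arc, after Ingram) and taking $z_0$ to be the finite endpoint. Then any non-degenerate connected $A\ni z_0$ in $\CH$ is a subarc $[z_0,w]$ and $A\setminus\{z_0\}$ lies in the composant of $\infty$; combined with the observation $\mu_{\s}(w)=\mu_{\s}(\infty)$ for $w\in\mu_{\s}(\infty)$ (Remark~1 after Definition~\ref{defn:uniformescape}), uniform escape on $A$ would force $z_0\in\mu_{\s}(\infty)$, contradicting Proposition~\ref{prop:composantsanduniformescape}. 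The needed escape-rate control is exactly what Proposition~\ref{prop:arclikeexistenceprecise} delivers when one takes $M_k=10$ constant: $\liminf_k \min_{z\in C}\re F^{n_k}(z)\leq M_k+1$ while every inverse orbit of $g$ satisfies $\liminf x_n>0$, so $C\subset I(f)$. Your sketch of the escape-rate mechanism (mapping the modulus-$\geq R_k$ part of $\CH$ into a bounded disc at time $n_k$) is also not what is actually arranged, and as stated is not quite meaningful, since that part of $\CH$ is a neighbourhood of $\infty$ and its forward images need not re-enter a fixed disc; what the construction ensures is that the \emph{coordinate projections} $h_k$ of the conjugating homeomorphism hit $0$, forcing some point of $F^{n_k}(C)$ to have small real part. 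You should switch to the arc model and to this precise formulation of the escape-rate control for the argument to close.
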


\section{Further discussion and results}\label{sec:intro2}
 In this section, we state and discuss further and more detailed results about the properties of Julia continua,
  going beyond those mentioned in the introduction. Let us begin by providing the formal definition of 
  the concepts from continuum theory that have already been mentioned. 
 
\begin{defn}[Terminal points; span zero; arc-like continua]\label{defn:topologicalproperties}
 Let $X$ be a continuum; i.e.\ $X$ is a non-empty compact, connected metric space. 
  \begin{enumerate}[(a)]
    \item A point $x\in X$ is called a \emph{terminal point} of
   $X$ if, for any two subcontinua $A,B\subset X$ with $x\in A\cap B$, either
   $A\subset B$ or $B\subset A$. 
  \item 
    $X$ is said to have \emph{span zero} if, for any subcontinuum
    $\mathcal{A}\subset X\times X$ whose first and second coordinates both
    project to the same subcontinuum 
    $A\subset X$, $\mathcal{A}$ must intersect the diagonal. (I.e., if $\pi_1(\A)=\pi_2(\A)$, then
    there is $x\in X$ such that $(x,x)\in \mathcal{A}$.)
   \item 
    A non-degenerate continuum 
      $X$ is said to be \emph{arc-like} if, for every $\eps>0$, there exists
     a continuous function $g\colon X\to [0,1]$ such that $\diam(g^{-1}(t))<\eps$ for all 
     $t\in [0,1]$.  
   \item $X$ is called a \emph{pseudo-arc} if $X$ is arc-like and also \emph{hereditarily indecomposable}.
 \end{enumerate}
\end{defn}
 For the benefit of those readers who have not encountered these concepts before, let us make a few comments regarding their meaning.
  \begin{enumerate}[(1)]
    \item As mentioned in the introduction, one may think of terminal points as analogous to the endpoints of an arc. However, keep in mind that there may be more than
      two terminal points~-- indeed, for any hereditarily indecomposable continuum, such as 
      a pseudo-arc, 
     \emph{all} points are terminal. 

    We remark that there are several different and inequivalent notions of ``terminal points'' in use in continuum theory. The above definition can
      be found e.g.\ in \cite[Definition~3]{fugatedecomposable}, and differs, in particular, from that of Miller
      \cite[p.~131]{millerunicoherent}. On the other hand, the distinction is not essential for our purposes, since both
      notions coincide for the types of continua studied in this paper.
    \item As we discuss in Section \ref{sec:topology}, 
     there are a number of equivalent 
     definitions of arc-like continua, the most important of which is that $X$ is arc-like if and only if it can be written as an inverse limit of arcs with surjective bonding maps. 
   \item Any two pseudo-arcs, as defined above, are homeomorphic \cite[Theorem~1]{binghereditarilyindecomposable}; for this reason, we also speak about \emph{the pseudo-arc}. 
      We refer to Exercise 1.23 in \cite{continuumtheory} for
      a construction that shows the existence of such an object, the introduction to Section 12 in the same book
      for a short history, and to  \cite{lewispseudoarc} for a survey of further results. 
  \end{enumerate}

\subsection*{Non-escaping points and accessible points}
  Recall that Theorems~\ref{thm:mainarclike} and~\ref{thm:mainspanzero} give an almost complete
   description of the possible topology of Julia continua. 
 Let us now turn to the behaviour of points in a Julia continuum $\CH = C\cup\{\infty\}$ under iteration.
  In the case of disjoint-type sine (or exponential) maps, and indeed for any disjoint-type entire function of finite order, 
  each component $C$ 
  of the Julia set is an arc and contains at most one point that does not tend to infinity under iteration,
  namely the finite endpoint of $C$. 
  Furthermore, this finite endpoint is always accessible from the Fatou set of $f$; no other 
  point can be accessible from $F(f)$. (Compare \cite{devgoldberg}.) This suggests the following questions:
\begin{enumerate}
 \item Can a Julia continuum contain more than one non-escaping point?
 \item Is every non-escaping point accessible from $F(f)$?
 \item Does every Julia continuum contain a point that is accessible from $F(f)$?
    This question is raised in
    \cite[p. 393]{baranskikarpinskatrees}, where the authors prove that the answer
     is positive when a certain growth condition is imposed on the external address (see Definition \ref{defn:externaladdress}) 
     of the component $C$.
\end{enumerate}

 To answer these questions, we require one more
  topological concept.
 \begin{defn}[Irreducibility]\label{defn:irreducible}
 Let $X$ be a continuum, and let $x_0, x_1\in X$. We say that $X$ is \emph{irreducible} between $x_0$ and $x_1$ if 
   no proper subcontinuum of $X$ contains both $x_0$ and $x_1$. 
 \end{defn}
  We shall apply this notion only in the case where $x_0$ and $x_1$ are terminal points of $X$. In this case, irreducibility of $X$ between $x_0$ and $x_1$
   means that, in some sense, the points $x_0$ and $x_1$ lie ``on opposite ends'' of $X$. For example, the $\sin(1/x)$-continuum of 
    Figure~\ref{fig:arclike}\subref{subfig:sinecurve} is irreducible between the terminal point on the right of the image and either of the two terminal points on the left, 
    but not between the latter two (since the limiting interval is a proper subcontinuum containing both).

 \begin{thm}[Non-escaping and accessible points]\label{thm:nonescapingaccessible}
  Let $\CH$ be a Julia continuum of a disjoint-type entire function
   $f$. Any non-escaping point $z_0$ in $\CH$ is a terminal
   point of $\CH$, and $\CH$ is irreducible between $z_0$ and $\infty$. The same is true
   for any point $z_0\in\CH$ that is accessible from $F(f)$. 

  Furthermore, the set of non-escaping points in $\CH$ has Hausdorff dimension zero.
   On the other hand, there exist a disjoint-type function having a Julia continuum for which
   the set of non-escaping points is a Cantor set and a disjoint-type function having a Julia continuum
   that contains a dense $G_{\delta}$ set of non-escaping points. 
 \end{thm}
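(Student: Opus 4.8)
The plan is to work in logarithmic coordinates, where $f$ is replaced by a map $F$ in the Eremenko--Lyubich class $\Blog$ (after the usual normalisation, in $\BlogP$) modelling the dynamics of $f$ near infinity, the right half-plane $H$ replaces the relevant neighbourhood of $\infty$, and the Julia continuum $\CH$ corresponds to $\Jsh$ for a fixed external address $\s=T_0T_1T_2\dots$. I would use three structural inputs from the framework: (i) $F$ restricts to a homeomorphism $F\colon\Jsh\to\hat J_{\sigma\s}$, so that each $F^n$ identifies $\Jsh$ with $\hat J_{\sigma^n\s}$ while respecting inclusions of subcontinua and the property ``contains $\infty$''; (ii) the inverse branches $F_T^{-1}\colon H\to T$ contract the hyperbolic metric $\rho$ of $H$ by a uniform factor $\mu<1$ (this is precisely the disjoint-type condition $\overline{\bigcup_T T}\subset H$), whence $F^n$ expands $\rho$ on $\Jsh$ by at least $\mu^{-n}$; and (iii) $\infty$ is a terminal point of $\CH$ and $\CH$ has span zero (Theorem~\ref{thm:mainspanzero}). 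A point $z$ is non-escaping exactly when $\Rea(F^n(z))\le R$ for infinitely many $n$, for some $R$; I will fix such an $R$ and an infinite sequence $n_1<n_2<\dots$ of these ``return times''.

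\textbf{Non-escaping points.} To show $\CH$ is irreducible between a non-escaping $z_0$ and $\infty$: since $\infty$ is terminal, the subcontinua of $\CH$ containing $\infty$ form a chain, so there is a minimal subcontinuum $K$ with $z_0,\infty\in K$, and irreducibility is equivalent to $K=\CH$. If $K\subsetneq\CH$, I would pick a component $U$ of $\CH\setminus K$: its closure $\overline U$ is a non-degenerate subcontinuum with $\overline U\cap K\ne\emptyset$, and $\infty\notin\overline U$ (otherwise $\overline U\cup K$ is a strictly larger subcontinuum containing $z_0$ and $\infty$, contradicting minimality), so $\overline U$ is non-degenerate and bounded in $\C$. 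Applying $F^{n_k}$: $F^{n_k}(K)$ is the minimal subcontinuum of $\hat J_{\sigma^{n_k}\s}$ joining $\infty$ to the ``low'' point $F^{n_k}(z_0)\in\{\Rea\le R\}$, while the bounded subcontinuum $F^{n_k}(\overline U)$ hangs off it and has, by (ii), $\rho$-diameter at least $\mu^{-n_k}$ times that of $\overline U$, hence tending to infinity. The hard part -- which I expect to be the main obstacle -- is a lemma saying that this configuration is impossible: a Julia continuum cannot, at arbitrarily large times, carry a hyperbolically arbitrarily large bounded subcontinuum attached to the minimal subcontinuum that joins $\infty$ to a point of bounded real part. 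Proving this is where the topology of $\CH$ (span zero, together with the block/approximation structure of Julia continua from the framework) has to be reconciled with the expansion of $F$; granting it, $K=\CH$. Terminality of $z_0$ is then short: subcontinua $A,B\ni z_0$ that both contain $\infty$ equal $\CH$ by irreducibility; if only $A$ does, $A=\CH\supseteq B$; and if neither does, $A$ and $B$ are bounded in $\C$, and the same circle of ideas (pushing forward to the times $n_k$, using that bounded subcontinua grow under $F$ and that $\CH$ has span zero) forces them to be comparable.

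\textbf{Accessible points.} If instead $z_0\in\CH$ is the landing point of a curve $\gamma$ in the Fatou set of $f$, I would use the standard correspondence (in logarithmic coordinates) between the Fatou set and the complement of the tracts: the curves $f^n(\gamma)$, which lie in $F(f)$ and land at $f^n(z_0)$, provide for every $n$ an access to $F^n(z_0)$ in $\hat J_{\sigma^n\s}$ from the region exterior to the tract $T_n$. One then reruns the argument above, with ``$\overline U$ is a bounded subcontinuum attached to $K$'' replaced by ``the access survives every pull-back'', to conclude that no proper subcontinuum of $\CH$ contains $z_0$ and $\infty$ and that no two subcontinua through $z_0$ are incomparable. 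The one new point needing care is that the access genuinely persists under all iterates and is not ``absorbed'' by $\CH$; this is why accessible points must be handled separately from non-escaping ones, which they do not contain (there are escaping endpoints).

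\textbf{Hausdorff dimension and the examples.} For the dimension statement I would write the non-escaping set as the countable union over $R\in\mathbb{N}$ of $S_R:=\{z\in\CH:\Rea(F^n(z))\le R\text{ for infinitely many }n\}$ and show $\dim_H S_R=0$. For every $M$, $S_R\subseteq\bigcup_{n\ge M}W_n$ with $W_n:=(F^n|_{\CH})^{-1}\bigl(\hat J_{\sigma^n\s}\cap\{\Rea\le R\}\bigr)$; by (ii) the $\rho$-diameter of $W_n$ is at most $\mu^n$ times that of $\hat J_{\sigma^n\s}\cap\{\Rea\le R\}$, which is bounded uniformly in $n$ by the distortion estimates for $\Blog$, and since $W_n$ lies at uniformly bounded real part (again by those estimates) its Euclidean diameter is $O(\mu^n)$. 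Hence $\mathcal{H}^s(S_R)\le\limsup_{M\to\infty}\sum_{n\ge M}O(\mu^{ns})=0$ for all $s>0$, so $\dim_H S_R=0$. Finally, I would obtain the two examples with the function-construction machinery used for Theorem~\ref{thm:mainarclike}: one designs the tracts of a disjoint-type function so that, along a suitable periodic external address, the resulting Julia continuum carries either a Cantor set of points whose orbits return to a fixed bounded region infinitely often -- e.g.\ by building in a self-similar binary branching pattern whose branch points form a Cantor set lying over that region -- or a countable dense set of such points, arranged through a careful choice of the approximating graphs in the construction.
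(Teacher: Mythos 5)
Your plan has the right overall shape, but the central step is left as an explicit gap, and the gap is not a small one: the lemma you ask to "grant" — that a bounded subcontinuum attached to the minimal join of $z_0$ and $\infty$ cannot grow hyperbolically without bound under iteration — is essentially the whole content of the theorem, and span zero (which you cite as the topological ingredient) is not strong enough on its own to supply it. The paper does not actually argue via span zero here. The engine is the pointwise \emph{separation-number} machinery of Section~\ref{sec:spanzero}: since a logarithmic tract $T$ is disjoint from its own $2\pi i\Z$-translates, the vertical segment $I_z$ of length $4\pi$ centred at any $z$ in a connected set $A\subset T\cup\{\infty\}$ must separate a left-most from a right-most point of $A$ (Corollary~\ref{cor:movingpoints}). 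Given two subcontinua $\hat A^1,\hat A^2\ni z_0$, the paper looks at $\hat A=\hat A^1\cup\hat A^2$ at the return times $n_k$, takes a right-most point $b_k\in\hat A^1_{n_k}$ (passing to a subsequence), and for $z\in\hat A^2$ shows that $I_{F^{n_k}(z)}$ must meet $\hat A^1_{n_k}$: the segment cannot separate $a_k$ from $\zeta_k=F^{n_k}(z_0)$ because both have real part $<Q$ and $\re F^{n_k}(z)\to\infty$, so it must separate $\zeta_k$ from $b_k$. Then Lemma~\ref{lem:separationoforbits} (hyperbolic expansion plus compactness) gives $z\in\hat A^1$. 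Your abstract reformulation would need exactly this tool to be made to work, and the proposal does not identify it.

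A second structural issue: you prove irreducibility first and then claim terminality is "short", disposing of the case where the two subcontinua $A,B\ni z_0$ both contain $\infty$ or exactly one does, but when \emph{neither} contains $\infty$ you again wave at "the same circle of ideas". That case is precisely the one that needs the separation argument; the paper avoids this split entirely by proving terminality directly for arbitrary pairs of subcontinua through $z_0$ and obtaining irreducibility as the special case $\hat A^2=\CH$. For accessible points, the idea of carrying the access curve along with the continuum is correct, but the paper implements it by building an auxiliary logarithmic tract $\tilde T_n\supset C_n\cup\gamma_n$ to which Corollary~\ref{cor:movingpoints} can be applied; this is the same missing mechanism. Your Hausdorff-dimension argument and the sketch of the two constructed examples are in line with the paper.
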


  A Julia continuum contains at most one accessible point 
   (see Theorem~\ref{thm:accessibleterminal}), so
   the two functions whose existence is asserted in Theorem~\ref{thm:nonescapingaccessible} 
    must have non-escaping points that are not
   accessible from $F(f)$.   Furthermore, we can apply Theorem \ref{thm:mainarclike} to the bucket-handle continuum of Figure~\ref{fig:arclike}\subref{subfig:knaster}, 
   which has only a single terminal point. Hence the corresponding Julia continuum $\CH$ (which, as discussed below, can be chosen such that the iterates do not converge uniformly
   to infinity on $C$) contains neither non-escaping nor accessible points. 
   In particular,  this answers the question of
   Bara\'nski and Karpi\'nska. 

 We remark that it is also possible to construct an inaccessible Julia continuum that \emph{does} contain
  a finite terminal point $z_0$. Indeed, we shall see that the examples mentioned in the second half of the preceding theorem
   must have this property (compare the remark after Theorem~\ref{thm:accessibleterminal}). Alternatively, such an example could 
   be achieved by ensuring that the continuum is embedded 
  in the plane in such a way that $z_0$ is not accessible from the complement of $\CH$ (see Figure~\ref{fig:embedding}); 
   we do not discuss the details here. 

\subsection*{Bounded-address and periodic Julia continua}

 We now turn our attention to the different types of dynamics that $f$ can exhibit on
   a Julia continuum. In Theorem \ref{thm:mainarclike}, we saw that, given any arc-like continuum $X$ 
  having a terminal point,
   there are 
   a disjoint-type function $f$ and a Julia continuum
   $\CH$ of $f$ such that $\CH$ is homeomorphic to $X$. We shall see that it is possible
   to choose $\CH=C\cup\{\infty\}$ either such that 
   $f^n|_C\to \infty$ uniformly, or such that $\min_{z\in C} |f^n(z)|<R$ for some
   $R>0$ and infinitely many $n$. However, our construction will always lead to continua with 
   $\limsup_{n\to\infty}\min_{z\in C}|f^{n}(z)|=\infty$. In particular, the Julia continuum cannot be periodic. 

 Periodic points, and periodic continua consisting of escaping points, play a crucial role in complex dynamics. 
   Hence it is interesting to consider when we can improve on the preceding results, in the following sense.
\begin{defn}[Periodic and bounded-address Julia continua]
 Let $\CH=C\cup\{\infty\}$ be a Julia continuum of a disjoint-type function $f$. 
  We say that $\CH$ is \emph{periodic} if $f^n(C)=C$ for some $n\geq 1$. 

 We also say that $\CH$ has \emph{bounded address}  if 
   there is $R>0$ such that, for every $n\geq 0$, there exists a point $z\in C$ such that
   $|f^n(z)|\leq R$.
\end{defn}

 Upon reflection, it becomes 
  evident that not every arc-like continuum $X$ having a terminal point
   can arise as a Julia continuum with
   bounded address. Indeed, it is well-known that every Julia continuum $\CH$ at bounded 
  address contains a unique point with a 
  bounded orbit, and hence that every periodic Julia continuum contains a periodic point.
  (See Proposition~\ref{prop:boundedorbits}.) 
  In particular, by Theorem \ref{thm:nonescapingaccessible},
   $\CH$ contains some terminal point $z_0$ such that $\CH$ is irreducible between $z_0$ and
   $\infty$. So if $X$ is an arc-like continuum that
   does not contain two terminal points between which $X$ is irreducible (such as the 
   Knaster bucket-handle), then $X$ cannot be realised by a bounded-address Julia continuum.
  It turns out that this is the only restriction.

\begin{thm}[Classification of bounded-address Julia continua] \label{thm:boundedexistence}
  There is a disjoint-type entire function $f$ of bounded slope with the following property.

  Let $X$ be an arc-like continuum, and let $x_0,x_1 \in X$ be two terminal points between which $X$ is irreducible.
   Then there is a Julia continuum $\CH$ of $f$ with bounded address
      and a homeomorphism $\psi\colon X\to\CH$ such that
        $\psi(x_0)$ has bounded orbit under $f$
       and $\psi(x_1)=\infty$.
\end{thm}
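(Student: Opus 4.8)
\textbf{Proof proposal for Theorem~\ref{thm:boundedexistence}.}
The plan is to build $f$ by the same ``model function'' technique that underlies Theorem~\ref{thm:mainarclike}: one constructs a disjoint-type entire function whose logarithmic tracts over a neighbourhood of infinity realise, via the logarithmic change of variable, an essentially arbitrary combinatorial/geometric pattern, and then transports the desired arc-like continuum into one of the Julia continua by a careful choice of that pattern. Since an arc-like continuum $X$ is, up to homeomorphism, an inverse limit $\invlim(I_n,\phi_n)$ of arcs $I_n=[0,1]$ with surjective bonding maps $\phi_n\colon I_{n+1}\to I_n$ (this is the characterisation recalled in Section~\ref{sec:topology}, cf.\ Proposition~\ref{prop:arclikecharacterization}), and since the dynamics of $f$ on a Julia continuum with external address $\s$ is conjugate to a shift-like map on an inverse limit of the successive ``fibres'' $\Jsh$, the natural idea is: encode the bonding maps $\phi_n$ as the transition maps of $f$ between consecutive tracts along the orbit, and arrange the geometry of the tracts so that the $n$-th fibre is (uniformly close to) the arc $I_n$ with the induced parametrisation. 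First I would fix such an inverse system for $X$, normalised so that the two prescribed terminal points $x_0,x_1$ correspond to the two coordinates $0$ and $1$ of every factor $I_n$; the hypothesis that $X$ is irreducible between $x_0$ and $x_1$ is exactly what guarantees that one can choose the $\phi_n$ surjective \emph{and} with $\phi_n(0)=0$, $\phi_n(1)=1$, i.e.\ that $0$ and $1$ are genuinely ``opposite ends'' of the inverse limit rather than collapsing into a proper subcontinuum.

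Next I would set up the entire function. As in \cite{strahlen} and in the bounded-slope constructions of this paper, one works in logarithmic coordinates: choose a tract $T$ (a simply connected domain over a disc about $\infty$), a conformal isomorphism $F\colon T\to\{\Rea w>0\}$ (or the relevant half-plane model), and then define $f$ so that the tract containing the orbit piece at level $n$ maps to the next one by a map whose boundary behaviour mimics $\phi_n$. The point of using bounded slope (equivalently, \anguine\ tracts, Definition~\ref{defn:arcliketracts}) is that it forces each Julia continuum to be arc-like with $\infty$ terminal, so that the continuum we produce is automatically of the right kind and we only have to check it is the \emph{right} arc-like continuum. To get \emph{bounded address} rather than uniform escape, I would, instead of letting the real parts of the successive tract positions grow without bound, arrange the orbit of the ``inner'' end so that $f^n$ of the terminal point $x_0$ stays within a fixed bounded set: concretely, pick the external address $\s$ of $\CH$ to be bounded (in the sense that the sequence of tract-labels, or the associated sequence of base points, stays bounded), which by Proposition~\ref{prop:boundedorbits} forces the existence of a unique point in $\CH$ with bounded orbit, and engineer the combinatorics so that this distinguished point is precisely $\psi(x_0)$ while $\infty=\psi(x_1)$ is the terminal point at the other end. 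The homeomorphism $\psi$ is then assembled from the conjugacy between $\CH$ with its dynamics and the inverse limit $\invlim(\Jsh,\text{induced maps})$, together with the identification of that inverse limit with $\invlim(I_n,\phi_n)\cong X$; one checks $\psi(x_0)$ is the bounded-orbit point and $\psi(x_1)=\infty$ by construction.

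The hard part will be the geometry: one must choose the shapes and positions of the tracts so simultaneously (i) $f$ is a genuine disjoint-type entire function of bounded slope (this is the function-theoretic input --- extending a prescribed map on a union of tracts to an entire function with bounded singular set requires a Cauchy-integral / quasiconformal-surgery or gluing argument as in \cite{strahlen,boettcher}), (ii) the induced map on the $n$-th fibre $\Jsh$ is, after a uniformly bicontinuous reparametrisation, within $\eps_n\to 0$ of the model arc $I_n$ with bonding map $\phi_n$ (so that the inverse limits are homeomorphic --- here one uses that $\eps_n$-close inverse systems of arcs have homeomorphic limits), and (iii) the address stays bounded so that the bounded-orbit point lands on $\psi(x_0)$. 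Balancing (ii) and (iii) is the genuine obstacle: uniform escape makes the fibre estimates in (ii) easy (the hyperbolic expansion is strong and contraction of inverse branches is uniform), whereas bounded address means the orbit returns near a fixed bounded region infinitely often, where one has much weaker expansion, so the control of $\diam(g_n^{-1}(t))$ needed for arc-likeness of the fibres must be obtained by a more delicate, possibly recursive, choice of the tract geometry near that region. I expect this to be handled by choosing the bounded part of the orbit to pass through tracts that are long and thin (high ``slope-bounded'' aspect ratio) precisely where expansion is weak, so that the combinatorial map $\phi_n$ is still faithfully realised; verifying that such a choice is compatible with the entire-function construction of (i) is the technical heart of the argument. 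Everything else --- the passage from the inverse-limit picture to the statement about $\psi$, the identification of terminal points, and the appeal to Theorems~\ref{thm:mainarclike} and~\ref{thm:nonescapingaccessible} for the ``only restriction'' remark --- is then bookkeeping.
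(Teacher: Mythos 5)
Your overall plan — express $X$ as an inverse limit of interval maps fixing $0$ and $1$ (via the final statement of Proposition~\ref{prop:arclikecharacterization}), construct a logarithmic transform whose pullback dynamics is pseudo-conjugate to that inverse system, and appeal to a shadowing/conjugacy principle, with the universality handled by an enumeration as for Theorem~\ref{thm:mainarclike} — matches the paper's strategy in outline. But you miss a structural constraint that actually shapes the construction: for bounded addresses, the function \emph{must} have at least two tracts modulo $2\pi i\Z$. Corollary~\ref{cor:boundedhomeomorphic} shows that if $F\in\BlogP$ has a unique tract up to $2\pi i\Z$-translation, then all bounded-address Julia continua of $F$ are pairwise homeomorphic, so the single-tract ``side-channel'' construction underlying Theorem~\ref{thm:mainarclike} cannot be reused here. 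The paper instead builds a function with exactly two tracts $S$ and $T$ modulo $2\pi i\Z$, using addresses of the form $\s=S^{N_1}T\,S^{N_2}T\cdots$: the straight half-strip $S$ transports the orbit to the right, while the ``decoration'' domains $U_k\subset T$ encode the bonding maps $g_k$ (Section~\ref{sec:boundedexistence}). Without noticing the two-tract necessity, a single-tract realisation of distinct $X$ at distinct bounded addresses is doomed from the start.

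You also misdiagnose the ``genuine obstacle'' as weak expansion in the bounded region where the orbit returns. By Proposition~\ref{prop:expansion} a disjoint-type $F$ is uniformly hyperbolically expanding with a constant $\Lambda>1$ everywhere, and the blocks $S^{N_k}$ with $N_k$ large supply arbitrarily strong additional contraction of inverse branches (cf.\ the analogue of~\eqref{eqn:Nk}); there is no weak-expansion regime to fight, and ``long and thin'' tract geometry near the base is not the remedy. The real work is combinatorial: one must place the cross-cuts and quadrilaterals inside $U_k\subset T$ so that the real part of the $F$-preimage models the graph of $g_k$ over the previous window $[M_{k_*},\widetilde M_{k_*}]$, and verify that $\psi_k(z)=\phi_k(\re z)$ satisfies all four clauses of Definition~\ref{defn:pseudoconjugacy} (this is the content of the analogues of Lemma~\ref{lem:pullingback} and Proposition~\ref{prop:pseudoconjugacyconditions}). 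The bounded-orbit point is then handled automatically by Proposition~\ref{prop:boundedorbits}, as you say; but the architecture forcing distinct topologies at distinct bounded addresses is the two-tract design, not differential tract shape.
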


  We also observe that not every continuum $X$ as in Theorem~\ref{thm:boundedexistence}
   can occur as a periodic Julia continuum. Indeed, if $\CH$ is a periodic Julia continuum, then
   $f^p\colon\CH\to\CH$ is a homeomorphism, where $p$ is the period of $\CH$, and $f^p$ is extended to $\CH$ by
    setting  $f^p(\infty)=\infty$. Furthermore, all points of $\CH$ but one 
   tend to $\infty$ under iteration by $f^p$. However, if $X$ is, say, the 
   $\sin(1/x)$-continuum from Figure \ref{fig:arclike}, then every self-homeomorphism of $X$ must map the limiting interval on the left to itself. Hence there
   cannot be any periodic Julia continuum
   $\CH$ that is homeomorphic to $X$. 
   The correct class of continua for this setting was discussed by Rogers \cite{rogerscontinua} in 1970.

\begin{defn}[Rogers  continua]\label{defn:rogerscontinua}
  Let  $X$ be homeomorphic to the inverse limit of a  surjective continuous self-map $h$
    of the interval $[0,1]$
    with $h(t)<t$ for  $0<t<1$, and let $x_0$ and $x_1$ denote the points of $X$ corresponding to 
     $0\mapsfrom  0 \mapsfrom  0\dots$ and $1\mapsfrom 1 \mapsfrom \dots$. 
     Then we shall say that $X$ is a \emph{Rogers continuum} (from  $x_0$ to  $x_1$). 
\end{defn}
\begin{remark}
  The \emph{inverse limit} generated by $h$ is the space of all backward orbits of $h$, 
      equipped with the
      product topology (Definition \ref{defn:inverselimit}). 
\end{remark}

\begin{thm}[Periodic Julia continua] \label{thm:periodicexistence}
 Let $X$ be a continuum and let $x_0,x_1\in X$. Then the following are equivalent:
 \begin{enumerate}[(a)]
   \item $X$ is a Rogers continuum from  $x_0$ to $x_1$.
  \item There exists a disjoint-type entire function $f$ of bounded slope, a periodic Julia continuum $\CH$ of $f$, say of period $p$, and 
    a homeomorphism $\psi\colon X\to\CH$ such that $f^p(\psi(x_0))=\psi(x_0)$ and $\psi(x_1)=\infty$.
 \end{enumerate}
\end{thm}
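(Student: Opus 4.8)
The plan is to establish the two implications separately, using the general machinery developed in the paper (of which we only have the statements, but the structure is clear). For the implication (b) $\Rightarrow$ (a): suppose $\CH$ is a periodic Julia continuum of period $p$ for a disjoint-type function $f$. Then $g \defeq f^p$ maps $\CH$ to itself and, extended by $g(\infty)=\infty$, is a homeomorphism of $\CH$. By Theorem~\ref{thm:mainarclike} (or its anguine-tracts generalisation, valid under bounded slope), $\CH$ is arc-like, and both $\infty$ and the unique bounded-orbit point $z_0$ (Proposition~\ref{prop:boundedorbits}) are terminal points between which $\CH$ is irreducible (Theorem~\ref{thm:nonescapingaccessible}). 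The key point is then to extract, from the dynamics of $g$ on the arc-like continuum $\CH$, a factor map onto an interval map $h\colon[0,1]\to[0,1]$ with $h(t)<t$ for $0<t<1$, realising $\CH$ as its inverse limit. Concretely, using an arc-like defining sequence adapted to $g$ (so that the bonding maps conjugate $g$ to a shift-like map), one builds a monotone retraction $\CH\to[0,1]$ sending $z_0\mapsto 0$, $\infty\mapsto 1$, and semiconjugating $g$ to some $h$; because every point but $z_0$ escapes under $g$ and $\infty$ is the terminal attracting end while $z_0$ is repelling, $h$ pushes every interior point toward $1$, i.e.\ $h(t)<t$ on $(0,1)$. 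This exhibits $X\cong\CH$ as a Rogers continuum from $x_0=z_0$ to $x_1=\infty$.

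For the implication (a) $\Rightarrow$ (b): given a surjective $h\colon[0,1]\to[0,1]$ with $h(t)<t$ on $(0,1)$, we must construct a disjoint-type entire function $f$ of bounded slope carrying a periodic Julia continuum homeomorphic to $\invlim h$, with the basepoints placed correctly. The natural approach is the same model-building technique used for Theorems~\ref{thm:mainarclike} and~\ref{thm:boundedexistence}: work in logarithmic coordinates, design a tract (which will be anguine / of bounded slope) whose first-return map to a fundamental domain, read off along the ``combinatorial'' interval direction, realises $h$; the transcendental function with this tract then has, over the corresponding external address, a Julia continuum that is precisely $\invlim h$. The period-$p$ requirement is handled by making the address periodic of period $p$ (one may even take $p=1$), so that $f^p$ acts on the continuum exactly as the shift on $\invlim h$, which by construction fixes the point corresponding to $1\mapsfrom 1\mapsfrom\cdots$ (that is $\infty$) and the point corresponding to $0\mapsfrom 0\mapsfrom\cdots$ (the bounded-orbit point), with all other points escaping under $f^p$.

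I expect the main obstacle to be the (a) $\Rightarrow$ (b) direction, specifically the verification that an arbitrary such interval map $h$ can be encoded by an \emph{anguine} (hence bounded-slope) tract while keeping the function genuinely of disjoint type and keeping precise control of the dynamics on the single relevant Julia continuum. The interval map $h$ may be wild (e.g.\ non-monotone, with complicated critical behaviour), so the tract has to fold in a carefully prescribed way, and one must check that the resulting inverse limit of the induced sequence of interval maps is homeomorphic to $\invlim h$ itself — this requires a stability/absorption argument showing the auxiliary bonding maps coming from the geometry do not change the homeomorphism type, presumably via Mioduszewski-type matching of inverse sequences. The (b) $\Rightarrow$ (a) direction is comparatively soft, being essentially a repackaging of Theorem~\ref{thm:mainarclike}, Theorem~\ref{thm:nonescapingaccessible} and Proposition~\ref{prop:boundedorbits} together with the standard fact that an arc-like continuum with a self-homeomorphism having exactly one non-escaping (fixed) terminal point and one attracting terminal point is an inverse limit of such an interval map.
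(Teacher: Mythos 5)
Your two-implication structure matches the paper, and the broad plan for (a) $\Rightarrow$ (b) — build a tract in logarithmic coordinates whose return map encodes $h$, then use the shadowing/pseudo-conjugacy machinery to identify the resulting invariant Julia continuum with $\invlim h$ — is exactly what the paper does in Theorem~\ref{thm:periodicexistenceprecise}, with Theorem~\ref{thm:realization} then upgrading from $\BlogP$ to a genuine entire function. Your self-diagnosis of where the (a)~$\Rightarrow$~(b) difficulties lie (anguine geometry, stability of the inverse limit type) is accurate.

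However, your argument for (b) $\Rightarrow$ (a) has a real gap. You propose to proceed from the \emph{abstract} data — $\CH$ arc-like, $z_0,\infty$ terminal, $\CH$ irreducible between them, $g=f^p$ a self-homeomorphism with all interior orbits escaping — and then ``using an arc-like defining sequence adapted to $g$ $\dots$ one builds a monotone retraction $\CH\to[0,1]$ semiconjugating $g$ to some $h$''. This is precisely the step that is not obviously available: an arbitrary arc-like continuum with two terminal points and a self-homeomorphism need not be a single inverse limit $\invlim h$ in any easy way, and the paper explicitly notes (Remark~1 after Theorem~\ref{thm:periodicexistence}) that no natural classification of arc-like continua expressible with a single bonding map is known. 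If ``adapted defining sequence'' were enough, Rogers's theorem and the indecomposable case would be routine. What the paper does instead (Theorem~\ref{thm:invariantarclike}) bypasses the abstract topology entirely: it takes the \emph{anguine} function $\phi\colon T\to[0,\infty)$ provided by the bounded-slope hypothesis, subdivides $[0,\infty)$ into unit pieces using a sequence $\zeta_j\in T$ spaced by a fixed hyperbolic distance, and defines the interval map by $h_n(4j) \defeq \phi(F^{-n}(\zeta_{4j}))$ with linear interpolation, choosing $n$ large enough to gain contraction. The map $\phi$ is then shown to be a pseudo-conjugacy in the sense of Definition~\ref{defn:pseudoconjugacy}, and Proposition~\ref{prop:conjugacyprinciple} does the rest. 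So the interval map $h$ is manufactured from the \emph{geometry} of the tract, not read off from the topology of $\CH$; this is precisely why bounded slope (hence anguine tracts) is needed in the hypothesis, and you should replace your appeal to an ``arc-like defining sequence adapted to $g$'' by this explicit construction from $\phi$.
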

\begin{remark}[Remark 1]
  Both Theorem \ref{thm:boundedexistence} and the combination of Theorems~\ref{thm:mainarclike} and~\ref{thm:mainspanzero}
   can be stated in the following form: \emph{Any (resp.\ any bounded-address) Julia continuum (whether arc-like or not) has a certain intrinsic topological 
   property $\mathcal{S}$, and 
   any arc-like continuum with property $\mathcal{S}$ can be realised as a Julia continuum (resp.\ bounded-address Julia continuum) of a disjoint-type,
    bounded-slope entire function.} Rogers's result in \cite{rogerscontinua} gives such a description
    also for Theorem~\ref{thm:periodicexistence}, assuming additionally that $X$ is decomposable.  
    It is an interesting question whether this can be achieved also in the indecomposable case, although we note
    that there is no known intrinsic topological description of those arc-like continua that can be written as an inverse
    limit with a single bonding map.  
\end{remark}
\begin{remark}[Remark 2]
  Another difference between Theorem~\ref{thm:periodicexistence} and Theorems~\ref{thm:mainarclike} 
  and~\ref{thm:boundedexistence} is that not all Rogers continua can be realised  by the same  function.
   Indeed, it can be  shown that there are  uncountably many pairwise non-homeomorphic Rogers continua,
    while the set  of periodic Julia continua of any given function  is countable.
\end{remark}
\begin{remark}[Remark 3]
  By a classical result of Henderson \cite{hendersonpseudoarc}, the pseudo-arc is a Rogers continuum 
    (where $x_0$ and $x_1$ can be taken as any two points between which it is irreducible). 
   Hence we see from Theorem \ref{thm:periodicexistence}
   that it can arise as an invariant Julia continuum of a disjoint-type entire function.
  It follows from the nature of our construction in the proof of Theorem~\ref{thm:periodicexistence} that, in this case,
  \emph{all} Julia continua are pseudo-arcs (see Corollary \ref{cor:pseudoarcs}), 
   establishing Theorem \ref{thm:pseudoarcs} as stated in the introduction.  
\end{remark}

\subsection*{(Non-)uniform escape to infinity}

 We now return to the question of rates of escape to infinity, and the ``uniform Eremenko property''
   (UE). Recall that by Theorem \ref{thm:mainarclike} it is possible to construct a Julia continuum $\CH$ that contains no finite terminal points,
   and hence has the property that $C\subset I(f)$ by Theorem \ref{thm:nonescapingaccessible}.
   Also recall that we can choose $\CH$ in such a way that the iterates of $f$ do not tend to
   infinity uniformly on $C$. This easily implies that there is some point in $C$ for which 
   the property (UE) fails.

 To study this type of question in greater detail, we introduce the following natural definition. 

 \begin{defn}[{\Escapingcomposant}s]\label{defn:uniformescapeintro}
  Let $f$ be a transcendental entire function, and let $z\in I(f)$. The
    \emph{{\escapingcomposant} $\mu(z)$} is the union of
    all connected sets $A\ni z$ such that $f^n|_A\to\infty$ uniformly. 

  We also define $\mu(\infty)$ to be the union of all unbounded connected sets $A$ such that
    $f^n|_A\to\infty$ uniformly. 
 \end{defn}

 With this definition, property (UE) requires precisely that $I(f)=\mu(\infty)$. For a disjoint-type
    entire function, it makes sense to study this property separately for each  Julia  continuum $\CH$; i.e.\ 
    to ask whether all escaping points in $\CH$ belong to $\mu(\infty)$. Clearly this is the case by  definition
    when $f^n\to\infty$ uniformly on $C=\CH\setminus\{\infty\}$. Otherwise, it turns out that 
    there is a close connection between the above question and the topology of $\CH$.
   Recall that the \emph{(topological) composant} of a point $x_0$ in a continuum $X$ is the union of
   all proper subcontinua of $X$ containing $x_0$. The topological composant
    of  $\infty$ in a Julia continuum $\CH$ is always a proper subset of $\CH$, since $\infty$ is a terminal  point.

 \begin{thm}[Topological composants and {\escapingcomposant}s]\label{thm:composants}
  Let $\CH=C\cup\{\infty\}$ be a Julia continuum of a disjoint-type entire function, and suppose that
   $f^n|_{C}$ does not tend to infinity uniformly. Then the topological composant of
   $\infty$ in $\CH$ is given by $\{\infty\}\cup (\mu(\infty)\cap C)$. 

  If $\CH$ is periodic, then $\CH$ is indecomposable if and only if
   $C\cap I(f)\setminus \mu(\infty)\neq\emptyset$ (that is, if $C$ contains an escaping point for which property (UE) fails). 
 \end{thm}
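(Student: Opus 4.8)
The plan is to establish the two assertions separately, in both cases exploiting the correspondence (from the theory developed earlier in the paper) between subcontinua of a Julia continuum $\CH$ and their behaviour under the dynamics. For the first assertion, I would argue by mutual inclusion. For ``$\supseteq$'': if $z\in\mu(\infty)\cap C$, then by definition there is an unbounded connected set $A\ni z$ on which $f^n\to\infty$ uniformly; taking closures in $\CH=C\cup\{\infty\}$ produces a subcontinuum $\overline{A}\ni z$ with $\infty\in\overline{A}$. The key point is that uniform escape forces $\overline A$ to be a \emph{proper} subcontinuum: since $f^n|_C$ does not tend to infinity uniformly, $C$ itself is not contained in $\mu(\infty)$, so $\overline A\neq\CH$. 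Hence $z$ lies in the topological composant of $\infty$. For ``$\subseteq$'': if $K\subsetneq\CH$ is a proper subcontinuum with $\infty\in K$, I would use that $\infty$ is a terminal point (Theorems~\ref{thm:mainspanzero}/\ref{thm:nonescapingaccessible}) together with the structure theory of Julia continua to show that $f^n|_{K}$ escapes uniformly — intuitively, a proper subcontinuum containing the terminal point $\infty$ must be a ``bounded-length initial segment'' of the continuum in the sense of the earlier sections, on which uniform escape holds. Thus $K\setminus\{\infty\}\subseteq\mu(\infty)\cap C$, and taking the union over all such $K$ gives the topological composant of $\infty$ inside $\{\infty\}\cup(\mu(\infty)\cap C)$.

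For the second assertion, suppose first that $\CH$ is periodic, say $f^p(C)=C$, and that $C\cap I(f)\setminus\mu(\infty)\neq\emptyset$. I must show $\CH$ is indecomposable, i.e.\ cannot be written as the union of two proper subcontinua. The topological composant of $\infty$ is, by the first part, equal to $\{\infty\}\cup(\mu(\infty)\cap C)$, which by hypothesis is a \emph{proper} subset of $\CH$ (it misses the point of $C\cap I(f)\setminus\mu(\infty)$). A standard fact from continuum theory is that a continuum in which some point has a proper composant is indecomposable precisely when \emph{every} point has a proper composant; so the task reduces to showing that every point of $\CH$ has proper composant. Here I would use periodicity: $f^p$ is a self-homeomorphism of $\CH$ fixing $\infty$, and (by Proposition~\ref{prop:boundedorbits}) has exactly one further fixed point, the unique point $z_0$ of bounded orbit; all other points escape under $f^p$. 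Given any $x\in\CH$, a proper subcontinuum containing $x$ can be produced by pulling back a proper subcontinuum containing $\infty$ under a suitable iterate $f^{-pn}$ (using that preimages of proper subcontinua under the covering-type dynamics remain proper and that iterating backward ``spreads out'' $x$ toward $\infty$); combined with the $z_0$ case handled directly, this shows no point is ``non-separating'' in the relevant sense, giving indecomposability. Conversely, if $C\cap I(f)\setminus\mu(\infty)=\emptyset$, then $C\cap I(f)\subseteq\mu(\infty)$, so the topological composant of $\infty$ is $\{\infty\}\cup(C\cap I(f))=\CH\setminus\{z_0\}$ (recall $z_0$ is the only non-escaping point, which is terminal by Theorem~\ref{thm:nonescapingaccessible}); a continuum in which some point has a dense composant equal to the whole complement of a single point, that other point being terminal, is decomposable — indeed one writes $\CH$ as the union of two proper subcontinua, one being a small subcontinuum at $z_0$ and the other a large subcontinuum containing $\infty$ but not $z_0$, which exists precisely because the composant of $\infty$ fills up $\CH\setminus\{z_0\}$.

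I expect the main obstacle to be the ``$\subseteq$'' direction of the first assertion: proving that \emph{every} proper subcontinuum containing $\infty$ escapes uniformly. This is where the quantitative machinery of the earlier sections — the description of Julia continua via their external address and the ``{\anguine}''/bounded-geometry analysis governing escape rates — has to be invoked carefully, since it is not a purely topological statement but uses that $\infty$ is not merely terminal but sits at the end of the tract structure. Once that is in hand, the composant computation and the indecomposability dichotomy are comparatively formal consequences of standard continuum theory applied to the (already established) facts that $\infty$ and $z_0$ are terminal points and that, in the periodic case, $f^p$ acts on $\CH$ as a homeomorphism with essentially all orbits escaping to the terminal point $\infty$.
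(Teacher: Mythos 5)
Your overall outline is sensible and the easy directions are essentially right, but both of the hard steps you flag contain genuine gaps, and the paper fills them with a specific tool you do not use: the $\s$-fast escaping set $A_{\s}(F)$ of Definition~\ref{defn:AsF}.

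For the $\subseteq$ inclusion in the first statement, your intuition ``a proper subcontinuum containing $\infty$ must be a bounded-length initial segment on which uniform escape holds'' is correct, but it is not a consequence of the ``{\anguine}/bounded-geometry'' material; it is proved via Proposition~\ref{prop:fast} and Proposition~\ref{prop:composantsanduniformescape}. The paper shows that $A_{\s}(F)$ is dense in $J_{\s}(F)$ and that every $z\in A_{\s}(F)$ lies on a \emph{proper} subcontinuum $\hat X\ni\infty$ on which the iterates escape uniformly; since $\infty$ is terminal, subcontinua through $\infty$ are nested, so any proper subcontinuum $K\ni\infty$ is contained in such an $\hat X$, hence escapes uniformly. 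This immediately gives both Proposition~\ref{prop:composantsanduniformescape} (the composant of $\infty$ equals $\hat A_{\s}(F)$) and, via Corollary~\ref{cor:uniformeremenko}~\ref{item:nonuniformescapeandcomposants}, the identity $\mu_{\s}(\infty)=A_{\s}(F)$ in the non-uniform case. Without some version of this argument your $\subseteq$ inclusion is unsupported.

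For the forward implication in the second statement your argument has a real flaw. You reduce indecomposability to ``every point of $\CH$ has a proper composant'' (correct), and then propose to produce, for a given $x$, a proper subcontinuum that witnesses this ``by pulling back a proper subcontinuum containing $\infty$ under $f^{-pn}$''. But $f^{-p}$ restricted to $\CH$ is a self-homeomorphism fixing $\infty$, so pullbacks of proper subcontinua through $\infty$ are again proper subcontinua through $\infty$; they do not help reach an arbitrary $x$, and in fact showing that the composant of an arbitrary $x$ is proper is not the right question to attack directly. The paper instead argues via Corollary~\ref{cor:decomposablejuliacontinua}: $\Jsh(F)$ is indecomposable if and only if $X\defeq J_{\s}(F)\setminus A_{\s}(F)$ is unbounded. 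Since $\s$ is periodic, $F^p(A_{\s}(F))=A_{\s}(F)$, hence $F^p(X)=X$. If $X$ were bounded it would consist of bounded-orbit points, and Proposition~\ref{prop:boundedorbits} forces $X=\{z_0\}$, the unique non-escaping point; thus if $X$ contains an escaping point (equivalently $C\cap I(f)\setminus\mu(\infty)\neq\emptyset$ via the first statement), $X$ is unbounded and $\CH$ is indecomposable. The reverse direction comes from Corollary~\ref{cor:uniformeremenko}~\ref{item:escapingpointsoutsidecomposant}: if $X$ is unbounded (hence has more than one point), it must contain escaping points because the non-escaping set has Hausdorff dimension zero (Proposition~\ref{prop:nonescapinghausdorff}). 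Your converse argument (``if the composant of $\infty$ is $\CH\setminus\{z_0\}$, then $\CH$ is decomposable'') reaches the right conclusion, but the sketch of an explicit decomposition $\CH=K\cup L$ is not justified; the safe argument is by contradiction, using that composants of an indecomposable continuum are pairwise disjoint and dense, which $\{z_0\}$ cannot be.

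In short: your proof architecture is compatible with the paper's, but the two central claims you identify as ``to be filled in'' are precisely where the paper's auxiliary machinery ($A_{\s}(F)$, Propositions~\ref{prop:fast}, \ref{prop:composantsanduniformescape}, \ref{prop:boundedorbits}, \ref{prop:nonescapinghausdorff}, and Corollaries~\ref{cor:decomposablejuliacontinua}, \ref{cor:uniformeremenko}) does the work, and the specific pullback heuristic you propose for the periodic case does not go through as stated.
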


 Any indecomposable continuum has uncountably many composants, all of which are pairwise disjoint. Hence we see that
   complicated topology of non-uniformly escaping Julia continua automatically leads to the existence of points 
   that cannot be connected to infinity by a set that escapes uniformly.
  However, our proof of Theorem \ref{thm:mainarclike} also allows us to construct 
  Julia continua that have very simple topology, but nonetheless contain
  points in $I(f)\setminus \mu(\infty)$. 

  \begin{thm}[A one-point {\escapingcomposant}]\label{thm:onepointuniform}
    There exists a disjoint-type entire function $f$ and a Julia continuum $\CH=C\cup\{\infty\}$ of $J(f)$ such that:
     \begin{enumerate}[(a)]
       \item $\CH$ is an arc, with one finite endpoint $z_0$ and one endpoint at $\infty$;
       \item $C\subset I(f)$, but $\liminf_{n\to\infty}\min_{z\in C} |f^n(z)|<\infty$. In fact, there is no non-degenerate connected set
         $A\ni z_0$ on which the iterates escape to infinity uniformly.\label{item:onepointuniform_escaping}
     \end{enumerate}
  \end{thm}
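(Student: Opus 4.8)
The plan is to carry out the construction from the realisation half of Theorem~\ref{thm:mainarclike} in the special case $X=[0,1]$, with $x$ one of the two endpoints, while keeping track of the dynamics along the resulting hair; the freedom in choosing the tracts in that construction will let us prescribe the rate of escape on $C$ as well as the topology. (This is essentially the refinement of the proof of Theorem~\ref{thm:mainarclike} alluded to just before the statement.)

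Concretely, working in logarithmic coordinates, one builds $f$ via the approximation techniques of the later sections so that along a fixed external address $\s=s_0s_1s_2\dots$ the tracts $T_0,T_1,T_2,\dots$ are {\anguine} with uniformly bounded geometry; for such $\s$ the Julia continuum $\CH=\Jsh$ is an arc, with $\infty$ as one endpoint (it is a terminal point by Theorem~\ref{thm:mainspanzero}) and a finite endpoint $z_0$ as the other. Write $C=\CH\setminus\{\infty\}$ and let $\gamma\colon[0,\infty)\to C$ be a homeomorphism with $\gamma(0)=z_0$ and $\gamma(t)\to\infty$. Two properties must be arranged through the choice of the $T_n$ and of $\s$. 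First, $C\subset I(f)$: this is \emph{not} automatic, since on a typical hair the finite endpoint is non-escaping, so we take $s_n\to\infty$; realising an unbounded itinerary in thin (even if {\anguine}) tracts forces $\Rea F^n(z)\to\infty$ for every $z\in C$ via the expansion estimates, i.e.\ $|f^n(z)|\to\infty$. Secondly, along a very sparse sequence of times $n_1<n_2<\dots$ we shape the tract $T_{n_k}$ so that it makes a ``bend'' bringing it back towards a fixed bounded set; since the hair $f^{n_k}(C)$ runs along $T_{n_k}$, this forces a parameter $t_k$ with $|f^{n_k}(\gamma(t_k))|\le R$ for an absolute constant $R$, while $\Rea F^m(\gamma(t_k))\to\infty$ is retained. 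This yields (a) together with $\liminf_n\min_{z\in C}|f^n(z)|\le R<\infty$, the first half of (b).

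For the ``in particular'' clause one must additionally ensure $t_k\to 0$; I would do this by placing the bend of $T_{n_k}$ ever closer, as $k\to\infty$, to the point $f^{n_k}(z_0)$ at which the hair enters $T_{n_k}$, using the quantitative description — developed in the later sections — of the finite endpoint of a hair in terms of its itinerary and tract shapes. Once $t_k\to 0$, any non-degenerate connected set $A\ni z_0$ satisfies $A\subset I(f)\subset J(f)$ and meets the component $C$ of $J(f)$, hence $A\subset C$, so $A$ contains a subarc $\gamma([0,\delta])$ with $\delta>0$; for all large $k$ we then have $\gamma(t_k)\in A$ and $\inf_{\zeta\in A}|f^{n_k}(\zeta)|\le R$, so $f^n|_A$ cannot tend to infinity uniformly. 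This establishes (b) in full.

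The principal obstacle is the tension between these ingredients: the bends must be pronounced enough to produce genuine bounded excursions and to push $t_k\to 0$, yet the tracts must remain {\anguine} with uniformly bounded geometry and the growth of $\s$ must be sparse and controlled, both so that $\CH$ is an arc and so that the slowest point $z_0$ actually escapes rather than, say, acquiring a bounded orbit. Verifying that there is a workable regime, and that $t_k$ genuinely tends to $0$, relies on the expansion estimates in {\anguine} tracts, the combinatorial description of hairs and of their finite endpoints in terms of external addresses, and the arc-criterion of the later sections; granting those, the verification of (a), (b) and the final sentence is a matter of bookkeeping along the lines above.
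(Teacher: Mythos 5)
Your identification of the main difficulty~-- that on a typical hair the finite endpoint is non-escaping, so one must arrange $C\subset I(f)$ by hand~-- is correct, but the fix you propose contains a genuine gap. It is not true that taking an address with $s_n\to\infty$ ``forces $\re F^n(z)\to\infty$ for every $z\in C$ via the expansion estimates'': escape is governed by the rate and fine geometry of the itinerary, not by mere unboundedness, and the paper itself produces Julia continua with Cantor and dense sets of non-escaping points (Corollary~\ref{cor:densenonescaping}) without any restriction to bounded addresses. Likewise, {\anguine} tracts of uniformly bounded geometry only guarantee that $\CH$ is arc-\emph{like} (Proposition~\ref{prop:arcliketracts}); the pseudo-arc Julia continua of Theorem~\ref{thm:pseudoarcs} also arise from {\anguine} tracts, so bounded geometry alone cannot make $\CH$ an arc. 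Your picture of the mechanism~-- the minimiser of $|f^{n_k}(\cdot)|$ drifting towards $z_0$ while every individual point still escapes~-- is qualitatively what happens, but nothing in your sketch ties together the three requirements of arc topology, $C\subset I(f)$, and a bounded infimum, which is precisely what makes the statement delicate.

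The paper resolves all of this at once by a different route: it applies Proposition~\ref{prop:arclikeexistenceprecise} with $M_k\equiv 10$ to a single, carefully chosen bonding map, $g(x)=2x-1$ for $x\ge 1/2$ and $g(x)=(1-2x)/4$ for $x<1/2$. For this $g$, $\invlim g$ is homeomorphic to an arc (a cited fact of Ingram), and~-- because $g^{-1}(0)=\{1/2\}$ and every inverse orbit with $x_0>1/4$ converges to $1$~-- every $(x_n)\in\invlim g$ has $\liminf_n x_n>0$. By part~\ref{item:controlonbehaviour} of Proposition~\ref{prop:arclikeexistenceprecise} this yields $\re F^{n_k}(z)\to\infty$ for every $z\in C$, hence $C\subset I(f)$, while surjectivity of $g$ gives $\min_{z\in C}\re F^{n_k}(z)\le 11$ for all $k$, so the infimum stays bounded. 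The ``in particular'' clause is then immediate from Theorem~\ref{thm:composants} (or read off directly from the construction), with no need for an explicit sequence $t_k\to 0$. The moral is that arc topology, escape of all points including $z_0$, and boundedness of the infimum are encoded simultaneously in the choice of bonding map; hand-shaped bends and $s_n\to\infty$ do not substitute for this.
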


 Observe that this implies Theorem \ref{thm:nonuniform}.

\subsection*{Number of tracts and singular values}
 
 So far, we have not said much about the nature of the functions $f$ that occur in our examples,
   except that they are of disjoint type. Using recent results of Bishop \cite{bishopfolding,bishopclassBmodels,bishopclassSmodels}, we can say considerably more:

 \begin{thm}[{Class $\classS$ and number of tracts}]\label{thm:Stracts}
  All examples of disjoint-type entire functions $f$ mentioned in the introduction and in this section can be constructed 
    in such a way that $f$ has exactly two critical values and no finite asymptotic values, and such that all critical points of
    $f$ have degree at most $16$.

  Furthermore, with the exception of Theorem \ref{thm:boundedexistence}, 
   the function $f$ can be constructed such that
      \[ \mathcal{T}_R \defeq  f^{-1}(\{z\in \C\colon  |z|>R\}) \]
   is connected for all $R$. In Theorem \ref{thm:boundedexistence}, the function
   $f$ can be constructed so that $\mathcal{T}_R$ has exactly two connected components for sufficiently 
    large $R$.
 \end{thm}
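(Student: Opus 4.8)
The plan is to observe that every disjoint-type function occurring in the results quoted above is built according to a single scheme: one first constructs a \emph{model}, namely a system of logarithmic tracts invariant under the translations $z\mapsto z+2\pi i n$ ($n\in\mathbb{Z}$) together with a conformal parametrisation of it~-- i.e.\ an element of the logarithmic class $\Blog$, or of $\BlogP$ for the periodic constructions behind Theorem~\ref{thm:periodicexistence}~-- and one then passes to an entire realisation. The first step is to revisit these constructions and to check that the underlying model may be chosen so that there is only a \emph{single} tract modulo $2\pi i\mathbb{Z}$, the one exception being the bounded-address function of Theorem~\ref{thm:boundedexistence}, where two tracts are used; and so that the model has {\anguine} tracts in the sense of Definition~\ref{defn:arcliketracts}. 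The key point is that all the topological and dynamical conclusions proved in this paper for a given construction depend only on three features of the model: that it is of disjoint type, the number of its tracts, and the {\anguine} geometry of those tracts near infinity. In particular none of these features constrains the \emph{local} shape of a tract, so the complexity required by a given example~-- the prescribed arc-like topology of the Julia continua, the bounded-address combinatorics of Theorem~\ref{thm:boundedexistence}, the presence or absence of finite terminal and of accessible points, the non-uniform escape of Theorem~\ref{thm:onepointuniform}, and so on~-- can be carried entirely by the boundary behaviour of the conformal parametrisation and by the placement of finitely many marked boundary points, rather than by an irregular shape of the tracts themselves.

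Given such a model, I would then invoke Bishop's realisation theorems \cite{bishopfolding,bishopclassBmodels,bishopclassSmodels}. These yield an entire function $g$ which realises the model~-- in the sense that $g$ agrees with it near infinity up to a quasiconformal change of coordinates which fixes $\infty$ and is conformal in a neighbourhood of infinity~-- and which, in addition, has exactly two critical values, no finite asymptotic values, and all critical points of degree at most $16$ (the explicit bound provided by Bishop's construction). Because the change of coordinates is conformal near infinity and fixes $\infty$, and because our entire analysis of Julia continua takes place near infinity, $g$ has ``the same'' Julia continua as the model, so every topological and dynamical statement about the model transfers to $g$; likewise $g$ inherits the {\anguine}, in particular bounded-slope, geometry of the model. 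The function $g$ need not itself be of disjoint type, but $f_\lambda\defeq\lambda g$ is of disjoint type for $\lambda$ sufficiently small by \cite{boettcher}. Post-composition with the linear map $w\mapsto\lambda w$ changes neither the critical points of $g$ nor their local degrees, nor the sets of critical and asymptotic values (other than transporting them by $w\mapsto\lambda w$), nor the number of connected components of $f_\lambda^{-1}(\{|w|>R\})$ for $R$ large; and it merely translates the logarithmic model, preserving its combinatorics and its geometry. Hence $f_\lambda$ is a disjoint-type entire function with exactly two critical values, no finite asymptotic values, all critical points of degree at most $16$, the prescribed number of tracts over infinity, and all the dynamical properties claimed for the example in question.

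The step I expect to be the main obstacle is the interface between our models and the hypotheses of Bishop's theorems: the class-$\classS$ realisation applies to models whose tracts have suitably controlled geometry, so each construction must be arranged so that its tract (or, for Theorem~\ref{thm:boundedexistence}, its two tracts) is of an admissible form, with all the required complexity absorbed into the parametrisation; verifying that this is always possible is where the real work lies. Closely related is the need to justify the dichotomy in the statement~-- that one tract suffices for the constructions behind Theorems~\ref{thm:mainarclike}, \ref{thm:pseudoarcs}, \ref{thm:nonuniform}, \ref{thm:periodicexistence} and \ref{thm:onepointuniform} and the two functions of Theorem~\ref{thm:nonescapingaccessible}, whereas two tracts are genuinely needed for the bounded-address continua of Theorem~\ref{thm:boundedexistence}. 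Everything else~-- the invariance of the singular-value data and of the tract count under the quasiconformal deformation and the rescaling, and the bound $16$, which is simply quoted from Bishop~-- is routine bookkeeping.
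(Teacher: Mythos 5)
Your overall road map is the right one: use Bishop's quasiconformal-folding construction from \cite{bishopfolding} (rather than the model-realisation result of \cite{bishopclassSmodels}, which would add extra tracts), and then rescale by a small $\lambda$ to reach disjoint type. However there is a genuine gap in the passage you yourself flag as the main obstacle, and it is more than bookkeeping.

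You write that Bishop's theorem yields an entire function $g$ agreeing with the model ``up to a quasiconformal change of coordinates which fixes $\infty$ and is conformal in a neighbourhood of infinity,'' and you deduce that $g$ has the same Julia continua as the model because the analysis takes place near infinity. This is not what \cite[Theorem~1.1]{bishopfolding} gives. The quasiconformal map $\phi$ there is only guaranteed to be conformal \emph{off} the set $G(r_0)$ of points close to edges of the tree, and $G(r_0)$ is an unbounded neighbourhood of the whole tree $G$; it is not contained in any bounded set, so $\phi$ is not conformal in a neighbourhood of $\infty$. Consequently $g$ and $\cosh\circ\tau$ are \emph{not} quasiconformally equivalent near infinity in the sense of \cite{boettcher}, and one cannot directly invoke the rigidity-near-infinity machinery. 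The paper explicitly points this out, and its fix is to ensure that the forward orbit of the relevant Julia continuum stays a bounded hyperbolic distance away from $\exp^{-1}(G(r_0))$; only then can the conjugacy principle of Section~\ref{sec:conjugacy} be applied to transfer that particular Julia continuum to $g$. Arranging this is delicate~-- for the constructions of Theorem~\ref{thm:mainarclike} it requires, for instance, making the opening sizes $\chi_k$ small and locally refining the subdivision of $\partial U_k^1$ so that the geodesic passing through the side channel does not hug the boundary too closely. Your proposal does not contain this idea, and without it the transfer of the Julia-continuum topology from the model to the entire function does not follow.

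There are two further points your argument glides over. First, to apply Bishop's theorem at all, the complement $\C\setminus\exp(\tilde T)$ must be an infinite tree, which forces the tract to fill out its full $2\pi$-strip; the tracts built in Sections~\ref{sec:arclikeexistence}--\ref{sec:periodicexistence} do not do this, so they must be modified (adding the left half-strip and the ``chambers'' $V_k$). Second, the bounded-geometry and edge-length conditions of Bishop's theorem constrain the \emph{shape} of the tract in a way that interacts non-trivially with the inductive construction; your claim that the topological conclusions ``depend only on'' disjoint type, tract count and anguine geometry, with the complexity ``carried entirely by the parametrisation,'' is misleading~-- the side-channel geometry of the tract is precisely what encodes the bonding maps $g_k$, and the real work is showing that this geometry can be simultaneously chosen to have bounded geometry, correct $\tau$-lengths, and safe distance from $G(r_0)$ along the orbit of the target continuum. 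These three points are where the paper's proof actually spends its effort.
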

\begin{remark}[Remark~1]
  On the other hand, if 
    $\mathcal{T}_R$ is connected for all $R$, then it turns out that every Julia continuum
    at a bounded address is homeomorphic to a periodic Julia continuum (Proposition \ref{prop:homeomorphic}). Hence
    it is indeed necessary to allow $\mathcal{T}_R$ to have two components in Theorem
    \ref{thm:boundedexistence}.
\end{remark}
\begin{remark}[Remark~2]
  The bound of $16$ on the degree of critical points can be reduced to $4$ by modifying Bishop's construction; compare Remark~\ref{rmk:bishopbound}.
\end{remark}

 As pointed out in \cite{hyperbolicboundedfatou}, this leads to an interesting observation. By Theorem \ref{thm:Stracts}, the function $f$ from Theorem
    \ref{thm:pseudoarcs} can be constructed such that $\# S(f)=2$, such that $f$ has no asymptotic values and such that all
    critical points have degree at most $16$. Let $v_1$ and $v_2$ be the critical values of $f$, and let $c_1$ and $c_2$ be critical points of 
    $f$ over $v_1$ resp.\ $v_2$. Let $A\colon\C\to\C$ be the affine map with $A(v_1)=c_1$ and $A(v_2)=c_2$. Then the function
    $g \defeq  f\circ A$ has super-attracting fixed points at $v_1$ and $v_2$. By the results from \cite{boettcher} discussed earlier,
    the Julia set $J(g)$ contains uncountably many invariant subsets for each of which the one-point compactification is 
    a pseudo-arc.
    On the other hand, $J(g)$ is locally connected by \cite[Corollary 1.9]{hyperbolicboundedfatou}. Hence we see that, in contrast to the 
    polynomial case, local connectivity of Julia sets does not imply simple topological dynamics, even for hyperbolic functions. Compare \cite{roeschnewton} for
    a similar phenomenon in the case of (albeit non-hyperbolic) rational maps. 

\subsection*{Embeddings}
  Given an arc-like continuum $X$, there are usually different ways to embed $X$ in the Riemann sphere $\Ch$.
   That is, there might be continua $C_1,C_2\subset \Ch$ such that $X$ is homeomorphic to
   $C_1$ and $C_2$, but such that no homeomorphism $\Ch\to\Ch$ can map $C_1$ to $C_2$. (That is,
   $C_1$ and $C_2$ are not \emph{ambiently} homeomorphic.) Our construction in the proof of 
   Theorem \ref{thm:mainarclike} 
   is rather flexible, and we could indeed use it to
   construct different Julia continua that are homeomorphic but not \emph{ambiently} homeomorphic.
   In particular, as briefly mentioned already in the discussion of results concerning accessibility,
   it would be possible to construct a Julia continuum $\hat{C}$ that is
   homeomorphic to the $\sin(1/x)$-continuum, and such that the limiting arc is not accessible
   from the complement of $\hat{C}$. (See Figure \ref{fig:embedding}).
 
\begin{figure}
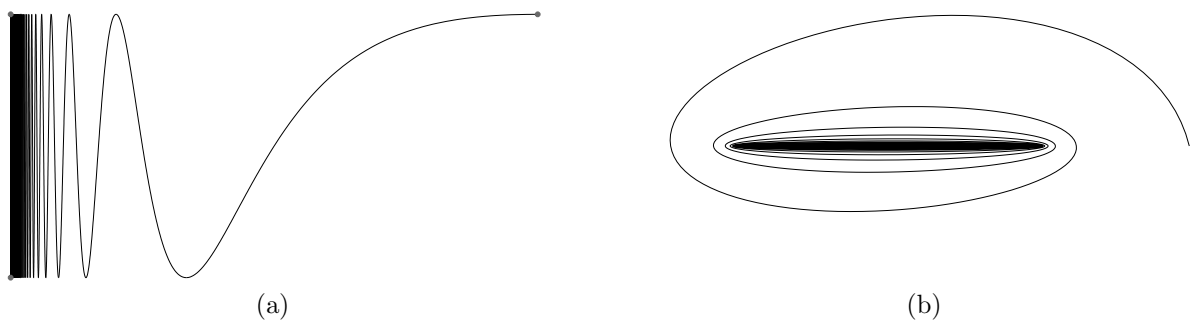
  
  \subfloat[]{\includegraphics[width=.45\textwidth]{sinecurve}}\hfill
  \subfloat[]{\includegraphics[width=.45\textwidth]{sinecurve_mod}}
 \caption{Two embeddings of the $\sin(1/x)$-continuum that are not
   ambiently homeomorphic%
    \label{fig:embedding}}
\end{figure}
  
  It is easy to see that, for a disjoint-type entire function which has bounded slope, 
   every Julia continuum can be 
   covered by a \emph{chain} with arbitrarily small links such that every link is a connected subset of
   the Riemann sphere. (For the definition of a chain, compare the remark after Proposition
   \ref{prop:arclikecharacterization}.) It is well-known 
   \cite[Example~3]{bingsnakelike} that
   there are embeddings of arc-like continua without this property.
   It is natural to ask whether  this is the only restriction on the continua that can arise by our
   construction, but we shall defer this and similar problems to future investigations.  

\subsection*{Structure of the memoir}
 In Section
  \ref{sec:preliminaries}, we collect background on the dynamics of 
   disjoint-type entire functions. In particular, we review the \emph{logarithmic change of 
   variable}, which is used throughout the remainder of the paper.  We also recall some basic facts
   from the theory of continua. Section~\ref{sec:conjugacy} discusses a general conjugacy principle between
   expanding inverse systems, which will be useful throughout. 
   Following these preliminaries, the memoir essentially splits into two parts, which can largely
   be read independently of each other:
   \begin{itemize}
    \item \emph{General topology of Julia continua.}
     We first study general properties of Julia continua of disjoint-type entire functions. More precisely,
      in Section \ref{sec:spanzero} we show that each such continuum has span zero, and prove the various
      results concerning terminal
      points stated earlier; in particular, we establish Theorem~\ref{thm:mainspanzero}. In Section \ref{sec:uniform}, we investigate the structure of {\escapingcomposant}s. 
      Section~\ref{sec:arcliketracts} studies conditions under which all Julia continua are arc-like, and establishes one half of 
      Theorem~\ref{thm:periodicexistence}. Finally, Section \ref{sec:homeomorphicsubsets} shows that, in certain circumstances,
      different Julia continua are homeomorphic to each other. These results show that our constructions in the
      second part of the paper are optimal in a certain sense, and are also used in the proof of
      Theorem~\ref{thm:pseudoarcs}.
    \item \emph{Constructing prescribed Julia continua.} 
    The second part of the paper is concerned with constructions that allow us to find entire functions having prescribed arc-like Julia continua, 
     as outlined in the theorems stated in the introduction and the present section. We review topological background
     on arc-like continua in Section~\ref{sec:topology} and, in Section~\ref{sec:arclikeexistence},
      give a detailed proof of a slightly weaker version of Theorem~\ref{thm:mainarclike} 
     (where the function $f$ is allowed to 
      depend on the
     arc-like continuum in question). Section~\ref{sec:onepointuniform} applies this general construction to obtain the
      examples from Theorems~\ref{thm:nonuniform}, \ref{thm:onepointuniform} and~\ref{thm:nonescapingaccessible},
      and the proof  of  Theorem~\ref{thm:mainarclike} is completed in Section~\ref{sec:allinone}. 
      Section~\ref{sec:boundedexistence} discusses the modification  
      of the construction from Section~\ref{sec:arclikeexistence} necessary to obtain
      bounded-address continua for Theorem~\ref{thm:boundedexistence}. 
      Section~\ref{sec:periodicexistence} contains the proofs of 
     Theorems~\ref{thm:periodicexistence} and
      \ref{thm:pseudoarcs}. Finally,   we briefly discuss how to modify the constructions to  obtain 
      Theorem~\ref{thm:Stracts} (Section~\ref{sec:classS}).
\end{itemize}

\subsection*{Basic notation}
 As usual, we denote the complex plane by $\C$ and the Riemann sphere by $\Ch$. We also denote
   the unit disc by $\D$ and the right half-plane by $\HH$. All boundaries and closures of plane
   sets will be
   understood to be taken in $\C$, unless explicitly noted otherwise.

  We shall also continue to use the notations introduced throughout Sections~\ref{sec:intro} and~\ref{sec:intro2}. 
   In particular, 
   the Fatou, Julia and escaping sets of an entire function are denoted by
   $F(f)$, $J(f)$ and $I(f)$, respectively. Euclidean distance is denoted by $\dist$.
   
   The symbols $\N$ and $\N_0$ denote the positive and non-negative integers, respectively. 

In order to keep the paper accessible to readers with
   a background in either continuum theory or transcendental dynamics, but not necessarily
   both,
   we aim to introduce all notions and results required from either area.
  For further background on transcendental iteration theory, we refer to \cite{waltermero}.  
   For a wealth of information on continuum theory,
   including the material treated here, we refer to \cite{continuumtheory}. In particular, a detailed
   treatment of arc-like continua can be found in
   \cite[Chapter~12]{continuumtheory}.

 We shall    assume that the reader is familiar with plane hyperbolic geometry; see e.g.\ \cite{beardonminda}. 
  If $U\subset\C$ is simply-connected, then we denote the density of the hyperbolic metric by 
   $\rho_U\colon U\to (0,\infty)$. In particular, we shall frequently use the \emph{standard estimate} 
   on the hyperbolic metric
   in a simply-connected domain \cite[Theorems~8.2~and~8.6]{beardonminda}:
\begin{equation}\label{eqn:standardestimate}
   \frac{1}{2\dist(z,\partial U)}\leq \rho_U(z) \leq \frac{2}{\dist(z,\partial U)}.
\end{equation}
  If $U$ is disjoint from its translates by integer multiples of $2\pi i$, then
    $\dist(z,\partial U)\leq \pi$ for all $z\in U$. Thus~\eqref{eqn:standardestimate} yields 
      what we shall call
     the \emph{standard bound for logarithmic tracts}:
  \begin{equation}\label{eqn:standardestimate2pi}
     \rho_U(z) \geq \frac{1}{2\pi}.
  \end{equation}

We also denote hyperbolic diameter in $U$ by $\diam_U$, and hyperbolic
   distance by $\dist_U$. Furthermore, the derivative of a holomorphic function $f$ with respect to the hyperbolic
   metric is denoted by $\|\Deriv f(z)\|_U$. (Note that this is defined whenever $z,f(z)\in U$.) 

\subsection*{Acknowledgements} I am  extremely  grateful  to  Mashael  Alhamd, Patrick Comd\"uhr,  Alexandre DeZotti, 
    Leticia Pardo Sim{\'o}n, Dave Sixsmith and Stephen Worsley for the careful reading of an early version of the manuscript and many 
    thoughtful comments, questions and suggestions for improvement. I would also 
    like to thank Anna Benini, Chris Bishop, Clinton Curry, 
   Toby Hall, Daniel Meyer, Phil Rippon and Gwyneth Stallard for interesting and stimulating
  discussions regarding the reported research.    
  This memoir is the culmination of a long-running research programme, which was begun and partially conducted while I 
   was working at the University of Liverpool, and gratefully acknowledge support by the University for this research
   during this time. I am indebted to the referee for many careful corrections that have led to
  considerable improvements in the exposition, particularly in the second half of the paper.



\section{Preliminaries}
  \label{sec:preliminaries}

\subsection*{Disjoint-type entire functions}
  Let $f\colon\C\to\C$ be a transcendental entire function. Recall that $f$ is said 
  to be of \emph{disjoint type} if
    it is hyperbolic with connected Fatou set. As far as we are aware,
    this class of functions was first studied by 
    Bara{\'n}ski and Karpi{\'n}ska~\cite{baranskikarpinskatrees}. 
    The material in this section is not new, 
    but is taken from different papers with
    sometimes conflicting notation. For this reason 
    we give a consistent account tailored to our purposes; we include (sketches of) proofs
    where these are relevant for the applications that follow. 
    
    The following (see
   \cite[Lemma~3.1]{baranskikarpinskatrees} or  \cite[Proposition~2.8]{helenaconjugacy})
   provides an alternative definition, which is
    the one that we shall work with.

 \begin{prop}[Characterisation of disjoint-type functions]\label{prop:disjointtype}
   A transcendental entire function $f\colon\C\to\C$ is of disjoint type if and only if there exists a  
    bounded Jordan domain $D$ with $S(f)\subset D$ and $f(\overline{D})\subset D$. 
 \end{prop}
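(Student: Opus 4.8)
The plan is to prove both implications by analysing the dynamics of $f$ near its attracting cycle and near infinity. For the "only if" direction, assume $f$ is of disjoint type. Since $f$ is hyperbolic, the singular set $S(f)$ is bounded and every singular value is attracted to some attracting cycle; since $F(f)$ is connected, all these cycles must lie in a single Fatou component, which is then necessarily the immediate basin of an attracting fixed point. (Indeed, if there were two distinct cycles, or a cycle of period greater than one, their immediate basins would be distinct Fatou components, contradicting connectivity of $F(f)$; a short argument using the classification of periodic Fatou components rules out the other possibilities.) Let $a$ be this fixed point, with multiplier $|f'(a)|<1$, and let $\Omega = F(f)$ be its (completely invariant, simply connected) basin. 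The key point is now that $\overline{S(f)}$ is a compact subset of the open connected set $\Omega$. First I would choose a bounded open set $U$ with $\overline{S(f)} \subset U \subset \overline{U} \subset \Omega$; since $\Omega$ is the basin, a large iterate $f^N$ maps $\overline{U}$ into a small hyperbolic neighbourhood of $a$, and in particular into $U$ itself after passing to a suitable forward iterate. Replacing $f$ by an iterate is not permitted (we need $D$ for $f$), so instead I would argue directly: since $S(f)$ is forward invariant up to attraction, and $\Omega$ is connected and simply connected, one can enlarge $\overline{S(f)}$ within $\Omega$ to a compact connected set $K$ that is forward invariant, $f(K)\subset K$, and still contained in $\Omega$ — for instance, take $K$ to be the union of $\overline{S(f)}$ with forward orbit segments connecting each point to $a$, then its closure. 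Then one takes $D$ to be a bounded Jordan domain with $K \subset D \subset \overline{D} \subset \Omega$; the hard part is ensuring $f(\overline D)\subset D$, which I expect to be the main obstacle.

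To overcome that obstacle, the right tool is the hyperbolic metric on $\Omega$. On the basin $\Omega$ of the attracting fixed point $a$, the map $f\colon\Omega\to\Omega$ is a holomorphic self-map that is not an automorphism (it is a transcendental function, hence infinite-to-one), so by the Schwarz–Pick lemma it is a strict contraction of the hyperbolic metric $\rho_\Omega$; moreover, on any hyperbolically bounded subset it contracts uniformly, $\|\Deriv f\|_\Omega \le c < 1$. Fix a large hyperbolic ball $B = B_{\rho_\Omega}(a, r)$ containing the compact set $K$. Then $f(\overline B)$ is contained in $B_{\rho_\Omega}(a, cr) \Subset B$, since $f(a)=a$ and $f$ contracts hyperbolic distances to $a$ by the factor $c$. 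Now I would take $D$ to be a bounded Jordan domain sandwiched between $B_{\rho_\Omega}(a,cr)$ and $B$, i.e. $\overline{B_{\rho_\Omega}(a,cr)} \subset D \subset \overline D \subset B$, chosen with smooth Jordan boundary (possible since hyperbolic balls in a simply connected domain are themselves Jordan domains, or one can smooth their boundaries). Then $f(\overline D) \subset f(\overline B) \subset B_{\rho_\Omega}(a,cr) \subset D$, and $S(f)\subset K \subset B_{\rho_\Omega}(a,cr)\subset D$, giving the required domain. The boundedness of $D$ in the Euclidean sense is automatic because $B$ is a bounded subset of $\C$ (it omits the whole Julia set, which is unbounded, so in particular $\overline B\subset\C$ is compact).

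For the "if" direction, suppose such a bounded Jordan domain $D$ exists with $S(f)\subset D$ and $f(\overline D)\subset D$. Then $S(f)$ is bounded by hypothesis. One checks that $D\subset F(f)$: since $f(\overline D)\subset D$, the domain $D$ is forward invariant, and the iterates $f^n|_D$ omit the set $\C\setminus f(\overline D)$, which contains a neighbourhood of $\partial D$ hence has nonempty interior — by Montel's theorem the family $\{f^n|_D\}$ is normal, so $D\subset F(f)$. Moreover $\overline D$, being a compact forward-invariant set, carries a dynamics that, by the Denjoy–Wolff-type argument on the hyperbolic domain $D$ (note $f(\overline D)\subset D$ forces $f|_D$ to contract $\rho_D$ strictly), converges to a unique attracting fixed point $a\in D$; every singular value lies in $D$ and is thus attracted to $a$, so $f$ is hyperbolic. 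Finally, since $S(f)\subset D \subset F(f)$ and $\C\setminus f^{-1}(S(f))$ covers $\C\setminus S(f)$, a standard monodromy argument (as in \cite[Lemma~3.1]{baranskikarpinskatrees}) shows that every Fatou component is eventually mapped into $D$ and hence into the basin of $a$, so $F(f)$ is connected (it is precisely that basin). Thus $f$ is of disjoint type. The only genuinely delicate point across both directions is producing the invariant Jordan domain in the forward direction; everything else is a routine application of normal families, the Schwarz–Pick lemma, and the classification of Fatou components.
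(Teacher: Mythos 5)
The paper does not prove this proposition; it cites it from \cite[Lemma~3.1]{baranskikarpinskatrees} and \cite[Proposition~2.8]{helenaconjugacy}, so there is no internal argument to compare yours against. Evaluated on its own, your proposal is essentially correct in outline, but one step is a genuine non sequitur and several justifications need repair.

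In the ``only if'' direction your hyperbolic-metric construction works, but three justifications are off. First, $\overline B$ is not compact in $\C$ ``because $B$ omits the unbounded Julia set'' --- omitting an unbounded set does not imply boundedness (the left half-plane omits the right half-plane). The correct reason is completeness of the hyperbolic metric of $\Omega$: by Hopf--Rinow, closed hyperbolic balls are compact in $\Omega$, hence compact (so bounded) in $\C$. Second, ``$f$ is transcendental, hence infinite-to-one'' does not by itself rule out $f\colon\Omega\to\Omega$ being a covering map, which is what Schwarz--Pick requires you to exclude; the clean reason is local, namely $\|\Deriv f(a)\|_{\Omega}=|f'(a)|<1$ at the attracting fixed point, so equality in Schwarz--Pick fails and $\|\Deriv f\|_{\Omega}<1$ everywhere, hence $\leq c<1$ on the compact set $\overline B$. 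Third, you write $S(f)\subset K\subset B_{\rho_\Omega}(a,cr)$, but you only placed $K$ inside $B=B_{\rho_\Omega}(a,r)$, not inside the smaller ball. The fix is to take $D=B_{\rho_\Omega}(a,r')$ for a suitable $r'$ with $cr<r'<r$ and $r'$ large enough that the compact set $\overline{S(f)}$, which is contained in $B_{\rho_\Omega}(a,r_0)$ for some $r_0<r$, lies in $D$. (The forward-invariant compact $K$ you build in the first paragraph plays no role in the final argument and can be dropped.)

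The real gap is at the end of the ``if'' direction. Having shown $D\subset F(f)$, hyperbolicity, and that all of $F(f)$ is the basin of the fixed point $a\in D$, you conclude ``so $F(f)$ is connected (it is precisely that basin)''. This does not follow: the full basin of an attracting fixed point is in general disconnected, so identifying $F(f)$ with the basin gives you nothing about connectivity. What the cited monodromy argument actually delivers, and what your sketch misses, is that $F(f)=\bigcup_{n\geq 0}f^{-n}(D)$ is an increasing union (by forward invariance of $D$) of \emph{connected} open sets, the substantive claim being that $f^{-1}(D)$ is connected whenever $D$ is a simply connected domain with $S(f)\subset D$ and $f(\overline D)\subset D$, and likewise inductively for $f^{-n}(D)$. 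This is the nontrivial content of the direction, and it is precisely the step your proposal replaces with a statement that sounds plausible but does not imply the conclusion.
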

  
  Let $f$ be of disjoint type, and consider the domain $W \defeq  \C\setminus \overline{D}$, with $D$ as in 
    Proposition~\ref{prop:disjointtype}. 
   Since $S(f)\cap W=\emptyset$, if $V$ is any connected component of $\V \defeq  f^{-1}(W)$, then
    $f\colon V\to W$ is a covering map. These components are called the \emph{tracts} of $f$ (over $\infty$).  Note that the disjoint-type condition implies that the boundaries $\partial W$ and
    $\partial\V$ are disjoint; this is the reason
   for our choice of terminology.

    Since $f$ is transcendental, it follows from the classification of covering maps of the punctured
    disc
   \cite[Theorem~5.10]{forsterriemannsurfaces} that every tract $V$ is simply-connected and unbounded, 
   and that $f\colon V\to W$ is 
   a universal covering map. Observe that a slightly smaller domain 
    $\widetilde{D}$ (with closure contained in $D$) still satisfies the conclusions of Proposition~\ref{prop:disjointtype}.
   Applying the above observations to $\widetilde{D}$ and the resulting collection of tracts  $\widetilde{\V}$,
   we can deduce that the boundary of any connected component $V$ of $\V$  
    is the preimage of the simple closed  curve 
    $\partial D$ under a universal covering map.  Hence $V$ is an
    \emph{unbounded Jordan domain}, 
    i.e.\ a simply-connected domain whose boundary in $\Ch$ is a simple closed curve passing through $\infty$.
   Furthermore, different
    tracts have disjoint closures, as every component of the open  set $\widetilde{\V}$ contains the closure of 
     exactly one  tract $V$. 
    Furthermore, any compact set $K$ intersects at most finitely many tracts
     (since $\widetilde{\V}$ is an open cover of  $K\cap \overline{\V}$). 

 The following characterises the Julia set  of a disjoint-type entire function;
   for readers who are unfamiliar with transcendental dynamics, it may
    alternatively  serve as  the \emph{definition} of the Julia set in  this setting.
\begin{prop}[Julia sets]
    If $f$ is of disjoint type and $D$ is as in Proposition \ref{prop:disjointtype}, then
     \[J(f) = \{z\in\C \colon f^n(z)\notin \overline{D}\text{ for all $n\geq 0$}\}.\]
 \end{prop}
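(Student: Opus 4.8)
The plan is to prove the characterization $J(f) = \{z\in\C \colon f^n(z)\notin\overline D \text{ for all } n\geq 0\}$ by establishing the two inclusions separately, using the alternative definition of disjoint type from Proposition~\ref{prop:disjointtype} (a bounded Jordan domain $D$ with $S(f)\subset D$ and $f(\overline D)\subset D$) together with standard facts about Fatou and Julia sets. Write $K \defeq \{z\in\C\colon f^n(z)\in\overline D\text{ for some }n\geq 0\}$, so the claim is $J(f) = \C\setminus K$; equivalently $F(f)=K$.

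First I would show $K\subset F(f)$. Since $f(\overline D)\subset D$, the open set $D$ is forward invariant and $\overline D$ is a compact subset of $D$; hence $f^n(\overline D)$ shrinks into $D$ and the iterates $f^n$ are uniformly bounded on $\overline D$ (indeed on $D$), so by Montel's theorem the family $\{f^n\}$ is normal on the open set $D$, giving $D\subset F(f)$. Now if $z\in K$, there is some $m$ with $w \defeq f^m(z)\in\overline D\subset D$; since $D$ is open and contained in $F(f)$, and $F(f)$ is completely invariant (the Fatou set is backward invariant: $f^{-1}(F(f))=F(f)$), the point $z=$ a preimage of $w$ under $f^m$ also lies in $F(f)$. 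Thus $K\subset F(f)$.

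For the reverse inclusion $F(f)\subset K$ — equivalently $J(f)\supset \C\setminus K$ — I would argue that every point of $\C\setminus K$ lies in the Julia set. The key geometric input is that $\C\setminus K$ is a subset of $W \defeq \C\setminus\overline D$ that is backward invariant under the tract structure: if $z\notin K$ then $f(z)\notin K$ either, and in fact the whole forward orbit of $z$ avoids $\overline D$, so it stays in $W$ and hence passes only through the tracts $V$, on each of which $f\colon V\to W$ is a covering map onto $W\supset\overline{f(\overline D)}$. The standard way to conclude normality fails is to exploit expansion: because the tracts are disjoint from $\overline D$, the covering maps $f\colon V\to W$ are hyperbolically expanding (the standard estimate~\eqref{eqn:standardestimate}, or the fact that $W$ omits the nonempty compact set $\overline D$ so $\rho_W$ is bounded below on any relevant region while $\rho_V$ is comparably small far out in the tract), and hence on $\C\setminus K$ one has uniform expansion of the spherical derivative of $f^n$ along orbits; a family with derivatives blowing up on every neighbourhood of a point cannot be normal there, so $\C\setminus K\subset J(f)$. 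Alternatively, and perhaps more cleanly, I would invoke that $J(f)$ is the complement of the set of points attracted to the attracting cycles together with (in the disjoint-type hyperbolic case) the fact that the unique Fatou component is the immediate basin of the attracting fixed point inside $D$, which is exactly $K$ by complete invariance; this uses only that $f$ is disjoint type, that $F(f)$ is connected and equals a single attracting basin, and that $D$ is an absorbing set for that basin.

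The main obstacle is the inclusion $F(f)\subset K$: showing that no point outside $K$ can lie in the Fatou set. The cleanest route is via hyperbolic expansion in the tracts, so the technical heart is to verify that orbits staying in $\bigcup V$ experience genuine (uniform, or at least unbounded) expansion — this is where~\eqref{eqn:standardestimate} and~\eqref{eqn:standardestimate2pi} and the disjointness $\partial W\cap\partial\V=\emptyset$ enter. One must be slightly careful that $W$ is not simply connected (it is $\C\setminus\overline D$, an annulus-like domain punctured at one disk but including $\infty$), so one either passes to a simply connected subdomain or uses that $f\colon V\to W$ lifts to a universal cover as noted in the preliminaries; once expansion is in hand, non-normality at every point of $\C\setminus K$ is immediate, completing the proof.
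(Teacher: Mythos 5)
Your proposal actually contains two separate arguments for the harder inclusion $F(f)\subset K$, and only the one you flag as the ``alternative'' is sound as stated. That alternative~--- $F(f)$ is a single component which is the immediate basin of attraction of an attracting fixed point $p\in D$, and $D$ is an absorbing neighbourhood for that basin, so $K=F(f)$ by complete invariance~--- is precisely the paper's proof, essentially verbatim. Your first inclusion $K\subset F(f)$ via Montel and backward invariance is fine and simply fills in what the paper states in one line.

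The expansion route that you present as your main approach has a real gap. The standard estimate~\eqref{eqn:standardestimate} gives expansion of the \emph{hyperbolic} metric of $W=\C\setminus\overline D$, but the density $\rho_W(z)\to 0$ as $z\to\infty$, so $\rho_W$ is \emph{not} bounded below on the parts of the tracts that matter, and hyperbolic expansion does not translate into a lower bound on the spherical derivative. Concretely, for an orbit escaping to infinity, nearby orbits separate in the hyperbolic metric of $W$ but can stay spherically close, so Marty's criterion gives you nothing directly; you cannot conclude non-normality from hyperbolic expansion alone. This is exactly why the paper's Proposition~\ref{prop:expansion} and Lemma~\ref{lem:separationoforbits} are phrased in logarithmic coordinates, where the $2\pi i$-periodicity yields the genuine lower bound~\eqref{eqn:standardestimate2pi} on the hyperbolic density and one can then convert hyperbolic to Euclidean separation via Observation~\ref{obs:pointstotheleft}. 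To salvage your expansion argument you would need to either pass to logarithmic coordinates in this way, or separately invoke $I(f)\subset J(f)$ for $f\in\B$ (the Eremenko--Lyubich result the paper cites in the introduction) to handle escaping orbits and use expansion only for the non-escaping ones. All of this is considerably heavier than what is needed; your own ``alternatively'' paragraph, which is what the paper does, is both simpler and complete.
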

\begin{proof}
  Since $f$ is hyperbolic and of disjoint type, $F(f)$ 
   is the immediate basin of attraction of a fixed point $p$.
   Since $f(\overline{D})\subset D$, we have $\overline{D}\subset F(f)$ 
   and $p\in D$. Thus a point $z$ belongs to $F(f)$ 
   if and only if its orbit eventually enters 
    $\overline{D}$, as claimed.
\end{proof}

\subsection*{The logarithmic change of variable}  
  Following Eremenko and Lyubich, we study $f$ using the
   \emph{logarithmic change of variable} \cite[Section~2]{alexmisha}. To this end, let us assume for simplicity that $0\in D$, which
   can always be achieved by
   conjugating $f$ with a translation.  Set
   $H \defeq  \exp^{-1}(W)$ and $\T \defeq  \exp^{-1}(\V)$. Since $0\notin W\supset \V$, and 
   since $f$ is a universal covering on each tract, there is a holomorphic function
   $F\colon \T\to H$ such that $f\circ\exp = \exp\circ F$. We may choose this map $F$ to be $2\pi i$-periodic,
   in which case we refer to it as a \emph{logarithmic transform} of $f$. 

  This representation is extremely convenient: for every component $T$ of $\T$, the map $F\colon T\to H$ is a conformal isomorphism,
   rather than a universal covering map as in the original coordinates. This 
   makes it much easier to consider inverse branches. From now on, we shall always study the 
  logarithmic transform of $f$. In fact,
   it is rather
   irrelevant for further considerations 
   that the map $F$ has arisen from a globally defined entire function. This 
   leads us to work in the following greater generality from now on.
  \begin{defn}[The classes $\Blog$ and $\BlogP$ \cite{boettcher,strahlen}]\label{defn:Blog}
   The class $\Blog$ consists of all holomorphic functions 
    \[ F\colon \T\to H, \]
   where $F$, $\T$ and $H$ have the following properties:
   \begin{enumerate}[(a)]
    \item $H$ is a $2\pi i$-periodic unbounded Jordan domain that contains
     a right half-plane.   \label{item:H}
    \item $\T\neq\emptyset$ 
     is $2\pi i$-periodic and $\re z$ is bounded from below in
     $\T$, but unbounded from above.
    \item Every connected component $T$ of $\T$ is an 
      unbounded Jordan domain that is disjoint
      from all its $2\pi i\Z$-translates. 
      For each such $T$, the restriction
      $F\colon T\to H$ is a conformal 
      isomorphism whose continuous extension to the closure
      of $T$ in $\Ch$ satisfies $F(\infty)=\infty$, and whose
      inverse we denote by $F_T^{-1} \defeq  (F|_T)^{-1}$. 
      Each such component $T$ is called a \emph{tract of $F$}.\label{item:tracts}%
    \item The tracts of $F$ have pairwise disjoint closures and
      accumulate only at $\infty$; i.e.,
      if $z_n\in\T$ is a sequence of points all belonging to different
      tracts of $F$, then $z_n\to\infty$.\label{item:accumulatingatinfty}%
   \end{enumerate}

    If furthermore $F$ is $2\pi i$-periodic, then we say that $F$ belongs to the class
    $\BlogP$. If $\cl{\T}\subset H$, then we say that $F$ is of \emph{disjoint type}. 
   If $F\in\Blog$ is of disjoint type,
   then the \emph{Julia set} and \emph{escaping set} of $F$ are defined by
     \begin{align*}
       J(F) \defeq \ &\{z\in H\colon F^n(z)  \text{ is defined for all $n\geq 0$} \} \\
             =\ &\bigcap_{n\geq 0} F^{-n}(\overline{\T}) \qquad\text{and}\\
       I(F) \defeq\  &\{z\in J(F)\colon \re F^n(z)\to\infty \text{ as $n\to\infty$}\}. \end{align*} 
  \end{defn}
\begin{remark}[Remark 1]
  If $F\in\Blog$ has disjoint type, then, by conjugation with an isomorphism $H\to\HH$ that commutes with translation by $2\pi i$, 
   we obtain a disjoint-type function $G\in\Blog$ that is conformally conjugate to $F$ and whose range is the right half-plane $\HH$. 
   It is not difficult to see that all geometric properties discussed in this paper, such as bounded 
  slope, are invariant under this transformation. Hence
   we could always assume that $H=\HH$ in the following; this is the approach taken
   by Bara\'nski and Karpi\'nska \cite[Section~3]{baranskikarpinskatrees}. 
   However, we prefer to work directly with the above more general class, which retains a more direct
   connection to the original entire functions.
\end{remark}
\begin{remark}[Remark 2]
 We are mainly interested in the class $\BlogP$, since this contains the logarithmic transforms of class $\B$ entire functions. However, several of our results
  hold more generally without the periodicity assumption, and we shall state them as such. If $F\in\BlogP$, then $J(F)$ is $2\pi i$-periodic, which need not be
  the case for general $F\in\Blog$. 
\end{remark}
\begin{remark}[Remark 3]
  Observe that the
    final condition \ref{item:accumulatingatinfty} can be equivalently rephrased as follows:
    \begin{enumerate}
      \item[(d')] The tracts of $F$ have pairwise disjoint closures. Furthermore, for
        every $R>0$, there are only finitely
        many tracts of $F$, up to translations by integer multiples of $2\pi i$, that intersect the 
       vertical line at real part $R$. 
    \end{enumerate}
\end{remark}
  In view of the preceding remark, we record the following observation, which will be used repeatedly.
 \begin{obs}[Points to the left of a given line]\label{obs:pointstotheleft}
   Let $F\colon \T\to H$ in $\Blog$ be of disjoint type. Then for every
    $R>0$, there exists a constant $\Delta>0$ 
     with the following property.

   Let $T$ be a tract of $F$, let $T_{> R}$ be the unbounded connected component of
      the set $\{z\in T\colon \re z > R\}$, and write $T_{\leq R} \defeq  T\setminus T_{> R}\supset 
                   \{ z\in T\colon \re z \leq R\}$. Then $T_{\leq R}$ has 
    diameter at most $\Delta$, both in the Euclidean metric and
     in the hyperbolic metric of $H$. 
    
  Furthermore, for every $R>0$ there is $M>0$ such that, for every $z\in H$:
    \[ \diam_H(\{w\in\T\colon |z-w|\leq R\}) \leq M. \]
 \end{obs}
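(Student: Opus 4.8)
The key structural fact is condition~(d') from Remark~2: for every $R>0$ there are only finitely many tracts, up to $2\pi i\Z$-translation, meeting the vertical line $\{\re z = R\}$. The plan is to exploit this together with the fact that each tract is an unbounded Jordan domain and that $F\in\Blog$ of disjoint type has $\overline{\T}\subset H$.

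First I would establish the statement about $T_{\le R}$. Fix $R>0$. Given a tract $T$, note that $\{z\in T\colon \re z\le R\}$ is contained in $T_{\le R}$, and that $T_{>R}$ is by definition the unique \emph{unbounded} component of $\{z\in T\colon \re z>R\}$; hence $T_{\le R}$ is a bounded set (it is disjoint from a neighbourhood of $\infty$ in $\overline T$). The issue is to bound its diameter \emph{uniformly} over all tracts. For this, enlarge $R$ slightly to $R' \defeq R+1$: there are only finitely many tracts $T_1,\dots,T_k$, up to translation by $2\pi i\Z$, that meet $\{\re z = R'\}$. Any tract not meeting $\{\re z = R'\}$ is, by connectedness and unboundedness, contained entirely in $\{\re z > R'\}$ (it cannot lie in $\{\re z<R'\}$, which is bounded to the right in $\T$ only from below — wait, $\re z$ is unbounded above on each tract, so such a tract lies in $\{\re z\ge R'\}$), and for such a tract $T_{\le R}=\emptyset$. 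For each of the finitely many exceptional tracts $T_j$, the set $(T_j)_{\le R}$ is a fixed bounded set with some finite Euclidean diameter $\Delta_j$ and some finite hyperbolic diameter in $H$ (finite because $\overline{(T_j)_{\le R}}$ is a compact subset of the open set $H$); translating by $2\pi i\Z$ preserves both diameters since $H$ is $2\pi i$-periodic. Taking $\Delta$ to be the maximum of the finitely many $\Delta_j$ and the corresponding hyperbolic diameters gives the claim. One technical point to check carefully: that $(T_j)_{\le R}$ really is bounded, i.e. that $\{\re z>R\}\cap T_j$ has exactly one unbounded component. This follows because $T_j$ is an unbounded Jordan domain with $F(\infty)=\infty$, so near $\infty$ the tract looks like a single "channel"; more carefully, $\partial T_j$ in $\Ch$ is a Jordan curve through $\infty$, and removing the two points where $\{\re z = R\}$ first meets this curve near $\infty$ leaves $T_j\cap\{\re z>R\}$ with a distinguished unbounded piece. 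I would phrase this via the covering-map structure $F\colon T_j\to H$ and the fact that $\{w\in H\colon \re w>R\}$ has one unbounded component.

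For the second assertion — a uniform bound $M$ on $\diam_H\{w\in\T\colon |z-w|\le R\}$ over all $z\in H$ — the point is again that the Euclidean $R$-ball around any $z$ meets $\overline{\T}$ in a set that is, up to translation, one of finitely many "shapes". If $|z-w|\le R$ forces $w$ into $\overline\T$, and $\overline\T\subset H$, then the set in question is contained in $\overline{B(z,R)}\cap\overline\T$. By periodicity we may assume $z$ lies in a fixed fundamental strip for the $2\pi i\Z$-action, say $\{0\le\Ima z<2\pi\}$; but $\re z$ can still be arbitrarily large, so I cannot immediately reduce to a compact set of $z$'s. Here I would use condition~(b): $\re z$ is bounded below on $\T$, say by $-c$. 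If $\re z\le R+c+2\pi$ (bounded), then $z$ ranges over a compact set modulo translation, $\overline{B(z,R)}\cap\overline\T$ is contained in a fixed compact subset of $H$, and $\diam_H$ is uniformly bounded. If $\re z > R+c+2\pi$, then $B(z,R)$ lies in $\{\re w > c+2\pi\} \subset\{\re w > c\}$, well inside the region where tracts are "deep"; here the set $\overline{B(z,R)}\cap\overline\T$ is still a subset of $H$ of Euclidean diameter $\le 2R$, and I bound $\diam_H$ by $2R\cdot\sup\{\rho_H(w)\colon w\in \overline{B(z,R)}\cap\overline\T\}$, using that $\dist(w,\partial H)$ is bounded below on $\overline\T$ (since $\overline\T\subset H$ and, with a little care via periodicity and the half-plane containment from~(a), $\dist(\overline\T,\partial H)>0$ — this uses that $H$ contains a right half-plane and $\re$ is bounded below on $\T$, so one reduces to a compact strip). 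Combining the two cases gives the uniform $M$.

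The main obstacle I anticipate is the uniformity over tracts in the first part — specifically, verifying that $T_{\le R}$ is bounded and has uniformly controlled diameter without assuming anything about the size or shape of individual tracts. Everything hinges on (d'): only finitely many tract-shapes meet any given vertical line, so "most" tracts contribute nothing to $T_{\le R}$, and the finitely many that do are handled one at a time and then spread out by periodicity. The hyperbolic-diameter bound in $H$ then comes for free from the Euclidean bound once one knows the relevant sets stay a definite distance away from $\partial H$, which is where the disjoint-type hypothesis $\overline\T\subset H$ and the right-half-plane containment in~\ref{item:H} do the work.
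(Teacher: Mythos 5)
Your argument follows essentially the same route as the paper's. For the first claim, you invoke condition~(d') to reduce to finitely many tracts modulo $2\pi i\Z$-translation, observe that translation preserves both the Euclidean and the $H$-hyperbolic diameter, and handle the finitely many exceptional tracts one at a time; this is precisely the paper's proof, and your extra care about the boundedness of $T_{\le R}$ (via the Jordan-curve structure of $\partial T\cup\{\infty\}$ and the single end of $T$ at $\infty$) fills in a point the paper treats as immediate. For the second claim, the paper merely notes that $\dist(\cdot,\partial H)$ is uniformly bounded away from zero on $\overline{\T}$ (by periodicity, compactness, and the half-plane containment), hence $\rho_H$ is bounded above there, and declares the claim to follow; your case split on $\re z$ is the natural way to make this explicit.

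There is, however, a small flaw in your second case. First, the threshold $R+c+2\pi$ (with $-c$ a lower bound for $\re$ on $\T$) is not the relevant cut-off: what you actually need is $\re z > R + R_1 + 1$, where $\{\re w>R_1\}\subset H$, so that the \emph{whole} closed ball $\overline{B}(z,R)$ lies in $H$ with $\dist(\cdot,\partial H)\geq 1$ there. Second, the estimate $\diam_H(A)\leq 2R\cdot\sup_A\rho_H$ with $A=\overline{B}(z,R)\cap\overline{\T}$ is not valid as stated: to bound $\dist_H(w_1,w_2)$ you must integrate $\rho_H$ along some path in $H$ joining $w_1$ to $w_2$, and the straight segment need not stay in $A$ (which is typically disconnected); a supremum of $\rho_H$ over $A$ alone does not control the integral. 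Once the threshold is corrected, the entire ball lies in $H$ with $\rho_H\le 2$, so integrating along the straight segment (which does stay in the ball) gives $\diam_H\le 4R$. Alternatively, you can simply run your Case~1 compactness argument for all $z$ with $\re z\le R+R_1+1$. Both fixes are routine, but as written the Case~2 estimate does not quite close.
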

\begin{proof} 
   Let us fix $R>0$.   
    For a fixed tract $T$, the set $T_{\leq R}$ is compactly contained in $H$, and hence has  
    finite Euclidean and hyperbolic diameter. Clearly $\tilde{T}_{\leq R}$ has the same diameter 
    as $T_R$ if $\tilde{T}$ is a $2\pi i\Z$-translations of $T$. By property (d') above, up to translations from 
    $2\pi i\Z$, there are only finitely many tracts
    for which $T_{\leq R}\neq \emptyset$. This proves the first claim. 

  Furthermore,  since $\overline{\T}\subset H$,  and both  $\T$  and $H$ are $2\pi  i$-periodic,
   $\dist(z,\partial H)$ is uniformly bounded on $\T$. By the standard  estimate~\eqref{eqn:standardestimate}, 
    the density 
   $\rho_H(z)$ is bounded from above, and the  second claim follows. 
\end{proof}

  Any logarithmic transform $F$ of a disjoint-type entire function,
   as described above,  
   belongs to the class $\BlogP$ and has disjoint type. The following theorem, which 
   follows from recent ground-breaking work of 
   Bishop \cite{bishopclassBmodels,bishopclassSmodels} together with the
   results of \cite{boettcher}, 
   shows essentially that the converse also holds. 

  \begin{thm}[Realisation of disjoint-type models] \label{thm:realization}
    Let $G\in\BlogP$ be of disjoint type
     and let $g$ be defined by $g(\exp(z)) = \exp(G(z))$. Then there exists 
     a disjoint-type function $f\in \B$ such that $f|_{J(f)}$ is topologically (and, in fact,
     quasiconformally) conjugate
     to $g|_{\exp(J(G))}$. 

    Furthermore, there is a disjoint-type function $\tilde{f}\in\classS$ such that every connected
     component of
     $J(G)$ is homeomorphic to a connected component of $J(\tilde{f})$ 
     (but not necessarily vice-versa). The function $\tilde{f}$ may be chosen
     to have exactly two critical values, no asymptotic values, and no critical points of degree 
    greater than $12$.
  \end{thm}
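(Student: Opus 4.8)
The plan is to invoke Bishop's realisation machinery for models in the class $\Blog$ and then transport dynamical information via the results of \cite{boettcher}. First I would pass from $G\in\BlogP$ to the associated entire-function-level model $g$, noting that $g$ is a disjoint-type map on the relevant tracts but need not itself be entire; the point of Bishop's work \cite{bishopclassBmodels} is precisely that every such model is quasiconformally close to a genuine $f\in\B$. Concretely, one uses Bishop's ``folding'' construction to produce an entire function $f$ whose tracts over a neighbourhood of infinity agree (up to quasiconformal correction absorbed by the Measurable Riemann Mapping Theorem) with the tracts of the model $G$. Rescaling by a small $\lambda$ as in \eqref{eqn:rescalingdisjointtype} if necessary, we may take $f$ to be of disjoint type. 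The quasiconformal interpolation is supported away from infinity, so on the Julia set—which for disjoint-type maps lives in the tracts—the conjugacy between $f|_{J(f)}$ and $g|_{\exp(J(G))}$ is obtained: this is exactly the content of \cite[Theorem~1.1]{boettcher}, which says $f$ and its disjoint-type model share their dynamics near infinity, upgraded to a global statement on $J(f)$ because disjoint type forces all of $J(f)$ to sit near infinity under the logarithmic change of variable.

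For the second assertion, I would instead apply Bishop's class-$\classS$ realisation results \cite{bishopclassSmodels}, which yield an entire function $\tilde f$ with only finitely many singular values realising a prescribed tract structure. The subtlety is that the $\classS$-construction does not reproduce the model exactly: folding introduces additional tracts and critical points. However, each tract of the model is still represented among the tracts of $\tilde f$, so each connected component of $J(G)$ (which, by the logarithmic-transform description, is determined by an external address and the conformal geometry of a single sequence of tracts) embeds as a component of $J(\tilde f)$—though $\tilde f$ has extra components coming from the auxiliary tracts, hence the ``not necessarily vice versa''. The bounds—two critical values, no asymptotic values, critical points of degree at most $12$—come directly from the quantitative form of Bishop's $\classS$-folding; one simply cites the relevant estimates in \cite{bishopclassSmodels}, possibly after the degree-reduction modification alluded to in Remark~\ref{rmk:bishopbound}.

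The main obstacle is bookkeeping rather than conceptual: one must check carefully that the quasiconformal conjugacy furnished by Bishop's construction genuinely restricts to a conjugacy on the Julia set, i.e.\ that the correction map respects the dynamics where it matters. This is handled by the now-standard argument that the dilatation is supported in a region disjoint from the orbits of Julia-set points (after the disjoint-type rescaling), so that the straightening map commutes with the dynamics there; the details are exactly those of \cite{boettcher}, and I would simply reference them rather than reproduce them. The remaining work—verifying that ``homeomorphic component'' is the right conclusion in the $\classS$ case, since exact conjugacy may fail—follows from the observation that a Julia continuum depends only on the sequence of tracts along its external address together with their conformal moduli, data which the $\classS$-realisation preserves tract-by-tract.
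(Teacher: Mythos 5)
Your proposal is correct and follows essentially the same route as the paper, which simply cites Bishop's realisation theorems together with the rigidity results of \cite{boettcher}: the first assertion is \cite[Theorem~1.2]{bishopclassBmodels} (a consequence of the folding construction and \cite[Theorem~3.1]{boettcher}), and the second is \cite[Theorem~1.5]{bishopclassSmodels}. The one small inaccuracy is your pointer to Remark~\ref{rmk:bishopbound} for the degree bound of $12$ --- that bound is already in \cite{bishopclassSmodels} as stated; the remark concerns the separate improvement from $16$ to $4$ used in the single-tract constructions of Theorem~\ref{thm:Stracts}, not the generic bound here.
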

  \begin{proof}
   The first statement is 
     \cite[Theorem 1.2]{bishopclassBmodels}, which is a consequence of 
    Theorem 1.1 in the same paper and \cite[Theorem~3.1]{boettcher}. 
    The second statement is \cite[Theorem~1.5]{bishopclassSmodels}, which similarly follows from \cite[Theorem 1.2]{bishopclassSmodels} and 
    \cite{boettcher}.
  \end{proof}

 Hence, in order to construct the examples of disjoint-type entire functions described in Sections~\ref{sec:intro} 
    and~\ref{sec:intro2}, apart from the more precise statement in Theorem \ref{thm:Stracts}, 
   it will be 
   sufficient to construct functions in the class $\BlogP$ with the analogous properties. 
   (With some extra care, the construction of these examples 
   in the class $\B$ could also be carried out using the earlier approximation result in 
    \cite[Theorem~1.9]{approximationhypdim}, rather than the 
    results of \cite{bishopclassBmodels}.)  By nature of the construction, the approximation will also preserve certain geometric properties,
   such as the bounded-slope condition; compare Remark~\ref{rmk:preserved}.

The same examples can be constructed in the 
   class $\classS$ using the second half of the above theorem, but in this case the entire functions
   will generally acquire additional tracts when compared with the original $\BlogP$ models. (More precisely,
   they will have twice as many tracts as the original model.) 
   In order to construct functions in the class $\classS$ with
   the number of tracts stated in Theorem \ref{thm:Stracts}, we shall instead use Bishop's 
   more precise 
   methods in \cite{bishopfolding} in Section \ref{sec:classS}.

\subsection*{The combinatorics of Julia continua}
  Let $F\in\Blog$ be of disjoint type. The tracts of $F$ and their  iterated preimages provide successively finer partitions of 
    $J(F)$, leading to a natural notion of symbolic dynamics as follows.

\begin{defn}[External addresses and Julia continua]\label{defn:externaladdress}
  Let $F\in\Blog$ have disjoint type. An \emph{external address} of $F$ is
    a sequence $\s= T_0 T_1 T_2 \dots$ of tracts of $F$. 

  If $\s$ is such an external address, then we define
   \begin{align*} J_{\s}(F) \defeq\  &\{ z \in J(F)\colon F^n(z)\in T_n\text{ for all $n\geq 0$}\} \\
                   =\ &\{ z \in J(F)\colon F^n(z)\in \overline{T_n} \text{ for all $n\geq 0$}\} , \\
            \hat{J}_{\s}(F) \defeq\  &J_{\s}(F)\cup\{\infty\} \qquad\text{and}\\
         I_{\s}(F) \defeq\  &I(F)\cap J_{\s}(F). \end{align*}
  
  When $J_{\s}(F)$ is non-empty, we say that $\s$ is \emph{admissible} (for $F$). In this case, $\hat{J}_{\s}(F)$ is called a  
   \emph{Julia continuum} of $F$. An address $\s$ is called \emph{bounded} if it contains only finitely many different tracts, and 
   \emph{periodic} if there is $p\geq 1$ such that $T_j = T_{j+p}$ for all $j\geq 0$.

  For $n\geq m\geq 0$, we also define 
    \[ F^{m  \dots n}_{\s} \defeq  F|_{T_n} \circ F|_{T_{n-1}}\circ \dots \circ F|_{T_m}, \qquad F^n_{\s} \defeq  F^{0 \dots n-1}_{\s}\qquad\text{and}\qquad  F^{-n}_{\s}\defeq (F^n_{\s})^{-1}. \]
\end{defn}
\begin{remark}[Remark 1]
   We can write $\hat{J}_{\s}(F)$ as a nested intersection of continua: 
      \begin{equation}\label{eqn:Jsintersection}
         \hat{J}_{\s}(F) = \bigcap_{n=0}^{\infty} \bigl(F_{\s}^{-n}(\overline{T_n})\cup  \{\infty\} \bigr). \end{equation}
      Hence $\hat{J}_{\s}$ is indeed a continuum. Furthermore, if $\s^1\neq \s^2$, then for some $n\geq 0$, 
     the sets $J_{\s^1}(F)$ and $J_{\s^2}(F)$ belong to different connected components of
     the open set  $F^{-n}(\T)$ (where $\T$ again denotes the domain of $F$). Hence every connected component of $J(F)$ is contained in a single Julia continuum. 
\end{remark}
\begin{remark}[Remark 2]
   In \cite[Remark after Theorem~C']{baranskikarpinskatrees}, the question is raised whether each $J_{\s}(F)$ is connected.
  It follows from \cite[Corollary~3.6]{eremenkoproperty} that this is indeed the case; compare the remark after Definition
   \ref{defn:uniformescape}. 
    We reprove this fact in Theorem \ref{thm:infty}, by showing that $\infty$ is a terminal point of
    $\hat{J}_{\s}(F)$. Indeed,  
     a terminal point of a continuum $X$ cannot be a cut point of $X$ (this follows e.g.\ from the 
     \emph{Boundary Bumping Theorem}~\ref{thm:boundarybumping} below), so
     $J_{\s}(F) = \hat{J}_{\s}(F)\setminus\{\infty\}$ is connected.

  In particular, if $f$ is an entire function of disjoint type and $F\in\BlogP$ is a logarithmic transform of $f$, then every (arbitrary/bounded-address/periodic)
    Julia continuum of $f$,
    as defined in the introduction, is homeomorphic (via a branch of the logarithm) to a Julia continuum of $F$ at an (admissible/bounded/periodic) address, and vice versa. 
\end{remark}
\begin{remark}[Remark 3]
 Recall that
    $F|_T$ extends continuously to $\infty$ with $F(\infty)=\infty$, for all tracts $T$ of $F$. Hence, throughout the memoir, we shall use the 
    convention that $F(\infty)=\infty$. We emphasise that, while this extension is continuous on a given tract, and hence on a given Julia continuum,
    it is not a continuous extension of the global function $F$. When considering subsets of 
    $\overline{T}\cup\{\infty\}$, it will  also be convenient to use the convention that $\re \infty = \infty$.  
\end{remark}

\subsection*{Hyperbolic expansion}
  In order to study disjoint-type functions, we shall use the fact that they are
   \emph{expanding} on the Julia set, with respect to the hyperbolic metric on $H$. Recall that    
    $\|\Deriv F(z)\|_H$ denotes the hyperbolic derivative of $F$, measured in the
   hyperbolic metric of $H$, and that $\diam_H$ denotes hyperbolic diameter in
   $H$.

\begin{prop}[Expanding properties of $F$]\label{prop:expansion}
  Let $F\colon\T\to H$ be a disjoint-type function in $\Blog$. Then there is a constant 
     $\Lambda>1$ such that
     \[ \|\Deriv F(z)\|_{H} \geq \Lambda, \]
    for all $z\in\T$;
    furthermore $\|\Deriv F(z)\|_{H}\to\infty$ as $\re z\to\infty$. 
\end{prop}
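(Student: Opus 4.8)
The plan is to use the ``standard bound for logarithmic tracts'' \eqref{eqn:standardestimate2pi} together with the Schwarz--Pick lemma. Write $G \defeq F|_T \colon T \to H$ for a fixed tract $T$; this is a conformal isomorphism, so the derivative of $G^{-1} \colon H \to T$ with respect to the hyperbolic metrics of $H$ (on the source) and of $T$ (on the target) equals $1$ identically. Since $T \subset \T \subset H$ and, crucially, $\cl{\T} \subset H$ (disjoint type), the inclusion $\iota \colon T \hookrightarrow H$ strictly contracts the hyperbolic metric: by Schwarz--Pick, $\rho_H(z)/\rho_T(z) < 1$ for $z \in T$. Combining, for $z \in T$ and $w \defeq F(z) \in H$ we get
\[
  \|\Deriv F(z)\|_H = \frac{\rho_H(w)}{\rho_H(z)} = \frac{\rho_H(w)}{\rho_T(z)}\cdot\frac{\rho_T(z)}{\rho_H(z)} = \frac{\rho_T(z)}{\rho_H(z)} > 1,
\]
using $\rho_H(w) = \rho_T(z)$ from the conformal isomorphism $G$. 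So pointwise expansion is essentially automatic; the content is the \emph{uniform} lower bound $\Lambda > 1$ and the behaviour as $\re z \to \infty$.

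For the uniform bound, the idea is: $\rho_H(z) \leq \rho_{\H'}(z)$ where $\H'$ is a right half-plane contained in $H$ (property \ref{item:H}), and on $\H'$ the hyperbolic density is comparable to $1/\dist(z,\partial\H')$ by \eqref{eqn:standardestimate}; in particular $\rho_H(z)$ is \emph{bounded above} on $\T$ because $\dist(z,\partial H)$ is bounded below on $\cl{\T}$ (both $\T$ and $H$ are $2\pi i$-periodic with $\cl{\T}\subset H$, so $\inf_{z\in\T}\dist(z,\partial H) =: \delta_0 > 0$), giving $\rho_H(z) \leq 2/\delta_0$ — wait, I need $\rho_H$ bounded \emph{below} on $\T$ as well, and more importantly a lower bound on $\rho_T(z)$. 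For the latter: each tract $T$ is disjoint from all its $2\pi i\Z$-translates, so $\dist(z, \partial T) \leq \pi$ for all $z \in T$, and hence by \eqref{eqn:standardestimate2pi}, $\rho_T(z) \geq 1/(2\pi)$. Thus
\[
  \|\Deriv F(z)\|_H = \frac{\rho_T(z)}{\rho_H(z)} \geq \frac{1/(2\pi)}{2/\delta_0} = \frac{\delta_0}{4\pi} =: c.
\]
This gives a uniform positive lower bound; but I need it to exceed $1$, not merely to be positive. The fix is to not use the crude bound $\rho_H(z) \leq 2/\delta_0$ directly but to exploit the \emph{strict} Schwarz--Pick contraction quantitatively, or — more simply — to observe that the argument already shows $\|\Deriv F\|_H$ is bounded below by a constant on $\T$ and to rescale: by Remark~1 after Definition~\ref{defn:Blog} we may conjugate so $H = \H$; then, since $\cl\T \subset \H$ with $\T$ and $\H$ both $2\pi i$-periodic, $\T$ is contained in a half-plane $\{\re z > a\}$ for some $a > 0$, and on such a half-plane $\rho_\H(z) = 1/\Rea z \leq 1/a$ is small when $a$ is large. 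Choosing the conjugating isomorphism so that $a$ is large enough (equivalently, using that $\cl\T \subset \H$ gives us room to push $\T$ deep into $\H$) makes $\rho_\H(z)/\rho_T(z) \leq 1/(2\pi a)$ times a bounded factor, hence $\|\Deriv F\|_\H \geq 2\pi a \cdot (\text{const}) > 1$ for $a$ large.

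Finally, the assertion $\|\Deriv F(z)\|_H \to \infty$ as $\re z \to \infty$: from the computation $\|\Deriv F(z)\|_H = \rho_T(z)/\rho_H(z)$, and $\rho_H(z) \leq 2/\dist(z,\partial H)$ which stays bounded above, it suffices to show $\rho_T(z) \to \infty$ as $\re z \to \infty$ within $\T$. This is where I expect the main obstacle, and it is genuinely a geometric input rather than formal nonsense: a priori a tract $T$ could be very ``wide'' far out (e.g.\ contain large discs), which would make $\rho_T(z)$ small there. The resolution uses property \ref{item:accumulatingatinfty} / (d'): for each $R$ only finitely many tracts (mod $2\pi i\Z$) meet the line $\{\re z = R\}$, but that alone does not bound the width of a single tract. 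Instead I would argue via the image side: $F(z) \in H$ and $F \colon T \to H$ is conformal, so $\rho_T(z) = \rho_H(F(z)) \cdot |F'(z)|$... this is circular. The clean route: since $T$ is disjoint from its translates by $2\pi i k$, $T \cap (T + 2\pi i)=\emptyset$, so $\dist(z, \partial T) \le \pi$, giving only $\rho_T(z)\ge 1/(2\pi)$ — not enough. So the honest statement must use that $F$ itself, being a logarithmic transform, expands: I would instead prove $\|\Deriv F(z)\|_H \to \infty$ by applying the uniform bound $\Lambda$ to $F^2$ restricted to the part of $\T$ with $\re z$ large, combined with the fact (Observation~\ref{obs:pointstotheleft}) that points with $\re F(z)$ large have $\dist(F(z), \partial H)$ controlled — but really the slickest argument is: $\|\Deriv F(z)\|_H = \|\Deriv F(z)\|_{H\to H}$, and writing it as the composition of the isometry $F\colon T \to H$ (w.r.t.\ $\rho_T \to \rho_H$) followed by the contraction $\iota\colon H \hookrightarrow$ nothing — no. I will present the pointwise identity $\|\Deriv F(z)\|_H = \rho_T(z)/\rho_H(z)$, the uniform bound via the rescaling to $\H$ with $\T$ in a deep half-plane, and for the ``$\to\infty$'' part invoke that as $\re z\to\infty$ in $\T$, also $\re F(z)\to\infty$ (true since $F\colon T\to H$ is proper and $H$ contains a half-plane while $\cl\T\subset H$), hence $\rho_H(F(z))\le 1/\Rea F(z)\to 0$ while $\rho_H(z)\ge$ some fixed lower bound near $\partial H$ fails too... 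I concede the $\to\infty$ clause is the crux and will handle it by the standard Eremenko--Lyubich computation: using coordinates $H=\H$, one has $\|\Deriv F(z)\|_\H = \frac{\Rea z}{\Rea F(z)}|F'(z)|$, and since $F^{-1}\colon\H\to T\subset\{\re>a\}$ is a contraction of $\rho_\H$ to $\rho_T\ge \rho_\H|_{\{\re>a\}}$... the clean finish: $|\,(F_T^{-1})'(w)| \le \frac{\Rea(F_T^{-1}(w))}{\Rea w}\cdot\frac{\rho_\H(w)}{\rho_{\{\re>a\}}(F_T^{-1}(w))}$ — I will simply cite that $\rho_T(z) \ge \tfrac12 e^{\Rea z - c_0}$ type growth follows from Koebe applied to $F_T^{-1}$ on large discs in $\H$, since $F_T^{-1}$ omits the half-plane complement and is thus highly contracting far out, forcing $\rho_T$ to blow up. That Koebe/Carathéodory-style estimate on $F_T^{-1}$ is the one nontrivial ingredient, and it is exactly \cite[Lemma~1]{alexmisha}-style reasoning, which I would invoke.
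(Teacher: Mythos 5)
Your starting point is correct and matches the standard argument: writing $\|\Deriv F(z)\|_H = \rho_T(z)/\rho_H(z)$ via the conformal-isometry property of $F\colon T\to H$, and using the standard bound for logarithmic tracts~\eqref{eqn:standardestimate2pi} to get $\rho_T(z)\geq 1/(2\pi)$. But both of the remaining steps have genuine gaps.

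For the limit $\|\Deriv F(z)\|_H\to\infty$ as $\re z\to\infty$, you fixate on the wrong factor. You assert that $\rho_H(z)$ ``stays bounded above'' and that therefore one must show $\rho_T(z)\to\infty$ --- but the latter is false in general (a tract can be $\pi$ wide arbitrarily far out, so $\rho_T(z)$ need not blow up, and indeed you can only guarantee $\rho_T(z)\geq 1/(2\pi)$), and your subsequent appeals to Koebe distortion do not rescue this. The correct observation is that $\rho_H(z)\to 0$: by~\ref{item:H} the domain $H$ contains a right half-plane $\{\re z > R_0\}$, and since $\partial H\subset\{\re z \leq R_0\}$ we have $\dist(z,\partial H)\geq \re z - R_0\to\infty$ for $z\in\T$, whence $\rho_H(z)\leq 2/(\re z - R_0)\to 0$ by~\eqref{eqn:standardestimate}. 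Combined with $\rho_T(z)\geq 1/(2\pi)$ this gives the divergence at once --- no distortion estimates are needed.

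For the uniform bound $\Lambda>1$, your ``rescaling'' manoeuvre does not work: the hyperbolic derivative is a conformal invariant, so replacing $F$ by its conjugate under an isomorphism $H\to\HH$ (which necessarily moves $\T$ and $\partial H$ together) leaves $\|\Deriv F(z)\|_H$ unchanged. You cannot ``push $\T$ deep into $\HH$'' without moving $\HH$ along with it. The standard route is a compactness argument: the pointwise strict inequality $\rho_T(z)>\rho_H(z)$ (Schwarz--Pick for the strict inclusion $T\subsetneq H$) shows $\|\Deriv F(z)\|_H>1$ everywhere; by the limit just established there is $R$ with $\|\Deriv F(z)\|_H\geq 2$ when $\re z\geq R$; and the set $\{z\in\T\colon \re z\leq R\}$ is compact modulo translations by $2\pi i\Z$ (this uses condition~(d') of Definition~\ref{defn:Blog} and the fact that $\re z$ is bounded below on $\T$), so the $2\pi i$-periodic continuous function $\|\Deriv F\|_H$ attains a minimum $\Lambda_0>1$ on it. Take $\Lambda=\min(\Lambda_0,2)$. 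The paper itself simply cites \cite[Lemma~3.3]{baranskikarpinskatrees} and \cite[Lemma~2.1]{strahlen}, which carry out precisely this argument.
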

\begin{proof}
  This fact is well-known and follows from the standard estimate~\eqref{eqn:standardestimate} on the hyperbolic metric; see e.g.\ 
   \cite[Lemma~3.3]{baranskikarpinskatrees} or \cite[Lemma~2.1]{strahlen}.
\end{proof}

 This expanding property implies that two different 
   points within the same Julia continuum must eventually
    separate under iteration. (Compare \cite[Lemma~3.1]{strahlen} for a quantitative
    statement.) We state the following slightly more general fact, 
    which will be used frequently in Section \ref{sec:geometry}. 
   (Recall that $\dist_H$ denotes hyperbolic distance in $H$.) 

\begin{lem}[Separation of orbits]\label{lem:separationoforbits}
 Let $F\colon \T\to H$ be a disjoint-type function in $\Blog$, and let  $\s=T_0 T_1 T_2\dots $ be an admissible external address for $F$. Suppose that
    $A\subset H$ 
   is compact, and that 
    $B\subset H$ is closed with 
    $F^n(A\cup B)\cap\T\subset T_n$ and $F^n(A), F^n(B)\neq\emptyset$ for all $n\geq 0$.
 If furthermore 
      \begin{equation} \label{eqn:hypdistancebounded} \liminf_{n\to\infty}
          \dist_H(F^{n}(A) , F^{n}(B)) < \infty,
      \end{equation}
 then $A\cap B\cap J_{\s}(F) \neq \emptyset$. 

  In particular, if $z,w\in J_{\s}(F)$ are distinct points, then 
     $\lim_{n\to\infty}|F^n(z)-F^n(w)|=\infty$. 
\end{lem}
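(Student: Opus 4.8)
The plan is to exploit the hyperbolic expansion of $F$ (Proposition~\ref{prop:expansion}) together with the hypothesis~\eqref{eqn:hypdistancebounded} to produce a common limit point of $A$ and $B$ that lies in $J_{\s}(F)$. The strategy is to argue by contradiction, or more precisely to extract a convergent subsequence of orbit positions where $A$ and $B$ have not yet separated, and pull back.

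First I would observe that, by assumption, for each $n$ we have $F^n(A), F^n(B) \neq \emptyset$ and these images meet $\T$ only inside $T_n$, so $A, B$ lie in nested preimages $F_{\s}^{-n}(\overline{T_n})$; thus $A \cap J_{\s}(F)$ and $B \cap J_{\s}(F)$ are the intersections of $A$, resp.\ $B$, with the continuum $\hat J_{\s}(F)$ from~\eqref{eqn:Jsintersection}. The core of the argument: pick points $a_n \in A$, $b_n \in B$ with $F^n(a_n)$ and $F^n(b_n)$ realising (approximately) the liminf in~\eqref{eqn:hypdistancebounded}, so that along a subsequence $n_k$ we have $\dist_H(F^{n_k}(a_{n_k}), F^{n_k}(b_{n_k})) \leq M$ for some fixed $M$. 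Passing to a further subsequence using compactness of $A$ (and, if necessary, restricting to the part of $B$ that stays in a compact set — here I would need to be slightly careful, using Observation~\ref{obs:pointstotheleft} to control the geometry, since $B$ is only closed, not compact), I may assume $a_{n_k} \to a \in A$ and $b_{n_k} \to b \in \overline{B} = B$. The claim is then $a = b$ and $a \in J_{\s}(F)$.

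To see $a = b$: suppose not. For each fixed $m$, once $n_k \geq m$, the expansion estimate gives
\[ \dist_H\bigl(F^{n_k}(a_{n_k}), F^{n_k}(b_{n_k})\bigr) \;\geq\; \Lambda^{\,n_k - m}\, \dist_H\bigl(F^{m}(a_{n_k}), F^{m}(b_{n_k})\bigr), \]
using that $F_{\s}^{\,m\dots n_k-1}$ contracts the relevant hyperbolic distance by at least $\Lambda^{n_k-m}$ in the backward direction along the tracts $T_m,\dots,T_{n_k-1}$ (valid because all iterates stay in the tracts $T_j$). Letting $k \to \infty$ with $m$ fixed, the left side stays $\leq M$ while the right side would blow up unless $\dist_H(F^m(a), F^m(b)) = 0$, i.e.\ $F^m(a) = F^m(b)$. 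Since this holds for all $m$, and $a, b$ then have the same (defined-for-all-time) orbit, injectivity of $F|_{T_j}$ on each tract forces $a = b$; in particular $F^n(a)$ is defined for all $n$, so $a \in J_{\s}(F)$, whence $a = b \in A \cap B \cap J_{\s}(F)$. The ``in particular'' follows by contraposition: if $z, w \in J_{\s}(F)$ are distinct, apply the first part with $A = \{z\}$, $B = \{w\}$; since $A \cap B = \emptyset$, condition~\eqref{eqn:hypdistancebounded} must fail, i.e.\ $\dist_H(F^n(z), F^n(w)) \to \infty$, and then $|F^n(z) - F^n(w)| \to \infty$ follows from Observation~\ref{obs:pointstotheleft} (bounded Euclidean distance would give bounded hyperbolic distance).

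\textbf{Main obstacle.} The delicate point is the compactness/properness issue: $A$ is compact but $B$ is merely closed, and a priori the approximating points $b_{n_k}$ could escape to $\infty$. I expect the fix is that $\dist_H(F^{n_k}(a_{n_k}), F^{n_k}(b_{n_k})) \leq M$ together with $a_{n_k}$ staying in the compact set $A$ pins $F^{n_k}(b_{n_k})$ near $F^{n_k}(a_{n_k})$, and then pulling back through the tracts (where $F_{\s}^{-n}$ is a uniform contraction, again by Proposition~\ref{prop:expansion}) keeps $b_{n_k}$ in a bounded region; combined with $B$ closed this yields the needed convergent subsequence. Making this last compactness bookkeeping precise, using~\eqref{eqn:standardestimate} and Observation~\ref{obs:pointstotheleft} to pass between the hyperbolic and Euclidean metrics, is the one place requiring genuine care.
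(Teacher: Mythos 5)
Your proposal is correct and follows the same basic strategy as the paper's proof: realise the $\liminf$ along a subsequence $n_k$, pull back to the initial tract using the hyperbolic expansion constant $\Lambda$ from Proposition~\ref{prop:expansion}, and invoke compactness of $A$. The paper's argument is more economical, however. It sets $\alpha_k \defeq F^{-n_k}_{\s}(a_k)\in A$, $\beta_k \defeq F^{-n_k}_{\s}(b_k)\in B$, notes $\dist_H(\alpha_k,\beta_k)\le \Lambda^{-n_k}\delta\to 0$, extracts a convergent subsequence $\alpha_k\to\alpha$ using compactness of $A$, and then observes that $\beta_k$ converges to the same $\alpha$ (so $\alpha\in B$ by closedness) and that $\alpha\in J_{\s}(F)$ by~\eqref{eqn:Jsintersection}. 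Your ``fixed $m$, let $k\to\infty$'' computation and the appeal to injectivity of $F|_{T_j}$ are redundant detours: once you know $\dist_H(a_{n_k},b_{n_k})\to 0$ and $a_{n_k}\to a$, you already have $b_{n_k}\to a$, hence $a=b$ and $a\in B$; there is no need to deduce $F^m(a)=F^m(b)$ for every $m$ first. You should also place the observation $a\in J_{\s}(F)$ \emph{before} you write expressions like $F^m(a)$, since a priori $F^m$ is only defined on a subset of $H$; this follows because $a_{n_k}\in F^{-n}_{\s}(\overline{T_n})$ for all $n\le n_k$ and these sets are closed. Finally, your opening claim that ``$A$, $B$ lie in nested preimages $F_{\s}^{-n}(\overline{T_n})$'' is not literally implied by the hypotheses (only the part of $A\cup B$ on which $F^n$ is defined is constrained), though the rest of your argument does not depend on it. Your ``main obstacle'' paragraph correctly identifies that the pull-back estimate, rather than Observation~\ref{obs:pointstotheleft}, is what pins $b_{n_k}$ down; that is exactly the paper's resolution, so the obstacle is genuine but handled.
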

\begin{proof}
  By assumption, there is $\delta>0$, 
    a strictly increasing sequence $n_k$, as well as points $a_k\in F^{n_k}(A)$ and $b_k\in F^{n_k}(B)$ such that 
\[ \dist_H( a_k , b_k) \leq \delta   \]
   for all $k$. Set $\alpha_k \defeq  F^{-n_k}_{\s}(a_k)\in A$ and $\beta_k \defeq  F^{-n_k}_{\s}(b)\in B$. 
    By Proposition \ref{prop:expansion}, we have 
    $\dist_H(\alpha_k,\beta_k) \leq  \Lambda^{-n_k}\delta$ for all $k$, where $\Lambda>1$ is the expansion
    constant. Since $A$ is compact, there is a limit point $\alpha\in A\cap B$ of $(\alpha_k)$,  and furthermore
    $\alpha\in  J_{\s}(F)$ by~\eqref{eqn:Jsintersection}.
 
  In particular, if $A=\{z\}$ and $B=\{w\}$, with $z\neq w$, then
     $\dist_H(F^n(z),F^n(w))\to \infty$. Hence $|F^n(z)-F^n(w)|\to\infty$ by the final statement in
     Observation~\ref{obs:pointstotheleft}.  
 \end{proof}

It follows that there cannot be infinitely many
   times at which two different orbits both return to the same left half plane. 
 \begin{cor}[Growth of real parts {\cite[Lemma~3.2]{strahlen}}]\label{cor:growthofrealparts}
   Let $F\in\Blog$ be of disjoint type, and let $\s$ be an admissible external address of $F$. If
    $z$ and $w$ are distinct points of $J_{\s}(F)$, then 
      $\max(\re F^n(z), \re F^n(w)) \to\infty$
   as $n\to\infty$. 
  \end{cor}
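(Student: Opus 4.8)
The plan is to deduce Corollary~\ref{cor:growthofrealparts} from the ``separation of orbits'' lemma, Lemma~\ref{lem:separationoforbits}, using the bound \eqref{eqn:standardestimate2pi} together with Observation~\ref{obs:pointstotheleft}. The statement to prove is that if $z\ne w$ lie in $J_{\s}(F)$, then $\max(\re F^n(z),\re F^n(w))\to\infty$. Suppose, for a contradiction, that this fails; then there is $R>0$ and an increasing sequence $n_k$ such that $\re F^{n_k}(z)\le R$ and $\re F^{n_k}(w)\le R$ for all $k$.

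The key observation is that points of $\T$ (indeed, of $\overline{\T}\subset H$) lying to the left of the vertical line $\{\re = R\}$ cannot be too far apart in the hyperbolic metric of $H$ once we also know they lie in a common tract. More precisely, $F^{n_k}(z)\in T_{n_k}$ and $F^{n_k}(w)\in T_{n_k}$ with both real parts at most $R$; applying Observation~\ref{obs:pointstotheleft} with this value of $R$, the set $(T_{n_k})_{\le R}$ has hyperbolic diameter at most some constant $\Delta$ depending only on $F$ and $R$. Since $F^{n_k}(z)$ and $F^{n_k}(w)$ both lie in $(T_{n_k})_{\le R}$ (they are in $T_{n_k}$ with real part $\le R$, hence in the part of $T_{n_k}$ separated off by the line at real part $R$), we obtain
\[ \dist_H\bigl(F^{n_k}(z), F^{n_k}(w)\bigr) \le \Delta \]
for all $k$. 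In particular $\liminf_{n\to\infty}\dist_H(F^n(z),F^n(w))\le\Delta<\infty$.

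Now apply Lemma~\ref{lem:separationoforbits} with $A=\{z\}$ and $B=\{w\}$: the hypotheses $F^n(A\cup B)\cap\T\subset T_n$ and $F^n(A),F^n(B)\ne\emptyset$ hold because $z,w\in J_{\s}(F)$, and condition~\eqref{eqn:hypdistancebounded} holds by the previous paragraph. The lemma then gives $\{z\}\cap\{w\}\cap J_{\s}(F)\ne\emptyset$, i.e.\ $z=w$, contradicting our assumption. (Alternatively, and even more directly, the final sentence of Lemma~\ref{lem:separationoforbits} already states that $|F^n(z)-F^n(w)|\to\infty$ for distinct $z,w\in J_{\s}(F)$; combined with the hyperbolic-diameter bound this is immediately contradictory, since a Euclidean distance tending to infinity forces the hyperbolic distance in $H$ to tend to infinity as well, by the second half of Observation~\ref{obs:pointstotheleft} read in reverse — but invoking the separation lemma directly is cleanest.) This completes the proof.

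The only mild subtlety — and the step I would be most careful about — is the claim that $F^{n_k}(z)$ and $F^{n_k}(w)$ actually lie in the component $(T_{n_k})_{\le R}$ rather than merely in $\{z\in T_{n_k}:\re z\le R\}$; but by definition $T_{\le R}=T\setminus T_{>R}$ contains $\{z\in T:\re z\le R\}$, as recorded in the statement of Observation~\ref{obs:pointstotheleft}, so there is nothing to check. Everything else is a routine application of the two cited results, so there is no real obstacle here; the corollary is essentially a repackaging of Lemma~\ref{lem:separationoforbits}.
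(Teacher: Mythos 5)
Your proof is correct and follows essentially the same route as the paper's: bound the hyperbolic distance at times when both orbits have small real part via Observation~\ref{obs:pointstotheleft}, then invoke Lemma~\ref{lem:separationoforbits} to force $z=w$. One small remark: the parenthetical ``alternative'' argument is slightly off, since boundedness of $|F^{n_k}(z)-F^{n_k}(w)|$ actually follows immediately from the \emph{first} half of Observation~\ref{obs:pointstotheleft} (bounded Euclidean diameter of $(T_{n_k})_{\leq R}$), not by reading the second half in reverse; but as you note, the direct application of the separation lemma is the clean route and that part is correct.
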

  \begin{proof}
   Let $R>0$, and write $\s=T_0 T_1 T_2 \dots$. Recall from Observation~\ref{obs:pointstotheleft} that 
    the diameter of $\{z\in T_n\colon \re z\leq R\}$ is bounded independently of $n$. Hence by Lemma~\ref{lem:separationoforbits},
     $\max(\re F^n(z), \re F^n(w)) > R$
    for all but finitely many $n$, as claimed.
  \end{proof} 

 Another simple consequence of hyperbolic expansion is the fact, mentioned in Section~\ref{sec:intro2}, that each Julia continuum at a bounded address
   contains a unique point with bounded orbit.

 \begin{prop}[Points with bounded orbits]\label{prop:boundedorbits}
   Let $F\in\Blog$ be of disjoint type, 
    and let $\s$ be a bounded external address. Then there is a unique non-escaping point
    $z_0\in J_{\s}(F)$. This point has bounded orbit, i.e.\ 
    $\sup_{j\geq 0} \re F^j(z_0)<\infty$, and is accessible from $\C\setminus J(F)$. If $\s$ is periodic of period $p$, so is~$z_0$.
    If $A\subset J_{\s}(F)\setminus\{z_0\}$ is closed, then $\re F^n|_A\to\infty$ uniformly. 
 \end{prop}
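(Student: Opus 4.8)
\textbf{Proof plan for Proposition~\ref{prop:boundedorbits}.}
The plan is to exploit uniform hyperbolic expansion (Proposition~\ref{prop:expansion}) together with the boundedness of the address to produce a single nonescaping point via a nested-intersection argument, and then to use Lemma~\ref{lem:separationoforbits} to rule out any second such point. Write $\s = T_0 T_1 T_2 \dots$, and recall that since $\s$ is bounded there are only finitely many distinct tracts among the $T_j$, up to $2\pi i\Z$-translation; by Observation~\ref{obs:pointstotheleft} there is a constant $\Delta>0$ and a radius $R>0$ such that each $T_{j,\leq R}$ has hyperbolic diameter at most $\Delta$ in $H$, and (enlarging $R$ if needed, using $\cl{\T}\subset H$ for disjoint type) we may assume every tract meets $\{\re z \leq R\}$. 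The key point: for each $n$, the set $K_n \defeq F_{\s}^{-n}(\overline{T_{n,\leq R}})$ is a nonempty compact subset of $\overline{T_{0,\leq R}}\cup\{\infty\}$, actually contained in $\overline{T_0}\cap H$ since $R$ was chosen so that preimages of the ``left part'' stay to the left. By the expansion estimate $\|\Deriv F\|_H \geq \Lambda>1$, pulling back contracts hyperbolic diameters, so $\diam_H(K_n) \leq \Lambda^{-n}\Delta \to 0$. Moreover the $K_n$ are nested: $F_{\s}^{-(n+1)}(\overline{T_{n+1,\leq R}}) \subset F_{\s}^{-n}(\overline{T_{n,\leq R}})$ because $F|_{T_n}$ maps $T_{n,\leq R}$ into a bounded region and in particular $F(\overline{T_{n,\leq R}}) \supset$ the relevant part; more carefully, any point whose $\s$-orbit stays to the left of $R$ up to time $n+1$ certainly does so up to time $n$. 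Hence $\bigcap_n K_n$ is a single point $z_0 \in J_{\s}(F)$, and by construction $\re F^j(z_0) \leq R$ for all $j$, so $z_0$ is nonescaping with bounded orbit.

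For \emph{uniqueness}: if $z_0' \in J_{\s}(F)$ is also nonescaping, then $\liminf_n \re F^n(z_0') < \infty$, so by Corollary~\ref{cor:growthofrealparts} (which asserts $\max(\re F^n(z), \re F^n(w)) \to \infty$ for distinct points of $J_{\s}(F)$) we cannot have both $z_0$ and $z_0'$ returning to a bounded left half-plane infinitely often — contradiction unless $z_0 = z_0'$. Actually the cleanest route is Lemma~\ref{lem:separationoforbits}: take $A = \{z_0\}$, $B = \{z_0'\}$; both orbits lie in a common left half-plane infinitely often, so by Observation~\ref{obs:pointstotheleft} $\dist_H(F^n(z_0), F^n(z_0'))$ is bounded along that subsequence, forcing $z_0 = z_0'$. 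The \emph{periodicity} claim is immediate: if $\s$ is periodic of period $p$, then $F^p$ maps $J_{\s}(F)$ to itself, preserving the (unique) nonescaping point, so $F^p(z_0) = z_0$.

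For \emph{accessibility} of $z_0$ from $\C \setminus J(F)$: the idea is that $z_0$ is the unique point in the nested intersection of the compact sets $\overline{T_{0,\leq R}} \cap K_n$, each of which is a ``landing region'' obtained by pulling back under conformal maps $F_{\s}^{-n}$ (recall $F|_{T_n}\colon T_n \to H$ is a conformal isomorphism). One can choose a curve $\gamma_0$ in $\C \setminus J(F)$ (e.g.\ inside $H \setminus \overline{\T}$, or connecting to $\partial H$) landing on $\partial K_0 \setminus J_{\s}(F)$, and pull it back via the $F_{\s}^{-n}$ to get curves $\gamma_n$ in the complement whose endpoints converge to $z_0$; concatenating and using the exponential shrinking of $\diam_H K_n$ gives a single curve in $\C\setminus J(F)$ landing at $z_0$. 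Alternatively, and more robustly, one observes that $F^n|_{J_{\s}(F)\setminus\{z_0\}}$ has real parts tending to infinity uniformly (proved below), and accessibility of escaping regions is standard; but the direct pullback-of-an-access argument is the expected route. This is the step I expect to require the most care, since it involves verifying that the complementary curves can be pulled back consistently (staying in the complement of $J(F)$, not just of $J_{\s}(F)$) and that their concatenation is a genuine arc landing at $z_0$.

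For the final statement, that $\re F^n|_A \to \infty$ uniformly for any closed $A \subset J_{\s}(F)\setminus\{z_0\}$: argue by contradiction. If not, there is $R' > 0$, a sequence $n_k \to \infty$, and points $a_k \in A$ with $\re F^{n_k}(a_k) \leq R'$. By compactness of $A$ (it is a closed subset of the compact continuum $J_{\s}(F)$, hence compact, and bounded away from $\infty$... — careful: $A$ closed in $J_{\s}(F)$ with $\infty \notin J_{\s}(F)$, and $J_{\s}(F)$ need not be bounded; but $A$ closed and contained in $J_{\s}(F)$, and $\hat{J}_{\s}(F)$ compact, so $A \cup \{\infty\}$ is compact — if $A$ does not accumulate at $\infty$ then $A$ itself is compact, and if it does, pass to $a_k$ with a finite limit using that $F^{n_k}(a_k)$ has bounded real part and invoking Observation~\ref{obs:pointstotheleft} to control the pullbacks). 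Apply Lemma~\ref{lem:separationoforbits} with this $A$ (or a compact subset containing a limit point $a$ of $(a_k)$) and $B = \{z_0\}$: since $\re F^{n_k}(a_k) \leq R'$ and $\re F^{n_k}(z_0) \leq R$, both $F^{n_k}(A)$ and $F^{n_k}(B)$ meet $\{\re z \leq \max(R,R')\}$, whose intersection with each tract has bounded hyperbolic diameter by Observation~\ref{obs:pointstotheleft}, so $\liminf_n \dist_H(F^n(A), F^n(B)) < \infty$. The lemma then gives $a = z_0 \in A$, contradicting $z_0 \notin A$. Hence the convergence is uniform, completing the proof.
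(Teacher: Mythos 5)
Your proposal takes a genuinely different route from the paper for the existence and accessibility of $z_0$, and the route has a gap that the paper's approach sidesteps entirely.

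The paper does not construct $z_0$ by a nested-intersection argument; it \emph{first} builds an access curve and reads everything off of it. Concretely, it picks a basepoint $\zeta_0\in H\setminus\T$ and, for each of the finitely many tracts $T_i$ appearing in $\s$, a curve $\Gamma_i\subset H\setminus J(F)$ of uniformly bounded hyperbolic length joining $\zeta_0$ to $F_{T_i}^{-1}(\zeta_0)$. Pulling back and concatenating, the curve $\gamma=\bigcup_j F_\s^{-j}(\Gamma_j)$ has hyperbolic length $\leq \Delta\Lambda/(\Lambda-1)$, so it lands at a finite point $z_0\in J_\s(F)$; accessibility is then immediate by construction, and the bounded orbit follows because $F^n(\gamma)$ contains an arc of uniformly bounded length from $\zeta_0$ to $F^n(z_0)$. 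Thus existence, bounded orbit, and accessibility all come out of one construction. Uniqueness and the final uniform-escape statement then follow from Corollary~\ref{cor:growthofrealparts} and Lemma~\ref{lem:separationoforbits}, as you also say.

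The gap in your version is the claimed nestedness of $K_n=F_\s^{-n}(\overline{T_{n,\leq R}})$: membership in $K_{n+1}$ says $\re F^{n+1}(z)\leq R$, which does \emph{not} imply $\re F^n(z)\leq R$, so $K_{n+1}\not\subset K_n$ in general; and the alternative reading $\bigcap_{j\leq n}F_\s^{-j}(\overline{T_{j,\leq R}})$ is nested but its nonemptiness is exactly what needs proving. One can repair this — the $K_n$ have hyperbolic diameter $\lesssim\Lambda^{-n}$ and (using boundedness of the address once more) $\dist_H(F^n(K_{n+1}),F^n(K_n))$ is uniformly bounded, so a Cauchy argument produces a limit — but this is extra work that the curve construction avoids. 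Your accessibility argument is also left at the level of ``pull back an access and concatenate,'' and, as you note yourself, verifying that the concatenation really lands and stays in $\C\setminus J(F)$ is the delicate part; the paper's construction is designed so that this is automatic, since the curve is manufactured from the start inside $H\setminus J(F)$ with summable lengths. The remainder of your argument (uniqueness via Lemma~\ref{lem:separationoforbits}, periodicity, and uniform escape on closed $A\not\ni z_0$) matches the paper; for the last point, the clean application of the lemma is with the compact set $\{z_0\}$ and the closed set $A$, which your sketch gestures at but slightly muddles by trying to compactify $A$ instead.
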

 \begin{proof}
   The proposition follows from \cite[Theorem~B]{baranskikarpinskatrees} and its proof, which
     however deals also with certain unbounded addresses and is therefore somewhat technical. For the reader's convenience, 
     we provide the simple argument in our case.
 
   Let us begin by proving the existence of a point $z_0\in J_{\s}(F)$ with bounded orbit, which is accessible from $\C\setminus J(F)$. 
    Let $\zeta_0$ be an arbitrary base point in $H\setminus \T$. For each of the tracts $T_i$ in the address
    $\s=T_0 T_1 T_2\dots$, let $\Gamma_i\subset H\setminus J(F)$ be a smooth curve 
    connecting $\zeta_0$ 
    to $F_{T_i}^{-1}(\zeta_0)$. (Observe that $H\setminus J(F)$ is connected.)

    Since $\s$ contains only finitely many different tracts, the curves $\Gamma_i$ can be chosen to have uniformly bounded hyperbolic length, say
    $\ell_H(\Gamma_i)\leq \Delta$ for all $i$. 

   For $j\geq 0$, let $\gamma_j$ be defined as the preimage of the curve $\Gamma_j$ under the appropriate branches of $F^{-1}$:
    \[ \gamma_j \defeq  F_{T_0}^{-1}( F_{T_1}^{-1}(\dots ( F_{T_{j-1}}^{-1}(\Gamma_j) ) \dots ) ). \]
   Then $\gamma \defeq  \bigcup_{j\geq 0}\gamma_j$ is a curve in $H\setminus J(F)$, beginning at
     $\zeta_0$ and having hyperbolic length
     \[ \ell_H(\gamma) = \sum_{j=0}^{\infty} \ell_H(\gamma_j) \leq
                       \Delta\cdot \sum_{j=0}^{\infty} \Lambda^{-j} = \frac{\Delta\Lambda}{\Lambda-1}. \]

   Hence $\gamma$ has a finite endpoint $z_0$, which  belongs to $J_{\s}(F)$ by~\eqref{eqn:Jsintersection}
     and is accessible from $\C\setminus J(F)$ by definition. Furthermore, for all $n\geq 0$, the curve
     $\bigcup_{j\geq n} F^n(\gamma_j)$
    connects $\zeta_0$ to $F^n(z_0)$, and its length is also bounded by $\Delta\Lambda/(\Lambda-1)$. Thus $z_0$ has bounded orbit. 

  By Corollary \ref{cor:growthofrealparts}, $z_0$ is the unique non-escaping point in $J_{\s}(F)$, and 
   by Lemma~\ref{lem:separationoforbits}, $\re F^n|_A\to\infty$ uniformly if $z_0\notin \overline{A}$.  In particular, if $\s$ is periodic
    of period $p$, so is $z_0$.  
 \end{proof}

\begin{cor}[Uncountably many Julia continua]\label{cor:uncountablymany}
 The Julia set $J(f)$ of a disjoint-type entire function $f$ has uncountably many connected components. Each of these components is closed and unbounded.
\end{cor}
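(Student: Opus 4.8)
The plan is to work with a logarithmic transform $F\in\BlogP$ of $f$, which is of disjoint type, and to use Proposition~\ref{prop:boundedorbits} as the only dynamical input: for every \emph{bounded} external address $\s$ of $F$ it produces a (unique) point $z_0(\s)\in J_\s(F)$ with bounded orbit, so in particular every bounded address is admissible. I would first exhibit uncountably many points of $J(F)$, lying in distinct components, with bounded orbit, and then transfer this to $f$ via $\exp$.

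Fix a tract $T_0$ of $F$ and set $T_1\defeq T_0+2\pi i$; this is a second tract, distinct from $T_0$ since tracts are disjoint from their $2\pi i\Z$-translates (Definition~\ref{defn:Blog}). Every sequence in $\{T_0,T_1\}^{\N}$ is then a bounded, hence admissible, external address. By the first Remark after Definition~\ref{defn:externaladdress} the sets $J_\s(F)$ for distinct admissible $\s$ are pairwise disjoint and each component of $J(F)$ lies in a single one of them; together with Corollary~\ref{cor:growthofrealparts} (two distinct points of a fixed $J_\s(F)$ cannot both have bounded orbit) this shows that \emph{each component of $J(F)$ contains at most one point with bounded orbit}. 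Since $F$ is $2\pi i$-periodic, translating a point of $J_\s(F)$ by a multiple of $2\pi i$ changes only the first tract of its address, so $\exp(z_0(\s))=\exp(z_0(\s'))$ forces $\s$ and $\s'$ to agree in all entries of index $\geq 1$. Within $\{T_0,T_1\}^{\N}$ these equivalence classes are therefore indexed by the entries of index $\geq 1$, so there remain $2^{\aleph_0}$ of them and hence $2^{\aleph_0}$ pairwise distinct points $\exp(z_0(\s))$; these lie in $J(f)$ since $\exp(J(F))\subset J(f)$, and have bounded $f$-orbit since $|f^n(\exp w)|=\e^{\re F^n(w)}$.

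It remains to see that each component $C$ of $J(f)$ contains at most one point with bounded orbit, as then the previous paragraph forces $J(f)$ to have uncountably many components. Because $J(f)\subset f^{-1}(W)=\V$ and the tracts of $f$ have pairwise disjoint closures, $C$ is contained in the closure $\overline{V_0}$ of a single tract $V_0=\exp(T_0)$. Now $\exp$ restricts to a homeomorphism of $\overline{T_0}$ (closure taken in $\Ch$) onto $\overline{V_0}$ carrying $\infty$ to $\infty$; here I would invoke Observation~\ref{obs:pointstotheleft}, which forces a point of $\overline{T_0}$ to escape to $\infty$ only as its real part does, so that $\exp$ extends continuously to the point at infinity. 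Consequently $C'\defeq(\exp|_{\overline{T_0}})^{-1}(C)$ is a connected subset of $J(F)$ homeomorphic to $C$; in fact it is a component of $J(F)$, since any strictly larger connected subset of $J(F)$ would lie in the closure of a single tract, necessarily $T_0$, and so map into $C$ under $\exp$. Bounded $f$-orbit of a point of $C$ then corresponds under this homeomorphism to bounded $F$-orbit of its preimage in $C'$, of which there is at most one. Finally $J(f)=\C\setminus F(f)$ is closed, so its components are closed; and $C$ is unbounded because $C'$ lies in some Julia continuum $\hat{J}_{\s}(F)$ and is a component of the open subset $J_\s(F)=\hat{J}_{\s}(F)\setminus\{\infty\}$ of the continuum $\hat{J}_{\s}(F)$ (cf.~\eqref{eqn:Jsintersection}), so $\infty\in\overline{C'}$ by the Boundary Bumping Theorem~\ref{thm:boundarybumping}, whence $C=\exp(C')$ accumulates at $\infty$ as well.

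The step I expect to require the most care is the passage between $F$ and $f$ through $\exp$: checking that components, points with bounded orbit, and unboundedness all correspond cleanly, and that factoring out the $2\pi i\Z$-action on external addresses still leaves uncountably many. This is elementary bookkeeping about the logarithmic change of variable; the genuine content is contained in Proposition~\ref{prop:boundedorbits} and the disjointness of distinct Julia continua.
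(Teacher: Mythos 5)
Your proof is correct and rests on the same dynamical input as the paper's — Proposition~\ref{prop:boundedorbits} — but it reaches the conclusion through a somewhat more roundabout route. The paper fixes the initial tract $T$, observes that the uncountably many bounded addresses beginning with $T$ are all admissible, notes that the corresponding sets $J_{\s}(F)$ are pairwise disjoint, and then invokes the correspondence between Julia continua of $F$ and of $f$ (Remark~2 after Definition~\ref{defn:externaladdress}) together with injectivity of $\exp$ on $T$. You instead count bounded-orbit points and prove that each component of $J(f)$ contains at most one. That is a valid strategy, but your detour through $\{T_0,T_1\}^{\N}$ followed by quotienting by the $2\pi i\Z$-action is unnecessary: had you fixed the initial entry to be $T_0$ (as the paper does) and varied only the subsequent entries, injectivity of $\exp$ on $T_0$ would immediately give uncountably many distinct bounded-orbit points in $J(f)$, with no quotient to analyse. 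You also reconstruct, via Observation~\ref{obs:pointstotheleft} and the tract-closure argument, the bijection between components of $J(f)$ and of $J(F)$ that the paper delegates to Remark~2 — this makes your proof more self-contained but longer. For unboundedness the paper argues globally, using that $J(f)\cup\{\infty\}$ is connected (functions in $\B$ have no multiply connected Fatou components) together with boundary bumping, while you apply boundary bumping inside the single Julia continuum $\hat{J}_{\s}(F)$; your version has the modest advantage of not needing the multiply-connected-Fatou-components fact. Both routes are sound.
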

\begin{proof}
  Connected components of a closed set are always closed. A function $f\in\B$ has no multiply-connected Fatou components \cite[Proposition~2]{alexmisha}, hence
   $J(f)\cup\infty$ is connected, and by the boundary bumping theorem (see below) every connected component of $J(f)$ is unbounded.

 Now let $F$ be a disjoint-type logarithmic transform of $f$, and let $T$ be a tract of $F$. There are uncountably many bounded external addresses of $F$ whose initial entry is
   $T$. By Proposition~\ref{prop:boundedorbits}, these addresses are all admissible, so there are associated
  non-trivial Julia continua. As noted in Remark 2, each Julia continuum of 
   $F$ corresponds to a Julia continuum of $f$ under the exponential map, and since $\exp$ is injective on $T$, these are pairwise disjoint. This completes the proof.
\end{proof}

\subsection*{Results from continuum theory}
  We shall frequently require the following fact. 

 \begin{thm}[Boundary bumping theorem {\cite[Theorem~5.6]{continuumtheory}}]\label{thm:boundarybumping}
   Let $X$ be a continuum, and let $E\subsetneq X$ be non-empty. If $K$ is a connected component of $X\setminus E$, then
    $\overline{K}\cap \partial E \neq\emptyset$. 
 \end{thm}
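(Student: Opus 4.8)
The plan is to prove the Boundary Bumping Theorem by the classical quasicomponent argument. The single non-elementary ingredient is the standard fact that in a compact Hausdorff space $Q$ every connected component coincides with its quasicomponent; equivalently, every component of $Q$ has arbitrarily small relatively clopen neighbourhoods. Everything else is point-set bookkeeping, and the only real subtlety is that a general continuum need not be locally connected.

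First I would record an elementary observation, valid for \emph{any} $E$ and any component $K$ of $X\setminus E$: one always has $\overline K\setminus K\subseteq\partial E$. Indeed, if $p\in\overline K\setminus K$, then $p\in\overline{X\setminus E}$ since $K\subseteq X\setminus E$; on the other hand $p\notin X\setminus E$, because $\overline K\cap(X\setminus E)$ is a connected subset of $X\setminus E$ containing $K$ and hence equals $K$ by maximality of components. Thus $p\in E\cap\overline{X\setminus E}\subseteq\overline E\cap\overline{X\setminus E}=\partial E$. Note also $\partial E\neq\emptyset$, since $X$ is connected and $E$ is a proper nonempty subset, so $E$ cannot be clopen. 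Consequently, if $\overline K\neq K$ we are already done, and it remains only to treat the case where $K$ is closed in $X$, hence compact.

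So suppose, for a contradiction, that $K$ is compact and $\overline K\cap\partial E=K\cap\partial E=\emptyset$. Since also $K\cap E=\emptyset$ and $\overline E=E\cup\partial E$, the compact sets $K$ and $\overline E$ are disjoint, and both are nonempty. Using normality of the metric space $X$, I would choose disjoint open sets $U\supseteq K$ and $V\supseteq\overline E$ and set $Q\defeq X\setminus V$. Then $Q$ is compact, $U\subseteq Q$ (as $U\cap V=\emptyset$), $K\subseteq U\subseteq Q$, and $Q\cap\overline E=\emptyset$; in particular $Q\subseteq X\setminus E$. The key point is now that $K$ is in fact a whole component of $Q$: any connected subset of $Q$ meeting $K$ is a connected subset of $X\setminus E$ meeting $K$, hence contained in $K$. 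Applying the quasicomponent fact inside $Q$ to the component $K$ and its open-in-$Q$ neighbourhood $U$, I obtain a set $W$ that is clopen in $Q$ with $K\subseteq W\subseteq U$. Since $Q$ is closed in $X$, $W$ is closed in $X$; and since $W\subseteq U\subseteq Q$ with $U$ open in $X$, writing $W=Q\cap O$ for some open $O\subseteq X$ gives $W=W\cap U=O\cap U$, so $W$ is open in $X$ as well. Thus $W$ is a nonempty, proper (it misses the nonempty set $\overline E\subseteq V$) clopen subset of $X$, contradicting connectedness of $X$. This contradiction proves the theorem.

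The point requiring care — the crux of the argument — is precisely the passage to the compact subspace $Q=X\setminus V$. Because $X$ need not be locally connected, the component $K$ of the open set $X\setminus E$ need not itself be open, so one cannot peel it off directly, and the quasicomponent lemma is only available in a \emph{compact} space. Shrinking the ambient space to $Q$ is arranged so that $K$ survives as an entire component of $Q$ while $Q$ remains compact; checking that $K$ is still a whole component of $Q$, and that a clopen-in-$Q$ subset of $U$ is clopen in $X$, are the routine verifications that make the argument close.
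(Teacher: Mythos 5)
The paper does not prove this theorem---it simply cites it as Theorem~5.6 of Nadler's \emph{Continuum Theory}---so there is no in-paper argument to compare against. Your proof is correct, and it is the standard quasicomponent (\v{S}ura-Bura) argument found in the cited reference: one disposes of the case $\overline{K}\neq K$ via the observation $\overline{K}\setminus K\subseteq\partial E$ (which holds because $K$, being a component, is closed in $X\setminus E$), and when $K$ is compact one passes to the compact set $Q=X\setminus V$, on which $K$ remains a full component, and invokes the fact that in a compact Hausdorff space every component has arbitrarily small relatively clopen neighbourhoods, producing a nonempty proper clopen subset of $X$ and hence a contradiction. The one place worth double-checking---that $W$ is open in $X$ and not merely in $Q$---you handle correctly via $W=O\cap U$ with $U$ open in $X$ and $W\subseteq U\subseteq Q$.
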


Recall the definition of (in-)decomposability from the introduction. 
  \begin{defn}[(In-)decomposable continua]\label{defn:indecomposable}
    A continuum $X$ is called \emph{decomposable} if it can be written as the union of two proper subcontinua of $X$. Otherwise, $X$ is
     \emph{indecomposable}. 

   Furthermore $X$ is called \emph{hereditarily} (in-)decomposable if every non-degenerate subcontinuum of $X$ is (in-)decomposable. 
  \end{defn}

  We now collect some well-known results concerning irreducibility and indecomposability, 
    mainly to be used in Section \ref{sec:uniform}. 
   The \emph{(topological) composant} of a point $x\in X$ is the union of all proper subcontinua containing $x$. We say that
    $x\in X$ is a \emph{point of irreducibility} of $X$ if there is some $y\in X$ such that $X$ is irreducible between $x$ and $y$
    in the sense of Definition \ref{defn:irreducible}.

\begin{prop}[Properties of composants]\label{prop:composants}
  Let $X$ be a continuum, and let $x\in X$. 
  \begin{enumerate}[(a)]
    \item The point $x$ is a point of irreducibility of $X$ if and only if its composant is a proper subset of $X$.
    \item The point $x$ is terminal if and only if $x$ is a point of irreducibility of every subcontinuum that contains $x$.\label{item:terminal_irreducible}
    \item A continuum is hereditarily indecomposable if and only if every point of $X$ is a terminal point.\label{item:hereditarilyindecomposable}
    \item If $C$ is a composant of $X$, then $X\setminus C$ is connected.\label{item:composantcomplement}
    \item If $X$ is decomposable, then there are either one or three different composants. 
    \item If $X$ is indecomposable, then there are uncountably many different composants, every two of which are disjoint, and 
             each of which is dense in $X$.\label{item:composantindecomposable}
  \end{enumerate}   
\end{prop}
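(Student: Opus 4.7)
All six assertions are classical results from continuum theory, and my plan is to deduce them in order, each from the preceding parts together with the definitions and the Boundary Bumping Theorem~\ref{thm:boundarybumping}. Part (a) merely unravels definitions: the composant of $x$ consists of those $y \in X$ that lie in a proper subcontinuum of $X$ together with $x$, so it is a proper subset of $X$ precisely when some $y$ fails to do so, which is the statement that $X$ is irreducible between $x$ and $y$.

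For (b), the converse is immediate: given subcontinua $A, B$ with $x \in A \cap B$, set $C \defeq A \cup B$, which is a continuum; by hypothesis $C$ is irreducible between $x$ and some $y \in C$, and since $y$ lies in $A$ or $B$, one of these two proper subcontinua of $C$ would contain both $x$ and $y$ unless $A \subset B$ or $B \subset A$. For the forward direction, assume $x$ is terminal and let $C \ni x$ be a subcontinuum; terminality of $x$ persists in $C$, so the proper subcontinua of $C$ containing $x$ are linearly ordered by inclusion, and a combination of this chain structure with the Boundary Bumping Theorem, applied to components of $C$ minus a suitably chosen maximal or limit element of the chain, shows that their union cannot exhaust $C$ and so yields a point $y$ witnessing the desired irreducibility. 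Part (c) follows immediately from (b) and Definition~\ref{defn:indecomposable}: for hereditary indecomposability, if $A, B$ are subcontinua containing a common point $x$ and neither contains the other, then $A \cup B$ is a decomposable subcontinuum; conversely, any decomposition $K = A \cup B$ into proper subcontinua satisfies $A \cap B \neq \emptyset$ by connectedness, and terminality of any point of this intersection is violated.

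For (d) I would follow the classical argument: assuming the complement of the composant of $x$ decomposes as $U \sqcup V$ into two nonempty separated pieces, I would analyse the closed sets $X \setminus U$ and $X \setminus V$, both of which contain $x$, and show that one of them provides a proper subcontinuum meeting both $U$ and $V$, contradicting separation. Part (e) is handled by writing $X = A \cup B$ with $A, B$ proper subcontinua and checking directly that the composant of any $x$ is one of $A$, $B$ or $X$, taking the value $X$ precisely when $x \in A \cap B$, so there are either three composants or, if every point has composant $X$, exactly one. Finally for (f), if $X$ is indecomposable then by (a) every composant is a proper subset, and two distinct composants are disjoint because a common point would merge their defining proper subcontinua; density of each composant is a standard consequence of indecomposability, and uncountability then follows from the Baire category theorem together with the fact that every proper subcontinuum of an indecomposable continuum has empty interior. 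The only step that requires real ingenuity is the forward direction of (b), which is the technical heart of the proposition; for that and for the routine verifications throughout, I would follow Nadler's treatment in \cite{continuumtheory} rather than reproduce the arguments here in full.
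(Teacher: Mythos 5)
The paper's own proof is essentially a list of citations: (a) is noted as immediate from the definition, (b) is \cite[Theorem~12]{bingsnakelike}, (c) receives a two-line direct argument, and (d)--(f) are referred to Theorems~11.4, 11.13, 11.17 and Exercise~5.20 of \cite{continuumtheory}. Your treatment of (a), of the converse direction of (b), of (c), and of (f) is correct and in the same spirit; and deferring to Nadler for the harder verifications is also exactly what the paper does.

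Two of your ``direct checks'', however, do not work as written. For (e), you claim that if $X = A \cup B$ with $A, B$ proper subcontinua, then the composant of each $x$ is one of $A$, $B$ or $X$, with value $X$ precisely when $x \in A \cap B$. This is false: take $X = [0,1]$, $A = [0,1/2]$, $B = [1/2,1]$. The composant of $0$ is $[0,1)$, which is none of $A$, $B$ or $X$; and the composant of $1/3$ is all of $X$, although $1/3 \notin A \cap B$. (Your characterisation would also always yield exactly three composants, never one.) The correct route goes through (a): the composant of $x$ equals $X$ precisely when $x$ fails to be a point of irreducibility of $X$, and the count then requires analysing the set of points of irreducibility of a decomposable continuum, not the given decomposition. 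For (d), the pieces $U$ and $V$ are only relatively clopen in $X \setminus C$; since a composant $C$ is not closed in general, $X \setminus U = C \cup V$ need not be closed in $X$, so the sets you propose to analyse are not automatically candidate subcontinua, and the sketch needs to be replaced rather than patched. Since you ultimately defer to \cite{continuumtheory}, these slips would be caught upon consultation, but the direct arguments you offer for (d) and (e) are, as given, incorrect.
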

\begin{proof}
 The first claim is immediate from the definition; for~\ref{item:terminal_irreducible} see \cite[Theorem~12]{bingsnakelike}. By definition, $X$ is hereditarily
   indecomposable if and only if any two subcontinua of $X$ are either nested or disjoint. Clearly this is the case if and only if all points of $X$
   are terminal. 
The remaining
   statements can be found in Theorems 11.4, 11.13 and 11.17, as well as Exercise 5.20, of \cite{continuumtheory}.
\end{proof}

Finally, we recall the definition of \emph{inverse limits}, which can be considered~-- from a dynamical systems point of view~-- as the
  space of inverse orbits of a (possibly non-autonomous) dynamical system. We refer to 
   \cite[Chapter~2]{continuumtheory} for more background on inverse limits of continua.
 \begin{defn}[Inverse limits]\label{defn:inverselimit}
   Let $(X_j)_{j\geq 0}$ be a sequence of metric spaces, and let $f_j\colon X_{j}\to X_{j-1}$ be a continuous function for every
     $j\geq 1$. Then the collection $(X_j,f_{j+1})_{j\geq 0}$ is called an \emph{inverse system}. 

   Let  $X$ be the set of all ``inverse orbits'', $(x_0 \mapsfrom x_1 \mapsfrom x_2 \mapsfrom \dots)$,
   with $x_j\in X_j$ for all $j\geq 0$ and $f_j(x_{j})=x_{j-1}$ for all $j\geq 1$. 
  Then $X$, with the product topology, is called the \emph{inverse limit} of the functions $(f_j)$, and
   denoted $\invlim (X_j,f_{j+1})_{j=0}^{\infty}$ or $\invlim (f_j)_{j=1}^{\infty}$. 
   The maps $f_j$ are called the \emph{bonding maps} of the
   inverse limit $X$. 

  If $(X_j,f_{j+1})$ is an inverse system and $n> j \geq 0$, then we shall abbreviate
    \[ f_{n,\dots,j} \defeq  f_{j+1}\circ\dots\circ f_{n} \colon X_{n}\to X_j. \]
\end{defn}
\begin{remark}\leavevmode
 \begin{enumerate}[1.]
     \item
   If all spaces $X_j$ are continua, then the inverse limit is again a continuum.
  \item
   The simplest example of an inverse system is the case where $X_j=X$ is a fixed space and $f_j=g^{-1}$ for all $j$, where
    $g\colon X\to X$ is a homeomorphism. In this case, the inverse limit is homeomorphic to $X$ itself. (A
    homeomorphism
   is given by projection  to the $j$-th coordinate, for any $j\geq 0$.)

   Similarly, with the same notation, if $g\colon X\to \tilde{X}\supset X$ is a homeomorphism, then 
    the inverse limit $\invlim (\tilde{X},g^{-1})=\invlim( X, g^{-1}|_X)$ is homeomorphic to the set of points that stay in $X$ forever under iteration under $g$.
  \item
   To connect the concept to the study of Julia continua, suppose that $F\in\Blog$ and that $\s=T_0 T_1 T_2\dots$ is an admissible external address of $F$.
    Define $X_j \defeq \overline{T_j}$ and $f_j \defeq (F_{T_{j-1}}^{-1})|_{\overline{T_{j}}}\colon X_{j} \to X_{j-1}$. Then $\invlim (f_j,X_j)$ is homeomorphic to 
    $J_{\s}(F)$, under projection to $X_0$. Note that, as in this example, we usually think  of the bonding maps
    in inverse systems as being connected to \emph{backward} branches of the (expanding) dynamical systems under     
    consideration.  
  \item When defining inverse  limits, one must choose whether $f_j$ should denote the bonding map from the $j$-th to the $(j-1)$-th
      coordinate, or from the $(j+1)$-th to the  $j$-th  coordinate. In  the literature, the latter is often used;
      we have chosen the former as it leads to more natural notation in  our main construction.
 \end{enumerate}
\end{remark}

    The introduction of some further topological background concerning arc-like continua will be delayed until 
      Section \ref{sec:topology}, as it is only required in the second part of this memoir.

\section{Shadowing and a conjugacy principle for inverse limits}\label{sec:conjugacy}
  It is a fundamental principle in dynamics that expanding systems that are close to each other are topologically conjugate~-- this is 
    a uniform version of the \emph{shadowing lemma} (compare \cite[Theorem~18.1.3]{katokhasselblatt}). The same ideas appear frequently also in the study of
    inverse limits to show that two such spaces are homeomorphic; compare e.g.~\cite[Proposition 2.8]{continuumtheory}. 

  We shall use this principle in a variety of different settings and to different ends: 
     to construct an inverse limit representation for a periodic Julia continuum (Theorem \ref{thm:invariantarclike}), 
       to 
     construct homeomorphisms between different subsets in the Julia set of a disjoint-type function $F\in\Blog$ 
     (Section \ref{sec:homeomorphicsubsets}), and to 
     construct Julia continua homeomorphic to given inverse limit spaces (Sections \ref{sec:arclikeexistence}, 
     \ref{sec:boundedexistence} and \ref{sec:periodicexistence}).
  Our goal in this section is to provide a unified account. 
   We shall
   use a slightly non-standard notion of expanding dynamics. 
 \begin{defn}[Expanding inverse system]\label{defn:expandingsystem}
   Let $(X_j,f_{j+1})_{j\geq 0}$ be an inverse system, where all $X_j$ are complete metric spaces, and let $d_{X_j}$ denote the metric on $X_j$\footnote{%
      Observe that this is a slight abuse of notation; even in the case where all the spaces in the inverse system are the same as sets, we may be using
      different metrics for different choices of $j$. We use this notation to simplify the discussion in which there are several inverse systems, and it should not
      lead to any ambiguities.}.
    We say that the system is \emph{expanding} if
    there are constants $\lambda>1$ and $K\geq 0$ with the following properties. 
   \begin{enumerate}[(1)]
     \item For all $j\geq 1$, and all $x,y\in X_{j}$,\label{item:expandinginversesystem}
          \begin{equation}\label{eqn:expandinginversesystem}
                 \max(K, d_{X_{j}}(x,y)) \geq \lambda d_{X_{j-1}}(f_j(x) , f_j(y)) .
           \end{equation}
     \item For all $\Delta>0$ and all $k\geq 0$,\label{item:expandingbackwardsshrinking}
          \[ \lim_{j\to\infty} 
              \sup\{ d_{X_k}( f_{j,\dots,k}(x),f_{j,\dots,k}(y))\colon x,y\in X_{j}, d_{X_{j}}(x,y)\leq \Delta \} =0.  \]
    \end{enumerate}

   A sequence $(\tilde{x}_j)_{j\geq 0}$ is called a \emph{pseudo-orbit} of the system if 
     $\tilde{x}_j\in X_j$ for all $j$ and $m\defeq \sup_{j\geq 1} d_{X_{j-1}}(\tilde{x}_{j-1},f_j(\tilde{x}_{j})) < \infty$.
     We also call the sequence an $M$-pseudo-orbit ($M>0$) if $m\leq M$.
 \end{defn}
\begin{remark}[Remark]
   Clearly, the backwards shrinking property~\ref{item:expandingbackwardsshrinking} is automatically satisfied if
    the system is expanding at all scales, i.e.\ if~\eqref{eqn:expandinginversesystem} holds with $K=0$. 
    However, we frequently apply the results of this section in cases where we can only ensure 
    expansion above a certain scale, i.e.\ where $K>0$. Indeed, this is crucial in 
   Observation~\ref{obs:obtainingexpandingsystems} below, which in turn plays an important role in our constructions.
 \end{remark}

  Note that we call a system as in Definition~\ref{defn:expandingsystem} expanding, although the maps $f_j$ themselves are (weakly) contracting. 
    The reason is that
      we think of the dynamics of the \emph{system} as going in the opposite direction of the functions $f_j$ 
      (recall Remark 3 after Definition~\ref{defn:inverselimit}). One simple consequence of expansion  is the following.

\begin{obs}[Convergence of orbits]\label{obs:inverselimitseparating}
  Let $(X_j,f_{j+1})_{j\geq 0}$ be an  expanding system,  and let 
      $x=(x_j)_{j=0}^{\infty}\in \invlim(f_{j})$. Suppose that 
     $(x^k)_{k\geq 0}$ is a sequence in  $\invlim(f_j)$, and that there is a sequence $j_k\to\infty$ and a number
     $M>0$ such that $d_{X_{j_k}}(x_{j_k},x^k_{j_k}) \leq M$. Then $x^k\to x$. 
\end{obs}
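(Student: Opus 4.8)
The plan is to show that for each fixed coordinate $k \geq 0$, we have $d_{X_k}(x^k_k, x_k) \to 0$ as $k \to \infty$; since the topology on $\invlim(f_j)$ is the product topology, convergence in every coordinate is exactly convergence of the sequence $x^k \to x$. So fix $k$, and fix $\eps > 0$. The key observation is that both $x_k$ and $x^k_k$ are obtained from the respective points at level $j_k$ by applying the bonding map composition $f_{j_k,\dots,k}$: indeed $x_k = f_{j_k,\dots,k}(x_{j_k})$ and $x^k_k = f_{j_k,\dots,k}(x^k_{j_k})$, because $x$ and $x^k$ are genuine elements of the inverse limit.

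Now I would invoke the backwards-shrinking property~\ref{item:expandingbackwardsshrinking} of the expanding system, applied with $\Delta \defeq M$ and this value of $k$: there exists $N$ such that for all $j \geq N$,
\[
   \sup\{ d_{X_k}( f_{j,\dots,k}(x),f_{j,\dots,k}(y))\colon x,y\in X_{j},\ d_{X_{j}}(x,y)\leq M \} < \eps.
\]
Since $j_k \to \infty$, there is $K_0$ such that $j_k \geq N$ for all $k \geq K_0$. Then, for every such $k$, because $d_{X_{j_k}}(x_{j_k}, x^k_{j_k}) \leq M$ by hypothesis, the displayed bound applies with the pair $(x_{j_k}, x^k_{j_k})$, giving $d_{X_k}(x_k, x^k_k) < \eps$. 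As $\eps$ and $k$ were arbitrary, this proves convergence in each coordinate, hence $x^k \to x$ in the product topology.

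The step requiring the most care is simply making precise that the ``orbit'' relations hold, i.e.\ that applying the bonding maps to the $j_k$-th coordinate of an inverse-limit point recovers the lower coordinates; this is immediate from the definition of $\invlim(f_j)$ and the notational convention $f_{n,\dots,k} = f_{k+1}\circ\dots\circ f_n$, but it is the place where one must not slip. Everything else is a direct application of property~\ref{item:expandingbackwardsshrinking}; in particular the expansion estimate~\eqref{eqn:expandinginversesystem} itself is not needed for this observation, only the backwards-shrinking consequence.
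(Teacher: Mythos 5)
Your proof is correct and follows exactly the route the paper intends: the observation is a direct consequence of the backwards-shrinking property (condition~\ref{item:expandingbackwardsshrinking} of Definition~\ref{defn:expandingsystem}), applied with $\Delta = M$, combined with the fact that convergence in the product topology is coordinatewise and that $x_\ell = f_{j_k,\dots,\ell}(x_{j_k})$ once $j_k>\ell$. One small stylistic caution: you overload the letter $k$ for both the fixed coordinate index and the running sequence index, so the displayed claim ``$d_{X_k}(x^k_k,x_k)\to 0$ as $k\to\infty$'' is hard to parse literally; introducing a separate letter for the coordinate would remove the ambiguity.
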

 \begin{remark}
   In particular, if the inverse limit of an expanding inverse system contains more than one point, then the diameter of the spaces $X_j$ with respect to
    the metric $d_{X_j}$ must be either infinite, or tend to infinity as $j\to\infty$. 
 \end{remark}
\begin{proof}
 This is an immediate consequence of property~\ref{item:expandingbackwardsshrinking} in the definition. 
\end{proof}

   We also remark that expansion is a property of the inverse system, rather than of the underlying inverse limit. 
     Indeed, when given any inverse system, we can 
     often artificially blow up the metrics involved to obtain a modified system that is expanding, but 
    has the same inverse limit. This fact will be useful in the second part of the paper. 

\begin{obs}[Obtaining expanding systems]\label{obs:obtainingexpandingsystems}
   Let $(X_j,f_{j+1})$ be an inverse system, with metrics $d_{X_j}$ on  $X_j$. Suppose that all $X_j$ are compact.
    Then there are constants $\gamma_j\geq 1$, 
    for $j\geq 0$, such that the system is expanding when $X_j$ is equipped 
    with the metric $\tilde{d}_{X_j}\defeq \gamma_j\cdot d_{X_j}$. 

   More precisely, for given $K>0$ and  $\lambda>1$ 
     there are functions $\Gamma_j\colon [1,\infty)\to [1,\infty)$ (for $j\geq 1$) with the following property:
     If $(\gamma_j)_{j\geq  0}$ satisfies $\gamma_{j}\geq \Gamma_j(\gamma_{j-1})$ for all $j\geq 1$,
     then the system $(X_j,f_{j+1})$ equipped 
     with the metrics $\tilde{d}_{X_j}$ satisfies Definition~\ref{defn:expandingsystem}
     for $K$ and  $\lambda$.  Moreover,  $\Gamma_j$ depends on $f_1,\dots,f_{j}$, but not on $f_k$ for $k > j$. 
\end{obs}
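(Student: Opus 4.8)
The plan is to construct the scaling constants $\gamma_j$ recursively, choosing each $\gamma_j$ large enough to "force" the expansion inequality~\eqref{eqn:expandinginversesystem} at step $j$, once the previous constant $\gamma_{j-1}$ (and hence the metric $\tilde d_{X_{j-1}}$) is already fixed. Concretely, rewrite the desired inequality $\max(K,\tilde d_{X_j}(x,y)) \geq \lambda\, \tilde d_{X_{j-1}}(f_j(x),f_j(y))$ in terms of the original metrics: it reads $\max(K, \gamma_j d_{X_j}(x,y)) \geq \lambda \gamma_{j-1} d_{X_{j-1}}(f_j(x),f_j(y))$. We must find, for each $j\geq 1$ and each admissible value of $\gamma_{j-1}$, a threshold $\Gamma_j(\gamma_{j-1})$ so that every $\gamma_j\geq\Gamma_j(\gamma_{j-1})$ makes this hold for all $x,y\in X_j$. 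The key point is that $f_j$ is a continuous map between compact metric spaces, hence \emph{uniformly continuous}; I will exploit this to split into a "large-scale" and a "small-scale" regime.

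First I would fix $K>0$ and $\lambda>1$ as given. For the large-scale regime: let $\delta_j>0$ be such that $d_{X_j}(x,y)<\delta_j$ implies $d_{X_{j-1}}(f_j(x),f_j(y))$ is as small as we like — precisely, by uniform continuity of $f_j$, choose $\delta_j$ with $d_{X_j}(x,y)\leq\delta_j \Rightarrow d_{X_{j-1}}(f_j(x),f_j(y)) \leq K/(\lambda\gamma_{j-1})$. Then for such $x,y$ the right-hand side is $\leq K \leq \max(K,\tilde d_{X_j}(x,y))$, so the inequality holds regardless of $\gamma_j$. For the small-scale regime, i.e.\ $d_{X_j}(x,y)\geq \delta_j$: here $\tilde d_{X_j}(x,y)=\gamma_j d_{X_j}(x,y)\geq\gamma_j\delta_j$, while the right side is bounded by $\lambda\gamma_{j-1}\operatorname{diam}(X_{j-1})$ (finite since $X_{j-1}$ is compact). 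So it suffices to require $\gamma_j\delta_j \geq \lambda\gamma_{j-1}\operatorname{diam}(X_{j-1})$, i.e.\ $\gamma_j\geq \lambda\gamma_{j-1}\operatorname{diam}(X_{j-1})/\delta_j$. Combining, I set
\[
 \Gamma_j(\gamma_{j-1}) \defeq \max\!\left(1,\ \frac{\lambda\,\gamma_{j-1}\operatorname{diam}(X_{j-1})}{\delta_j}\right),
\]
where $\delta_j=\delta_j(\gamma_{j-1})$ is chosen as above; note $\delta_j$ depends on $f_j$ and on $\gamma_{j-1}$, and $\operatorname{diam}(X_{j-1})$ on $X_{j-1}$, so $\Gamma_j$ depends only on $f_1,\dots,f_j$ (through $\delta_j$, and through the compactness/diameter data of $X_0,\dots,X_{j-1}$ which are part of the original system), not on any $f_k$ with $k>j$, exactly as claimed. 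Given any sequence $(\gamma_j)$ with $\gamma_j\geq\Gamma_j(\gamma_{j-1})$ (which exists: start with $\gamma_0=1$ and define $\gamma_j\defeq\Gamma_j(\gamma_{j-1})$ inductively), property~\ref{item:expandinginversesystem} of Definition~\ref{defn:expandingsystem} holds.

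It remains to verify the backward-shrinking property~\ref{item:expandingbackwardsshrinking} for the new metrics. This is where I expect the only genuine subtlety to lie, because blowing up the later metrics by the enormous factors $\gamma_j$ could in principle destroy the shrinking of $f_{j,\dots,k}$-images. However, property~\ref{item:expandinginversesystem} — which we have just arranged — itself implies~\ref{item:expandingbackwardsshrinking}: iterating~\eqref{eqn:expandinginversesystem} backwards, if $\tilde d_{X_j}(x,y)\leq\Delta$ then either some intermediate distance already exceeds $K$ (and then, peeling one more step, it contracts by $\lambda$, so after finitely many further steps every distance drops below $\max(K,\Delta)$ and then contracts geometrically), or $\tilde d_{X_k}(f_{j,\dots,k}(x),f_{j,\dots,k}(y))\leq \lambda^{-(j-k)}\Delta$. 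In all cases $\tilde d_{X_k}(f_{j,\dots,k}(x),f_{j,\dots,k}(y)) \leq \max(K\lambda/(\lambda-1),\, \lambda^{-(j-k)}\Delta) \to $ a quantity independent of $j$ that is $\leq \lambda K/(\lambda-1)$ once $j-k$ is large; but this is not yet $0$. To get the stated $0$ in the limit, I observe that once all the relevant distances at levels between $k$ and $j$ have fallen below $K$, the subsequent contraction is by the \emph{strict} factor $\lambda$ with the $\max$ inactive, so in fact the bound is $\lambda^{-(j-k)}\max(K,\Delta)\cdot(\text{const})$, which does tend to $0$ as $j\to\infty$ with $k$ and $\Delta$ fixed. (More carefully: for fixed $k$, after the finitely many steps needed to bring the distance below $K$, pure geometric contraction takes over, so the $\sup$ over $x,y$ with $\tilde d_{X_j}(x,y)\leq\Delta$ is $O(\lambda^{-j})$.) Thus~\ref{item:expandingbackwardsshrinking} holds and the system is expanding. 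The main obstacle, then, is purely bookkeeping: making the peeling-off argument for~\ref{item:expandingbackwardsshrinking} precise and confirming that the strict contraction factor $\lambda$, rather than the $\max$ with $K$, governs the tail — but this follows directly from~\eqref{eqn:expandinginversesystem} once distances drop below $K$, which they must after boundedly many steps since $X_{j-1}$ has finite diameter in the new metric (namely $\gamma_{j-1}\operatorname{diam}(X_{j-1})$) and that diameter, though large, is a fixed number for each $j$.
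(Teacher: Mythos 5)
Your construction of $\Gamma_j$ for property~\ref{item:expandinginversesystem} is sound and amounts to the same thing as the paper's explicit formula for $\tilde{\Gamma}_j$ (your uniform-continuity argument is hidden in the paper's maximum, which is taken only over pairs with $d_{X_{j-1}}(f_j(x),f_j(y))\geq K/(\lambda\gamma)$, a compact set bounded away from the diagonal). The genuine gap is in your claim that property~\ref{item:expandinginversesystem} then implies the backwards-shrinking property~\ref{item:expandingbackwardsshrinking}. It does not, when $K>0$, and the paper's own remark after Definition~\ref{defn:expandingsystem} flags exactly this: the implication holds only if~\eqref{eqn:expandinginversesystem} is satisfied with $K=0$. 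The error is in the sentence ``once all the relevant distances \ldots have fallen below $K$, the subsequent contraction is by the \emph{strict} factor $\lambda$ with the $\max$ inactive.'' You have the regime wrong: once $\tilde d_{X_m}(\cdot,\cdot)<K$, the $\max$ in $\max(K,\tilde d_{X_m})$ is \emph{active} and equals $K$, so~\eqref{eqn:expandinginversesystem} yields only the stationary bound $\tilde d_{X_{m-1}}(\cdot,\cdot)\leq K/\lambda$, not a further contraction. The iterated bound therefore flattens out at $K/\lambda$ and does not tend to zero.

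A concrete counterexample makes this vivid: take $X_j=[0,1]$ with the Euclidean metric, $K=1$, $\lambda>1$, and $f_j(x)=\min(x,1/\lambda)$. Then the image of each $f_j$ has diameter $1/\lambda$, so
\[
  \lambda\, d_{X_{j-1}}(f_j(x),f_j(y)) \leq 1 = K \leq \max\bigl(K,\, d_{X_j}(x,y)\bigr),
\]
and property~\ref{item:expandinginversesystem} holds verbatim. But $f_j$ is the identity on $[0,1/\lambda]$, so for $x,y\in[0,1/\lambda]$ the compositions $f_{j,\dots,k}$ preserve the distance exactly, and property~\ref{item:expandingbackwardsshrinking} fails. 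Thus your argument, if correct, would prove a false statement; the flaw is precisely in the backwards-peeling step.

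To repair the proof you need a \emph{separate} choice of lower bound on $\gamma_j$ that forces property~\ref{item:expandingbackwardsshrinking} directly. That is what the paper does: it defines, using only $f_1,\dots,f_j$ and compactness,
\[
  M_j \defeq \min\bigl\{\,d_{X_j}(x,y)\colon d_{X_k}(f_{j,\dots,k}(x),f_{j,\dots,k}(y))\geq 1/j \text{ for some } k\leq j\,\bigr\}>0,
\]
and requires $\gamma_j\geq\gamma_j^0\defeq\max(1,2j/M_j)$. Then $\tilde d_{X_j}(x,y)\leq\Delta$ and $j>\Delta/2$ force $d_{X_j}(x,y)<M_j$, hence $d_{X_k}(f_{j,\dots,k}(x),f_{j,\dots,k}(y))<1/j$ for all $k\leq j$; since $\gamma_k$ is fixed once $k$ is fixed, $\tilde d_{X_k}(\cdot,\cdot)<\gamma_k/j\to 0$. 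The final $\Gamma_j$ is then the maximum of $\gamma_j^0$ and the threshold you already produced for~\ref{item:expandinginversesystem}.
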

\begin{proof}
  We first claim that there is a sequence $(\gamma_j^0)_{j\geq 1}$ 
     such that condition~\ref{item:expandingbackwardsshrinking}
     is satisfied for the blown-up system whenever  $\gamma_j\geq \gamma_j^0$ for all  $j$.
     Indeed, set
      \begin{align*} 
          M_j\defeq &\min\{d_{X_j}(x,y)\colon x,y\in X_j, d_{X_k}(f_{j,\dots,k}(x),f_{j,\dots,k}(y))\geq 1/j\text{ for some $k\leq j$}\}\qquad\text{and} \\
       \gamma_j^0  \defeq &\max(1,2j/M_j)  \end{align*}
    and the claim follows easily. 

   Now, for $j\geq 1$ and $\gamma\geq 1$, define
    \[ \tilde{\Gamma}_{j}(\gamma) \defeq
   \max\Bigl\{\frac{\lambda \cdot \gamma \cdot d_{X_{j-1}}(f_{j}(x),f_{j}(y))}{%
                           d_{X_{j}}(x,y)}\colon x,y\in X_{j}, d_{X_{j-1}}(f_j(x),f_j(y)) \geq K/(\lambda\cdot\gamma)\Bigr\} \Bigr). \]
    Suppose that $(\gamma_j)$ satisfies $\gamma_{j}\geq \tilde{\Gamma}_j(\gamma_{j-1})$ for all $j$. 
     Then clearly~\ref{item:expandinginversesystem}
      holds for the metrics $\tilde{d}_{X_j}$.      Setting $\Gamma_{j}(\gamma) \defeq \max(\gamma_{j}^0, \tilde{\Gamma}_{j}(\gamma))$, we are done.
\end{proof}

\begin{lem}[Shadowing lemma]\label{lem:shadowing}
  Let $(X_j,f_{j+1})$ 
   be an expanding inverse system, and suppose that $(\tilde{x}_j)_{j\geq 0}$ is an $M$-pseudo-orbit. Then there is a unique orbit 
     $(x_j)_{j\geq 0}\in \invlim (f_j)$  such that 
         \[ \limsup_{j\to\infty} d_{X_j}(x_j,\tilde{x}_j) < \infty. \]
    More precisely, $d_{X_j}(x_j,\tilde{x}_j) \leq \lambda\cdot \max(M,K)/(\lambda-1)$ for all $j$, where $K$ is the constant from 
      Definition~\ref{defn:expandingsystem}.
\end{lem}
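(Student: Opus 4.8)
The plan is to obtain the shadowing orbit as a limit of finite approximations. Set $c\defeq\max(M,K)$ and $r\defeq\lambda c/(\lambda-1)$; the claim will be that the orbit we seek satisfies $d_{X_j}(x_j,\tilde x_j)\leq r$ for all $j$. For each $n\geq 0$ I would push the pseudo-orbit point $\tilde x_n$ down through the system, setting $x^{(n)}_n\defeq\tilde x_n$ and $x^{(n)}_j\defeq f_{n,\dots,j}(\tilde x_n)\in X_j$ for $0\leq j<n$; by the definition of the composed maps this satisfies $f_j(x^{(n)}_j)=x^{(n)}_{j-1}$ for $1\leq j\leq n$. The key estimate, proved by downward induction on $j$, is
\[ d_{X_j}(x^{(n)}_j,\tilde x_j)\leq r\qquad(0\leq j\leq n). \]
The base case $j=n$ is trivial. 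For the inductive step I would rewrite the defining inequality~\eqref{eqn:expandinginversesystem} of an expanding system in the form $d_{X_{j-1}}(f_j(x),f_j(y))\leq\tfrac1\lambda\max\bigl(K,d_{X_j}(x,y)\bigr)$, and combine it with the triangle inequality and the $M$-pseudo-orbit bound $d_{X_{j-1}}(f_j(\tilde x_j),\tilde x_{j-1})\leq M$, obtaining
\[ d_{X_{j-1}}(x^{(n)}_{j-1},\tilde x_{j-1})\leq\tfrac1\lambda\max\bigl(K,d_{X_j}(x^{(n)}_j,\tilde x_j)\bigr)+M\leq\tfrac r\lambda+M\leq r, \]
where the last steps use $K\leq c\leq r$, the inductive hypothesis, and $\tfrac r\lambda+M=\tfrac{c}{\lambda-1}+M\leq\tfrac{c}{\lambda-1}+c=r$.

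Next I would pass to the limit $n\to\infty$. Fixing $j$ and taking $m>n\geq j$, one has $x^{(m)}_j=f_{n,\dots,j}(x^{(m)}_n)$ and $x^{(n)}_j=f_{n,\dots,j}(\tilde x_n)$, while $d_{X_n}(x^{(m)}_n,\tilde x_n)\leq r$ by the estimate above; hence the backwards-shrinking property~\ref{item:expandingbackwardsshrinking}, applied with $\Delta=r$ and $k=j$, gives
\[ d_{X_j}(x^{(n)}_j,x^{(m)}_j)\leq\sup\bigl\{d_{X_j}(f_{n,\dots,j}(x),f_{n,\dots,j}(y))\colon x,y\in X_n,\ d_{X_n}(x,y)\leq r\bigr\}\xrightarrow[n\to\infty]{}0. \]
Thus $(x^{(n)}_j)_{n\geq j}$ is Cauchy in $X_j$, and by completeness it converges to a point $x_j\in X_j$. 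Continuity of $f_j$ then yields $f_j(x_j)=\lim_n f_j(x^{(n)}_j)=\lim_n x^{(n)}_{j-1}=x_{j-1}$, so $(x_j)_{j\geq0}\in\invlim(f_j)$, and letting $n\to\infty$ in the displayed estimate gives $d_{X_j}(x_j,\tilde x_j)\leq r$ for every $j$, which is the refined conclusion.

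For uniqueness I would take two orbits $(x_j)$ and $(y_j)$ in $\invlim(f_j)$ with $\limsup_{j\to\infty} d_{X_j}(x_j,\tilde x_j)<\infty$ and $\limsup_{j\to\infty} d_{X_j}(y_j,\tilde x_j)<\infty$; then $d_{X_j}(x_j,y_j)$ is bounded for all sufficiently large $j$, so applying Observation~\ref{obs:inverselimitseparating} to the constant sequence $x^k\defeq(y_j)_j$ forces $(y_j)=(x_j)$ (alternatively, one iterates~\eqref{eqn:expandinginversesystem} directly). I do not expect a genuine obstacle: this is the standard shadowing construction, and the only real subtlety is that the spaces $X_j$ are merely assumed complete rather than compact, so the orbit must be produced by the Cauchy argument of the second paragraph rather than by a compactness extraction from the $(x^{(n)})$ --- and it is precisely the backwards-shrinking hypothesis~\ref{item:expandingbackwardsshrinking} that makes the relevant sequences Cauchy. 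The precise constant $\lambda\max(M,K)/(\lambda-1)$ is nothing more than the sum of the geometric series implicit in the first paragraph's induction.
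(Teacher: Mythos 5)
Your proposal is correct and follows essentially the same route as the paper: the finite approximations $x^{(n)}_j=f_{n,\dots,j}(\tilde x_n)$ are the paper's $x^n_j$, the inductive bound $d_{X_j}(x^{(n)}_j,\tilde x_j)\le\lambda\max(M,K)/(\lambda-1)$ matches the paper's estimate line for line, and both arguments finish with backwards-shrinking giving the Cauchy property and Observation~\ref{obs:inverselimitseparating} giving uniqueness. The only difference is that you spell out the Cauchy step in slightly more detail, which the paper leaves implicit.
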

\begin{proof}
   Set $\widetilde{M}\defeq \max(M,K)$. For $n\geq j \geq 0$, define $x_j^n\in X_j$ recursively by
      $x_j^j \defeq \tilde{x}_j$ and 
    \[ x_j^{n+1} \defeq f_{j+1}(x_{j+1}^{n+1}). \] 
    We claim that $d_{X_j}(x_j^n,\tilde{x}_j)\leq \lambda \widetilde{M}/(\lambda-1)$ for all $j$ and $n$. Indeed, this follows by induction
      on $n-j$, since 
       \begin{align*} d_{X_j}(x_j^{n+1} , \tilde{x}_j) &=
            d_{X_j}(f_{j+1}(x_{j+1}^{n+1}) , \tilde{x}_j) \leq d_{X_j}(f_{j+1}(x^{n+1}_{j+1}) , f_{j+1}(\tilde{x}_{j+1})) + M \\ 
        &\leq 
             \frac{\max(K , d_{X_{j+1}}(x^{n+1}_{j+1} , \tilde{x}_{j+1}))}{\lambda}  + M \leq 
             \widetilde{M}\cdot \left(1 + \frac{1}{\lambda-1}\right) = \frac{\lambda\widetilde{M}}{\lambda-1}. \end{align*}

  By property~\ref{item:expandingbackwardsshrinking} of Definition~\ref{defn:expandingsystem}, it follows that $(x_j^n)_{n\geq j}$ is a Cauchy sequence for every $j\geq 0$, and hence
    has a limit $x_j$ (recall that the space $X_j$ is assumed to be complete). We have $f_j(x_{j})=x_{j-1}$ by construction, which completes the proof of existence. Uniqueness follows immediately from
  Observation~\ref{obs:inverselimitseparating}. 
\end{proof}

 \begin{defn}[Pseudo-conjugacy between inverse systems]\label{defn:pseudoconjugacy}
  Two inverse systems $(X_j,f_{j+1})$ and $(Y_j,g_{j+1})$ will be called \emph{pseudo-conjugate} if 
   there exists a sequence of (not necessarily continuous) functions
     $\psi_j\colon X_j \to Y_j$ and a constant $M>0$ with the following properties.
   \begin{enumerate}[(a)]
     \item $d_{Y_{j-1}}(g_j(\psi_{j}(x)) , \psi_{j-1}( f_j(x))) \leq M$ for all $j\geq 1$ and all $x\in X_{j}$.%
               \label{item:pseudoconjugacy}
     \item The image $\psi_j(X_j)$ is $M$-dense in $Y_j$ for all $j$; i.e., for all $y\in Y_j$, there is
                 $x\in X_j$ such that $d_{Y_j}(\psi_j(x),y) \leq M$.\label{item:pseudosurjectivity}
     \item For all $\Delta\geq 0$, there is $R>0$ such that
       $d_{Y_{j-1}}(\psi_{j-1}(f_{j}(x)),\psi_{j-1}(f_{j}(\tilde{x})))\leq R$
       whenever $j\geq 1$ and $x,\tilde{x}\in X_{j}$ with 
       $d_{X_{j}}(x,\tilde{x})\leq \Delta$.\label{item:pseudocontinuity}
     \item For all $\Delta\geq 0$,  there is $R>0$ such that
       $d_{X_j}(x,\tilde{x})\leq R$ whenever $j\geq 0$ and $x,\tilde{x}\in X_j$ with 
       $d_{Y_j}(\psi_j(x),\psi_j(\tilde{x}))\leq \Delta$.\label{item:pseudoinjectivity}
   \end{enumerate}
 \end{defn}
\begin{remark}[Remark 1]
  Observe that a pseudo-conjugacy can be considered an approximate conjugacy. Indeed,
    the four conditions given are approximate versions of,  respectively, the conjugacy relation, 
    surjectivity of $\psi_j$, continuity of $\psi_j$ and a combination of injectivity of $\psi_j$ and continuity of
      its inverse. 
\end{remark}
\begin{remark}[Remark 2]
  There is an asymmetry between the conditions~\ref{item:pseudocontinuity} 
   and~\ref{item:pseudoinjectivity}, in that one concerns the map  
    $\psi_{j-1}\circ f_j$ while  the other  deals with  $\psi_j$ itself. One could weaken the requirements on
     a pseudo-conjugacy 
    to obtain a perhaps more natural, but also more technical, definition, which  still yields an analog of
    Proposition~\ref{prop:conjugacyprinciple} below. However, the  above version  is sufficient for all  our purposes.
\end{remark}
\begin{remark}[Remark 3]
   Observe that the definition depends strongly on  the choice of metrics. In particular, suppose two pseudo-conjugate 
     systems are transformed into expanding systems 
    using Observation~\ref{obs:obtainingexpandingsystems}.
     Then  the resulting blown-up systems will not,  in general, again  by  pseudo-conjugate  to each other.   
\end{remark}

 \begin{prop}[Conjugacy principle for expanding inverse systems]\label{prop:conjugacyprinciple}
  Two expanding inverse systems $(X_j,f_{j+1})$ and $(Y_j,g_{j+1})$ that are pseudo-conjugate have homeomorphic
    inverse limits. 

  More precisely, if $(\psi_n)$ is the sequence of maps guaranteed by the definition of pseudo-conjugacy, then there is a 
    homeomorphism $\theta\colon\invlim (f_j)\to \invlim (g_j)$ such that
      \begin{equation}\label{eqn:conjugacycloseness}
            d_{Y_j}( \theta_j(x) , \psi_j(x_j)) \leq \max(M,K)\cdot \frac{\lambda}{\lambda-1}  \end{equation}
    for all $j\geq 0$ and all $x = (x_0 \mapsfrom x_1 \mapsfrom\dots ) \in \invlim (f_j)$. (Here $\lambda$ and $K$ are
    the expansion constants for the system $(Y_j,g_{j+1})$, as in Definition~\ref{defn:expandingsystem}, and $M$ is the number from 
    Definition~\ref{defn:pseudoconjugacy}.)
 \end{prop}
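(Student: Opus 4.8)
The plan is to construct the homeomorphism $\theta$ directly using the shadowing lemma, by transporting each orbit $x \in \invlim(f_j)$ along $\psi$ to a pseudo-orbit in the target system, and then shadowing it by a genuine orbit. First I would define, for $x = (x_0 \mapsfrom x_1 \mapsfrom \dots) \in \invlim(f_j)$, the sequence $\tilde{y}_j \defeq \psi_j(x_j) \in Y_j$. By property~\ref{item:pseudoconjugacy} of Definition~\ref{defn:pseudoconjugacy}, we have $d_{Y_{j-1}}(g_j(\tilde{y}_j), \tilde{y}_{j-1}) = d_{Y_{j-1}}(g_j(\psi_j(x_j)), \psi_{j-1}(f_j(x_j))) \leq M$, so $(\tilde{y}_j)$ is an $M$-pseudo-orbit of the expanding system $(Y_j, g_{j+1})$. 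By the shadowing lemma (Lemma~\ref{lem:shadowing}) there is a unique orbit $\theta(x) = (y_j) \in \invlim(g_j)$ with $d_{Y_j}(y_j, \tilde{y}_j) \leq \max(M,K)\lambda/(\lambda-1)$ for all $j$; this is exactly the bound~\eqref{eqn:conjugacycloseness} and defines the map $\theta$. Symmetrically, using the maps $\psi_j$ only through the hypotheses (there is no assumed inverse), one builds a candidate inverse $\theta'\colon \invlim(g_j) \to \invlim(f_j)$: given $y = (y_j)$, use property~\ref{item:pseudosurjectivity} to pick $x_j \in X_j$ with $d_{Y_j}(\psi_j(x_j), y_j) \leq M$, check via~\ref{item:pseudoconjugacy} and~\ref{item:pseudoinjectivity} that $(x_j)$ is a pseudo-orbit of $(X_j, f_{j+1})$ (the bound on $d_{X_{j-1}}(f_j(x_j), x_{j-1})$ comes from applying~\ref{item:pseudoinjectivity} with $\Delta$ governing $d_{Y_{j-1}}(\psi_{j-1}(f_j(x_j)), \psi_{j-1}(x_{j-1}))$, which is controlled by~\ref{item:pseudoconjugacy} plus the choice of $x_j$), and then shadow to get $\theta'(y) \in \invlim(f_j)$.

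The next step is to verify $\theta' \circ \theta = \id$ and $\theta \circ \theta' = \id$. This is where the uniqueness clause of Lemma~\ref{lem:shadowing} does the work: $\theta'(\theta(x))$ and $x$ itself are both orbits in $\invlim(f_j)$, and I would show they stay boundedly close in each coordinate (combining the closeness estimate for $\theta$, property~\ref{item:pseudoinjectivity} to pull the bound in $Y_j$ back to a bound in $X_j$, and the closeness estimate for $\theta'$), so by uniqueness of the shadowing orbit they coincide. The same argument, run on the $Y$ side, gives $\theta \circ \theta' = \id$. Hence $\theta$ is a bijection.

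It then remains to prove that $\theta$ is continuous (being a continuous bijection between compact Hausdorff spaces — or here, since these are inverse limits of complete metric spaces, one also needs compactness; in our intended application the $X_j, Y_j$ are continua, but in general one argues continuity of both $\theta$ and $\theta'$ directly). For continuity of $\theta$ at $x$: if $x^k \to x$ in $\invlim(f_j)$ then for each fixed coordinate $j$ we have $x^k_j \to x_j$ in $X_j$, hence $d_{X_j}(x^k_j, x_j) \leq \Delta_k \to 0$; applying property~\ref{item:pseudocontinuity} (or directly the bound on $\psi_j$ via~\ref{item:pseudocontinuity}) gives that $\psi_j(x^k_j)$ stays within a bounded distance $R$ of $\psi_j(x_j)$ in $Y_j$, hence $\theta(x^k)_j$ stays within bounded distance of $\theta(x)_j$; then Observation~\ref{obs:inverselimitseparating} (convergence of orbits in an expanding system, applied with the sequence of coordinates $j_k \to \infty$) forces $\theta(x^k) \to \theta(x)$. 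Symmetrically $\theta'$ is continuous, so $\theta$ is a homeomorphism.

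The main obstacle I expect is the bookkeeping in the inverse direction, specifically checking that the coordinate-wise choices $x_j$ made via~\ref{item:pseudosurjectivity} actually assemble into a pseudo-orbit with a uniform bound $M'$ independent of $j$ — this is precisely the asymmetry flagged in Remark 2 after Definition~\ref{defn:pseudoconjugacy}, and it is the reason conditions~\ref{item:pseudocontinuity} and~\ref{item:pseudoinjectivity} are stated the way they are, one about $\psi_{j-1} \circ f_j$ and one about $\psi_j$. Getting the quantifiers on $\Delta$ and $R$ to chain correctly (so that the constant bounding $d_{X_{j-1}}(f_j(x_j), x_{j-1})$ does not grow with $j$) is the delicate point; once a uniform pseudo-orbit bound is secured, the rest is a routine combination of Lemma~\ref{lem:shadowing}, Observation~\ref{obs:inverselimitseparating}, and the four defining properties of a pseudo-conjugacy.
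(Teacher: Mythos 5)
Your proposal follows essentially the same route as the paper's proof: define $\theta$ by shadowing the pseudo-orbit $(\psi_j(x_j))$, then use properties \ref{item:pseudosurjectivity}, \ref{item:pseudoconjugacy}, \ref{item:pseudoinjectivity} of Definition~\ref{defn:pseudoconjugacy} together with the contraction of $g_{j+1}$ to build a pseudo-orbit in the reverse direction, and invoke Observation~\ref{obs:inverselimitseparating} and the uniqueness clause of Lemma~\ref{lem:shadowing} for continuity and bijectivity. The only organizational difference is that you package surjectivity and injectivity as an explicit two-sided inverse $\theta'$ rather than proving them separately (which requires an extra, routine, check that $\theta'$ is well-defined independent of the choices made in \ref{item:pseudosurjectivity}); you correctly flag the genuine subtlety, namely that bounding the pseudo-orbit defect $d_{X_{j-1}}(f_j(x_j),x_{j-1})$ uniformly requires chaining \ref{item:pseudoconjugacy}, the $\lambda$-contraction of $g_j$, the choice in \ref{item:pseudosurjectivity}, and then \ref{item:pseudoinjectivity}, exactly as the paper does.
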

 \begin{proof}
  Let $x\in \invlim (f_j)$, and define $\tilde{y}_j \defeq \psi_j(x_j)$ for $j\geq 0$. Then $(\tilde{y}_j)$ is a pseudo-orbit for 
     the inverse system $(Y_j,g_{j+1})$, since
     \[ d_{Y_{j-1}}( g_j(\tilde{y}_{j}), \tilde{y}_{j-1} ) = 
          d_{Y_{j-1}}( g_j(\psi_{j}(x_{j}) , \psi_{j-1}( f_j(x_{j}) )  ) \leq M \]
     for all $j\geq 1$. 

  By Lemma~\ref{lem:shadowing}, there is a unique element $y\in \invlim(g_j)$ such that
    $\tilde{y}_j$ and $y_j$ are at most
     distance $\delta\defeq \max(M,K)\cdot \lambda/(\lambda-1)$ apart for all $j$. So if we define $\theta(x)\defeq y$,
  then
     $\theta$ satisfies~\eqref{eqn:conjugacycloseness}. 

  To show that  $\theta$ is continuous, let $x^i \to x$ in $\invlim(f_j)$ as  $i\to\infty$,  and set
    $y^i\defeq \theta(x^i)$, $y\defeq \theta(x)$. Let $R$  be as in~\ref{item:pseudocontinuity}, say for $\Delta=1$,
    and let $j\geq  1$ be arbitrary. If  $i$ is large enough, then 
    $d_{X_j}(x^i_{j} , x_{j})\leq 1$ and hence 
    \[ d_{Y_{j-1}}( \psi_{j-1}(x^i_{j-1}) , \psi_{j-1}(x_{j-1}) ) = 
         d_{Y_{j-1}}( \psi_{j-1}(f_j(x^i_{j})) , \psi_{j-1}(f_j(x_{j})))
         \leq R. \]
    By~\eqref{eqn:conjugacycloseness},
    we see  that
    $d_{Y_{j-1}}(y^i_{j-1},y_{j-1})\leq R + 2\delta$ for sufficiently large $i$. 
    It follows from Observation~\ref{obs:inverselimitseparating} that $y^i\to y$, as  required. 

   To show that $\theta$ is injective, suppose that 
     $x^1,x^2\in  \invlim(f_j)$ satisfy  $\theta(x^1)=\theta(x^2)$, and let
     $R$ be as in~\ref{item:pseudoinjectivity}, for $\Delta=2\delta$. Then 
       $\dist_{X_j}(x^1_j,x^2_j)\leq  R$ for all $j$, and hence $x^1=x^2$ by the uniqueness statement in
    Lemma~\ref{lem:shadowing}. 
    Continuity of the inverse (where defined) similarly follows from~\ref{item:pseudoinjectivity} and
      Observation~\ref{obs:inverselimitseparating}.

   It remains to show that $\theta$ is surjective. Let $y\in \invlim(g_j)$. By~\ref{item:pseudosurjectivity}, for
    all $j\geq 0$ there is $x_j\in X_j$ such  that $\dist_{Y_j}(y_j,\psi_j(x_j))\leq M$ for all $j$. 
    Then  
     \begin{align*} \dist_{Y_j}(\psi_j(x_j), \psi_j(f_{j+1}(x_{j+1}))) &\leq
         M + \dist_{Y_j}(y_j,\psi_j(f_{j+1}(x_{j+1})) \\ &\leq
         2M + \dist_{Y_j}(y_j, g_{j+1}(\psi_{j+1}(x_{j+1}))) \\ &= 
         2M + \dist_{Y_j}(g_{j+1}(y_{j+1}), g_{j+1}(\psi_{j+1}(x_{j+1}))) \\
       &\leq 3\max(M,K). \end{align*}
   Here we used~\ref{item:pseudoconjugacy} and  the contracting property  of $g_{j+1}$.
   Let $R_1$ be as in~\ref{item:pseudoinjectivity} with $\Delta = 3\max(M,K)$; then
    $\dist_{X_j}(x_j,f_{j+1}(x_{j+1}))\leq R_1$.
    By Lemma~\ref{lem:shadowing}
    there  is $\hat{x}\in  \invlim(f_j)$ such that $\dist(x_j,\hat{x}_j)\leq \delta_1$
    for some $\delta_1 >0$ and all $j$. 
  Set $\hat{y}\defeq \theta(\hat{x})$. Then 
    \begin{align*}
      \dist_{Y_j}(y_j , \hat{y_j}) &\leq  
                   \dist_{Y_j}(y_j, \psi_j(f_{j+1}(x_{j+1}))) + \dist_{Y_j}( \psi_j(f_{j+1}(x_{j+1})) , \hat{y}_j ) \\ &\leq
                2\max(M,K) + \delta + \dist_{Y_j} ( \psi_j(f_{j+1}(x_{j+1})) , \psi_j(f_{j+1}(\hat{x}_{j+1} ))) \\ &\leq 2\max(M,K) + \delta + R_2,
     \end{align*}
   where $R_2$ is chosen according to~\ref{item:pseudocontinuity} for $\Delta=\delta_1$. By the uniqueness part of Lemma~\ref{lem:shadowing}, we have
    $y=\hat{y}=\theta(\hat{x})$, as  desired. 
 \end{proof} 

 In order to deduce additional properties of the conjugacy $\theta$, it is useful to note the following.
\begin{obs}[Converging to the conjugacy]\label{obs:convergingtotheconjugacy}
  With the notation of Proposition~\ref{prop:conjugacyprinciple}, the $n$-th coordinate $\theta_n$ of the 
    homeomorphism $\theta$ is the uniform limit, as $k\to\infty$, of the functions 
     \[ \invlim(f_j)\to Y_n; \qquad (x_j)_{j\geq 0} \mapsto g_{n,\dots,k}(\psi_k(x_k)). \] 
\end{obs}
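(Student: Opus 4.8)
The plan is to unwind the construction of $\theta$ in the proof of Proposition~\ref{prop:conjugacyprinciple} and show that the finite-level maps $\Phi_k\colon (x_j)_{j\geq 0}\mapsto g_{n,\dots,k}(\psi_k(x_k))$ converge uniformly to $\theta_n$. Recall from that proof that $\theta(x)=y$, where $y=(y_j)\in\invlim(g_j)$ is the unique orbit shadowing the pseudo-orbit $\tilde y_j\defeq\psi_j(x_j)$, with $d_{Y_j}(y_j,\tilde y_j)\leq\delta\defeq\max(M,K)\cdot\lambda/(\lambda-1)$ for all $j$. In particular $d_{Y_k}(y_k,\psi_k(x_k))\leq\delta$ for every $k$. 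First I would apply the bonding maps $g_{n,\dots,k}$ to both sides: since the system $(Y_j,g_{j+1})$ is expanding, property~\ref{item:expandingbackwardsshrinking} of Definition~\ref{defn:expandingsystem} gives, for every $\eps>0$, some $k_0$ such that for all $k\geq k_0$ and all points $u,v\in Y_k$ with $d_{Y_k}(u,v)\leq\delta$ one has $d_{Y_n}(g_{n,\dots,k}(u),g_{n,\dots,k}(v))<\eps$. Applying this to $u=y_k$ and $v=\psi_k(x_k)$, and using $g_{n,\dots,k}(y_k)=y_n=\theta_n(x)$ (because $y\in\invlim(g_j)$), we conclude $d_{Y_n}(\theta_n(x),\Phi_k(x))<\eps$ for all $k\geq k_0$.

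The key point is that $k_0$ depends only on $\delta$ and $\eps$ — not on $x$ — because the bound $d_{Y_k}(y_k,\psi_k(x_k))\leq\delta$ holds uniformly over $x\in\invlim(f_j)$ by the "more precisely" clause of Lemma~\ref{lem:shadowing}, and the backwards-shrinking estimate in Definition~\ref{defn:expandingsystem}\ref{item:expandingbackwardsshrinking} is itself uniform over all pairs of points within distance $\delta$. Hence the convergence $\Phi_k\to\theta_n$ is uniform on $\invlim(f_j)$, which is exactly the assertion.

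I do not expect any serious obstacle here; the statement is essentially a bookkeeping consequence of the shadowing construction. The one thing to be careful about is making sure the uniform distance bound $d_{Y_k}(y_k,\psi_k(x_k))\leq\delta$ is quoted correctly from Lemma~\ref{lem:shadowing} (it follows by taking the pseudo-orbit $\tilde y_j=\psi_j(x_j)$, which is an $M$-pseudo-orbit for the system $(Y_j,g_{j+1})$ by condition~\ref{item:pseudoconjugacy} of Definition~\ref{defn:pseudoconjugacy}), and that the relevant $\eps$-$k_0$ relation genuinely comes from property~\ref{item:expandingbackwardsshrinking} applied with $\Delta=\delta$ and $k=n$. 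Everything else is immediate.
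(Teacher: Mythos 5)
Your proof is correct and takes essentially the same route the paper intends: the paper's proof simply points to the definition of $\theta$ and the proof of the shadowing lemma, where $y_n$ is constructed precisely as $\lim_{k\to\infty}g_{k,\dots,n}(\tilde y_k)$ with the Cauchy estimate coming from property~\ref{item:expandingbackwardsshrinking} applied to the uniform bound $d_{Y_k}(y_k,\tilde y_k)\leq\delta$. You have merely unwound that one-line pointer into the explicit $\eps$--$k_0$ argument, correctly locating both the source of the uniform distance bound (the ``more precisely'' clause of Lemma~\ref{lem:shadowing}, via condition~\ref{item:pseudoconjugacy} of Definition~\ref{defn:pseudoconjugacy}) and the source of uniformity in $x$ (the bound $\delta$ does not depend on the point).
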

\begin{proof}
 This is immediate from the definition of $\theta$ and the proof of the shadowing lemma. 
\end{proof}
\begin{remark}
  In particular, if both systems are autonomous, say generated by $(f,X)$ and $(g,Y)$, and the map $\psi_j\colon X\to Y$ is independent of $j$, then
    $\theta$ is a conjugacy between the functions $\tilde{f}$ and $\tilde{g}$ induced by $f$ and $g$ on the respective inverse limits. 
\end{remark}


\section{Topology of Julia continua}\label{sec:geometry}\label{sec:spanzero}

We now study the general topological properties of Julia continua for a function in the class 
   $\Blog$. In particular, we prove that every such Julia continuum has span zero. The idea of the proof can be outlined in rather simple terms: 
   Since each tract $T$ cannot intersect its own $2\pi i$-translates, two points cannot exchange position by moving inside
    $T$ without coming within distance $2\pi$ of each other. Now let $\s$ be an admissible external address. By applying the preceding
    observation to the tract $T_j$, for $j$ large, and using the expanding property of $F$, we see that two points cannot
    cross each other within $J_{\s}(F)$ without passing within distance $\eps$ of each other, for every $\eps>0$. 
    This establishes that $J_{\s}(F)$ has span zero. (This idea is similar in spirit to the proof of Lemma~3.3 and Corollary~3.4 in 
   \cite{eremenkoproperty},
    which we in fact recover below.)

  However, some care is required, since the tract $T$ can very well contain points whose imaginary parts differ by a large amount 
   (see Figure \ref{fig:windingtract}). Hence we shall have to take some care in justifying the informal argument above, by studying the
   possible structure of \emph{logarithmic tracts} somewhat more closely.
 
  \begin{defn}[Logarithmic tracts]
    Let $T\subset\C$ be an unbounded simply connected domain bounded by a Jordan curve (passing through infinity).
    If $T$ does not intersect its $2\pi i\Z$-translates and is  unbounded to the right (i.e., $\re z\to +\infty$ as $z\to\infty$ in $T$), then $T$
     is called a \emph{logarithmic tract}. 
  \end{defn}
\begin{remark}
In particular, every tract of a function $F\in\Blog$ is a logarithmic tract by definition. 
\end{remark}


  Within such a tract, we wish to understand when points can move around without having to come close to each other. To study this question,
   we introduce the following terminology. (See Figure \ref{subfig:sep}.)

\begin{figure}
  \subfloat[]{\def\svgwidth{.5\textwidth}
  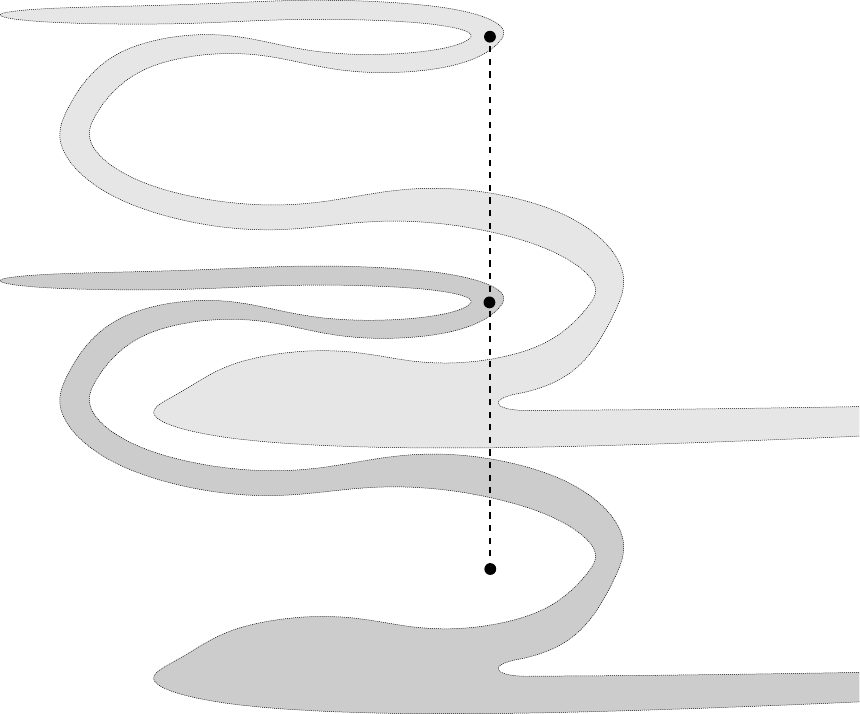\protect\label{subfig:Iz}}
  \subfloat[]{\def\svgwidth{.5\textwidth}
  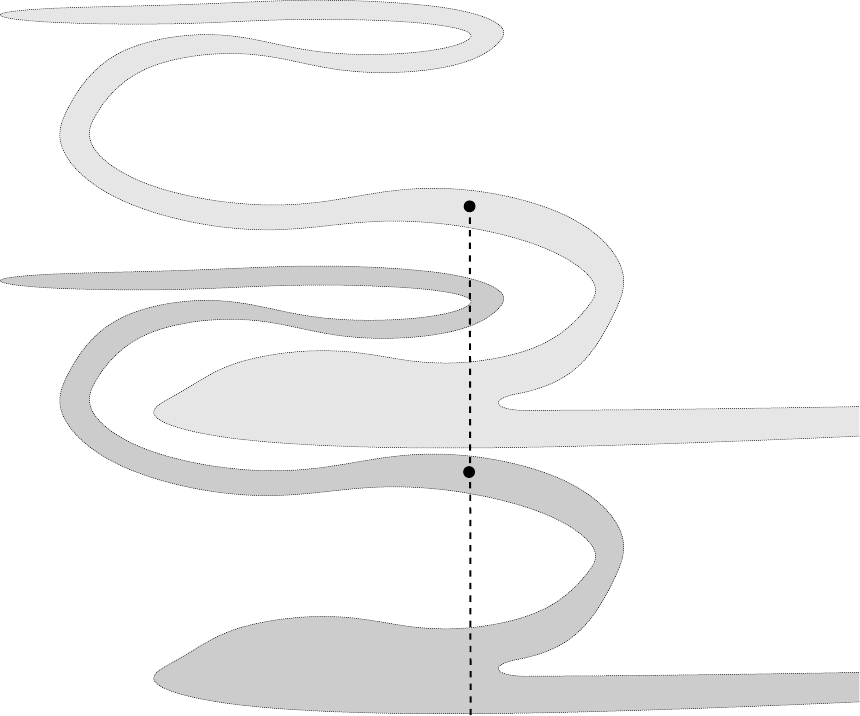\protect\label{subfig:sep}}
 \caption{\label{fig:windingtract}
   A logarithmic tract containing points whose imaginary parts are further apart than $2\pi$. (The translate $T+2\pi i$ is shown in
    light grey to demonstrate that $T$ is indeed disjoint from its $2\pi i\Z$-translates.)
     Subfigure~\protect\subref{subfig:Iz} illustrates
     the definition of the segment $I_z$, and shows that we may have~$\sep_T(a,b,z)=0$ even though
     $\re a < \re z < \re b$. The configuration in~\protect\subref{subfig:sep} demonstrates that 
    the number $\sep_T(a,b;z)$ can decrease under perturbation of $z$ (it will change from $2$ to $0$ if we move
    the point $z$ slightly to the right).}
\end{figure}

  \begin{defn}[Separation number]\label{defn:separationnumber}
    For any $z\in \C$, we denote by 
     $I_z$ the line segment
     \[ I_z \defeq  \{ z + i\cdot t \colon  t\in [-2\pi ,2\pi] \}. \]

   Let $T$ be a logarithmic tract, and let $z\in T$. 
    If $a,b\in (T\cup \{\infty\})\setminus I_z$, then we define
      $\sep_T(a,b;z)\geq 0$ to denote the smallest number of points that 
      a curve $\gamma$ connecting $a$ and $b$ in $T$ may have in common with the segment $I_z$. 

  By convention, $I_{\infty}=\{\infty\}$; hence we also define $\sep_T(a,b;\infty)=0$ for $a,b\in T$. 
  \end{defn}
  (The tract $T$ will usually be fixed in the following, and we shall then suppress the subscript $T$ in this notation.)

  \begin{prop}[Continuous parity of separation numbers]\label{prop:sep}
   Let $T$ be a logarithmic tract, let $a,b,z\in T\cup\{\infty\}$, and suppose that $a,b\notin I_z$. Then 
     the parity of $\sep_T(a,b;z)$ remains constant under
     small perturbations of $a$, $b$ and $z$.

  That is, if $X$ denotes the set of points $(a,b;z)\in (T\cup \{\infty\})^3$ with $a,b\notin I_z$, then the function
      \[ \sep_T\colon X\to \Z_2; \quad (a,b;z)\mapsto \sep_T(a,b;z)\ (\mod 2) \]
     is continuous.

   On the other hand, if $z\in\C$, then $\sep_T(a,b;z)$ increases or decreases by $1$ (and hence 
    changes parity) as $a$ or $b$ passes through the segment $I_z$ transversally. Similarly, this number 
    changes parity as $z$ passes through (exactly) one  of  $I_a$ and $I_b$ transversally. 
  \end{prop}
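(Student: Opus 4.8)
The plan is to reinterpret $\sep_T(a,b;z)\bmod 2$ as a mod-$2$ intersection number, after which all three assertions become instances of the standard behaviour of such numbers. The first step is the following identification, for finite $a,b,z\in T$ with $a,b\notin I_z$: the number $\sep_T(a,b;z)$ equals the minimum of $\#(\gamma\cap I_z)$ over all simple arcs $\gamma$ from $a$ to $b$ in $T$ that meet $I_z$ transversally in finitely many points, and, moreover, $\#(\gamma\cap I_z)\equiv\sep_T(a,b;z)\pmod 2$ for \emph{every} such $\gamma$. The minimality claim follows by passing to a simple subarc of any competitor and then, inside small disjoint disks around the (finitely many) intersection points, replacing each local arc by one crossing $I_z$ at most once; this never increases the count. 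The parity claim is where the hypotheses on a logarithmic tract are used: the endpoints $z\pm 2\pi i$ of $I_z$ do not lie in $T$, since $T$ is disjoint from its $2\pi i\Z$-translates, so the open segment $I_z\setminus\{z\pm 2\pi i\}$ is a closed, properly embedded line in the open set $\C\setminus\{z\pm 2\pi i\}\supset T$. As $T$ is simply connected, any loop in $T$ has mod-$2$ intersection number zero with this line, and applying this to the concatenation $\gamma_1\ast\overline{\gamma_2}$ of two competitors shows that their intersection counts agree mod $2$. Write $\nu(a,b;z)\in\Z_2$ for this common parity; the proposition is then a statement about $\nu$.

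For continuity, fix $(a_0,b_0;z_0)\in X$ with all three coordinates finite, and choose a simple arc $\gamma_0$ from $a_0$ to $b_0$ in $T$ meeting $I_{z_0}$ transversally in $k_0$ points. Since $\gamma_0$ is compact in the open set $T$, keeps a positive distance from $z_0\pm 2\pi i$, and has endpoints off $I_{z_0}$, for $(a,b,z)$ sufficiently close to $(a_0,b_0,z_0)$ the translated segment $I_z$ still meets $\gamma_0$ transversally in exactly $k_0$ points, none of them near $a_0$ or $b_0$; prepending the short segment $[a,a_0]$ and appending $[b_0,b]$ (both lying in $T$ and disjoint from $I_z$) yields an admissible arc for $(a,b;z)$ with intersection count $k_0$, so $\nu(a,b;z)=k_0\bmod 2=\nu(a_0,b_0;z_0)$. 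The same bookkeeping handles the jump across $I_z$: if $a$ moves along a path in $T$ that crosses $I_z$ transversally once (with $b$ and $z$ fixed), then concatenating that path segment --- which contributes exactly one transversal intersection with $I_z$ --- with a fixed admissible arc joining the old position of $a$ to $b$ changes the intersection count by exactly $1$, whence $\nu$ changes parity.

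The delicate assertion is the behaviour as $z$ moves. Here I would fix one simple arc $\gamma$ from $a$ to $b$ in $T$, chosen to meet the vertical direction transversally near each of its endpoints, and analyse $s\mapsto\#(\gamma\cap I_{z(s)})$ while $z(s)$ crosses, say, $I_a$ transversally at $s=s_0$ and never meets $I_b$. This is a step function, and its jumps can come only from: tangencies of $\gamma$ with the (vertical) direction of $I_{z(s)}$, which change the count by $\pm 2$ and so preserve parity; instants at which an endpoint $z(s)\pm 2\pi i$ of $I_{z(s)}$ passes across $\gamma$ --- which never happen, because $z(s)\pm 2\pi i$ lies in $T\pm 2\pi i$ and hence never in $T\supset\gamma$; and instants at which $I_{z(s)}$ sweeps across an endpoint of $\gamma$, i.e.\ across $a$ or $b$. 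Since $z(\cdot)$ crosses $I_a$ (equivalently $I_{z(s)}$ sweeps across $a$) exactly once transversally near $s_0$ and never meets $I_b$, choosing $s^-<s_0<s^+$ close to $s_0$ isolates precisely one $\pm1$ jump between $s^-$ and $s^+$, so $\nu(a,b;z(s^-))\neq\nu(a,b;z(s^+))$. Finally, the cases in which one of $a,b,z$ equals $\infty$ reduce to the above: when $z_0=\infty$ one has $a_0,b_0\in T$ and all nearby competitors can be routed inside a fixed compact subset of $T$ which is disjoint from $I_z$ once $z$ is far out, giving $\nu\equiv0$; and when $a_0$ or $b_0$ equals $\infty$ one routes the corresponding end of the arc through a fixed unbounded neighbourhood of $\infty$ in $T\cup\{\infty\}$ lying to the right of all the relevant segments $I_z$, after which the finite arguments apply verbatim. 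The main obstacle is precisely the genericity bookkeeping in this last paragraph: verifying that the three listed sources are the only ways the step function can jump, and that only the $\pm1$ jumps at $a$ and at $b$ alter its parity.
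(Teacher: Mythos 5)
Your proposal is correct and has the same overall skeleton as the paper's argument: both encode $\sep_T(a,b;z)\bmod 2$ as a mod-$2$ intersection count with $I_z$ and then run perturbation arguments. The phrasing differs: the paper works directly with the decomposition of $I_z\cap T$ into cross-cuts (each separating $T$ into two components, so that $\sep$ is the number of separating cross-cuts and the parity of $\#(\gamma\cap I_z)$ is forced), whereas you phrase the same parity invariance via mod-$2$ intersection theory and simple-connectedness of $T$ together with the fact that the endpoints $z\pm 2\pi i$ lie outside $T$; these are two dressings of the same underlying Jordan-curve fact. The genuine methodological divergence is in the final claim. The paper disposes of ``$z$ crossing $I_a$'' by a slick three-move replacement (move $a$ off $I_{z_1}$ without crossing $I_{z_1}$, move $z$ from $z_1$ to $z_2$ without crossing $I_{a'}$ or $I_b$, then move $a$ back across $I_{z_2}$), reducing the claim entirely to what has already been proved. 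You instead track the step function $s\mapsto\#(\gamma\cap I_{z(s)})$ directly and classify its jump events; this is more explicit and self-contained but requires the genericity bookkeeping that you yourself flag, and it is slightly more work. Both are correct. One small imprecision worth flagging (which, to be fair, the paper's own wording shares): in the continuity step you assert that for nearby $(a,b,z)$ the arc $\gamma_0$ meets $I_z$ transversally in \emph{exactly} $k_0$ points; this can fail, since $I_z$ may sweep past a vertical tangency of $\gamma_0$ that sat just outside $I_{z_0}$ and thereby pick up a new pair of intersections. What is true, and all you actually use, is that the \emph{parity} of the count is locally constant, because the only parity-changing events (an endpoint of $\gamma_0$ entering $I_z$, or an endpoint of $I_z$ entering $\gamma_0$) are ruled out for $(a,b,z)$ near $(a_0,b_0,z_0)$. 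The conclusion is unaffected.
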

  \begin{remark}
     The function $\sep_T(a,b;z)$ itself need not be continuous in $z$, but 
      is always upper semi-continuous. That is, 
     under a small perturbation of $z$, the separation number might decrease, but will never increase; see Figure~\ref{fig:windingtract}. 
  \end{remark}
 \begin{proof}
   Continuity of $\sep_T(a,b;z)$ at infinity (in each of the three variables) is clear from the definition. Hence in the following we may assume that
    $a,b,z\in T$.

  Observe that $I_z\cap T$ is a union of vertical cross-cuts of the tract $T$. Clearly $\sep(a,b;z)$ is precisely the number of
   such cross-cuts that separate $a$ from $b$ in $T$. Recall that each cross-cut $C$ separates $T$ into precisely two components, one on each
   side of $C$. In particular, as the point $a$ or $b$ crosses $I_z$, keeping the other points
    fixed, the number $\sep(a,b;z)$ increases or decreases by $1$.

   Also note that, if $\gamma\subset T$ is a smooth curve connecting
   $a$ and $b$, and $\gamma$ intersects $I_z$ only transversally, then 
   the number of intersections between $\gamma$ and $I_z$ has the same
   parity as $\sep(a,b;z)$. Indeed, 
   the curve $\gamma$ must
   intersect every cross-cut that separates $a$ from $b$ in an odd number
   of points, and every cross-cut that does not separate $a$ from $b$ in
   an even number of points.

   So let $a,b,z\in T$ with $a,b\notin I_z$. Clearly a sufficiently small perturbation of $a$ or of $b$ does not change the value (and hence the 
   parity) of $\sep(a,b;z)$, so we only need to focus on what happens when we perturb
   $z$ to a nearby point $\tilde{z}$. 

  Let $\gamma$ be a smooth curve, as above, connecting $a$ and $b$ and intersecting $I_z$ only transversally. If $\tilde{z}$ is close enough to $z$, then
     $\gamma$ also intersects $I_{\tilde{z}}$ only transversally, and in the same number of points. Hence we see that
     $\sep(a,b;z)$ and $\sep(a,b;\tilde{z})$ have the same parity, as claimed. 

 The final claim follows from what has already been proved:
    the effect of $z$ crossing $I_a$ (say) transversally can be  obtained by
    first moving $a$ without intersecting $I_z$ (leaving the separation number unchanged), then
    moving $z$ without intersecting $I_a$ or $I_b$ (preserving parity),  and letting $a$ cross
    $I_z$ transversally  to  its original position (changing parity). 
 \end{proof}

 \begin{cor}[Moving along a connected set]\label{cor:movingpoints}
   Let $T$ be a logarithmic tract, and let $A\subset T\cup\{\infty\}$ be compact and connected. 
    Fix a left-most point $a\in A$ and a right-most point  $b\in A$; i.e. 
   \[ \re a = \min_{z\in A} \re z\quad\text{and}\quad
       \re b = \max_{z\in A} \re z. \]
   (Recall from Section~\ref{sec:preliminaries} that $\re \infty = +\infty$ by convention.)

  \begin{enumerate}[(a)]
    \item \label{item:closetoleftorright}
   Let $w\in A$, and let $\zeta_0\in (A\cap I_a)\setminus I_w$. If $\zeta\in A\setminus I_a$ is sufficiently close to
    $\zeta_0$,   then  $\sep(a,w;\zeta)$ is  odd. 
    The same statement holds with $a$ replaced by  $b$. 

   \item \label{item:movingpoints}
   In particular, let $z\in A$ such that $a,b\notin I_z$. Then
    $\sep(a,b;z)$ is odd, and hence $I_z$ separates $a$ from $b$ in
    $T$. 
 \end{enumerate}
 \end{cor}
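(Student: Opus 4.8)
The plan is to derive the second part of the corollary from the first, together with the continuity statement of Proposition~\ref{prop:sep} and the Boundary Bumping Theorem~\ref{thm:boundarybumping}, and to prove the first part directly from Proposition~\ref{prop:sep}. The only essential use of the hypothesis that $T$ is a logarithmic tract is the elementary fact that $z+2\pi i\notin T$ whenever $z\in T$ (otherwise $z+2\pi i\in T\cap(T+2\pi i)$). This has two consequences I shall need: every connected component of the intersection of $T$ with a vertical line has Euclidean diameter less than $2\pi$; and, since $a,\zeta_0\in A\subseteq T$, the hypothesis $\zeta_0\in A\cap I_a$ in the first part already forces $\re\zeta_0=\re a$ and $|\Ima\zeta_0-\Ima a|<2\pi$, that is, $\zeta_0$ lies in the \emph{interior} of the segment $I_a$. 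If it happens that $A$ is contained in a single vertical line, the first consequence gives $\diam A<2\pi$, so that $a,b,w\in I_z$ for every $z\in A$ and the hypotheses of both parts are vacuous; I therefore assume henceforth that $\re a<\re b$ (in the second part) and that $A$ is not contained in a vertical line (in the first part).

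For the first part, assume first that $a$, $w$ and $\zeta_0$ are finite. Since $\zeta_0$ is interior to $I_a$, for all $\zeta$ sufficiently close to $\zeta_0$ we have $\zeta\in I_a$ precisely when $\re\zeta=\re a$; as $a$ is a left-most point of $A$, every $\zeta\in A\setminus I_a$ near $\zeta_0$ therefore satisfies $\re\zeta>\re a$. I would then pick a point $\zeta^-\in T$ with $\re\zeta^-<\re a$, close enough to $\zeta_0$ that $\zeta^-\notin I_w$ (possible, since $\zeta_0\in T$ and $\zeta_0\notin I_w$ with $I_w$ closed). A short straight segment in $T$ joining $\zeta^-$ to $\zeta$ then meets the line $\{\re z=\re a\}$ in exactly one point, transversally, and this point lies in the interior of $I_a$; moreover the segment avoids $I_w$ (staying near $\zeta_0\notin I_w$), and at both endpoints the triples $(a,w;\zeta^-)$, $(a,w;\zeta)$ lie in the domain of $\sep_T$, as one checks directly. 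By the part of Proposition~\ref{prop:sep} asserting that $\sep_T(a,w;\cdot)$ changes parity when its last argument passes transversally through exactly one of $I_a$ and $I_w$, the numbers $\sep_T(a,w;\zeta)$ and $\sep_T(a,w;\zeta^-)$ have opposite parity. But $\sep_T(a,w;\zeta^-)=0$: the segment $I_{\zeta^-}$ lies on the line $\{\re z=\re\zeta^-\}$, disjoint from $A\subseteq\{\re z\geq\re a\}$, so $A$ is a connected subset of $T\setminus I_{\zeta^-}$ and its points $a$ and $w$ are not separated by $I_{\zeta^-}$ in $T$. Hence $\sep_T(a,w;\zeta)$ is odd. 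The assertion with $b$ in place of $a$ is identical, using a point $\zeta^+$ with $\re\zeta^+>\re b$ and the disjointness of $\{\re z=\re\zeta^+\}$ from $A\subseteq\{\re z\leq\re b\}$. When $\zeta_0=\infty$ (which forces $A=\{\infty\}$) or $w=\infty$, the argument is either vacuous or goes through verbatim, using local path-connectedness of $(T\cup\{\infty\})\setminus I_{\zeta^-}$ to produce a curve from $a$ to $\infty$ avoiding $I_{\zeta^-}$.

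For the second part, Proposition~\ref{prop:sep} says that the map $p\colon\zeta\mapsto\sep_T(a,b;\zeta)\bmod 2$ is continuous on $\{\zeta\in T\cup\{\infty\}\colon\zeta\notin I_a\cup I_b\}$. Given $z\in A$ with $a,b\notin I_z$, let $U$ be the connected component of $A\setminus(I_a\cup I_b)$ containing $z$. Since $a\in A\cap(I_a\cup I_b)$ while $z\notin I_a\cup I_b$, the set $U$ is a nonempty proper subset of the continuum $A$, so by the Boundary Bumping Theorem~\ref{thm:boundarybumping} its closure contains a point $\zeta_0\in A\cap(I_a\cup I_b)$ that is a limit of points of $U$. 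As $\re a<\re b$, we have $I_a\cap I_b=\emptyset$, so $\zeta_0$ lies in exactly one of $I_a$, $I_b$; say $\zeta_0\in(A\cap I_a)\setminus I_b$, the other case being symmetric with $a$ and $b$ interchanged. Applying the first part with $w=b$ gives $p(\zeta)=1$ for every $\zeta\in A\setminus I_a$ sufficiently close to $\zeta_0$; choosing such a $\zeta$ in $U$ (possible, as $\zeta_0\in\overline U$ and $U\subseteq A\setminus(I_a\cup I_b)$), and using that $p$ is continuous and $\Z_2$-valued on the connected set $U$, we conclude $p\equiv 1$ on $U$, and in particular $p(z)=1$. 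Thus $\sep_T(a,b;z)$ is odd, so $I_z$ separates $a$ from $b$ in $T$.

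The hard part is conceptual rather than computational: once Proposition~\ref{prop:sep} is in place the whole argument is short, and the one step I expect to need genuine care is guaranteeing that $\zeta_0$ is an \emph{interior} point of $I_a$, so that crossing the line $\{\re z=\re a\}$ near $\zeta_0$ really is a transversal crossing of $I_a$. This is precisely where disjointness of $T$ from its $2\pi i$-translates is indispensable, and where the corresponding statement for an arbitrary Jordan domain would be false. Besides this, one must not forget to treat the degenerate case in which $A$ lies in a single vertical line, nor the configurations involving the point $\infty$; each of these is easy but easy to overlook.
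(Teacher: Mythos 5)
Your proof of part~\ref{item:closetoleftorright} for finite data, and your derivation of~\ref{item:movingpoints} from~\ref{item:closetoleftorright} via boundary bumping and the continuity clause of Proposition~\ref{prop:sep}, closely track the paper's argument; your observation that the $2\pi i$-disjointness forces $\zeta_0$ into the \emph{interior} of $I_a$, so that the crossing is genuinely transversal, is exactly the geometric point the paper relies on implicitly. However, there is a real gap in the case $b=\infty$, which you set aside as ``easy''. Your proof of the $b$-version of~\ref{item:closetoleftorright} requires a point $\zeta^+$ with $\re\zeta^+>\re b$, which does not exist when $b=\infty$; and your claim that ``$\zeta_0=\infty$ forces $A=\{\infty\}$'' is correct only for the $a$-version (where $\zeta_0\in I_a$ gives $a=\infty$, hence $A=\{\infty\}$). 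For the $b$-version, $\zeta_0\in I_b=I_\infty=\{\infty\}$ just says $\zeta_0=b=\infty$, and $A$ can perfectly well be nondegenerate. So the instance of~\ref{item:closetoleftorright} asserting that $\sep(\infty,w;\zeta)$ is odd for $\zeta\in A$ of large real part is not established by your argument.

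That instance is genuinely needed in your proof of~\ref{item:movingpoints}: when $b=\infty$, boundary bumping can yield $\zeta_0\in\cl U\cap I_b=\{\infty\}$ with $\cl U\cap I_a=\emptyset$. (For a concrete example take $A$ to be the union of two rays $\{t+0.1i\colon t\geq 1\}$, $\{t-0.1i\colon t\geq 1/2\}$ together with $\infty$ inside a narrow horizontal strip; then $a=1/2-0.1i$, and the first ray is a component of $A\setminus(I_a\cup\{\infty\})$ whose closure meets $I_a\cup I_b$ only at $\infty$.) The paper recognises the difficulty and \emph{defers} this subcase of~\ref{item:closetoleftorright}: it proves~\ref{item:movingpoints} assuming~\ref{item:closetoleftorright} in full generality, and then settles the deferred case by an auxiliary construction~--- take an arc $\gamma\subset T$ from $w$ to $\infty$, let $a$ be a left-most point of $\gamma$, and run the argument of~\ref{item:movingpoints} on $\gamma$ instead of $A$. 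This is non-circular precisely because for the \emph{arc} $\gamma$, every component of $\gamma\setminus I_a$ has closure meeting $I_a$ (not only $\{\infty\}$), so only the already-proved finite-$a$, $w=\infty$ case of~\ref{item:closetoleftorright} is invoked; one then passes from $\gamma$ to an arbitrary $\zeta$ of large real part by the parity-continuity of Proposition~\ref{prop:sep}. You need to supply this, or an equivalent argument, before your proof of~\ref{item:movingpoints} is complete.
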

 \begin{remark}[Remark 1]
   The final statement implies that, in order to move from a left-most point of $A$ to a right-most point, we must pass 
    within distance at most $2\pi$ of every point of $A$. This is a key idea to keep in mind.
 \end{remark}
\begin{proof}
  Note first that, if $\tilde{\zeta}\in T\setminus A$ is to  the left of  $a$, then 
    $\sep(a,w;\tilde{\zeta})=0$, since $A$ is connected, contains $a$ and $w$ and does not intersect
      $I_{\tilde{\zeta}}$. As  $\tilde{\zeta}$  crosses $I_a$ at $\zeta_0$, the parity of  the separation number
     changes by Proposition~\ref{prop:sep}, proving the first claim. (A slightly more careful argument would show that
     the separation number is exactly equal to $1$.) 
     If $b\neq \infty$, then the second part of~\ref{item:closetoleftorright} follows analogously. 
    We defer the case
     $b=\infty$ until the end of  the proof, and show first how part~\ref{item:movingpoints} follows 
      from~\ref{item:closetoleftorright}. 

  So suppose that~\ref{item:closetoleftorright} holds, and let
      $z\in  A$ such  that $a,b\notin I_z$. Let $K$ be the component of $A\setminus (I_a\cup I_b)$ 
      containing  $z$.  
     By Proposition~\ref{prop:sep}, the set of $\zeta$ such that
     $\sep(a,b;\zeta)$ is odd is relatively open and closed in 
      $K$; so it remains only to show that this set is non-empty. 

   By the boundary bumping theorem (Theorem \ref{thm:boundarybumping}), the closure of $K$ intersects
    $I_a$ or $I_b$; let us suppose for example that there is $\zeta_0 \in I_a\cap \cl{K}$. 
    If $\zeta$ is  sufficiently close  to $\zeta_0$, then $\sep(a,b;\zeta)$ is odd by~\ref{item:closetoleftorright},  as 
      desired.

   The same argument establishes the second half of~\ref{item:closetoleftorright} when $b=\infty$. 
     Indeed, fix $w\in  T$; we shall show that $\sep(w,\infty;\zeta)$ is odd whenever  $\zeta\in T$ has sufficiently
      large real part. Let $\gamma$ be an arc in $T$ connecting $w$ to infinity and let $a$ be a left-most point  of
      $\gamma$. Then the closure of every connected component of $\gamma\setminus I_a$ intersects 
      $I_a$. As we saw, this implies that $\sep(a,\infty; z)$ is odd for all $z\in \gamma\setminus I_a$.  
      But if $\zeta\in  T$ has sufficiently large real part, then  $\zeta$ can be  connected  to $\gamma$ without
      intersecting  $I_a$, and the  claim follows by the continuous parity of separation numbers (Proposition~\ref{prop:sep}).
\end{proof}

We are now ready to prove the statement alluded to at the beginning of the section, which then allows us to
   deduce that every Julia continuum has span zero.

 \begin{prop}[Bounded span of tracts]\label{prop:boundedspan}
  Let $T$ be a logarithmic tract, and let $A\subset T\cup \{\infty\}$ be a continuum. 
   Suppose  that
   $X\subset (T\cup\{\infty\})^2$ is a continuum whose first and second 
   components
   both project to $A$. 

   Then there is a point $(z,w)\in X$ 
    such that $w\in I_z$. In particular, either $z=w=\infty$ or 
    $z,w\in\C$ and $|z-w|<2\pi$. 
 \end{prop}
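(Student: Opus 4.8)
The plan is to argue by contradiction. Suppose that $w\notin I_z$ for every $(z,w)\in X$; that is, $X$ avoids the ``thickened diagonal'' $\{(z,w)\colon w\in I_z\}$. Two degenerate cases are immediate: if $(\infty,\infty)\in X$ then we are done, since $\infty\in I_\infty$; and if $A\defeq\pi_1(X)=\pi_2(X)$ equals $\{\infty\}$ then $X=\{(\infty,\infty)\}$, which we have just excluded. So I may assume $A\neq\{\infty\}$, in which case $A$ has a \emph{finite} left-most point $a$ (with $\re a=\min_{\zeta\in A}\re\zeta$) and a right-most point $b\in T\cup\{\infty\}$. Since $a\in A\subseteq T$ and $T$ is open, I would pick $p\in T$ with $\re p<\re a$, chosen so close to $a$ that the horizontal segment $\eta\defeq[p,a]$ stays in $T$. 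The features of this choice I will use are: $I_p$ (a vertical segment of real part $\re p<\re a$) is disjoint from $A$; and $\eta$ has real parts in $[\re p,\re a]$ and meets the vertical line $\{\re\zeta=\re a\}$ only at $a$.

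The heart of the argument is the $\Z_2$-valued function
\[ g\colon X\to\Z_2,\qquad g(z,w)\defeq\sep_T(p,w;z)\ (\mod 2). \]
The point is that this is well defined on \emph{all} of $X$: for $(z,w)\in X$ the reference point $z$ lies in $A$, so $p\notin I_z$ (because $\re p<\re a\le\re z$, with the convention $\sep_T(\cdot,\cdot;\infty)=0$ taking care of $z=\infty$), while $w\notin I_z$ is the standing hypothesis. By the continuity of the parity of separation numbers (Proposition~\ref{prop:sep}), $g$ is continuous; since $X$ is a continuum, $g$ is therefore constant, with some value $c\in\Z_2$.

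It then remains to evaluate $g$ at two suitably chosen points of $X$, using surjectivity of the projections. First, since $\pi_2(X)=A\ni a$, pick $(z_1,a)\in X$; the segment $\eta$ joins $p$ to $a$ inside $T$, and by the choice of $\eta$ together with $a\notin I_{z_1}$ (which holds because $(z_1,a)\in X$) the curve $\eta$ is disjoint from $I_{z_1}$, so $\sep_T(p,a;z_1)=0$ and hence $c=g(z_1,a)=0$. Second, since $\pi_2(X)=A\ni b$, pick $(z_2,b)\in X$; here $z_2\neq\infty$, for otherwise $\infty\in A$, which would force $b=\infty$ and $(\infty,\infty)\in X$. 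Now $A^{*}\defeq A\cup\eta$ is a continuum in $T\cup\{\infty\}$ whose left-most point is $p$ and whose right-most point is $b$, and the point $z_2\in A^{*}$ satisfies $p\notin I_{z_2}$ and $b\notin I_{z_2}$ (the latter, once more, because $(z_2,b)\in X$). Hence part~(b) of Corollary~\ref{cor:movingpoints}, applied to $A^{*}$, shows that $\sep_T(p,b;z_2)$ is odd, so $g(z_2,b)=1\neq c$. This contradiction proves that some $(z,w)\in X$ has $w\in I_z$. For the final assertion: if $z=\infty$ then $I_z=\{\infty\}$ forces $w=\infty$; otherwise $z\in T$ and $w\in I_z\subseteq\C$, so $w\in T$, and since a logarithmic tract is disjoint from its $2\pi i\Z$-translates we cannot have $w=z\pm 2\pi i$, whence $|z-w|<2\pi$.

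The step I expect to need the most care is guaranteeing that $g$ is simultaneously defined at \emph{every} point of $X$: the separation number $\sep_T(p,w;z)$ is only defined when both arguments $p$ and $w$ avoid the reference segment $I_z$, and it is exactly to ensure $p\notin I_z$ for all $z\in A$ that $p$ must be placed strictly to the left of $A$, while the contradiction hypothesis disposes of $w$. A secondary, but routine, point is the bookkeeping at infinity~-- the cases $z=\infty$ and $b=\infty$~-- which is absorbed by the conventions $I_\infty=\{\infty\}$ and $\sep_T(\cdot,\cdot;\infty)=0$.
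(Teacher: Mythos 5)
Your argument is correct, and it takes a noticeably different route from the paper's. Both arguments exploit the continuity of the parity of $\sep_T$ (Proposition~\ref{prop:sep}) and the fact that a $\Z_2$-valued continuous function on a continuum is constant, but they differ in how they make that function well defined on all of $X$. The paper works with $\sep_T(a,w;z)$ where $a$ is the left-most point of $A$; since $a$ can land in $I_z$ when $\re z=\re a$, the paper must treat this degeneracy by hand, defining $V\defeq X\setminus U$ and using Corollary~\ref{cor:movingpoints}(a) to show $V$ is open near such points, and then showing $V$ is nonempty precisely because $(a,w)$ falls into the undefined case. Your shift of the base point to $p$ strictly to the left of $A$, together with the connecting segment $\eta$ and the augmented continuum $A^*=A\cup\eta$, makes $\sep_T(p,\cdot;\cdot)$ globally defined on $X$, so constancy is immediate and the contradiction comes from evaluating at $(z_1,a)$ (giving $0$ via $\eta$) and at $(z_2,b)$ (giving an odd value via Corollary~\ref{cor:movingpoints}(b) applied to $A^*$). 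The gain is that you avoid the fiddly open/closed analysis of the undefined locus; the cost is the small auxiliary construction of $p$, $\eta$, $A^*$ and the need to check that the right-most point $b$ can be reached with $b\notin I_{z_2}$, which you handle correctly by appealing to the standing hypothesis and to the convention at $\infty$.
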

 \begin{proof}
  We shall prove the contrapositive: suppose that 
   $X\subset (T\cup \{\infty\})^2$ is any compact set whose first and second
   components both project to $A$, and such that $w\notin I_z$ for all
   $(z,w)\in X$. Observe that this implies $(\infty,\infty)\notin X$, and hence 
    $A\cap T\neq\emptyset$. We shall show that $X$ is disconnected.

  Let $a$ be a left-most point of $A$, and let $U$ consist of the
   set of all points $(z,w)\in X$ such that 
   $\sep(a,w;z)$ is defined and even. By Proposition~\ref{prop:sep}, this set is open in $X$.

  We claim that $V\defeq  X\setminus U$ is also open in $X$. 
   Let $(z,w)\in V$; then $w\notin I_z$ by assumption on $X$. If also $a\notin I_z$, then $V$ contains a neighborhood of
    $(z,w)$ in $X$ by Proposition~\ref{prop:sep}.

  Now suppose that $a\in I_z$. Let $B_z$ and $B_w$ be connected neighbourhoods of $z$ and $w$ in $T\cup\{\infty\}$, 
    chosen sufficiently small to ensure that
    $\tilde{w}\notin I_{\tilde{z}}$ for $(\tilde{z},\tilde{w}) \in B_z\times B_w$. 
    By Corollary~\ref{cor:movingpoints}, $\sep(a,w;\tilde{z})$ is odd for $\tilde{z}\in A$ sufficiently close to $z$. 
    By Proposition~\ref{prop:sep},
    the value of $\sep(a,\tilde{w};\tilde{z})$ is constant (and hence odd) for all
    $(\tilde{z},\tilde{w})\in ((A\cap B_z)\setminus I_a)\times B_w$. 
  Hence, if $(\tilde{z},\tilde{w})\in X\cap (B_z\times B_w)$, then either $\tilde{z}\in I_a$ or 
     $\sep(a,\tilde{w};\tilde{z})$ is odd. We have $(\tilde{z},\tilde{w})\in V$ in either case, as required.

  Furthermore, both $U$ and $V$ are non-empty. Indeed, by assumption there are 
   $z,w\in A$ such that $(z,a),(a,w)\in X$. We have $(a,w)\in V$ by definition (since $a\in I_a$). Similarly, we have
    $z\notin I_a$, and hence $a\notin I_z$, by assumption on $X$, and $\sep(a,a;z) = 0$ by definition. So $(a,z)\in U$. 
   We have shown that $X$ is disconnected, as desired. 
 \end{proof} 

 \begin{thm}[Julia continua have span zero]\label{thm:spanzero}
  Let $F\in\Blog$ be of disjoint type, and let $\CH$ be a Julia continuum of $F$.
   Then $\CH$ has span zero. 
 \end{thm}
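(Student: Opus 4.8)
The plan is to combine Proposition~\ref{prop:boundedspan} with the hyperbolic expansion packaged in Lemma~\ref{lem:separationoforbits}. Write $\s = T_0 T_1 T_2 \dots$ for the (admissible) external address with $\CH = \hat{J}_{\s}(F)$, and let $\mathcal{A} \subseteq \CH \times \CH$ be a subcontinuum with $\pi_1(\mathcal{A}) = \pi_2(\mathcal{A}) =: A$, a subcontinuum of $\CH$; we must produce a point of $\mathcal{A}$ on the diagonal. First I would observe that, since $\CH \subseteq \overline{T_0} \cup \{\infty\}$ and $F|_{T_0}$ extends to a homeomorphism of $\overline{T_0}\cup\{\infty\}$ fixing $\infty$, the iterate $F^n$ (with the convention $F(\infty)=\infty$) restricts to a continuous map $\CH \to \overline{T_n}\cup\{\infty\}$ which sends finite points into $\overline{T_n}\subseteq\C$ and $\infty$ to $\infty$. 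Hence $\mathcal{A}_n \defeq (F^n\times F^n)(\mathcal{A})$ is a subcontinuum of $(\overline{T_n}\cup\{\infty\})^2$ whose two coordinate projections both equal the subcontinuum $F^n(A) \subseteq \overline{T_n}\cup\{\infty\}$.

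Next I would apply Proposition~\ref{prop:boundedspan} to the logarithmic tract $T_n$, the continuum $F^n(A)$ and the continuum $\mathcal{A}_n$: this yields a point $(p_n,q_n)\in\mathcal{A}_n$ with $q_n\in I_{p_n}$, and I would fix $(\zeta_n,\omega_n)\in\mathcal{A}$ with $F^n(\zeta_n)=p_n$ and $F^n(\omega_n)=q_n$. If $p_n=q_n=\infty$ for some $n$, then $\zeta_n=\omega_n=\infty$ (a finite point of $\CH$ is mapped by $F^n$ into $\overline{T_n}\subseteq\C$), so $(\infty,\infty)\in\mathcal{A}$ lies on the diagonal and we are done. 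Otherwise $p_n,q_n\in\C$ for all $n$, hence $\zeta_n,\omega_n\in J_{\s}(F)$ and $|p_n-q_n|<2\pi$; by the final statement of Observation~\ref{obs:pointstotheleft} there is then $M>0$, independent of $n$, with $\dist_H(p_n,q_n)\leq M$. Now I would run the expansion argument exactly as in the proof of Lemma~\ref{lem:separationoforbits}: pulling back the hyperbolic geodesic of $H$ from $p_n$ to $q_n$ by the conformal isomorphism $F^n_{\s}$ produces a curve in $T_0$ joining $\zeta_n$ and $\omega_n$ whose iterates stay in the tracts $T_0,\dots,T_{n-1}$, so Proposition~\ref{prop:expansion} gives $\dist_H(\zeta_n,\omega_n)\leq\Lambda^{-n}M$. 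In particular $\dist_H(\zeta_n,\omega_n)\to 0$.

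Finally, using compactness of $\mathcal{A}$, I would pass to a subsequence along which $(\zeta_{n_k},\omega_{n_k})\to(\zeta,\omega)\in\mathcal{A}$ and show $\zeta=\omega$. If $\zeta\in\C$, then $\zeta\in J_{\s}(F)\subseteq\overline{T_0}\subseteq H$, and since $\rho_H$ is continuous and positive near $\zeta$, hyperbolic convergence forces Euclidean convergence there; thus $\dist_H(\zeta_{n_k},\omega_{n_k})\to 0$ together with $\zeta_{n_k}\to\zeta$ gives $\omega_{n_k}\to\zeta$, so $\omega=\zeta$. If $\zeta=\infty$, then $\zeta_{n_k}\to\infty$; were some subsequence of $(\omega_{n_k})$ to converge to a finite point $\omega^{*}\in J_{\s}(F)\subseteq H$, then $\dist_H(\zeta_{n_k},\omega^{*})\to 0$ along it, which forces $\zeta_{n_k}$ into a fixed compact subset of $H$, contradicting $\zeta_{n_k}\to\infty$; hence $\omega_{n_k}\to\infty=\zeta$ and again $\omega=\zeta$. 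In either case $(\zeta,\zeta)=(\zeta,\omega)\in\mathcal{A}$, so $\mathcal{A}$ meets the diagonal, and $\CH$ has span zero.

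I expect the main obstacle to be the bookkeeping around the point $\infty$ together with the passage between the hyperbolic metric of $H$ and Euclidean (equivalently spherical) distance: the crude $2\pi$-bound supplied by Proposition~\ref{prop:boundedspan} must be converted into a genuine hyperbolic bound via Observation~\ref{obs:pointstotheleft}, the expansion estimate must be applied with some care to curves that can meet the tract boundaries, and the concluding ``nearby orbits converge to a common point'' step, while immediate for finite limit points, needs the separate argument above when the limit is $\infty$. The remaining ingredients---Proposition~\ref{prop:boundedspan}, the expansion mechanism of Lemma~\ref{lem:separationoforbits}, and compactness of $\mathcal{A}$---go through essentially as in the results already established.
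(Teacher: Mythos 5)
Your proof is correct and follows essentially the same route as the paper: push $\mathcal{A}$ forward by $F^n$, invoke Proposition~\ref{prop:boundedspan} at level $n$ to produce a pair within distance $2\pi$ in $T_n$, convert to a uniform hyperbolic bound via Observation~\ref{obs:pointstotheleft}, pull back by the conformal isomorphism $F^n_{\s}$ using the expansion of Proposition~\ref{prop:expansion}, and finish by compactness. The only differences are presentational (the paper assumes $(\infty,\infty)\notin X$ at the outset rather than disposing of that case inline, and leaves the final convergence step terser than your two-case analysis).
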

 \begin{proof}
  Suppose that $X\subset \CH^2$ is a continuum whose projections to the
   first and second coordinates are the same subcontinuum $A\subset \CH$. We must show that
   $(\zeta,\zeta)\in X$, for some point $\zeta\in A$. We assume that $(\infty,\infty)\notin X$,
   as otherwise there is nothing to prove.

  Let $\s= T_0 T_1 T_2 \dots$ be the external address of $\CH$, and consider 
   $A_n \defeq  F^n(A)$ and 
    $X_n \defeq  \{(F^n(z), F^n(w)) \colon  (z,w)\in X\}$ for $n\geq 0$. (Recall that $F(\infty)=\infty$ by convention.)

   By Proposition~\ref{prop:boundedspan}, $X_n$ contains
   a point $(z_n , w_n)\in T_n\times T_n$ such that $|z_n - w_n| < 2\pi$.
   Let $\zeta_n,\omega_n\in A$ such that $z_n = F^n(\zeta_n)$ and $w_n = F^n(\omega_n)$. 
   The hyperbolic distance (in the range $H$ of $F$) between $z_n$ and $w_n$ is
   uniformly bounded by Observation~\ref{obs:pointstotheleft}, and $F$ uniformly expands the hyperbolic metric
    by Proposition~\ref{prop:expansion}.
   It follows that the hyperbolic distance in $T_0$ between $\zeta_n$ and
   $\omega_n$ tends to zero, and thus the spherical distance also tends to zero. If $\zeta$ is any limit point of the sequence
   $(\zeta_n)$, then 
   $(\zeta,\zeta)\in X$, as required. 
 \end{proof}

 We shall next prove that infinity, as well as any non-escaping or accessible point, is terminal in each Julia continuum. 
 \begin{thm}[The role of $\infty$] \label{thm:infty}
   Let $\CH$ be a Julia continuum of a disjoint-type function $F\in\Blog$. Then $\infty$ is a terminal point
    of $\CH$.
\end{thm}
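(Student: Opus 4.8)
The plan is to prove the contrapositive statement about subcontinua: whenever $A,B\subset\CH$ are subcontinua with $\infty\in A\cap B$, one of them contains the other. So suppose not. Any degenerate continuum containing $\infty$ is contained in every other such continuum, so $A$ and $B$ are both nondegenerate, and I may pick \emph{finite} points $a\in A\setminus B$ and $b\in B\setminus A$. Since $F$ is of disjoint type, $F(\partial T)\subset\partial H$ is disjoint from $\cl{\T}$ for each tract $T$; hence, by the definition of $J_{\s}(F)$, the whole orbit of a point of $J_{\s}(F)$ stays in the \emph{open} tracts, i.e.\ $F^n(J_{\s}(F))\subset T_n$ for all $n$. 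Moreover $F^n|_{\CH}$ is injective, being a composition of the homeomorphisms $F|_{\cl{T_k}}$. Therefore, for each $n$, the images $F^n(A)$ and $F^n(B)$ are subcontinua of $T_n\cup\{\infty\}$, both containing $\infty$, with $F^n(a)\in F^n(A)\setminus F^n(B)$ and $F^n(b)\in F^n(B)\setminus F^n(A)$, and $D_n\defeq F^n(A)\cup F^n(B)=F^n(A\cup B)$ is a subcontinuum of $T_n\cup\{\infty\}$ containing $\infty$.

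The geometric heart of the argument will be Corollary~\ref{cor:movingpoints}: if $p$ is a left-most point of $D_n$, then any subcontinuum of $D_n$ joining $p$ to the right-most point $\infty$ must come within distance $2\pi$ of every point of $D_n$. Let $p_n$ be a left-most point of $F^n(A)$ and $q_n$ a left-most point of $F^n(B)$ (finite, since these continua meet $\C$). For every $n$ either $\re p_n\le\re q_n$, so that $p_n$ is a left-most point of $D_n$, or $\re q_n\le\re p_n$, so that $q_n$ is; hence one alternative holds for infinitely many $n$, and by the symmetry between $(A,a)$ and $(B,b)$ I may assume it is the first. Fixing such an $n$ (to be taken large), I would argue as follows: if $p_n\in I_{F^n(b)}$ put $z_n\defeq p_n$; otherwise apply Corollary~\ref{cor:movingpoints} to $D_n$ (left-most point $p_n$, right-most point $\infty$) and the point $F^n(b)\in D_n$ to conclude that $I_{F^n(b)}$ separates $p_n$ from $\infty$ in $T_n$, so that the continuum $F^n(A)\subset T_n\cup\{\infty\}$, which contains both $p_n$ and $\infty$, must meet $I_{F^n(b)}$; pick $z_n$ in this intersection. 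In either case $z_n\in F^n(A)$, $\lvert z_n-F^n(b)\rvert\le 2\pi$, and $z_n\ne F^n(b)$ because $F^n(b)\notin F^n(A)$.

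Finally I would pull back. Set $\zeta_n\defeq F^{-n}_{\s}(z_n)\in A$, and note $F^{-n}_{\s}(F^n(b))=b$ since $b\in J_{\s}(F)$. By the uniform expansion of Proposition~\ref{prop:expansion} together with the bound $\dist_H(z_n,F^n(b))\le M$ from Observation~\ref{obs:pointstotheleft} (with $R=2\pi$), one gets $\dist_H(\zeta_n,b)\le\Lambda^{-n}M$, so $\lvert\zeta_n-b\rvert\to 0$ (the Euclidean and hyperbolic metrics of $H$ being comparable on a fixed hyperbolic ball around $b$). For $n$ large this forces $\lvert\zeta_n-b\rvert<\dist(b,A)$, contradicting $\zeta_n\in A$ and $b\notin A$. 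In the symmetric case the same estimate produces a point of $B$ arbitrarily close to $a$, contradicting $a\notin B$. Hence no such $a,b$ exist, i.e.\ $A\subseteq B$ or $B\subseteq A$, and $\infty$ is a terminal point of $\CH$.

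The step I expect to be the main obstacle is the geometric one in the second paragraph: because a logarithmic tract may \emph{wind}, $F^n(A)$ and $F^n(B)$ can run along branches of $T_n$ that are far apart in the imaginary direction even though both are anchored at $\infty$, so it is not at all immediate that they have to come close. The resolution is exactly the separation-number machinery of Proposition~\ref{prop:sep} and Corollary~\ref{cor:movingpoints}, applied to the combined continuum $D_n=F^n(A\cup B)$ and one of its left-most points; everything else (the injectivity/open-tract bookkeeping and the expansion estimate) is routine and parallels the proof of Theorem~\ref{thm:spanzero}.
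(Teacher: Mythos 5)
Your proof is correct and takes essentially the same approach as the paper's: both arguments choose (for infinitely many $n$) whichever of the two continua contains a left-most point of $F^n(A\cup B)$, invoke Corollary~\ref{cor:movingpoints} to force the segment $I_{F^n(\cdot)}$ to meet that continuum, and pull back using the uniform hyperbolic expansion. The only differences are cosmetic---you argue by contradiction with two fixed witnesses $a,b$ and inline the expansion estimate, whereas the paper shows the inclusion $\hat{A}^2\subset\hat{A}^1$ directly and packages the pull-back into Lemma~\ref{lem:separationoforbits}.
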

\begin{remark}
  Theorems \ref{thm:spanzero} and \ref{thm:infty} together establish Theorem~\ref{thm:mainspanzero}. 
\end{remark}
\begin{proof}
  Let $\s = T_0 T_1 T_2 \dots$ be the address of $\CH$. 
  Suppose that $\hat{A}^1,\hat{A}^2\subset\CH$ are subcontinua both containing $\infty$; we must show that
    one of these is contained in the other. We may assume that both continua are non-trivial, as otherwise there is nothing to prove.
    Let us set $\hat{A} \defeq  \hat{A}^1\cup\hat{A}^2$, and define 
    $\hat{A}_n \defeq  F^n(\hat{A})$ and $\hat{A}_n^j \defeq  F^n(\hat{A}^j)$ for $n\geq 0$ and $j\in\{1,2\}$.

  For each $n$, let $a_n$ be a left-most point of $\hat{A}_n$ as in Corollary \ref{cor:movingpoints}. There is $j\in\{1,2\}$ such that
    $a_{n_k}\in \hat{A}_{n_k}^j$ for an infinite sequence $(n_k)$. Without loss of generality, we may suppose that $j=1$; we shall show that
    $\hat{A}^2\subset \hat{A}^1$.

 Let $z\in \hat{A}^2\setminus\{\infty\}$, and set $z_k \defeq  F^{n_k}(z)$. We claim that, for all $k$, the point 
   $z_k$ satisfies $I_{z_k}\cap \hat{A}^1_{n_k}\neq\emptyset$, and hence $\dist(z_k,\hat{A}_{n_k}^1\setminus\{\infty\})\leq 2\pi$. If
   $z_k\in I_{a_{n_k}}$, this is trivial. Otherwise,
    $I_{z_k}$ separates $a_{n_k}$ from $\infty$ in the tract
    $T_{n_k}$ by Corollary \ref{cor:movingpoints}. Hence $I_{z_k}$ 
    does indeed intersect $\hat{A}^1_{n_k}$. 

  By the expanding property of $F$~-- more precisely, by Lemma \ref{lem:separationoforbits}, applied to the sets $\{z\}$ and $\hat{A}^1\setminus\{\infty\}$~-- 
    we have 
    $z\in \hat{A}^1$, as claimed.
\end{proof}

We next use a similar argument to
  show that non-escaping points are also terminal points of Julia continua, as claimed in Theorem \ref{thm:nonescapingaccessible}.
\begin{thm}[Non-escaping points are terminal]\label{thm:nonescapingterminal}
  Let $F\in\Blog$ be of disjoint type, and let $\CH$ be a Julia continuum of $F$. If
   $\zeta_0\in \CH\setminus\{\infty\}$ is non-escaping, then
     $\zeta_0$ is a terminal point of $\CH$, and $\CH$ is irreducible between
    $\zeta_0$ and $\infty$. 
 \end{thm}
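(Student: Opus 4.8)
The plan is to follow the proof of Theorem~\ref{thm:infty} almost verbatim, with the nonescaping point $\zeta_0$ now playing the role of a \emph{left} end of the continuum, and a point of large real part playing the role that $\infty$ played there. Write $\CH=\hat{J}_{\s}(F)$ with address $\s=T_0T_1T_2\dots$. Since $\zeta_0$ is nonescaping, $\liminf_{n\to\infty}\re F^n(\zeta_0)<\infty$, so I would fix $R>0$ and a sequence $n_k\to\infty$ with $\re F^{n_k}(\zeta_0)\le R$, and then fix, by the first part of Observation~\ref{obs:pointstotheleft}, a constant $\Delta>0$ such that the ``left piece'' $(T)_{\le R}$ of every tract $T$ of $F$ has Euclidean diameter at most $\Delta$. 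Note that $F^{n_k}(\zeta_0)\in(T_{n_k})_{\le R}$, and that a left-most point of any continuum in $\CH$ whose $F^{n_k}$-image meets $(T_{n_k})_{\le R}$ also lies in $(T_{n_k})_{\le R}$.

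To show $\zeta_0$ is terminal, I would take non-degenerate subcontinua $\hat{A}^1,\hat{A}^2\subseteq\CH$ with $\zeta_0\in\hat{A}^1\cap\hat{A}^2$, set $\hat{A}\defeq\hat{A}^1\cup\hat{A}^2$ and $\hat{A}_n\defeq F^n(\hat{A})$, $\hat{A}_n^j\defeq F^n(\hat{A}^j)$ (using the convention $F(\infty)=\infty$), so that $\hat{A}_n\subseteq T_n\cup\{\infty\}$ is a continuum. Let $b_n$ be a right-most point of $\hat{A}_n$ (so $b_n=\infty$ whenever $\infty\in\hat{A}$), and, passing to a subsequence of $(n_k)$, assume $b_{n_k}\in\hat{A}_{n_k}^j$ for a fixed index $j$; after relabelling, $j=1$, and I will show $\hat{A}^2\subseteq\hat{A}^1$. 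Fix $z\in\hat{A}^2$. The cases $z=\zeta_0$ and $z=\infty$ are trivial (in the second, $b_n=\infty\in\hat{A}_n^1$ forces $\infty\in\hat{A}^1$), so assume $z\in J_{\s}(F)\setminus\{\zeta_0\}$; by Corollary~\ref{cor:growthofrealparts} the point $\zeta_0$ is the unique nonescaping point of $J_{\s}(F)$, hence $z$ escapes and $\re F^{n_k}(z)\to\infty$. Write $z_k\defeq F^{n_k}(z)\in T_{n_k}$. It is enough to show that $\dist(z_k,\hat{A}_{n_k}^1\setminus\{\infty\})$ stays bounded along an infinite subsequence: then $\dist_H(F^{n_k}(z),F^{n_k}(\hat{A}^1\setminus\{\infty\}))$ stays bounded along that subsequence by the second part of Observation~\ref{obs:pointstotheleft}, and Lemma~\ref{lem:separationoforbits}, applied to $A=\{z\}$ and $B=\hat{A}^1\setminus\{\infty\}$, gives $z\in\hat{A}^1$. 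If $z_k\in(T_{n_k})_{\le R}$ this is immediate, since $F^{n_k}(\zeta_0)\in\hat{A}_{n_k}^1\cap(T_{n_k})_{\le R}$ and $(T_{n_k})_{\le R}$ has diameter at most $\Delta$. Otherwise $\re z_k>R+\Delta$ for all large $k$, so the segment $I_{z_k}$ is disjoint from $(T_{n_k})_{\le R}$, which contains the point $F^{n_k}(\zeta_0)\in\hat{A}_{n_k}^1$ as well as a left-most point $a_{n_k}$ of $\hat{A}_{n_k}$. Applying Corollary~\ref{cor:movingpoints} to the continuum $\hat{A}_{n_k}$, with left-most point $a_{n_k}$, right-most point $b_{n_k}$, and the point $z_k\in\hat{A}_{n_k}$, shows that $I_{z_k}$ separates $a_{n_k}$ from $b_{n_k}$ in $T_{n_k}$; using the continuous-parity statement of Proposition~\ref{prop:sep} to pass this separation from $a_{n_k}$ to the nearby point $F^{n_k}(\zeta_0)$ of $(T_{n_k})_{\le R}$, one obtains that $I_{z_k}$ also separates $F^{n_k}(\zeta_0)$ from $b_{n_k}$ in $T_{n_k}$. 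Since $\hat{A}_{n_k}^1$ is a continuum in $T_{n_k}\cup\{\infty\}$ containing both $F^{n_k}(\zeta_0)$ and $b_{n_k}$, it must meet $I_{z_k}$, whence $\dist(z_k,\hat{A}_{n_k}^1\setminus\{\infty\})\le 2\pi$.

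For the irreducibility statement I would run the same argument with $\hat{A}^1$ an arbitrary proper subcontinuum of $\CH$ containing $\zeta_0$ and $\infty$ and with $\hat{A}^2\defeq\CH$; the conclusion $\CH=\hat{A}^2\subseteq\hat{A}^1$ contradicts properness, so no proper subcontinuum of $\CH$ contains both $\zeta_0$ and $\infty$. (Alternatively, one can argue directly from the uniqueness of the nonescaping point: any proper subcontinuum $K\ni\zeta_0,\infty$ omits an escaping point $z$, and the separation argument above forces $z\in K$.) I expect the main difficulty to lie precisely in the ``transfer'' step of the second paragraph. A logarithmic tract can wind so badly that $I_{z_k}$ meets it in several cross-cuts (Figure~\ref{fig:windingtract}), and then two points of small real part, such as $a_{n_k}$ and $F^{n_k}(\zeta_0)$, need not lie on the same side of $I_{z_k}$; in particular $(T_{n_k})_{\le R}$ may be disconnected, so one cannot conclude the required even parity of the separation between them by a naive comparison of real parts. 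Controlling this genuinely requires the bounded-diameter estimate of Observation~\ref{obs:pointstotheleft} (which at least keeps $a_{n_k}$ and $F^{n_k}(\zeta_0)$ within distance $\Delta$ of each other and disjoint from $I_{z_k}$) together with the parity bookkeeping for separation numbers developed in Proposition~\ref{prop:sep} and Corollary~\ref{cor:movingpoints}, just as in the span-zero argument earlier in the section.
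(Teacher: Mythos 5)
Your proposal follows the same outline as the paper's proof: pick times $n_k$ where $F^{n_k}(\zeta_0)$ has real part $\le R$, take subcontinua $\hat A^1,\hat A^2$ through $\zeta_0$, pass to a subsequence so that the right-most point $b_{n_k}$ of $F^{n_k}(\hat A^1\cup\hat A^2)$ lies in $\hat A^1_{n_k}$, and then show that $I_{z_k}$ must meet $\hat A^1_{n_k}$, invoking Lemma~\ref{lem:separationoforbits}. The irreducibility part is also the same. However, there is a genuine gap in the ``transfer'' step, and your final paragraph, while identifying the difficulty, does not correctly resolve it.

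The issue is the choice of threshold for $\re z_k$. You use $\re z_k > R+\Delta$ to ensure $I_{z_k}$ is disjoint from $(T_{n_k})_{\le R}$. This indeed gives $a_{n_k},F^{n_k}(\zeta_0)\notin I_{z_k}$, but it does \emph{not} give that $\sep\bigl(a_{n_k},F^{n_k}(\zeta_0);z_k\bigr)$ is even. For that, you need a curve in $T_{n_k}$ joining $a_{n_k}$ to $F^{n_k}(\zeta_0)$ and avoiding $I_{z_k}$. If $(T_{n_k})_{\le R}$ is disconnected (which can happen: a spiralling tract can have an ``arm'' attached at very large real part $R_0$ that swings back down to real parts $\le R$; then the left part of the arm and the left part of the spine lie in different components of $T_{\le R}$ even though both have small real parts and $T_{\le R}$ has bounded Euclidean diameter), the only curves joining the two points within $T_{n_k}$ pass through real parts near $R_0$, which can be far larger than $R+\Delta$ and so can hit $I_{z_k}$ for infinitely many $k$. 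The bounded-diameter estimate keeps the two points \emph{close in the plane}, but not \emph{close within the domain $T_{n_k}$}, and Proposition~\ref{prop:sep} only transports parity along paths inside $T_{n_k}$ that miss $I_{z_k}$.

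What is needed is the paper's stronger consequence of Observation~\ref{obs:pointstotheleft}: there is a constant $Q>R$ (generally much larger than $R+\Delta$) such that any two points of any $T_{n_k}$ at real parts $\le R$ can be joined by a curve in $T_{n_k}$ consisting entirely of points at real parts $<Q$. This uses not just the diameter bound but also the fact that, up to $2\pi i\mathbb{Z}$-translation, only finitely many tracts meet $\{\re z\le R\}$, so a per-tract choice of such a curve and its maximal real part can be made uniform. With $\re z_k\ge Q$ (which holds eventually by Corollary~\ref{cor:growthofrealparts}), that curve avoids $I_{z_k}$, so $I_{z_k}$ does not separate $a_{n_k}$ from $F^{n_k}(\zeta_0)$, and the parity bookkeeping then gives that $I_{z_k}$ separates $F^{n_k}(\zeta_0)$ from $b_{n_k}$, as you want. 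So the fix is to replace $R+\Delta$ by this $Q$ and to justify its existence along the lines above, rather than appealing to the diameter bound alone.
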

\begin{proof}
  Let $\s=T_0 T_1 \dots$ be the address of $\CH$. Since 
  $\zeta_0$ is a non-escaping point, there
   is a number $R>0$ and a sequence $(n_k)$ such that
   $\re \zeta_{k} < R$ for all $k$, where  $\zeta_k\defeq F^{n_k}(\zeta_0)$. 
  By Observation~\ref{obs:pointstotheleft}, we can find a constant
   $Q>0$, independent of $k$, with the following property:
   Any two points in $T_{n_k}$ having real part at most $R$ can be connected by a curve $\gamma\subset T_{n_k}$ that
   consists entirely of points at real parts less than $Q$. 

  Let $\hat{A}^1,\hat{A}^2\subset\CH$ be subcontinua both containing $z_0$. Similarly as in the preceding proof,
   let us set $\hat{A} \defeq  \hat{A}^1\cup \hat{A}^2$, and let $b_k$ be a right-most point of 
    $\hat{A}_{k} \defeq  F^{n_k}(\hat{A})$. By relabelling, and by passing to a further subsequence if necessary, 
    we may assume that $b_k\in \hat{A}_k^1 \defeq  F^{n_k}(\hat{A}^1)$. We shall show that
    $\hat{A}^2\subset \hat{A}^1$.

 Let $z\in \hat{A}^2$, and consider the point $z_k \defeq  F^{n_k}(z)$. 
   By Corollary~\ref{cor:growthofrealparts}, we have $\max(\re z_k, \re \zeta_k)\to\infty$, 
    and hence $\re z_k \geq Q$ when $k$ is chosen sufficiently large.
   In particular, if $a_k$ is a left-most point of $\hat{A}_k$, then $a_k\notin I_{z_k}$.  
   By Corollary~\ref{cor:movingpoints}, either $b_k\in I_{z_k}$, or 
   the segment $I_{z_k}$ separates $a_k$ from $b_k$. Recall that $I_{z_k}$ does not separate $a_k$ from
   $\zeta_k$ by choice of $Q$; so in the latter case, 
    $I_{z_k}$  also separates $\zeta_k$ from $b_k$. Hence $I_{z_k}\cap \hat{A}_k^1\neq\emptyset$ in either case. 
    By Lemma~\ref{lem:separationoforbits}, it follows that 
    $z\in \hat{A}^1$. 

  This proves that $\zeta_0$ is a terminal point. 
    Furthermore, if $\hat{A}^1\subset\CH$ is a continuum containing both $\zeta_0$ and $\infty$, then we can
    choose $\hat{A}^2 = \CH$ in the above argument, and conclude that $\CH\subset\hat{A}^1$. Thus $\CH$ is indeed irreducible between
    $\zeta_0$ and $\infty$.
\end{proof}

 We next prove the claim in Theorem \ref{thm:nonescapingaccessible} that concerns
    the Hausdorff dimension of the set of non-escaping points in a given Julia continuum, 
    showing that this set is geometrically rather small. (We refer to \cite{falconerfractalgeometry}
    for the definition of Hausdorff dimension.) 

\begin{prop}[Hausdorff dimension of non-escaping points with a given address]\label{prop:nonescapinghausdorff}
  Let $F\in\Blog$ be of disjoint type, and let $\CH$ be a Julia continuum of $F$. Then the 
   Hausdorff dimension of the set of non-escaping points in $\CH$ is zero.
\end{prop}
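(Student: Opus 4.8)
The plan is to exploit the exponential contraction of the inverse branches $F_{\underline s}^{-n}$ from Proposition~\ref{prop:expansion}, together with the uniform control on bounded left‑parts of tracts provided by Observation~\ref{obs:pointstotheleft}, so that the nonescaping set is covered, for every $m$, by a sequence of sets whose diameters decay geometrically with $n\ge m$.

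Write $\hat C=\hat J_{\underline s}(F)$ with $\underline s=T_0 T_1 T_2\dots$. First I would note that a point $z\in J_{\underline s}(F)$ is nonescaping exactly when $\re F^n(z)\le R$ for infinitely many $n$, for some $R>0$; hence the set $N$ of nonescaping points of $\hat C$ is a countable union $N=\bigcup_{R\in\N}N_R$, where
\[ N_R\defeq\bigcap_{m\ge 0}\ \bigcup_{n\ge m}A_n, \qquad A_n\defeq\{z\in J_{\underline s}(F)\colon \re F^n(z)\le R\}. \]
Since a countable union of sets of Hausdorff dimension zero again has Hausdorff dimension zero, it is enough to prove $\dim_H N_R=0$ for each fixed $R$.

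So fix $R$. By Observation~\ref{obs:pointstotheleft} there is a constant $\Delta=\Delta(R)$, independent of $n$, with $\diam_H\bigl((T_n)_{\le R}\bigr)\le\Delta$; and since $(T_n)_{\le R}\supseteq\{w\in T_n\colon\re w\le R\}$ by definition, the condition $\re F^n(z)\le R$ forces $F^n(z)\in(T_n)_{\le R}$, so that $A_n\subseteq E_n\defeq F_{\underline s}^{-n}\bigl(\overline{(T_n)_{\le R}}\bigr)\subseteq\overline{T_0}$. Because $F$ expands the hyperbolic metric of $H$ by at least $\Lambda>1$ at every point of $\T$, the branch $F_{\underline s}^{-n}$ contracts that metric by at least $\Lambda^{n}$, so $\diam_H(E_n)\le\Lambda^{-n}\Delta$. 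Since $E_n\subseteq\overline{\T}$, on which $\dist(\,\cdot\,,\partial H)$ is uniformly bounded, the standard estimate~\eqref{eqn:standardestimate} converts this into a Euclidean bound $\diam(E_n)\le C\Lambda^{-n}$, valid for all sufficiently large $n$, with $C=C(R)$.

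Finally, for every $m$ the sets $E_n$ with $n\ge m$ cover $N_R$, and their Euclidean diameters tend to $0$ geometrically; hence for each $s>0$ one obtains $\mathcal{H}^s(N_R)\le C^s\liminf_{m\to\infty}\sum_{n\ge m}\Lambda^{-ns}=0$, so $\dim_H N_R=0$ and thus $\dim_H N=0$. I do not expect a genuine obstacle here: the estimate is essentially immediate from hyperbolic expansion, and the only point requiring a little care is the passage between the hyperbolic metric of $H$ (in which the contraction is natural) and the Euclidean metric (in which Hausdorff dimension is measured), which is handled by the boundedness of $\dist(\,\cdot\,,\partial H)$ on $\overline{\T}$.
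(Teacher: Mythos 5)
Your argument is essentially the paper's proof: the same decomposition into the sets $N_R$, the same use of Observation~\ref{obs:pointstotheleft} to bound the hyperbolic diameter of the left pieces $(T_n)_{\leq R}$, and the same appeal to the uniform expansion of Proposition~\ref{prop:expansion} to make pullbacks shrink geometrically, followed by summing a geometric series. The only (cosmetic) difference is the hyperbolic--to--Euclidean conversion at the end: the paper tracks the diameter in the hyperbolic metric of $T_0$ and invokes the logarithmic-tract bound~\eqref{eqn:standardestimate2pi}, whereas you track it in $H$ and use the boundedness of $\dist(\cdot,\partial H)$ on $\overline{\T}$~-- this works, but strictly speaking needs the observation that for small hyperbolic distance the $H$-geodesic cannot wander far from $\partial H$, which your ``for all sufficiently large $n$'' hedge implicitly accommodates.
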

\begin{proof}
  If $z$ is a non-escaping point, then there is $K>0$ such that $\re F^n(z)\leq K$ infinitely often. So the set of non-escaping points in 
    $\CH=J_{\s}(F)$
    can be written as
  \[ J_{\s}(F)\setminus I(f) = \bigcup_{K=0}^{\infty} \bigcap_{n_0=0}^{\infty} \bigcup_{n= n_0}^{\infty} F_{\s}^{-n}(\{z\in T_n\colon  \re z \leq K\}), \]
   where $\s=T_0 T_1 \dots$ is the address of $\CH$.

 Since a countable union of sets of Hausdorff dimension zero has Hausdorff dimension zero, it is sufficient to fix $K$ and show that the set
    \[ S(K) \defeq  \bigcap_{n_0\geq 0} \bigcup_{n\geq n_0} F_{\s}^{-n}(\{z\in T_n\colon  \re z \leq K\}) \]
   has Hausdorff dimension zero. 

  Let $K>0$. By Observation~\ref{obs:pointstotheleft},    
    there is a number $\Delta=\Delta(K)$ such that, for all $n$, the set of points in $T_n$ with real part $\leq K$ has   diameter 
   at most 
   $\Delta$ in the hyperbolic metric of $H$. 

   Keeping in mind that the map $F\colon T_n\to H$ is a hyperbolic isometry, it follows that 
     \[ \diam_{T_0}\bigl(  F_{\s}^{-n}(\{z\in T_n\colon  \re z \leq K\})  \bigr) \leq \Delta \cdot \Lambda^{-(n-1)} \]
    for $n\geq 1$, where $\Lambda>1$ is the expansion constant from Proposition~\ref{prop:expansion}. 
    By the standard bound~\eqref{eqn:standardestimate2pi}, the Euclidean diameter of this set is hence bounded by 
   $2\pi\cdot \Delta \cdot \Lambda^{-(n-1)}$.

  Let $t>0$. Then for every $n_0\geq 1 $, the $t$-dimensional Hausdorff measure of $S(K)$ is bounded from above by
     \begin{align*} \liminf_{n_0\to\infty}  \sum_{n\geq n_0} \diam(F_{\s}^{-n}(\{z\in T_n\colon  \re z \leq K\}))^t & \leq
         \liminf_{n_0\to\infty} \sum_{n\geq n_0} \bigl(2\pi \cdot \Delta \cdot\Lambda^{(-(n-1))}\bigr)^t  \\ &=
           (2\pi \Delta)^t \cdot \lim_{n_0\to\infty} \sum_{n\geq n_0-1} (\Lambda^{-t})^n = 0 .\end{align*}
    Thus $\dim(S(K))\leq t$. Since $t>0$ was arbitrary, we have 
    $\dim(S(K))=0$, as claimed. 
\end{proof}

 To complete the proof of the first half of Theorem \ref{thm:nonescapingaccessible}, our final topic in this section is the study of points in $J(F)$ that 
   are accessible from $H\setminus J(F)$. 

\begin{thm}[Accessible points]\label{thm:accessibleterminal}
  Let $F\in\BlogP$ be of disjoint type, and let $\CH=C\cup\{\infty\}$ be a Julia continuum of $F$. Suppose that $z_0\in C$ is accessible from
    $\C\setminus J(F)$.

  Then $z_0$ is a terminal point of $\CH$, and $\CH$ is irreducible between $z_0$ and $\infty$. Furthermore, $z_0$ is the unique point of $\CH$ that is 
    accessible from $\C\setminus J(F)$, and $\CH\setminus\{z_0\}\subset I(F)$. 
\end{thm}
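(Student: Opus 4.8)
The plan is to run the same scheme as in the proof of Theorem~\ref{thm:nonescapingterminal}, with one substitution. There, the decisive input was that a nonescaping point has a subsequence of iterates of bounded real part, which simultaneously keeps those iterates ``to the left'' and ties them to a left-most point of the continuum; here $z_0$ may well escape, so this input is replaced by a \emph{barrier curve} built by pushing the access of $z_0$ forward under the iteration. To set this up, write $\s=T_0T_1T_2\dots$ for the address of $\CH$. Since $z_0\in C\subset T_0$ and $J(F)\subset\T$, while $\partial T_0\subset H\setminus\T$ and $H\setminus J(F)$ is connected (as observed in the proof of Proposition~\ref{prop:boundedorbits}), I would first produce an arc $\gamma\colon[0,1]\to\overline{T_0}$ with $\gamma(0)\in\partial T_0$, $\gamma(1)=z_0$ and $\gamma((0,1))\subset T_0\setminus J(F)$: take an arc realising the accessibility of $z_0$, join its free endpoint to a point of $\partial T_0$ by an arc inside the connected open set $H\setminus J(F)$, and retain the final subarc, which after the last visit to $\partial T_0$ necessarily lies in $T_0\cup\{z_0\}$.

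Next I would build the barriers. For $n\geq 0$ let $\gamma_n$ be the connected component of $z_0$ in $\gamma\cap F_{\s}^{-n}(\overline{T_n})$, and set $\beta_n\defeq F^{n}_{\s}(\gamma_n)\subset\overline{T_n}$. Using that $F^{-1}(J(F))\cap\T=J(F)$, that $F^n_{\s}$ is injective on $F_{\s}^{-n}(\overline{T_n})$, and that each branch $F_{T_j}^{-1}$ extends homeomorphically to the closure, one checks that $\beta_n$ is an arc joining $\zeta_n\defeq F^n(z_0)\in T_n$ to a point $q_n\in\partial T_n$, with $\beta_n\setminus\{\zeta_n\}\subset T_n\setminus J(F)$. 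Consequently, for any subcontinuum $\hat A\subset\CH$ its image $\hat A_n\defeq F^n(\hat A)$ meets $\beta_n$ only at $\zeta_n$, with $\hat A_n\setminus\{\infty\}\subset T_n$ (since $\partial T_n\cap J(F)=\emptyset$), and $\beta_n$ misses $F^n(w)$ for every $w\in C\setminus\{z_0\}$. Moreover $q_n$ and $\infty$ are two distinct points of the Jordan curve bounding $T_n$ in $\Ch$, so whenever $\infty\in\hat A$ the continuum $\hat A_n\cup\beta_n$ meets that curve precisely in $\{q_n,\infty\}$ and therefore separates $T_n$: this ``cross-cut'' is the replacement, in the separation-number calculus of Section~\ref{sec:spanzero}, for the segment from a left-most to a right-most point used in Theorem~\ref{thm:nonescapingterminal}.

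With this in place I would argue terminality and irreducibility as in Theorem~\ref{thm:nonescapingterminal}. Let $\hat A^1,\hat A^2\subset\CH$ be subcontinua both containing $z_0$. If $\infty\in\hat A^1\cup\hat A^2$, the irreducibility already proved for $\infty$ in Theorem~\ref{thm:infty}, together with the cross-cut property above, reduces nestedness to the case $\infty\notin\hat A^1\cup\hat A^2$; so assume this. Put $\hat A=\hat A^1\cup\hat A^2$, and after relabelling pass to a subsequence $(n_k)$ along which a right-most point $b_k$ of $\hat A_{n_k}$ lies in $\hat A_{n_k}^1$. Fix $z\in\hat A^2\setminus\{z_0\}$, $z_k=F^{n_k}(z)$, $\zeta_k=F^{n_k}(z_0)$; by Corollary~\ref{cor:growthofrealparts}, $\re z_k\to\infty$ whenever $z_0$ escapes, and otherwise $z_0$ is already terminal with $\CH$ irreducible between $z_0$ and $\infty$ by Theorem~\ref{thm:nonescapingterminal}, so we may assume $\re z_k$ is eventually large. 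Since $\beta_{n_k}$ joins $\zeta_k$ to $\partial T_{n_k}$ without re-entering $\hat A_{n_k}$, the segment $I_{z_k}$ cannot separate $\zeta_k$ from a left-most point $a_k$ of $\hat A_{n_k}$; combining this with Corollary~\ref{cor:movingpoints} in $T_{n_k}$ gives, in every case, $I_{z_k}\cap\hat A^1_{n_k}\neq\emptyset$, hence $\dist(z_k,\hat A^1_{n_k}\setminus\{\infty\})\leq 2\pi$. By Observation~\ref{obs:pointstotheleft} and Lemma~\ref{lem:separationoforbits}, applied to $\{z\}$ and $\hat A^1$, this forces $z\in\hat A^1$; so $\hat A^2\subset\hat A^1$ and $z_0$ is terminal, and taking $\hat A^2$ to be an arbitrary subcontinuum of $\CH$ containing $z_0$ and $\infty$ shows $\CH$ is irreducible between $z_0$ and $\infty$.

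For the remaining two assertions: if some $w\in C\setminus\{z_0\}$ were nonescaping, then $\CH$ would be irreducible between $w$ and $\infty$ by Theorem~\ref{thm:nonescapingterminal}; feeding the subcontinuum witnessing this, the barrier $\beta_n$, and a subsequence along which $\re F^{n_k}(w)$ stays bounded (so that, by Corollary~\ref{cor:growthofrealparts}, it is $F^{n_k}(z)$ and not $F^{n_k}(w)$ that escapes to the right) into the argument of the previous paragraph yields $w$ inside that subcontinuum, contradicting irreducibility; hence $\CH\setminus\{z_0\}\subset I(F)$. The same device, run with the barrier curve of a hypothetical second accessible point $w\neq z_0$ --- which by the first part is itself terminal with $\CH$ irreducible between $w$ and $\infty$ --- is what I would use to conclude $w=z_0$. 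I expect the genuine obstacle to be the barrier construction of the second paragraph: ensuring that the pulled-forward curves $\beta_n$ actually reach $\partial T_n$ (rather than degenerating as $\gamma_n$ shrinks to $z_0$), and proving, uniformly in $n$, the planar-topology fact that a continuum meeting the boundary Jordan curve of $T_n$ in the two points $q_n$ and $\infty$ separates $T_n$; everything downstream is then a faithful adaptation of Theorem~\ref{thm:nonescapingterminal} (compare also the arc of Theorem~\ref{thm:onepointuniform}, which shows the accessible point $z_0$ need not be nonescaping).
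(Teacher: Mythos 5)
Your plan follows the same general scheme as the paper — push the access arc forward to create a ``barrier'' and then run the separation-number argument of Section~\ref{sec:spanzero} as in Theorem~\ref{thm:nonescapingterminal} — and your intuition that the barrier replaces the bounded-real-part input of the nonescaping case is right. But you have correctly identified the weak point, and it is fatal as stated: the missing idea is the construction of an \emph{auxiliary logarithmic tract}. The paper chooses $\gamma_n$ to be the piece of $F^n(\gamma)$ that runs from $F^n(z_0)$ to a point of real part $\re a_n$ (a left-most point of $C_n$) while never dropping below real part $\re a_n$, then observes that $\tilde C_n\defeq C_n\cup\gamma_n$ is still disjoint from its $2\pi i\Z$-translates, and finds a new logarithmic tract $\tilde T_n\supset\tilde C_n$. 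Only then can Corollary~\ref{cor:movingpoints} be applied, with $A=\hat A_n=\hat A_n^1\cup\hat A_n^2\cup\gamma_n\subset\tilde T_n\cup\{\infty\}$: the endpoint of $\gamma_n$ \emph{is} a left-most point of $\hat A_n$, it lies in the connected set $\hat A_n^1\cup\gamma_n$, and so does the right-most point $b_n$ after relabelling, whence separation of these two by $I_{z_n}$ forces $I_{z_n}\cap(\hat A_n^1\cup\gamma_n)\neq\emptyset$.

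Your $\beta_n$ does not serve this purpose. First, it terminates on $\partial T_n$ rather than at a point of controlled real part, so it gives you no left-most point in the role needed. Second, $\hat A_n\cup\beta_n$ is not contained in $T_n$, so you cannot feed it to Corollary~\ref{cor:movingpoints} inside $T_n$; working in $T_n$ you would have to drop $\beta_n$ and you are then back to the nonescaping argument without its key hypothesis. Third, and most seriously, the step ``$I_{z_k}$ cannot separate $\zeta_k$ from a left-most point $a_k$'' is not established by anything you've set up — the barrier $\beta_{n_k}$ can perfectly well cross $I_{z_k}$, and there is no reason it insulates $\zeta_k$ from a separation — and, logically, what the paper actually needs is a \emph{separation} (of the left-most and right-most points of $\hat A_n$, both lying in $\hat A_n^1\cup\gamma_n$), not a non-separation. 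The ``cross-cut separates $T_n$'' observation, which you flag as the likely obstruction, is indeed not part of the paper's proof and is not needed once the auxiliary tract is in place. Finally, the claimed reduction to the case $\infty\notin\hat A^1\cup\hat A^2$ via Theorem~\ref{thm:infty} does not follow (nestedness when exactly one of the two contains $\infty$ is not a consequence of $\infty$ being terminal), but this reduction is also unnecessary: the paper's relabelling by where the right-most point of $\hat A_n$ lies handles all cases uniformly, including $b_n=\infty$.
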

\begin{remark}
  In particular, any Julia continuum containing
   more than one non-escaping point (such as those that will be constructed in Section \ref{sec:onepointuniform} to prove
    the final part of Theorem~\ref{thm:nonescapingaccessible}) cannot
   contain any accessible points. 
\end{remark}
\begin{proof}
  Let $\gamma$ be an arc that connects $\partial H$ to $z_0$ without intersecting $J(F)$ in any other points. Then, for every 
   $n\geq 0$, the image $F^n(\gamma)$ contains a piece that connects $F^n(z_0)$ to $\partial H$. 

  Let $a_n$ be a left-most point of $C_n \defeq  F^n(C)$, and let $\gamma_n$ be a piece of $F^n(\gamma)$ that connects 
   $F^n(z_0)$ with a point of real part $\re a_n$, containing no point of real part less than $\re a_n$. 
   Since $\gamma_n$ does not intersect the $2\pi i\Z$-translates of $C_n$, it follows that the closed set
    \[ \tilde{C}_n \defeq  C_n\cup \gamma_n \]
   is disjoint from its own $2\pi i\Z$-translates. We can find a logarithmic tract
   $\tilde{T}_n$ with $\tilde{T}_n \supset \tilde{C}_n$. (First shrink $T_n$ around $C_n$ to avoid the $2\pi i\Z$-translates of $\gamma_n$, 
    then add a small neighbourhood of $\gamma_n$.) The set $\tilde{T}_n$ is not a tract of $F$, but will act as an auxiliary object
    to which we can apply Definition~\ref{defn:separationnumber} and Corollary~\ref{cor:movingpoints}.

 To show that $z_0$ is terminal, suppose that 
   $\hat{A}^1,\hat{A}^2$ are subcontinua of $\CH$ both containing $z_0$, and consider  the sets
   \[ \hat{A}_n^1 \defeq  F^n(\hat{A}^1), \qquad \hat{A}_n^2 \defeq  F^n(\hat{A}^2), \qquad\text{and}\qquad
         \hat{A}_n \defeq  \hat{A}_n^1 \cup \hat{A}_n^2 \cup \gamma_n. \] 
   Let $b_n$ be a right-most point of $\hat{A_n}$; we assume that the sets are labelled such that $b_n\in \hat{A}_n^1\cup\gamma_n$ for infinitely many $n$. 
   As before, Corollary \ref{cor:movingpoints} implies that $I_z\cap (\hat{A}_n^1\cup \gamma_n)\neq \emptyset$ for every $z\in \hat{A}_n^2$. 
   By Lemma~\ref{lem:separationoforbits}, we thus have 
   $z\in  (\hat{A}^1\cup\gamma) \cap \hat{C}=\hat{A}^1$, as required. As in the proof of Theorem~\ref{thm:nonescapingterminal}, if 
   $\hat{A}^1\subset\hat{C}$ is a subcontinuum containing both $z_0$ and $\infty$, then we may take $\hat{A}^2=\hat{C}$, and see that
   $\hat{C}=\hat{A}^1$.

 Now, suppose that $\zeta_0\in C$ is a non-escaping point, 
    say $\re F^{n_k}(\zeta_0)\leq R$ for a strictly increasing sequence $(n_k)$, and assume by contradiction that $\zeta_0\neq z_0$. 
   Set $\zeta_k \defeq  F^{n_k}(\zeta_0)$ and $z_k \defeq  F^{n_k}(z_0)$; then $\re z_k \to \infty$ by Corollary~\ref{cor:growthofrealparts}. 
    In particular~-- for sufficiently large $k$, and hence without loss of generality for all $k$~-- the segment
    $I_{\zeta_k}$ does not separate $z_k$ from $\infty$ in $T_{n_k}$ (recall Observation~\ref{obs:pointstotheleft}). It follows that
    the auxiliary tract $\tilde{T}_{n_k}$ can be chosen such that 
    $I_{\zeta_k}$ also does not separate $z_k$ from $\infty$ in $\tilde{T}_{n_k}$. (We leave the details to the reader.)

    Let $\omega_k$ be a left-most point of $\gamma_{n_k}$ (so $\re \omega_k = \re a_{n_k}$). 
    By Corollary \ref{cor:movingpoints}, either $\omega_k\in I_{\zeta_k}$, or the segment
    $I_{\zeta_k}$ separates $\omega_k$ from infinity, and hence also from $z_k$,  in $\tilde{T}_{n_k}$. 
    Hence 
    $I_{\zeta_k}\cap\gamma_{n_k}\neq\emptyset$ in either case. By Lemma~\ref{lem:separationoforbits}, we have
   $\zeta_0\in\gamma\cap \hat{C}=\{z_0\}$, which is a contradiction. 

  A similar argument shows that   $\CH$ cannot contain two different accessible points. We omit the details since this fact is well-known
    (see e.g.\ \cite[Corollary C' on p.~412]{baranskikarpinskatrees}). Indeed,
    the set $\CH$ is precisely the impression of a unique prime end of $\C\setminus J(F)$, 
    and hence
    contains at most one accessible point. 
\end{proof}

\section{Uniform escape} \label{sec:uniform}

We next discuss the connection between 
  topological properties of Julia continua and  {\escapingcomposant}s, proving Theorem \ref{thm:composants}.
\begin{defn}[{\Escapingcomposant}s for $F\in\Blog$]\label{defn:uniformescape}
  Let $F\in\Blog$ be of disjoint type, and let $\s$ be an admissible external address. 
   If $z\in I_{\s}(F)$, then the \emph{\escapingcomposant} of $z$, denoted 
    $\mu(z) \defeq  \mu_{\s}(z)\defeq \mu_{\s}(F,z)$, is the union of all connected sets $A\subset J(F)$ with $z\in A$ for which $\re F^n|_A$ converges to infinity 
    uniformly. 

  We also define
     \begin{align*}
       \mu_{\s}(\infty) \defeq  \mu_{\s}(F,\infty)\defeq
          \{& z\in J_{\s}(F)\colon \text{there is an unbounded, closed, connected set} \\ 
    &\text{$A\subset J_{\s}(F)$ such that
            $z\in A$ and $\re F^n|_A\to\infty$ uniformly}\}. \end{align*}
\end{defn}
\begin{remark}[Remark 1]
  The set $\mu_{\s}(\infty)$ 
    appears implicitly in \cite[Corollary~3.4]{eremenkoproperty}, which implies that it is always connected as a subset of
    the complex plane. In particular, if $z\in \mu_{\s}(\infty)$, then $\mu_{\s}(z)=\mu_{\s}(\infty)$. 
\end{remark}
\begin{remark}[Remark 2]
  Suppose that $F$ is the logarithmic transform of an entire function $f$. Then, for $z\in\C$, $\exp(\mu_{\s}(F,z))$ agrees precisely with 
   the {\escapingcomposant} $\mu(\exp(z))$ of $f$ as given in Definition~\ref{defn:uniformescapeintro}. 
   The {\escapingcomposant} 
   $\mu(\infty)$ of $f$ is given by $\mu(\infty)=\bigcup_{\s} \exp(\mu_{\s}(F,\infty))$.
\end{remark}

 In \cite[Proposition~3.2]{eremenkoproperty}, an unbounded and connected subset of $I_{\s}(F)$ is constructed whose
  points escape ``as fast as possible'' in a certain sense. This shows that $\mu_{\s}(\infty)$ is non-empty, and suggests the following definition.

\begin{defn}[$\s$-fast escaping points]\label{defn:AsF}
  Let $F\in\Blog$ be of disjoint type, and let 
    $\s = T_0 T_1 \dots$ be an admissible external address. We say that a point $z\in J_{\s}(F)$ belongs to
    the \emph{$\s$-fast escaping set} $A_{\s}(F)$ if there exists an open set $D_0$ intersecting $J_{\s}(F)$ with the following property:
    If we inductively define $D_{n+1} \defeq  F(T_n\cap D_n)$, then $F^n(z)$ belongs to the unbounded connected component of
     $T_n\setminus D_n$ for all $n$. 
\end{defn}
\begin{remark}
  The definition is reminiscent of, and motivated by, the description of the \emph{fast escaping set} $A(f)$ of an entire function that was 
   given by Rippon and Stallard \cite{ripponstallardfatoueremenko}. However, we note that there is no simple relation between the two sets. Indeed,
   it is not only possible that the $\s$-fast escaping set contains points that are not ``fast'' for the global function, but also that some 
   points that are ``fast'' for the global function may not belong to $A_{\s}(F)$. We shall not discuss this relation further here.
\end{remark}

The following proposition is essentially a restatement of \cite[Proposition~3.2]{eremenkoproperty}. However, the terminology there is slightly 
  different, so for completeness and for the reader's convenience, we shall give a self-contained proof here. 
 
\begin{prop}[Existence of $\s$-fast escaping points]\label{prop:fast}
  Let $F\in\Blog$ be of disjoint type, and let $\s$ be an admissible external address. 
     For every $z\in A_{\s}(F)$, there is an unbounded closed connected set $X\subsetneq A_{\s}(F)$ that contains $z$, and on which the iterates
    escape to infinity uniformly. In particular, 
    $A_{\s}(F)\subset \mu_{\s}(F,\infty)$. Furthermore,  $A_{\s}(F)$ is dense in $J_{\s}(F)$.
\end{prop}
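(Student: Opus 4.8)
The plan is to follow the strategy of \cite[Proposition~3.2]{eremenkoproperty}, recast in the present notation. Fix $\s = T_0 T_1 \dots$ admissible and $z \in A_{\s}(F)$, witnessed by an open set $D_0$ meeting $J_{\s}(F)$, with $D_{n+1} \defeq F(T_n \cap D_n)$; by definition $F^n(z)$ lies in the unbounded component $U_n$ of $T_n \setminus D_n$ for every $n$. The first step is to understand the sets $U_n$ geometrically. Since each $D_n$ is open and $T_n$ is an unbounded Jordan domain, $U_n$ is an unbounded, closed-in-$T_n$, connected set; I would show (using that $F \colon T_n \to H$ is a conformal isomorphism carrying $\infty$ to $\infty$) that $F(U_n \cap D_n^c) \subset H \setminus D_{n+1}$, so that $U_n$ maps into $\overline{T_{n+1}} \setminus D_{n+1} \supset U_{n+1}$ in the appropriate sense, and conversely that $F^{-1}_{T_n}(U_{n+1}) \subset U_n$. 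Taking $X \defeq \bigcap_{n \geq 0} F^{-n}_{\s}(\overline{U_n})$ (intersected with $J_{\s}(F)$), one gets a nested intersection of continua containing $z$, hence a nonempty continuum; it is contained in $A_{\s}(F)$ by construction, and it is a proper subset because $D_0$ meets $J_{\s}(F)$ but is disjoint from $U_0$.

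The heart of the matter is the \emph{uniform} escape and the \emph{unboundedness} of $X$. For this I would invoke the standard bound for logarithmic tracts~\eqref{eqn:standardestimate2pi} together with the expansion estimate of Proposition~\ref{prop:expansion}: points in $U_n$ that are ``deep'' in the tract (large real part) have large hyperbolic derivative, and the key point—already implicit in the construction of $A_{\s}$—is that being in the unbounded component of $T_n \setminus D_n$ forces $\re F^n(\zeta)$ to grow at a rate controlled only by $\s$ and not by $\zeta \in X$. Concretely, I expect to show there is a sequence $R_n \to \infty$, depending only on $F$ and $\s$, with $\re F^n(\zeta) \geq R_n$ for all $\zeta \in X$; this gives uniform escape. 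For unboundedness of $X$: by the Boundary Bumping Theorem~\ref{thm:boundarybumping} applied to $\overline{U_n} \cup \{\infty\}$ in $\overline{T_n} \cup \{\infty\}$, each $\overline{U_n}$ reaches $\infty$, and pulling back by the isometries $F^{-n}_{\s}$ (which fix $\infty$) one checks that the nested intersection $X$ still contains a continuum accumulating at $\infty$; since every continuum containing more than one point and meeting a neighbourhood of $\infty$ in $\CH$ is unbounded in $\C$, this yields unboundedness. Combining, $z \in X \subset \mu_{\s}(F,\infty)$, and since $z \in A_{\s}(F)$ was arbitrary, $A_{\s}(F) \subset \mu_{\s}(F,\infty)$.

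Finally, for density of $A_{\s}(F)$ in $J_{\s}(F)$: given $\zeta \in J_{\s}(F)$ and $\eps > 0$, I would pick $n$ large enough that $\diam_{T_0}(F^{-n}_{\s}(\overline{T_n})) < \eps$ (possible by Proposition~\ref{prop:expansion} and Observation~\ref{obs:pointstotheleft}, exactly as in the proof of Proposition~\ref{prop:nonescapinghausdorff}), choose any open set $D_n$ inside $T_n$ whose complementary unbounded component meets $J(F)$ and is nonempty, and define the remaining data by pulling back and pushing forward appropriately; the resulting $\s$-fast escaping point lies within $\eps$ of $\zeta$. The main obstacle I anticipate is the bookkeeping in the second paragraph: verifying carefully that the component structure $T_n \setminus D_n \mapsto T_{n+1} \setminus D_{n+1}$ behaves well under the conformal maps $F|_{T_n}$ and their inverses, and extracting from it a quantitative, $\zeta$-independent lower bound on $\re F^n(\zeta)$ for $\zeta \in X$—this is where the argument of \cite{eremenkoproperty} does the real work and where I would need to reproduce it with some attention to detail rather than routine estimation.
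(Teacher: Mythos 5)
Your nested-intersection framework ($X_n$ the unbounded component of $T_n\setminus D_n$, $X=\bigcap F_{\s}^{-n}(\overline{X_n})$, properness because $X$ misses $D_0$) matches the paper's construction exactly, and your intuition about the Boundary Bumping Theorem for unboundedness is sound (though the paper dispenses with it: $\hat X\ni\infty$ is a nondegenerate continuum obtained as a nested intersection of continua, and terminality of $\infty$ gives connectivity of $X$). However, there are two genuine gaps. First, your density step ``pick $n$ large enough that $\diam_{T_0}(F^{-n}_{\s}(\overline{T_n})) < \eps$'' cannot hold: $F^{-n}_{\s}(\overline{T_n})$ contains all of $J_{\s}(F)$, which is unbounded, so its diameter does not shrink. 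What shrinks is the pullback of a \emph{bounded} portion (real part $\leq K$), as in Proposition~\ref{prop:nonescapinghausdorff}. The paper's density argument is much more direct and avoids this altogether: given $z_0\in J_{\s}(F)$ and $\eps>0$, run the same construction with $D_0$ a disc of radius $<\eps$ about $z_0$; one checks $\overline{X_n}\cap\partial D_n\neq\emptyset$ for all $n$ (by boundary bumping in $T_n$), and pulling back gives $X(D_0)\cap\partial D_0\neq\emptyset$, i.e.\ a point of $A_{\s}(F)$ within $\eps$ of $z_0$. Note also that $X\subsetneq A_{\s}(F)$ (as opposed to merely $X\subsetneq J_{\s}(F)$) is then a \emph{consequence} of density plus closedness of $X$; your properness remark as stated only gives $X\subsetneq J_{\s}(F)$.

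Second, and more substantially, you explicitly defer ``the real work'' on uniform escape, and that deferral hides the two ideas that make the argument go: (i) expansion translates the ``thickness'' of $D_0$ at stage $0$ into a growing hyperbolic gap at stage $n$: fix $z_0\in D_0\cap J_{\s}(F)$ and $\eps>0$ with $B_H(z_0,\eps)\subset D_0$; since $F_{\s}^{-n}(w)\in X_0\subset T_0\setminus D_0$ for $w\in X_n$, the $\Lambda$-expansion of Proposition~\ref{prop:expansion} gives $\dist_H(X_n, z_n)\geq\eps\Lambda^n$; and (ii) a topological separation lemma is needed to convert this hyperbolic gap into a lower bound on real parts: for every $R$ there is $Q$ so that if $\zeta\in T_n$ has $\re\zeta\geq Q$, then the vertical segment $I_\zeta$ of length $4\pi$ separates every point of $T_n$ with real part $\leq R$ from $\infty$ in $T_n$. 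Since $B_H(z_n,\eps\Lambda^n)$ eventually contains a Euclidean $2\pi$-ball around some $\zeta$ with $\re\zeta\geq Q$, and $X_n$ avoids that ball but is unbounded and connected, $X_n$ cannot contain a point of real part $\leq R$. Neither the expansion estimate nor the standard hyperbolic bound alone yields this; the separation step (which in the paper is Corollary~\ref{cor:movingpoints} and Observation~\ref{obs:pointstotheleft}) is the essential missing ingredient in your sketch.
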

\begin{proof}
  Let $\s= T_0 T_1 T_2 \dots$, and let $D_0\subset T_0$ be a disc that intersects $J_{\s}(F)$. We define
    $D_{n+1} \defeq  F(T_n\cap D_n)$ as in Definition \ref{defn:AsF}, and let 
   $X_n=X_n(D_0)$ be the unbounded connected component of $T_n\setminus D_n$. 
      Then $F_{T_n}^{-1}(\cl{X_{n+1}})\subset X_n$ by definition. It follows that
    \[ \hat{X} \defeq  \hat{X}(D_0) \defeq  \bigcap_{n\geq 0} F_{\s}^{-n}(\overline{X_n})  \cup\{\infty\}\]
    is a compact and connected set containing both $z$ and $\infty$. Furthermore, the subset $X=X(D_0)\defeq \hat{X}(D_0)\setminus\{\infty\}$ is contained in
    $A_{\s}(F)$ by definition, and it is connected since $\infty$ is a terminal point of $\CH$. 

  Observe that 
     $\overline{X_n} \cap \partial D_n\neq \emptyset$ for all $n$, and hence $X\cap \partial D_0\neq \emptyset$.
    Since we can apply the construction to any disc $D_0$ around any point $z_0\in J_{\s}(F)$, this implies density of 
    $A_{\s}(F)$ in $J_{\s}(F)$. 
   Since $X$ is a closed proper subset of $J_{\s}(F)$, this also
    proves $X\subsetneq A_{\s}(F)$.

  To show that points in $X$ escape to infinity uniformly, we shall use the following general observation (see \cite[Lemma~3.1]{eremenkoproperty}). 
   \begin{claim}
    For every $R>0$, there is a number $Q>0$ with the following property. If $T$ is a tract of $F$, and $z,w\in T$ with
     $\re w \leq R$ and $\re z \geq Q$, then the vertical line segment $I_z$ of length $4\pi$ centred on $z$ separates $w$ from $\infty$ in $T$. 
   \end{claim}
  \begin{subproof}
   This follows immediately from Observation~\ref{obs:pointstotheleft} and Corollary~\ref{cor:movingpoints}.
  \end{subproof}

   Let $z_0\in D_0\cap J_{\s}(F)$, and set $z_n \defeq  F^n(z)$. Let $\eps>0$ be sufficiently small that
    $D_0$ contains a hyperbolic ball of  radius $\eps$ around $z_0$ in the hyperbolic metric of the range $H$ of $F$. Then
    $\dist_H( X_n , z_n) \geq \eps\cdot \Lambda^n$, where $\Lambda>1$ is the expansion factor from
    Proposition~\ref{prop:expansion}. Now let $R>0$ be arbitrary and let $Q>0$ be as in the claim. For sufficiently large 
    $n$, the hyperbolic ball of radius $\eps\cdot \Lambda^n$ around $z_n$ contains a Euclidean ball of radius $2\pi$ around a point of real part at least $Q$. 
     Hence $\re F^n(z) >R$ for all $z\in X$.
      So we have indeed shown that $X\subset \mu_{\s}(\infty)$,  as claimed. 
\end{proof}

Interestingly, it turns out that we can define $A_{\s}(F)$ purely using the topology of $\Jsh(F)$: 

\begin{prop}[Topological composants and uniform escape]\label{prop:composantsanduniformescape}
 Let $F\in\Blog$ be of disjoint type, and let $\s$ be an admissible external address of $F$. Then $\hat{A}_{\s}(F) \defeq  A_{\s}(F)\cup \{\infty\}$ is the 
   topological composant of $\infty$ in $\Jsh(F)$.
 
 In other words, $\Jsh(F)$ is irreducible between $z\in J_{\s}(F)$ and $\infty$ if and only if $z\notin A_{\s}(F)$. 
\end{prop}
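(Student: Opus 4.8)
The statement asserts that the topological composant of $\infty$ in $\Jsh(F)$ is precisely $\hat A_{\s}(F)=A_{\s}(F)\cup\{\infty\}$; the equivalent reformulation in terms of irreducibility then follows immediately by unwinding the definition of irreducibility. So the plan is to establish this set equality by proving the two inclusions.

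The inclusion of $\hat A_{\s}(F)$ in the composant of $\infty$ will be essentially a restatement of Proposition~\ref{prop:fast}. The point $\infty$ lies trivially in its own composant, and for $z\in A_{\s}(F)$ that proposition produces an unbounded, closed, connected set $X\subsetneq A_{\s}(F)$ with $z\in X$; by its proof, $X\cup\{\infty\}$ is a subcontinuum of $\Jsh(F)$, and since $X\subsetneq A_{\s}(F)\subseteq J_{\s}(F)$ this is a \emph{proper} subcontinuum containing both $z$ and $\infty$. Hence $z$ belongs to the composant of $\infty$.

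For the reverse inclusion, let $\hat A$ be a proper subcontinuum of $\Jsh(F)$ with $\infty\in\hat A$; I must show $A\defeq\hat A\setminus\{\infty\}\subseteq A_{\s}(F)$ (the case $\hat A=\{\infty\}$ being vacuous). First I would record the relevant properties of $A$: by Theorem~\ref{thm:infty}, $\infty$ is a terminal point of $\Jsh(F)$, hence of $\hat A$, hence not a cut point of $\hat A$ (via the Boundary Bumping Theorem~\ref{thm:boundarybumping}), so $A$ is connected; and $A=\hat A\cap\C$ is closed in $\C$ and unbounded, being the finite part of a non-degenerate continuum containing $\infty$. Now let $\s=T_0T_1T_2\dots$ be the address of $\Jsh(F)$, choose any $w\in J_{\s}(F)\setminus\hat A$ (which exists since $\hat A$ is proper and contains $\infty$), and let $D_0$ be a small open disc about $w$ disjoint from the closed set $A$; then $D_0$ meets $J_{\s}(F)$ in $w$, so it is admissible for Definition~\ref{defn:AsF}. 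With $D_n$ and $X_n$ defined as there, a short induction on $n$ — using that $F^n(A)\subseteq T_n$, that $F$ is injective on $T_n$, and that $D_{n+1}=F(T_n\cap D_n)$ — gives $F^n(A)\cap D_n=\emptyset$ for all $n$. Thus $F^n(A)$ is a connected subset of $T_n\setminus D_n$; it is unbounded, because $F^n$ is injective on $\Jsh(F)$ and so $F^n(\hat A)$ is again a non-degenerate continuum containing $\infty$. Therefore $F^n(A)$ lies in the unbounded component $X_n$ of $T_n\setminus D_n$; in particular $F^n(z)\in X_n$ for every $n$ and every $z\in A$, which is exactly the defining condition for $z\in A_{\s}(F)$. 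Hence $A\subseteq A_{\s}(F)$.

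The step I expect to require genuine care is the last one: deducing $F^n(A)\subseteq X_n$ from the fact that $F^n(A)$ is a connected unbounded subset of $T_n\setminus D_n$, and reconciling this with the singular phrase \emph{the unbounded connected component} in Definition~\ref{defn:AsF}. The clean way is to pass to $\Ch$: the component of $\infty$ in $(T_n\cup\{\infty\})\setminus D_n$ provides a distinguished set that contains the closure, and hence the entirety, of every unbounded connected subset of $T_n\setminus D_n$, so $F^n(A)$ necessarily lands in $X_n$. The remaining ingredients — the inductive disjointness $F^n(A)\cap D_n=\emptyset$, which is a one-line consequence of injectivity of $F$ on $T_n$, and the elementary topology of $A$ — are routine.
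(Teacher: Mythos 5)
Your proof is correct, but the reverse inclusion takes a genuinely different route from the paper's. The paper argues softly: given a proper subcontinuum $K\ni\infty$, density of $\hat{A}_{\s}(F)$ (the final claim of Proposition~\ref{prop:fast}) gives a point of $\hat{A}_{\s}(F)\setminus K$; the first part of Proposition~\ref{prop:fast} then yields a subcontinuum $A\subset\hat{A}_{\s}(F)$ containing both that point and $\infty$; and terminality of $\infty$ forces $K\subset A\subset\hat{A}_{\s}(F)$. You instead verify the defining condition of $A_{\s}(F)$ (Definition~\ref{defn:AsF}) directly, by building a witnessing open set $D_0$ around a point of $J_{\s}(F)\setminus\hat{A}$ and showing inductively, via injectivity of $F|_{T_n}$, that $F^n(A)$ avoids $D_n$ and hence sits in the unbounded component $X_n$. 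Both arguments are valid; yours is more constructive and bypasses the density statement entirely, at the cost of having to argue that $T_n\setminus D_n$ has a single unbounded component which absorbs every unbounded connected subset. Your sketch of this last point is slightly imprecise as written --- the $\Ch$-closure of $F^n(A)$ need not lie inside $(T_n\cup\{\infty\})\setminus D_n$ --- but the intended fact is immediate: since $F^n(A)$ is connected and unbounded, $F^n(A)\cup\{\infty\}$ is connected, lies in $(T_n\cup\{\infty\})\setminus D_n$, and hence in the component of $\infty$ there; removing $\infty$ from this open connected subset of the Jordan disc $T_n\cup\{\infty\}$ leaves it connected, so it is the unique unbounded component of $T_n\setminus D_n$.
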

\begin{proof}
  By Proposition \ref{prop:fast}, $\hat{A}_{\s}(F)$ is contained in the topological composant of $\infty$.

  On the other hand, let $K\subsetneq\Jsh(F)$ be a proper subcontinuum containing $\infty$; we must show that $K\subset\hat{A}_{\s}(F)$. 
   Since $\Jsh(F)\setminus K\neq\emptyset$, we see that
   $\hat{A}_{\s}(F)\setminus K\neq\emptyset$ by density of $\hat{A}_{\s}(F)$ (the final statement of Proposition~\ref{prop:fast}).
  Hence, by the first part of Proposition~\ref{prop:fast},
   there is a continuum $A\subset \hat{A}_{\s}(F)$ with $\infty\in A$ and
   $A\not\subset K$. Since $\infty$ is a terminal point of $\Jsh(F)$, we have $K\subset A\subset \hat{A}_{\s}(F)$, as desired. 
\end{proof}

\begin{cor}[Characterisation of decomposability]\label{cor:decomposablejuliacontinua}
  For every admissible address $\s$ of a disjoint-type function $F\in\Blog$, 
   the set $J_{\s}(F)\setminus A_{\s}(F)$ is non\-empty and connected. Moreover, the following are equivalent:
\begin{enumerate}[(a)]
  \item $\Jsh(F)$ is a decomposable continuum;
  \item $J_{\s}(F)\setminus A_{\s}(F)$ is bounded.
\end{enumerate}
\end{cor}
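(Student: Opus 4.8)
The plan is to deduce everything from the identification of $\hat{A}_{\s}(F)$ with a topological composant, together with the structure theory of composants collected in Proposition~\ref{prop:composants}. Recall from Proposition~\ref{prop:composantsanduniformescape} that $\hat{A}_{\s}(F)=A_{\s}(F)\cup\{\infty\}$ is exactly the topological composant of $\infty$ in $\Jsh(F)$, and observe that $J_{\s}(F)\setminus A_{\s}(F)=\Jsh(F)\setminus\hat{A}_{\s}(F)$. Connectedness of this set is then immediate from Proposition~\ref{prop:composants}(d), which says that the complement of any composant is connected. For non-emptiness one needs $\hat{A}_{\s}(F)\subsetneq\Jsh(F)$: since $\s$ is admissible, $J_{\s}(F)\neq\emptyset$, so $\Jsh(F)$ is non-degenerate, and $\infty$ is a terminal point of $\Jsh(F)$ by Theorem~\ref{thm:infty}; applying Proposition~\ref{prop:composants}(b) to the subcontinuum $\Jsh(F)$ itself shows that $\infty$ is a point of irreducibility of $\Jsh(F)$, and then Proposition~\ref{prop:composants}(a) gives that its composant is proper. (This is the observation already recorded after Definition~\ref{defn:uniformescapeintro}.) This settles the first sentence.

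For the implication (a)$\Rightarrow$(b), write $\Jsh(F)=P\cup Q$ with $P,Q$ proper subcontinua. Since $\infty$ is terminal, it cannot lie in both $P$ and $Q$: otherwise one would be contained in the other and $P\cup Q$ would itself be a proper subcontinuum, which is impossible since $P\cup Q=\Jsh(F)$. Say $\infty\in P$. Then $Q$ is a subcontinuum of $\Jsh(F)$ not containing $\infty$, hence a compact subset of $\C$ and in particular bounded. On the other hand $P$ is a proper subcontinuum through $\infty$, so $P\subset\hat{A}_{\s}(F)$ by definition of the composant, whence $J_{\s}(F)\setminus A_{\s}(F)=\Jsh(F)\setminus\hat{A}_{\s}(F)\subset\Jsh(F)\setminus P\subset Q$ is bounded, as required.

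For (b)$\Rightarrow$(a) I argue by contraposition. Suppose $\Jsh(F)$ is indecomposable. By Proposition~\ref{prop:composants}(f) it then has uncountably many pairwise disjoint composants, each dense in $\Jsh(F)$; pick one, say $C'$, with $C'\neq\hat{A}_{\s}(F)$. Disjointness of composants gives $C'\subset\Jsh(F)\setminus\hat{A}_{\s}(F)=J_{\s}(F)\setminus A_{\s}(F)$, so $J_{\s}(F)\setminus A_{\s}(F)$ is dense in $\Jsh(F)$; in particular $\infty$ lies in its closure, and since $\infty$ can be a limit point only of an unbounded subset of $\C$, the set $J_{\s}(F)\setminus A_{\s}(F)$ is unbounded. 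Thus (b) fails, which completes the equivalence.

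I do not anticipate a genuine obstacle here: the only delicate points are bookkeeping ones — distinguishing closures taken in $\C$ from closures in $\Ch$, noting that $\Jsh(F)$ is non-degenerate so that the terminal-point argument applies, and quoting the relevant parts of Proposition~\ref{prop:composants} in precisely the right form. An alternative route for (a)$\Rightarrow$(b) would use the description of the (one or three) composants of a decomposable continuum from Proposition~\ref{prop:composants}(e), but the direct argument via terminality of $\infty$ seems shortest.
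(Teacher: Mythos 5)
Your proof is correct and follows essentially the same route as the paper: identify $\hat{A}_{\s}(F)$ with the topological composant of $\infty$ via Proposition~\ref{prop:composantsanduniformescape}, then read off connectedness and non-emptiness from parts \ref{item:composantcomplement} and \ref{item:terminal_irreducible} of Proposition~\ref{prop:composants}, use density of composants (part \ref{item:composantindecomposable}) in the indecomposable case, and for the decomposable case pick the half containing $\infty$, note by terminality that the other half misses $\infty$ and is hence bounded, and observe that the half through $\infty$ lies in the composant. You spell out a few steps the paper leaves tacit (why $\infty$ cannot lie in both pieces of a decomposition, why density of $B$ forces unboundedness), but the logical skeleton and the ingredients cited are identical.
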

\begin{proof}
  Let us set $\CH \defeq  \Jsh(F)$ and $\hat{A} \defeq  A_{\s}(F)\cup\{\infty\}$; so
   $\hat{A}$ is the topological composant of $\infty$ in $\CH$.

  The set $B \defeq  J_{\s}(F)\setminus A_{\s}(F) = \CH\setminus \hat{A}$
   is non-empty because $\infty$ is a terminal point of 
   $\CH$, and hence a point of irreducibility by Proposition~\ref{prop:composants}~\ref{item:terminal_irreducible}. 
   By Proposition~\ref{prop:composants}~\ref{item:composantcomplement}, $B$ is connected. If $\CH$ is indecomposable, then 
   $B$ is unbounded by Proposition~\ref{prop:composants}~\ref{item:composantindecomposable}.

 On the other hand, suppose that $\CH$ is decomposable, say $\CH=X\cup Y$, where $X$ and $Y$ are proper subcontinua, say with
   $\infty\in X$. Since $\infty$ is a terminal point of $\CH$, we have $\infty\notin Y$, and hence $Y\subset J_{\s}(F)$ is bounded.
   Furthermore, $X\subset \hat{A}$ by the definition of topological composants, and hence $B\subset Y$ is bounded as claimed. 
\end{proof}

In many instances, the following statement will allow us to infer that there exist points in $I_{\s}(F)\setminus \mu_{\s}(F,\infty)$; i.e., escaping points
   that do not satisfy the ``uniform Eremenko property'' (UE) mentioned in the introduction.

\begin{cor}[{\Escapingcomposant}s different from $\mu_{\s}$]\label{cor:uniformeremenko}
   Let $\s$ be an admissible address of a disjoint-type function $F\in\Blog$. 
   \begin{enumerate}[(a)]
     \item Either $\min_{z\in J_{\s}(F)} \re F^n(z) \to\infty$ as $n\to\infty$, and hence $\mu_{\s}(F,\infty)=J_{\s}(F)$, or $\mu_{\s}(F,\infty) = A_{\s}(F)$.
      \label{item:nonuniformescapeandcomposants}
    \item  If $J_{\s}(F)\setminus A_{\s}(F)$ contains more than one point, then $I_{\s}(F)\setminus A_{\s}(F)\neq\emptyset$.
        \label{item:escapingpointsoutsidecomposant}
     \item In particular, if $\#(J_{\s}(F)\setminus A_{\s}(F)) >1$ and $J_{\s}(F)$ does not escape uniformly, then 
         $I_{\s}(F) \setminus \mu_{\s}(F,\infty)\neq\emptyset$.
       \label{item:nonuniformexistence}
   \end{enumerate}
\end{cor}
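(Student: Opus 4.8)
The plan is to deduce all three parts, essentially formally, from the structural results already established: that $A_{\s}(F)\subseteq\mu_{\s}(F,\infty)$ always (Proposition~\ref{prop:fast}); that $A_{\s}(F)\cup\{\infty\}$ is precisely the topological composant of $\infty$ in $\Jsh(F)$ (Proposition~\ref{prop:composantsanduniformescape}); that $J_{\s}(F)\setminus A_{\s}(F)$ is always connected (Corollary~\ref{cor:decomposablejuliacontinua}); and that the nonescaping points of $\Jsh(F)$ form a set of Hausdorff dimension zero (Proposition~\ref{prop:nonescapinghausdorff}).

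For part~\ref{item:nonuniformescapeandcomposants} I would treat the two alternatives separately. If $\min_{z\in J_{\s}(F)}\re F^n(z)\to\infty$, then $J_{\s}(F)$ is itself an unbounded, connected set, closed in $\C$, on which $\re F^n$ converges to infinity uniformly; since it contains every point of $J_{\s}(F)$, Definition~\ref{defn:uniformescape} immediately gives $\mu_{\s}(F,\infty)=J_{\s}(F)$. If instead $\min_{z\in J_{\s}(F)}\re F^n(z)\not\to\infty$, fix any $z\in\mu_{\s}(F,\infty)$ and a witnessing unbounded, closed, connected set $A\subseteq J_{\s}(F)$ with $z\in A$ and $\re F^n|_A\to\infty$ uniformly. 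Then $\hat A\defeq A\cup\{\infty\}$ is a subcontinuum of $\Jsh(F)$: it is compact, being the closure of $A$ in $\Ch$, and connected, being the closure of a connected set. Moreover $\hat A$ is a \emph{proper} subcontinuum, since $\hat A=\Jsh(F)$ would force $A=J_{\s}(F)$ and hence $\min_{z\in J_{\s}(F)}\re F^n(z)\to\infty$, contrary to assumption. Thus $z$ lies in a proper subcontinuum of $\Jsh(F)$ containing $\infty$, i.e.\ in the topological composant of $\infty$, which equals $A_{\s}(F)\cup\{\infty\}$ by Proposition~\ref{prop:composantsanduniformescape}; as $z\neq\infty$ we get $z\in A_{\s}(F)$. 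Together with Proposition~\ref{prop:fast} this gives $\mu_{\s}(F,\infty)=A_{\s}(F)$.

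For part~\ref{item:escapingpointsoutsidecomposant}, put $B\defeq J_{\s}(F)\setminus A_{\s}(F)$, which is connected by Corollary~\ref{cor:decomposablejuliacontinua} and, by hypothesis, has more than one point. Then $B$ has Hausdorff dimension at least $1$: for any $x_0\in B$ the function $x\mapsto\dist(x,x_0)$ is a non-constant Lipschitz map of $B$ onto a nondegenerate interval, and Lipschitz maps do not increase Hausdorff dimension. But the set $J_{\s}(F)\setminus I_{\s}(F)$ of nonescaping points has Hausdorff dimension zero by Proposition~\ref{prop:nonescapinghausdorff}. Hence $B\not\subseteq J_{\s}(F)\setminus I_{\s}(F)$, so there is $z\in B\cap I_{\s}(F)$, and then $z\in I_{\s}(F)\setminus A_{\s}(F)$. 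Part~\ref{item:nonuniformexistence} follows by combining the two: the assumption that $J_{\s}(F)$ does not escape uniformly is exactly the second alternative in part~\ref{item:nonuniformescapeandcomposants}, giving $\mu_{\s}(F,\infty)=A_{\s}(F)$, and part~\ref{item:escapingpointsoutsidecomposant} then produces a point of $I_{\s}(F)\setminus A_{\s}(F)=I_{\s}(F)\setminus\mu_{\s}(F,\infty)$.

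I do not expect a real obstacle; the corollary is mainly a matter of assembling the preceding results correctly. The step needing the most care is the second case of part~\ref{item:nonuniformescapeandcomposants}: one must check that the one-point compactification $A\cup\{\infty\}$ of a uniform-escape witness really is a \emph{proper} subcontinuum of $\Jsh(F)$, which is the only place the non-uniform-escape hypothesis enters and where compactness and connectedness in $\Ch$ must be verified explicitly. The sole other point worth spelling out is the elementary fact, used in part~\ref{item:escapingpointsoutsidecomposant}, that a nondegenerate connected set has Hausdorff dimension at least one and so cannot be contained in a set of Hausdorff dimension zero.
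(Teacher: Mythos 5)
Your proof is correct and follows essentially the same route as the paper: part~(a) via the identification of $A_{\s}(F)\cup\{\infty\}$ with the topological composant of $\infty$, part~(b) via connectedness of $J_{\s}(F)\setminus A_{\s}(F)$ together with the Hausdorff-dimension-zero bound on nonescaping points, and part~(c) by combining the two. The only difference is that you spell out a few details the paper leaves implicit (compactness and connectedness of $A\cup\{\infty\}$, and the Lipschitz-map argument for why a nondegenerate connected set has Hausdorff dimension at least one).
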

\begin{proof}
 Recall that always $A_{\s}(F)\subset \mu_{\s}(\infty)$. 
    Suppose that $\re F^n(z)$ does not tend to infinity uniformly on $J_{\s}(F)$, and let $z\in \mu_{\s}(\infty)$. Then there is a closed, unbounded and
    connected set $K\subset J_{\s}(F)$ with $K\ni z$ such that $\re F^n|_K\to\infty$ uniformly. By assumption, we have $K\neq J_{\s}(F)$, so 
    $K\cup\{\infty\}$ is a proper subcontinuum of $\Jsh(F)$.  Thus $z$ belongs to the topological
   composant of $\infty$ in $\Jsh(F)$, and hence to
    $A_{\s}(F)$ by Proposition \ref{prop:composantsanduniformescape}. This proves~\ref{item:nonuniformescapeandcomposants}

  To prove~\ref{item:escapingpointsoutsidecomposant}, recall that the set
    $X \defeq  J_{\s}(F)\setminus A_{\s}(F)$ is connected
      by Corollary \ref{cor:decomposablejuliacontinua}, and that the set of non-escaping points in $J_{\s}(F)$ has Hausdorff dimension zero by Proposition
    \ref{prop:nonescapinghausdorff}. Hence, if $X$ contains more than one point, it must intersect $I(F)$. (Indeed, this intersection has Hausdorff dimension
    at least $1$.) 

   Claim~\ref{item:nonuniformexistence} is an immediate consequence of~\ref{item:nonuniformescapeandcomposants} and~\ref{item:escapingpointsoutsidecomposant}. 
\end{proof}

\begin{proof}[Proof of Theorem \ref{thm:composants}]
  Let $F\in\Blog$ be a disjoint-type logarithmic transform of the disjoint-type entire function $f$, and let $\s$ be an external address such that 
    $\exp(J_{\s}(F))=C$ (where $\CH$ is our original Julia continuum of $f$). Clearly it suffices to prove the claims for $F$ and $\Jsh(F)$. 

 The first statement is a direct consequence of Proposition~\ref{prop:composantsanduniformescape} and
    Corollary~\ref{cor:uniformeremenko}~\ref{item:nonuniformescapeandcomposants}. 

    If $\Jsh(F)$ is an indecomposable continuum, then by Corollary~\ref{cor:decomposablejuliacontinua}, $X\defeq J_{\s}(F)\setminus A_{\s}(F)$ is unbounded,
     and hence $I_{\s}(F)\setminus \mu_{\s}(\infty)\neq\emptyset$ by Corollary~\ref{cor:uniformeremenko}. 

   On the other hand, suppose that $\s$ is a periodic address of period $p$, and that $\Jsh(F)$ is decomposable. 
    Then $X$ is bounded by Corollary~\ref{cor:decomposablejuliacontinua}, and furthermore $F^p(X)=X$. (Indeed, $F(A_{\s}(F))= A_{\sigma(\s)}(F)$ for
     all addresses $\s$, where $\sigma$ is the shift map. In particular, $F^p(A_{\s}(F))=A_{\s}(F)$.)
    Hence all points in $X$ have bounded orbits, and by 
      Proposition~\ref{prop:boundedorbits} and  Corollary~\ref{cor:uniformeremenko}~\ref{item:nonuniformescapeandcomposants}, 
     $X= J_{\s}(F)\setminus \mu_{\s}(\infty)$ consists of a single periodic point. 
\end{proof}

\section{{\Anguine} tracts}\label{sec:arcliketracts}

We now turn to a class of functions for which we can say more about the topology of Julia 
  continua.

\begin{defn}[{\Anguine} tracts]\label{defn:arcliketracts}
   Let $F\colon \T\to H$ be a disjoint-type function in the class $\Blog$. 
   We say that a tract $T$ is \emph{\anguine} if there exists a continuous function $\phi\colon T\to [0,\infty)$ with $\phi(z)\to \infty$ as $z\to \infty$
    and a constant $K>0$ such that 
      \[ \diam_H(\phi^{-1}(t)) \leq K \]
  for all $t$. If all tracts of $F$ are {\anguine} with the same constant $K$, then we say that $F$ \emph{has {\anguine} tracts}.
\end{defn}
\begin{remark}
  Each tract of any iterate of $F$ is contained in a tract of $F$. Hence any iterate of a function with {\anguine} tracts also has {\anguine} tracts.
\end{remark}

 The following two cases of {\anguine} tracts are particularly important.
\begin{defn}[Bounded slope and bounded decorations]\label{defn:boundedslopelog}
  Let $F\colon \T\to H$ be a disjoint-type function in the class $\Blog$.
   We say that $F$ has \emph{bounded slope} if there exists a curve
    $\gamma\colon [0,\infty)\to \T$ and a constant $K$ such that $\re \gamma(t)\to +\infty$ as $t\to\infty$ and
    \[ |\im \gamma(t) | \leq K\cdot \re \gamma(t) \]
   for all $t$. 

  If $H=\HH$, then we say that $F$ has \emph{bounded decorations} if there is a constant
    $K$ such that 
   \begin{equation}\label{eqn:boundeddecorations} 
     \diam_{\HH}( F_T^{-1}(\{\zeta\in \HH\colon  |\zeta|=\rho\}) ) \leq K 
   \end{equation}
   for all sufficiently large $\rho\geq 0$ and all tracts $T$ of $F$. 

  If $H\neq \HH$, then we say that $F$ has bounded decorations if 
       $\mu \circ F$ has bounded decorations, where $\mu\colon H\to \HH$ is a conformal isomorphism that 
     commutes
     with translation by $2\pi i$.
\end{defn}
\begin{remark}[Remark 1]
  Let $f\in\B$, and let $F$ be a logarithmic transform of $f$. Then $f$ 
     has bounded slope, as defined in the Introduction, if and only if $F$ 
    has bounded slope in the sense defined here (and vice versa). Indeed, both conditions are
     equivalent to saying that some tract of $F$~-- and hence all tracts of $F$~-- eventually
    lie within some fixed sector around the positive real axis, of opening angle less than $\pi$.
\end{remark}
\begin{remark}[Remark 2]
  For a fixed tract $T$, it is easy to see that the condition~\eqref{eqn:boundeddecorations} is 
    preserved under post-composition of $F$ with an affine automorphism of $\HH$, allowing for a 
    change of the constant $K$; see also Proposition \ref{prop:decorations} below. Hence it
    makes sense to say that a logarithmic tract $T$ itself has \emph{bounded decorations} 
    if~\eqref{eqn:boundeddecorations} is satisfied for some conformal isomorphism
    $F\colon T\to\HH$ with $F(\infty)=\infty$. If $F\in\BlogP$ has only finitely many tracts, up to
    translations from $2\pi i\Z$, then $F$ has bounded decorations if and only if all tracts of
    $F$ do. 
\end{remark}
\begin{rmk}[Bounded slope and decorations preserved under approximation]\label{rmk:preserved}
  Suppose that the entire function $f$ is obtained from $G\in\BlogP$ via Theorem~\ref{thm:realization}. We claim that this can be done in 
    such a way that, if $G$ has bounded slope,
    so does $f$,  and if $G$ has bounded decorations, then so does any logarithmic transform $F$ of $f$. 

  Indeed, for the first part of Theorem~\ref{thm:realization} (where the function $f$ belongs to the Eremenko--Lyubich class), by 
    \cite{bishopclassBmodels}, the maps $F$ and $G$ are \emph{quasiconformally equivalent} in the 
    sense of \cite{boettcher}, and it follows from \cite[Proof~of~Lemma~2.6]{boettcher} that the  
    bounded-slope and the bounded-decorations conditions are each preserved under quasiconformal equivalence.

  For the second part of Theorem~\ref{thm:realization}, the logarithmic transform $F$ of the function $f$
    as constructed in~\cite{bishopclassSmodels} does not satisfy the bounded-decorations condition in general. Indeed, $F$ has a number of
    ``additional'' tracts (not arising from tracts of $G$), and these  need not have bounded decorations (see \cite[Figure~10]{bishopclassSmodels}). 

 However, $G$ is nonetheless quasiconformally equivalent to a restriction $\tilde{F}$ of 
     $F$ (to the complement of the union of these additional tracts). As above, this shows that $f$ has bounded slope, and that $\tilde{F}$ has 
    bounded decorations. This is in fact sufficient for our purposes. However, it is not difficult to modify the construction from~\cite{bishopclassSmodels} to 
    ensure that the additional tracts also have (uniformly) bounded decorations; we omit the details.
\end{rmk}

 The following observation makes it easy to verify the bounded decorations condition. (It will be used in 
   the second part of the paper.)
 \begin{prop}[Characterisation of bounded decorations]\label{prop:decorations}
  Let $T$ be a logarithmic tract, let $\zeta_0\in\partial T$, and let 
    $\gamma^+$ and $\gamma^-$ be the two connected components of
   $(\partial T) \setminus \{\zeta_0\}$. (Recall that boundaries are always understood as
   subsets of $\C$.)
  The following are equivalent:
    \begin{enumerate}[(a)]
       \item $T$ has bounded decorations.\label{item:boundeddecorations}%
      \item Every sufficiently large point $z\in \overline{T}$ can be connected to both $\gamma^+$ and $\gamma^-$ by 
        an arc in $T$ whose hyperbolic diameter (in $\HH$) is uniformly bounded independently of $z$.
      \label{item:boundedconnections}%
    \end{enumerate}
 \end{prop}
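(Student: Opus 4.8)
The plan is to work directly with a fixed conformal isomorphism $F\colon T\to\HH$ with $F(\infty)=\infty$, and to translate between the two conditions via the fact that $F$ is a hyperbolic isometry from $T$ (with its own hyperbolic metric) onto $\HH$. Recall that bounded decorations means that the preimages $F_T^{-1}(\{|\zeta|=\rho\})$ have uniformly bounded hyperbolic diameter in $\HH$ for large $\rho$; equivalently, via the isometry, the circular arcs $\{\zeta\in\HH\colon|\zeta|=\rho\}$ have uniformly bounded hyperbolic diameter \emph{in $T$}, i.e.\ in the domain $\HH$ pulled back through $F$ --- but it is cleaner to phrase the whole argument on the $T$-side: condition \ref{item:boundeddecorations} says exactly that the cross-cuts $C_\rho\defeq F_T^{-1}(\{|\zeta|=\rho\})$ of $T$ have hyperbolic diameter in $\HH$ bounded by some constant $K$, for all large $\rho$. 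Note that $\zeta_0=F_T^{-1}(0)\in\partial T$ is a distinguished boundary point (the ``first'' boundary point, corresponding to $0\in\partial\HH$), and that for each large $\rho$ the endpoints of the cross-cut $C_\rho$ lie one on $\gamma^+$ and one on $\gamma^-$, since the semicircle $\{|\zeta|=\rho\}$ in $\HH$ has its two endpoints on the two rays of $\partial\HH$ emanating from $0$.

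For the direction \ref{item:boundeddecorations}$\Rightarrow$\ref{item:boundedconnections}: given a large point $z\in\overline T$, let $\rho\defeq|F(z)|$. Then $z\in C_\rho$ (or $z=\infty$, a trivial case handled separately if needed, but large finite $z$ is the substantive case). Since $C_\rho$ is a connected arc of hyperbolic diameter $\leq K$ running from $\gamma^+$ to $\gamma^-$ and passing through $z$, the two sub-arcs of $C_\rho$ from $z$ to $\gamma^+$ and from $z$ to $\gamma^-$ furnish the required connecting arcs, each of hyperbolic diameter $\leq K$. This direction is essentially immediate. The content is in the converse.

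For \ref{item:boundedconnections}$\Rightarrow$\ref{item:boundeddecorations}: fix $\rho$ large; I must bound $\diam_\HH(C_\rho)$. First observe that $C_\rho$ separates $T$ into a bounded part $B_\rho$ (containing a neighbourhood of $\zeta_0$ in $T$) and an unbounded part. The idea is: pick any point $z\in C_\rho$; by \ref{item:boundedconnections} connect it to $\gamma^+$ and to $\gamma^-$ by arcs $\alpha^\pm\subset T$ of hyperbolic diameter $\leq K'$. The arc $\alpha^+\cup\{z\}\cup\alpha^-$ is then a cross-cut of $T$ from $\gamma^+$ to $\gamma^-$ of hyperbolic diameter $\leq 2K'+\dist_\HH(\text{endpoints})$, hence of diameter $O(K')$; moreover every point of $C_\rho$ lies ``beyond'' this cross-cut in the sense that $C_\rho$ and the bounded component it cuts off are contained in the unbounded component of $T$ minus this new cross-cut, provided $z$ is chosen appropriately (say $z$ a point of $C_\rho$ that is extremal in some sense, e.g.\ of maximal real part, ensuring the connecting arcs $\alpha^\pm$ reach $\gamma^\pm$ without having to ``come back around'' $C_\rho$). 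Now run the same construction at a slightly larger radius $\rho'>\rho$, producing a second cross-cut of bounded hyperbolic diameter; then $C_\rho$ lies in the bounded region of $T$ trapped between these two cross-cuts of bounded diameter, and a region so trapped has bounded hyperbolic diameter by the standard estimate on the hyperbolic metric together with the fact that $T$ is disjoint from its $2\pi i$-translates (so that $\rho_\HH$ is bounded below on $T$, giving \eqref{eqn:standardestimate2pi}, and distances in $T$ control Euclidean size). Finally one deduces the bound uniformly in $\rho$, and then transports it through the isometry $F$ to obtain the bounded-decorations estimate \eqref{eqn:boundeddecorations}, with a uniform constant over all tracts since the hypothesis in \ref{item:boundedconnections} provides a uniform bound and the geometry of $T$ near $\zeta_0$ only enters through $2\pi i$-disjointness, which is uniform.

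The main obstacle I anticipate is the bookkeeping in the converse direction: making precise the claim that the two bounded-diameter cross-cuts genuinely \emph{trap} $C_\rho$ in a region of bounded hyperbolic diameter. One has to choose the auxiliary radii and the basepoints $z$ on each level set carefully so that the connecting arcs from \ref{item:boundedconnections} do not wander into the region one is trying to confine, and one must rule out the arcs ``wrapping around'' --- this is where the hypothesis that $T$ is a logarithmic tract (a Jordan domain through $\infty$, unbounded to the right, disjoint from $2\pi i\Z$-translates) is used to keep the topology under control, so that ``bounded hyperbolic diameter'' really does imply ``bounded Euclidean diameter'' and the separation argument goes through. Everything else is a routine application of \eqref{eqn:standardestimate}, \eqref{eqn:standardestimate2pi}, and the conformal invariance of the hyperbolic metric.
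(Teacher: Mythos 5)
Your outline of the easy direction \ref{item:boundeddecorations}$\Rightarrow$\ref{item:boundedconnections} matches the paper, and the reduction to the model $F\colon T\to\HH$ with $\zeta_0=F^{-1}(0)$, $\gamma^{\pm}=F^{-1}(i\cdot(0,\pm\infty))$ is the right setup. However, your argument for the converse has a genuine gap, and it is not, as you suggest, merely bookkeeping. The ``trapping'' step fails: having two cross-cuts of bounded hyperbolic diameter at nearby levels does \emph{not} bound the hyperbolic diameter of the region between them (nor of the arc $C_\rho$ that you want to sandwich). A domain can have a large bulge sitting between two narrow necks; the cross-cuts can both run through the necks with small diameter while the intermediate region, and the level set $C_\rho$ threading through it, is enormous. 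The lower bound~\eqref{eqn:standardestimate2pi} on the hyperbolic density controls Euclidean size by hyperbolic size, which is the wrong direction for this step, so the $2\pi i$-disjointness does not rescue the argument. There is also the structural problem you half-acknowledge: your first cross-cut passes through a point $z\in C_\rho$, so $C_\rho$ is not on one side of it, and the nesting picture never quite gets set up.

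The paper avoids this entirely by invoking a Gehring--Hayman-type estimate (Pommerenke, Lemma~4.21): if $\alpha$ is the curve from hypothesis~\ref{item:boundedconnections} joining $F^{-1}(\sigma ir)$ to the opposite boundary arc (and hence crossing $F^{-1}([0,\infty))$), then the image under $F^{-1}$ of the quarter-circle $\{|\zeta|=r,\ \sigma\im\zeta>0\}$ has diameter at most $C\cdot\diam_{\HH}(\alpha)\leq CK$, with $C$ an absolute constant. Summing the two halves gives~\eqref{eqn:boundeddecorations} directly, with no need for a sandwich. So the missing idea in your proposal is precisely this distortion lemma controlling the diameter of a boundary arc under a conformal map by the diameter of \emph{any} curve reaching from the midpoint to an endpoint; without it, the bound on $\diam_\HH(C_\rho)$ does not follow from the existence of short connections at one point. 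If you want to keep your two-cross-cut picture, you would still need essentially this Pommerenke/Gehring--Hayman input to conclude that the part of $C_\rho$ between the two cuts is short, so the detour does not save you from learning the lemma.
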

\begin{proof}
   First suppose that $T$ has bounded decorations; then by definition every sufficiently 
      large point of $T$ can be connected to both $F^{-1}\bigl(i[0,\infty)\bigr)$ and $F^{-1}\bigl((-\infty,0]\bigr)$ by 
      a curve of uniformly bounded hyperbolic diameter. (Here $F$ is the conformal isomorphism from the definition of bounded
   decorations.) The difference between the latter two curves and $\gamma^+$ and $\gamma^-$ is bounded,
      so~\ref{item:boundedconnections} holds.  
        
 To prove the converse, we use the following fact of geometric function theory. 
  \begin{claim}
    There exists a universal constant $C>0$ with the following property. Suppose that $T$ is any simply-connected domain and
     $\phi\colon \HH\to T$ is a conformal isomorphism. Let $r>0$ and let $I$ be a connected component of 
      $\R\setminus \{ -r , 0 , r \}$. Then there exists $t \in I$ such that the Euclidean length of
      $\phi(\gamma_t)$ is at most $C\cdot \dist(F^{-1}(r), \partial T)$, where $\gamma_t$ is 
      the hyperbolic geodesic $\gamma$ of $\HH$ connecting $r$ and $it$. 
  \end{claim}
  \begin{subproof}
    This is an immediate consequence of~\cite[Corollary~4.18]{pommerenke} and the fact that $I$ has harmonic measure $1/4$ viewed from $r$. 
  \end{subproof}
  
  Now suppose that~\ref{item:boundedconnections} holds for $T$, and let $F\colon T\to\HH$ be a conformal isomorphism with $F(\infty)=\infty$ and $F(\zeta_0) = 0$. 
    Let $r>0$ and let $\beta$ be the hyperbolic geodesic of $T$ connecting $F^{-1}(r)$ and $F^{-1}(ir)$. 
    We apply the claim to $\phi = F^{-1}$ and the intervals $I_1 = (0,ir)$ and $I_2 = (ir,\infty)$. Thus we obtain 
    geodesics $\alpha_1$ and $\alpha_2$ of $T$, each of Euclidean length at most $C\cdot \pi$, such that 
    $\alpha_1 \cup \alpha_2$ separates all points of $\beta$ from $F^{-1}\bigl(i(-\infty,0)\bigr)$. If $r$ is sufficiently large, then $\alpha\defeq \alpha_1 \cup \alpha_2\subset\HH$ and has
    hyperbolic diameter at most $1$. By assumption, every point of $\beta$ can be connected to $F^{-1}\bigl(i(-\infty,0)\bigr)$, and hence to $\alpha$, by a curve of uniformly bounded hyperbolic
    diameter. Hence the hyperbolic diameter of $\beta$ is bounded independently
     of $r$. 
   The same argument applies to the geodesic connecting $F^{-1}(r)$ and $F^{-1}(-ir)$. We have established~\ref{item:boundeddecorations}, as desired.  
\end{proof}

\begin{obs}[Examples of {\anguine} tracts]
  If the disjoint-type function $F\in\Blog$ has bounded slope or bounded decorations, then $F$ has {\anguine} 
   tracts.
\end{obs}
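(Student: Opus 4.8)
The plan is to produce, in each of the two cases, for every tract $T$ a continuous ``sweeping function'' $\phi\colon T\to[0,\infty)$ with $\phi(z)\to\infty$ as $z\to\infty$ and with $\diam_H(\phi^{-1}(t))\le K$ for all $t$, where $K$ does \emph{not} depend on $T$. In each case the hypothesis supplies a natural $\phi$, and the argument splits into bounding $\diam_H(\phi^{-1}(t))$ for all large $t$ (here the hypothesis does the work, via the standard estimate~\eqref{eqn:standardestimate}) and for the remaining, bounded, range of $t$, which corresponds to the ``left end'' of $T$; for the latter I would invoke Observation~\ref{obs:pointstotheleft}.

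If $F$ has bounded slope, then, as explained in the remark following the definition of bounded slope, there are $\alpha\in(0,\pi/2)$ and $\rho_1>0$ such that $T\cap\{|z|\ge\rho_1\}\subseteq S_\alpha\defeq\{z\colon|\arg z|<\alpha\}$ for every tract $T$. I would take $\phi(z)\defeq|z|$, which is continuous with $\phi(z)\to\infty$ as $z\to\infty$. Fix $R_H$ with $\{\re z>R_H\}\subseteq H$. For $t\ge\max(\rho_1,2R_H/\cos\alpha)$ the level set $\phi^{-1}(t)=T\cap\{|z|=t\}$ lies on the circular arc $\{|z|=t\}\cap S_\alpha$, along which $\re z\ge t\cos\alpha\ge 2R_H$, so $\dist(z,\partial H)\ge\re z-R_H\ge t\cos\alpha/2$ and hence $\rho_H(z)\le 4/(t\cos\alpha)$ by~\eqref{eqn:standardestimate}; since that arc has Euclidean length $2\alpha t$, its hyperbolic length in $H$ is at most $8\alpha/\cos\alpha$. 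For the remaining, bounded, values of $t$ one has $\phi^{-1}(t)\subseteq T\cap\{\re z\le R\}\subseteq T_{\le R}$ for a fixed $R$, and $\diam_H(T_{\le R})$ is bounded independently of $T$ by Observation~\ref{obs:pointstotheleft}. Then $K$ may be taken as the larger of the two bounds.

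If $F$ has bounded decorations, let $\mu\colon H\to\HH$ be the conformal isomorphism commuting with translation by $2\pi i$ (with $\mu=\id$ if $H=\HH$), and take $\phi(z)\defeq|\mu(F(z))|$. Both $\mu$ and $F$ extend to homeomorphisms of the closures in $\Ch$ fixing $\infty$, so $\phi$ is continuous and $\phi(z)\to\infty$ as $z\to\infty$; and since $\mu$ is a hyperbolic isometry, $\diam_H(\phi^{-1}(t))$ equals the hyperbolic diameter in $\HH$ of the preimage under $\mu\circ F|_T$ of the half-circle $\{\zeta\in\HH\colon|\zeta|=t\}$. For $t$ above the threshold $\rho_0$ in the definition of bounded decorations this is at most the constant appearing there, uniformly in $T$. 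For $t<\rho_0$ the level set lies in $T_0\defeq\{z\in T\colon|\mu(F(z))|<\rho_0\}$; by the Carath\'eodory extension theorem $T_0$ is the image under $(\mu\circ F|_T)^{-1}$ of a half-disk whose closure in $\Ch$ is a compact subset of $\C$, so $T_0$ has compact closure in $\C$, and hence bounded real part, whence $T_0\subseteq T_{\le R}$ for a fixed $R$; Observation~\ref{obs:pointstotheleft} again bounds $\diam_H(T_0)$. Taking $K$ as before finishes this case.

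The delicate point is that the radii $\rho_1$ and $R$ above, and hence the final constant $K$, must be chosen \emph{uniformly over all tracts}. This is immediate when $F$ has only finitely many tracts up to translation by $2\pi i\Z$~-- the situation in all the constructions of this paper~-- since such translations preserve real parts; in general it follows from the facts that the tracts accumulate only at $\infty$ and that $F$ (respectively $\mu\circ F$) extends continuously with value $\infty$ at $\infty$ on the closure of each tract, so that a sequence of points with real parts tending to infinity cannot have its $\mu\circ F$-images confined to a bounded set. I expect this uniformity, rather than any of the individual estimates, to be the main thing to get right.
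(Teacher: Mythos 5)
Your bounded-decorations argument matches the paper's (taking $\phi(z)=|\mu(F(z))|$; for large $t$ invoke the definition, for small $t$ invoke Carath\'eodory plus Observation~\ref{obs:pointstotheleft}), and indeed fleshes out the paper's terse ``by continuity'' more than the paper itself does. But your bounded-slope argument has a genuine gap, caused by a choice that differs from the paper's: you take $\phi(z)=|z|$, whereas the paper takes $\phi(z)=\re z$. The function $|z|$ is not invariant under translation by $2\pi i\Z$, and this matters.

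Concretely, the claim that there is a uniform $\rho_1$ with $T\cap\{|z|\ge\rho_1\}\subseteq S_\alpha$ for \emph{every} tract $T$ is false. Fix $z_0\in T_0$ with, say, $\re z_0=5$. Since $\T$ is $2\pi i$-periodic, $T_0+2\pi im$ is a tract for every $m\in\Z$, and the point $z_0+2\pi im$ lies in it with $|z_0+2\pi im|\ge 2\pi|m|\to\infty$ while $\arg(z_0+2\pi im)\to\pi/2$. So for $|m|$ large this point lies outside $S_\alpha$ even though its modulus exceeds any prescribed $\rho_1$. Consequently, for $t\approx 2\pi|m|$ the level set $T\cap\{|z|=t\}$ of the translated tract contains points far from the sector (and also points far to the right along the tract), and your argument---which covers $t$ only by the dichotomy ``large $t$ is in the sector'' versus ``small $t$ is in $T_{\le R}$''---does not cover these level sets at all. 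Your closing remark on uniformity does not repair this: you argue that $2\pi i\Z$-translations preserve real parts, which would help if $\phi$ were a function of $\re z$, but your $\phi=|z|$ depends essentially on $\im z$ and is changed by those translations, so the observation is inapplicable to your chosen $\phi$.

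The paper avoids all of this by taking $\phi(z)=\re z$, which is translation-invariant, and by using the bounded-slope geometry directly: each tract tends to infinity between two $2\pi i\Z$-translates of the curve $\gamma$ from the definition, these translates lie $4\pi i$ apart, and each crosses the vertical line $L_R=\{\re z=R\}$ within an interval of Euclidean length $O(R)$; hence $T\cap L_R$ lies in an interval of length $O(R)+4\pi$, whose hyperbolic diameter in $H$ is bounded (since the density at real part $R$ is $O(1/R)$), uniformly in $T$ and in $R\ge R_0$. The remaining small $R$ are then handled by Observation~\ref{obs:pointstotheleft}, exactly as you intended. If you replace $|z|$ by $\re z$ and run your sector argument ``level by vertical line'' rather than ``level by circle,'' the proof goes through and coincides with the paper's.
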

\begin{proof}
  First suppose that $F\colon  \T\to H$ has bounded decorations. 
   Set $\phi(z) \defeq |\mu(F(z))|$, where $\mu$ is as in the
    definition of bounded decorations (so $\mu=\id$ if $H=\HH$). Then $\phi^{-1}(t)$ has bounded  hyperbolic 
    diameter for all sufficiently large $t$ by  definition,  and for the remaining  $t$ by continuity (recall that $F$
    is of disjoint type). 
 
  In the case of bounded slope, 
   the desired function $\phi$ is given by $\phi(z) \defeq \re z$. Indeed,
    suppose that $\gamma$ is a curve as in the definition of bounded slope. If 
    $T$ is a tract of $F$, then $T$ must tend to infinity between two $2\pi i\Z$-translates of $\gamma$ that lie $4\pi i$ 
    apart. By the bounded slope assumption, the intersection of $\gamma$ (and hence of any of its translates) with the line $L_R\defeq \{\re z = R \}$ is contained in
    an interval of length at most $C\cdot R$ for some $C>0$. Thus~-- again assuming $R$ is sufficiently large, say $R\geq R_0$~--
    the set $\phi^{-1}(R)$, which is the intersection of $T$ with $L_R$, is contained in an interval of length
    $C\cdot R + 4\pi$, and hence has bounded hyperbolic diameter. For $t\leq R_0$, the hyperbolic diameter of $\phi^{-1}(t)$ is bounded
    by~\ref{obs:pointstotheleft}.
\end{proof}

The key reason for the above definitions is given by the following observation, which (together with Theorem~\ref{thm:infty}) completes the proof of the first half of Theorem~\ref{thm:mainarclike}. 

\begin{prop}[{\Anguine} tracts imply arc-like continua]\label{prop:arcliketracts}
  Suppose that $F$ has {\anguine} tracts. Then every Julia continuum of $F$ is arc-like.
\end{prop}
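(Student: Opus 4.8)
The plan is to fix an admissible external address $\s=T_0T_1T_2\dots$ and verify Definition~\ref{defn:topologicalproperties}(c) for $\CH=\hat{J}_{\s}(F)$ directly: given $\eps>0$, I want to produce a continuous map $g\colon\CH\to[0,1]$ all of whose fibres have spherical diameter $<\eps$. The natural candidate is $g=g_n\defeq\eta\circ\phi_n^{\ast}\circ F^{n}$, where: $\phi_n\colon T_n\to[0,\infty)$ is the function witnessing that the tract $T_n$ is {\anguine} (with the common constant $K$); $\phi_n^{\ast}\colon T_n\cup\{\infty\}\to[0,\infty]$ is the extension by $\phi_n^{\ast}(\infty)\defeq\infty$, which is continuous precisely because $\phi_n(z)\to\infty$ as $z\to\infty$; $F^{n}$ is taken with the standing convention $F(\infty)=\infty$, so that $F^{n}|_{\CH}\colon\CH\to T_n\cup\{\infty\}$ is continuous (a composition of the continuous extensions of the $F|_{T_j}$ to the closures of the tracts, using that $F^{n}(\CH)\subset T_n\cup\{\infty\}$); and $\eta\colon[0,\infty]\to[0,1]$ is a fixed homeomorphism with $\eta(\infty)=1$. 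I would note at once that $g_n(\infty)=1$ and that, since $F^{n}(z)\in T_n$ for every finite $z\in\CH$, in fact $g_n^{-1}(1)=\{\infty\}$. So it remains to show that the remaining fibres become small as $n\to\infty$.

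The first ingredient I would establish is a contraction estimate for inverse branches: for any tract $T$, the conformal isomorphism $F|_T\colon T\to H$ satisfies $\|\Deriv(F|_T)^{-1}(w)\|_{H}=\|\Deriv F((F|_T)^{-1}(w))\|_{H}^{-1}\le\Lambda^{-1}$ for all $w\in H$, since $(F|_T)^{-1}(w)\in T\subset\T$, where Proposition~\ref{prop:expansion} applies; hence $(F|_T)^{-1}$ is $\Lambda^{-1}$-Lipschitz for $\dist_H$, and these branches compose cleanly because $\overline{\T}\subset H$. Now if $z_1,z_2\in J_{\s}(F)$ lie in a common fibre of $g_n$, then $w_i\defeq F^{n}(z_i)$ lie in a common fibre $\phi_n^{-1}(t)$, so $\dist_H(w_1,w_2)\le\diam_H(\phi_n^{-1}(t))\le K$; applying the composition of the $n-1$ inverse branches corresponding to $T_{n-1},\dots,T_1$ gives $\dist_H(F(z_1),F(z_2))\le K\Lambda^{-(n-1)}$, and then, using that $F|_{T_0}\colon T_0\to H$ is a hyperbolic isometry, $\dist_{T_0}(z_1,z_2)\le K\Lambda^{-(n-1)}$. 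Thus every fibre of $g_n$ other than $\{\infty\}$ is a subset of $T_0$ of hyperbolic diameter at most $K\Lambda^{-(n-1)}$.

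The last step is to pass from small hyperbolic diameter inside the fixed Jordan domain $T_0$ to small spherical diameter, uniformly; this is the point that needs the most care, because a fibre may approach the boundary point $\infty\in\partial T_0$. I would handle it via Carath\'eodory's theorem: since $T_0$ is an unbounded Jordan domain, a Riemann map $\varphi\colon\D\to T_0$ extends to a homeomorphism $\overline{\D}\to\overline{T_0}\subset\Ch$, and $\varphi$ is a hyperbolic isometry of $\D$ onto $T_0$. A subset of $\D$ of hyperbolic diameter $\le\delta$ has Euclidean diameter $O(\delta)$ uniformly (for $\delta\le 1$, say), so by uniform continuity of $\varphi$ on $\overline{\D}$ its $\varphi$-image has spherical diameter at most $\omega(\delta)$ for some $\omega$ with $\omega(\delta)\to0$ as $\delta\to0$; pulling back, every $A\subset T_0$ with $\diam_{T_0}(A)\le\delta$ has spherical diameter $\le\omega(\delta)$.

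To finish, given $\eps>0$ I would choose $n$ so large that $\omega(K\Lambda^{-(n-1)})<\eps$; then every fibre of $g_n$ — including the singleton $g_n^{-1}(1)=\{\infty\}$ — has spherical diameter $<\eps$, so $g_n$ witnesses the $\eps$-requirement of Definition~\ref{defn:topologicalproperties}(c) (and can be reparametrised to be onto $[0,1]$ if one insists on surjectivity). As $\eps>0$ was arbitrary, $\CH$ is arc-like. The step I expect to be the main obstacle is the uniform hyperbolic-to-spherical comparison in the third paragraph: one must make sure the estimate is genuinely uniform over the whole fibre, and in particular near $\infty$, which the boundary extension of the Riemann map provides cleanly but the interior standard estimate~\eqref{eqn:standardestimate} on the hyperbolic metric does not on its own.
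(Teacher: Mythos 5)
Your proof is correct and takes essentially the same route as the paper: define $g_n$ by composing the anguine functions $\phi_n$ with $F^n$, and use hyperbolic expansion (Proposition~\ref{prop:expansion}) to show the fibres contract geometrically. The paper's own proof stops at the bound $\diam_H(g_j^{-1}(t))\leq K\Lambda^{-j}$ and leaves the passage to spherical diameter implicit; you supply this step explicitly via a Riemann map and Carath\'eodory's theorem. That works, but is slightly more roundabout than necessary: since the fibre lies in $\overline{T_0}$ and $T_0$ is a logarithmic tract, the standard bound~\eqref{eqn:standardestimate2pi} gives $\rho_{T_0}\geq 1/(2\pi)$ outright, so $\diam_{T_0}(A)$ controls the Euclidean (hence spherical) diameter of $A$ uniformly, with no need for a conformal change of variable; your final caveat about the standard estimate not handling this uniformly is therefore unfounded. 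A minor remark: your $n-1$ in the contraction exponent is the correct power for $\dist_{T_0}$, obtained by peeling off the last branch $F|_{T_0}^{-1}$ as an isometry, and only differs from the paper's exponent by a shift in which metric one records; both are fine.
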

\begin{proof}
 Let $\s=T_0 T_1 T_2\dots$ be the external address of a Julia continuum $\CH$. For each $T_j$,  let $\phi_j\colon T_j\to [0,\infty)$ be a
   function from the definition of {\anguine} tracts. Recall that there is $K>0$
     such that $\diam_H(\phi_j^{-1}(t))\leq K$ for all $j$ and $t$. We define
     \[ g_j \colon  \CH \to [0,\infty];\quad g_j(z) \defeq  \begin{cases} \phi_j(F^j(z)) &\text{if } z\in C \\ \infty &\text{if }z=\infty. \end{cases} \]
  By Proposition \ref{prop:expansion} (hyperbolic expansion), there is $\Lambda>1$ such that 
     \[ \diam_H(g_j^{-1}(t)) = \diam_H(F_{\s}^{-j}(\phi_j^{-1}(t)) \leq \Lambda^{-j} \cdot \diam_H(\phi_j^{-1}(t))\leq \frac{K}{\Lambda^j}\] 
    for all $t\in [0,\infty)$. It follows that $\CH$ is arc-like. 
\end{proof}
\begin{remark}
  Note that, for functions with anguine tracts, many of the results from Section~\ref{sec:spanzero} regarding terminal points
   can be proved much more directly. For example, the maps $g_j$ constructed in the above proof have $\eps(\infty)=\infty$, and it follows
   immediately that $\infty$ is terminal for $\CH$ (compare Proposition~\ref{prop:arclikecharacterization} below). 
\end{remark}

Our final result in this section proves one direction of Theorem \ref{thm:periodicexistence}, concerning the topology of periodic Julia continua. 
\begin{thm}[Invariant continua in {\anguine} tracts]\label{thm:invariantarclike}
  Let $F\in\Blog$ be of disjoint type such that each tract of $F$ is {\anguine}, and suppose that $\CH$ is a periodic Julia continuum of $F$.
   Let $z_0$ be the unique periodic point of $F$ in  $\CH$. Then $\CH$ is a Rogers continuum from $z_0$ to $\infty$.
%
%
\end{thm}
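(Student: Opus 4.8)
The plan is to build an explicit inverse limit representation of $\CH$ with a single bonding map of the interval, and then apply the conjugacy principle from Section~\ref{sec:conjugacy}. Let $p$ be the period of $\CH=J_{\s}(F)\cup\{\infty\}$, so $\s$ is periodic with $T_{j+p}=T_j$; after replacing $F$ by $F^p$ (which again has anguine tracts by the Remark after Definition~\ref{defn:arcliketracts}), I may assume $p=1$, i.e.\ $\s = T\,T\,T\dots$ for a single tract $T$, and $G\defeq F|_T\colon T\to H$ is a conformal isomorphism with $z_0$ the unique fixed point of $G^{-1}$ in $J_{\s}(F)$. First I would use the anguineness of $T$ to produce, via the function $\phi\colon T\to[0,\infty)$ of Definition~\ref{defn:arcliketracts}, a ``coordinate'' on $\CH$: as in the proof of Proposition~\ref{prop:arcliketracts}, the maps $g_j(z)=\phi(F^j(z))$ (with $g_j(\infty)=\infty$) have $\diam_H g_j^{-1}(t)\leq K\Lambda^{-j}\to 0$, so $\CH$ is arc-like; in particular $g_0\colon \CH\to[0,\infty]$ is continuous, monotone in the sense that point-preimages are small, and maps $z_0$ near $\inf\phi$ and $\infty$ to $\infty$. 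Rescaling $[0,\infty]$ to $[0,1]$ by a homeomorphism sending $\phi(z_0)$-end to $0$ and $\infty$ to $1$, I obtain a continuous surjection $\pi\colon\CH\to[0,1]$ with $\pi(z_0)=0$, $\pi(\infty)=1$.

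Next I would set up the inverse system. Following Remark~3 after Definition~\ref{defn:inverselimit}, $\CH$ is homeomorphic (via projection to the $0$-th coordinate) to $\invlim(X_j,f_{j+1})$ where $X_j=\overline{T}\cup\{\infty\}$ and $f_{j+1}=(G^{-1})|_{X_{j+1}}$ — an autonomous system generated by the single map $G^{-1}\colon \overline{T}\cup\{\infty\}\to\overline{T}\cup\{\infty\}$. By Observation~\ref{obs:obtainingexpandingsystems} (or directly from Proposition~\ref{prop:expansion}, which already gives hyperbolic expansion above a bounded scale), this system is expanding. On the other hand, I want a target autonomous system on $[0,1]$. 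Here is where the anguine coordinate $\pi$ does the work: $\pi$ conjugates the action of $G^{-1}$ on $\CH$ to a map $h$ on $[0,1]$, in the sense that I would like $h\circ\pi = \pi\circ G^{-1}$. Because $\phi(G^{-1}(z))$ need not be a function of $\phi(z)$ alone, $\pi$ is only an approximate semiconjugacy, but that is exactly what Definition~\ref{defn:pseudoconjugacy} tolerates: using the uniform bound $\diam_H\phi^{-1}(t)\leq K$ and hyperbolic expansion, I can choose a genuine continuous surjection $h\colon[0,1]\to[0,1]$ with $h(0)=0$, $h(1)=1$, $h(t)<t$ for $0<t<1$ (the strict inequality reflecting that $G^{-1}$ moves every non-fixed, non-$\infty$ point strictly toward the $z_0$-end, since real parts grow under $G$ by Corollary~\ref{cor:growthofrealparts}), such that the maps $\psi_j=\pi\colon X_j\to[0,1]$ satisfy the four pseudo-conjugacy conditions, with $[0,1]$ equipped with a suitably blown-up metric (Observation~\ref{obs:obtainingexpandingsystems}) making the system $([0,1],h)$ expanding. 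Condition~(a) is the approximate semiconjugacy just discussed; (b) is surjectivity of $\pi$; (c) follows from continuity of $\pi$ together with the bound on $\diam_H\phi^{-1}(t)$; (d) follows because $\pi$ has uniformly small point-preimages in the hyperbolic metric, so $\pi(z)$ close to $\pi(\tilde z)$ forces $z$ close to $\tilde z$.

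Then Proposition~\ref{prop:conjugacyprinciple} gives a homeomorphism $\theta\colon\CH\cong\invlim(X_j,G^{-1})\to\invlim([0,1],h)$, and by the closeness estimate~\eqref{eqn:conjugacycloseness} together with $\pi(z_0)=0$, $\pi(\infty)=1$, $\theta$ carries $z_0$ to the point $0\mapsfrom 0\mapsfrom\cdots$ and $\infty$ to $1\mapsfrom 1\mapsfrom\cdots$. Since $h\colon[0,1]\to[0,1]$ is a continuous surjection with $h(t)<t$ for $0<t<1$, this exhibits $\CH$ as a Rogers continuum from $z_0$ to $\infty$ by definition.

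The main obstacle is the construction of the interval map $h$ and the verification that $\pi$ is a pseudo-conjugacy: one must check that the ``decorations'' of the tract — the variation of $\phi(G^{-1}(z))$ over a fibre $\phi^{-1}(t)$, and more seriously the fact that $G^{-1}$ may fold fibres in a complicated way — are controlled well enough in the hyperbolic metric that conditions (a) and (c) hold with a uniform constant $M$, and simultaneously that $h$ can be taken strictly below the diagonal on $(0,1)$. The strictness is the delicate point: it should follow from the fact that for $z\neq z_0,\infty$ one has $\re F^n(z)\to\infty$ (Corollary~\ref{cor:growthofrealparts} applied with $w=z_0$), so $\phi(F^n(z))\to\infty$ and hence the $h$-orbit of $\pi(z)$ must decrease to $0$, which forces $h(t)<t$ on a dense set and then, after a mild monotone adjustment of $h$, everywhere on $(0,1)$; but making this adjustment compatible with the pseudo-conjugacy bounds requires care. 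Everything else is a routine application of the machinery already assembled in Sections~\ref{sec:conjugacy} and~\ref{sec:arcliketracts}.
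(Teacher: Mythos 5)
Your strategy is in the right spirit --- use the anguine coordinate $\phi$ to produce an approximate semi-conjugacy $\psi(z)=\phi(\re z)$ (or here just $\phi$ itself) to a self-map of an interval, and then feed this into Proposition~\ref{prop:conjugacyprinciple} --- but the execution as sketched has a concrete gap at the point you hope is ``routine''.

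The problem is the pairing of the single backward branch $G^{-1}=F^{-1}$ with a \emph{blown-up} metric on the target interval. You propose to take $h$ from $F^{-1}$ and then rescale the interval metric via Observation~\ref{obs:obtainingexpandingsystems} so that the system $([0,1],h)$ becomes expanding. But Remark~3 after Definition~\ref{defn:pseudoconjugacy} explicitly warns that this destroys the pseudo-conjugacy: condition~\ref{item:pseudoconjugacy} demands a \emph{uniform} bound $M$ on the $d_{Y_{j-1}}$-distance, and scaling $d_{Y_j}$ by $\gamma_j\to\infty$ multiplies the semi-conjugacy error by $\gamma_j$, so there is no uniform $M$ left (similarly for condition~\ref{item:pseudocontinuity}). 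Hence the proof of Proposition~\ref{prop:conjugacyprinciple} does not apply. Without rescaling, the system $([0,1],h)$ is only expanding if $h$ already has slope bounded strictly below $1$ everywhere, and there is no reason for the map induced by $F^{-1}$ alone to have this property: the hyperbolic expansion factor $\Lambda$ of $F$ can be arbitrarily close to $1$.

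The paper resolves exactly this by passing to a high iterate $F^{-n}$ (with $\Lambda^{n-1}>64$) instead of $F^{-1}$; that is the step your proposal is missing. At that point the constructed interval map $h=h_n$ has slope at most $1/2$ on each linearity interval, so it is intrinsically expanding in the plain Euclidean metric, no blow-up required, and the same estimate also yields $h(t)<t$ for $t>0$. Passing to a high iterate is not a cosmetic adjustment: it is simultaneously what keeps the pseudo-conjugacy constant uniform, what makes $([0,\infty),h)$ expanding without rescaling, and what gives strict decrease of $h$. Relatedly, the anguine condition $\diam_H\phi^{-1}(t)\leq K$ does \emph{not} by itself say that $\phi$ has bounded variation on sets of bounded hyperbolic diameter, which you need for conditions~\ref{item:pseudoconjugacy} and~\ref{item:pseudocontinuity}. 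The paper fixes this by reparametrising $\phi$ via a sequence $(\zeta_j)$ placed at hyperbolic distance $3K$ apart with $\phi(\zeta_j)=j$; this forces the converse control that any hyperbolic ball of radius $K$ has $\phi$-image of diameter less than $2$. Your ``rescaling $[0,\infty]$ to $[0,1]$ by a homeomorphism'' must be precisely this reparametrisation, not an arbitrary one, and that should be made explicit. With these two modifications --- reparametrising $\phi$ via the $\zeta_j$'s, and replacing $F^{-1}$ by $F^{-n}$ for $n$ large --- the remainder of your outline goes through as you describe and recovers the paper's argument.
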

\begin{remark}
 In the  case where $\CH$ is decomposable, this follows from the  main theorem of \cite{rogerscontinua}. We do not require this observation.
\end{remark}
\begin{proof}
  By passing to an iterate, we may assume that $\CH$ is invariant. That is, we are in the situation where 
   $T$ is an {\anguine} logarithmic tract, $F\colon T\to H$ is a conformal isomorphism and
    $\CH$ consists of all points that stay in $T$ under iteration, together with $\infty$.

 Let $\phi$ and $K$ be as in the definition of {\anguine} tracts.  Recall that the hyperbolic diameter of 
   $\phi^{-1}(t)$ is bounded by $K$, independently of $t$. 
    By restricting the function $F$ to a slightly smaller domain, we may assume 
   that $\phi$ extends continuously to the boundary
    of $T$. 
     Increasing $K$ if necessary, we additionally suppose that $\phi(z_0)=0$, and, in particular, that $\phi$ is surjective.

  Let us define a sequence $\zeta_j\in T$ inductively as follows. Let $\zeta_0 \defeq z_0$.
    For $j\geq 0$, let $\zeta_{j+1}\in \cl{T}$ be a point with $\dist_H(\zeta_j,\zeta_{j+1}) = 3K$
    such that $\phi(\zeta_{j+1})> \phi(\zeta_j)$ and such that $\phi(\zeta_{j+1})$  is minimal with this property. To see that such a point exists, note that
   $\phi^{-1}(\phi(\zeta_j))$ is contained in the hyperbolic disc of radius $3K$ around $\zeta_j$. Hence the boundary of the disc must
   contain some points of $\phi^{-1}( (\zeta_j,\infty))$ by continuity and surjectivity of $\phi$, as well as connectedness of $T$. 

  We claim that $x_j \defeq  \phi(\zeta_j)\to\infty$. Indeed, the sequence $x_j$ is increasing by construction. Suppose its limit was a finite
     value $\hat{x}<\infty$. Since $\diam_H(\phi^{-1}(\hat{x})) \leq K$, by continuity of $\phi$ there is a small interval $J$ around $\hat{x}$ such that
      $\diam(\phi^{-1}(J)) < 2K$. But this contradicts the fact that $\phi(\zeta_j)\in J$ for all sufficiently large $j$, and that
     $\dist_H(\zeta_j,\zeta_{j+1}) = 3K$. 

  Postcomposing $\phi$ with a homeomorphism $[0,\infty]\to[0,\infty]$, we may assume for simplicity that
    $x_j = j$ for all $j$.  Observe that, by construction, any point in $\phi^{-1}([j,j+1])$ has hyperbolic
    distance at most $4K$ from $\zeta_j$, and hence  
    \begin{equation}\label{eqn:phiboundeddiameter}
       \diam_H(\phi^{-1}([j,j+1])) \leq 6K \qquad\text{for all $j\geq 0$}. \end{equation}

  For $n\geq 0$, we now define a function $h_n\colon [0,\infty)\to [0,\infty)$ by setting
    \[ h_n(4j) \defeq  \phi(F^{-n}(\zeta_{4j})), \]
   for $j\geq 0$
   and interpolating linearly between these points. Observe that $h_n(0)=0$. 

 \begin{claim}[Claim 1]
  If $n$ is sufficiently large, then 
     $\diam(\phi(F^{-n}(A)))\leq 2$ whenever $A\subset H$ with $\diam_H(A)\leq 32K$. In particular, 
   \begin{enumerate}[(1)]
      \item $\diam(\phi( F^{-n}(\phi^{-1}([4j,4(j+1)]))))\leq 2$ for all $j$;\label{item:diameter} 
     \item $|h_n(x)-h_n(y)|\leq |x-y|/2$ for all $x,y\in [0,\infty)$, and
        \label{item:smalldiameter}
     \item $h_n(x) < x$ for all $x>0$. \label{item:gettingsmaller}
   \end{enumerate}
 \end{claim}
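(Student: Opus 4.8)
The plan is to prove the first, general estimate of the claim and then read off \textnormal{(1)}, \textnormal{(2)} and \textnormal{(3)} as essentially immediate consequences; all of the substance lies in that estimate.

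For the estimate I would combine two facts. First, by Proposition~\ref{prop:expansion} the single inverse branch $F^{-n}\colon H\to T$ is a $\Lambda^{-n}$-contraction of the hyperbolic metric of $H$, so $\diam_H(F^{-n}(S))\le\Lambda^{-n}\diam_H(S)$ for every $S\subseteq H$. Second, consecutive level sets of $\phi$ are \emph{uniformly} separated in $H$: since $\zeta_j\in\phi^{-1}(j)$ and $\zeta_{j+1}\in\phi^{-1}(j+1)$ are at hyperbolic distance exactly $3K$, while $\diam_H(\phi^{-1}(j)),\diam_H(\phi^{-1}(j+1))\le K$, the triangle inequality gives $\dist_H(\phi^{-1}(j),\phi^{-1}(j+1))\ge 3K-K-K=K$ for every integer $j\ge 0$. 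Now, given $A\subseteq H$ with $\diam_H(A)\le 32K$, I would pick $a_0\in A$ and pass to the closed hyperbolic ball $B\defeq\{w\in H:\dist_H(w,a_0)\le 32K\}\supseteq A$, which has $\diam_H(B)\le 64K$. Since $F^{-n}(H)\subseteq T$ for $n\ge 1$, the map $\phi$ is defined on $F^{-n}(B)$, and since $B$ is connected, $\phi(F^{-n}(B))$ is a subinterval of $[0,\infty)$ containing $\phi(F^{-n}(A))$. If it had length greater than $2$ it would contain $[j,j+1]$ for some integer $j\ge 0$ (here $\phi\ge 0$ and $\phi$ is surjective, so $\phi^{-1}(j)\ne\emptyset$), and then $F^{-n}(B)$ would meet both $\phi^{-1}(j)$ and $\phi^{-1}(j+1)$ in two points at hyperbolic distance at most $\diam_H(F^{-n}(B))\le 64K\Lambda^{-n}$; for $n$ large enough that $64K\Lambda^{-n}<K$ this contradicts the uniform separation. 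Hence $\diam(\phi(F^{-n}(A)))\le\diam(\phi(F^{-n}(B)))\le 2$.

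From here the rest is bookkeeping. Statement \textnormal{(1)} is the case $A=\phi^{-1}([4j,4(j+1)])$, for which $\diam_H(A)\le 4\cdot 6K=24K\le 32K$ by~\eqref{eqn:phiboundeddiameter}; as $\phi(\zeta_{4j})=4j$ and $\phi(\zeta_{4(j+1)})=4(j+1)$ both lie in this $A$, we get $|h_n(4(j+1))-h_n(4j)|\le 2$, and since $h_n$ is affine on $[4j,4(j+1)]$ its slope there is at most $1/2$ in absolute value, which is \textnormal{(2)}. For \textnormal{(3)}, $h_n(0)=\phi(F^{-n}(\zeta_0))=\phi(z_0)=0$ because $z_0$ is fixed; then $h_n(4(j+1))\le h_n(4j)+2$ gives inductively $h_n(4j)\le 4j-2<4j$ for $j\ge 1$, while on $(0,4]$ one has $h_n(x)=\tfrac{x}{4}h_n(4)\le\tfrac{x}{2}<x$. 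Since $h_n$ and the identity are both affine on each $[4j,4(j+1)]$, the bound $h_n(x)<x$ for all $x>0$ follows. The one place that needs a little care is that $A$ is not assumed connected — this is exactly why I pass to the ball $B$, so that $\phi(F^{-n}(B))$ is genuinely an interval; beyond that I do not expect any obstacle, since once the uniform separation of the level sets $\phi^{-1}(j)$ is in hand the general estimate is a one-line pigeonhole argument.
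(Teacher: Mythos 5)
Your proposal is correct and takes essentially the same approach as the paper: both arguments combine the exponential contraction of $F^{-n}$ with the uniform separation $\dist_H(\phi^{-1}(j),\phi^{-1}(j+1))\geq 3K-K-K=K$ that you derive by the triangle inequality. The only differences are cosmetic. You thicken $A$ to an $H$-hyperbolic ball $B$ of radius $32K$ \emph{before} pulling back and then run the pigeonhole argument on $\phi(F^{-n}(B))$; the paper instead pulls back $A$ first, estimates the diameter of $F^{-n}(A)$ in the hyperbolic metric of $T$, and thickens to a $T$-hyperbolic disc there. Both routes produce a connected set on which $\phi$ is defined and whose image is an interval meeting at most one integer (hence of diameter at most $2$). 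Your constants differ slightly ($24K$ rather than $32K$ in item (1); $\Lambda^n>64$ rather than $\Lambda^{n-1}>64$), and your proof of (3) runs through the piecewise-affine structure and an inductive bound on $h_n(4j)$ rather than simply applying (2) with $y=0$ together with $h_n(0)=0$, but both yield $h_n(x)<x$ for all $x>0$. If anything, working with $\diam_H(F^{-n}(B))$ directly, as you do, keeps the bookkeeping between the $T$- and $H$-metrics slightly cleaner than the paper's phrasing, where the stated radius of the auxiliary $T$-disc should really be $K/2$ (so that its diameter stays strictly below the separation constant $K$); your version avoids this issue entirely.
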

 \begin{subproof}
  Let $n$ be sufficiently large
   to ensure that $\Lambda^{n-1} > 64$, where $\Lambda$ is once more the expansion factor from Proposition 
    \ref{prop:expansion}. 
   Then, in the hyperbolic metric of $T$, the diameter of $F^{-n}(A)$ is less than $K/2$. Let $B$ be an open hyperbolic disc of $T$, of radius $K$
    and centred at a point of $F^{-n}(A)$. Then $\phi(B)\subset [0,\infty)$ is connected. Furthermore, $\phi(B)$ can contain
    at most one integer, since the hyperbolic distance between any point of $\phi^{-1}(m)$ and any point of $\phi^{-1}(m+1)$ is at least $K$, by
     construction. Hence $\phi(F^{-n}(A))\subset \phi(B)$ has diameter at most $2$, as claimed.

  By~\eqref{eqn:phiboundeddiameter}, the hyperbolic diameter (in $H$) of $A_j \defeq  \phi^{-1}([4j,4(j+1)])$ is bounded by
    $32K$ (independently of $j$).   Hence~\ref{item:diameter} follows.
  In particular, we have $|h_n(x) - h_n(y)|\leq 2$ for $x=4j$ and $y=4(j+1)$. This implies 
   that the slope of $h_n$ on each interval of linearity is at most $1/2$, establishing~\ref{item:smalldiameter}. Claim~\ref{item:gettingsmaller} 
   follows from~\ref{item:smalldiameter}, using $y=0$. 
\end{subproof}

  Let us set $h \defeq  h_n$, where $n$ is as in the claim. 
    The construction is carried out so that $\phi$ is a pseudo-conjugacy between the 
     (autonomous) inverse systems generated by 
$F_T^{-n}\colon \overline{T}\to \overline{T}$ 
    (with the  hyperbolic metric on  $H$)  and 
    $h\colon [0,\infty)\to [0,\infty)$ (with the usual metric), in the sense of Section~\ref{sec:conjugacy}. 
  \begin{claim}[Claim 2]
   For all $z\in \overline{T}$, 
    $|h(\phi(z))- \phi(F^{-n}(z))|\leq 4$. 
  \end{claim}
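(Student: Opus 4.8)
Proof proposal for Claim 2 ($|h(\phi(z)) - \phi(F^{-n}(z))| \leq 4$ for all $z \in \overline{T}$):

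The plan is to estimate the left-hand side by comparing both quantities to the value $\phi(F^{-n}(\zeta_{4j}))$ at the appropriate ``grid point'' $4j$. Fix $z \in \overline{T}$ and set $x \defeq \phi(z) \in [0,\infty)$. Choose $j \geq 0$ so that $x \in [4j, 4(j+1)]$; then $z \in \phi^{-1}([4j,4(j+1)]) = A_j$. By definition, $h(4j) = \phi(F^{-n}(\zeta_{4j}))$ and $h(4(j+1)) = \phi(F^{-n}(\zeta_{4(j+1)}))$, and $h$ interpolates linearly between these values, so $h(x)$ lies between them; in particular $|h(x) - h(4j)| \leq |h(4(j+1)) - h(4j)| \leq 2$ by part~\ref{item:diameter} of Claim~1.

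Next I would bound $|\phi(F^{-n}(z)) - h(4j)| = |\phi(F^{-n}(z)) - \phi(F^{-n}(\zeta_{4j}))|$. Both $z$ and $\zeta_{4j}$ lie in $A_j = \phi^{-1}([4j,4(j+1)])$, which has hyperbolic diameter at most $32K$ by~\eqref{eqn:phiboundeddiameter}. Hence $F^{-n}(z)$ and $F^{-n}(\zeta_{4j})$ both lie in $F^{-n}(A_j)$, and by part~\ref{item:diameter} of Claim~1 the set $\phi(F^{-n}(A_j))$ has diameter at most $2$. Therefore $|\phi(F^{-n}(z)) - \phi(F^{-n}(\zeta_{4j}))| \leq 2$.

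Combining the two estimates via the triangle inequality gives
\[
 |h(\phi(z)) - \phi(F^{-n}(z))| \leq |h(x) - h(4j)| + |h(4j) - \phi(F^{-n}(z))| \leq 2 + 2 = 4,
\]
as required. The argument is entirely routine once Claim~1 is in hand; the only point requiring any care is making sure the linear-interpolation estimate for $h$ and the diameter estimate for $\phi\circ F^{-n}$ are applied to the \emph{same} grid interval $[4j,4(j+1)]$, which is precisely why the construction of $h_n$ samples $\phi$ at the points $\zeta_{4j}$ (spaced so that each $A_j$ has controlled hyperbolic diameter) rather than at every $\zeta_j$. I do not anticipate any genuine obstacle here; this claim is a bookkeeping consequence of the quantitative bounds already established.
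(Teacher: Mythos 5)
Your proof is correct and follows the paper's own argument essentially verbatim: pick $j$ with $\phi(z)\in[4j,4(j+1)]$, observe that both $h(\phi(z))$ and $\phi(F^{-n}(z))$ lie within distance $2$ of the anchor value $h(4j)=\phi(F^{-n}(\zeta_{4j}))$ by part (1) of Claim~1, and conclude by the triangle inequality. Your more explicit spelling-out of the linear interpolation for $h$ and of the hyperbolic-diameter bound on $A_j=\phi^{-1}([4j,4(j+1)])$ is exactly the content compressed into the paper's one-sentence justification.
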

 \begin{subproof}
   Choose $j\geq 0$ such that $\phi(z)\in [4j,4(j+1)]$. Recall from Claim 1 that both 
    $h([4j,4(j+1)])$ and $\phi(F^{-n}(\phi^{-1}([4j,4(j+1)])))$ have diameter at most $2$. Hence both $h(\phi(z))$ and
    $\phi(F^{-n}(z))$ have distance at most $2$ from the point $h(4j)$, and the claim follows. 
 \end{subproof}

 It follows that $\phi$ is indeed a pseudo-conjugacy between the two inverse systems mentioned above. 
   Indeed, Claim 2 is precisely 
   property~\ref{item:pseudoconjugacy} of the 
   definition, while  $\phi$ is  surjective and hence satisfies~\ref{item:pseudosurjectivity}.
   Property~\ref{item:pseudocontinuity} follows from Claim 1, observing that 
    a hyperbolic disc of some fixed radius can be covered by a fixed number  of hyperbolic discs of
    radius $32K$. In the same manner,~\eqref{eqn:phiboundeddiameter} implies~\ref{item:pseudoinjectivity}. 

  By Claim 1, the system
     $(h,[0,\infty))$ is (uniformly) expanding, while the system $(F^{-n},\overline{T})$ is uniformly expanding by
     Proposition~\ref{prop:expansion}. Proposition~\ref{prop:conjugacyprinciple} yields a homeomorphism
      $\theta\colon C\to X\defeq \invlim([0,\infty),h)$ which conjugates $F^{-n}$ and the natural map
      $\tilde{h}\colon X\to X$  induced on the inverse limit. 
      Since $h$ extends continuously to $\infty$ with $h(\infty)=\infty)$, it follows from~\eqref{eqn:conjugacycloseness}
      that $\theta$ extends continuously to a map between the one-point compactifications $\hat{C}$ and
     $\hat{X} \defeq  \invlim([0,\infty],h) = X\cup \{\infty\mapsfrom \infty \mapsfrom\dots\}$.

    The conjugacy  $\theta$ maps $z_0$ to the unique fixed point $0\mapsfrom 0 \mapsfrom \dots$ of $\tilde{h}$ in $X$. (This also follows
     directly from the fact that $\phi(z_0)=0$ and Observation~\ref{obs:convergingtotheconjugacy}.)
 \end{proof}

\section{Homeomorphic subsets of Julia continua}\label{sec:homeomorphicsubsets}

  To conclude this part of the paper, we shall 
   establish that any two bounded-address Julia continua
   of an entire function with a single tract are (ambiently) homeomorphic. More generally, we establish a technical statement 
   about homeomorphisms between subsets of Julia continua that has a number of interesting
   consequences. In particular, Corollary~\ref{cor:pseudoarcs} below will 
   allow us to establish 
   Theorem~\ref{thm:pseudoarcs} in  Section~\ref{sec:periodicexistence}.

\begin{prop}[Julia continua with similar addresses]\label{prop:homeomorphic}
  Let $F\in\BlogP$ be of disjoint type. 
  Let $\s = T_0 T_1 \dots$ be an external address, let $(m_j)_{j\geq 0}$ be a sequence of integers, and 
  consider the address $\s^1 = T_0^1 T_1^1 \dots$ where $T_j^1 = T_j + 2\pi i m_j$.

 Suppose that $\hat{K}\subset \hat{J}_{\s}$ is compact, and that there is 
   $\delta>0$ such that
   \begin{equation}\label{eqn:Kboundeddistance} \delta\cdot |m_j| \leq \max(1,\re F^j(z)) \end{equation}
  for all $z\in \hat{K}\cap\C$ and all $j\geq 0$. 

 Then there exists a compact subset $\hat{A}\subset \hat{J}_{\s^1}$ and a homeomorphism
   $\theta\colon \hat{K}\to \hat{A}$, with the property that
   \begin{equation}\label{eqn:thetadistance}
     d_{H}(F^n(z), F^n(\theta(z))) \leq C \end{equation}
  for all $z\in \hat{K}\cap\C$ and $n\geq 0$. (Here the constant $C$ depends on
  $F$ and $\delta$, but not otherwise on $\hat{K}$, $\s$ and $\s^1$.) In particular, if $\infty\in \hat{K}$, then $\infty\in \hat{A}$ and $\theta(\infty)=\infty$. 
\end{prop}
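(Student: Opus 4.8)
The plan is to build the homeomorphism $\theta$ by invoking the conjugacy principle for expanding inverse systems (Proposition~\ref{prop:conjugacyprinciple}), with the two systems being the natural inverse systems representing $\hat K$ (as a subset of $\hat J_{\s}$) and a suitable subset $\hat A$ of $\hat J_{\s^1}$. The maps $\psi_j$ witnessing the pseudo-conjugacy will simply be the translations $z\mapsto z + 2\pi i m_j$, sending $\overline{T_j}$ to $\overline{T_j^1}$; the key point is that these translations intertwine the branches of $F$ on the two address sequences only \emph{approximately}, and condition~\eqref{eqn:Kboundeddistance} is exactly what is needed to keep the errors bounded along the relevant orbit.

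First I would set up the inverse systems carefully. For the source system, let $X_j$ be $F^j(\hat K\cap\C)$ together with $\infty$ (a compact subset of $\overline{T_j}\cup\{\infty\}$, invariant under the appropriate branches), with bonding map $f_{j+1} = F_{T_j}^{-1}$ restricted to $X_{j+1}$, equipped with the hyperbolic metric of $H$. By Proposition~\ref{prop:expansion} this system is expanding (the bonding maps contract by the factor $\Lambda>1$, and the backward-shrinking condition~\ref{item:expandingbackwardsshrinking} follows). For the target, I do not yet know $\hat A$; instead I would take $Y_j$ to be $\overline{T_j^1}\cup\{\infty\}$ with bonding map $g_{j+1} = F_{T_j^1}^{-1}$, again with the hyperbolic metric of $H$; this is expanding by the same argument. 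The candidate $\hat A$ will then emerge as the image $\theta(\hat K)$, which automatically lies in $\invlim(g_j) \cong \hat J_{\s^1}$. Now define $\psi_j\colon X_j\to Y_j$ by $\psi_j(z) = z + 2\pi i m_j$ (and $\psi_j(\infty)=\infty$). Since $H$ is $2\pi i$-periodic, $\psi_j$ is a hyperbolic isometry of $H$ onto itself (restricted appropriately), so conditions~\ref{item:pseudocontinuity} and~\ref{item:pseudoinjectivity} of Definition~\ref{defn:pseudoconjugacy} hold trivially with $R=\Delta$. Condition~\ref{item:pseudosurjectivity} is not literally needed for our purposes — we only want a homeomorphism onto the image, not onto all of $\invlim(g_j)$ — but one can either restrict $Y_j$ to $\psi_j(X_j)$ or simply observe that the proof of Proposition~\ref{prop:conjugacyprinciple} gives an embedding whose image is $\invlim(g_j)$-closed; I would phrase this as applying the conjugacy principle to the subsystem $Y_j' \defeq \overline{\psi_j(X_j)}$, which is expanding as a closed subsystem of an expanding system.

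The heart of the matter — and the step I expect to be the main obstacle — is verifying the approximate conjugacy relation~\ref{item:pseudoconjugacy}: that $d_H(g_j(\psi_j(z)), \psi_{j-1}(f_j(z)))$ is uniformly bounded for $z\in X_j$. Writing it out, $g_j(\psi_j(z)) = F_{T_{j-1}^1}^{-1}(z + 2\pi i m_j)$ while $\psi_{j-1}(f_j(z)) = F_{T_{j-1}}^{-1}(z) + 2\pi i m_{j-1}$. Both points lie in $\overline{T_{j-1}^1}$ (using $2\pi i$-periodicity of $F$ and of $\T$) and both map, under $F$, to points within Euclidean distance $2\pi(|m_j| + |m_{j-1}|)$ of each other in $H$. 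The bound~\eqref{eqn:Kboundeddistance}, applied at step $j$ and $j-1$ to the orbit point $F^{j}(z)$ (which for $z\in X_j = F^j(\hat K\cap\C)$ means applying it to the corresponding original point of $\hat K$), controls $|m_j|$ and $|m_{j-1}|$ linearly in terms of $\max(1,\re F^j(z))$, i.e.\ linearly in terms of the Euclidean distance of these points from $\partial H$. Now, two points of $H$ whose $F$-images are Euclidean-distance $O(\max(1,\mathrm{Re}))$ apart — where $\mathrm{Re}$ is comparable to $\dist(\cdot,\partial H)$ up to the periodicity — have uniformly bounded hyperbolic distance, by the standard estimate~\eqref{eqn:standardestimate} (the hyperbolic metric density is $\asymp 1/\dist(\cdot,\partial H)$). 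Pulling back by the hyperbolic isometry $F$ (on the tract $T_{j-1}^1$) preserves this bound, giving $d_H(g_j(\psi_j(z)),\psi_{j-1}(f_j(z))) \leq M$ for a constant $M = M(F,\delta)$. Some care is needed because~\eqref{eqn:Kboundeddistance} involves $\max(1,\re)$ rather than $\dist(\cdot,\partial H)$ directly, but since $H$ contains a right half-plane and is $2\pi i$-periodic, $\dist(z,\partial H)$ and $\max(1,\re z)$ are comparable for $z$ in the relevant range — this is the kind of routine estimate I would not grind through but would cite via Observation~\ref{obs:pointstotheleft} and~\eqref{eqn:standardestimate}.

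Finally, Proposition~\ref{prop:conjugacyprinciple} delivers a homeomorphism $\theta\colon \invlim(f_j)\to\invlim(g_j')$; identifying $\invlim(f_j)$ with $\hat K$ (projection to the $0$-th coordinate, since the source system's inverse limit is homeomorphic to $\hat K$ by Remark~3 after Definition~\ref{defn:inverselimit}) and its image with the desired compact subset $\hat A\subset\hat J_{\s^1}$, the closeness estimate~\eqref{eqn:conjugacycloseness} reads $d_H(\theta_j(x),\psi_j(x_j))\leq \max(M,K)\lambda/(\lambda-1)$, so for $z\in\hat K\cap\C$ and $n\geq 0$ we get $d_H(F^n(z), F^n(\theta(z))) = d_H(x_n, \theta_n(x)) \leq d_H(x_n,\psi_n(x_n)) + d_H(\psi_n(x_n),\theta_n(x))$; since $\psi_n$ is a hyperbolic isometry, $d_H(x_n,\psi_n(x_n))$ is $d_H$-distance between $z$ and a $2\pi i m_n$-translate, which by~\eqref{eqn:Kboundeddistance} and the standard estimate is again bounded by a constant depending only on $F$ and $\delta$. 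Setting $C$ to be the sum gives~\eqref{eqn:thetadistance}. Since $\infty$ is a fixed point of all the bonding maps and $\psi_j(\infty)=\infty$, the construction sends the inverse orbit $(\infty\mapsfrom\infty\mapsfrom\cdots)$ to itself, so if $\infty\in\hat K$ then $\infty\in\hat A$ and $\theta(\infty)=\infty$, completing the proof.
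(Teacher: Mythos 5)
Your approach is the same as the paper's: cast both systems as expanding inverse systems in the hyperbolic metric of $H$, use the translations $\psi_j(z)=z+2\pi i m_j$ as a pseudo-conjugacy, verify that~\eqref{eqn:Kboundeddistance} and the standard estimate bound $\dist_H(z,\psi_j(z))$ uniformly, and invoke Proposition~\ref{prop:conjugacyprinciple}. However, the way you propose to deal with the surjectivity condition~\ref{item:pseudosurjectivity} has a real flaw.

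You suggest applying the conjugacy principle to the subsystem $Y_j'\defeq\overline{\psi_j(X_j)}$, calling it ``a closed subsystem of an expanding system''. But this is not a subsystem: the inclusion $g_{j+1}(Y_{j+1}')\subset Y_j'$ fails in general. To see this, compute $\psi_j(X_j)=\{F_{T_j^1}^{-1}(z)\colon z\in X_{j+1}\}$ (using $f_{j+1}(X_{j+1})=X_j$ and $2\pi i$-periodicity of $F$), whereas $g_{j+1}(\psi_{j+1}(X_{j+1}))=\{F_{T_j^1}^{-1}(z+2\pi i m_{j+1})\colon z\in X_{j+1}\}$; these differ by the shift $X_{j+1}\mapsto X_{j+1}+2\pi i m_{j+1}$, which does not preserve $X_{j+1}$ unless $m_{j+1}=0$. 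The paper avoids this by letting $Y_j$ be the set of points of hyperbolic distance at most $M$ from $X_j$ (not from $\psi_j(X_j)$), where $M\geq\Lambda\rho/(\Lambda-1)$, and then \emph{checking} the invariance $g_{j+1}(Y_{j+1})\subset Y_j$ via the triangle inequality combined with $g_j=\psi_{j-1}\circ f_j$, the $\rho$-bound and the contraction factor $\Lambda$. This gives surjectivity of $\psi_j$ onto $Y_j$ for free ($M$-density), after which $\hat A=\invlim(g_j)$. Your alternative remark that one could ``simply observe that the proof of Proposition~\ref{prop:conjugacyprinciple} gives an embedding'' without surjectivity is closer to a workable route, but you haven't argued it, and you'd still have to recover compactness of the image via the one-point compactification.

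A further (minor) point: your verification of~\ref{item:pseudoconjugacy} is more roundabout than necessary because you don't record the clean identity $g_j=\psi_{j-1}\circ f_j$, which follows immediately from $2\pi i$-periodicity. With it, the pseudo-conjugacy estimate becomes $\dist_H(\psi_{j-1}(f_j(z)),g_j(\psi_j(z)))=\dist_H(g_j(z),g_j(\psi_j(z)))\leq\dist_H(z,\psi_j(z))/\Lambda\leq\rho/\Lambda$, with no need to compare $F$-images and then pull back. Relatedly, your claim that~\eqref{eqn:Kboundeddistance} ``controls $|m_j|$ and $|m_{j-1}|$ linearly in terms of $\max(1,\re F^j(z))$'' is off for $|m_{j-1}|$; fortunately your estimate only needs $|m_j|$, so this doesn't propagate, but it is a sign that the computation hasn't been carried through carefully.
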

\begin{remark}
 The reader may wish to keep in mind the
  simplest case, where $F$ has a single tract, $\s$ is a fixed address and
  $\s^1$ is a bounded address, so that the sequence $|m_j|$ is uniformly
  bounded. In this case we can take $\hat{K}= \hat{J}_{\s}$, and it follows
  easily that $A=\hat{J}_{\s^1}$ (see Corollary \ref{cor:homeomorphic} below); so the two Julia continua are
  homeomorphic.
\end{remark}
\begin{proof}
  This is essentially a non-autonomous version of the argument used in \cite{boettcher}
   to construct conjugacies between subsets of Julia sets of different
   functions in the class $\BlogP$. Here we can interpret it as a special case of the conjugacy principle in Proposition~\ref{prop:conjugacyprinciple}.

  Indeed, let us consider two inverse systems $(X_j,f_{j+1})_{j\geq 0}$ and $(Y_j,g_{j+1})_{j\geq 0}$, 
    defined as follows.
    The first system consists of $X_j \defeq  F^j(\hat{K}\cap\C)$, with the hyperbolic metric in the range $H$ of $F$, and $f_{j+1} \defeq  F_{T_{j}}^{-1}$. 
   To define the second system, fix a constant 
    $M>0$, and define $Y_j$ to consist of all points having hyperbolic distance (in $H$) at most $M$ from
    $X_j$. We also define $g_{j+1} \defeq  F^{-1}_{T_{j}^1}\colon H\to T_{j}^1$. As we shall see below, if $M$ is sufficiently large, then
    $g_{j+1}(Y_{j+1})\subset Y_{j}$, so $(Y_j,g_{j+1})$ defines an inverse system (again endowed with the hyperbolic metric on $H$). 
   
 Define maps $\psi_j \colon X_j\to T_j^1$ by
   $\psi_j(z) \defeq  z + 2\pi i m_j$; then $g_{j} = \psi_{j-1}\circ f_{j}$ for $j\geq1$. 
   By~\eqref{eqn:standardestimate} and~\eqref{eqn:Kboundeddistance}, there is a constant $\rho>0$ 
(depending only on $F$ and $\delta$) such that
    \[ \dist_H(z, \psi_j(z)) \leq \rho \]
   for all $j\geq 0$ and $z\in X_n$. Hence
     \begin{equation}\label{eqn:homeomorphic_pseudoconjugacy} 
        \dist_H( \psi_{j-1} ( f_j(z)) , g_j(\psi_{j}(z)) ) = 
       \dist_H( g_j(z) , g_j(\psi_{j}(z))) \leq \rho /\Lambda \end{equation}
    for $j\geq 1$ and $z\in X_{j}$, 
    where $\Lambda$ is the expansion constant of $F$ from Proposition~\ref{prop:expansion}. 

   Suppose now that $M$ was chosen such that  $M \geq \Lambda\rho / (\Lambda-1)$, 
    and let $z\in Y_{j}$, $j\geq 1$. 
    By definition of $Y_{j}$, there is $\zeta\in X_{j}$ with $\dist_H(z,\zeta)\leq M$, and 
     \begin{align*}
      \dist_H(g_j(z) , f_j(\zeta)) &\leq \dist_H( g_j(z), g_j(\zeta)) + \dist_H( g_j(\zeta) , f_j(\zeta)) \\
          &=
                 \dist_H(g_j(z), g_j(\zeta)) + \dist_H( \psi_{j-1}(f_j(\zeta)), f_j(\zeta)) \leq M/\Lambda + \rho \leq M. \end{align*}
     Thus indeed $g_j(z)\in Y_{j-1}$, and $(Y_j,g_{j+1})$ is an inverse system.

   Both systems are expanding by Proposition~\ref{prop:expansion}. The sequence $(\psi_j)$ is a pseudo-conjugacy
    with constant  $(M+\rho)$
 by~\eqref{eqn:homeomorphic_pseudoconjugacy},
    the definition of $Y_j$, and the fact that $\psi_n$ is a hyperbolic isometry. Hence by Proposition~\ref{prop:conjugacyprinciple}, the two
    inverse limits are homeomorphic. Now $\invlim(X_j, f_{j+1})$ is homeomorphic to $X_0=\hat{K}\cap \C$, and 
   $\invlim(Y_j,g_{j+1})$ is homeomorphic to a 
    closed subset $A\subset J_{\s^1}$, in both cases via projection to the first coordinate. Let $\theta$ be the homeomorphism between the two sets; 
    then $\theta$ satisfies~\eqref{eqn:thetadistance} by Proposition~\ref{prop:conjugacyprinciple}. If $\infty\notin \hat{K}$, then
     $\hat{A} \defeq  A$ has the desired properties. Otherwise, 
    $A$ is unbounded by~\eqref{eqn:thetadistance}, and $\theta$ extends to a homeomorphism between the one-point compactifications 
    $\hat{K}$ and $\hat{A}$.  
\end{proof}

\begin{cor}[Homeomorphic Julia continua]\label{cor:homeomorphic}
  With the notation of the preceding Proposition, suppose that
  the sequence $(m_j)$ is bounded. Then $J_{\s}(F)$ and $J_{\s^1}(F)$
  are homeomorphic.
\end{cor}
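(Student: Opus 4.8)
The plan is to deduce this directly from Proposition~\ref{prop:homeomorphic}, applied twice --- the second time with the roles of $\s$ and $\s^1$ interchanged --- and then to invoke Lemma~\ref{lem:separationoforbits} to see that the two embeddings one obtains are mutually inverse. We may assume $(m_j)$ is not identically zero (otherwise $\s^1=\s$) and that $\s$ is admissible (otherwise, as is clear from the symmetric form of the argument below, $\s^1$ is not admissible either and both sets are empty). Put $N\defeq\sup_j|m_j|\ge 1$ and $\delta\defeq 1/N$.

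First I would apply Proposition~\ref{prop:homeomorphic} with $\hat K\defeq\hat{J}_{\s}(F)$, which is a compact continuum. Hypothesis~\eqref{eqn:Kboundeddistance} holds with this $\delta$, since $\delta|m_j|=|m_j|/N\le 1\le\max(1,\re F^j(z))$ for every $z\in J_{\s}(F)$ and $j\ge 0$. The proposition then produces a compact set $\hat A\subseteq\hat{J}_{\s^1}(F)$ and a homeomorphism $\theta\colon\hat{J}_{\s}(F)\to\hat A$ with $\theta(\infty)=\infty$ and
\[ d_H\bigl(F^n(z),F^n(\theta(z))\bigr)\le C\qquad\text{for all }z\in J_{\s}(F)\text{ and }n\ge 0, \]
where $C$ depends only on $F$ and $\delta$. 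Since $\hat A$ is homeomorphic to the non-degenerate continuum $\hat{J}_{\s}(F)$, it contains a finite point, so $\s^1$ is admissible.

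Since $F$ is $2\pi i$-periodic, $T_j=T_j^1-2\pi i m_j$ exhibits $\s$ as the translate of $\s^1$ by the sequence $(-m_j)$, which is again bounded by $N$. Applying Proposition~\ref{prop:homeomorphic} with $\s^1$ in place of $\s$, translation sequence $(-m_j)$, the same $\delta$, and $\hat K\defeq\hat{J}_{\s^1}(F)$ thus yields a homeomorphism $\theta^*\colon\hat{J}_{\s^1}(F)\to\hat A^*\subseteq\hat{J}_{\s}(F)$ with $\theta^*(\infty)=\infty$ and $d_H(F^n(w),F^n(\theta^*(w)))\le C$ for all $w\in J_{\s^1}(F)$ and $n\ge 0$ (the constant depends only on $F$ and $\delta$, hence may again be taken to equal $C$). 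Now fix $z\in J_{\s}(F)$; then $z$ and $\theta^*(\theta(z))$ both lie in $J_{\s}(F)$, and the triangle inequality gives $\liminf_{n\to\infty}\dist_H\bigl(F^n(z),F^n(\theta^*(\theta(z)))\bigr)\le 2C<\infty$. By Lemma~\ref{lem:separationoforbits}, applied to the singletons $\{z\}$ and $\{\theta^*(\theta(z))\}$, this forces $z=\theta^*(\theta(z))$. Hence $\theta^*\circ\theta=\id$ on $J_{\s}(F)$, and therefore on $\hat{J}_{\s}(F)$; by the same reasoning $\theta\circ\theta^*=\id$ on $\hat{J}_{\s^1}(F)$. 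Consequently $\theta$ is a homeomorphism $\hat{J}_{\s}(F)\to\hat{J}_{\s^1}(F)$ with $\theta(\infty)=\infty$, and removing the point $\infty$ yields the desired homeomorphism $J_{\s}(F)\to J_{\s^1}(F)$.

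The one genuine obstacle is surjectivity of $\theta$: Proposition~\ref{prop:homeomorphic} by itself only embeds $\hat{J}_{\s}(F)$ as a compact subset of $\hat{J}_{\s^1}(F)$. It is the combination of the symmetry of the hypothesis (boundedness of $(m_j)$ is preserved under $m_j\mapsto -m_j$) with the uniform-closeness estimate~\eqref{eqn:thetadistance} and the separation property of Lemma~\ref{lem:separationoforbits} that upgrades this embedding to a homeomorphism of the full Julia continua.
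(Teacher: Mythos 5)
Your proof is correct, but it takes a genuinely different route from the paper's.

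The paper's proof is a one-liner that \emph{reopens} the proof of Proposition~\ref{prop:homeomorphic}: when $(m_j)$ is bounded, one may simply take $Y_j=T_j^1$ (the full translated tract) in place of the bounded hyperbolic neighbourhood of $X_j$, and then the inverse limit is all of $J_{\s^1}(F)$, so surjectivity of $\theta$ is immediate. Your argument instead treats Proposition~\ref{prop:homeomorphic} as a black box, applies it twice (once in each direction, using that $|{-m_j}|=|m_j|$ so the same $\delta$ and hence the same $C$ works), and then uses the closeness estimate~\eqref{eqn:thetadistance} together with the expansion property of Lemma~\ref{lem:separationoforbits} to show that the two resulting embeddings are mutually inverse. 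Both arguments are sound. Yours is more modular~-- it does not require remembering the internal construction of $Y_j$ and the pseudo-conjugacy~-- at the cost of a second application of the proposition plus an extra separation argument; the paper's is shorter but relies on the reader having the proof of Proposition~\ref{prop:homeomorphic} in hand. Your handling of the admissibility edge case and the passage from $J_{\s}(F)$ to $\hat{J}_{\s}(F)$ at $\infty$ are both correct, as is the observation that $\hat A$ non-degenerate forces $\s^1$ admissible.
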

\begin{proof}
  In this case, we may take $\hat{K}=J_{\s}(F)$ and $Y_j = T_j^1$ in the proof of
  Proposition~\ref{prop:homeomorphic}, and we see that $A=J_{\s^1}(F)$. 
\end{proof}

\begin{cor}[Bounded-address Julia continua are homeomorphic]\label{cor:boundedhomeomorphic}
  Suppose that $F\in\BlogP$ has only a single tract up to translation by
   integer multiples of $2\pi i$. Then any two Julia continua of $F$ at bounded external addresses
   are homeomorphic. 
\end{cor}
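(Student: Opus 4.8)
The plan is to obtain the corollary as an immediate consequence of Corollary~\ref{cor:homeomorphic}, once one observes that the single-tract hypothesis forces any two bounded external addresses of $F$ to differ only by a \emph{bounded} sequence of $2\pi i$-translations.

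Concretely, I would let $T$ denote the unique tract of $F$ up to translation by $2\pi i\Z$, so that every tract of $F$ has the form $T + 2\pi i k$ with $k\in\Z$. Given two bounded external addresses $\s^1 = T^1_0 T^1_1 \dots$ and $\s^2 = T^2_0 T^2_1 \dots$, I would write $T^1_j = T + 2\pi i a_j$ and $T^2_j = T + 2\pi i b_j$ with $a_j,b_j\in\Z$, and set $m_j \defeq b_j - a_j$. Since each of $\s^1$ and $\s^2$ involves only finitely many distinct tracts, the integer sequences $(a_j)$ and $(b_j)$ assume only finitely many values, whence $(m_j)$ is bounded and $T^2_j = T^1_j + 2\pi i m_j$ for all $j$. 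By Proposition~\ref{prop:boundedorbits}, both $\s^1$ and $\s^2$ are admissible, so the Julia continua in question are nondegenerate. I would then apply Corollary~\ref{cor:homeomorphic} with $\s \defeq \s^1$ and with the sequence $(m_j)$ above: the translated address produced there is exactly $(T^1_j + 2\pi i m_j)_j = \s^2$, and we conclude that $J_{\s^1}(F)$ and $J_{\s^2}(F)$ are homeomorphic. Since the homeomorphism furnished by Proposition~\ref{prop:homeomorphic} fixes $\infty$, it also identifies the Julia continua $\hat{J}_{\s^1}(F)$ and $\hat{J}_{\s^2}(F)$.

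There is essentially no obstacle to overcome: all of the substantive work is already contained in Proposition~\ref{prop:homeomorphic} (itself an instance of the conjugacy principle of Section~\ref{sec:conjugacy}), and the only hypothesis of that proposition that needs verification here is the growth condition~\eqref{eqn:Kboundeddistance}. Taking $\hat{K} \defeq \hat{J}_{\s^1}(F)$, this condition reads $\delta\cdot|m_j| \leq \max(1,\re F^j(z))$; since $(m_j)$ is bounded, say $|m_j|\leq N$ for all $j$, it holds with $\delta \defeq 1/N$, because then $\delta|m_j|\leq 1$. Thus the corollary reduces to the single observation that boundedness of an external address of a single-tract map translates into boundedness of the associated translation sequence.
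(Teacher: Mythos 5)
Your proof is correct and follows exactly the route the paper takes: the single-tract hypothesis turns a pair of bounded addresses into a bounded translation sequence $(m_j)$, after which Corollary~\ref{cor:homeomorphic} applies directly, with the growth condition~\eqref{eqn:Kboundeddistance} of Proposition~\ref{prop:homeomorphic} holding trivially via $\delta = 1/\sup_j|m_j|$. The explicit check of admissibility via Proposition~\ref{prop:boundedorbits} and the remark that the homeomorphism from Proposition~\ref{prop:homeomorphic} fixes $\infty$ (and hence identifies the compactified continua) are the right finishing touches, matching the implicit reading of the paper.
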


For Julia continua at unbounded addresses, the situation is more complicated, as Theorem~\ref{thm:mainarclike} shows. 
  However, the following
  result shows that the topology of bounded-address Julia continua also influences the topology of
   (some) Julia continua at unbounded addresses. 

\begin{thm}[Subsets of bounded-address continua yield Julia continua]\label{thm:subsetsasJuliacontinua}
  Let $F\in\BlogP$ be of disjoint type and has bounded slope. 
     Let $\Jsh$ be a Julia continuum at a bounded external address 
     $\s=T_0 T_1 T_2 \dots$, and let
     $\hat{K}\subset\Jsh$ be a subcontinuum with  $\{\infty\}\subsetneq \hat{K}$. 

    Then there is a Julia continuum $\hat{J}_{\s^1}$ of $F$ homeomorphic to $\hat{K}$. The homeomorphism
      fixes $\infty$, and $\s^1$ is of the form 
        $\s^1 = (T_0 + 2\pi  i m_0) (T_1 +  2\pi i m_1) \dots$
       for some sequence $(m_n)_{n\geq 0}$ of non-negative integers.
\end{thm}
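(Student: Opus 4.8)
The plan is to construct $\s^1$ and the homeomorphism by means of the conjugacy machinery of Proposition~\ref{prop:homeomorphic}, applied to $\hat K$ with a carefully chosen sequence $(m_n)$, and then to show that the resulting homeomorphic copy of $\hat K$ actually \emph{exhausts} $\hat J_{\s^1}$.

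First I would dispose of the trivial case $\hat K=\Jsh$, in which one takes $m_n\equiv 0$, so that $\s^1=\s$ and the identity is the required homeomorphism; thus assume $\{\infty\}\subsetneq\hat K\subsetneq\Jsh$. Since $\s$ is bounded, Proposition~\ref{prop:boundedorbits} provides the unique nonescaping point $z_0\in J_{\s}(F)$, and by Theorem~\ref{thm:nonescapingterminal} the continuum $\Jsh$ is irreducible between $z_0$ and $\infty$; as $\hat K$ is a proper subcontinuum containing the terminal point $\infty$, it cannot contain $z_0$. Hence $K\defeq\hat K\setminus\{\infty\}$ is a closed subset of $J_{\s}(F)\setminus\{z_0\}$, so $\re F^n|_K\to\infty$ uniformly by Proposition~\ref{prop:boundedorbits}, and $d_n\defeq\min_{z\in K}\re F^n(z)\to\infty$ (the minimum is attained, since $F^n(\hat K)$ is compact in $\overline{T_n}\cup\{\infty\}$ and $F^n(K)\neq\emptyset$). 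I then fix a constant $c>0$, depending only on $F$ and to be specified in the last step, and put $m_n\defeq\max(0,\lfloor d_n\rfloor-c)$; these are nonnegative integers with $m_n\to\infty$ and $m_n\leq\max(1,\re F^n(z))$ for all $z\in K$ and $n\geq 0$. Proposition~\ref{prop:homeomorphic} then applies (with $\delta=1$) and produces the address $\s^1=(T_0+2\pi i m_0)(T_1+2\pi i m_1)\cdots$, a subcontinuum $\hat A\subseteq\hat J_{\s^1}$, and a homeomorphism $\theta\colon\hat K\to\hat A$ with $\theta(\infty)=\infty$ and $d_H(F^n(z),F^n(\theta(z)))\leq C$ for all $z\in K$ and $n\geq 0$. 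In particular $\hat A$ is non-degenerate, so $\s^1$ is admissible.

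It remains to prove $\hat A=\hat J_{\s^1}$; combined with $\hat A\cong\hat K$ this is the theorem. Inspecting the proof of Proposition~\ref{prop:homeomorphic}, the set $\hat A\setminus\{\infty\}$ is exactly $\{w\in J_{\s^1}(F)\colon\dist_H(F^n(w),F^n(K))\leq M\text{ for all }n\}$ for an appropriate $M$, so one must verify this bound for every $w\in J_{\s^1}(F)$. Using that $F\in\BlogP$ is $2\pi i$-periodic (hence $F_{T_n^1}^{-1}=F_{T_n}^{-1}+2\pi i m_n$ and $H$ is $2\pi i$-periodic), the bound reduces to $\dist_H(v_n,F^n(K))\leq M-\rho$, where $v_n\defeq F^n(w)-2\pi i m_n\in T_n$ and $\rho$ is the uniform bound for $\dist_H(\zeta,\zeta+2\pi i m_n)$ over $\zeta\in F^n(K)$ (valid because $\re\zeta\geq d_n\geq m_n$). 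Now $F^n(K)$ is a connected subset of $\overline{T_n}$ whose set of real parts is $[d_n,\infty)$, and bounded slope — exactly as in the proof that a bounded-slope function has anguine tracts — makes the level set $\{z\in T_n\colon\re z=t\}$ have hyperbolic diameter at most a uniform constant $M_1$. Hence, whenever $\re v_n\geq d_n$, the points $v_n$ and $F^n(K)\cap\{\re z=\re v_n\}$ lie in a common level set and $\dist_H(v_n,F^n(K))\leq M_1$.

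The main obstacle is the complementary case, in which $\re F^n(w)<d_n$ for some $n$: a priori the orbit of $w\in J_{\s^1}(F)$ could momentarily dip below the escape rate of $K$, and this is exactly what the constant $c$ in the choice of $m_n$ is designed to exclude. The idea is that if $\re v_n$ is much smaller than $d_n$ then $v_n$ lies in the extreme left part of $T_n$, which by bounded slope has bounded imaginary extent, forcing $F^n(w)=v_n+2\pi i m_n$ to have imaginary part within a bounded distance of $2\pi m_n$; propagating this constraint to the later iterates and applying the separation lemma (Lemma~\ref{lem:separationoforbits}) together with the fact that $\infty$ is a terminal point of $\hat J_{\s^1}$ (Theorem~\ref{thm:infty}), one shows — provided $m_n$ lags $d_n$ by at least the width of these left parts, which is the role of $c$ — that such a $w$ must already lie in $A$. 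Alternatively one may apply Proposition~\ref{prop:homeomorphic} a second time, in reverse, to $\hat A\subseteq\hat J_{\s^1}$ with shifts $-m_n$, obtaining a homeomorphism $\theta'\colon\hat A\to\hat B$ onto a subcontinuum $\hat B\subseteq\Jsh$; since $\infty$ is terminal in $\Jsh$, the subcontinua $\hat B$ and $\hat K$ are nested, and Lemma~\ref{lem:separationoforbits} applied to the bounded-distance correspondence $\theta'\circ\theta$ then forces $\hat B=\hat K$, from which one excludes any point of $J_{\s^1}(F)$ lying outside $A$. In both approaches the delicate step is the geometric control of low-real-part orbits in the up-shifted tracts; once it is in place, the theorem follows from this together with $\hat A\cong\hat K$ and $\theta(\infty)=\infty$.
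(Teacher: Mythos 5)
Your overall strategy — produce $\s^1$ and a homeomorphic copy $\hat{A}\subset\hat{J}_{\s^1}$ of $\hat{K}$ via Proposition~\ref{prop:homeomorphic}, then argue $\hat{A}=\hat{J}_{\s^1}$ — is the same as the paper's. The first part (discarding the trivial case, observing $z_0\notin\hat{K}$, deducing $d_n\to\infty$, checking the hypothesis of Proposition~\ref{prop:homeomorphic} and handling the level set $\{z\in T_n\colon\re z=\re v_n\}$ when $\re v_n\geq d_n$) is fine.

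The gap is in the step you yourself flag as ``delicate'': controlling points whose shifted orbit dips below the escape rate of $K$. The paper's proof does \emph{not} pick $m_n$ arithmetically as $\lfloor d_n\rfloor - c$; it picks each $m_{n+1}$ via Proposition~\ref{prop:shortpreimages}. That proposition is a harmonic-measure/Gehring--Hayman statement: for each tract $T$ and $R$, there \emph{exists} a special integer $m\in[R/(2\pi),R/\pi+1]$ so that the $F_T^{-1}$-preimage of an $H$-hyperbolic neighbourhood of the shifted segment $[R_0,R]+2\pi i m$ has uniformly bounded Euclidean diameter. It is precisely this property that lets one absorb the ``left part'' $(T_{n+1}+2\pi i m_{n+1})\setminus A_{n+1}$: its preimage under $F_{T_n+2\pi i m_n}^{-1}$ has bounded diameter and hence sits inside $Y_n$. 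With your choice $m_n=\max(0,\lfloor d_n\rfloor-c)$ — which is, incidentally, about $2\pi$ times the magnitude the paper uses, but more importantly has nothing to do with the conformal geometry of the tract — there is no reason for this short-preimage bound to hold, and bounded slope alone does not supply it: bounded slope controls imaginary extent of $T_n$ at a fixed real part (good enough when $\re v_n\geq d_n$), but it gives no control over how $F_{T_n+2\pi i m_n}^{-1}$ distorts the low-real-part portion of $T_{n+1}+2\pi i m_{n+1}$.

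Both of your workarounds inherit the same gap. The ``propagating constraints'' sketch never establishes that the relevant preimage lands near $F^n(K)$; knowing $F^n(w)$ has small real part and imaginary part near $2\pi m_n$ does not say where the backward branch maps it unless one has the short-preimage bound. The reverse application of Proposition~\ref{prop:homeomorphic} to $\hat{A}$ with shifts $-m_n$ only produces a subcontinuum $\hat{B}\subset\Jsh$ and shows $\hat{B}=\hat{K}$; it cannot rule out points of $J_{\s^1}$ outside $A$, because $\theta'$ is not defined on them, and applying Proposition~\ref{prop:homeomorphic} directly to all of $\hat{J}_{\s^1}$ requires precisely the real-part lower bound on $J_{\s^1}$ that you are trying to establish. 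To close the argument you need to replace your choice of $(m_n)$ with the one provided by Proposition~\ref{prop:shortpreimages} and then run the paper's $Y_n$ argument.
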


 We shall  use the following fact about preimages of ``initial'' pieces of tracts. 

\begin{prop}[Short preimages]\label{prop:shortpreimages}
  Let $F\in\Blog$ be of disjoint type, say $F\colon \T\to H$. Also let $R_0 > 0$ be such that $[R_0,\infty)\subset H$, and let $\theta>0$.  
    Then there is a constant $C>0$ with the following
    property. If $T$ is a tract of $F$ and $R\geq R_0$, then there is an integer 
      $m\in [R/(2\pi), R/\pi+1 ]$ such that 
      \[ \diam( F_{T}^{-1}( \{ z + 2\pi i m \colon \dist_H(z, [R_0,R]) \leq \theta \})) \leq C. \]
   If $F$ also has bounded decorations,  then there is $C'>0$ such that 
      \[ \diam_H( F_{T}^{-1}( 
      \{ z + 2\pi i m \colon \dist_H(z, [R_0, R_0 + 2\pi |m|] ) \leq \theta \} \leq C'\]
    for all tracts $T$ and all $m\in\Z$, where $H$ is the range of  $F$. 
\end{prop}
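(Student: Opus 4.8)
\emph{Setup and two elementary facts.} I would first reduce to the case $H=\HH$: every disjoint-type $F\in\Blog$ is conformally conjugate, via an isomorphism commuting with $z\mapsto z+2\pi i$, to one whose range is $\HH$, and this conjugation carries $[R_0,\infty)$ to a curve standing in the same metric relation to the boundary, so it changes nothing essential. Two observations are then used repeatedly. Since each tract $T$ is disjoint from its $2\pi i\Z$-translates, $\dist(z,\partial T)\le\pi$, hence $\rho_T\ge\tfrac1{2\pi}$ on $T$; moreover $F_T\colon T\to\HH$ is a hyperbolic isometry and $\rho_\HH\le\rho_T$ on $T\subset\HH$, so for every $S\subset\HH$
\[
  \diam\bigl(F_T^{-1}(S)\bigr)\ \le\ 2\pi\,\diam_T\bigl(F_T^{-1}(S)\bigr)\ =\ 2\pi\,\diam_{\HH}(S).
\]
Secondly, $\log|\zeta|$ is $1$-Lipschitz on $\HH$ with respect to $\rho_\HH$ (its $\rho_\HH$-gradient has norm $\re\zeta/|\zeta|\le1$), so on any set of bounded $\rho_\HH$-diameter the modulus $|\zeta|$ varies by a bounded factor; conversely, a positive-real segment $[a,b]$ has $\diam_{\HH}([a,b])=\log(b/a)$.

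\emph{The bounded-decorations estimate.} Set $\sigma_m:=[R_0,R_0+2\pi|m|]+2\pi i m$ and let $N_m$ be its $\theta$-neighbourhood in $\HH$. Writing $\rho_m:=|R_0+2\pi i m|\ge2\pi|m|$, an explicit look at $|t+2\pi i m|^2$ for $t\in[R_0,R_0+2\pi|m|]$ shows that, once $|m|$ is large, $\sigma_m\subset\{\rho_m\le|\zeta|\le2\rho_m\}$, and then by the Lipschitz fact $N_m\subset\{\rho_me^{-\theta}\le|\zeta|\le2\rho_me^{\theta}\}\cap\HH$. I would choose $m_0$ so that $\rho_me^{-\theta}$ exceeds the threshold in the definition of bounded decorations whenever $|m|\ge m_0$. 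For such $m$ and any $\zeta\in N_m$, the point $F_T^{-1}(\zeta)$ lies on $F_T^{-1}(\{|\zeta'|=|\zeta|\}\cap\HH)$, which has $\rho_\HH$-diameter $\le K$ and contains $F_T^{-1}(|\zeta|)$; hence $F_T^{-1}(\zeta)$ is within $\rho_\HH$-distance $K$ of $F_T^{-1}\bigl([\rho_me^{-\theta},2\rho_me^{\theta}]\bigr)$, which by the second elementary fact and the isometry/monotonicity has $\rho_\HH$-diameter $\le\log(2e^{2\theta})$. Therefore $\diam_{\HH}\bigl(F_T^{-1}(N_m)\bigr)\le 2K+2\log(2e^{2\theta})$, uniformly in $T$ and in $|m|\ge m_0$. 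For the finitely many $m$ with $|m|<m_0$, $N_m$ is a fixed compact subset of $\HH$ and $\diam_{\HH}(F_T^{-1}(N_m))\le\diam_{\HH}(N_m)$ is bounded independently of $T$ by the monotonicity fact; enlarging $C'$ to absorb these finishes this half.

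\emph{The choice of $m$, and where the difficulty lies.} Here no control on decorations is available, so the improvement must come entirely from the freedom to pick $m$ among the $\asymp R/2\pi$ integers in $[R/2\pi,R/\pi+1]$. For $R$ in a compact range the sets $\{z+2\pi im:\dist_{\HH}(z,[R_0,R])\le\theta\}$ remain in a compact part of $\HH$ and the bound follows from the monotonicity fact, so I would assume $R$ large. The key structural inputs are: the constraint $2\pi m\ge R$ forces $\sigma_m:=[R_0,R]+2\pi i m$ to have bounded modulus-ratio ($\rho_m\le|\zeta|\le\sqrt2\,\rho_m$ with $R\le\rho_m\le3R$), so that $F_T^{-1}(N_m)\subset F_T^{-1}(A)$ where $A:=\{R\le|\zeta|\le4R\}\cap\HH$ has conformal modulus bounded independently of $R$; the $\sigma_m$ are pairwise disjoint; and since each meets the vertical segment $J_R:=\{R+is:R\le s\le 2R+2\pi\}$, with $\len_{\HH}(J_R)=(R+2\pi)/R\le2$, all the right endpoints $F_T^{-1}(R+2\pi im)$ lie in a single Euclidean ball of radius $4\pi$. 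From here the plan is a length–area / extremal-length argument inside $\Omega:=F_T^{-1}(A)$: the $\asymp R$ disjoint curves $F_T^{-1}(\sigma_m)$ cut $\Omega$ into $\asymp R$ thin sub-quadrilaterals, and since $\Omega$ has bounded modulus one of these slices — hence the curve $F_T^{-1}(\sigma_m)$ bounding it, and after thickening $F_T^{-1}(N_m)$ — should be forced to lie near $\partial T$, where $\rho_T$ is large, and so have bounded Euclidean diameter. Extracting a single good $m$ quantitatively from this packing is the genuine obstacle: the crude estimate above only yields $\diam(F_T^{-1}(N_m))=O(\log R)$ for every $m$, and the exact point of the special interval $[R/2\pi,R/\pi+1]$ is to supply enough candidates for a pigeonhole to replace this by $O(1)$.
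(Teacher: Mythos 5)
Your treatment of the bounded-decorations assertion is sound and broadly parallel to the paper's, though more explicit: you localise $N_m$ inside an annulus $\{\rho_m e^{-\theta}\le|\zeta|\le 2\rho_m e^{\theta}\}$, invoke the bounded-decorations hypothesis to control the preimage of each circle $\{|\zeta|=\rho\}$ there, and use the monotonicity $\diam_{\HH}(F_T^{-1}(S))\le\diam_{\HH}(S)$ for the preimage of the intervening positive-real segment. Together with the compactness remark for $|m|<m_0$, this gives a correct and uniform proof of the second claim.

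The first (main) assertion, however, you do not prove, and you say so. The extremal-length plan also has real obstructions: the segments $\sigma_m$ are not crosscuts of the half-annulus $A$ (their left endpoints $R_0+2\pi i m$ sit in the interior, not on the inner circle $|\zeta|=R$), so they do not decompose $\Omega=F_T^{-1}(A)$ into sub-quadrilaterals in the way the heuristic needs; and even granting a decomposition, thinness of a slice (small modulus in one direction) does not by itself bound the Euclidean diameter of the curve bounding it, since a long thin piece near $\partial T$ has small modulus without having small diameter. The paper bypasses the pigeonhole entirely by a single harmonic-measure step. Normalise $H=\HH$, $R_0=1$. The harmonic measure of the boundary arc $[Ri,2Ri]\subset\partial\HH$ seen from $R$ is a universal constant. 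Since $\zeta:=F_T^{-1}(R)$ lies at Euclidean distance at most $2\pi$ from $\partial T$, the distortion estimate in Pommerenke's Corollary~4.18 produces $t\in[R,2R]$ such that the $T$-geodesic $\gamma$ from $\zeta$ to $F_T^{-1}(ti)$ has Euclidean diameter bounded by a universal constant. Taking $m:=\lceil t/(2\pi)\rceil$, one checks directly that every point of $\{z+2\pi i m:\dist_{\HH}(z,[1,R])\le\theta\}$ lies within bounded $\HH$-hyperbolic distance of the $\HH$-geodesic $F(\gamma)$ from $ti$ to $R$; pulling back by the isometry $F_T$ and applying the standard lower bound $\rho_T\ge 1/(2\pi)$ converts this to the bounded Euclidean diameter of the preimage. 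This is precisely the mechanism that selects the one good $m$ your sketch is missing.
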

\begin{proof}
     Assume for simplicity that $H=\HH$ and $R_0=1$. (Otherwise, replace 
     $F$ by its postcomposition with a conformal homeomorphism $H\to\HH$ in what follows; observe that such a conformal map will not move $R$ by more than
     a bounded hyperbolic distance.)
 
    The harmonic measure of the interval $[Ri,2Ri]$ in $\HH$, as seen from  $R$, is independent of 
     $R$. As $\zeta\defeq F_{T}^{-1}(R)$ has distance at most  $\pi$ from the boundary of $T$, it follows 
       \cite[Corollary~4.18]{pommerenke} that there is $t\in [R,2R]$ such that the geodesic $\gamma$ of $T$
      connecting
      $\zeta$ and $F_{T}^{-1}(ti)$ has Euclidean diameter bounded by a universal  constant  $C_1$.  

      Set $m\defeq \lceil t/2\pi \rceil \in [R/(2\pi),R/\pi+1]\cap \N$. Then the hyperbolic 
       distance of any point in $[1,R]+2\pi i m$ to the circular arc $F(\gamma)$ connecting $ti$ and $R$
       is uniformly bounded (by $3$). So the distance between any point of 
       $S\defeq  \{ z + 2\pi i m \colon \dist_{\HH}(z, [1,R]) \leq \theta \}$ and $F(\gamma)$ is bounded above
      (by $3+\theta$). 
       As $F\colon T\to \HH$ is a conformal isomorphism, the distance between  $F_{T}^{-1}(z)$ and $\gamma$ in the hyperbolic  metric of 
      $T$ is uniformly bounded, and  the  claim  follows from the  standard bound~\eqref{eqn:standardestimate2pi}.  

    If $F$ has bounded decorations, then in the above we  can  take any  $t=2\pi m$, and let $\gamma$ be
     the geodesic of $T$ connecting $F_{T}^{-1}(ti)$ to $F_{T}^{-1}(t)$. This geodesic has bounded  hyperbolic
     length in  $H$ by assumption, and  the remainder of  the proof proceeds as above.
\end{proof}

\begin{proof}[Proof of Theorem~\ref{thm:subsetsasJuliacontinua}]
%
    Let $R_0$ be as in Proposition~\ref{prop:shortpreimages}.
     By the bounded-slope condition, there is 
     $\theta>0$ such that $T_n\subset S\defeq \{z\colon \dist_H(z,[R_0,\infty)\leq \theta\}$ for
     each of the (finitely many) tracts $T_n$. Set $m_0\defeq 0$. 
       For $n\geq 0$, define $\hat{K}_n\defeq  F^n(\hat{K})$ and
      \[ R_n \defeq  \min_{z\in \hat{K}_n} \re  z \]
    for all $n\geq 0$.  Also choose an integer $m_{n+1}$ according to
    Proposition~\ref{prop:shortpreimages}, using $T=T_{n}$  and  $R=\max(1,R_{n+1})$. 
   This choice determines the  address $\s^1$ in the statement of the theorem;
     note that there is $\delta>0$ such that 
    $\delta\cdot m_n \leq \max(1,R_n)$ for all $n$.  

   By Proposition~\ref{prop:homeomorphic}, there is a subset $A\subset \hat{J}_{\s^1}$ homeomorphic to $\hat{K}$, and
     the corresponding homeomorphism moves  points by  at most a finite hyperbolic distance.  
    Moreover, for $n\geq 0$ and $z\in A_{n+1}\defeq  \{z+ 2\pi i m_{n+1}\colon z\in T_{n+1}, \re z \geq R_{n+1}\}$,
     the hyperbolic distance from $z$ to $\hat{K}\setminus \{\infty\}$ is uniformly bounded
      by $2\theta + 2\pi/\delta$.  
     On the other hand,
     \[ \diam( F_{T_n+2\pi i m_n}^{-1}\bigl((T_{n+1}+2\pi i m_{n+1})\setminus A_{n+1}\bigr)) \]
     is uniformly bounded by choice of $m_n$. It follows that 
     \[ F^n(J_{\s^1}) \subset F_{T_n + 2\pi i m_n}^{-1}(T_{n+1}+2\pi i m_{n+1}) \subset  Y_{n}, \] 
     where $Y_n$ is the set constructed in the proof of Proposition~\ref{prop:homeomorphic} (provided $M$ was 
     chosen suffciently large there). Therefore 
    $A=\hat{J}_{\s^1}$, as  required.  
\end{proof}

One may ask whether, for functions having bounded slope and bounded decorations, and  with finitely many tracts,
   the bounded-address Julia continua completely determine the topological types of \emph{all}  Julia continua. 
   We shall prove a weaker result in this direction. 

\begin{prop}[$\eps$-dense subcontinua of Julia continua]\label{prop:epsilondensesubcontinua}
  Let $F\in\BlogP$ be  of disjoint type, with bounded slope and bounded decorations. Let 
     $\s=T_0 T_1 T_2 \dots$ be a bounded  external  address of $F$, and  let 
        $\s^1 = (T_0 + 2\pi  i m_0) (T_1 +  2\pi i m_1) \dots$
       for some sequence $(m_n)_{n\geq 0}$ of integers. 

     Then every proper subcontinuum of $\hat{J}_{\s^1}$ containing $\infty$ is homeomorphic
      to some subcontinuum of $\Jsh$ containing $\infty$.

    More precisely, let $K\subset \hat{J}_{\s^1}$ be a non-degenerate continuum. Then for every  $\eps>0$ 
      there is a subcontinuum $K'\subset \Jsh$ 
      and a continuous injective map 
         \[ \phi\colon K' \to K\] 
      such  that every point of $K$ has Euclidean distance at most $\eps$ from $\phi(K')$.
      (That is, $\phi(K')$ is $\eps$-dense in  $K$.) 
       If $\infty\in K$, then  $\infty\in K'$ and $\phi(\infty)=\infty$. 
\end{prop}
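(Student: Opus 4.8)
The plan is to deduce the statement from the conjugacy principle already established in Proposition~\ref{prop:homeomorphic}. By the remark following Definition~\ref{defn:Blog} we may assume $H=\HH$. It suffices to produce a subcontinuum $K^{*}\subseteq K$ which is $\eps$-dense in $K$, contains $\infty$ whenever $\infty\in K$, and for which there is $\delta>0$ with
$\delta|m_{j}|\le\max(1,\re F^{j}(z))$ for all $z\in K^{*}\cap\C$ and all $j\ge 0$.
Granting this, one applies Proposition~\ref{prop:homeomorphic} with the roles of $\s$ and $\s^{1}$ interchanged (so that the sequence of integers occurring there is $(-m_{j})$) to the continuum $K^{*}\subseteq\hat J_{\s^{1}}$; this yields a subcontinuum $K'\subseteq\Jsh$ and a homeomorphism $\theta\colon K^{*}\to K'$ with $\theta(\infty)=\infty$, and $\phi\defeq\theta^{-1}\colon K'\to K^{*}\subseteq K$ has the required properties. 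The first assertion of the proposition (for $K$ a \emph{proper} subcontinuum containing $\infty$) then follows by running this construction with $\eps\to 0$: the uniform shadowing bound~\eqref{eqn:thetadistance} makes the resulting family $\{\phi\}$ equicontinuous, and any Hausdorff subsequential limit is a continuous bijection onto $K$, hence a homeomorphism by the injectivity criterion in Lemma~\ref{lem:separationoforbits}. (In this case Theorem~\ref{thm:nonescapingterminal} is also helpful: the nonescaping point of $\hat J_{\s^{1}}$, between which and $\infty$ the continuum is irreducible, does not lie in $K$, which removes the most serious obstruction to the winding inequality.)

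The reason a suitable $K^{*}$ exists is that bounded slope already forces the winding inequality \emph{away from the bounded near-boundary parts of the translated tracts}. Indeed, all tracts of $F$, hence all their $2\pi i\Z$-translates, eventually lie in a fixed sector about $\R_{+}$ of opening $<\pi$; consequently there are constants $R_{0},C_{0}$, depending only on $F$, such that any $z\in J_{\s^{1}}$ with $F^{j}(z)$ in that sector and $\re F^{j}(z)\ge R_{0}$ already satisfies $|m_{j}|\le C_{0}\re F^{j}(z)$. Thus the winding inequality can only fail at levels $j$ where $F^{j}(z)$ lies in the ``bad'' region $(T_{j}+2\pi i m_{j})\setminus\{\,w\in S_{\alpha}:\re w\ge R_{0}\,\}$ of the translated tract, and only when $|m_{j}|$ is large; moreover the finitely many \emph{shallow} levels cause no difficulty, since there one only needs $\delta\le 1/\max(1,\max_{j\le j_{0}}|m_{j}|)$ for the inequality $\delta|m_j|\le 1\le\max(1,\re F^j(z))$ to hold. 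Hence $K^{*}$ should be obtained from $K$ by trimming away exactly those points whose orbit visits such a bad region at a \emph{deep} level.

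The construction of this trimming, and the verification that it is an $\eps$-small perturbation, is the main obstacle. For each deep level $j$ at which $F^{j}(K)$ reaches the bad region one cuts $F^{j}(K)$ along the $F^{j}$-preimage of a ``slab'' of the type appearing in the bounded-decorations part of Proposition~\ref{prop:shortpreimages}, which connects the near-boundary part of $T_{j}+2\pi i m_{j}$ to its sector part and whose preimage under $F_{T_j+2\pi i m_j}^{-1}$ has hyperbolic diameter bounded by a constant $C'$ \emph{independent of $j$ and of $m_j$}; pulling back by $F^{-j}_{\s^{1}}$ and using hyperbolic expansion (Proposition~\ref{prop:expansion}) together with~\eqref{eqn:standardestimate2pi}, one gets a curve in $K$ of Euclidean diameter $\le 2\pi C'\Lambda^{-j}$, which is $<\eps$ once $j$ is large. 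One then lets $K^{*}$ be the component of $K$ minus these (small, deep) cuts that contains $\infty$ (if $\infty\notin K$, one first removes in addition a small neighbourhood of the nonescaping point, should it lie in $K$). The delicate point — and where bounded decorations is genuinely needed, in contrast with Theorem~\ref{thm:subsetsasJuliacontinua} — is that, although the bad region of a translated tract can have hyperbolic diameter growing like $\log|m_{j}|$, its part lying in $F^j(K)$ must be pulled back to something of diameter $\lesssim\Lambda^{-j}C'$ by the slab estimate, so that discarding it indeed changes $K$ by less than $\eps$; and one must check, level by level and propagating control along the dynamics exactly as in the proof of Proposition~\ref{prop:homeomorphic}, that $K^{*}$ remains connected, is $\eps$-dense in $K$, still contains $\infty$, and that the orbit of every point of $K^{*}$ now stays in the sector region at all deep levels, so that the winding inequality holds with a uniform $\delta$. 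Once this bookkeeping is carried out, the reduction to Proposition~\ref{prop:homeomorphic} described in the first paragraph completes the proof. The part I expect to require the most care is precisely verifying that the trimmed-off pieces stay within $\eps$ of $K^{*}$ while keeping $K^{*}$ a continuum.
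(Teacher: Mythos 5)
Your overall strategy matches the paper's: reduce to Proposition~\ref{prop:homeomorphic}, recognise that the obstruction is the winding inequality $\delta|m_j|\leq\max(1,\re F^j(z))$, and propose to discard the offending points, using the bounded-decorations estimate of Proposition~\ref{prop:shortpreimages} to control the diameter of the discarded pieces after pull-back. However, what you explicitly flag as ``the part I expect to require the most care'' --- keeping the trimmed set connected while verifying $\eps$-density --- is not a detail but the entire content of the proof, and your sketch does not resolve it. Removing, for each deep level $j$, the piece of $K$ whose $j$-th iterate strays too far left does not obviously leave a single connected component that is $\eps$-dense: the deletions interact across levels, and ``the component of $K$ minus these cuts that contains $\infty$'' may be far smaller than $K$. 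The paper handles this with a careful \emph{backward-to-forward} induction: starting from $F^n(K)$ with $n$ large, it defines $K_n^{j+1}=F^{-1}_{T_n+2\pi i m_n}(\tilde K_{n+1}^j)$, where $\tilde K_{n+1}^j$ is the connected component of $\{z\in K_{n+1}^j\colon\re z\geq 2\pi|m_{n+1}|\}$ containing a \emph{rightmost} point; the boundary bumping theorem is then used to show that every point of $F^n(K)$ remains within a fixed hyperbolic distance $\Delta$ of $K_n^j$, and the desired $\eps$-density drops out by pulling back under $F^{-n}_{\s^1}$ and letting $n\to\infty$. Your plan asks for a trimmed set $K^*\subset K$ satisfying the winding inequality \emph{at every level simultaneously}; the paper instead only needs the inequality at levels $\geq n+1$ for the set $F(K_n)$, which is a genuinely easier thing to arrange.

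Two smaller issues. First, your derivation of the first assertion (proper subcontinua containing $\infty$) via Hausdorff limits of the maps $\phi$ is not sound as stated: the $\phi$ are defined on varying domains $K'_\eps$, and a limit of continuous injections need not be a bijection, let alone a homeomorphism; Lemma~\ref{lem:separationoforbits} does not supply such an ``injectivity criterion.'' The paper deduces it from the second assertion directly using terminality of $\infty$: pick $z\in\hat J_{\s^1}\setminus\tilde K$, take $\eps=\dist(z,\tilde K)/2$, and observe that $\phi(K')$ then cannot be contained in $\tilde K$, so terminality forces $\tilde K\subset\phi(K')$ and $\phi^{-1}(\tilde K)$ is the required subcontinuum of $\Jsh$. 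Second, your parenthetical about ``the nonescaping point of $\hat J_{\s^1}$, between which and $\infty$ the continuum is irreducible'' implicitly assumes $\s^1$ is a bounded address; but the $(m_n)$ here are arbitrary integers, so $\hat J_{\s^1}$ need not contain a unique nonescaping point, and this remark does not apply.
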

\begin{proof}
   To see that the first part of the proposition follows from the second, let $\tilde{K}\subset \hat{J}_{\s^1}$ be
     a proper subcontinuum containing infinity, and let $z\in \hat{J}_{\s^1}\setminus \tilde{K}$.
     Apply the second part of the Proposition with  $K=\hat{J}_{\s^1}$ and $\eps\defeq \dist(z,\tilde{K})/2$. 
     Then $\phi(K')$ is a subcontinuum of $K$ with $\infty\in  \phi(K')$ and  $\phi(K')\not\subset \tilde{K}$. 
     Since $\infty$ is a terminal point  of $K$, we have $\tilde{K}\subset \phi(K')$, and hence 
     $\phi^{-1}(\tilde{K})$  is indeed a subcontinuum of $\Jsh$ homeomorphic to $\tilde{K}$.  

   Now let $K$ be a non-degenerate subcontinuum of  $\hat{J}_{\s^1}$. The idea of  the proof is as follows. For each $n$, we construct a 
    subcontinuum $K_n$ of $F^n(K)$, containing $\infty$ if $K$ does, whose iterates $F^{j-n}(K_n)$ stay to the right of the line
    $L_j = \re z = 2\pi |m_{j}|$. By Proposition~\ref{prop:homeomorphic}, there is 
    a subcontinuum of $F^n(\Jsh)$ homeomorphic to $K_n$. The bounded decorations condition ensures (via Proposition~\ref{prop:shortpreimages}) that the part of $F^j(K)$ to the left of
   the line $L_j$ shrinks to a set of bounded diameter when pulled back. This and the 
   construction of $K_n$~-- which is similar to the proof of Proposition~\ref{prop:fast}~-- ensures that $K_n$ is $\Delta$-dense in $F^n(K)$, for some $\Delta$ independent of 
   $n$. The claim now follows by pulling back under $F^n$; if $n$ is sufficiently large then the resulting continuum will be $\eps$-dense in $K$.

  To provide the details, let $H$ be the range of $F$. Since $F$ has bounded slope, there  is $C>0$ such that 
   \begin{equation}\label{eqn:boundedatrealparts} \diam_H (\{z\in T\colon  \re z = x \}) \leq C \end{equation}
   for all tracts $T$ and all $x\geq 0$. By Proposition~\ref{prop:shortpreimages}, if $C$ is sufficiently 
    large, then also 
   \begin{equation}\label{eqn:boundedalongpieces} 
          \diam_H(F_T^{-1}(\{z+2\pi i m\colon  z\in T_n\text{ and }\re z \leq 2\pi |m| \}))\leq C \end{equation}
    for all $m\in\Z$, all tracts $T$ and all  $n\geq 0$. 

   Let $\Lambda>1$ be 
    the hyperbolic expansion factor of $F$, and set 
      \[ \Delta \defeq  C\cdot\frac{\Lambda}{\Lambda-1} . \]
    We can
    assume without loss of generality that the set $K$ has hyperbolic diameter
    greater than $C+\Delta$. (Otherwise, replace $K$ by a suitable forward iterate.) 
    By expansion, $F^n(K)$ has hyperbolic diameter greater
    than $C+\Delta$ for all $n\geq 0$. 

  For every choice of $n,j\geq 0$, we inductively define a continuum
     $K_n^j \subset F^n(K)$ as follows. Set 
     $K_{n}^0 \defeq  F^n(K)$. If $K_{n+1}^j$ has been defined, choose a point
     $\zeta_{n+1}^j\in K_{n+1}^j$ with maximal real part, and let 
     $\tilde{K}_{n+1}^j$ be the connected component of
      \[ \{ z\in K_{n+1}^j\colon  \re z \geq 2\pi |m_{n+1}| \} \]
     that contains $\zeta_{n+1}^j$. The proof of the claim below will show that 
     $\re \zeta_{n+1}^j\geq 2\pi |m_{n+1}|)$, so that such a component exists. We set
      \[ K_n^{j+1} \defeq  F_{T_n + 2\pi i m_n}^{-1}(\tilde{K}_{n+1}^j). \]

   \begin{claim}
     Let $n\geq 0$. Then the above construction defines a continuum $K_n^j$ for every $j$, and furthermore
      every point of $F^n(K)$ has
      hyperbolic distance at most $\Delta$ from $K_n^j$.
   \end{claim}
   \begin{subproof}
     We prove the claim by induction on $j$. It is trivial for $j=0$.

     Suppose that the claim is true for $j$ (and all $n$). Since
      $F^{n}(K)$ has diameter greater than $\Delta + C$, for all $n$, it follows from~\eqref{eqn:boundedalongpieces} 
      and the inductive hypothesis that
      $K_{n+1}^j$ contains a point at real part greater than $2\pi |m_{n+1}|$. 
      Hence $\tilde{K}_{n+1}^{j+1}$ is indeed defined for all $n$.

    Let $z\in F^n(K)$. By the  inductive hypothesis, there is $\zeta_1\in K_{n+1}^j$ at distance
     at most $\Delta$ from $F(z)$. By construction and the boundary bumping theorem 
      (Theorem \ref{thm:boundarybumping}), there is a point $\zeta \in \tilde{K}_{n+1}^{j}$ with
      real part $\max(\re \zeta_1, 2\pi |m_{n+1}|)$. If  $\re \zeta_1 \geq 2\pi |m_{n+1}|$, then
          \[ \dist_H(F_{T_n+2\pi i m_n}^{-1}(\zeta), F_{T_n+2\pi i m_n}^{-1}(\zeta_1)) <
               \dist_H(\zeta,\zeta_1) \leq C \]
       by~\eqref{eqn:boundedatrealparts}. Otherwise, 
         \[  \dist_H(F_{T_n+2\pi i m_n}^{-1}(\zeta), F_{T_n+2\pi i m_n}^{-1}(\zeta_1))\leq C \]
    by~\eqref{eqn:boundedalongpieces}. So 
     $\dist_H(z,K_n^{j+1}) \leq C + \Delta/\Lambda = \Delta$, as required.
   \end{subproof}

  The inductive construction ensures that
   $K_n^{j+1}\subset K_n^j$ and $F(K_n^{j+1})\subset K_{n+1}^j$ 
     for all $n$ and $j$. We define
    \[ K_n \defeq  \bigcap_{j\geq 0} K_n^j. \]
   Then, for all $n$, 
    \begin{itemize}
      \item $K_n$ is a subcontinuum of $F^n(K)$;
      \item $\dist_H(F^n(z), K_n)\leq \Delta$ for all $z\in K$; in particular, $K_n$ is non-degenerate; 
      \item $F(K_n)\subset K_{n+1}$;
      \item $\re F(z)\geq 2\pi |m_{n+1}|$ for all $z\in K_n$.
    \end{itemize}

   By Proposition~\ref{prop:homeomorphic} (applied with the roles of $\s$ and $\s^1$ reversed), there is a subcontinuum
   $K_{n+1}'$ of $F^{n+1}(\Jsh)$ and a homeomorphism $\theta_{n+1}\colon F(K_n)\to K_{n+1}'$ with
    $\theta_{n+1}(\infty)=\infty$ if $\infty\in K$. Letting $n$ be sufficiently large, set 
     $K'\defeq F_{\s}^{-(n+1)}(K_{n+1}')$ and 
     \[ \phi\colon K'\to K; \qquad \phi(z) = F_{\s^1}^{-(n+1)}(\theta_{n+1}^{-1}(F^{n+1}(z))). \]
    Then $\phi$ is continuous and injective. By expansion, every point of $F(K)$ has distance at most
       $\Delta/\Lambda^{n-1}$ from $F(\phi(K'))$ in the hyperbolic distance of $H$; so 
      every point of $K$ has distance at most $\Delta/\Lambda^{n-1}$ from $\phi(K')$ in the hyperbolic metric of
        $T_0+2\pi  i  m_0$. The claim follows by the standard bound~\eqref{eqn:standardestimate2pi},
       if $n$ was chosen sufficiently large to ensure that
        $\Delta/\Lambda^{n-1} \leq \eps/(2\pi)$.  
\end{proof}

The following will be used to prove Theorem~\ref{thm:pseudoarcs}. 

\begin{cor}[Pseudo-arcs]\label{cor:pseudoarcs}
   Suppose that $F\in\BlogP$ is a disjoint-type function having a unique tract $T$ 
    up to translation by $2\pi i$, and such that
    $T$ has bounded slope and bounded decorations.   

    If one (and hence each) bounded-address Julia continuum of $F$ is a pseudo-arc, then 
    every Julia continuum of $F$ is a pseudo-arc.
\end{cor}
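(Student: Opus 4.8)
The plan is to derive the corollary from Proposition~\ref{prop:epsilondensesubcontinua} (existence of $\eps$-dense subcontinua of Julia continua that are homeomorphic to subcontinua of a bounded‑address continuum) together with two standard facts from continuum theory: that arc‑like continua are hereditarily unicoherent, and that a continuum is a pseudo‑arc precisely when it is arc‑like and every non‑degenerate subcontinuum is indecomposable (Definitions~\ref{defn:topologicalproperties} and~\ref{defn:indecomposable}, Proposition~\ref{prop:composants}\ref{item:hereditarilyindecomposable}). For the setup: since the unique tract $T$ of $F$ has bounded slope, $F$ has bounded slope, hence anguine tracts, so every Julia continuum of $F$ is arc‑like by Proposition~\ref{prop:arcliketracts}; and since all tracts of $F$ are $2\pi i\Z$‑translates of $T$, the bounded decorations of $T$ give that $F$ has bounded decorations. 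The constant address $\s\defeq TTT\dots$ is bounded, so $\hat{J}_{\s}(F)$ is a bounded‑address Julia continuum and therefore a pseudo‑arc, by hypothesis together with Corollary~\ref{cor:boundedhomeomorphic}. Finally, every external address of $F$ has the form $\s^1=(T+2\pi i m_0)(T+2\pi i m_1)\dots$ with $(m_n)\subset\Z$, so every Julia continuum $\CH=\hat{J}_{\s^1}(F)$ is of the shape to which Proposition~\ref{prop:epsilondensesubcontinua} applies with the bounded address $\s$. It therefore suffices to show that every non‑degenerate subcontinuum $K\subseteq\CH$ is indecomposable.

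To this end I would argue by contradiction: suppose $K=A\cup B$ with $A,B$ proper subcontinua. Arc‑likeness of $\CH$ makes it hereditarily unicoherent, so $C\defeq A\cap B$ is a non‑empty subcontinuum. Both $K\setminus A$ and $K\setminus B$ are non‑empty open subsets of $K$; fix $p\in K\setminus B$, $q\in K\setminus A$ and $\delta>0$ so that the $\delta$‑neighbourhood of $p$ (resp.\ $q$) in $K$ lies in $A\setminus B$ (resp.\ $B\setminus A$). Applying Proposition~\ref{prop:epsilondensesubcontinua} with any $\eps\in(0,\delta)$ that is also less than $\diam K/3$ yields a subcontinuum $K'\subseteq\hat{J}_{\s}(F)$ and a continuous injection $\phi\colon K'\to K$ with $\phi(K')$ being $\eps$‑dense in $K$. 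The choice of $\eps$ forces $K'$ to be non‑degenerate, hence a subcontinuum of the pseudo‑arc $\hat{J}_{\s}(F)$ and so itself a pseudo‑arc; as $\phi$ is a continuous injection from a compact space, $P\defeq\phi(K')$ is a pseudo‑arc, in particular hereditarily indecomposable. By $\eps$‑density, $P$ contains a point $p'\in A\setminus B$ and a point $q'\in B\setminus A$, so $U\defeq P\cap\{w:\dist(w,p)<\delta\}$ and $V\defeq P\cap\{w:\dist(w,q)<\delta\}$ are non‑empty open subsets of $P$ with $U\subseteq A\setminus B$, $V\subseteq B\setminus A$; in particular $A\cap P$ and $B\cap P$ are proper closed subsets of $P$.

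Now comes the heart of the argument. Let $Q$ be the component of $A\cap P$ containing $p'$ and $Q'$ the component of $B\cap P$ containing $q'$; these are proper subcontinua of $P$. Applying the Boundary Bumping Theorem~\ref{thm:boundarybumping} inside $P$ to the non‑empty open set $P\setminus A$, whose complement in $P$ is $A\cap P$, shows that $Q$ meets $\partial_P(P\setminus A)$; since $P\setminus A\subseteq B$ and $A\cap P$ is closed in $P$, one checks $\partial_P(P\setminus A)\subseteq A\cap B=C$, so $Q\cap C\neq\emptyset$, and likewise $Q'\cap C\neq\emptyset$. Because $P$ is hereditarily indecomposable and $p'\in Q$ but $p'\notin B\supseteq C$, comparability of the overlapping subcontinua $Q$ and $C$ forces $C\subseteq Q$; symmetrically $C\subseteq Q'$. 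Then $Q\cap Q'\supseteq C\neq\emptyset$, so $Q\subseteq Q'$ or $Q'\subseteq Q$; but the first gives $p'\in Q'\subseteq B$ and the second gives $q'\in Q\subseteq A$, each a contradiction. Hence $K$ is indecomposable, and as $K$ was an arbitrary non‑degenerate subcontinuum, $\CH$ is hereditarily indecomposable; being also arc‑like, it is a pseudo‑arc.

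The main obstacle is precisely this last step, and the reason one cannot shortcut it: the naive assertion that a continuum containing $\eps$‑dense \emph{indecomposable} subcontinua must itself be indecomposable is false, since $A\cap P$ and $B\cap P$ need not be connected. To make the approximation bite one really needs the strictly stronger \emph{hereditary} indecomposability of the approximating pseudo‑arcs $P$ (to compare the pieces $Q$, $Q'$ and $C$ inside $P$) together with hereditary unicoherence of $\CH$ (to know that $A\cap B$ is a subcontinuum in the first place). The remaining parts — verifying the hypotheses of Proposition~\ref{prop:epsilondensesubcontinua} and extracting the separating neighbourhoods — are routine.
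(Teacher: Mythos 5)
Your reductions are sound — all Julia continua are arc-like, the invariant continuum in $T$ is a pseudo-arc by Corollary~\ref{cor:boundedhomeomorphic}, Proposition~\ref{prop:epsilondensesubcontinua} applies, and it suffices to show every nondegenerate subcontinuum $K\subset\CH$ is indecomposable. But the heart of your argument has a gap. You compare the subcontinua $Q$ and $C=A\cap B$ using hereditary indecomposability \emph{of $P$}, yet $C$ is a subcontinuum of $K$, not of $P$; in general $C\not\subset P$, and $C\cap P$ need not be connected. So the comparability principle for $P$ does not apply to the pair $(Q,C)$, and the conclusion $C\subseteq Q$ (likewise $C\subseteq Q'$) does not follow. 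If you repair this by replacing $C$ with a component $D$ of $C\cap P$ meeting $Q$, you do get $D\subset Q$ — but $Q'$ only meets some (possibly different) component $D'$ of $C\cap P$, and nothing forces $Q\cap Q'\neq\emptyset$. The argument does not close.

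The irony is that your closing remark dismisses the very argument that works. You correctly observe that $K$ is hereditarily unicoherent, but you only use this to see that $A\cap B$ is a continuum. Hereditary unicoherence says more: the intersection of \emph{any} two subcontinua of $K$ is connected. Since $P=\phi(K')$ and $A$ are both subcontinua of $K$, the set $P\cap A$ \emph{is} a continuum, and likewise $P\cap B$; for $\eps$ small neither equals $P$, and their union is $P$. This shows $P$ is decomposable — contradicting the mere indecomposability of $P$. So the ``naive assertion'' you label false (that $\eps$-dense indecomposable subcontinua of a hereditarily unicoherent continuum force indecomposability) is in fact true, and it is exactly the lemma the paper proves and applies. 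You do not need hereditary indecomposability of the approximants at all, only indecomposability, and you do not need boundary bumping or the comparability machinery.
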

\begin{proof}
  Let $\CH$ be the 
     invariant Julia continuum of $F$ contained in $T$. Then, by assumption and 
     Corollary~\ref{cor:boundedhomeomorphic}, $\CH$ is a pseudo-arc. 
 Since all Julia continua of $F$ are  arc-like by Proposition \ref{prop:arcliketracts},  it suffices to  show that they
   are hereditarily indecomposable. 

    So let $\Jsh(F)$ be a Julia continuum, and let $K\subset \Jsh(F)$ be a subcontinuum. 
    Let $(m_n)_{n\geq 0}$ be the sequence of integers
    such that the $n$-th entry of $\s$ is given by $T+2\pi  i  m_n$. 
    By Proposition~\ref{prop:epsilondensesubcontinua}, for every $\eps>0$, there is a subcontinuum $K'$ of $\hat{C}$ and 
    a continuous injection $\phi\colon K'\to K$ such that the subcontinuum $\phi(K')$ is $\eps$-dense in $K$. 
   As $\CH$ is hereditarily indecomposable,
     $K'$, and hence $\phi(K')$, is indecomposable.  

   \begin{claim} Let  $K$ be a continuum, and suppose that $K$ is hereditarily unicoherent. (That is, if $A,B\subset K$ are subcontinua, then $A\cap B$ is connected.)
     If $K$ has $\eps$-dense
     indecomposable subcontinua for all $\eps>0$, then  $K$ is indecomposable.
   \end{claim}
   \begin{subproof} 
     We prove the contrapositive, so let 
      $K=A\cup  B$ with $A$, $B$ proper subcontinua. If $\eps$ is sufficiently small, then any
      $\eps$-dense subcontinuum $K_{\eps}$ satisfies   
       $K_{\eps}\not\subset A$ and  $K_{\eps}\not\subset B$. By assumption, 
       $K_{\eps}\cap  A$ and  $K_{\eps}\cap B$ are continua; hence $K_{\eps}$ is decomposable,  as required.
   \end{subproof} 

    No subcontinuum of $K$ separates the plane, so $K$ is hereditarily unicoherent. 
      (This also follows from Theorem~\ref{thm:spanzero}, as every span zero continuum is hereditarily unicoherent.) 
    So $K$ is indecomposable by the claim,  and the proof is complete. 
\end{proof}

   Theorem~\ref{thm:mainarclike} asserts the existence of a disjoint-type entire function $f$ having bounded slope
    such that every arc-like continuum having a terminal point is realised as a Julia continuum of $f$.
    We shall  see 
    in Remark~\ref{rmk:alternativeproof}  that
     $f$ can be  chosen  either to  have a single tract, or to have  two  tracts with
     bounded decorations.  
   The following consequence of Proposition~\ref{prop:epsilondensesubcontinua} shows that this is best possible,
     in that 
    $f$ cannot have
    only a single tract which also has bounded decorations.

\begin{cor}[Julia continua requiring unbounded decorations]\label{cor:unboundedrequired}
  There exists an arc-like continuum $X$ having 
    a terminal point with the following property. Suppose $F\in\BlogP$ has disjoint type, with a single tract up to 
     translations
    from $2\pi i \Z$, and having bounded slope and bounded decorations. Then no Julia  continuum of $F$ is 
    homeomorphic to $X$.
\end{cor}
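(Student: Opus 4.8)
The plan is to find a single arc-like continuum $X$ that can be realised as a Julia continuum by our flexible construction in Section~\ref{sec:arclikeexistence}, but whose internal structure forces any tract through which it is realised to have unbounded decorations. The natural candidate is an arc-like continuum having a terminal point $x$ such that, for arbitrarily large $\eps$, every $\eps$-dense subcontinuum of $X$ containing $x$ already has diameter comparable to $X$ itself — in other words, $X$ cannot be exhausted by proper subcontinua of bounded "size" relative to the rest. More precisely, I would take $X$ to be (or to contain as a carefully placed subcontinuum) something like a Knaster-type or $\sin(1/x)$-type continuum attached at a terminal point, but arranged so that the approximation behaviour forced by Proposition~\ref{prop:epsilondensesubcontinua} is incompatible with the bounded-decorations hypothesis. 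The key contradiction is to be extracted as follows: if $F\in\BlogP$ has a single tract $T$ (up to $2\pi i\Z$) with bounded slope and bounded decorations, then by Corollary~\ref{cor:boundedhomeomorphic} all bounded-address Julia continua of $F$ are homeomorphic to one another; call this common homeomorphism type $\hat{C}$. By Proposition~\ref{prop:epsilondensesubcontinua}, every proper subcontinuum of \emph{any} Julia continuum $\Jsh$ of $F$ containing $\infty$ is homeomorphic to a subcontinuum of $\hat{C}$ containing $\infty$.

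So the strategy reduces to: \textbf{choose $X$ so that $X$ is \emph{not} homeomorphic to any subcontinuum of any fixed arc-like continuum $Y$ having exactly one terminal point, when $X$ itself is required to occur as the ``ambient'' continuum.} Concretely, I would exploit the fact that a proper subcontinuum of $\Jsh$ containing $\infty$ is topologically constrained, yet $X\setminus\{x\}$ (with $x$ its terminal point identified with $\infty$) can be arranged to have subcontinua of \emph{every} homeomorphism type appearing among arc-like continua with a terminal point. Since the class of arc-like continua with a terminal point is uncountable and contains pairwise non-homeomorphic members, but $\hat{C}$ — being a single fixed continuum — has only countably many homeomorphism types of subcontinua realised as ``initial'' pieces, no fixed $\hat{C}$ can absorb all of them. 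Thus I would pick $X$ to be a ``universal'' arc-like continuum with a terminal point, one that contains copies of uncountably many non-homeomorphic arc-like continua arbitrarily close (in the $\eps$-dense sense) to the terminal point; such a continuum exists by standard inverse-limit constructions (e.g.\ amalgamating countably many bonding maps realising different continua, as one does to build continua with prescribed subcontinuum spectra). Then Proposition~\ref{prop:epsilondensesubcontinua} applied to a Julia continuum $\Jsh$ homeomorphic to $X$ would force $\hat{C}$ to contain, up to homeomorphism, all these uncountably many subcontinua as subcontinua through $\infty$ — and since $\infty$ is terminal in $\hat{C}$, nested subcontinua through $\infty$ form a chain ordered by inclusion, whose homeomorphism types are therefore realised along a single monotone family, a contradiction with uncountability of an antichain of pairwise non-homeomorphic types.

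Carrying this out, the steps in order would be: (1) construct $X$ as an inverse limit of arcs with bonding maps chosen so that $X$ has a single terminal point $x$ and so that, for every arc-like continuum $Z$ with a terminal point, $X$ has a subcontinuum homeomorphic to $Z$ sitting $\eps$-densely around $x$ for every $\eps>0$; (2) verify $X$ is arc-like with terminal point $x$ using Proposition~\ref{prop:arclikecharacterization} and the inverse-limit description of terminality; (3) suppose for contradiction that $F$ is as in the statement and $\Jsh$ is a Julia continuum homeomorphic to $X$ via $\psi$ with $\psi(x)=\infty$; (4) invoke Corollary~\ref{cor:boundedhomeomorphic} to fix the type $\hat{C}$; (5) apply Proposition~\ref{prop:epsilondensesubcontinua} to get that every proper subcontinuum of $\Jsh$ through $\infty$ — hence (via $\psi$) a cofinal family of subcontinua of $X$ through $x$ — is homeomorphic to a subcontinuum of $\hat{C}$ through $\infty$; (6) observe these subcontinua of $\hat{C}$ through the terminal point $\infty$ are totally ordered by inclusion, so their homeomorphism types form a well-orderable (indeed monotone) family, which cannot contain an uncountable antichain; (7) conclude, since by (1) the subcontinua of $X$ through $x$ contain an uncountable antichain of pairwise non-homeomorphic arc-like continua, that $\hat{C}$ cannot exist, contradicting (4).

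The main obstacle I anticipate is step~(6): making precise why the homeomorphism types of subcontinua of $\hat{C}$ containing the terminal point $\infty$ cannot contain an uncountable antichain. Terminality gives that any two such subcontinua are nested (one contains the other), so the family $\{K : \infty\in K\subsetneq \hat{C}, K \text{ subcontinuum}\}$ is linearly ordered by $\subseteq$; one then needs that a linearly ordered family of subcontinua of a \emph{fixed} metric continuum, each distinct, realises at most countably many homeomorphism types — this should follow because a strictly increasing chain of subcontinua in a separable metric space has at most countably many ``jumps'', and between consecutive jumps the homeomorphism type is forced to stabilise by a connectedness/Baire argument, while the composant of $\infty$ is separable. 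Pinning down this last point rigorously — essentially an argument that a monotone family of subcontinua of a fixed continuum cannot carry uncountably many homeomorphism types — is where the real work lies, though it is plausible it can be deduced cleanly from the fact that $\hat{C}$ is arc-like and $\infty$ is terminal, so that the composant of $\infty$ is a monotone union of arcs or arc-like pieces whose type is determined by a real parameter.
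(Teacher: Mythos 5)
Your proof has a genuine gap at the step you yourself flag as ``the real work,'' and in fact that step is simply false, which makes the whole approach unsalvageable. Your claimed lemma in step~(6) is that a nested chain of subcontinua of a fixed metric continuum $\hat{C}$ through a terminal point realises at most countably many homeomorphism types. This is not true. The set of subcontinua of $\hat{C}$ containing a terminal point is a compact, totally ordered (by inclusion) subset of the hyperspace and is therefore an arc, parametrised by $t\in[0,1]$, but nothing forces the homeomorphism type to be constant on all but countably many $t$. Indeed, the paper's own proof cites an explicit counter-example: Andrews' arc-like continuum in which \emph{no two} non-degenerate subcontinua are homeomorphic. If $p$ is a terminal point of that continuum, the nested chain of subcontinua through $p$ is uncountable and its members are pairwise non-homeomorphic, directly refuting your lemma. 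Worse, your proposal is internally inconsistent: the continuum $X$ you wish to construct in step~(1) is itself an arc-like continuum with terminal point $x$ whose subcontinua through $x$ realise uncountably many types, so $X$ would also contradict step~(6). Either step~(6) is true and the $X$ of step~(1) cannot exist, or $X$ exists and step~(6) fails; either way there is no contradiction to be extracted.

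The correct mechanism, which your argument misses, is not cardinality but dynamical rigidity. Restricting attention to the invariant Julia continuum $\CH$ (which you correctly identify via Corollary~\ref{cor:boundedhomeomorphic} as determining all bounded-address types), the map $F$ itself acts on $\CH$ with $F(\infty)=\infty$, and for any proper subcontinuum $K'\subsetneq\CH$ containing $\infty$ one has $F(K')\subsetneq K'$ (using that $\infty$ is terminal and that iterates escape uniformly on $K'$ by Proposition~\ref{prop:boundedorbits}). Thus every such $K'$ carries a continuous injection onto a proper subcontinuum of itself. This is the constraint that Proposition~\ref{prop:epsilondensesubcontinua} transports to every proper subcontinuum of $\Jsh$ through $\infty$, and it is what must be contradicted by the choice of $X$. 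One can then take $X$ to be, for instance, the $\sin(1/x)$-continuum glued to its reflection along the limiting interval (which has exactly two terminal points): if $\Jsh\cong X$, then $\infty$ corresponds to one of the two terminal points, the copy of the $\sin(1/x)$-continuum $S$ containing that point gives a proper subcontinuum $K\ni\infty$ homeomorphic to $S$, and the corresponding $K'\subset\CH$ would be a copy of $S$ admitting a non-surjective continuous self-injection, which $S$ does not. Alternatively, one embeds Andrews' continuum as a proper subcontinuum of a slightly larger arc-like continuum whose only terminal points lie inside it, and uses that Andrews' continuum has no non-surjective self-embedding because no two of its non-degenerate subcontinua are homeomorphic.
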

\begin{proof}
  Let $F$ be a function with the stated properties, let $\CH$ be an invariant Julia continuum of $F$, and
     let $\Jsh$ be any Julia  continuum of $F$. Then by the final part of Proposition~\ref{prop:epsilondensesubcontinua},
     every proper subcontinuum $K\subset\Jsh$ with  $\infty\in  K$ is homeomorphic to  a corresponding proper
     subcontinuum
      $K'$ of $\CH$ with $\infty\in K'$. 

    However, there are significant restrictions on the possible topology of  $K'$. E.g., 
     $F^n|_{K'}\to\infty$ uniformly on $K'$ by Proposition~\ref{prop:boundedorbits}. Since $\infty$ is a terminal point,  we have
     $F(K')\subsetneq K'$. But
     there exists an arc-like continuum $X$ such that no two non-degenerate subcontinua of $X$ are 
     homeomorphic to each other \cite{andrewschainable}, and one can see from the proof that this continuum
     has at least one terminal point. So $K'$ cannot be homeomorphic to $X$. On the other hand,  
    $X$ can easily be realised as a proper subset
     of another arc-like continuum having no terminal points except in $X$; this provides the desired example.
 
   For a more elementary construction, observe that $K'$ cannot be homeomorphic to the 
     $\sin(1/x)$-continuum $S$, as any continuous injection $S\to S$ maps the limiting interval bijectively to
     itself. So let $X$ be the union of $S$ with its reflection in 
    its limiting interval, and assume by contradiction  that $\Jsh$ is homeomorphic to $X$. The continuum $X$
    has exactly two terminal points; one of these corresponds to $\infty$ in $\Jsh$. Then there is a 
    proper subcontinuum $K\ni \infty$ homeomorphic to  $S$, and a corresponding subcontinuum $K'\subset \CH$.
    This is a contradiction.
\end{proof}

For completeness, we also make an observation concerning the embedding of the Julia continua considered
    in  Proposition~\ref{prop:homeomorphic}. (This result will not be used
   in the following.) 

\begin{prop}[Ambient homeomorphism]
 The sets $K$ and $A$ in Proposition \ref{prop:homeomorphic} are
  \emph{ambiently} homeomorphic.
   More precisely, the map $\theta$ extends to
  a quasiconformal homeomorphism of $\C$.

 Moreover, as $\s^1\to \s$ (for fixed $\delta$), the maps
  $\theta=\theta_{\s^1}$ converge uniformly to the identity. 
\end{prop}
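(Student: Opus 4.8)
The plan is to realise the ambient homeomorphism by the classical device of pulling back and interpolating quasiconformal maps level by level; this is a non-autonomous version of the construction of quasiconformal conjugacies in \cite{boettcher}, which already produces homeomorphisms of the plane, so what follows is essentially a routine adaptation of that argument. Throughout one uses that, since $F\in\BlogP$ is $2\pi i$-periodic, every $2\pi i\Z$-translate of a tract is again a tract and $F_{T+2\pi i m}^{-1}=F_T^{-1}+2\pi i m$.

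First I would record the interpolation lemma underlying the construction: fix a Lipschitz cut-off $\chi\colon[0,\infty)\to[0,1]$ with $\chi\equiv0$ near $0$, $\chi\equiv1$ on $[1,\infty)$ and $\chi(t)\le t$. Then, for $m\in\Z$ and $r>0$, the map $z\mapsto z+2\pi i m\cdot\chi(\re z/r)$ is a quasiconformal homeomorphism of $\C$ that equals the identity on $\{\re z\le0\}$, equals $z\mapsto z+2\pi i m$ on $\{\re z\ge r\}$, has dilatation bounded solely in terms of the ratio $2\pi|m|/r$, and displaces every point of $\{\re z>0\}$ by a hyperbolic distance bounded solely in terms of $2\pi|m|/r$. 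Combining this with quasiconformal gluing along tract boundaries---and using that pulling a quasiconformal map back through the conformal isomorphisms $F_{T_n}$ and $F_{T_n^1}$ does not change its dilatation---one constructs, by a downward induction made rigorous through the usual exhaustion, quasiconformal self-maps $\Xi_n$ of the range $H$ of $F$, $n\ge0$, such that $\Xi_n$ fixes $\partial H$ pointwise, $\Xi_n|_{\overline{T_n}}=F_{T_n^1}^{-1}\circ\Xi_{n+1}\circ F$ (so $\Xi_n(\overline{T_n})=\overline{T_n^1}$), and $\Xi_n$ is the identity on the remaining tracts of $F$ (modulo the harmless relabelling of the finitely many tracts that coincide with some $T_n^1$).

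The substantive point---and the main obstacle---is that the dilatation of $\Xi_n$ is bounded by a constant $\mathcal K=\mathcal K(\delta)$ \emph{independent of $n$, $\s$ and $\s^1$}. Indeed, the only non-conformal ingredient at level $n$ is the interpolation, performed over the region $\{0\le\re z\le\delta|m_n|\}$, of the translation by $2\pi i m_n$; by hypothesis~\eqref{eqn:Kboundeddistance} one has $2\pi|m_n|\le(2\pi/\delta)\max(1,\re z)$ for every $z\in F^n(\hat{K})$, so the ratio of the translation length to the available width is the constant $2\pi/\delta$, uniformly in $n$, and the interpolation lemma applies with this ratio. The same estimate shows that $\Xi_n$ displaces every point of $\{z\in H:\re z\ge\delta|m_n|\}$ (in particular every point of $F^n(\hat{K})$) by hyperbolic distance at most some $D=D(\delta)$.

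Now set $\Theta\defeq\Xi_0$, extended to $\C$ by the identity off $H$ (legitimate since $\Xi_0$ fixes $\partial H$); this is a quasiconformal homeomorphism of $\C$. The functional equations give $F^n\circ\Theta=\Xi_n\circ F^n$ on the set of points whose first $n$ iterates lie in $T_0,\dots,T_{n-1}$, hence $\dist_H(F^n(\Theta(z)),F^n(z))\le D$ for all $z\in\hat{K}\cap\C$; since moreover $\dist_H(F^n(z),F^n(z)+2\pi i m_n)$ is uniformly bounded by~\eqref{eqn:Kboundeddistance}, the uniqueness part of the shadowing lemma (Lemma~\ref{lem:shadowing}) forces $\Theta(z)$ to coincide with the shadowing point produced in the proof of Proposition~\ref{prop:homeomorphic}; thus $\Theta|_{\hat{K}\cap\C}=\theta$, and as $\Theta$ is a plane homeomorphism carrying $\hat{K}\cap\C$ onto $\hat{A}\cap\C$, passage to the one-point compactification shows that $K$ and $A$ are ambiently homeomorphic. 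For the convergence statement, suppose $\s^1\to\s$ with $\delta$ fixed; given $\eps>0$, choose $N$ so large that pulling any set of bounded hyperbolic diameter back through $N$ tracts yields a set of spherical diameter $<\eps$ (possible by Proposition~\ref{prop:expansion} together with~\eqref{eqn:standardestimate2pi}). Once $\s^1$ is close enough to $\s$ that $m_0=\dots=m_{N-1}=0$, the interpolations at levels $0,\dots,N-1$ are trivial, so $\Theta$ is the identity outside $F_\s^{-N}(\overline{T_N})$ and equals $F_\s^{-N}\circ\Xi_N\circ F_\s^{N}$ there; combining the uniform displacement bound for $\Xi_N$ with the contraction factor $\Lambda^{-N}$ then gives $\dist(\Theta(z),z)<\eps$, so $\Theta_{\s^1}\to\id$ uniformly. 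The genuinely delicate ingredient throughout is precisely the uniform dilatation and displacement control of the $\Xi_n$, which is exactly what~\eqref{eqn:Kboundeddistance} is designed to furnish.
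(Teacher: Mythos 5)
Your proposal takes a genuinely different route from the paper. You attempt a hands-on construction by quasiconformal interpolation and inductive pullback in the spirit of the conjugacy of~\cite{boettcher}, whereas the paper proceeds via holomorphic motions: it embeds the two inverse systems into a one-parameter holomorphic family $f_{j+1}^{\mu}(z) = F_{T_j^1}^{-1}(z) + 2\pi i m_j\mu$, observes (via Observation~\ref{obs:convergingtotheconjugacy}) that the resulting conjugacies $\theta^{\mu}$ depend holomorphically on $\mu$ and hence define a holomorphic motion of $\hat{K}$, and then invokes the Bers--Royden $\lambda$-lemma to obtain the quasiconformal extension. The attraction of the paper's approach is precisely that it never has to produce a global quasiconformal map by hand: the motion lives only on the set $\hat{K}$, where~\eqref{eqn:Kboundeddistance} gives ample room, and Bers--Royden supplies the plane extension for free. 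Your route, by contrast, must build a quasiconformal self-map of all of $H$, and that is where the trouble lies.

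There is a genuine gap in the gluing step. You stipulate $\Xi_n|_{\overline{T_n}} = F_{T_n^1}^{-1}\circ\Xi_{n+1}\circ F$ on \emph{all} of $\overline{T_n}$ and the vertical shear $z\mapsto z + 2\pi i m_n\chi(\re z / r_n)$ with $r_n \asymp \delta|m_n|$ elsewhere. Since $\Xi_{n+1}$ fixes $\partial H$ pointwise and $F_{T_n^1}^{-1}(w) = F_{T_n}^{-1}(w) + 2\pi i m_n$ by $2\pi i$-periodicity, the recursion forces $\Xi_n$ to equal the \emph{full} translation $z\mapsto z + 2\pi i m_n$ on $\partial T_n$, at \emph{every} real part. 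But the shear you glue to on $H\setminus T_n$ equals $z\mapsto z + 2\pi i m_n\chi(\re z / r_n)$, which is not yet the full translation for $\re z < r_n$. The tract $T_n$ is an unbounded Jordan domain whose boundary reaches down to real parts of the order of a fixed constant $r_0$ (the definition of $\Blog$ only requires $\re z$ to be bounded below on $\T$), while $r_n\asymp\delta|m_n|\to\infty$. So on the long stretch of $\partial T_n$ with $r_0 \lesssim \re z < r_n$, the two prescriptions disagree by a vector of modulus up to $2\pi|m_n|$, and the map is not even continuous as written. If instead you force the shear to complete by $\re z = r_0$, the dilatation is of order $|m_n|/r_0$, which is unbounded; the hypothesis~\eqref{eqn:Kboundeddistance} only gives room near $F^n(\hat{K})$, not near the left end of $\partial T_n$. (The remark about ``harmlessly relabelling'' also understates the issue: since $F\in\BlogP$ is $2\pi i$-periodic, $T_n^1$ is \emph{always} a tract, so $\Xi_n$ can never be the identity there.) None of this is fatal to the pullback-and-interpolate philosophy~--- one can impose the recursion only on the part of $T_n$ with large real part, glue to the shear across a second interpolation collar inside $T_n$, and track the dilatation of each collar separately~--- but those gluings are exactly the non-routine content of the proof, and they are not supplied. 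The holomorphic motion argument of the paper circumvents all of this, which is why it is the route chosen there.
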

\begin{remark}
 Here convergence of addresses should be understood in the product topology, where we use the discrete metric on the alphabet. In other words, 
   $\s^1$ is close to $\s$ if the two addresses agree in a long initial sequence of entries. 
\end{remark}
\begin{proof}
   This follows from the $\lambda$-lemma of Bers and Royden \cite[Theorem~1]{bersroyden}. Indeed, we can embed the two inverse systems
    from the proof of Proposition~\ref{prop:homeomorphic} into a holomorphic family 
    $(Y_j, f_{j+1}^{\mu})$, where $Y_j$ is defined as before, $\mu$ belongs to a suitable simply-connected neighbourhood $U$ of both $0$ and $1$, and 
     \[ f_{j+1}^{\mu}(z) \defeq F_{T_j^1}^{-1}(z) + 2\pi i m_n\cdot \mu. \]

    If we assume for simplicity that the range $H$ of $F$ is the right half-plane $\HH$, then we can let $U$ be a sufficiently small neighbourhood of the
     segment $[0,1]$. For each $\mu\in U$, we find a homeomorphism $\theta^{\mu}\colon  \hat{K}\to \hat{A}^{\mu}$, where
     $\theta^{0}=\id$ and $\theta^1=\theta$. Furthermore, the maps $\theta^{\mu}$ are the locally uniform limit of 
     maps that depend holomorphically on $\mu$ (Observation \ref{obs:convergingtotheconjugacy}). Hence these maps are themselves
     holomorphic in $\mu$, and thus define a \emph{holomorphic motion} of the set $\hat{K}$. By the Bers-Royden $\lambda$-lemma, 
     each $\theta^{\mu}$ extends to a quasiconformal homeomorphism. 

 The final claim follows by applying Proposition \ref{prop:homeomorphic} to the addresses $\sigma^n(\s^1)$ and $\sigma^n(\s)$, 
    using~\ref{eqn:thetadistance} and
    the fact that $F$ is expanding. 
\end{proof}

\section{Background on arc-like continua}
\label{sec:topology}
  In this second part of the memoir, we are going to discuss the construction of entire function with prescribed arc-like continua
   in the Julia set. We shall need to collect some further background on arc-like continua (also 
   referred  to as \emph{snake-like} continua, following Bing~\cite{bingsnakelike}). 
   Let us begin by recalling their definition and introducing some additional terminology. 

 \begin{defn}[$\eps$-maps]
 An \emph{$\eps$-map} from a metric space $A$ to a topological space $B$ is a continuous function $g\colon A\to B$ such that $g^{-1}(x)$ has diameter
   less than $\eps$ for every $x\in B$.

 Recall that a continuum $X$ is \emph{arc-like} if, for every $\eps>0$, there exists an $\eps$-map $g$ from $X$ 
   onto an arc.
\end{defn}

 There are a number of equivalent definitions of arc-like continua. The key one for our construction is the following,
   in terms of inverse limits. 

\begin{prop}[Characterisation of arc-like continua with terminal points] \label{prop:arclikecharacterization}
 Let $X$ be a continuum, and let $p\in X$. The following are equivalent.
   \begin{enumerate}[(a)]
      \item $X$ is arc-like and $p$ is terminal; \label{item:arclike}
      \item for every $\eps>0$, there is a surjective $\eps$-map $g\colon X\to [0,1]$ with $g(p)=1$; \label{item:epsmap}
      \item there is a sequence 
        $g_j\colon [0,1]\to [0,1]$ of continuous and surjective functions    
         with $g_j(1)=1$ for all $j$ such that there is a homeomorphism from
        $X$ to $\invlim\bigl((g_j)_{j=1}^{\infty})\bigr)$ that
        maps $p$ to the point $(1\mapsfrom 1 \mapsfrom 1 \mapsfrom \dots)$. \label{item:inverselimit}
   \end{enumerate}
  If any (and hence all) of these properties hold, and $q$ is a second terminal point such that $X$ is irreducible between $p$ and $q$, then
   the maps $g_j$ can be additionally chosen to fix $0$, with the point $q$ corresponding to the point $0\mapsto 0 \mapsto \dots$. Similarly,
   the $\eps$-map $g$ in~\ref{item:epsmap} can be chosen such that $g(q)=0$. 
\end{prop}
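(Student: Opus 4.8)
\textbf{Proof plan for Proposition~\ref{prop:arclikecharacterization}.}

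The plan is to prove the cycle $\ref{item:inverselimit}\Rightarrow\ref{item:epsmap}\Rightarrow\ref{item:arclike}\Rightarrow\ref{item:inverselimit}$, and then handle the ``irreducibility'' addendum at the end. First I would establish $\ref{item:inverselimit}\Rightarrow\ref{item:epsmap}$: given the inverse limit representation $X\cong\invlim(g_j)$, the projection $\pi_n\colon X\to[0,1]$ to the $n$-th coordinate is continuous and surjective, with $\pi_n(p)=1$. Because the bonding maps are uniformly continuous on the compact interval, one shows by a standard argument (uniform continuity of $g_{n,\dots,0}$ and a $\delta$-chain argument, as in \cite[Proposition~2.8]{continuumtheory}) that for any $\eps>0$ one can choose $n$ large enough that $\diam\pi_n^{-1}(t)<\eps$ for all $t$; this uses that the diameter of fibers of $\pi_n$ is controlled by how finely the composed bonding map $g_{n,\dots,0}$ distinguishes points, which shrinks as $n\to\infty$. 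The implication $\ref{item:epsmap}\Rightarrow\ref{item:arclike}$ is immediate, since $[0,1]$ is an arc and we have produced surjective $\eps$-maps onto it for every $\eps$.

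The main work is $\ref{item:arclike}\Rightarrow\ref{item:inverselimit}$. I would use the classical theorem of Mardešić--Segal (or Mioduszewski) that an arc-like continuum is homeomorphic to an inverse limit of arcs with surjective bonding maps; the content here beyond the classical statement is the bookkeeping to make $p$ correspond to $(1\mapsfrom 1\mapsfrom\cdots)$. To do this I would start from a sequence of surjective $\eps_j$-maps $f_j\colon X\to[0,1]$ with $\eps_j\to 0$ and $f_j(p)=1$ (which exist since $X$ is arc-like and $p$ is terminal; the point is that one can always post-compose with a self-homeomorphism of $[0,1]$ to arrange $f_j(p)=1$, because $f_j(p)$ is an endpoint of $f_j(X)=[0,1]$ --- this is exactly where terminality of $p$ enters, via the observation that $p$ terminal implies $f_j(p)\in\{0,1\}$; I will need to justify this, e.g.\ by noting that if $f_j(p)$ were interior, the two subcontinua $f_j^{-1}([0,f_j(p)])$ and $f_j^{-1}([f_j(p),1])$ both contain $p$ yet neither contains the other once $\eps_j$ is small). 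Then, passing to a subsequence and refining so that each $f_{j+1}$ is finer than $f_j$ in the sense needed for the Mardešić--Segal construction, one obtains bonding maps $g_j\colon[0,1]\to[0,1]$ with $g_j\circ f_{j+1}$ uniformly close to $f_j$; the resulting homeomorphism $X\to\invlim(g_j)$ sends $x$ to $(f_1(x)\mapsfrom f_2(x)\mapsfrom\cdots)$ up to the usual limiting adjustment, hence sends $p$ to a point whose every coordinate is $1$, i.e.\ to $(1\mapsfrom 1\mapsfrom\cdots)$; and $g_j(1)=1$ follows since $g_j$ maps the endpoint $f_{j+1}(p)=1$ to $f_j(p)=1$.

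For the final addendum, suppose $q$ is a second terminal point with $X$ irreducible between $p$ and $q$. By the same terminality argument, $f_j(q)$ is an endpoint of $[0,1]$ for every $j$; it cannot equal $1=f_j(p)$ for all large $j$, since otherwise $f_j^{-1}(1)$ would be a sequence of subcontinua containing both $p$ and $q$ with diameters tending to zero, contradicting that $X$ is non-degenerate and irreducible between them (irreducibility forces any subcontinuum containing both $p$ and $q$ to be all of $X$). Hence, after passing to a subsequence, $f_j(q)=0$ for all $j$, and then the bonding maps automatically satisfy $g_j(0)=0$ as above, with $q$ corresponding to $(0\mapsfrom 0\mapsfrom\cdots)$; correspondingly the $\eps$-map $g$ in \ref{item:epsmap} obtained as a projection satisfies $g(q)=0$. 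The main obstacle I anticipate is the careful verification that one may simultaneously normalise all the maps $f_j$ so that $f_j(p)=1$ (and $f_j(q)=0$) \emph{and} retain the refinement/closeness properties required to run the inverse-limit construction; this is where I would be most careful, since normalising by a self-homeomorphism of $[0,1]$ must be done before extracting the subsequence and choosing the bonding maps, and one must check the normalisation does not destroy the $\eps_j$-map property (it does not, since self-homeomorphisms of $[0,1]$ are uniformly continuous with uniformly continuous inverse, so fiber diameters are only distorted by a controlled amount, which can be absorbed by passing to a faster-shrinking subsequence of the $\eps_j$).
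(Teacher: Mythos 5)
Your proposal has two genuine gaps, both stemming from the same subtle point about the relationship between terminal points and $\eps$-maps.

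\textbf{Gap 1: $\ref{item:epsmap}\Rightarrow\ref{item:arclike}$ is not immediate.} You claim this implication is immediate because surjective $\eps$-maps onto $[0,1]$ certify arc-likeness. That handles half of \ref{item:arclike}, but \ref{item:arclike} also asserts that $p$ is terminal, and this requires an argument. The paper proves it by contraposition: if $p$ is not terminal, pick subcontinua $A,B\ni p$ with $A\not\subset B$ and $B\not\subset A$; for $\eps$ small, $g(A)$ and $g(B)$ are closed subintervals of $[0,1]$, both containing $g(p)$, with neither contained in the other, which forces $g(p)$ to lie strictly between $0$ and $1$, so $g(p)\neq 1$. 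Without this (or something like it) your cycle does not close.

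\textbf{Gap 2: terminality does \emph{not} force $f_j(p)\in\{0,1\}$.} Your argument for $\ref{item:arclike}\Rightarrow\ref{item:inverselimit}$ hinges on the claim that any surjective $\eps$-map $f$ on an arc-like $X$ sends a terminal point $p$ to an endpoint of $[0,1]$, so that a self-homeomorphism of the interval can be applied to normalise. This is false. Take $X=[0,1]$, $p=0$ (terminal), and $f(x)=|x-\delta|/(1-\delta)$ for small $\delta>0$: $f$ is continuous and surjective, its fibres have diameter at most $2\delta$, so $f$ is a $2\delta$-map, yet $f(p)=\delta/(1-\delta)\in(0,1)$. Your proposed justification also fails: $f^{-1}([0,t])$ and $f^{-1}([t,1])$ need not be \emph{continua} (in the example, $f^{-1}([f(0),1])=\{0\}\cup[2\delta,1]$ is disconnected), so you cannot invoke terminality against them. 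In short, starting from an arbitrary family of $\eps$-maps and post-composing with interval homeomorphisms cannot produce maps with $f_j(p)=1$; one must build the maps with the terminal point in mind from the outset. The paper does this by invoking Bing's chainability characterisation (an $\eps$-chain with $p$ in the last link), from which an $\eps$-map with $g(p)=1$ can be constructed directly, and by inspecting the construction of \cite[Lemma~12.17]{continuumtheory} to see that the bonding maps fix $1$. The same issue afflicts your treatment of the addendum: the argument that $f_j(q)$ is an endpoint rests on the same false claim, and the statement that $f_j^{-1}(1)$ is a ``subcontinuum'' is unjustified (though, as you note, the diameter bound alone would suffice at that point, \emph{if} $p,q\in f_j^{-1}(1)$ were established).

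Your $\ref{item:inverselimit}\Rightarrow\ref{item:epsmap}$ direction is correct (the mesh of the coordinate projections of an inverse limit of compacta tends to zero), though the reason is compactness together with the product topology rather than ``$g_{n,\dots,0}$ distinguishing points more finely'' --- that composition actually distinguishes points \emph{less} finely as $n$ grows. To repair the proof you should replace the post-composition normalisation with Bing's chain-based characterisation, or otherwise follow the route through the proof of \cite[Theorem~12.19]{continuumtheory} as the paper does.
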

\begin{remark}
  An additional equivalent
   formulation, which  is very intuitive,  is in terms of \emph{chainability}:
    for every $\eps>0$, there is an $\eps$-chain in $X$ that covers $X$ and 
    such that $p$ belongs to the final link of this chain.
    That is, there is a finite sequence
     $U_1,\dots , U_n$ of non-empty open subsets (``links'') of $X$ whose union equals $X$, where
     two links $U_j$ and $U_k$ intersect if and only if $|j-k|=1$, and such that $p\in U_n$ and 
     $\diam(U_j)<\eps$ for all $j$.
     We shall not use chainability; see \cite[Definition 12.8]{continuumtheory} for details. 
\end{remark}
 \begin{proof}
   This result is well-known, but we are  not aware of a reference to  it that includes the 
   statements concerning the terminal points $p$ and   $q$. Without these, 
   the equivalence is proved in \cite[Theorem 12.19]{continuumtheory}. For completeness,
   let us sketch how one may modify that proof to obtain the above version, referring to \cite{continuumtheory} and
   \cite{bingsnakelike} where necessary. 

  First observe that \ref{item:inverselimit} clearly implies
    \ref{item:epsmap}, as we can let $g$ be the projection to the $j$-th coordinate,
    for $j$ sufficiently large.

  Conversely, it follows from the proof of 
    Theorem 12.19 in \cite{continuumtheory} that 
    \ref{item:epsmap} implies \ref{item:inverselimit}. 
    Indeed, that proof 
   constructs a suitable inverse limit, and an inspection of the proof
   of Lemma 12.17 immediately shows that the map $\phi$ defined 
   there, which is used in the construction of the inverse limit,
   satisfies $\phi(1)=1$. (Indeed, with the notation of that proof, we have 
   $t_n = 1$ and $\phi(1)=\phi(t_n)=s_{i(n)}=i(n)/m$.
   Here the definition
   of $i(n)$ ensures that $i(n)=m$, provided that $g_1(p)=g_2(p)=1$.)

  To see that \ref{item:epsmap} implies \ref{item:arclike},
    let us prove the contrapositive. 
    So suppose that $p$ is not terminal; then there are continua $A,B\subset X$ with $p\in A\cap B$, but
    $A\not\subset B$ and $B\not\subset A$. Let $\eps>0$ be so 
    small that some point of $A$ has distance greater than 
    $\eps$ from $B$, and vice versa.
    If $g$ is an $\eps$-map, 
    $g(A)\not\subset g(B)$ and $g(B)\not\subset g(A)$.
    Since $g(A)$ and $g(B)$ are closed subintervals of $[0,1]$, it follows
    that $g(p)\neq 1$, as required. 

  Finally, Bing \cite[Theorem 13]{bingsnakelike} showed that 
    \ref{item:arclike} is equivalent to the statement on chainable continua
    mentioned in the remark after the statement of the theorem. This,
    in turn, is easily seen to imply \ref{item:epsmap}. 

  The final part of the proposition follows analogously.
 \end{proof}

Proposition~\ref{prop:arclikecharacterization} allows us to express an arbitrary arc-like  continuum $Y$
   (having a terminal point) as an inverse limit as in~\ref{item:inverselimit} above.  We then construct
   a function $F\in\Blog$ whose behaviour (more precisely, the behaviour of suitable inverse branches of   
    iterates of  $F$) is designed to mimic that of the  functions  $g_j$. We then use 
    the conjugacy principle from  Proposition~\ref{prop:conjugacyprinciple} to show that $F$ does indeed  have a 
     Julia continuum  
    homeomorphic to $Y$. 
  Moreover, the following observation will allow us to realise \emph{all} arc-like continua with terminal points at once as Julia continua of a \emph{single}
  entire function, as claimed  in  Theorem~\ref{thm:mainarclike}. 

\begin{prop}[Countable generating set]\label{prop:countable}
  There exists a universal 
    countable set $\G$ of surjective continuous functions $g\colon [0,1]\to[0,1]$ with $g(1)=1$ such that
    all maps in Proposition \ref{prop:arclikecharacterization}~\ref{item:inverselimit} can be chosen to belong to $\G$.
    
 Similarly, if $X$ is an arc-like continuum having two terminal points between which $X$ is irreducible, then all maps $g_j$ in the 
    final part of Proposition~\ref{prop:arclikecharacterization} (which are required to fix $0$) can be chosen to belong to $\G$. 
\end{prop}
\begin{proof}
 For general arc-like continua, this is stated in \cite{continuumtheory} (without the assumption that $g(1)=1$).
  The proof in our case is entirely analogous; the set $\G$ consists of all  piecewise linear functions 
   $g\colon[0,1]\to[0,1]$ with $g(1)=1$, where all points of non-linearity are assumed  to be rational,
    and the function takes rational values at rational points. Clearly this set is countable.

  Let $(g_j)_{j\geq 1}$ be any sequence as in
    \ref{prop:arclikecharacterization}~\ref{item:inverselimit}, and consider constants $\gamma_j\geq 1$ ($j\geq 0$)
   as in
   Observation~\ref{obs:obtainingexpandingsystems}, so that the system $(g_j)$ is expanding when
   the $j$-th copy of the interval is equipped with the metric $d_j$ obtained by scaling the Euclidean metric
   by $\gamma_j$. We may assume that all $\gamma_j$ are integers.

   Clearly if we  approximate each $g_j$ closely  enough by a function $\tilde{g}_j\in \G$, then 
     the system $(\tilde{g}_j)$ is also expanding (using the same  metrics $d_j$, although possibly with slightly
     different expansion constants), and the two
    systems will  be pseudo-conjugate in the sense of Definition~\ref{defn:pseudoconjugacy}, with all
     maps  $\psi_j$ given by the identity.  The claim hence follows by Proposition~\ref{prop:conjugacyprinciple}.

  If all maps $g_j$ fix $0$  (as in  the final statement of Proposition~\ref{prop:arclikecharacterization}, then 
    the functions $\tilde{g}_j$ can also be chosen to do so. 
\end{proof}

\section{All arc-like continua with terminal points are Julia continua}
 \label{sec:arclikeexistence}

 The goal of this section is to prove a slightly weakened version of the main part of Theorem \ref{thm:mainarclike}. 

\begin{thm}[Realisation of arc-like continua] \label{thm:arclikeexistenceBlog}
   Let $Y$ be an arc-like continuum containing a terminal point $y_1$.
   Then there exist a disjoint-type function $F\in\BlogP$ with bounded slope and a Julia continuum
    $\Jsh(F)$  that is homeomorphic to $Y$, with $\infty$ corresponding to $y_1$. 

  Moreover, $F$ and $\s$ can be chosen either such that $\re F^k|_{\Js(F)}\to\infty$ uniformly as $k\to\infty$,
    or such that $\liminf_{k\to\infty}\min_{z\in\Js(F)} \re F^k(z) <\infty$. 
\end{thm}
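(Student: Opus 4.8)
The plan is to realise $Y$ as a Julia continuum by feeding its inverse-limit description into the conjugacy principle of Section~\ref{sec:conjugacy}. By Propositions~\ref{prop:arclikecharacterization} and~\ref{prop:countable} I may write $Y\cong\invlim((g_j)_{j\geq 1})$, where every $g_j$ belongs to the fixed countable set $\G$ of surjective piecewise-linear maps $[0,1]\to[0,1]$ fixing $1$, and where the point $y_1$ corresponds to the orbit $(1\mapsfrom 1\mapsfrom\dots)$. (We may assume $Y$ is non-degenerate, as otherwise there is nothing to construct.) Since inserting copies of $\id$ into the sequence $(g_j)$, or composing consecutive bonding maps, does not change the homeomorphism type of the inverse limit, I am free to adjust the sequence $(g_j)$ for convenience; in particular, for the second alternative in the ``moreover'' clause I will arrange that $g_j=\id$ for infinitely many $j$.

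The core of the construction is a disjoint-type $F\in\BlogP$ with bounded slope whose tracts ``implement'' the maps in $\G$. For each $g\in\G$ I build an \emph{anguine} logarithmic tract $T_g$ — hence carrying a position function $\psi_g\colon\overline{T_g}\cup\{\infty\}\to[0,1]$ with $\psi_g^{-1}(1)=\{\infty\}$ and level sets of uniformly bounded hyperbolic diameter — lying in a fixed sector about the positive real axis and squeezed into an imaginary window of width less than $2\pi$, together with a conformal isomorphism $F_g\colon T_g\to H$ onto a fixed $2\pi i$-periodic Jordan domain $H$ containing a right half-plane. Here $T_g$ is an explicit polygonal ``zigzag tube'' whose combinatorics is dictated by the graph of $g$, arranged so that $\psi_g\circ F_g^{-1}$, restricted to any other such tract $\overline{T'}$, agrees with $g\circ\psi_{T'}$ up to a bounded hyperbolic error; this folding property is verified by exhibiting $T_g$, up to a quasiconformal map, as a model in which the folding holds tautologically, using that quasiconformal maps distort the hyperbolic metric by a bounded factor. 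I then let $F$ be the $2\pi i$-periodic map whose tracts are all $2\pi i\Z$-translates of countably many copies of the domains $T_g$, equal to the appropriate $F_g$ on each, with the bases (leftmost points) of these copies tending to $+\infty$ along an enumeration, except that for the second alternative I keep one copy of a simple half-strip tract at a fixed bounded real part $R_0$. A routine verification — the crucial point being to check that each vertical line is met by only finitely many tracts up to $2\pi i\Z$-translation, cf.\ the remark after Definition~\ref{defn:Blog} — then shows that $F\in\BlogP$, that $F$ is of disjoint type, and that $F$ has bounded slope.

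Now fix the external address $\s=T_0T_1T_2\dots$, where $T_j$ is a $2\pi i\Z$-translate of a designated copy of $T_{g_{j+1}}$: for the first alternative I choose these copies so that the base of $T_j$ has real part $R_j\to\infty$, while for the second, whenever $g_{j+1}=\id$ I take $T_j$ to be a fixed translate of the half-strip tract at real part $R_0$. Let $X_j\defeq\overline{T_j}\cup\{\infty\}$, with the (completed) hyperbolic metric of $H$ and bonding maps $f_j\defeq(F_{T_{j-1}}^{-1})|_{X_j}$; by Proposition~\ref{prop:expansion} this is an expanding inverse system, whose inverse limit is $\Jsh(F)$. Equip the $j$-th copy $Y_j$ of $[0,1]$ with the scaled metric $\gamma_j|\cdot|$, the $\gamma_j$ chosen as in Observation~\ref{obs:obtainingexpandingsystems} to make $(Y_j,g_{j+1})$ expanding and, in addition, compatibly with the geometry of the tracts. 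Then the position functions $\psi_j\defeq\psi_{T_j}\colon X_j\to Y_j$ form a pseudo-conjugacy in the sense of Definition~\ref{defn:pseudoconjugacy}: \ref{item:pseudoconjugacy} is the folding property of the $T_g$; \ref{item:pseudosurjectivity} is surjectivity of $\psi_j$; and \ref{item:pseudocontinuity} and \ref{item:pseudoinjectivity} follow from hyperbolic expansion together with the bounded-width property of the tracts and the choice of $\gamma_j$. Proposition~\ref{prop:conjugacyprinciple} then yields a homeomorphism $\theta\colon\Jsh(F)\to Y$, and since every $g_j$ fixes $1$ while $\psi_j(\infty)=1$, Observation~\ref{obs:convergingtotheconjugacy} forces $\theta(\infty)=(1\mapsfrom 1\mapsfrom\dots)=y_1$. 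Finally, for the first alternative $F^j(J_{\s}(F))\subset\overline{T_j}\subset\{\re z\geq R_j\}$, so $\re F^k\to\infty$ uniformly on $\Jsh(F)$; for the second, surjectivity of the coordinate projections $Y\to[0,1]$ together with the closeness estimate~\eqref{eqn:conjugacycloseness} shows that $\psi_j(F^j(J_{\s}(F)))$ comes arbitrarily close to $0$ for those $j$ with $g_{j+1}=\id$, so $F^j(J_{\s}(F))$ reaches within bounded distance of the base of $T_j$, and hence $\liminf_{k\to\infty}\min_{z\in\Jsh(F)}\re F^k(z)\leq R_0<\infty$.

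The main obstacle is the geometric construction in the second paragraph, and above all the reconciliation of the hyperbolic geometry of the model tracts with a single fixed choice of scaled metrics on the interval copies: the $\gamma_j$ must simultaneously make $(Y_j,g_{j+1})$ expanding — which bounds them from below in terms of the earlier ones — \emph{and} make the $\psi_j$ satisfy the ``continuity'' condition~\ref{item:pseudocontinuity}, which bounds them from above in terms of the geometry of $T_{j-1}$. Threading these competing constraints forces one to build the tracts long and thin enough that a bounded hyperbolic displacement produces an arbitrarily small change in the position coordinate, while still controlling the conformal distortion well enough to preserve the folding property; this is where the quasiconformal description of the zigzag tubes and the latitude in the constants $\gamma_j$ are essential, and it is the technical heart of the argument.
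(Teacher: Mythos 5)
Your proposal is the ``multi-tract'' variant of the paper's construction, which the paper itself anticipates in Remark~\ref{rmk:infinitelymanytracts}: instead of a single tract with a sequence of side-channels $U_k$ encoding the maps $g_k$, you build one tract per bonding map and string them together directly in the external address. The overall engine — expressing $Y$ as $\invlim(g_j)$ with $g_j\in\G$, applying the pseudo-conjugacy principle of Proposition~\ref{prop:conjugacyprinciple} via position functions $\psi_j$ that collapse each tract onto $[0,1]$, and reading off $\theta(\infty)=y_1$ from Observation~\ref{obs:convergingtotheconjugacy} — is the same as the paper's. Where you genuinely diverge is in how the non-uniform-escape case is handled: the paper uses an auxiliary interval of transport iterates $0^{N_k}$ and a free parameter $M_k$ controlling the leftmost real part reached at stage $k$; you instead pad the sequence $(g_j)$ with copies of $\id$, keep a tract at a fixed bounded real part for those steps, and deduce via surjectivity of the coordinate projections and~\eqref{eqn:conjugacycloseness} that $\min_{z}\re F^j(z)$ is bounded along the $\id$-steps. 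This is a correct and cleaner idea, and it deserves credit — the paper's Remark~\ref{rmk:infinitelymanytracts} explicitly says the fully-simplified multi-tract address only gives uniform escape, and your padding trick is a legitimate way around that.

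However, the proposal has a genuine gap in the claim that for each $g\in\G$ one can build a \emph{fixed} anguine tract $T_g$ so that ``$\psi_g\circ F_g^{-1}$, restricted to any other such tract $\overline{T'}$, agrees with $g\circ\psi_{T'}$ up to a bounded hyperbolic error.'' This cannot hold for all $T'$ at once. The preimage $F_g^{-1}(\overline{T'})$ is a subdomain of $T_g$ whose entry point $F_g^{-1}(\text{base of }T')$ and whose rate of traversal through the zigzag depend on where in $H$ the tract $T'$ sits; if $T'$ is placed further to the right, the preimage enters $T_g$ higher up the tube and skips the early folds, so $\psi_g\circ F_g^{-1}$ cannot track $g\circ\psi_{T'}$ at the left end. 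In other words, the folding property is not a property of $T_g$ alone but of the pair $(T_{j-1},T_j)$, and the tract $T_{j-1}$ (or at least the placement of $T_j$ and the reparameterisation $\phi_{j-1}$) must be calibrated to its successor, exactly as in the paper's inductive steps I2--I3 (choosing $R_k$, $\theta_k^{\ell}$ and the shape of $U_k$ from the data at stage $k_*$). Compounding this, the error in the pseudo-conjugacy relation must be of size $O(1/\gamma_{j-1})$ in the Euclidean metric, not merely bounded hyperbolically; since $\gamma_{j-1}\to\infty$, the zigzag of $T_{j-1}$ must be refined to match $\Xi_{j-1}$ as the induction proceeds, which again forbids constructing the tracts once and for all. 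You flag this issue as ``the technical heart'' but present the construction as if the tracts $T_g$ pre-exist; the correct formulation must build them (or their placements and reparameterisations) inductively, concurrently with the choices of $\gamma_j$. With those corrections made the argument should go through, and the $\id$-padding mechanism is a genuine simplification of the ``moreover'' clause; but as written, the second paragraph asserts a uniform property of the model tracts that is false, and the inductive calibration that replaces it is substantial.
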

\begin{remark}
  By Theorem \ref{thm:realization}, 
   the existence of such a function in $\BlogP$ automatically yields a function $f\in\B$, and
   even a function $f\in\classS$, having a Julia
    continuum homeomorphic to $Y$. 
\end{remark}

 The idea of the proof of Theorem~\ref{thm:arclikeexistenceBlog}
 is to express $Y$ as the inverse limit of a sequence $(g_k)_{k=1}^{\infty}$ of self-maps of the unit interval
   according to Proposition~\ref{prop:arclikecharacterization}, and to
   construct $F$ in such a way that suitable branches of iterates of $F$ mimic the behaviour of $g_k$. We obtain a homeomorphism between the resulting
   Julia continuum and $Y$ by appealing to the pseudo-conjugacy principle from Proposition~\ref{prop:conjugacyprinciple}. This construction
   provides some additional information concerning the relation between the two systems, which we record for further applications.

\begin{prop}[Properties of  the construction] \label{prop:arclikeexistenceprecise}
    Let $(g_k)_{k=1}^{\infty}$ be a sequence of continuous and surjective functions $g_k\colon[0,1]\to[0,1]$ with $g_k(1)=1$. Set
     $\uarrow{Y}\defeq \invlim(g_k)_{k=1}^{\infty}$ and $y_1\defeq 1\mapsfrom 1\mapsfrom \dots$. Also fix 
       any sequence $(M_k)_{k\geq 0}$  with  $M_k\geq 10$ for  all  $k$.

   Then the function $F$, the address $\s$, and the homeomorphism 
     $h\colon \Jsh(F)\to Y \defeq \uarrow{Y}$ whose existence is asserted in Theorem~\ref{thm:arclikeexistenceBlog} 
    can be  constructed so that there is a strictly increasing sequence
     $(n_k)_{k\geq 0}$ with the following properties for all $z\in \Js(F)$. 
    \begin{enumerate}[(a)]
      \item $\re F^n(z) \geq \re F^{n_k}(z) \geq M_k-1$ for $n_k\leq n < n_{k+1}$.
        (In particular, $z\in I(F)$ if and only if 
                 $\lim_{k\to\infty} \re F^{n_k}(z) = \infty$.)\label{item:largerthanMk}
   \item\label{item:controlonbehaviour}%
     Let $h_k\colon \Jsh\to [0,1]$ denote the $k$-th component of $h$. Then
     \[ h_k(z) = 0 \quad \Longrightarrow\quad \re F^{n_k}(z)\leq M_k + 1\] 
    and
      \[ \liminf_{k\to\infty} h_k(z) >0 \quad\Longrightarrow\quad \re F^{n_k}(z) - M_k \to \infty. \]
   \end{enumerate}
\end{prop}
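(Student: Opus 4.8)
The plan is to prove Proposition~\ref{prop:arclikeexistenceprecise} (together with Theorem~\ref{thm:arclikeexistenceBlog}) by realising $Y$ as the inverse limit of a carefully metrised \emph{expanding} inverse system (Definition~\ref{defn:expandingsystem}), constructing $F$ and $\s$ so that the backward dynamics of $F$ along $\s$ forms a second expanding system pseudo-conjugate to the first, and then reading off $h$ and the estimates~\ref{item:largerthanMk}--\ref{item:controlonbehaviour} from Proposition~\ref{prop:conjugacyprinciple} and its closeness bound~\eqref{eqn:conjugacycloseness}. Concretely, I would first fix the checkpoint times $(n_k)_{k\geq 0}$ and a rapidly growing sequence of scales $(b_n)_{n\geq0}$, put $I_n\defeq[0,b_n]$, choose homeomorphisms $\phi_k\colon[0,\infty)\to[0,\infty)$ that are ``flat near $0$'' (so that $\phi_k^{-1}$ of a bounded neighbourhood of $0$ lies in $[0,1]$), and build a model inverse system whose bonding maps $\beta_{n+1}\colon I_{n+1}\to I_n$ are orientation-preserving homeomorphisms for non-checkpoint indices and an affinely rescaled copy of $\phi_k\circ g_k$ when $n+1=n_k$. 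Since only the checkpoint maps fail to be homeomorphisms, $\invlim(\beta_n)\cong\invlim(g_k)=Y$, with the $n_k$-th coordinate of the model (after division by $b_{n_k}$) corresponding to the $k$-th coordinate of $Y$ and the right-endpoint inverse orbit corresponding to $y_1$; choosing the $b_n$ to grow fast, Observation~\ref{obs:obtainingexpandingsystems} makes the model expanding for any prescribed constants $K,\lambda$, the residual failure of expansion below scale $K$ matching the bounded hyperbolic diameter of the ``left-hand ends'' of tracts from Observation~\ref{obs:pointstotheleft}.

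The technical heart is the construction of $F$ and $\s$. Here I would build a disjoint-type $F\in\BlogP$ of bounded slope with a single tract $T$ up to $2\pi i$-translation, contained in a horizontal strip, whose boundary carries ``decorations'' encoding all of the $g_k$ at growing real-part scales, together with an address $\s=(T+2\pi i m_0)(T+2\pi i m_1)\cdots$ whose integer translates $m_n$ steer the orbit through the feature encoding $g_k$ at checkpoint $n_k$. The tract is assembled in blocks: for $n_k\leq n<n_{k+1}$ the portion of $\overline{T_n}$ relevant to $J_\s(F)$ is a long thin region lying to the right of $\{\re z=M_k-1\}$, with successive pieces nested in real part so that $\re F^n(z)\geq\re F^{n_k}(z)$ throughout the block; this yields~\ref{item:largerthanMk}. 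At a checkpoint index the decoration is prescribed so that, setting $\psi_n(w)\defeq\phi_{k(n)}(\re w-M_{k(n)})$ with $k(n)$ the block containing $n$, the conformal inverse branch $F_{T_{n-1}}^{-1}$, read through $\psi_{n-1}$ on the piece occupied by $\overline{T_n}$, traces out $\beta_n\circ\psi_n$ up to a bounded additive error in the hyperbolic metric of $H$; this is condition~\ref{item:pseudoconjugacy} of Definition~\ref{defn:pseudoconjugacy}. The bounded-slope hypothesis, Observation~\ref{obs:pointstotheleft}, Proposition~\ref{prop:shortpreimages} and the standard estimates~\eqref{eqn:standardestimate}--\eqref{eqn:standardestimate2pi} then supply the remaining pseudo-conjugacy conditions~\ref{item:pseudosurjectivity}--\ref{item:pseudoinjectivity} with constants independent of $n$.

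Granting this, the inverse system $X_n\defeq\overline{T_n}$, $f_{n+1}\defeq F_{T_n}^{-1}$ with the hyperbolic metric of $H$ is expanding by Proposition~\ref{prop:expansion} and pseudo-conjugate to the model via $(\psi_n)$, so Proposition~\ref{prop:conjugacyprinciple} gives a homeomorphism $\theta\colon J_\s(F)\to\invlim(\beta_n)$ with $|\theta_n(z)-\psi_n(F^n(z))|\leq C$ for all $n,z$. Composing with the coordinatewise rescalings onto $Y$ and passing to one-point compactifications — legitimate since $\infty$ is terminal in $\Jsh$ by Theorem~\ref{thm:infty} and corresponds to $y_1$ — produces $h\colon\Jsh\to Y$ with $h(\infty)=y_1$, proving Theorem~\ref{thm:arclikeexistenceBlog}. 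Property~\ref{item:largerthanMk} is built into the block structure. For~\ref{item:controlonbehaviour}, the closeness bound yields $|h_k(z)-\phi_k(\re F^{n_k}(z)-M_k)/b_{n_k}|\leq C/b_{n_k}$; hence $h_k(z)=0$ forces $\re F^{n_k}(z)-M_k$ into a bounded set, which the flatness of $\phi_k$ confines to $[0,1]$, while $\liminf_k h_k(z)>0$ forces $\phi_k(\re F^{n_k}(z)-M_k)\geq\varepsilon b_{n_k}-C$ for large $k$ and therefore $\re F^{n_k}(z)-M_k\geq\phi_k^{-1}(\varepsilon b_{n_k}-C)\to\infty$ as $b_{n_k}\to\infty$. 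Finally, since $(M_k)$ is arbitrary subject to $M_k\geq10$, taking $M_k\to\infty$ or $\liminf_kM_k<\infty$ realises, via~\ref{item:largerthanMk}, the two alternatives in Theorem~\ref{thm:arclikeexistenceBlog}.

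I expect the main obstacle to be precisely the construction of $T$ described in the second paragraph: for an essentially arbitrary continuous surjection $g_k$ of an interval one must produce a conformally correct ``folded'' piece of logarithmic tract — lying in a strip and disjoint from its $2\pi i\mathbb{Z}$-translates — whose inverse branch, viewed through the real-part projections $\psi_n$, mimics $g_k$ up to a bounded error, and this must be done \emph{uniformly in $k$}, i.e.\ with a single pseudo-conjugacy constant. Verifying conditions~\ref{item:pseudocontinuity}--\ref{item:pseudoinjectivity} in this situation is where the harmonic-measure and distortion estimates underlying Proposition~\ref{prop:shortpreimages} will carry most of the weight, and retaining the bounded-slope property while inserting ever more elaborate decorations is the principal bookkeeping difficulty.
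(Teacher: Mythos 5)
Your overall strategy is precisely the one the paper uses: express $Y$ as an inverse limit, make it expanding by scaling the metrics (the student's $b_n$ playing the role of the paper's $\gamma_k$), construct a tract and address so that the backward dynamics of $F$ along $\s$ is an expanding system pseudo-conjugate to the model via real-part projections $\psi_n(w)=\phi_{k}(\re w)$, and invoke Proposition~\ref{prop:conjugacyprinciple} plus the closeness bound~\eqref{eqn:conjugacycloseness}. The only real cosmetic difference is that you index the model system by $n$ with intermediate bonding homeomorphisms, whereas the paper collapses each block $[n_k,n_{k+1})$ into a single bonding map $f_{k+1}(z)=F^{-(N_{k+1}+1)}(z+2\pi i s(k+1))$; these are interchangeable.

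That said, two things fall short of a complete argument. First, as you yourself flag, the entire substance of the result is the explicit inductive construction of the tract (the central strip with ``side channels'' $U_k$, narrowed irises $\chi_k$, the real-part scales $R_k$, the multi-step block lengths $N_k$, the integer translates $s(k)$, and the scale parameters $\widetilde{M}_k$, $\gamma_k$), together with the verification that the resulting $\psi_n$ actually satisfy conditions~\ref{item:pseudoconjugacy}--\ref{item:pseudoinjectivity} of Definition~\ref{defn:pseudoconjugacy} with uniform constants. You describe what these objects should do but give none of the estimates; the paper's Lemma~\ref{lem:pullingback} and Proposition~\ref{prop:pseudoconjugacyconditions} are exactly what is missing, and they do \emph{not} rest on the harmonic-measure machinery of Proposition~\ref{prop:shortpreimages} (which plays no role in this particular construction). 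They rest instead on the elementary expansion bound of Observation~\ref{obs:expansion}, the Carath\'eodory approximation of $F$ by partial-tract maps $F_k$ (Observation~\ref{obs:caratheodoryconvergence}), and the chosen geometry of the side channels $U_k^j$.

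Second, your derivation of the $\liminf$ half of~\ref{item:controlonbehaviour} has a gap. From $\liminf_k h_k(z)\geq\varepsilon>0$ and the closeness bound you get $\phi_k(\re F^{n_k}(z)-M_k)\geq\varepsilon b_{n_k}-C$ for large $k$, and you then write $\re F^{n_k}(z)-M_k\geq\phi_k^{-1}(\varepsilon b_{n_k}-C)\to\infty$. But ``flat near $0$'' only constrains $\phi_k^{-1}$ on a fixed bounded neighbourhood of $0$; it places no control on the growth of $\phi_k$ at the other end. If, say, $\phi_k$ carries $[M_k+1,M_k+2]$ onto $[1,b_{n_k}]$, which your hypotheses permit, then $\phi_k^{-1}(\varepsilon b_{n_k}-C)$ stays bounded by $2$. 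To make this work one must couple the construction of $\phi_k$ to the metric scaling: in the paper $\phi_k$ is a homeomorphism of $[M_k,\widetilde{M}_k]$ onto $[0,1]$ with $\widetilde{M}_k\to\infty$, and the conclusion $\re F^{n_k}(z)-M_k\to\infty$ follows from the contrapositive via the pseudo-continuity estimate of Proposition~\ref{prop:pseudoconjugacyconditions} and the automatic divergence $\gamma_k\to\infty$ (the diameters in a nontrivial expanding inverse system cannot stay bounded). You should either mirror this argument or impose a growth bound on each $\phi_k$ relative to $b_{n_k}$.
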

\begin{remark} 
   This implies the final statement of Theorem~\ref{thm:arclikeexistenceBlog}:
      the iterates of $F$ will tend to infinity uniformly  on $J_{\s}(F)$ if and  only  if 
      $M_k\to\infty$. 
      The proposition also allows us to construct examples where $M_k=10$ for all $k$, yet 
      the set of non-escaping points is empty (leading to a proof of Theorem~\ref{thm:nonuniform}), and examples where the set of non-escaping points is uncountable,
      simply by specifying a suitable inverse limit construction. 
\end{remark}

   To prove the theorem, we construct a suitable simply-connected domain $T$ with $\overline{T}\subset\HH$ which does not
     intersect its $2\pi i\Z$-translates and a 
     conformal isomorphism $F\colon T\to \HH$ such that $F(z)\to \infty$ in $\HH$ only if $z\to\infty$ in $T$. 
     For simplicity, we do not require here that $T$ is a Jordan domain.  
    We observe that $T$ and $F$ naturally give rise to a disjoint-type function as follows. 

 \begin{rmk}[Obtaining a function in $\BlogP$]\label{rmk:obtainingBlog}
   With $T$ and $F$ as above, set $H\defeq \{\re z > \eps\}$, where 
    $\eps>0$ is small enough that $\overline{T}\subset H$,  and consider $\tilde{T}\defeq F^{-1}(H)$. 
    Then the $2\pi i$-periodic extension $\tilde{F}$ of $F|_{\overline{T}}$ is a disjoint-type function  in 
    $\BlogP$  (with range $H$), having a single tract up to translation by $2\pi i\Z$.

   Observe that the tracts of $\tilde{F}$ are all of the form
     $\tilde{T}+2\pi i  m$, where $m\in\Z$. We can hence simplify notation by 
     identifying an  external address 
     \[ \s = (\tilde{T}+ 2\pi i s_0) (\tilde{T} + 2\pi is_1) (\tilde{T}+2\pi i s_2) \dots\] with the sequence
     $s_0 s_1 s_2 \dots$ of integers.
 \end{rmk}

Let us fix the maps $(g_k)_{k\geq 1}$ and the sequence $(M_k)_{k\geq 0}$ from Proposition~\ref{prop:arclikeexistenceprecise} from 
 now on.
We devote the remainder of the section to constructing $T$ and $F$ such that 
   $\tilde{F}$ as in Remark~\ref{rmk:obtainingBlog} satisfies the conclusions of the proposition.

\subsection*{Definition of the tract $T$}
   Our tract $T$ is a subset of the horizontal half-strip  $\{x+iy \colon  x>4, |y|<\pi\}$; 
  in particular, the function $\tilde{F}$ has bounded slope. 
  The tract consists of a central straight half-strip, to which a number of ``side channels'', 
   domains $U_k$, 
   are attached that mimic the behaviour of the maps
   $g_k$; see Figure \ref{fig:tract_definition}. For technical reasons, it will also be convenient to narrow the 
    central strip to a small window, of size $\chi_k>0$, 
   just before the place where $U_k$ is attached.

\begin{figure}
 \begin{center}
   \def\svgwidth{\textwidth}
   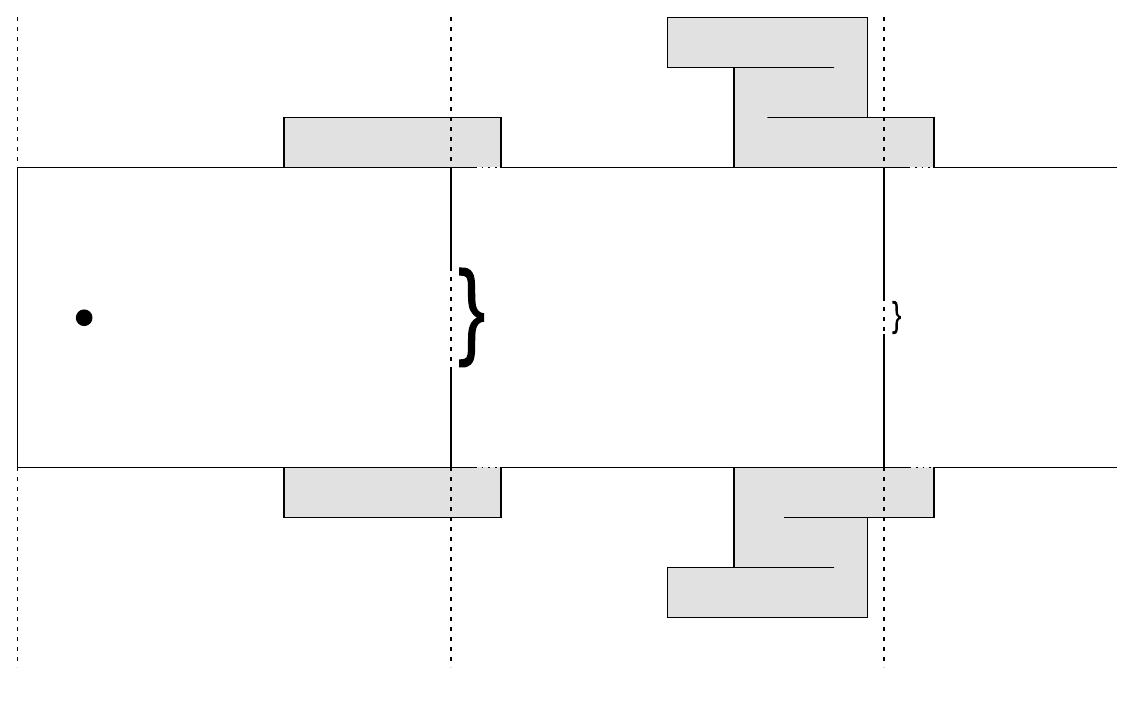
 \end{center}
 \caption{The tract $T$.\label{fig:tract_definition}}
\end{figure}

  More precisely, define
    \[ S \defeq  \{ x + iy\colon  x>4 \text{ and } |y|<\pi/2 \}. \] 
   The tract $T$ depends on sequences
     $(R_j)_{j\geq 1}$, $(\chi_j)_{j\geq 1}$, $(U_j)_{j\geq 1}$ and  $(C_j)_{j\geq 1}$. Here
      $R_{j+1} - 1 > R_j > 5$ and $0<\chi_j\leq\pi$. The $U_j$ are simply-connected
     domains such that 
     \begin{equation}\label{eqn:Ujposition} U_j \subset \{ x + iy \colon  R_{j-1} + 1 < x < R_j+1 \text{ and } \pi > y > \pi/2 \} \end{equation}
    (we use the convention that $R_{0}=4$) and each $C_j$ is an associated  arc 
     \[ C_j \subset \{  x + i\pi/2 \colon  x\in [R_j,R_j+1] \} \cap \partial U_j. \]
  The tract $T$ is defined as
     \begin{align*}
      T \defeq  \left(S \setminus \bigcup_{j\geq 1} \{ R_j + iy\colon  |y|\geq \chi_j/2\}\right) 
        \cup \bigcup_{j\geq 1} 
        \left(U_j \cup \interior(C_j) \cup  \tilde{U}_j \cup \interior(\tilde{C}_j) \right). \end{align*}
   Here $\tilde{U}_j = \{x+iy\colon  x-iy\in U_j\}$ consists of all complex conjugates of points in $U_j$, 
    (likewise for $C_j$), and $\interior(C_j)$ refers to the arc without its endpoints.

  The conformal isomorphism
    $F\colon T\to \HH$
  is determined uniquely by requiring that $F(5)=5$ and $F'(5)>0$. Since
  $T$ is symmetric with respect to the real axis, this implies that 
   $F([5,\infty))=[5,\infty)$. By the expanding property of $F$,
  we have $F(t)>t$ for $t>5$. 

  This completes the description of $T$ and $F$, depending on  the sequences
      $(R_j)$, $(\chi_j)$,  $(U_j)$ and $(C_j)$, which are chosen as part of a recursive
   construction below.
     To carry out and analyse this construction, we need to estimate $F$ on parts of the tract  
     while only having made the first $k$ such choices,  for $k\geq 0$. To this end,  we define 
     the ``partial tract''
     \begin{equation}\label{eqn:partialtract}
       T_k \defeq  \left(S \setminus \bigcup_{j=1}^{k} \{ R_j + iy\colon  |y|\geq \chi_j/2\}\right) 
             \cup \bigcup_{j= 1}^k
          \left(U_j \cup \interior(C_j) \cup  \tilde{U}_j \cup \interior(\tilde{C}_j) \right), \end{equation}
  and let $F_k\colon T_k\to\HH$ be the conformal isomorphism with $F_k(5)=5$ and $F_k'(5)>0$. Note that $T_0=S$. 
  The following observation shows that $F_k$ closely approximates the
  map $F$, provided that
   $R_{k+1}$ is chosen  sufficiently large  (independently of any further choices). 
\begin{obs}[Continuity of the construction]\label{obs:caratheodoryconvergence}
  Let $k\geq 0$, and let $T_k$ be a partial tract as in~\eqref{eqn:partialtract} above. (In other  words,
    fix $R_j$,  $\chi_j$,  $U_j$ and $C_j$ for $1\leq j \leq  k$.) 
  Then for all $N,S\in\N_0$, all $\eps>0$ and  all $M>1$, there is a number $R(T_k,N,\eps,S,M)>0$ with the 
   following property. 

  Suppose that $R_{k+1}$ is chosen such that $R_{k+1}\geq R(T_k,N,\eps,S,M)$, and that 
   the remainder of the sequences $(R_j)$, $(\chi_j)$, $(U_j)$ and $(C_j)$ are chosen subject only to the 
    restrictions mentioned above. 
   Then the resulting tract $T$ and function $F$ satisfy 
    \[ |F^{-n}(z) - F_k^{-n}(z)|\leq \eps
      \]
   for all $n\leq N$ and all $z$ with $1\leq \re z \leq M$ and  $|\im z|\leq (2S+1)\pi$. 
\end{obs}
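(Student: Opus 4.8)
The plan is to deduce this from Carathéodory kernel convergence together with uniform control of the inverse functions on compact subsets of $\HH$. First I would fix $k$ and the data $R_j,\chi_j,U_j,C_j$ for $1\le j\le k$, so that the partial tract $T_k$ and the isomorphism $F_k\colon T_k\to\HH$ (normalized by $F_k(5)=5$, $F_k'(5)>0$) are fixed once and for all. The key point is that, regardless of how the remaining sequences are chosen subject to the stated restrictions, the full tract $T$ always contains $T_k\cap\{\re z<R_{k+1}\}$ and is contained in $T_k\cup\{\re z>R_{k+1}-1\}$; more precisely, $T$ and $T_k$ agree on $\{\re z<R_{k+1}\}$ except possibly near the vertical slit at $\re z=R_{k+1}$ (which is irrelevant for the kernel of the component containing $5$, since narrowing the strip at real part $R_{k+1}$ only affects points of real part $\geq R_{k+1}$, and we will take $R_{k+1}$ large). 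Consequently, as $R_{k+1}\to\infty$ (with all later choices arbitrary), the domains $T$ converge to $T_k$ in the sense of Carathéodory with respect to the base point $5$: every compact subset of $T_k$ is eventually contained in $T$, and conversely any point in $\limsup$ of the $T$'s that can be joined to $5$ inside all but finitely many $T$'s lies in $T_k$.

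By the Carathéodory kernel theorem, the normalized isomorphisms $F^{-1}\colon\HH\to T$ converge to $F_k^{-1}\colon\HH\to T_k$ locally uniformly on $\HH$ as $R_{k+1}\to\infty$. Next I would iterate this. Fix $N$, $S$, $M$, and let $\mathcal{K}\defeq\{z\colon 1\le\re z\le M,\ |\im z|\le(2S+1)\pi\}$, a compact subset of $\HH$. I claim that for each $n\le N$, $F^{-n}\to F_k^{-n}$ uniformly on $\mathcal{K}$; this follows by induction on $n$, using that $F_k^{-n}(\mathcal{K})$ is a compact subset of $T_k\subset\HH$, hence has a compact neighborhood $\mathcal{K}'\subset\HH$, so for $R_{k+1}$ large $F^{-(n)}(\mathcal{K})$ stays within $\mathcal{K}'$ and $F^{-1}$ is uniformly close to $F_k^{-1}$ there; one then applies local uniform continuity of $F_k^{-1}$ on $\mathcal{K}'$ together with the inductive estimate. (Here one uses that each inverse branch $F_T^{-1}$ is a well-defined conformal isomorphism $\HH\to T$ by Definition \ref{defn:Blog}, so $F^{-n}$ on $\mathcal{K}$ is unambiguous.) Choosing $R(T_k,N,\eps,S,M)$ large enough that all finitely many of these uniform approximations (for $n=1,\dots,N$) are within $\eps$ yields the conclusion.

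The main obstacle is making the Carathéodory convergence genuinely \emph{uniform over the choice of the remaining sequences} $(R_j)_{j>k+1}$, $(\chi_j)_{j>k}$, $(U_j)_{j>k}$, $(C_j)_{j>k}$: the kernel theorem as usually stated concerns a fixed sequence of domains, whereas here we need a threshold $R(T_k,\dots)$ that works for \emph{all} admissible continuations simultaneously. This is handled by observing that all these continuations produce domains $T$ that contain the common domain $T_k\cap\{\re z<R_{k+1}\}$ and are contained in $\{\re z<R_{k+1}-1\}^c\cup T_k$, so by monotonicity properties of the hyperbolic metric (or, equivalently, by the Koebe-type distortion bounds underlying the kernel theorem, e.g.\ \cite[\S4]{pommerenke}), $F^{-1}$ and $F_k^{-1}$ differ on $\mathcal{K}$ by an amount that depends only on the hyperbolic distance in $\HH$ from $\mathcal{K}$ to the line $\{\re z=R_{k+1}\}$ — a quantity governed solely by $R_{k+1}$ and $M$, not by the later choices. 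Quantifying this via the standard estimate \eqref{eqn:standardestimate} and the Schwarz lemma for the inclusion $T\hookrightarrow(\text{domain agreeing with }T_k\text{ left of }R_{k+1})$ gives the required uniform bound, and the iteration argument then proceeds as above. I expect the write-up of this uniformity — rather than the kernel convergence itself — to be the only genuinely delicate point; the rest is a routine induction.
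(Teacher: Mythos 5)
Your proposal is essentially the same argument as the paper's: establish Carathéodory kernel convergence $T\to T_k$ (as $R_{k+1}\to\infty$, uniformly over all admissible continuations), deduce that $F^{-1}\to F_k^{-1}$ locally uniformly on $\HH$, and then iterate by induction to control $F^{-n}$. The paper handles the uniformity-over-continuations issue you flag as ``the only genuinely delicate point'' in one line: a compact connected set $A$ containing the base point with $\max_A \re z \leq R_{k+1}$ satisfies $A\subset T$ if and only if $A\subset T_k$, because $A$ cannot cross the arcs $C_j,\tilde C_j$ with $j\geq k+1$; this is precisely the kernel criterion and it manifestly does not depend on the later choices, so no appeal to hyperbolic monotonicity or Koebe distortion is needed. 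One small imprecision in your write-up: it is not true that $T$ and $T_k$ agree on $\{\re z<R_{k+1}\}$ --- the decoration $U_{k+1}$ lies at real parts between $R_k+1$ and $R_{k+1}+1$, so part of it sits to the left of $R_{k+1}$ --- but this is harmless because $U_{k+1}$ is reachable from the base point only through $C_{k+1}$ at real part $\geq R_{k+1}$, so the kernel (the component accessible from $5$ by compact connected sets at bounded real part) is unaffected. This is exactly the role of the paper's connectedness remark, which you implicitly use but do not quite state.
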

\begin{proof}
   Having fixed $T_k$, we obtain a tract $T$ for any allowed choice of $(R_j)_{j\geq k+1}$,
    $(\chi_j)_{j>k}$, $(U_j)_{j>k}$ and $(C_j)_{j>k}$.
    As $R_{k+1}\to\infty$, these tracts $T$ converge to $T_k$ with respect to 
     Carath\'eodory kernel convergence, regardless of the remaining choices. 
     (See \cite[Theorem~1.8]{pommerenke} for the definition of kernel convergence.) 
     Indeed, let $A\ni 5$ be compact and connected. Suppose 
     $R_{k+1}\geq \max_{z\in A}\re z$. Then  clearly $A\subset T_k$ if and only if $A\subset  T$, as
    $A$ cannot cross the arcs $C_j$ or  $\tilde{C}_j$ for  $j\geq k+1$.

   Recall that $F$ is a conformal isomorphism from $T$ to $\HH$ with
     $F(5)=5$ and $F'(5)>0$; the same holds for $F_k$ and $T_k$. 
    By the kernel convergence theorem, as $R_{k+1}\to\infty$, we have 
     $F^{-1}\to F_k^{-1}$, uniformly on compact subsets of $\HH$.
     This proves the claim for $N=1$; 
     the general case follows by induction.
\end{proof}

\begin{obs}[Expansion of the maps $F$ and $F_k$] \label{obs:expansion}
 Let $T\ni 5$ be a logarithmic tract with $\re z > 4$ and $\lvert \im z\rvert < \pi$ 
  for all $z\in T$, and let
  $G\colon \T\to \HH$ be a conformal isomorphism with 
  $G(5)=5$ and $G(\infty)=\infty$. Then $|G'(z)|\geq \re G(z)/2$ for all $z\in T$.

  In particular, $|G'(z)|\geq 2$ whenever $\re G(z)\geq 4$. 
    In addition, if $z\in T$ with $G(z)\in \overline{T}$ and $\re z \geq 9$, then $\re G(z) > \re z$. 
\end{obs}
\begin{proof}
  The first claim (and hence the second claim) 
   holds for any $G\in\Blog$ with range $H=\HH$; 
   see~\cite[Lemma~2.1]{EremenkoLyubichconstant}. In our case, where 
    $T \subset \tilde{S}=\{a+ib\colon  |b|<\pi\}$, we may establish it directly
    by estimating the density of the hyperbolic metric of $T(G)$ from below
    by that of $\tilde{S}$, which is bounded from below by $1/2$; 
    compare~\cite[Lemma~3.3]{pseudoarcs}. 

  To prove the final claim, suppose that $\re z\geq 9$ and $G(z) \in \overline{T}$. 
    Let  $\gamma$ be a straight line segment connecting 
    $G(z)$ and $5$, and let $\tilde{\gamma}\defeq F^{-1}\circ\gamma$. Then,
    by the first claim and the assumptions on $T$ and $F$, 
   \begin{align*} |\re G(z) - 5| &\geq |G(z)-5| - \pi = \ell(\gamma) - \pi \geq
      2\ell(\tilde{\gamma}) -  \pi \\ &\geq 2|z-5| - \pi \geq 2(\re z - 5) - \pi > (\re z - 5) + 4 - \pi >\re z -  5. \end{align*}
    Since $\re G(z)\geq 4$, we  hence have $\re G(z) >\re z$, as  claimed.
\end{proof}

\subsection*{Overview of the recursive construction}
From now on, we fix sequences of continuous and surjective 
  functions $g_{k}\colon [0,1]\to[0,1]$ ($k\geq 1$) and numbers 
  $M_k\geq 10$ ($k\geq 0$), as in Proposition~\ref{prop:arclikeexistenceprecise}. 
    We also fix a number $k_* \in\{0,\dots, k-1\}$ for each $k\geq 1$. For the 
     purpose of
      the current section,
     we set
     \begin{equation}\label{eqn:kstar} k_*\defeq k-1. \end{equation}
  \begin{remark}
   In Section~\ref{sec:allinone}, we use the same construction, but
   with a different choice of ``predecessor'' $k_*$. With this
   modification, we are able to realise not just the single Julia
   continuum $\uarrow{Y}=\invlim ([0,1],g_{k+1})_{k=0}^{\infty}$, but the
   inverse limit of any system $([0,1],g_{k_j})_{j=0}^{\infty}$, where
   $(k_{j+1})_* = k_j$ for all $j\geq 0$, and $(k_0)_* = 0$. 
   (There are uncountably many 
    such sequences for general choices of $k\mapsto k_*$, although there
     is only one for~\eqref{eqn:kstar}.) 
   We shall use this fact to 
   realise all continua $Y$ as in Theorem~\ref{thm:arclikeexistenceBlog} as Julia continua of a
   \emph{single} function, as claimed in Theorem~\ref{thm:mainarclike}. In this section,    
  where we realise a \emph{single} Julia continuum, 
   setting~\eqref{eqn:kstar} suffices. To facilitate the discussion in 
  Section~\ref{sec:allinone}, we nonetheless describe the construction of $F$ and $T$,
  and the description of their properties, for a more general choice of $k\mapsto k_*$. 
  \end{remark}

  The address $\s$ in Proposition~\ref{prop:arclikeexistenceprecise} will be of the form 
       \begin{equation}\label{eqn:formofs} \s = s(0) 0^{N_1}\, s(1)\, 0^{N_2}\, s(2)\, 0^{N_3}\, s(3) \dots, \end{equation}
   where $(N_k)_{k=1}^{\infty}$ is a (rapidly increasing) sequence of non-negative integers, and $(s(k))_{k=0}^{\infty}$ is another sequence of non-negative
   integers. The purpose of the 
   block of zeros of length $N_k$ is mainly to move the Julia continuum sufficiently far to the right to reach the
   side channel $U_k$; for $k\geq 1$,  
   the integer $s(k)$ is chosen to ensure that the continuum does indeed enter $U_k$
   at this point. The numbers $n_k$ that appear in Proposition~\ref{prop:arclikeexistenceprecise} are then given by
    \begin{equation}\label{eqn:nk}
      n_k \defeq k + \sum_{j=1}^k N_j.
    \end{equation}  
(That is, $\sigma^{n_k}(\s)$ is the part of $\s$ from the entry $s(k)$ onwards.)

  Our goal is to
    apply the conjugacy principle from Proposition~\ref{prop:conjugacyprinciple} to
    two inverse systems.
   The first system $(X_k,f_{k+1})_{k=0}^{\infty}$ is given by $X_k=\hat{T}\defeq \overline{T}\cup\{\infty\}$ for all $k$ and
       \begin{equation}\label{eqn:defnfk}
     f_k\colon \hat{T}\to \hat{T};\qquad z\mapsto F^{-(N_k+1)}(z+2\pi i s(k)) \end{equation}
     for $k\geq 1$. 
     Note that $\uarrow{X}\defeq \invlim(X_k,f_{k+1})$ is homeomorphic to  the  Julia continuum $\Jsh(\tilde{F})$.  
     We shall endow each $X_k$ with  a metric that
      agrees with the Euclidean metric, except near infinity and on the left side  of  the
      tract; see~\eqref{eqn:dXk}. This modification is done  to ensure that the 
      system is expanding, so  that we can indeed apply 
     Proposition~\ref{prop:conjugacyprinciple}.

The second system $(Y_k,g_k)_{k=0}^{\infty}$ 
    has $Y_k = [0,1]$ for all $k$ and consists of the functions 
    $g_k\colon [0,1]\to [0,1]$ fixed 
     above.  Each copy 
     $Y_k$ of the interval $[0,1]$ 
     is endowed with a ``blown-up'' version $d_{Y_k}$ of the Euclidean metric 
      in the sense of
      Observation~\ref{obs:obtainingexpandingsystems}, again to ensure expansion.

 The construction of the tract proceeds recursively, along with the construction of a number of additional objects:
  \begin{enumerate}[(a)]
    \item \label{item:sandNk}
         The sequences $(s(k))_{k\geq 0}$ and $(N_k)_{k\geq 1}$ of non-negative integers
           determine the address $\s$ as in~\eqref{eqn:formofs}. Recall 
      from~\eqref{eqn:nk} that this also determines $n_k$.
    \item A sequence of natural numbers $\gamma_k\in\N$, $k\geq 0$ is used to define the metric $d_{Y_k}$, used in the system $(g_k)$.
    More precisely, $d_{Y_k}$ is defined by scaling up the Euclidean metric by
        a factor of $\gamma_k$. These numbers are chosen so large that they satisfy the hypotheses of 
        Observation~\ref{obs:obtainingexpandingsystems}. 
    \item\label{item:Xi}%
     We also define $\Xi_k \defeq \{0 = \xi_k^0 < \xi_k^1<\dots<\xi_k^{\gamma_k}=1\}$, where 
       $\xi^j_k \defeq  j/\gamma_k$ for $0\leq j \leq \gamma_k$. The purpose of $\Xi_k$
       is to provide  a sufficiently fine partition of the range of $g_k$. 
    \item For each  $k\geq 1$, a finite subset  $\Omega_k = \{0=\omega^0_k < \omega^1_k < \dots < \omega^{m_k}_k = 1 \}\subset [0,1]$  will  be defined, 
      to be used as a partition of
       the domain of $g_k$. 
    \item A surjective, continuous and non-decreasing map $\phi_k\colon [0,\infty]\to [0,1]$, 
     for every $k\geq 0$, relates the dynamics of the two inverse systems $(f_k)$ and $(g_k)$. This function has the property 
          that 
    \begin{equation}\label{eqn:phiconditions}
       \phi_k|_{[0,M_k]}=0, \qquad \phi_k|_{[\widetilde{M}_k,\infty]} = 1,
           \qquad \text{and}\qquad 
          \text{$\phi_k|_{[M_k,\widetilde{M}_k]}$ is a 
         homeomorphism.} 
         \end{equation}
    Here  $M_k\geq 10$ is the number from the hypothesis of 
          Proposition~\ref{prop:arclikeexistenceprecise}, 
            and $\widetilde{M}_k > M_k$ is a number chosen during the construction.
    In a slight abuse of notation, we use $\phi_k^{-1}$ to denote 
     the inverse of the (bijective) restriction $\phi_k|_{[M_k,\widetilde{M}_k]}$.

    \item\label{item:psidefinition}   We also define  $\psi_k\colon \hat{T}\to [0,1]$ by $\psi_k(z)\defeq \phi_k(\re z)$, using the convention
        that $\re \infty = \infty$. These  maps form our pseudo-conjugacy for use with 
        Proposition~\ref{prop:conjugacyprinciple}.  
  \end{enumerate}

The idea of the construction is summarised in Figure \ref{fig:tract_construction}: the map $\psi_{k_*}$ provides an  
   identification between the tract $T$
   and the domain of the map $g_{k_*}$, or equivalently the range of the map $g_{k}$. The notation within the figure
   will become clear in the detailed definitions; the reader may find it useful to keep the figure to hand while
   reading about the construction. 
   
     As the technical details of the inductive step of the construction are somewhat involved, let us first 
     set out its overall structure, and explain the motivation  for the choices made.
   We begin by defining $s(0)$, $\widetilde{M}_0$, $\phi_0$ and $\gamma_0$. Then suppose that $k\geq 1$, and that
    all relevant objects have been defined up to stage $k-1$. 

 \begin{enumerate}
  \item[I1.] We choose the set $\Omega_k$ such that the 
    complementary intervals of $\Omega_k$ are small enough; see ~\ref{I1item:Omegaimage}. This is done  so that
    the information of how these 
    intervals are mapped over the set $\Xi_{k_*}$ encodes the essential information about 
    $g_k$. 
  \item[I2.] We define $N_k$ and $R_k$ sufficiently large.
   Recall  from  Proposition~\ref{prop:arclikeexistenceprecise} that the $n_{k_*}$-th iterate of the
     Julia continuum will begin near real part $M_{k_*}$. Moreover, from the definition of
      the map $\psi_{k_*}$, the essential information about the image of the map $f_k$
       concerns 
      points with real parts in the interval $[M_{k_*},\widetilde{M}_{k_*}]$ (as the other parts of the tract
      are collapsed by $\psi_{k_*}$). Recall that $f_k$ was defined in~\ref{eqn:defnfk}.

   Our  choice of  $N_k$ is made to ensure that $F^{N_k}$ maps this interval to real  parts
     larger than $R_{k-1}$, i.e. to the right of  all 
     decorations already constructed at this stage; see~\eqref{eqn:Nkright}. 
     (In the actual construction, the final map $F$ is not yet defined, so we instead use 
      $F_{k-1}$ in~\eqref{eqn:Nkright}; the same applies for the remainder of this informal description.) 

 Choosing $N_k$ large enough also
     ensures (using Observation \ref{obs:expansion}) that 
     $F^{-N_k}$ is strongly contracting~\eqref{eqn:Fkexpansion}.

    Once $N_k$ is fixed, the number $R_k$ is chosen larger than 
      $F^{N_k}(\widetilde{M}_{k_*})$ (see~\ref{I2item:Rksize}), 
     which is necessary to be able to ensure that the interval 
     $F^{N_k}([M_{k_*},\widetilde{M}_{k_*}])$ lies below  the domain $U_k$ (see Figure~\ref{fig:tract_construction}).

    Furthermore, if $R_k$ is large enough, then the final function  
      $F$ will be close enough to
      $F_{k-1}$ as to not disturb any of the previous steps of the 
      construction; see~\ref{I2item:closeness}.
  \item[I3.] This is the key step in the construction, where we define the domain $U_k$.
     This domain is chosen to follow the same structure as the map $g_k$: the real parts of $U_k$ run
     across the interval $F^{N_k}([M_{k_*},\widetilde{M}_{k_*}])$ in the same way 
     as the graph of $g_k$ runs over the interval $[0,1]$. (Here
    the identification between the
     two intervals given by $\phi_{k_*}\circ F^{-N_k}$). 

 More precisely, $U_k$ is a chain of $m_k$ Jordan quadrilaterals, one for each complementary interval of
   $\Omega_k$, placed at the appropriate real parts (see~\ref{I3item:sizeofUk}). In~\ref{I3item:Cm} we also choose 
   the arc $C_k$, along with additional arcs connecting the quadrilaterals \ref{I3item:Cj}. These 
   arcs are taken sufficiently short to ensure good control  over geodesics of $T_k$ running through the
    domain $U_k$; see~\ref{I3item:Ukhyperbolicdistance} and~\ref{I3item:Ukrealparts}. 
 We remark that the additional arcs are introduced 
    primarily for convenience in the analysis. 
  \item[I4.] We fix the opening size $\chi_k$, which completes the choice of the partial
    tract $T_k$ and the function $F_k$. This means 
    that in the remaining
    steps, we may use $F_k$  as an approximation of the limiting function $F$, 
    instead of $F_{k-1}$. 

  Fixing $\chi_k$ small enough ensures that we can choose the entry
    $s(k)$ 
     in such a way that the curve $F^{-1}(x+2\pi i s(k))$, for $x\geq M_k$,
      runs through the entire decoration $U_k$; see~\ref{I4item:endpoint} and~\ref{I4item:endpiece}. 

      If $\chi_k$ is sufficiently small, then furthermore 
        the curve does not enter the part of the central strip  $S$
        that is to the left of real part $R_{k}$~\ref{I4item:wholegeodesic}. 
      (In other words, the picture does indeed look 
      as sketched at the bottom of Figure~\ref{fig:tract_construction}.) 
  \item[I5.] We pick a sufficiently large number  $\widetilde{M}_k$,       
     see~\eqref{eqn:Mtilde}. The key here is that 
     the pull-back of $[\widetilde{M}_k,\infty)+2\pi i s(k)$ under $F^{n_k}$,
    along the address
     $\s$, should still have large real parts. In the limit, this means that 
      no orbit of $F$ with address $\s$ has real part 
      to the right of $\widetilde{M}_k$ at every time $n_k$ and 
       ensures that
       the inverse system $(f_k)$ is expanding. 

   We can then define 
    $\phi_k$ according to Figure \ref{fig:tract_construction}.
    That is, if $z$ belongs to one of the constituent quadrilaterals of $U_k$, and $F(z)\in T+2\pi i s(k)$, then 
     $\phi(\re z)$ should belong to the corresponding complementary interval of $\Omega_k$, or possibly to one of the adjacent intervals. 
    This allows us to establish
      the key pseudo-conjugacy condition (see Lemma~\ref{lem:pullingback}). 

  Finally, we choose $\gamma_k$ sufficiently large according to Observation~\ref{obs:obtainingexpandingsystems},
    so that the system $(g_k)$ is expanding with respect to the
    metrics $d_{Y_k}$~\ref{I5item:expansion}, and so large that
    preimages of intervals of size $1/\gamma_k$ under $\phi_k$ have bounded
    Euclidean diameter~\ref{I5item:smallgaps}. This is needed to verify
    condition~\ref{item:pseudoinjectivity} of Definition~\ref{defn:pseudoconjugacy}. 

  With this, the recursive definition will be complete. (Recall that
   the choice of $\gamma_k$ also determines the partition $\Xi_k$ of the range of
  $g_k$ into intervals of size $1/\gamma_k$.) 
 \end{enumerate}

\begin{figure}
 \begin{center}
   \def\svgwidth{.9\textwidth}
   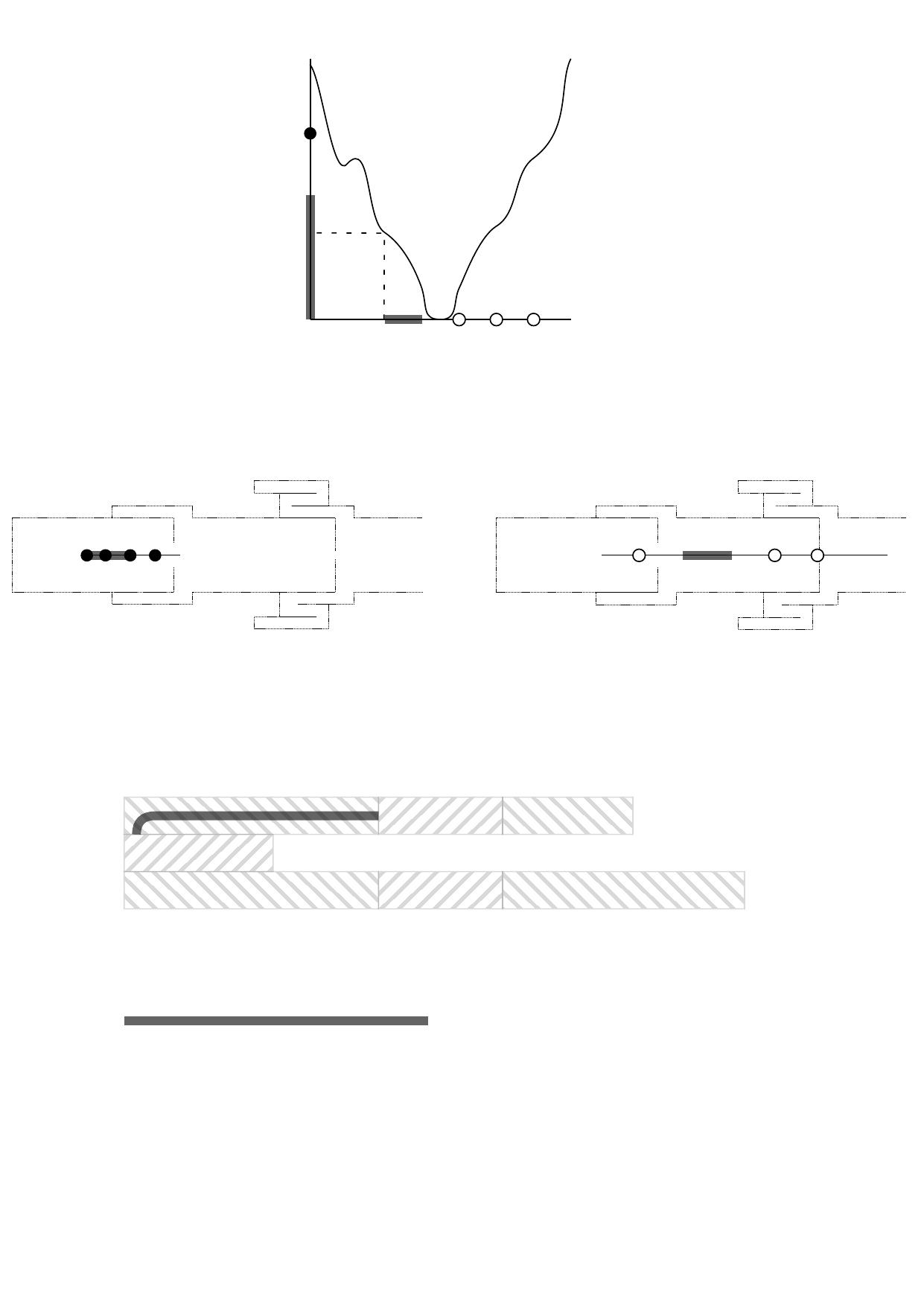
 \end{center}
 \caption{Construction of the domain $U_k$. The shaded intervals at the top of the figure are $[\omega_k^2,\omega_k^3]$ (in the domain
    of $g_k$) and $I_k^3$ (in the range).\label{fig:tract_construction}}
\end{figure}

\subsection*{Details of the recursive construction} 
 To anchor the recursion, define $s(0)\defeq 0$, 
   $\widetilde{M}_0 \defeq M_0 +1$, $\phi_0(x)\defeq x -  M_0$ for $x\in  [M_0,\widetilde{M}_0]$ 
     and according to~\eqref{eqn:phiconditions} elsewhere, 
    and $\gamma_0 \defeq 5$, which also fixes $\Xi_0$ according to~\ref{item:Xi} above. 
    For the  recursive step of the construction,  fix $k\geq 1$ such that all  relevant objects
    have been defined up to stage $k-1$. Recall that $k_*\leq k-1$ (and in fact $k_*=k-1$ for the purpose of this section).

\begininductiveconstruction
\textbf{I1.}
 We choose 
   $\Omega_k = \{0=\omega^0_k < \omega^1_k < \dots < \omega^{m_k}_k = 1 \}\subset [0,1]$ 
   such that, for each $j$, 
 \begin{enuminduction}
    \item the image $g_k([\omega_k^{j-1},\omega_k^j])$ contains at most one point of $\Xi_{k_*}\setminus\{0,1\}$.\label{I1item:Omegaimage}
 \end{enuminduction}
 
\nextinductivestep\textbf{I2.} 
 The next step is to choose the numbers $N_k$ and $R_k$. Define 
     \begin{equinduction}\label{eqn:alpha}
        \alpha_{k_*} \defeq \min\bigl (1, \min\bigl \{|x-y|\colon  |\phi_{k_*}(x)-\phi_{k_*}(y)|\geq \frac{1}{2\gamma_{k_*}}\bigr\}\bigr) > 0.
     \end{equinduction}

  Now we choose $N_k$
   sufficiently large that 
     \begin{aligninduction}
       F_{k-1}^{N_k}(M_{k_*})&> R_{k-1} + 1 \qquad\text{and}\label{eqn:Nkright}\\
       N_k &> \frac{\log(12)-\log(\alpha_{k_*})}{\log 2} > 3.\label{eqn:Nk}
     \end{aligninduction} 
  By Observation~\ref{obs:expansion}, condition~\eqref{eqn:Nk} implies 
\begin{equinduction}\label{eqn:Fkexpansion}
  \bigl\lvert \bigl( F^{-N_k}\bigr)'(z)\rvert \leq 2^{-N_k} \leq
       \frac{\alpha_{k_*}}{12}
\end{equinduction}
   whenever $\re z\geq 4$; in particular, 
 \begin{equinduction}\label{eqn:fkexpansion}
  \lvert f_k'(z)\rvert \leq \frac{1}{12}
 \end{equinduction}
 for all $z\in \overline{T}$. 
  We also choose $R_k > R_{k-1}+1$ so large  that
  \begin{enuminduction}
   \item $R_k \geq F_{k-1}^{N_k}(\widetilde{M}_{k_*}+1)$ and \label{I2item:Rksize}
    \item $R_k \geq R(T_{k-1} , N ,  \eps , S , M)$ according to Observation~\ref{obs:caratheodoryconvergence}.  Here we  take
                  $N=n_k$, $\eps = \alpha_{k_*}/2$ , $S=\max_{j< k} s(j)$ and 
                  $M = \max(\widetilde{M}_{k-1},F_{k-1}^{N_k}(\widetilde{M}_{k_*})+10)$.\label{I2item:closeness}
 \end{enuminduction}

\nextinductivestep\textbf{I3.} Now we define the domain $U_k$ and the arc $C_k$. 
 For $0\leq \ell \leq \gamma_{k_*}$, let
   \begin{equinduction}\label{eqn:thetadefn} 
   \theta^{\ell}_{k} \defeq  F_{k-1}^{N_k}(\phi_{k_*}^{-1}(\xi_{k_*}^{\ell})). \end{equinduction}
   Note that $\theta_k^{\ell} \geq \theta_k^0 = F_{k-1}^{N_k}(M_{k_*}) \geq R_{k-1} + 1$ for all $\ell$, by~\eqref{eqn:Nkright}. 
   As mentioned above, $U_k$ is the union of a sequence of Jordan quadrilaterals $(U_k^j)_{j=1}^{m_k}$, 
    with one quadrilateral (namely $U_k^j$) associated  
     to each complementary interval $[\omega_k^{j-1},\omega_k^{j}]$ of $\Omega_k$. 
     We  also construct pairwise disjoint arcs  
     $(C_k^j)_{j=0}^{m_k}$, 
    with $C_k^j$ joining $U_k^{j}$ and  $U_k^{j+1}$ for $1\leq j\leq  m_k-1$. 
    More precisely,
 \begin{enuminduction}
   \item $C_k^{j-1}\cup C_k^{j}\subset \partial U_k^j$ for all $j$, and
     $\interior(C_k^i)\cap \partial U_k^j =\emptyset$ for $i\neq j-1,j$; and \label{I3item:Cj}
   \item $C_k\defeq  C_k^{m_k}\subset \{x + i\pi/2\colon  R_k<x<R_k+1\}$.  \label{I3item:Cm}
 \end{enuminduction}
 Once these quadrilaterals and arcs are chosen, the domain $U_k$ is given by 
  \begin{equinduction}\label{eqn:Ukdefn}
     U_k \defeq  \bigcup_{j=1}^{m_k} U_k^j \cup \bigcup_{j=1}^{m_k-1} \interior(C_k^j). 
  \end{equinduction}

 For $j=1,\dots,m_k$, let $I_k^j\subset[0,1]$ be the smallest closed interval bounded by two points of
   $\Xi_{k_*}$ whose relative interior in $[0,1]$ contains $g_k([\omega_k^{j-1},\omega_k^{j}])$. (See the top of Figure~\ref{fig:tract_construction}.) Recall 
     from~\ref{I1item:Omegaimage} that $I_k^j$ has length at most $2/\gamma_{k_*}$, and note that $I_k^j$ and $I_k^{j+1}$ 
     overlap by an interval of length at least $1/\gamma_{k_*}$. Also observe  that
     $I_k^{m_k} \supset [\xi_{k_*}^{\gamma_{k_*}-1} ,  1]$. 
     Define $\tilde{I}_k^j \defeq  F_{k-1}^{N_k}(\phi_{k_*}^{-1}( I_k^j)\cap [M_{k_*},\widetilde{M}_{k_*}])$
      for $j=1,\dots , m_k-1$ (so $\tilde{I}_k^j$ is bounded by two of the points $\theta^{\ell}_k$, corresponding to the endpoints
     of the interval $I_k^j$). For $j=m_k$, we modify this definition by extending $\tilde{I}_k^{m_k}$ to $R_k+1$ on the right. 
    That is, $\tilde{I}_k^{m_k} = [\theta_k^{\gamma_{k_*}-1} , R_k+1]$ or $\tilde{I}_k^{m_k} = [\theta_k^{\gamma_{k_*}-2} , R_k+1]$, depending on whether
    $\omega_k^{j-1}>\theta_k^{\gamma_{k_*}-1}$ or not. 
        
  Now the domains $U^j_k$ and arcs $C^j_k$ are chosen such that, for $1\leq j\leq m_k$:
 \begin{enuminduction}
   \item $\re z\in \tilde{I}_k^j$ for all  $z\in U_k^j$.\label{I3item:sizeofUk}
  \item The hyperbolic distance, in $T_k$, between $C_k^{j-1}$ and  $C_k^{j}$ is at least $1$.\label{I3item:Ukhyperbolicdistance}
  \item Any geodesic segment of  $T_k$ that connects
     $C_k^{j-1}$ and $C_k^{j}$ has real parts in
      $\tilde{I}_k^j$.\label{I3item:Ukrealparts}
 \end{enuminduction}
 Since $\tilde{I}_k^j \subset [\theta_k^0, R_k + 1 ]\subset [R_{k-1}+1 ,R_k + 1]$,  we see 
  that~\ref{I3item:sizeofUk} ensures~\eqref{eqn:Ujposition}. We emphasise that
  the domain $T_k$ 
  depends not only on $U_k$ and $C_k$, which we are fixing in this step,
  but also on the opening size $\chi_k$, which will only be chosen in 
  step~I4. Our claim is that, by appropriate choice of
  the $U^j_k$ and $C^j_k$, we can ensure 
  that properties~\ref{I3item:Ukhyperbolicdistance} 
  and~\ref{I3item:Ukrealparts} hold for \emph{any} choice of $\chi_k$. 
 
 For example, we may take 
      \[ U_k^j\defeq \left\{x+iy\colon  x\in \interior(\tilde{I}_k^j), 
        y \in \pi\cdot \left( 1 - \frac{j}{2m_k},1-\frac{j-1}{2m_k}\right) \right\}. \] 
    Then~\ref{I3item:sizeofUk} is trivially satisfied. The arcs $C_k^j$ are chosen to be short horizontal line segments 
    at imaginary part $\pi\cdot(1 - j/(2m_k))$, with 
     \begin{equinduction} \label{eqn:Ckintervals}\re C_k^j \subset \begin{cases} \tilde{I}_k^1 & \text{if $j=0$}, \\
                                                        \tilde{I}_k^j \cap \tilde{I}_k^{j+1} & \text{if $0<j<m_k$},\\
                                                        [R_k,R_{k}+1] &\text{if $j=m_k$}.\end{cases} \end{equinduction}
    (Recall that the intervals $\tilde{I}_k^j$ and  $\tilde{I}_k^{j+1}$ overlap for $0<j<m_k$.) Then~\ref{I3item:Cj} and~\ref{I3item:Cm}
     are satisfied. We claim that, by choosing each arc $C_k^j$ sufficiently short, say with real parts centred around the midpoint of the 
      corresponding interval in~\eqref{eqn:Ckintervals}, 
       we 
       can also ensure that 
      Properties~\ref{I3item:Ukhyperbolicdistance} and~\ref{I3item:Ukrealparts} hold,
       regardless of the choice of $\chi_k$. 
   For~\ref{I3item:Ukhyperbolicdistance},
   this follows immediately from the standard estimate~\eqref{eqn:standardestimate} on the hyperbolic metric. 

For~\ref{I3item:Ukrealparts}, we use the following fact. (Compare~\cite[Lemma~A.3]{strahlen}.) 
  Suppose that $Q\subset\HH$ is a conformal quadrilateral; that is, a Jordan domain whose boundary is subdivided into four arcs, where two opposing arcs are designated the ``horizontal sides'' and the others the ``vertical sides''. Recall that the \emph{modulus} of $Q$ is the modulus
 of the family of curves in $Q$ connecting the horizontal sides, or equivalently the extremal length of the curves in $Q$ connecting the vertical sides.  
   If the horizontal sides of $Q$ are contained in $\partial\HH$, and $\mod(Q)>1$, 
 then $Q$ contains a hyperbolic geodesic of $\HH$ that connects the two horizontal sides. To see this, 
we may suppose (applying a M\"obius transformation of $\HH$) that one of the two vertical
sides contains $0$ and the other contains $\infty$. By reflection in the imaginary
axis, and connecting the resulting quadrilateral to $Q$ along the horizontal sides, 
 we obtain an annulus $A$ of modulus greater than $1/2$ that separates $0$ from
$\infty$. By the Teichm\"uller modulus 
 theorem, this annulus contains a round circle, centred at~0;
 see~\cite[Section~4.11]{ahlforsconformal}. The intersection 
  of this circle with the right half-plane is a hyperbolic geodesic of $\HH$ contained in $Q$, 
 as desired. 
 
 Now let $1 < j<m_k$. 
  If the arc $C_k^{j-1}$ is chosen sufficiently short, then
   there is a quadrilateral $Q^-\subset U_k^{j-1}$ of modulus greater than
   $1$ with horizontal sides in $\partial T_k$, having $C_k^{j-1}$ as one of the vertical sides, and such that furthermore 
    any $z\in Q$ has $\re z \in\tilde{I}_k^j$. If  $C_k^{j+1}$ is 
   sufficiently small, then there is a similar quadrilateral $Q^+\subset U_k^{j+1}$. 
   By the above, for any choice of $\chi_k$, the quadrilateral $Q^-$ contains a geodesic $\beta^-$ of $T_k$ that
   connects the two horizontal sides, and $Q^+$ contains a similar geodesic
   $\beta^+$. So the connected component of $T_k\setminus (\beta^-\cup \beta^+)$
   that contains $U_k^j$ has real parts in $\tilde{I}_k^j$. Two different 
   geodesics of $T_k$ intersect in at most one point. So a geodesic segment of $T_k$ connecting $C_k^{j-1}$ and
    $C_k^j$ cannot intersect $\beta^-$ or $\beta^+$, and thus has 
    real parts in $\tilde{I}_k^j$. This establishes~\ref{I3item:Ukrealparts} for $1< j<m_k$.
    For $j=1$, the same argument applies, except that there is no need for 
    the quadrilateral $Q^-$ or the geodesic $\beta^-$. Similarly, for $j=m_k$, 
     apply the same argument but with $Q^+$ a quadrilateral contained 
     in the central half-strip $S$, and having real parts in 
     $[R_k,R_k+1]\subset \tilde{I}_k^{m_k}$. 

 \begin{remark}
  There are many other ways of choosing the domains $U_k^j$ with the desired properties---e.g., 
  Figure~\ref{fig:tract_construction} uses an arrangement that is more economical with vertical space.
  By using different choices of how the domains are arranged, one may obtain inequivalent embeddings of the
  same continuum in the Julia set, as mentioned at the end of Section~\ref{sec:intro2}. 
\end{remark}

\nextinductivestep\textbf{I4.}
  We now choose $\chi_k$. Moreover, depending on $\chi_k$, $s(k)$ will be defined 
   as follows. Let $A$ be the interval
   on $\partial\HH$ corresponding to the arc $C_k^0$ under $F_k$ (note that $F_k$
   itself depends on the choice of $\chi_k$). 
  We choose $s(k)$ such that
  $2\pi i s(k)$ is closest to the midpoint of $A$. We claim that, if
   $\chi_k$ is chosen sufficiently small, then 
 \begin{enuminduction}
     \item $2\pi i s(k)\in A$, and hence
        $F_k^{-1}( 2\pi i s(k) )\in C_k^0$;\label{I4item:endpoint} 
     \item $F_k^{-1}( x + 2\pi i s(k) ) \in U_k^1$ for 
          $x\leq M_k+1$, and \label{I4item:endpiece}
    \item For all $x\geq 0$, the point $F_k^{-1}(x + 2\pi i s(k))$ 
        belongs to the unbounded connected component $Q$
        of
$T_k\setminus \bigl(R_k + \frac{i}{2}\cdot [-\chi_k,\chi_k]\bigr)$.
     In other words,
        the point either belongs to $U_k$ or has real part greater than $R_k$.\label{I4item:wholegeodesic}
  \end{enuminduction}
   Indeed, consider what happens to the tract $T_k$ and the map $F_k$ as we
    let $\chi_k$ tend to zero. Clearly the harmonic measure of 
    $C_k^0\subset \partial U_k$ in $T_k$, viewed from
    $R_k+1$, remains bounded from below. On the other hand, the length
    of the geodesic segment $[5,R_k+1]$ tends to infinity as $\chi_k\to 0$, so 
    $F_k( R_k+1)\to\infty$. 
    This implies that, as $\chi_k\to 0$, the length of $A$ tends to infinity.
    In particular, $2\pi i s(k)$ lies well inside $A$ as long as
    $\chi_k$ is sufficiently small, establishing~\ref{I4item:endpoint}. 

  Furthermore, $A$ and $F_k(C_k^1)$ are separated by a quadrilateral (namely $F_k(U_k^1)$) whose modulus is independent of $\gamma_k$. 
   Again applying the Teichm\"uller modulus theorem, 
   it follows that the distance from $2\pi i s(k)$ to $F_k(C_k^1)$ grows at least
  propertionally with the length of $A$. 
   So~\ref{I4item:endpiece} also holds when $\chi_k$ is small enough. 

 Finally, consider $Q$ as a quadrilateral 
  whose ``horizontal'' sides are given by the two vertical segments
 $R_k \pm i [\chi_k,\pi]/2$.
  If $\chi_k$ is small enough, $Q$ has modulus greater than $1$, so again there
  is a geodesic of $T$ contained in $Q$ connecting the two  line segments.
  This geodesic is disjoint from 
  $F_k^{-1}([0,\infty) + 2\pi i s(k))$, and separates it from $T\setminus Q$. 
   This establishes~\ref{I4item:wholegeodesic}. Observe that 
$F_k^{-1}([0,\infty) + 2\pi i s(k))$ is disjoint from the geodesic $[4,\infty)$, 
   and hence does not
  enter $\tilde{U}_k$. So the second claim of~\ref{I4item:wholegeodesic}
   does indeed follow from the first.

\nextinductivestep\textbf{I5.}
For $j=1,\dots,m_k$, define 
     \begin{equinduction}\label{eqn:etaj}
         \eta_k^j \defeq \max\{ x\colon F_{k}^{-1}(x + 2\pi i s(k)) \in C_k^{j} \} \geq M_k+1. \end{equinduction}
   Furthermore, set 
      \begin{equinduction}\label{eqn:Mtilde}
         \widetilde{M}_k\defeq\max\bigl((2s(k)+1)\pi , \eta_k^{m_k} , 
             F_k(R_k+3) , F_k^{N_k+1}(\widetilde{M}_{k_*}+1) ,
               F_k^{n_k}(k+5)\bigr).\end{equinduction}

  By construction, we have $M_k < \eta_k^1 < \dots < \eta_k^{m_k} \leq \widetilde{M}_k$. 
    Hence we can define an order-preserving homeomorphism
   $\phi_k\colon [M_k,\widetilde{M}_k]\to [0,1]$ such that 
   \begin{equinduction}\label{eqn:phikdefn}
      \phi_k(\eta_k^j)=\omega_j, \qquad j=1,\dots,m_{k}-1.\end{equinduction}
     We may extend $\phi_k$ to $[0,\infty]$ by setting
   $\phi_k(x) \defeq  0$ for $x<M_k$ and $\phi_k(x) \defeq 1$ for $x\geq \widetilde{M}_k$. 

 It remains to choose $\gamma_k$ sufficiently large such that the following hold.
    \begin{enuminduction}
      \item\label{I5item:expansion} $\gamma_k \geq \Gamma_k(\gamma_{k_*})$, where $\Gamma_k$ is  the 
        function from Observation~\ref{obs:obtainingexpandingsystems}, for expansion  constants
          $K=3$ and  $\lambda =  4$. This ensures that the 
        system $(g_j)_{j=1}^{\infty}$ is indeed expanding with respect to the sequence of rescaled metrics $d_{Y_k}$. 
      \item $\max\{ |x_1 - x_2|\colon x_1,x_2\in [M_k,\widetilde{M}_k]\text{ and } |\phi_k(x_1)-\phi_k(x_2)|\leq 1/\gamma_k\}<1/4$.
               \label{I5item:smallgaps}
     (Observe that this holds also for $k=0$ by choice of  $\gamma_0$.)
     \end{enuminduction}
%

 This completes the recursive construction.

\subsection*{Analysis of the construction}
   We must now show that our construction does indeed yield two expanding inverse systems that are
     pseudo-conjugate, and hence have homeomorphic inverse limits.  The key property is
     as follows  (see also Figure \ref{fig:tract_construction}). 

\begin{lem}[Pseudo-conjugacy relation] \label{lem:pullingback}
  Let $k\geq 1$ and $w\in \overline{T}$. Then       $\re f_k(w) \geq M_{k_*}-1$ and
   \begin{equation}\label{eqn:toproveforpseudoconjugacy} |\psi_{k_*}( f_k(w)) - g_{k}(\psi_{k}(w))| < 3/\gamma_{k_*}. \end{equation}
\end{lem}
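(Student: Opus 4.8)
## Proof proposal for Lemma~\ref{lem:pullingback}

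The plan is to unwind the definitions of $f_k$, $\psi_k$, $\psi_{k_*}$ and $g_k$ and to use the explicit properties \ref{I3item:sizeofUk}--\ref{I3item:Ukrealparts} and \ref{I4item:endpoint}--\ref{I4item:wholegeodesic} of the inductively constructed domain $U_k$ together with the closeness estimate \ref{I2item:closeness} (via Observation~\ref{obs:caratheodoryconvergence}) to pin down the real part of $f_k(w) = F^{-(N_k+1)}(w + 2\pi i s(k))$ in terms of where $\psi_k(w)$ sits relative to the partition $\Omega_k$. First I would write $z \defeq F^{-1}(w + 2\pi i s(k))$, so that $f_k(w) = F^{-N_k}(z)$, and split according to the value of $\re z$. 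By \ref{I4item:wholegeodesic}, either $z \in U_k$ or $\re z > R_k$. The lower bound $\re f_k(w) \geq M_{k_*} - 1$ will follow in the second case because $R_k \geq F_{k-1}^{N_k}(\widetilde{M}_{k_*}+1)$ by \ref{I2item:Rksize} and $F^{-N_k}$ is close to $F_{k-1}^{-N_k}$ on the relevant compact set by \ref{I2item:closeness}; in the first case ($z\in U_k$) one uses that $\re z$ lies in some $\tilde I_k^j \subset F_{k-1}^{N_k}([M_{k_*},\widetilde M_{k_*}])$ by \ref{I3item:sizeofUk}, so that $\re F_{k-1}^{-N_k}(\re z)$ — more precisely the real part of the pullback — is at least $M_{k_*}$, again up to the $\alpha_{k_*}/2$-error from \ref{I2item:closeness}, which is $\leq 1$ by \eqref{eqn:alpha}. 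That gives the claimed lower bound; it also shows $\re f_k(w)$ is in range so that $\psi_{k_*}(f_k(w)) = \phi_{k_*}(\re f_k(w))$ makes sense.

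For the main estimate \eqref{eqn:toproveforpseudoconjugacy}, the key observation is that $g_k(\psi_k(w)) = g_k(\phi_k(\re z))$ and that $\phi_k$ was designed in \ref{I5item:expansion}ff (specifically \eqref{eqn:phikdefn}, $\phi_k(\eta_k^j) = \omega_k^j$) so that the interval $[\eta_k^{j-1},\eta_k^j]$ of real parts is sent by $\phi_k$ precisely to the complementary interval $[\omega_k^{j-1},\omega_k^j]$ of $\Omega_k$. Meanwhile, by \ref{I4item:endpiece} and the definition \eqref{eqn:etaj} of $\eta_k^j$, the curve $x \mapsto F^{-1}(x + 2\pi i s(k))$ passes through $U_k^0, C_k^1, U_k^1, C_k^2, \dots$ in order, so $\re z \in [\eta_k^{j-1},\eta_k^j]$ (roughly) forces $z \in U_k^j$, and hence $\re f_k(w) = \re F^{-N_k}(z) \in F^{-N_k}(\tilde I_k^j)$ by \ref{I3item:sizeofUk} and \ref{I3item:Ukrealparts}. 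Now $\tilde I_k^j = F_{k-1}^{N_k}(\phi_{k_*}^{-1}(I_k^j) \cap [M_{k_*},\widetilde M_{k_*}])$ where $I_k^j$ is the smallest interval bounded by points of $\Xi_{k_*}$ containing $g_k([\omega_k^{j-1},\omega_k^j])\setminus\{0,1\}$ and having length $\leq 2/\gamma_{k_*}$ by \ref{I1item:Omegaimage}. Pulling back: $\phi_{k_*}(\re f_k(w))$ lands in (a neighbourhood of) $I_k^j$, which contains $g_k([\omega_k^{j-1},\omega_k^j]) \ni g_k(\phi_k(\re z))$, and both $I_k^j$-endpoints are within $2/\gamma_{k_*}$ of $g_k(\psi_k(w))$.

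The remaining work is bookkeeping of the errors: there is the $\alpha_{k_*}/2$ approximation error between $F^{-N_k}$ and $F_{k-1}^{-N_k}$ from \ref{I2item:closeness}, which by the definition \eqref{eqn:alpha} of $\alpha_{k_*}$ translates into a $\phi_{k_*}$-error of at most $1/(2\gamma_{k_*})$; there is the length of $I_k^j$ itself, at most $2/\gamma_{k_*}$; and one must handle the boundary cases $j = m_k$ (where $\tilde I_k^{m_k}$ was extended to $R_k+1$ and $\phi_{k_*}$ saturates at $1$, so the estimate only improves) and $z$ with $\re z > R_k$ (where $\psi_k(w) = 1$, $g_k(1) = 1$, and $\psi_{k_*}(f_k(w)) = 1$ as well since $\re f_k(w) \geq \widetilde M_{k_*}$ by \ref{I2item:Rksize} and \ref{I2item:closeness}, so both sides equal $1$). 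Summing the two contributions $1/(2\gamma_{k_*}) + 2/\gamma_{k_*}$ gives strictly less than $3/\gamma_{k_*}$, as required. The main obstacle I expect is the careful tracking of which constituent quadrilateral $U_k^j$ the point $z = F^{-1}(w + 2\pi i s(k))$ lies in — i.e. correctly extracting, from \ref{I4item:endpiece}, \ref{I4item:wholegeodesic}, \ref{I3item:Ukhyperbolicdistance} and \ref{I3item:Ukrealparts}, the statement that the $x$-interval $[\eta_k^{j-1},\eta_k^j]$ corresponds exactly to $z$ traversing $U_k^j$ — and making sure the off-by-one indexing at the endpoints $j=0$ and $j=m_k$ is consistent with the definitions of $\phi_k$ and $\tilde I_k^{m_k}$.
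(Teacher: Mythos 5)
Your proposal follows essentially the same route as the paper's proof: decompose by the complementary interval $[\omega_k^{j-1},\omega_k^j]$ of $\Omega_k$ determined by $\psi_k(w)=\phi_k(\re w)$, pin down where the preimage lies using~\ref{I3item:Ukrealparts} and~\ref{I4item:wholegeodesic}, and account for errors via~\eqref{eqn:alpha} and the length bound on $I_k^j$ from~\ref{I1item:Omegaimage}. The final numerical budget $1/(2\gamma_{k_*}) + 2/\gamma_{k_*} < 3/\gamma_{k_*}$ is exactly the one the paper uses.

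There is, however, one concrete omission in the error accounting that your proposal should address explicitly. Writing (in your notation) $\zeta := F^{-1}(w+2\pi i s(k))$, you cannot directly assert $\re\zeta\in\tilde I_k^j$ nor apply~\ref{I4item:wholegeodesic} to $\zeta$: these properties hold for $F_k^{-1}(x+2\pi i s(k))$ with $x=\re w$ real, whereas $\zeta$ is computed from $F^{-1}$ (not $F_k^{-1}$), from $w$ (not $\re w$, and $|\im w|$ may be as large as $\pi$), and even if $\zeta$ did lie in $U_k^j$ it could have imaginary part of size up to $\pi$ above $\tilde I_k^j$. The paper's proof bundles all three effects into a single Euclidean estimate $\dist(\zeta,\tilde I)<6$ (the ``Claim''), and the pullback by $F^{-N_k}$ then shrinks this by the factor $2^{-N_k}\leq \alpha_{k_*}/12$, using the lower bound on $N_k$ imposed in~\eqref{eqn:Nk}. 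Your write-up identifies only the $\alpha_{k_*}/2$ contribution coming directly from~\ref{I2item:closeness} and never invokes~\eqref{eqn:Nk}; without crushing the $O(1)$ pre-pullback distance by $2^{-N_k}$, the argument does not close, because that distance is a universal constant and has nothing to do with $\gamma_{k_*}$. This is precisely the ``careful tracking'' obstacle you flagged at the end; in the paper it is resolved by introducing $\zeta_1 := F^{-1}(\re w+2\pi i s(k))$ and $\zeta_2 := F_k^{-1}(\re w+2\pi i s(k))$ and bounding $|\zeta-\zeta_1|$, $|\zeta_1-\zeta_2|$, and $\dist(\zeta_2,\tilde I)$ separately, together with a separate treatment of the case $\re w>\widetilde M_k$ (where the inequality $(2s(k)+1)\pi\leq\widetilde M_k$ from~\eqref{eqn:Mtilde} is used). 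Once that claim is in place, the two contributions $6\cdot 2^{-N_k}\leq\alpha_{k_*}/2$ and the $\alpha_{k_*}/2$ from~\ref{I2item:closeness} combine to a distance $<\alpha_{k_*}$ from $F_{k-1}^{-N_k}(\tilde I)$, yielding the $\phi_{k_*}$-error $<1/(2\gamma_{k_*})$ and the lower bound $\re f_k(w)\geq M_{k_*}-\alpha_{k_*}\geq M_{k_*}-1$ that you need.
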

\begin{proof}
  Set $z\defeq f_k(w) = F^{-(N_k+1)}(w+2\pi i s(k))$. 
  Let $j\in\{1,\dots,m_k\}$ be such that $\psi_k(w)=\phi_k(\re w)\in[\omega_k^{j-1}, \omega_k^j]$
   and recall the definition of
   the interval $\tilde{I}^j_k$ from step I3 of the construction. We set
   $\tilde{I}\defeq \tilde{I}^j_k$ if $j<m_k$. For $j=m_k$, we set
   $\tilde{I}\defeq \tilde{I}^{m_k}_k \cup [R_k+1,\infty)$, so that 
     $[R_k,\infty)\subset \tilde{I}$ (recall that $[\theta_k^{\gamma_{k_*}-1},R_k+1]\subset \tilde{I}^{m_k}_k$,
     and that $\theta_k^{\gamma_{k_*}-1} \leq R_k$ by~\ref{I2item:Rksize}). In either case, 
    \begin{equation}\label{eqn:Itilde} \phi_{k_*}(F_{k-1}^{-N_k}(\tilde{I})) = I^j_k 
       \supset g_k([\omega_k^{j-1},\omega_k^j])\ni g_k(\psi_{k}( w)).\end{equation}


  \begin{claim}
        Set $\zeta\defeq F^{-1}(w+2\pi i s(k))$. 
       Then the Euclidean distance between $\zeta$ and $\tilde{I}$ is less than 6. 
  \end{claim}
 \begin{subproof}
   Set $\tilde{w} \defeq \re w + 2\pi i s(k)$, and consider the points 
   $\zeta_1\defeq F^{-1}(\tilde{w})$ and $\zeta_2\defeq F_k^{-1}(\tilde{w})$. 
   Then $|\zeta - \zeta_1|\leq \pi/2$ by Observation~\ref{obs:expansion} 
   and~\ref{I2item:closeness}. 
   Furthermore,  
     \begin{equation}\label{eqn:zeta2} \re \zeta_2 \in \tilde{I}.\end{equation}
    Indeed, if $\re w \leq \eta_k^{m_k}$, then by the definition of $\phi_k$ in~\eqref{eqn:phikdefn}, 
     $\eta_k^{j-1} \leq \re w \leq \eta_k^j$  (with the convention that $\eta_k^0=0$). The 
     curve 
      \[ \{ F_k^{-1}(x + 2\pi i s(k))\colon \eta_k^{j-1} \leq x \leq \eta_k^j \} \]
      is a geodesic of $T_k$ 
      connecting $C_k^{j-1}$ and $C_k^j$ by choice of $\eta_k^j$ in~\eqref{eqn:etaj}
      (using~\ref{I4item:endpoint} in the case where $j=1$). Hence by~\ref{I3item:Ukrealparts}, its real parts belong to
      $\tilde{I}_k^j$, establishing~\eqref{eqn:zeta2}. On  the other hand, if
      $\re w >  \eta_k^{m_k}$, then  $\zeta_2\notin U_k$ by~\eqref{eqn:etaj}. Hence
       $\re \zeta_2 > R_k$ by~\ref{I4item:wholegeodesic}, and~\eqref{eqn:zeta2} holds in this case  also.
    
   To complete the proof of the claim, first suppose that $\re w \leq \widetilde{M}_k$.  Then 
     $|\zeta_1 -  \zeta_2|\leq 1/2$ by the choice of $R_{k+1}$ in~\ref{I2item:closeness}, and hence, 
     by~\eqref{eqn:zeta2}, 
    \[ \dist(\zeta, \tilde{I}) \leq |\zeta - \zeta_1| + |\zeta_1 -  \zeta_2|  + \dist(\zeta_2,\tilde{I}) \leq
            \frac{\pi}{2} + \frac{1}{2} + \pi  < 6. \] 

    On the other hand, if $\re w > \widetilde{M}_k$, then by~\eqref{eqn:Mtilde} 
          \[ |\im(w+2\pi i  s(k))|\leq (2s(k)+1)\pi \leq \widetilde{M}_{k} < \re w. \]
      By Observation~\ref{obs:expansion}, $\lvert (F^{-1})'\rvert \leq 2/\re w$ on the vertical segment connecting
       $w$ and $w + 2 \pi i s(k)$. So $|\zeta - F^{-1}(\re w)| < 2$ and 
        \[ \re \zeta \geq \re F^{-1}(\re w) - 2 \geq F^{-1}(\widetilde{M}_k) - 2  \geq
               F_k^{-1}(\widetilde{M}_k) - 5/2 > R_k \]
      by~\ref{I2item:closeness} and~\eqref{eqn:Mtilde}. 
     Since $[R_k, \infty] \subset \tilde{I}$,   this completes the proof of the claim.
 \end{subproof}

  By~\eqref{eqn:Fkexpansion} $z = F^{-N_k}(\zeta)$ has distance less than 
   \[ 6\cdot 2^{- N_k  } \leq 6\cdot \frac{\alpha_{k_*}}{12} = \frac{\alpha_{k_*}}{2} \] 
   from $F^{-N_k}( \tilde{I})$.  Furthermore, by~\ref{I2item:closeness}, 
   every point of  $F^{-N_k}(\tilde{I})$ has distance at most $\alpha_{k_*}/2$ from  
     $F_{k-1}^{-N_k}(\tilde{I})$. In particular, $\re z \geq M_{k_*} - \alpha_{k_*} \geq M_{k_*} -1$. 
  
  Finally, recall that 
      $I_k^j$ has length  at  most $2/\gamma_{k_*}$ by~\ref{I1item:Omegaimage}. 
    By~\eqref{eqn:Itilde} and the definition of $\alpha_{k_*}$ in~\eqref{eqn:alpha},
     we have indeed established~\eqref{eqn:toproveforpseudoconjugacy}.
\end{proof}

  Recall that 
     the metric $d_{Y_k}$ on $Y_k$ is defined by scaling the Euclidean metric by a factor of
      $\gamma_k$. We claim that 
      the functions 
       $\psi_k\colon \hat{T}\to [0,1]; \psi_k(z) =\phi_k(\re z)$ 
     satisfy  Property~\ref{item:pseudoconjugacy} of Definition~\ref{defn:pseudoconjugacy}. Indeed,
     for 
       $x\in \overline{T} = \C\cap \hat{T}$, this follows from Lemma~\ref{lem:pullingback}, while it holds trivially for $x=\infty$, since 
      $\psi_{k-1}(f_k(\infty))=\psi_{k-1}(\infty)=1=g_k(1) = g_k(\psi_k(\infty))$. 

   We next verify the remaining properties from  Definition~\ref{defn:pseudoconjugacy}. To  do so,  
       we must define the metric $d_{X_k}$ on $X_k=\hat{T}$. We set
      \begin{align}\notag &L_k\colon [0,\infty] \to [M_k-1, \widetilde{M}_k+1];  \qquad
           x\mapsto \begin{cases}
             M_k-1+\frac{x}{M_k} & \text{if }x<M_k \\
             \widetilde{M}_k+1 - \frac{\widetilde{M}_k}{x} &   \text{if }x>\widetilde{M}_k \\
             x &\text{otherwise},\end{cases} \\ \notag
       &\tilde{L}_k\colon \hat{T}\to \C; \quad x+iy\mapsto L(x) + iy, \quad\text{and}\\
       &d_{X_k}(z,w) \defeq |\tilde{L}_k(z) -  \tilde{L}_k(w)|.\label{eqn:dXk} \end{align}

   \begin{prop}[Further pseudo-conjugacy conditions]\label{prop:pseudoconjugacyconditions}
     Let $k\geq 0$. Then $\psi_k\colon \hat{T}\to [0,1]$ is surjective. Furthermore, the following holds
      for all $\Delta\in \N$. 
    \begin{enumerate}[(a)]
    \item  If $z,\tilde{z}\in \hat{T}$ with $d_{Y_k}(\psi_k(z),\psi_k(\tilde{z})) \leq \Delta$, then 
       $d_{X_k}(z,\tilde{z})\leq \Delta/4 + 9$.\label{item:pseudosurjectivityofconstruction}
     \item If    $k\geq 1$ and $z,\tilde{z}\in \hat{T}$ with $d_{X_k}(z,\tilde{z})\leq \Delta$, then  
     $|\psi_{k_*}(f_k(z))- \psi_{k_*}(f_k(\tilde{z}))| \leq (\Delta+8)/\gamma_{k_*}$.%
        \label{item:pseudocontinuityofconstruction}
   \end{enumerate}
   \end{prop}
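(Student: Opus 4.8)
The plan is to verify the three assertions of Proposition~\ref{prop:pseudoconjugacyconditions} in turn; surjectivity and part~\ref{item:pseudosurjectivityofconstruction} are fairly mechanical, and the real content lies in part~\ref{item:pseudocontinuityofconstruction}. Surjectivity of $\psi_k$ is immediate: by~\eqref{eqn:phiconditions} the restriction $\phi_k|_{[M_k,\widetilde M_k]}$ is a homeomorphism onto $[0,1]$, and since $M_k\ge 10$ the interval $[M_k,\widetilde M_k]$ is contained in the set of real parts attained on $\overline T$ (the vertical slit at real part $R_j$ never separates the strip completely); hence $\psi_k=\phi_k\circ\re$ maps $\hat T$ onto $[0,1]$.

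For part~\ref{item:pseudosurjectivityofconstruction}, I would first record the elementary properties of the truncation map $L_k$ from~\eqref{eqn:dXk}: it is non-decreasing and $1$-Lipschitz (its slope equals $1/M_k$, then $1$, then $\widetilde M_k/x^2\le 1/\widetilde M_k$, each at most $1$), it is the identity on $[M_k,\widetilde M_k]$, and it compresses $[0,M_k]$ into $(M_k-1,M_k]$ and $[\widetilde M_k,\infty]$ into $[\widetilde M_k,\widetilde M_k+1)$. Writing $z=x+iy$, $\tilde z=\tilde x+i\tilde y$, and using that $\overline T$ lies in the horizontal strip $\{|\im w|\le\pi\}$ (so $|y-\tilde y|\le 2\pi$, the case $z=\infty$ or $\tilde z=\infty$ being handled in the same way by assigning $\tilde L_k(\infty)$ real part $\widetilde M_k+1$), one has $d_{X_k}(z,\tilde z)\le |L_k(x)-L_k(\tilde x)|+2\pi$. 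The hypothesis $d_{Y_k}(\psi_k(z),\psi_k(\tilde z))\le\Delta$ reads $|\phi_k(x)-\phi_k(\tilde x)|\le\Delta/\gamma_k$. When $x,\tilde x\in[M_k,\widetilde M_k]$ I would split $[\phi_k(x),\phi_k(\tilde x)]$ into at most $\Delta$ subintervals each of length $\le 1/\gamma_k$ and apply property~\ref{I5item:smallgaps} to each, obtaining $|x-\tilde x|=|L_k(x)-L_k(\tilde x)|<\Delta/4$; when $x$ or $\tilde x$ lies in a region on which $\phi_k$ is constant, I would replace it by the nearer of $M_k$, $\widetilde M_k$, which changes $L_k$ of it by at most $1$, while in the degenerate case $\phi_k(x)=0$, $\phi_k(\tilde x)=1$ one has $\Delta\ge\gamma_k$ and hence $\widetilde M_k-M_k<\gamma_k/4\le\Delta/4$, again by~\ref{I5item:smallgaps}. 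In every case $|L_k(x)-L_k(\tilde x)|\le\Delta/4+2$, whence $d_{X_k}(z,\tilde z)\le\Delta/4+2+2\pi<\Delta/4+9$.

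For part~\ref{item:pseudocontinuityofconstruction}, the idea is to route the estimate through the bonding map $g_k$ by means of Lemma~\ref{lem:pullingback}, which gives $|\psi_{k_*}(f_k(w))-g_k(\psi_k(w))|<3/\gamma_{k_*}$ for all $w\in\overline T$ (and trivially at $\infty$). By the triangle inequality it is then enough to show $|g_k(\psi_k(z))-g_k(\psi_k(\tilde z))|\le(\Delta+2)/\gamma_{k_*}$. Let $a\le b$ be the indices with $\psi_k(z)\in[\omega_k^{a-1},\omega_k^a]$ and $\psi_k(\tilde z)\in[\omega_k^{b-1},\omega_k^b]$. By~\eqref{eqn:phikdefn} and monotonicity of $\phi_k$, membership of $\psi_k(z)$ in $[\omega_k^{a-1},\omega_k^a]$ confines $x=\re z$ to the interval between the consecutive points $\eta_k^{a-1}$ and $\eta_k^a$ from~\eqref{eqn:etaj} (with the obvious conventions at the two ends). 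The key geometric input is that these points grow at least geometrically: the segment $\{t+2\pi i s(k):\eta_k^{j-1}\le t\le\eta_k^j\}$ is the $F_k$-image of a geodesic of $T_k$ joining $C_k^{j-1}$ to $C_k^j$, which by~\ref{I3item:Ukhyperbolicdistance} has hyperbolic length at least $1$, and the hyperbolic length of that segment in $\HH$ equals $\log(\eta_k^j/\eta_k^{j-1})$; hence $\eta_k^j\ge\e\cdot\eta_k^{j-1}$, so $\eta_k^j-\eta_k^{j-1}\ge(\e-1)M_k>17$. Since $L_k$ is an isometry on $[M_k,\widetilde M_k]$ and collapses each flanking region to length $\le 1$, the bound $|L_k(x)-L_k(\tilde x)|\le d_{X_k}(z,\tilde z)\le\Delta$ limits the number of the $\eta_k^j$ that can lie between $x$ and $\tilde x$, and unwinding the conventions gives $b-a\le\Delta$. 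Finally, by step~I3, $g_k([\omega_k^{j-1},\omega_k^j])\subseteq I_k^j$, and the $I_k^j$ form a chain of intervals of length $\le 2/\gamma_{k_*}$ in which consecutive members overlap in an interval of length $\ge 1/\gamma_{k_*}$; hence $\bigcup_{j=a}^{b}I_k^j$ is an interval of length at most $(b-a+2)/\gamma_{k_*}\le(\Delta+2)/\gamma_{k_*}$ containing both $g_k(\psi_k(z))$ and $g_k(\psi_k(\tilde z))$. Combining this with the two applications of Lemma~\ref{lem:pullingback} gives the desired $(\Delta+8)/\gamma_{k_*}$.

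The main obstacle is part~\ref{item:pseudocontinuityofconstruction}: quantifying precisely how the metric $d_{X_k}$---an honest isometry on the ``active'' range $[M_k,\widetilde M_k]$ of real parts, but flattened near $M_k$, near $\widetilde M_k$, and at $\infty$---controls the combinatorial index of $\psi_k(z)$ within the partition $\Omega_k$, and keeping track of the end-point book-keeping carefully enough that the constants $3/\gamma_{k_*}$, $2/\gamma_{k_*}$, $1/\gamma_{k_*}$ assemble into exactly $(\Delta+8)/\gamma_{k_*}$.
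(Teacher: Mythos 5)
Your proposal is correct and follows essentially the same route as the paper's own proof: verify surjectivity from~\eqref{eqn:phiconditions}, control the real-part spread via~\ref{I5item:smallgaps} for part~\ref{item:pseudosurjectivityofconstruction}, and for part~\ref{item:pseudocontinuityofconstruction} count the number of complementary intervals of $\Omega_k$ that can be hit, push through $g_k$ using~\ref{I1item:Omegaimage}, and sandwich the result between two applications of Lemma~\ref{lem:pullingback}. One small inaccuracy: your claim that $\eta_k^j-\eta_k^{j-1}>17$ relies on the geometric growth coming from~\ref{I3item:Ukhyperbolicdistance}, which applies to consecutive pairs $\eta_k^{j-1},\eta_k^j$ with $j\geq 2$; for the first gap the relevant lower bound is just $\eta_k^1-M_k\geq 1$ from~\eqref{eqn:etaj}, which is also exactly what the paper uses and is all that the bookkeeping actually needs, so this does not affect the conclusion.
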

  \begin{proof}
       By the definition of $\phi_k$ in~\eqref{eqn:phikdefn}, $\psi_k$ is surjective. 
         
     To prove~\ref{item:pseudosurjectivityofconstruction}, let us assume without loss of  generality that
    $\psi_k(z)\leq \psi_k(\tilde{z})$. 
  Recall that the complementary intervals of $\Xi_k$ have Euclidean
 length $1/\gamma_k$, and hence unit length with respect to $d_{Y_k}$. 
  So the assumption implies that the interval $[\psi_k(z),\psi_k(\tilde{z})]$ 
      intersects at most  $\Delta+1$ complementary intervals of $\Xi_k$.  
      By~\ref{I5item:smallgaps} and the definition  of $d_{X_k}$, we have
       $d_{X_k}(\re z , \re \tilde{z}) \leq (\Delta+1)/4 + 2$. The claim 
      follows since $2\pi + 1/4 + 2 <9$. 

    For~\ref{item:pseudocontinuityofconstruction}, 
     observe from~\ref{I3item:Ukhyperbolicdistance} and~\eqref{eqn:etaj} 
     that, for $1\leq j \leq m_k-1$, 
    \[  \dist_{\HH}\bigl( \eta_k^j + 2\pi i s(k), \eta_k^{j+1} + 2\pi i s(k) \bigr)
      \geq 1, \]
    and in particular $\eta_k^{j+1} > \eta_k^j + 1$. 
  So the interval between $\re z$ and $\re \tilde{z}$ contains at most $\Delta$ 
  of the $\eta_k^j$. By the definition of
    $\phi_k$ in step I5, 
     the interval between $\phi_k(\re z)$ and  $\phi_k(\re \tilde{z})$ intersects at most
      $\Delta+1$ of the complementary intervals of $\Omega_k$. The image of each of these intervals by $g_k$ contains
      at most one element of $\Xi_{k_*}\setminus\{0,1\}$ by~\ref{I1item:Omegaimage}, so 
          \[ |g_k(\phi_k(\re z)) - g_k(\phi_k(\re \tilde{z}))| \leq (\Delta + 2)/\gamma_{k_*}.\]
   By Lemma~\ref{lem:pullingback}, it  follows that 
      \[ |\psi_{k_*}(f_k(z))- \psi_{k_*}(f_k(\tilde{z}))| \leq
              \frac{\Delta + 8}{\gamma_{k_*}}. \qedhere \]
\end{proof}

  The final piece of the puzzle is the expanding property of the maps $f_k$.  

\begin{prop}[Expanding system]\label{prop:fkexpanding}
  The system $(X_k,f_{k+1})_{k=0}^{\infty}$, equipped with the metrics $d_{X_k}$ as described above, is expanding in the sense of
   Definition~\ref{defn:expandingsystem}. 
\end{prop}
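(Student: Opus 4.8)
The plan is to verify the two defining conditions of Definition~\ref{defn:expandingsystem} directly, using the map $f_k(w)=F^{-(N_k+1)}(w+2\pi i s(k))$ and the metrics $d_{X_k}$ defined via $\tilde L_k$. The key point is that $\tilde L_k$ distorts the Euclidean metric only near the left edge (real parts below $M_k$) and near infinity (real parts above $\widetilde M_k$), and on the intermediate strip $\{M_k\le\re z\le\widetilde M_k,\ |\im z|\le\pi\}$ it is just the identity; moreover $L_k$ is $1$-Lipschitz on all of $[0,\infty]$ in one direction (its image has length roughly $\widetilde M_k-M_k+2$ while its domain is unbounded), so $d_{X_k}$ is dominated by the Euclidean metric, and conversely on bounded pieces away from the edge it is comparable.

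For condition~\ref{item:expandinginversesystem} (expansion above a fixed scale $K$), I would argue as follows. Fix $z,w\in X_k=\hat T$ and consider $\zeta=f_k(z)$, $\xi=f_k(w)$ in $X_{k-1}=\hat T$. By Lemma~\ref{lem:pullingback} (or directly from the construction), $\re\zeta,\re\xi\ge M_{k-1}-1$, so after applying $\tilde L_{k-1}$ these points sit in the region where $\tilde L_{k-1}$ is either the identity or a mild contraction; hence $d_{X_{k-1}}(\zeta,\xi)$ is bounded above by (a constant times) the hyperbolic distance $\dist_H(\zeta,\xi)$ plus a bounded error, using Observation~\ref{obs:pointstotheleft} to control the left part. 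On the other hand $f_k$ is an iterated inverse branch of $F$ composed with a translation, so by the hyperbolic expansion of $F$ (Proposition~\ref{prop:expansion}) we have $\dist_H(\zeta,\xi)\le\Lambda^{-(N_k+1)}\dist_H(z+2\pi i s(k),w+2\pi i s(k))=\Lambda^{-(N_k+1)}\dist_H(z,w)$. Translating back: if $d_{X_k}(z,w)$ is large (say $\ge K$), then since $d_{X_k}$ is bounded below by the hyperbolic distance up to a bounded additive constant (here I use that $\tilde L_k$ is Lipschitz from the Euclidean to itself on the relevant region and the standard bound~\eqref{eqn:standardestimate2pi}), we get $\dist_H(z,w)$ comparable to $d_{X_k}(z,w)$, and the strong contraction $\Lambda^{-(N_k+1)}$ coming from $N_k>3$ in~\eqref{eqn:Nk} gives $d_{X_{k-1}}(f_k(z),f_k(w))\le\lambda^{-1}d_{X_k}(z,w)$ for a suitable $\lambda>1$. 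The case $\re z$ or $\re w$ very large (the region where $\tilde L_k$ bunches things up) is actually the easy case, since there $f_k$ moves points by a bounded $d_{X_{k-1}}$-distance regardless.

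For condition~\ref{item:expandingbackwardsshrinking} (backwards shrinking), I would show that for fixed $\Delta$ and fixed target level $k$, if $x,y\in X_j$ with $d_{X_j}(x,y)\le\Delta$ then $f_{j,\dots,k}(x),f_{j,\dots,k}(y)$ are exponentially close in $d_{X_k}$ as $j\to\infty$. The composition $f_{j,\dots,k}$ is an inverse branch of $F^{n_j-n_k}$ (with translations inserted), so again by Proposition~\ref{prop:expansion} the hyperbolic distance contracts by $\Lambda^{-(n_j-n_k)}\to0$; combined with the fact that the output points land in the region $\re\cdot\ge M_k-1$ where $d_{X_k}$ is controlled by the hyperbolic metric, and that a bounded $d_{X_j}$-distance translates to at most a bounded hyperbolic distance (via Observation~\ref{obs:pointstotheleft}, since $d_{X_j}$ controls real parts up to the window and imaginary parts are always bounded by $\pi$), this gives the required uniform convergence to $0$.

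The main obstacle I anticipate is bookkeeping the relationship between $d_{X_k}$ and the hyperbolic metric of $H$ carefully enough in the two "edge" regions: near $\re z=0$ the Euclidean metric blows up relative to $d_{X_k}$ (because $L_k$ maps $[0,M_k]$ into a short interval), and near $\re z=\infty$ it does too. The resolution is that $f_k$ never maps into the far-left region below $M_{k-1}-1$ (by Lemma~\ref{lem:pullingback}), so the troublesome left edge of $X_{k-1}$ is simply not in the image; and on the far-right region, although $d_{X_k}$ is very compressed there, $f_k$ restricted to points with huge real parts still contracts in the $d_{X_{k-1}}$-metric because such points get mapped to still-larger real parts (Observation~\ref{obs:expansion}: $\re F(z)>\re z$ when $\re z\ge 9$) where $L_{k-1}$ is even flatter, so the image distance is at most the source distance --- and one only needs contraction above scale $K$, which is automatic once one checks the bounded-distance case separately. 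So the proof splits into: (i) both points in the "middle" region, handled by hyperbolic expansion plus comparability of metrics; (ii) at least one point in the far-right region, handled by monotonicity of real parts under $F$ and flatness of $L$; no point can be in the far-left region of the target. I would assemble these into the constants $\lambda>1$ and $K\ge 0$ explicitly, using $N_k>(\log 12-\log\alpha_{k-1})/\log 2>3$ to make the middle-region contraction beat any loss.
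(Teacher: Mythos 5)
Your high-level plan (two conditions, case analysis by region) is sound, but the central technical tool you choose --- converting back and forth between $d_{X_k}$ and the hyperbolic metric $\dist_H$ of the range $H$ and then invoking Proposition~\ref{prop:expansion} --- does not close. The problem is that the conversion constants are not uniform in $k$. In the ``middle'' region $[M_{k-1},\widetilde M_{k-1}]$ the metric $d_{X_{k-1}}$ is Euclidean, while the hyperbolic density $\rho_H$ at a tract point of real part $R$ is of order $1/R$ (for a half-plane-like $H$; Observation~\ref{obs:pointstotheleft} only gives an \emph{upper} bound on $\rho_H$, not a lower one). So the bound ``$d_{X_{k-1}}(\zeta,\xi)\leq C\dist_H(\zeta,\xi)+C'$'' that you assert actually has $C\sim\widetilde M_{k-1}$. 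Meanwhile the hyperbolic contraction of $f_k$ is $\Lambda^{-(N_k+1)}$, but~\eqref{eqn:Mtilde} makes $\widetilde M_k$ comparable to $F_k^{N_k+1}(\widetilde M_{k-1})$, which grows like an $(N_k{+}1)$-fold exponential and so dwarfs $\Lambda^{N_k+1}$; the gain from Proposition~\ref{prop:expansion} does not catch up to the loss from the metric conversion, and there is no constraint in~\eqref{eqn:Nk} that would make it do so. Similarly, for condition~\ref{item:expandingbackwardsshrinking} your step ``a bounded $d_{X_j}$-distance translates to at most a bounded hyperbolic distance'' is simply false: the map $\tilde L_j$ compresses the entire half-plane $\{\re z\geq\widetilde M_j\}$ into a set of $d_{X_j}$-diameter at most $1+2\pi$, so two points with $d_{X_j}$-distance below $2$ can have arbitrarily large $\dist_H$-distance. (There is also a minor direction error: you cite Observation~\ref{obs:expansion} to say far-right points are mapped by $f_k$ to ``still-larger real parts,'' but $f_k$ is an inverse branch and moves real parts \emph{down}, from $\sim\widetilde M_k$ to $\sim\widetilde M_{k-1}$.)

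The paper avoids $\dist_H$ entirely here. Condition~\ref{item:expandinginversesystem} is read off directly from Proposition~\ref{prop:pseudoconjugacyconditions}\ref{item:pseudocontinuityofconstruction} (which controls $|\psi_{k_*}(f_k(z))-\psi_{k_*}(f_k(\tilde z))|$ by $(\Delta+8)/\gamma_{k_*}$) together with~\ref{I5item:smallgaps} and the definition of $d_{X_{k_*}}$, yielding $d_{X_{k_*}}(f_k(z),f_k(\tilde z))\leq\Delta/4+11\leq\max(\Delta,44)/2$ with explicit constants $\lambda=2$, $K=44$. Condition~\ref{item:expandingbackwardsshrinking} uses the \emph{Euclidean} derivative bound $|f_k'|\leq 1/12$ (from Observation~\ref{obs:expansion} and~\eqref{eqn:Nk}) on bounded sets, and treats the edge sets $A_k=\{\re z\leq M_k\}$ and $B_k=\{\re z\geq\widetilde M_k\}$ separately: $f_k(A_k)$ has bounded Euclidean diameter by Lemma~\ref{lem:pullingback} and~\ref{I5item:smallgaps}, while $f_{k,\dots,0}(B_k)\to\infty$ by a recursive real-part estimate using~\ref{I2item:closeness},~\eqref{eqn:Mtilde} and Observation~\ref{obs:expansion}. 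In short, the metrics $d_{X_k}$ were built so that the expansion of the interval system $(g_j)$ transfers directly through the maps $\psi_k$, and that is the mechanism the proof uses; trying to re-derive expansion through the hyperbolic metric of $H$ runs into the non-uniform density ratio described above.
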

 \begin{proof}
  Let $k\geq 1$ and let $z,\tilde{z}\in \hat{T}$. Set $\Delta\defeq d_{X_k}(z,\tilde{z})$. 
  By Proposition~\ref{prop:pseudoconjugacyconditions}~\ref{item:pseudocontinuityofconstruction}, we have 
    \[ \lvert \psi_{k_*}(f_k(z))-\psi_{k_*}(f_k(\tilde{z}))\rvert \leq \frac{\lceil\Delta\rceil +8}{\gamma_{k_*}}. \]
    By~\ref{I5item:smallgaps} and the definition of $L_k$, we conclude that 
       \[ \lvert L_k(\re f_k(z)) - L_k(\re f_k(\tilde{z}))\rvert \leq \frac{\lceil \Delta\rceil + 8}{4} + 2, \]
   and thus 
     \[ d_{X_{k_*}}(f_k(z),f_k(\tilde{z})) < \frac{\lceil \Delta\rceil +8}{4} + 2 + 2\pi  \leq
                 \frac{\Delta}{4} + 11 \leq \frac{\max(\Delta,44)}{2}. \]
    This establishes part~\ref{item:expandinginversesystem} of  Definition~\ref{defn:expandingsystem}.

  By~\eqref{eqn:fkexpansion}, if $A\subset \overline{T}$ has Euclidean diameter at most $\Delta$,
    then \[ \diam_{X_k}(f_{k,\dots,j}(A))  \leq \diam(f_{k,\dots,j}(A)) \leq 2^{k-j}\cdot \Delta \] for all $k\geq j$. Recall that the metric  $d_{X_k}$ agrees with 
   the  Euclidean metric at real parts between $M_k$ and $\widetilde{M}_k$. So
   consider the sets
$A_k\defeq \{z\in\overline{T}\colon \re  z \leq M_k\}$   and $B_k\defeq \{z\in\overline{T}\colon \re  z \geq \widetilde{M}_k\}$.
   To establish part~\ref{item:expandingbackwardsshrinking} of Definition~\ref{defn:expandingsystem}, it remains only to show that, for each  $j$, the diameter of 
   $f_{k,\dots,j}(A_k)$ and  $f_{k,\dots,j}(B_k)$ tends to zero as $k\to\infty$.

   First let $k\geq 1$ and consider the  set $A_k$. Then
     $\phi_k(\re z)=0$ for all $z\in A_k$. By 
    Lemma~\ref{lem:pullingback}, $\re f_k(z)\geq M_{k_*}-1$ and 
    $\lvert \psi_{k_*}(f_k(z)) - \psi_{k_*}(f_k(z'))\rvert < 6/\gamma_k$ when $z,z'\in A_k$. By~\ref{I5item:smallgaps}, $\re f_k(z)$ and $\re f_k(z')$ differ by 
    at most $5/2$, and hence the Euclidean diameter of $f_k(A_k)$ is bounded
     by $5/2+2\pi$. The desired property follows from the previous observation. 

     Now  consider 
    $B_k$.  By continuity of $F$ at $\infty$, 
    it is sufficient to show that
     $f_{k,\dots,0}(B_k)\to\infty$ as $k\to\infty$. 
  Let $z\in B_k$, and set $x\defeq \re z \geq \widetilde{M}_k$. Define
    $z_k\defeq z$, $x_k\defeq x$, and, inductively,
 \begin{equation}\label{eqn:expandingproofzj}
     z_{j-1} \defeq f_j(z_j) \qquad\text{and}\qquad x_{j-1}\defeq F^{-(N_j+1)}(x_j) \qquad (1\leq j \leq k). 
 \end{equation}
  \begin{claim}
      $|\re  z_j - x_j|\leq 1$ and $x_j\geq \widetilde{M}_j$ for all $j=0,\dots,k$.
  \end{claim}
   \begin{subproof}
  This is trivial for $j=k$. Now assume that the claim holds for $j\in\{1,\dots,k\}$. First note that
     \begin{equation}\label{eqn:expandingproofinequality}  
           x_{j-1} \geq F^{-(N_j+1)}(\widetilde{M}_j) \geq F_j^{-(N_j+1)}(\widetilde{M}_j)- \frac{1}{2} \geq 
                   \widetilde{M}_{j_*} + 1/2 = 
                    \widetilde{M}_{j-1}+1/2 
     \end{equation}
   by~\ref{I2item:closeness} and~\eqref{eqn:Mtilde}. Furthermore,   
     $2\pi  s(j) \leq \widetilde{M}_{j}$ also by~\eqref{eqn:Mtilde}. So       
   \[
      |F^{-1}(x_j) - F^{-1}(z_j + 2\pi i s(j))| \leq
      |F^{-1}(x_j) - F^{-1}(x_j  + 2\pi  i s(j))| + \frac{\pi + 1}{2} \leq 
        5 \] 
    by Observation~\ref{obs:expansion}. Recalling that $(F^{-N_k})' \leq 1/12$ on $\overline{T}$ by~\eqref{eqn:Fkexpansion},
      we  are done. 
    \end{subproof} 

   In particular, using~\ref{I2item:closeness} and~\eqref{eqn:Mtilde} again, and recalling the definition of $n_k$ in~\eqref{eqn:nk}, 
    \begin{align*} \re f_{k,\dots,0}(z) &= \re z_0 \geq x_0 - 1 = F^{-n_k}(x_k) -1  \\ &\geq F^{-n_k}(\widetilde{M}_k)-1 
               \geq F_k^{-n_k}(\widetilde{M}_k) - 3/2  \geq k + 3/2 \to \infty. \qedhere \end{align*}
 \end{proof} 

\begin{proof}[Proof of Theorem~\ref{thm:arclikeexistenceBlog} and Proposition~\ref{prop:arclikeexistenceprecise}] 
   
   The system $(g_k)$ is expanding by con\-struc\-tion and Observation~\ref{obs:obtainingexpandingsystems},
   while $(f_k)$ is expanding by Proposition~\ref{prop:fkexpanding}. By Lemma~\ref{lem:pullingback} and
    Proposition~\ref{prop:pseudoconjugacyconditions}, the 
   inverse systems $(f_k)$ and $(g_k)$ are pseudo-conjugate via 
    the maps $\psi_k$.  

  Applying Proposition~\ref{prop:conjugacyprinciple}, we see that the 
   inverse limits $X=\invlim(X_k,f_{k+1})_{k=0}^{\infty}$ and $\uarrow{Y}=\invlim(Y_k,g_k)_{k=0}^{\infty}$ are
   homeomorphic. Moreover, since $\psi_k(\infty)=1$ for all $k$, the homeomorphism 
   between  the two  systems sends $\infty\mapsfrom\infty\mapsfrom\dots$ to $1\mapsfrom 1 \mapsfrom  \dots$
   by Observation~\ref{obs:convergingtotheconjugacy}. Also recall that $X$ is homeomorphic to the Julia continuum
   $\Jsh(\tilde{F})$, where $\tilde{F}$ is as in Remark~\ref{rmk:obtainingBlog} and 
    $\s$ is given by~\eqref{eqn:formofs}, via projection  to  the initial coordinate. 
   This proves the  first part of  
   Theorem~\ref{thm:arclikeexistenceBlog}; recall that the  second part will follow once we have established
   Proposition~\ref{prop:arclikeexistenceprecise}.  

  To do so, let $z\in J_{\s}$. Applying Lemma~\ref{lem:pullingback} to 
   $w=F^{n_{k+1}}(z)-2\pi i s(k+1)$, we see that $\re F^{n_k}(z) \geq M_k-1 \geq 9$. By Observation~\ref{obs:expansion}, 
    it follows that $\re F^{n}(z)\geq \re F^{n_k}(z)$ for $n_k\leq n < n_{k+1}$. This establishes
    part~\ref{item:largerthanMk} of Proposition~\ref{prop:arclikeexistenceprecise}. 

   To prove~\ref{item:controlonbehaviour}, let $h\colon \Jsh \to \uarrow{Y}$ denote the  homeomorphism whose existence follows from
     Proposition~\ref{prop:conjugacyprinciple}, and $h_k$ its $k$-th component. 
     Recall that the expansion constants $K=3$ and $\lambda=4$ for the system $(g_j)_{j=1}^{\infty}$ were chosen in~\ref{I5item:expansion}, and
     that the constant $M=3$ for the pseudo-conjugacy property~\ref{item:pseudoconjugacy} of Definition~\ref{defn:pseudoconjugacy} was obtained in 
     Lemma~\ref{lem:pullingback}. Hence we can estimate $h_k$ using~\eqref{eqn:conjugacycloseness}:
     \[ |h_k(z) - \psi_k(F^{n_k}(z))| \leq  \frac{4}{\gamma_{k_*}} \]
     for all $z\in \Jsh$ and all $k\geq 0$. 
     If $h_k(z)=0$, this implies by~\ref{I5item:smallgaps} that $\re F^{n_k}(z) < M_k+1$, as desired. 

  Finally, suppose that $\liminf \re F^{n_k}(z) - M_k  < \infty$.  Since $\psi_k(M_k)=0$, it follows from 
     Proposition~\ref{prop:pseudoconjugacyconditions}~\ref{item:pseudocontinuityofconstruction} that
    $\liminf \gamma_k \cdot \psi_k(F^{n_k}(z)-2\pi i s(k)) < \infty$. Note that  $\gamma_k\to\infty$: Indeed, by definition
     the diameter
    of the spaces in an expanding inverse system with non-degenerate inverse limit cannot remain bounded. It follows that 
    $\liminf h_k(z) = 0$, as desired. 
\end{proof}

\begin{rmk}[Number of  tracts]\label{rmk:infinitelymanytracts}
  We have chosen to carry out our construction with a function having a single tract (up  to  translations
    by multiples of  $2\pi  i$). Some of the technicalities of the construction could be simplified 
    by allowing an infinite 
    number of  tracts. Indeed, we would then introduce a new tract at each stage, having a similar shape as our
    domains $U_k$, removing the need to modify these tracts and their functions at later  stages. This
    means those arguments that ensure that the iterates of $F$ are sufficiently
    closely approximated by those of $F_{k-1}$ (such  as in the choice of  $R_k$) can be omitted. 
    At each stage of the construction, we would still use $N_k$ iterates in an auxiliary tract, 
   which can be chosen just as  the strip  $S$,  to 
    transport the interval  $[M_k,\widetilde{M}_k]$ sufficiently far to  the right.

  The latter can be avoided if we are content with constructing Julia continua
    that escape to infinity uniformly, allowing a further simplification in which we are  left only  with a 
    sequence of tracts  $T_k$, one for each  function  $g_k$, and the adddress  $T_0 T_1 T_2 \dots$.   
    However, this 
    would prevent us from realising all Julia continua in a single function as in Theorem~\ref{thm:mainarclike}.
    Moreover, proving
    Theorem~\ref{thm:nonuniform} and its strengthening, Theorem~\ref{thm:onepointuniform}, which we shall do in the
    next section, 
    necessarily requires the construction of non-uniformly escaping Julia continua. 
\end{rmk} 

\section{Applications of the construction: point {\escapingcomposant}s and non-escaping points} \label{sec:onepointuniform}

 We now proceed to some applications of Theorem \ref{thm:arclikeexistenceBlog} and Proposition~\ref{prop:arclikeexistenceprecise}.

\begin{proof}[Proof of Theorems~\ref{thm:nonuniform} and~\ref{thm:onepointuniform}]
  Recall  that Theorem~\ref{thm:onepointuniform} implies Theorem~\ref{thm:nonuniform}. Furthermore,
    the second sentence in part~\ref{item:onepointuniform_escaping} of Theorem~\ref{thm:onepointuniform}
    follows from the remainder
    of the theorem, using 
   Theorem~\ref{thm:composants}. 
     (Alternatively, this property can  also be  easily deduced directly from the construction below.) 

  Consider the function
    \[ g\colon [0,1]\to[0,1]; \qquad x\mapsto \begin{cases}
                                                                  2x-1 &\text{if } x\geq \frac{1}{2} \\
                                                                  \frac{1-2x}{4} &\text{if } x < \frac{1}{2}. \end{cases} \] 
   (Compare Figure~\ref{fig:onepointuniform}.) By~\cite[Section~1.2, Example~12]{ingraminverselimits}, $\uarrow{Y}\defeq\invlim g$ is 
    homeomorphic to an arc. Furthermore, suppose that $x_0\mapsfrom x_1 \mapsfrom x_2 \mapsfrom \dots$ is a point
    in this inverse limit with $x_0>1/4$; then $x_n\to 1$ as $n\to\infty$. As $g^{-1}(0)=\{1/2\}$, this implies that
     \begin{equation}\label{eqn:liminfnonuniform} \liminf_{n\to\infty} x_n > 0 \end{equation}
    for all inverse orbits $(x_n)_{n=0}^{\infty}\in \uarrow{Y}$.

   Let $F$, $\s$ and $n_k$ be as in Proposition~\ref{prop:arclikeexistenceprecise}, taking $M_k=10$ for all $k$. 
     Consider the Julia continuum $\CH\defeq \Jsh(F)$. 
     Then $C\subset I(F)$ by~\eqref{eqn:liminfnonuniform} and Proposition~\ref{prop:arclikeexistenceprecise}~\ref{item:controlonbehaviour}. 
    On the other hand, 
     \[ \liminf_{n\to\infty} \min_{z\in C} \re F^n(z) = \liminf_{k\to\infty} \min_{z\in C} \re F^{n_k}(z) \leq 11 \]
    by choice of $M_k$ and Proposition~\ref{prop:arclikeexistenceprecise}. 

    We now apply Theorem~\ref{thm:realization} to obtain a transcendental entire function $f\in \B$ topologically conjugate
      to the function $\exp(z)\mapsto \exp(F(z))$. As desired, the image of $\exp(\CH)$ under this conjugacy is a Julia continuum of $f$ that
     is homeomorphic to an arc, contains only escaping points, but on which the iterates do not escape to infinity uniformly. 
   \end{proof}

\begin{figure}
 \hfill\subfloat[Theorem \ref{thm:onepointuniform}]{\includegraphics[height=.25\textheight]{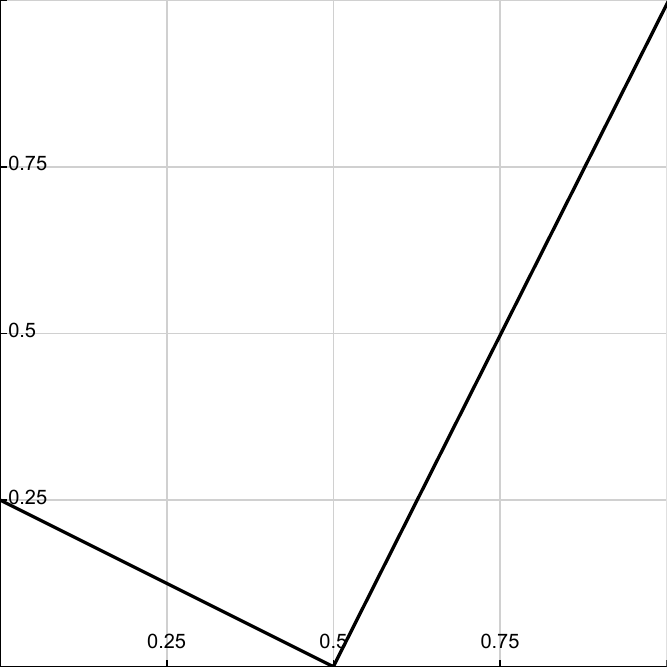}}\hfill%
 \subfloat[Proposition \ref{prop:cantornonescaping}]{\includegraphics[height=.25\textheight]{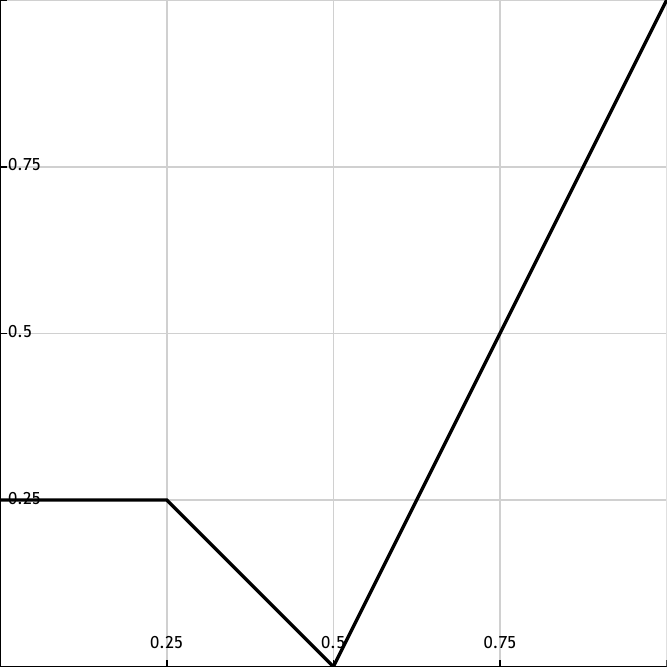}}\hfill\mbox{}
 \caption{The graph of the function $g$ from the proof of Theorem \ref{thm:onepointuniform} (left), and the function $g_a$
    from the proof of Proposition \ref{prop:cantornonescaping}, with $a=1/4$.\label{fig:onepointuniform}}
\end{figure}

We now complete the proof of Theorem \ref{thm:nonescapingaccessible}. Recall that the 
  first half of the theorem, concerning the terminal nature of non-escaping and accessible points, 
   was established in Theorems \ref{thm:nonescapingterminal} and \ref{thm:accessibleterminal}.
   Also recall that the set of non-escaping points in a given Julia continuum has Hausdorff dimension zero by 
   Proposition \ref{prop:nonescapinghausdorff}. Hence it remains to prove that 
   there is a disjoint-type entire function $f$ having a Julia continuum $\CH$ such that
   the set of non-escaping points in $\CH$ is a Cantor set, and another function
   having a Julia continuum containing a dense $G_{\delta}$ 
   set of non-escaping points. Both results 
   will be proved using Proposition \ref{prop:arclikeexistenceprecise}. 

To prove the second statement, we shall use the following general topological fact.
\begin{prop} \label{prop:countablymanyterminalpoints}
  Let $X$ be an arc-like continuum. Suppose that $x$ is a terminal point of $X$, and that
    $E$ is a finite or countable set of terminal points
     such that $X$ is irreducible between $x$ and $e$ for every $e\in E$.  

  Then there is a sequence $g_j\colon [0,1]\to [0,1]$ of continuous and surjective functions such that
    $\uarrow{Y} \defeq  \invlim (g_j)$ is homeomorphic to $X$, in such a way that $x$ corresponds to the
    point $1\mapsfrom 1 \mapsfrom \dots$ and such that every point of $E$ corresponds to
    a point $x_0\mapsfrom x_1 \mapsfrom \dots$ with $x_j=0$ for infinitely many $j$. 
\end{prop}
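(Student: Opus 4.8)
The plan is to construct the bonding sequence $(g_j)$ directly, running the classical realisation of an arc-like continuum as an inverse limit (\cite[Theorem~12.19]{continuumtheory}; cf.\ the proof of Proposition~\ref{prop:arclikecharacterization}), but interleaving into it, at a sparse strictly increasing sequence of ``flattening'' levels $L_1<L_2<\dots$, steps that force the prescribed points of $E$ to acquire the coordinate $0$. After padding a finite $E$ by repetition, fix an enumeration $E=\{q^1,q^2,\dots\}$ and a map $\nu\colon\N\to\N$ attaining every value infinitely often; the aim is to have $q^{\nu(n)}$ land on $0$ in its $L_n$-th coordinate, so that each point of $E$ has vanishing coordinate at infinitely many levels.

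The one genuinely new preliminary fact I would establish is a local strengthening of the last part of Proposition~\ref{prop:arclikecharacterization}: \emph{if $q$ is terminal and $X$ is irreducible between $q$ and $x$, then for every $\eps>0$ there is a surjective $\eps$-map $f\colon X\to[0,1]$ with $f(x)=1$ such that $f^{-1}([0,\beta])$ is a neighbourhood of $q$ of diameter less than $\eps$ for all sufficiently small $\beta>0$}. Indeed, Proposition~\ref{prop:arclikecharacterization} provides a surjective $(\eps/2)$-map $g$ with $g(x)=1$ and $g(q)=0$; since $\diam g^{-1}(0)<\eps/2$, a compactness argument gives $\diam g^{-1}([0,t])<\eps$ for $t$ small, and then $f\defeq r\circ g$ works, where $r\colon[0,1]\to[0,1]$ is surjective with $r(1)=1$, $r\equiv0$ on $[0,\eta]$, and $r$ injective on $[\eta,1]$, for $\eta>0$ small.

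The construction then proceeds by induction, producing surjective $\eps_j$-maps $f_j\colon X\to[0,1]$ (with $\eps_j\to0$, $f_j(x)=1$, and each $f_j$ coming from a chain refining that of $f_{j-1}$), surjective bonding maps $g_j\colon[0,1]\to[0,1]$ with $g_j(1)=1$ and $\|f_{j-1}-g_j\circ f_j\|_\infty$ small, and rescaling factors making both $([0,1],g_{j+1})$ and the trivial system over $X$ expanding (Observation~\ref{obs:obtainingexpandingsystems}), so that the maps $f_j$ constitute a pseudo-conjugacy. Away from the levels $L_n$ everything is chosen exactly as in the classical construction; at $L_n$ (taken with large gaps) I additionally require $f_{L_n}$ and $f_{L_n+1}$ to be obtained from the preliminary fact relative to $q^{\nu(n)}$, with $\eps_{L_n+1}$ and $\beta_n\defeq 1/n$ chosen so small that $f_{L_n+1}^{-1}([0,\beta_n])$ has diameter less than $\eps_{L_n+1}$ and lies in the neighbourhood of $q^{\nu(n)}$ on which $f_{L_n}$ vanishes; I then take $g_{L_n+1}$ to be identically $0$ on $[0,\beta_n]$, which is compatible with $g_{L_n+1}\circ f_{L_n+1}\approx f_{L_n}$ precisely because of the preceding choice. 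Proposition~\ref{prop:conjugacyprinciple} (equivalently \cite[Theorem~12.19]{continuumtheory}) now supplies a homeomorphism $h\colon X\to\invlim(g_j)$ with $h(x)=(1\mapsfrom 1\mapsfrom\cdots)$, whose $m$-th coordinate satisfies $|h_m(z)-f_m(z)|\le\mathcal C/\gamma_m$ by \eqref{eqn:conjugacycloseness}, where $\mathcal C$ is a fixed constant and $\gamma_m$ the rescaling factor (which may be enlarged at will). Taking $\gamma_{L_n+1}>\mathcal C\,n$ we get $h_{L_n+1}(q^{\nu(n)})\in[0,\beta_n)$ since $f_{L_n+1}(q^{\nu(n)})=0$, hence $h_{L_n}(q^{\nu(n)})=g_{L_n+1}\bigl(h_{L_n+1}(q^{\nu(n)})\bigr)=0$; as $\nu$ hits every index infinitely often and $L_n$ is increasing, this proves the assertion.

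The step I expect to be the main obstacle is confirming that the flattening steps do not disturb the homeomorphism type: one must check that requiring $g_{L_n+1}$ to be locally constant near $0$, together with the prescribed behaviour of $f_{L_n},f_{L_n+1}$ near $q^{\nu(n)}$, is compatible with choosing all remaining data so that $\invlim(g_j)$ is genuinely homeomorphic to $X$ rather than to a proper quotient --- the essential point being that each modification is confined to a shrinking neighbourhood of the $0$-end of a single factor, and that the large gaps between the $L_n$ keep the modifications for distinct $n$ (and the behaviour of any points of $E$ clustering at $q^{\nu(n)}$) from interacting. A cleaner route I would also try is to start from an inverse-limit representation $X\cong\invlim(\tilde g_j)$ from Proposition~\ref{prop:arclikecharacterization} and to splice in, between suitable factors, an extra copy of $[0,1]$ together with a refinement map $p$ (surjective, fixing $1$, with fibres contained in those of $\tilde g_{j+1}$ and sending the relevant coordinate of $q^{\nu(n)}$ to $0$) and the induced ``unfolding'' map $\tilde g_{j+1}\circ p^{-1}$; there the obstacle is to show that such a refinement $p$ always exists.
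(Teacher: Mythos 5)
Your proposal is correct and fills in precisely the details the paper elides --- the paper's own ``proof'' is the single sentence deferring to Nadler's Theorem~12.19 and the proof of Proposition~\ref{prop:arclikecharacterization}. Your local strengthening of Proposition~\ref{prop:arclikecharacterization} (an $\eps$-map $f$ with $f(x)=1$ that vanishes identically on a small neighbourhood of a prescribed terminal point $q$ with $X$ irreducible between $q$ and $x$) is correct, and the flattening mechanism --- an enumeration $\nu$ hitting every index infinitely often, together with the requirement that $g_{L_n+1}\equiv 0$ on $[0,\beta_n]$ while $f_{L_n+1}^{-1}([0,\beta_n])$ is a small neighbourhood of $q^{\nu(n)}$ contained in where $f_{L_n}$ vanishes --- is exactly the adaptation of the chain-refinement argument the paper has in mind. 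The final step, obtaining $|h_{L_n+1}(q^{\nu(n)})|\le\mathcal C/\gamma_{L_n+1}<\beta_n$ from \eqref{eqn:conjugacycloseness} and deducing $h_{L_n}(q^{\nu(n)})=g_{L_n+1}(h_{L_n+1}(q^{\nu(n)}))=0$, closes the argument correctly.

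The only genuine divergence from the paper's intent is stylistic: the paper points to a direct run of Nadler's construction (the map $x\mapsto(\lim_m g_{j,\dots,m}(f_m(x)))_j$), whereas you route everything through the pseudo-conjugacy framework of Section~\ref{sec:conjugacy}. These encode the same shadowing mechanism, and the latter has the advantage that \eqref{eqn:conjugacycloseness} gives you the quantitative coordinate estimate ready-made. What you have not fully verified are the pseudo-conjugacy conditions~(c) and~(d) of Definition~\ref{defn:pseudoconjugacy}, which require coordinating the choices of $\eps_j$, the rescaling factors $\gamma_j$, and the blown-up metric $\delta_j\cdot d_X$ on the constant $X$-system: one must pick $\delta_j$ large enough that $d_X(x,\tilde x)\le 1/\delta_j$ forces $|f_{j-1}(x)-f_{j-1}(\tilde x)|\le 1/\gamma_{j-1}$ (uniform continuity of $f_{j-1}$), and conversely $\gamma_j$ large enough that $|f_j(x)-f_j(\tilde x)|\le 1/\gamma_j$ forces $d_X(x,\tilde x)\le 2\eps_j$ with $\delta_j\eps_j$ bounded. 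These verifications are routine and of the same kind the paper itself omits in the proof of Proposition~\ref{prop:countable}. Your closing worry about the flattening steps ``disturbing the homeomorphism type'' is already resolved by Proposition~\ref{prop:conjugacyprinciple} once those conditions hold, so the remaining work is exactly these bookkeeping checks, not a further structural argument.
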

\begin{proof}
  This follows from the proof of 
   \cite[Theorem 12.19]{continuumtheory}, similarly as in the proof of Proposition     
   \ref{prop:arclikecharacterization}. We omit the details.
\end{proof}

To construct a Cantor set of non-escaping points, we make an explicit inverse limit construction.

\begin{prop}[Cantor sets in inverse limit spaces]\label{prop:cantornonescaping}
  There exists a sequence $(g_n)_{n\geq 1}$ of surjective continuous maps $g_n\colon [0,1]\to[0,1]$,
    each fixing $1$,
    such that the inverse limit $\uarrow{Y} \defeq  \invlim (g_n)$ has the following properties.
  \begin{enumerate}[(a)]
    \item The set $A \defeq  \{(x_n)_{n=0}^{\infty}\in \uarrow{Y}\colon  x_n=0 \text{ for infinitely many $n$}\}$ is a 
        Cantor set;\label{item:cantorA}
    \item every $(x_n)_{n=0}^{\infty}\in \uarrow{Y}\setminus A$ satisfies $\liminf_{n\to\infty} x_n > 0$.\label{item:liminfA}
  \end{enumerate}
\end{prop}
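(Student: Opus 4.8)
The plan is to write down an explicit sequence of tent-like maps for which the relevant inverse limit contains a Cantor set of ``return-to-zero'' orbits but no other orbits accumulating at $0$. The natural candidate is a family of maps of the shape already used in the proof of Theorem~\ref{thm:onepointuniform}, namely maps that are surjective, fix $1$, and on the left half of $[0,1]$ map a short initial interval onto all of $[0,1]$ while a longer middle piece is sent into the ``safe'' region bounded away from $0$. Concretely I would take, for a small parameter $a\in(0,1/2)$, the map
\[
 g_a\colon[0,1]\to[0,1];\qquad
   x\mapsto
   \begin{cases}
     \dfrac{a-x}{a} & 0\le x\le a,\\[2mm]
     \dfrac{x-a}{1-a} & a\le x\le 1,
   \end{cases}
\]
(compare Figure~\ref{fig:onepointuniform}). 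This has $g_a^{-1}(0)=\{a\}$, $g_a(1)=1$, and $g_a$ is expanding off the point $a$. Then I would set $g_n\defeq g_{a_n}$ for a suitable sequence $a_n\to 0$, say $a_n = 2^{-n}$.

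The first step is to establish~\ref{item:liminfA}: if $(x_n)\in Y\setminus A$, i.e.\ $x_n>0$ for all sufficiently large $n$, then I claim $\liminf x_n>0$. The key observation is that $g_n^{-1}([0,a_n])=[a_n,2a_n]\cup\{$ a small left piece $\}$ in a way that makes the preimage of the ``dangerous'' interval $[0,a_n]$ under $g_{n+1}$ lie inside $[0,O(a_{n+1})]\cup[a_n,a_n+O(a_{n+1})]$; since $a_{n+1}<a_n$, this shows that once an inverse orbit escapes the shrinking dangerous zone it cannot return. More precisely, I would argue: if $x_N\ge \delta$ for some $\delta>0$ and $N$ with $a_N<\delta/2$, then $g_{N}(x_{N})\ge\delta/2$ because $x_N$ lies in the expanding branch, and inductively $x_n\ge\delta/2$ for all $n\ge N$. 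This gives the uniform lower bound. Conversely, if $x_n=0$ for infinitely many $n$ then $(x_n)\in A$; so $Y\setminus A$ is exactly the orbits with eventually positive coordinates, and the above applies.

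The second, and I expect main, step is to show $A$ is a Cantor set, i.e.\ nonempty, compact, totally disconnected and perfect. Compactness is automatic ($A$ is closed in the compact space $Y$: it is the intersection over $N$ of the closed sets $\{(x_n): x_n=0\text{ for some }n\ge N\}$). Nonemptiness and a coding argument come together: I would set up a homeomorphism between $A$ and a sequence space. The point is that at each level where $x_n=0$, one has a binary choice at the next ``return'' — whether the orbit goes through $0$ again at a later prescribed time or not — precisely because $g_n^{-1}(0)=\{a_n\}$ is a single point but $g_n^{-1}$ of a neighbourhood of $0$ splits into two components (the left fold and the right foot of the tent), exactly one of which can again be pulled back to hit $0$. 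Making this rigorous requires (i) showing that the set of return times is genuinely free to be any infinite subset of $\N$ of a suitable form, yielding a continuous surjection from $\{0,1\}^{\N}$ onto $A$, and (ii) showing this map is injective, i.e.\ that the forward orbit (in $Y$, i.e.\ the backward orbit of the $g_n$) is determined by its pattern of zeros. Total disconnectedness and perfectness of $A$ then follow from the corresponding properties of $\{0,1\}^{\N}$, provided the coding map is a homeomorphism; since $\{0,1\}^{\N}$ and $A$ are both compact metric and the map is a continuous bijection, this is automatic. The hard part will be carefully analysing the two components of the $g_n$-preimages along an inverse orbit and verifying that the ``zero pattern'' is both arbitrary and complete as an invariant — this is where one must be careful that the shrinking of $a_n$ does not accidentally force or forbid some patterns. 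Once~\ref{item:cantorA} and~\ref{item:liminfA} are in hand, one feeds $(g_n)$ into Proposition~\ref{prop:arclikeexistenceprecise} (with $M_k=10$, say) exactly as in the proof of Theorem~\ref{thm:onepointuniform}: the homeomorphism $h\colon\Jsh(F)\to Y$ and part~\ref{item:controlonbehaviour} of that proposition translate $A$ into the set of nonescaping points of the Julia continuum and $Y\setminus A$ into escaping points, completing the proof of the Cantor-set statement in Theorem~\ref{thm:nonescapingaccessible}.
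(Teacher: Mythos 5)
Your choice of bonding maps has a structural problem that makes part~(a) fail outright. With your
\[
 g_a(x)=\begin{cases}(a-x)/a & 0\le x\le a,\\ (x-a)/(1-a) & a\le x\le 1,\end{cases}
\]
we have $g_a(0)=1$ and $g_a(1)=1$ for every $a$. Now suppose $(x_n)\in Y=\invlim(g_n)$ has $x_{n_0}=0$ for some $n_0$. Then $x_{n_0-1}=g_{n_0}(0)=1$, and since $1$ is fixed by every $g_n$, all earlier coordinates satisfy $x_m=1$ for $m<n_0$. Thus no inverse orbit can have two distinct coordinates equal to $0$, and $A=\emptyset$, which is not a Cantor set. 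This is independent of the choice of $a_n$; the obstruction is the identity $g_a(0)=1$ itself. The ``two-component preimage'' you describe also doesn't hold where you need it: $g_a^{-1}$ of a small neighbourhood of $0$ is a single interval around $a$ (the fold of your tent is at $a$, where $g_a$ attains its minimum $0$), whereas the preimage of a neighbourhood of $1$ splits into two pieces near $0$ and near $1$ -- the opposite of what the coding argument requires.

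The paper avoids this by choosing maps with $g_a(0)=a<1$: the map is constant $=a$ on $[0,1/4]$, decreases from $a$ to $0$ on $[1/4,1/2]$, and equals $2x-1$ on $[1/2,1]$. The parameters are chosen \emph{increasing} to $1$, namely $a_n=1-2^{-\lfloor(n-1)/2\rfloor}$, precisely so that the descending coordinates of a zero coordinate revisit $0$ at a controlled earlier index: one computes $g_{n,\dots,\tilde k}(0)=1-2^{\lfloor n/2\rfloor-\tilde k}$ for $\lfloor n/2\rfloor\le\tilde k<n$, hence $g_{n,\dots,\lfloor n/2\rfloor}(0)=0$. Nonemptiness and perfectness of $A$ then come from the fact that, for each $n_0$, there are exactly two indices $n$ with $\lfloor n/2\rfloor=n_0$, giving a binary branching of return times; compactness follows from the identity $A=\bigcap_{k}\bigcup_{j=2^k}^{2^{k+1}-1}\{x_j=0\}$ and total disconnectedness from the countability of each coordinate projection. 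Part~(b) is proved by the contrapositive: if $x_n<1/4$ then $x_{n-1}=g_n(x_n)=a_n=g_n(0)$, whence $x_{\lfloor n/2\rfloor}=0$; so $\liminf x_n=0$ forces infinitely many zero coordinates.

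A secondary point: your argument for~(b) runs the induction in the wrong direction. In an inverse limit $x_{n-1}=g_n(x_n)$, so knowing $x_N$ determines $x_m$ for $m<N$, not for $m>N$; a lower bound on $\liminf_{n\to\infty}x_n$ requires control of the large-index coordinates, which your ``$x_n\ge\delta/2$ for all $n\ge N$'' step does not provide. The contrapositive argument above (tracing the effect of a small coordinate on \emph{earlier} coordinates until it produces an exact zero) is what actually closes the loop, and it relies on the flat piece near $0$ that your $g_a$ lacks.
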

\begin{proof}
  For $a\in [0,1)$, let us define
       \[ g_a\colon [0,1]\to[0,1]; \qquad x\mapsto
             \begin{cases} 
                 a                  &\text{if } x\leq \frac{1}{4} \\
                 a\cdot(2 - 4x)          &\text{if } \frac{1}{4} < x \leq \frac{1}{2} \\
                 2x - 1          &\text{if } \frac{1}{2} < x \leq 1. \end{cases}. \]
   Note that $g_a(1-2^{-j})=g_a(1-2^{-(j-1)})$ for $j\geq 1$. The desired sequence of maps is given by 
    $g_n \defeq  g_{a_{n}}$, where $a_1=0$ and 
      \begin{equation}\label{eqn:an} a_n \defeq 1 - \frac{1}{2^{\left\lfloor \frac{n-1}{2}\right\rfloor}} \quad (n\geq 2). \end{equation}

  For $n\geq 2$, set 
      \[ k(n)\defeq n-1 -  \lfloor (n-1)/2\rfloor = \lceil (n-1)/2 \rceil = \lfloor n/2\rfloor. \]
     By~\eqref{eqn:an}, if   $k(n) \leq \tilde{k}< n$, then 
      \[ g_{n ,\dots, \tilde{k}}(0)= g_{(n-1),\dots, \tilde{k}}(a_n) = 1 - 2^{k(n)-\tilde{k}}. \] 
In particular, 
     $g_{n,\dots,k(n)}(0)=0$ and $g_{n,\dots,\tilde{k}}(0)\neq 0$ for 
      $k < \tilde{k}< n$.

 Now consider $\uarrow{Y} =  \invlim(g_n)$ and the set  
   $A$ from the statement of the proposition. 
   Let $(x_n)_{n=0}^{\infty}\in A$ and $k\geq 1$, and let $j$ be minimal with $x_j=0$ and $j\geq 2^k$. 
    Then, by construction,  $x_{k(j)}=x_{\lfloor j/2 \rfloor} = 0$, and hence 
    $j < 2^{k+1}$. So
        \[ A = \bigcap_{k\geq 1} 
                 \bigcup_{j= 2^k}^{2^{k+1}-1} \{ (x_n)_{n=0}^{\infty}\in \uarrow{Y} \colon x_j=0\}, \]
     and  $A$ is compact as an intersection  of compact sets. 
     Furthermore, $A$ is totally disconnected, as its projection to the $k$-th coordinate is countable for all $k$. 

  Fix $(x_n)_{n=0}^{\infty}\in A$, and let $n_0\geq 1$ such that  $x_{n_0}=0$. 
   There are exactly two values of $n$ with
   $k(n)=n_0$, and hence $A$ contains at least one other point $(y_n)_{n=0}^{\infty}$ with $y_{n_0}=0$. The distance between
    $(x_n)_{n=0}^{\infty}$ and $(y_n)_{n=0}^{\infty}$ tends to zero as  $n_0\to\infty$, so $(x_n)_{n=0}^{\infty}$ is not an isolated point. 
    Thus we have proved that $A$ is a Cantor set, 
    establishing~\ref{item:cantorA}. 

   Finally, suppose that $(x_n)_{n=0}^{\infty}\in \uarrow{Y}$ and
   $\liminf_{n\to\infty} x_n = 0$. Then there are infinitely many values of $n$ for which
    $x_n < 1/4$, and hence $x_{n-1} = g_n(x_n)=a_n=g_n(0)$. So
      \[ x_{k(n)} = g_{n,\dots,k(n)}(x_n) = g_{n,\dots,k(n)}(0) = 0. \]
     Since $k(n)\to\infty$ as $n\to\infty$, we see that $(x_n)_{n=0}^{\infty}\in A$, establishing claim~\ref{item:liminfA}.
\end{proof}

 \begin{cor} \label{cor:densenonescaping}
   There exists a disjoint-type entire function $f$ having a Julia continuum $\CH$ such that
    the set of non-escaping points in $\CH$ contains a dense $G_{\delta}$ subset of $\CH$.

  There also exists a disjoint-type entire function $f$ having a Julia continuum $\CH$ such that
    the set of non-escaping points in $\CH$ is a Cantor set. 
 \end{cor}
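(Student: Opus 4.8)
The plan is to derive both statements of Corollary~\ref{cor:densenonescaping} directly from Proposition~\ref{prop:arclikeexistenceprecise}, together with the two inverse-limit constructions that precede it (Proposition~\ref{prop:countablymanyterminalpoints} and Proposition~\ref{prop:cantornonescaping}), and then to pass from a $\BlogP$-model to an actual entire function via Theorem~\ref{thm:realization}. The common mechanism is as follows. Given a sequence $(g_j)$ of surjective continuous self-maps of $[0,1]$ fixing $1$, set $Y\defeq\invlim(g_j)$ with terminal point $y_1\defeq(1\mapsfrom 1\mapsfrom\dots)$, apply Proposition~\ref{prop:arclikeexistenceprecise} with $M_k\defeq 10$ for all $k$, and obtain $F\in\BlogP$ of bounded slope, an external address $\s$, the increasing sequence $(n_k)$, and a homeomorphism $h\colon\Jsh(F)\to Y$ with $h(\infty)=y_1$. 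The key is part~\ref{item:controlonbehaviour} of that proposition: for $z\in\Jsh(F)$ one has $h_k(z)=0\Rightarrow \re F^{n_k}(z)\le 11$, while $\liminf_k h_k(z)>0\Rightarrow \re F^{n_k}(z)-10\to\infty$. Combined with part~\ref{item:largerthanMk} (which says $\re F^{n}(z)\ge\re F^{n_k}(z)$ for $n_k\le n<n_{k+1}$, so that $z\in I(F)\iff \re F^{n_k}(z)\to\infty$), this shows: if $h(z)=(x_j)_{j\ge 0}$ has $x_j=0$ for infinitely many $j$ then $z$ is nonescaping (its real part returns below $11$ infinitely often), and if $\liminf_j x_j>0$ then $z\in I(F)$.

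For the ``dense nonescaping points'' statement, I would take $X$ to be \emph{any} arc-like continuum possessing a terminal point $x$ together with a countable dense set $E$ of terminal points such that $X$ is irreducible between each point of $E$ and $x$; for instance the Knaster buckethandle augmented appropriately, or more simply a suitable inverse limit built so that a dense set of points has inverse orbits hitting $0$ infinitely often. (Such continua exist; one concrete choice is to use $X=\invlim(g_j)$ where the $g_j$ are chosen so that the set of inverse orbits passing through $0$ infinitely often is dense, which is easy to arrange with tent-like maps.) Proposition~\ref{prop:countablymanyterminalpoints} then yields a sequence $(g_j)$ with $Y=\invlim(g_j)$ homeomorphic to $X$, with $x$ corresponding to $y_1$ and every point of $E$ corresponding to an inverse orbit with $x_j=0$ for infinitely many $j$. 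Feeding $(g_j)$ into the machinery above, the preimages under $h$ of these countably many points are all nonescaping, and their images under $h$ are dense in $Y$, hence they are dense in $\Jsh(F)$ since $h$ is a homeomorphism. Finally, Theorem~\ref{thm:realization} upgrades $F\in\BlogP$ to a disjoint-type entire function $f\in\B$ (indeed in $\classS$) with $f|_{J(f)}$ conjugate to the induced map, carrying $\Jsh(F)$ to a Julia continuum $\CH$ of $f$ on which the nonescaping points remain dense (escaping/nonescaping status and topology are preserved by the conjugacy, which is a homeomorphism on each component, via the exponential).

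For the ``Cantor set of nonescaping points'' statement, I would instead plug the explicit sequence $(g_n)$ of Proposition~\ref{prop:cantornonescaping} into Proposition~\ref{prop:arclikeexistenceprecise} (again with $M_k=10$). By that proposition the set $A\subset Y$ of inverse orbits hitting $0$ infinitely often is a Cantor set, and every point of $Y\setminus A$ has $\liminf_n x_n>0$. Under the homeomorphism $h\colon\Jsh(F)\to Y$: every point of $h^{-1}(A)$ is nonescaping by the argument in the first paragraph, while every point of $h^{-1}(Y\setminus A)$ lies in $I(F)$. Hence the set of nonescaping points in $\Jsh(F)$ is exactly $h^{-1}(A)$, a Cantor set. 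Transferring through Theorem~\ref{thm:realization} as before gives a disjoint-type entire function $f$ with a Julia continuum $\CH$ whose nonescaping set is a Cantor set. I expect the main (though modest) obstacle to be purely bookkeeping: one must verify that the conclusions of Proposition~\ref{prop:cantornonescaping}, phrased about the inverse limit $Y$, translate verbatim into statements about $\re F^{n_k}(z)$ via part~\ref{item:controlonbehaviour}—in particular checking that $\liminf_k\re F^{n_k}(z)<\infty$ is genuinely equivalent to $\liminf_k h_k(z)=0$ (the ``only if'' direction uses the second implication of~\ref{item:controlonbehaviour} contrapositively, the ``if'' direction uses the first implication together with $M_k=10$). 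No new ideas beyond those already assembled in the excerpt are required.
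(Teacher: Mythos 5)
Your proposal is correct and follows essentially the same route as the paper: Proposition~\ref{prop:countablymanyterminalpoints} (respectively Proposition~\ref{prop:cantornonescaping}) provides the inverse-limit input, Proposition~\ref{prop:arclikeexistenceprecise} with $M_k\equiv 10$ translates ``inverse orbit hits $0$ infinitely often'' into ``nonescaping'' and ``$\liminf_j x_j>0$'' into ``escaping'', and Theorem~\ref{thm:realization} upgrades the $\BlogP$-model to an entire function. The only slip is your suggestion of ``the Knaster buckethandle augmented appropriately'' for the dense case: the Knaster continuum has a single terminal point and cannot supply a dense set of them, so this particular lead is a dead end as stated --- the paper's concrete example is the pseudo-arc (every point terminal, so choose a countable dense $E$ in one composant and $x$ in another), while your second alternative (directly building an inverse limit with a dense set of inverse orbits through $0$) matches the paper's parenthetical remark exactly.
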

\begin{remark}
 This completes the proof of Theorem \ref{thm:nonescapingaccessible}.
\end{remark}
\begin{proof}
  Let $Y$ be an arc-like continuum containing a terminal point $x$ and a dense countable set $E$ of terminal 
    points such that $Y$ is irreducible between $x$ and $e$ for all $e\in E$. Apply 
    Proposition~\ref{prop:countablymanyterminalpoints} to obtain an inverse limit representation of $Y$ where
    the set $E$ consists of points having infinitely many coordinates equal to $0$. 
    By Proposition~\ref{prop:arclikeexistenceprecise}, there is a disjoint-type 
     $F\in\BlogP$ for which there exist a Julia continuum $\Jsh(F)$ and
     a homeomorphism $\theta\colon Y\to \Jsh(F)$ such that the orbit of
     any element of $\theta(E)$ contains infinitely many points at real part less than $12$. The set of points having the latter property is a $G_{\delta}$ subset of $\Jsh(F)$, which is dense in 
     $\Jsh(F)$ since it contains $\theta(E)$. An application of Theorem~\ref{thm:realization} now yields
     the  first claim.  

  (An example of a continuum $Y$ with the desired property is given by the pseudo-arc. Indeed, since 
    every point is terminal, we can simply select a countably dense subset $E$ of one composant, and choose $x$ in some other composant.
   We remark that it is also straightforward to construct an inverse limit $Y$ with the desired properties directly, without using 
   Proposition \ref{prop:countablymanyterminalpoints}.)

  The second claim follows in analogous manner, using the inverse limit  
    from Proposition \ref{prop:cantornonescaping}.
\end{proof}
\begin{remark}
  A continuum containing a dense set of terminal points
   must necessarily be either indecomposable or the union of two
   indecomposable continua \cite{densesetofendpoints}. 
\end{remark}

\section{Realising all arc-like continua by a single function}
 \label{sec:allinone}

 In this section, we complete the proof of Theorem \ref{thm:mainarclike}, by showing that all the arc-like continua
   in question can be realised by a single function.

 \begin{thm}[A universal Julia set for arc-like continua]\label{thm:allinone}
    There exists a disjoint-type function $F\in\BlogP$ with bounded slope and the following property.

   Let $Y$ be an arc-like continuum having a terminal point $y_1$, Then there is a Julia continum
      $\Jsh(F)$ such  that $\Jsh(F)$ is homeomorphic to $Y$, with $\infty$ corresponding to $y_1$. 
 \end{thm}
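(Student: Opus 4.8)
The plan is to reuse the machinery developed in Section~\ref{sec:arclikeexistence} essentially verbatim, but with a bookkeeping modification that allows a single function $F$ to encode \emph{all} arc-like continua with terminal points at once. The key observation is Proposition~\ref{prop:countable}: there is a countable ``universal'' set $\G$ of piecewise-linear surjections $g\colon[0,1]\to[0,1]$ with $g(1)=1$ such that every arc-like continuum with a terminal point can be written as $\invlim(g_k)_{k\geq1}$ with each $g_k\in\G$. Since $\G$ is countable, there are only countably many \emph{finite} sequences $(g_1,\dots,g_n)$ of elements of $\G$. The plan is to run the inductive construction of Section~\ref{sec:arclikeexistence} once, but instead of building a single nested sequence of side-channels $U_1,U_2,\dots$ corresponding to one fixed sequence $(g_k)$, we build a countable \emph{tree} of side-channels, one branch for each finite sequence from $\G$, so that every infinite sequence drawn from $\G$ is ``visible'' as an address of $F$.

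Concretely, first I would fix an enumeration of all finite sequences $(g_1,\dots,g_n)$ with $g_i\in\G$, arranged so that each sequence is listed after all its proper initial segments (e.g.\ breadth-first through the tree $\G^{<\omega}$). Then I would carry out a single inductive construction as in Section~\ref{sec:arclikeexistence}, where at stage $k$ (corresponding to the $k$-th finite sequence $w_k=(g_1,\dots,g_{n})$ in the enumeration), the predecessor $k_*$ is taken to be the index of the initial segment $(g_1,\dots,g_{n-1})$ rather than $k-1$ — exactly the flexibility already anticipated in the remark around \eqref{eqn:kstar} (``$k_*$ will be chosen to be a different predecessor''). The side channel $U_k$ is built to mimic the map $g_n$ relative to the partition data $\Xi_{k_*}$ inherited from its tree-parent, precisely as in steps I1--I5. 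The numbers $M_k$ are chosen so that along any infinite branch $M_k\to\infty$ is \emph{not} forced; in fact for the present theorem we only need the existence statement, so we may simply take $M_k\equiv 10$ (or indeed let $M_k$ depend on the branch; but a constant suffices since the conclusion here does not assert anything about uniform escape). The addresses are then of the form $\s = s(0)\,0^{N_1}\,s(1)\,0^{N_2}\,\dots$ as in \eqref{eqn:formofs}, where now, given a target continuum $Y=\invlim(g_k)_{k\geq1}$, we follow the unique branch of the tree through $(g_1),(g_1,g_2),(g_1,g_2,g_3),\dots$, reading off the corresponding subsequence of indices $k$ and the associated integers $s(k)$.

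The analysis then proceeds exactly as before. For a fixed arc-like continuum $Y$ with terminal point $y_1$, write $Y=\invlim(g_k)_{k=1}^\infty$ with all $g_k\in\G$ by Proposition~\ref{prop:countable}. Let $(k_j)_{j\geq1}$ be the increasing sequence of stage-indices along the corresponding branch of the tree, and let $\s$ be the address obtained by concatenating the blocks $0^{N_{k_j}}s(k_j)$. The pair of inverse systems $(X_k,f_{k+1})$ (restricted to the relevant branch, i.e.\ using the subsequence $(k_j)$) and $(Y_k,g_{k+1})$ is pseudo-conjugate via the maps $\psi_k(z)=\phi_k(\re z)$ by Lemma~\ref{lem:pullingback} and Proposition~\ref{prop:pseudoconjugacyconditions}, and both systems are expanding by Proposition~\ref{prop:fkexpanding} and Observation~\ref{obs:obtainingexpandingsystems}. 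Proposition~\ref{prop:conjugacyprinciple} then yields a homeomorphism $\Jsh(F)\cong Y$ carrying $\infty$ to $y_1=(1\mapsfrom1\mapsfrom\dots)$, using $\psi_k(\infty)=1$ and Observation~\ref{obs:convergingtotheconjugacy} as in the proof of Theorem~\ref{thm:arclikeexistenceBlog}. Combining with Theorem~\ref{thm:realization} transfers this to an entire function $f\in\B$ (indeed $f\in\classS$), and the bounded-slope property is preserved because $T$ is contained in a horizontal strip, giving Theorem~\ref{thm:mainarclike} in full.

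The main obstacle is purely organizational rather than conceptual: one must verify that the tree-structured construction does not interfere with itself — that introducing the side-channel $U_k$ for one branch at stage $k$ does not disturb the channels already built for \emph{other} branches at earlier stages. This is handled exactly as the single-branch case handles the disturbance of earlier $U_j$: by choosing $R_k$ large enough (step I2, condition~\ref{I2item:closeness}) that the new function $F_k$ is uniformly close to $F_{k-1}$ on the relevant compact pieces, via Observation~\ref{obs:caratheodoryconvergence}; since there are only finitely many channels constructed before stage $k$ regardless of which branches they lie on, the same quantitative estimates apply. One also needs all the $U_k$ to be pairwise disjoint and to fit inside the strip $\{x+iy:|y|<\pi\}$ with disjoint $2\pi i\Z$-translates; this is arranged by placing the channels for stage $k$ in the real-part window $(R_{k-1}+1,R_k+1)$ as before, with the tree structure only affecting \emph{which} partition data $\Xi_{k_*}$ the channel is built against, not its geometric placement. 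I would also note (as in Remark~\ref{rmk:infinitelymanytracts}) that one could alternatively give each branch-channel its own tract, simplifying the approximation arguments at the cost of having infinitely many tracts; but keeping a single tract is compatible with the above and matches the statement of Theorem~\ref{thm:allinone}.
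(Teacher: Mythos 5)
Your proposal matches the paper's proof essentially step by step: enumerate the finite sequences over the countable family $\G$ from Proposition~\ref{prop:countable} so that prefixes come first, run the inductive construction of Section~\ref{sec:arclikeexistence} once with $k_*$ redefined as the index of the immediate prefix, take $M_k\equiv 10$, and for a given $Y=\invlim(g_k)$ read off the address along the corresponding branch of the tree, invoking the same expansion and pseudo-conjugacy lemmas. The only detail you leave implicit that the paper makes explicit is the replacement of the formula $n_k = k + \sum_{j\leq k}N_j$ by the tree-adapted recursion $n_k = n_{k_*} + N_k + 1$ (and the corresponding notational change in the expansion proof), but this is exactly the ``organizational'' bookkeeping you describe, so the argument is correct and the same as the paper's.
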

 \begin{proof}
  By Proposition \ref{prop:countable}, there is a countable set $\G$ of functions $g\colon [0,1]\to [0,1]$ with 
    $g(1)=1$ such that every arc-like
    continuum with a terminal point can be written as an inverse limit with bonding maps in $\G$. 

  Let $(S^k)_{k\geq 1}$ be an enumeration of all finite 
       sequences of maps in $\G$;
        that is, $k\mapsto S^k$ gives a bijection from $\N$ to the set 
        $\bigcup_{\ell \geq 1} \G^{\ell}$. 
    Let us write 
     $S^k = (g^k_0 , g^k_1 , \dots , g^k_{\ell_k})$, with $g^k_j\in \G$,  for $k\geq 1$,
      and let 
     us also write $\kappa$ for the inverse of this bijection; that is,  $\kappa(S^k)=k$.
     We may assume that no sequence $S$ appears before any of its prefixes in the enumeration. In other words, 
     denote by $\sigma(S)$ the sequence obtained from $S$ by forgetting the final entry. Then 
     $\kappa(\sigma(S)) < \kappa(S)$. Here we use the convention that $\kappa(\sigma(S))=0$ if $S$ has length $1$.   
  
   We now set $g_k \defeq g^k_{\ell_k}$,
      \begin{equation}\label{eqn:kstarallinone} k_*\defeq \kappa(\sigma(S^k)) \end{equation}
    and  $M_k\defeq 10$ 
   for all $k$. We  then carry out exactly the same construction as in the 
    proof of Theorem~\ref{thm:arclikeexistenceBlog}, but using~\eqref{eqn:kstarallinone} instead of
    $k_*=k-1$ as in~\eqref{eqn:kstar}, and defining $n_k$ inductively by $n_0\defeq  0$ and
       \[ n_k  \defeq n_{k_*} + N_k + 1,  \]
     instead of using~\eqref{eqn:nk}. This yields a tract $T$, a  function $F$ (with 
     associated $\tilde{F}\in\BlogP$), and sequences   $N_k$ and $s(k)$ of integers. 

  If $S$ is an infinite sequence in  $\G$, let $S(j)$ be the finite sequence consisting of the first $j$ entries of
    $S$,  and  set $k_j\defeq \kappa(S(j))$. Then  $S =  (g_{k_j})_{j\geq 1}$, and $k_j=(k_{j+1})_*$ for all $j$.  
    Define an external address of $\tilde{F}$ by  
      \[  \s \defeq \s(S) \defeq 0^{N_{k_1}} s(k_1) 0^{N_{k_2}} s(k_2) \dots. \]
    The proof of Theorem~\ref{thm:arclikeexistenceBlog} now goes through as before, with a small number
    of obvious adjustments to notation  in the proofs of Proposition~\ref{prop:fkexpanding} and Theorem~\ref{thm:arclikeexistenceBlog}.  For example, we should   
     replace~\eqref{eqn:expandingproofzj} by 
     \begin{equation}\label{eqn:expandingproofzjstar}
        z_{j_*} \defeq f_j(z_j) \qquad\text{and}\qquad x_{j_*}\defeq F^{-(N_j+1)}(x_j).
     \end{equation}

   We conclude that
     $\Jsh(\tilde{F})$ is homeomorphic to $\invlim (g_{k_j})_{j=1}^{\infty}$. 
      As every Julia continuum
     with a terminal point arises in this way, the proof is complete.
 \end{proof}

\begin{proof}[Proof of Theorem \ref{thm:mainarclike}]
  The first part of the theorem was established in Theorem~\ref{thm:infty} and Proposition~\ref{prop:arcliketracts}. 
  The second part of the theorem follows from Theorem \ref{thm:allinone}, again combined with Theorem \ref{thm:realization} to realise the
   example by a disjoint-type entire function. Recall from Remark~\ref{rmk:preserved} that the bounded-slope property is preserved by the approximation
   in Theorem~\ref{thm:realization}. 
\end{proof}

\section{Bounded-address Julia continua}\label{sec:boundedexistence}
  
  We now turn our attention to constructing Julia continua at bounded external addresses with prescribed
    topology. Recall from Proposition~\ref{prop:boundedorbits} that every such Julia continuum contains a unique point with bounded orbit. 

\begin{thm}[Prescribed bounded-address Julia continua] \label{thm:boundedexistenceBlog}
   Let $Y$ be an arc-like continuum containing two terminal points $y_0$ and $y_1$ such that $Y$ is
     irreducible between $y_0$ and $y_1$.
   Then there exists a disjoint-type function $F\in\BlogP$, having bounded slope and bounded decorations, 
    and a bounded external address
    of $F$ such that the Julia continuum
    $\Jsh(F)$ is homeomorphic to $Y$. Under this homeomorphism, the unique point in $\Jsh(F)$  that has
     a bounded orbit corresponds to $y_0$, while $\infty$ corresponds to $y_1$.  
\end{thm}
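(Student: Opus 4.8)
The plan is to imitate the construction of Section~\ref{sec:arclikeexistence} (Theorem~\ref{thm:arclikeexistenceBlog} and Proposition~\ref{prop:arclikeexistenceprecise}), modifying it so that the external address becomes \emph{bounded} and the function acquires \emph{bounded decorations}. First, since $y_1$ is terminal, $y_0$ is terminal, and $Y$ is irreducible between $y_0$ and $y_1$, the final part of Proposition~\ref{prop:arclikecharacterization} provides continuous surjections $g_j\colon[0,1]\to[0,1]$, $j\geq 1$, each fixing both $0$ and $1$, together with a homeomorphism $Y\cong\invlim(g_j)_{j\geq 1}$ under which $y_1\leftrightarrow(1\mapsfrom 1\mapsfrom\cdots)$ and $y_0\leftrightarrow(0\mapsfrom 0\mapsfrom\cdots)$. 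Exactly as in Section~\ref{sec:arclikeexistence}, it will then suffice to construct a disjoint-type $F\in\BlogP$ of bounded slope and bounded decorations, a bounded external address $\s$, and metrics making the relevant inverse systems expanding, so that the inverse system of closed tracts along $\s$ (with bonding maps the appropriate inverse branches) is pseudo-conjugate to $(g_j)$; Proposition~\ref{prop:conjugacyprinciple} then yields $\Jsh(F)\cong\invlim(g_j)\cong Y$ with $\infty$ corresponding to $y_1$. Because $\s$ is bounded, Proposition~\ref{prop:boundedorbits} endows $\Jsh(F)$ with a unique point $z_0$ having bounded orbit; since (as in Section~\ref{sec:arclikeexistence}) the pseudo-conjugacy maps $\psi_k$ vanish identically on a fixed neighbourhood of the base point, while $z_0$ remains in that neighbourhood, $z_0$ is carried to $0\mapsfrom 0\mapsfrom\cdots$, i.e.\ to $y_0$.

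The key modification, and the reason the address can be kept bounded, is the use of \emph{two} tracts rather than one. By the Remark following Theorem~\ref{thm:Stracts}, with a single tract every bounded-address Julia continuum is, up to homeomorphism, periodic (a Rogers continuum), so two tracts are genuinely needed. I take $S$ to be the straight horizontal strip of Section~\ref{sec:arclikeexistence}, used only for the ``drift'' blocks; it has bounded slope and, trivially, bounded decorations. The second, ``decoration'' tract $U$ plays the role of the side channels $U_k$: in Section~\ref{sec:arclikeexistence} those were attached to the single tract and were reachable only via a vertical translate $2\pi i\,s(k)$ with $s(k)\to\infty$ (a side channel far to the right has image of large imaginary part in $H$), whereas here $U$ is a separate tract lying in a fixed band of bounded height, so its image under $F$ lies near the real axis of $H$ and $U$ is reached from $S$ by a \emph{bounded} translate, which may be absorbed into the definition of $U$. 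I build $U$ thin (hence anguine, and with bounded decorations by Proposition~\ref{prop:decorations}) and of bounded slope, but with its local shape at real part $\approx R_k$ encoding $g_k$ by exactly the combinatorial mechanism of the domains $U_k$ in Section~\ref{sec:arclikeexistence} (a short chain of quadrilaterals, joined by short arcs, whose real-part windows overlap in the pattern of the graph of $g_k$). The address is then
\[ \s \;=\; S^{N_1}\,U\,S^{N_2}\,U\,S^{N_3}\,U\,\cdots, \]
a word in the finite set of tracts of $F$, hence bounded; here $N_k$ is chosen, precisely as in step~I2 of Section~\ref{sec:arclikeexistence}, large enough that the $N_k$ forward iterates through $S$ carry the real-part window $[M_{k-1},\widetilde M_{k-1}]$ out past every previously constructed decoration, so that the subsequent step into $U$ reads exactly the gadget at $R_k$.

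With the two tracts in place, the inductive construction of the data $R_k$, the gadget of $U$ at $R_k$, the integers $N_k$, the numbers $M_k,\widetilde M_k$, the maps $\phi_k,\psi_k$ and the scaling constants $\gamma_k$, as well as the verification that the two inverse systems are expanding and pseudo-conjugate, is carried out exactly as in Section~\ref{sec:arclikeexistence} (with $k_*=k-1$ and $n_k\defeq n_{k-1}+N_k+1$); Lemma~\ref{lem:pullingback}, Proposition~\ref{prop:pseudoconjugacyconditions} and Proposition~\ref{prop:fkexpanding} carry over up to harmless changes of notation. The genuinely new point is that $F$ must now have bounded decorations: for $S$ this is immediate, and for $U$ one invokes Proposition~\ref{prop:decorations}, using that the gadget at each $R_k$ is attached to the core of $U$ by arcs of hyperbolic diameter bounded independently of $k$ and that $U$ stays in a band of bounded height, so that every sufficiently large point of $\overline U$ can be joined to each of the two boundary rays of $\partial U$ by an arc of uniformly bounded hyperbolic diameter. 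Keeping $U$ inside a horizontal band also preserves bounded slope.

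I expect the main obstacle to lie precisely in the design of the decoration tract $U$. In Section~\ref{sec:arclikeexistence} the gadgets were only required to make $\Jsh$ arc-like, whereas here the \emph{same} gadgets must additionally be reachable at a bounded address (this is what forces the two-tract architecture) and must respect bounded decorations and bounded slope, all while the chain of quadrilaterals encoding $g_k$ may be arbitrarily long. Arranging such a chain inside a thin, bounded-height tract so that every point retains bounded-diameter connections to \emph{both} boundary rays — rather than the vertically stacked layout of Section~\ref{sec:arclikeexistence}, under which a point in the innermost quadrilateral is hyperbolically far from one wall of the tract — while simultaneously preserving all the quantitative estimates driving the pseudo-conjugacy, is the delicate part of the argument. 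A secondary point, already noted, is to keep the base real part below every $M_k$ so that the bounded-orbit point supplied by Proposition~\ref{prop:boundedorbits} is sent to $y_0$.
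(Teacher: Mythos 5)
Your proposal is essentially correct and follows the paper's approach in Section~\ref{sec:boundedexistence}: two tracts (a half-strip $S$ for drift, a second tract $T$ carrying the $g_k$-gadgets) read at the bounded address $\s = S^{N_1}\,T\,S^{N_2}\,T\cdots$, combined with the pseudo-conjugacy machinery of Section~\ref{sec:conjugacy} and essentially the same inductive scheme as Section~\ref{sec:arclikeexistence}. The delicate point you flag at the end --- arranging the gadgets so that the tract $T$ has bounded decorations --- is resolved in the paper by placing each gadget $U_k$ \emph{inline} in the tract, between crosscuts $C_k^-$ and $C_k^+$ at real parts $R_k^-$ and $R_k^+$, so that $T$ passes through the gadget end-to-end rather than having it dangle off a central strip as a dead-end side channel; a welcome side-effect of this inline layout is that Step~I4 disappears entirely, since $F_k^{-1}([2,\infty))$ is automatically forced through $U_k$ and there is no aperture $\chi_k$ nor integer $s(k)$ to choose. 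Your argument that the bounded-orbit point $z_0$ maps to $y_0$ is right in substance, but the set $\{\re z < M_k\}$ on which $\psi_k$ vanishes is not a fixed neighbourhood: the threshold $M_k$ is defined in terms of the construction and grows with $k$. The conclusion survives precisely because $\re F^{n_k}(z_0)$ stays bounded while $M_k\to\infty$, so that $\psi_k(F^{n_k}(z_0))=0$ for all large $k$; combined with $g_j(0)=0$ and Observation~\ref{obs:convergingtotheconjugacy}, this forces $h_k(z_0)=0$ for every $k$.
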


 The construction is very similar to the proof of Theorem \ref{thm:arclikeexistenceBlog}.  
   In order to construct a bounded address, we cannot, however, use 
   ``side channels'' of a single tract as in Section~\ref{sec:arclikeexistence}. 
  Hence we instead construct a function having exactly 
   two tracts $S$  and $T$ (modulo $2\pi i\Z$). Corollary~\ref{cor:boundedhomeomorphic} shows 
   that this is indeed necessary.  

  Recall from Proposition~\ref{prop:arclikecharacterization} that $Y$ is homeomorphic
   to an inverse limit $\uarrow{Y} = \invlim (g_k)_{k=1}^{\infty}$ of functions
     $g_k\colon [0,1]\to [0,1]$ with  $g_k(0)=0$ and  $g_k(1)=1$. Under the
     homeomorphism, 
     $y_0$ corresponds to the point $(0\mapsfrom 0 \mapsfrom \dots)$ and
     $y_1$ to the point $(1\mapsfrom 1\mapsfrom \dots)$. 
     We fix this sequence for the remainder of the construction.

 \subsection*{Description of the tracts $S$ and $T$}
   Both tracts are contained in the half-strip $\{x+iy\colon x>2\text{ and } -\pi/2<y<3\pi/2\}$. Similarly as in
   Section~\ref{sec:arclikeexistence}, we shall not require that the tracts are Jordan domains, or that
   their closures are disjoint. Once the conformal isomorphisms $F_S\colon S\to \HH$ and $F_T\colon T\to\HH$
    are defined, we obtain a disjoint-type function $\tilde{F}$ by restriction and periodic extension as in 
   Remark~\ref{rmk:obtainingBlog}.

\begin{figure}
 \begin{center}
   \def\svgwidth{\textwidth}
   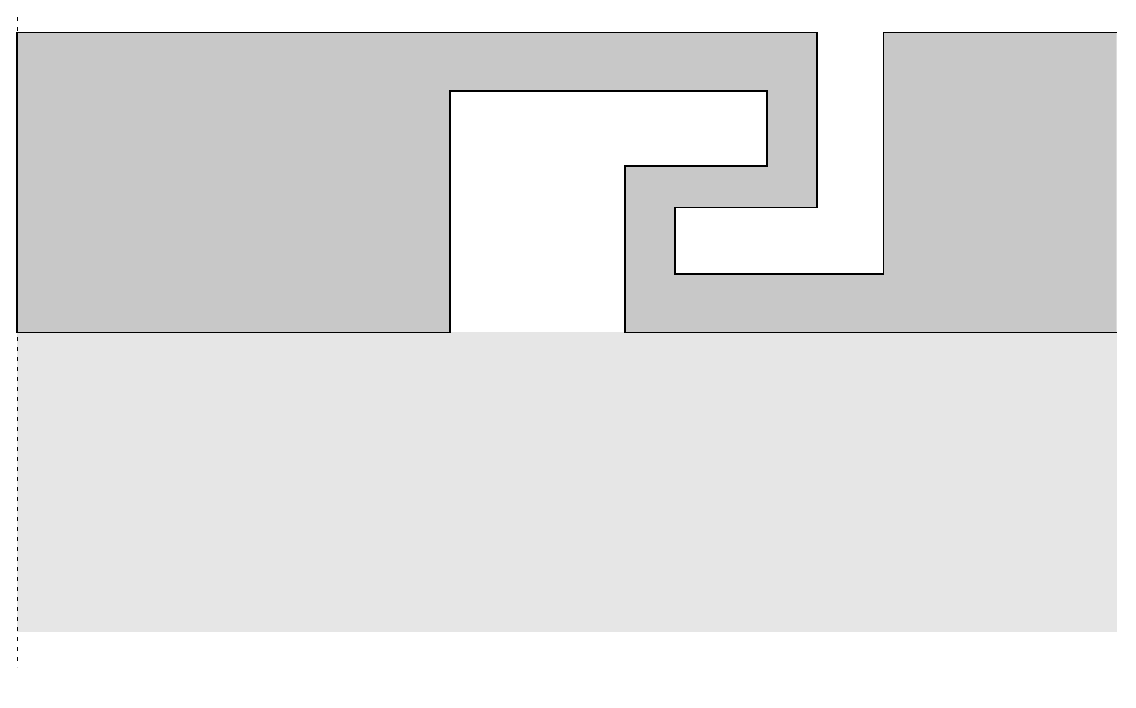
 \end{center}
 \caption{The tracts $S$ and $T$.\label{fig:tract_definition-bounded}}
\end{figure}

   The first of the two tracts is the half-strip
    \[ S \defeq  \{ x + iy\colon  x>4\text{ and } |y|<\pi/2\}, \]
   together with the conformal isomorphism
   $F_S\colon S\to \HH$ with $F_S(5)=5$ and $F_S'(5)>0$. This tract plays the
    same role
    as the central strip of the tract from Section \ref{sec:arclikeexistence}. 

    The second
    tract,
      \[ T\subset \{x + iy \colon  x>4\text{ and } \pi/2<y<3\pi/2\} \]
    is determined by numbers $(R_k^-)_{k=1}^{\infty}$ and $(R_k^+)_{k=1}^{\infty}$
    such that 
     $6\leq R_k^- < R_k^+ < R_{k+1}^-$ for all $k\geq 1$, simply-connected domains 
     \[ U_k \subset \{ x + iy \colon  R_k^- < x < R_k^+\text{ and } \pi/2<y<3\pi/2\} \]
    and arcs 
    $C_k^-$ and $C_k^+$, with 
   \[ C_k^{\pm} \subset \partial U_k \cap \{x+ iy \colon  x = R_k^{\pm}\}. \]
   (See Figure \ref{fig:tract_definition-bounded}.) The tract $T$ is defined as 
 \[   T \defeq  \bigcup_{k\geq 1} \{x + iy\colon   R_{k-1}^+ < x < R_k^- \text{ and }\pi/2<y<3\pi/2\} 
            \cup \interior(C_k^-) \cup U_k \cup \interior(C_k^+).  \]
   (Here we use the convention that $R_0^+ = 4$.) The function $F_T\colon  T\to\HH$ is chosen such that
   $F_T(5+ \pi i)=5$ and $F_T(\infty)=\infty$. 

 The choices of $R_k^{\pm}$, $U_k$ and $C_k^{\pm}$ will again be specified in a recursive construction.  
  For this purpose we define partial tracts 
 \begin{align}  \label{eqn:Tkbounded} T_k \defeq  &\left(\bigcup_{j= 1}^k \{x + iy\colon   R_{j-1}^+ < x < R_j^-\text{ and } \pi/2<y<3\pi/2\} 
            \cup \interior(C_j^-) \cup U_j \cup \interior(C_j^+)\right) \\ &\cup \{x + iy\colon  x > R_k^+\text{ and } \pi/2<y<3\pi/2\},\notag  \end{align}
   and associated conformal isomorphisms $F_k\colon T_k \to \HH$, again
   normalised such that $F_k(5+\pi i)=5$ and $F_k(\infty) = \infty$. 

The expansion properties from Observation~\ref{obs:expansion} hold for each of the maps
  $F_S$, $F_T$ and $F_k$.

\begin{obs}[Expansion of the maps $F$ and $F_k$] \label{obs:expansion2}
 Let $G$ be one of the maps $F_S$, $F_T$ and $F_k$, where $F_T$ and $F_k$ are of 
 the form discussed above, and denote its domain by $T(G)$. 
 Then 
  Then $|G'(z)|\geq \re G(z)/2$ for all $z\in T(G)$.

  In particular, $|G'(z)|\geq 2$ whenever $\re G(z)\geq 4$. 
    In addition, let $z\in T(G)$ with $G(z)\in \overline{S}$ and $\re z \geq 9$; 
    then $\re G(z) > \re z$. Similarly, if $z\in S$, $\lvert \im F_S(z)\rvert \leq 2\pi$ and
    $\re z\geq 9$, then $\re F_S(z)\geq \re z$.
\end{obs}
\begin{proof}
 Everything except the final statement follows from
  Observation~\ref{obs:expansion} (note that this observation applies 
   to the 
   pre-composition of $F_T$ resp.\ $F_k$
  with translation by $\pi i$). 
  
   The final statement follows from the explicit formula 
     $F_S(z) = \frac{5}{\sinh(1)} \cdot \sinh(z-4)$:
      \begin{align*} \re F_S(z)  &\geq  \lvert F_S(z)\rvert - 2\pi \geq 
            4 \lvert \sinh(z-4) \rvert - 2\pi \\ & \geq 
             4\frac{e^{\re z-4}-e^{-(\re z-4)}}{2} - 2\pi 
             > 2e^{\re z-4}  - 7 > 2\re z - 7 > \re z, \qedhere \end{align*} 
    where we used the fact that $e^{x-4} > x$ for $x>6$. 
\end{proof} 
    
An analogue of Observation~\ref{obs:caratheodoryconvergence} also holds
  for the convergence of $F_k$ to $F_T$: 

\begin{prop}\label{prop:caratheodoryconvergence2}
  Let $k\geq 0$, and fix a partial tract $T_k$ as in~\eqref{eqn:Tkbounded},
   along with the associated function $F_k$. 
     For every $\eps>0$ and all $M>1$, there is $R^-(T_k,\eps,M)$ with the
following property.

   Suppose that the tract $T$ is chosen as above, subject only
    to the constraints that $T_k$ is the $k$-th partial tract of $T$, and that
     $R_{k+1}^- \geq R^-(T_k,\eps,M)$. Then
   \[ \lvert F_T^{-1}(z) - F_k^{-1}(z) \rvert \leq \eps \]
   for all $z$ with $1\leq \re z \leq M$ and $\lvert \im z\rvert \leq 2\pi$. 
\end{prop} 
\begin{proof}
 Denote by $\F(F_k)$ the collection of all functions
  $F\colon T\to \HH$ as above such that $T_k$ 
   is the 
   $k$-th partial tract of $T=T(F)$. 
  As in Observation~\ref{obs:caratheodoryconvergence}, as $F\in\mathcal{F}(F_k)$ 
    varies in such way that 
    $R_{k+1}^-=R_{k+1}^-(F)\to\infty$, we have $T(F)\to T_k$ 
in the sense of Carath\'eodory kernel convergence, with
respect to the base point $z_0\defeq 5 + \pi i$. By the 
    Carath\'eodory kernel convergence theorem, we conclude that the corresponding  
     functions $F$ satisfy
    \begin{equation}\label{eqn:caratheodory-convergence}
      F^{-1} \circ \alpha_F \to F_k^{-1},\end{equation} where 
    $\alpha_F$ is the unique M\"obius automorphism of $\HH$ that satisfies
    $M(5)=5$ and 
     $\arg\bigl(F^{-1}\circ\alpha_F)'(5)\bigr)=\arg\bigl((F_K^{-1})'(5)\bigr)$. 
     (The tracts $T$ and $T_k$ are not symmetric with
     respect to the real axis, so we cannot conclude that $\alpha_f=\id$,
      in contrast to Observation~\ref{obs:caratheodoryconvergence}.) 
   
   We must show that $\alpha_F\to \id$ as $R_{k+1}^-(F)\to\infty$; 
   compare~\cite[Proposition~8.2]{pseudoarcs} for a similar argument. 
   Fix some $R$ much greater than $R_k^+(F_k)$. If $R_{k+1}^-(F) > R$, then 
      \[ Q\defeq \{x+iy\colon R_k^+(F_k) < x < R-1 \text{ and } \pi/2 < y < 3\pi/2\} \]
    is a quadrilateral of large modulus (with horizontal sides on $\partial T$) that 
     separates $z_0$ from 
      $\infty$ and from any point of $T$ of real part greater than $R-1$. 
    Applying the Teichm\"uller modulus theorem, we see that 
       \[ \inf \{ \lvert F(z)\rvert \colon F\in\F(F_k), R^{-}_{k+1}(F)\geq R, 
               z\in T(F) \text{ and }\re z \geq R-1 \}\rvert \to \infty \]
       as $R\to \infty$. By~\eqref{eqn:caratheodory-convergence}, 
       if $R_k^-(F)$ is sufficiently large (depending on $R$), then
       \[ \re F^{-1}(\alpha_F(F_k(R))) > R-1. \]
        Hence the M\"obius transformation $\alpha_F$ maps the point $F_k(R)$, which has
       large modulus, to 
       another point of large modulus. It follows that $\alpha_F$ is close to the identity.
        Letting $R\to\infty$ yields
       the desired conclusion. 
\end{proof} 

  Our goal (in order to prove Theorem~\ref{thm:boundedexistenceBlog}) is to construct $T$ (and hence $\tilde{F}$) in such away that 
   $J_{\s}(\tilde{F})$ is homeomorphic to $\uarrow{Y}$. This address will be of the form
     \begin{equation}\label{eqn:formofsbounded} \s = S^{N_1} T S^{N_2} T S^{N_3} T \dots \end{equation}
    for some sequence of positive integers $(N_k)_{k=1}^{\infty}$. We define $n_k$ as in~\eqref{eqn:nk}; i.e.,
     $n_k \defeq k + \sum_{j=1}^k N_j$. The inverse system $(f_k)_{k=1}^{\infty}$, 
     which 
     we will construct to be pseudo-conjugate to the system $(g_k)_{k=1}^{\infty}$,
     is thus given by the maps 
     \[ f_k\colon \hat{S}\to \hat{S}; \qquad z\mapsto F_S^{-N_k}(F_T^{-1}(z)), \]
    where $\hat{S}=\overline{S}\cup\{\infty\}$.

   We construct 
   the tract $T$ recursively, by almost the same 
    process as in the proof of Theorem~\ref{thm:arclikeexistenceBlog}.
    In particular, the proof also constructs the same objects as summarised
    in~\ref{item:sandNk} to~\ref{item:psidefinition} at the beginning of 
    the recursive construction in Section~\ref{sec:arclikeexistence}, with the same properties
    as stated there,
    with only the following modifications:
  \begin{enumerate}
   \item[(a')] There is no need to construct an address entry 
      $s(k)$ in our construction. Indeed,~\ref{eqn:formofsbounded}
      means that $\s$ is uniquely determined by the sequence $(N_k)_{k\geq 1}$.
    \item[(e')] 
      The function $\phi_k$ still satisfes~\eqref{eqn:phiconditions}, 
        but the number $M_k\geq 10$ can no longer be freely chosen; instead, we use 
        $M_k\defeq 10$ for all $k$. Although this number no longer depends on $k$, we retain the 
        notation $M_k$ in analogy with Section~\ref{sec:arclikeexistence}. 
   \end{enumerate}

     We now describe the recursive construction, noting places where adjustments
      to the conditions from 
     Section~\ref{sec:arclikeexistence} are necessary; see also 
     Figure~\ref{fig:bounded_tract_construction}. 
     Begin by setting $\widetilde{M}_0\defeq 11$, 
     $\phi_0(x) = x-10$ for $M_0 =10 \leq x \leq  \widetilde{M}_0$ and satisfying~\eqref{eqn:phiconditions} elsewhere, and $\gamma_0\defeq 5$.  The number  $k_*\leq k-1$, for $k\geq 1$, is used in  exactly the same way as in Section~\ref{sec:arclikeexistence}. That is,
   $k_*=k-1$ for the purpose of the proof of Theorem~\ref{thm:boundedexistenceBlog}, but a different choice
    is used in the proof of Theorem~\ref{thm:boundedexistence}.

\begin{figure}
 \begin{center}
   \def\svgwidth{.95\textwidth}
   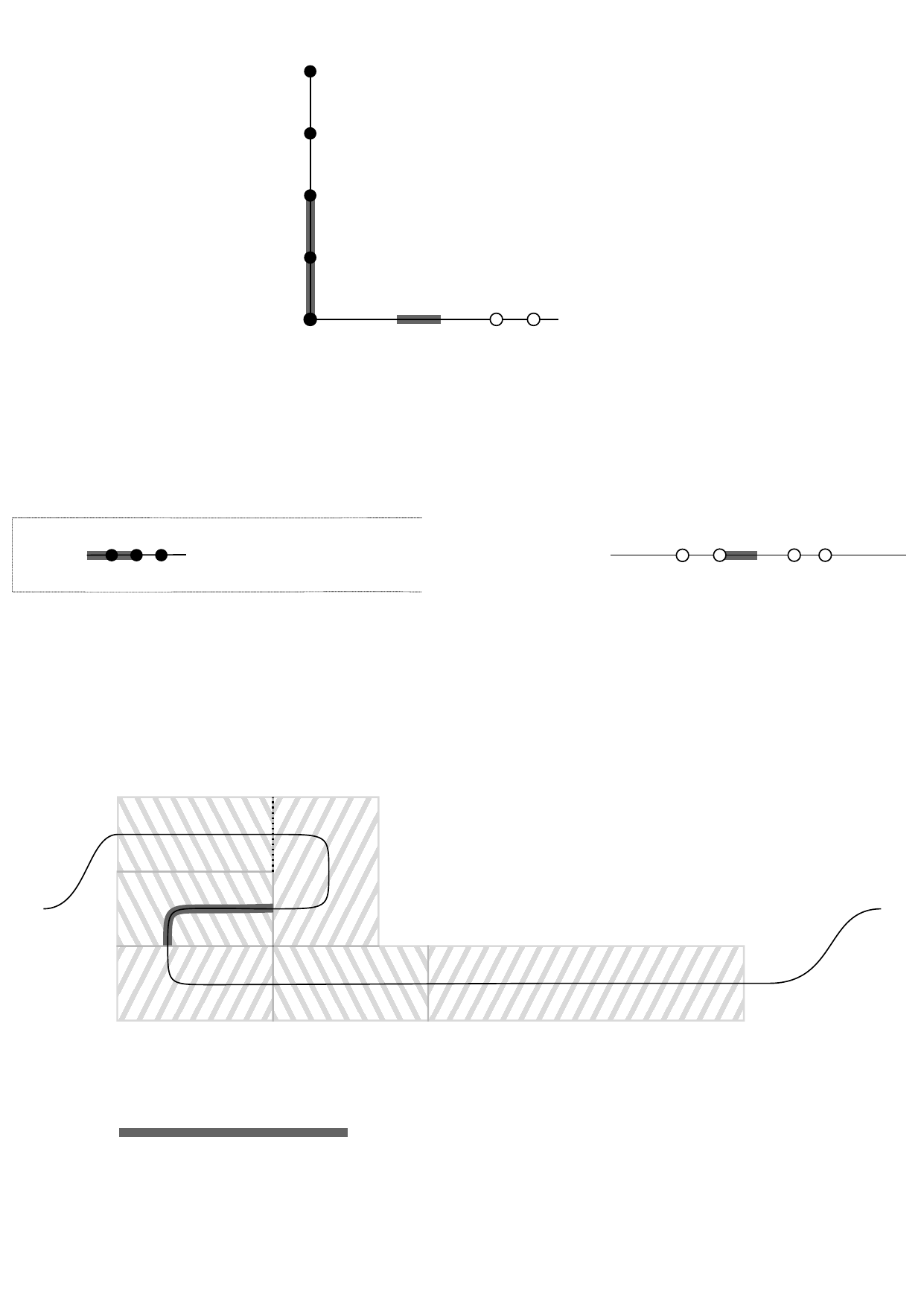
 \end{center}
 \caption{Construction of the tracts for Theorem \ref{thm:boundedexistenceBlog}.\label{fig:bounded_tract_construction}}
\end{figure}

    \textbf{I1.}  Step I1 is the  same as in Section~\ref{sec:arclikeexistence}. 
   
   \textbf{I2.} In step I2, we first choose $N_k$ sufficiently large,
      and then fix $R_k^-$ and $R_k^+$.

     To choose $N_k$, we define $\alpha_{k_*}$ as in~\eqref{eqn:alpha}
      and require~\eqref{eqn:Nk} to hold. Condition~\eqref{eqn:Nkright} becomes
     \begin{equation} \label{eqn:Nkrightbounded}
       F_S^{N_k}(M_{k_*}) > R_{k-1}^+ + 1.
     \end{equation}
    (Recall that $M_{k_*}=10$.) 
    We also require $N_k$ to be so large that
    \begin{equation} F_S^{N_k}(M_{k_*}) > R^-(T_{k-1}, 1/2, \widetilde{M}_{k-1} ).  
    \end{equation}

  We then define 
         \begin{equation} \label{eqn:boundedNk}
               R_k^-\defeq F_S^{N_k}(M_{k_*})\qquad\text{and}\qquad
         R_k^+ \defeq  F_S^{N_k}(\widetilde{M}_{k_*}). \end{equation}
    
  \textbf{I3.} Similarly as in~\eqref{eqn:thetadefn}, set 
     \[\theta_k^{\ell}\defeq F_S^{N_k}( \phi_{k_*}^{-1}(\xi_{k_*}^{\ell})) \]
     for $0\leq \ell \leq \gamma_{k_*}$. Note that $\theta_k^0 = R_k^-$ and $\theta_k^{\gamma_{k_*}} = R_k^+$. 
     
  The Jordan domains $U_k^j\subset \{x+iy\colon R_k^-<x<R_k^+\text{ and } \pi/2<y<3\pi/2\}$, for $1\leq j\leq m_k$,  and 
     arcs $(C_k^j)_{j=0}^{m_k}$ are chosen similarly as in
     Section~\ref{sec:arclikeexistence}; see Figure~\ref{fig:bounded_tract_construction}. Condition~\ref{I3item:Cj} remains unchanged,
      while~\ref{I3item:Cm} becomes
   \begin{equation}
  C_k^-\defeq  C_k^0 \subset \{x+ iy \colon  x = R_k^{-}\}\qquad\text{and}\qquad
                 C_k^+\defeq  C_k^{m_k} \subset \{x+ iy \colon  x = R_k^+\}.\end{equation}
   Then $U_k$ is defined as in~\eqref{eqn:Ukdefn}.
   
   The intervals $\tilde{I}_k^{\ell}$ are defined as before, with the exception that $\tilde{I}_{k}^1$ is extended to
     $4$ on the left, and $\tilde{I}_k^{m_k}$ is extended to $\infty$ on the right.
     (Note that the left end-point of $I_k^1$ is $0$ and the right end-point of
     $I_k^{m_k}$ is $1$, since $g_k$ fixed $0$ and $1$.)  Conditions~\ref{I3item:sizeofUk} to~\ref{I3item:Ukrealparts} are unchanged. To ensure that the tract $T$ has bounded decorations, the domains $U_k^j$ should also satisfy
     the following condition: Every point of $U_k^j$ can be connected to both components of $\partial U_k^j\setminus (C_k^{j-1}\cup C_k^j)$ by a curve in
     $U_k^j$ of Euclidean length at most $\pi+1$.

     The construction of domains $U_k^j$ and arcs $C_k^j$ with these
     properties is easily adapted from that given in Section~\ref{sec:arclikeexistence}; we omit the details. 

   This completes the description of the partial  tract $T_k$ and  the  map  $F_k$.

 \textbf{I4.} There  is no analogue of step I4. Observe that  the curve
     $F_k^{-1}([4,\infty))$ is guaranteed to run through the entire domain $U_k$ by construction. 

 \textbf{I5.} Define 
    \begin{equation}
       \eta_k^j\defeq  \max\{x\colon F_k^{-1}(x) \in C_k^j\} \qquad (j=1,\dots,m_k-1).\end{equation}
    Then choose $\widetilde{M}_k$ sufficiently large to ensure that
       \begin{align}
           &\re F_k^{-1}(\widetilde{M}_k) > R_k^+ + 3\qquad\text{and} \\
           &
         \re F_S^{-N_{1}}(F_ k^{-1}(F_S^{-N_{2}}(F_k^{-1}( \dots  (F_S^{-N_k}(F_k^{-1}(\widetilde{M}_k)))\dots )))) \geq k.\label{eqn:boundedMtilde}
       \end{align}    

    Finally, $\phi_k$ is defined as in~\eqref{eqn:phikdefn},  and $\gamma_k$ is also chosen as in~\ref{I5item:expansion} and~\ref{I5item:smallgaps}. 
    This completes the inductive construction. 

  The analysis of the construction proceeds as in the proof of  Theorem~\ref{thm:arclikeexistenceBlog}. 
    We first establish the pseudo-conjugacy relation.
     \begin{lem}[Pseudo-conjugacy relation]\label{lem:pullingback2}
       Let $k\geq 1$ and $w\in \overline{S}$. Then
   \begin{equation}\label{eqn:toproveforpseudoconjugacy2} |\psi_{k_*}( f_k(w)) - g_{k}(\psi_{k}(w))| < 3/\gamma_{k_*}. \end{equation}
     \end{lem}
  \begin{proof}
    The proof proceeds analogously to that of Lemma~\ref{lem:pullingback}. In the Claim stated and proved there,
    the point $\zeta$ should now be defined by 
     $\zeta \defeq F_T^{-1}(w)$; the Claim is then proved in a similar manner. The lemma follows from the claim as before. 
  \end{proof}   
     
    The definition of $d_{X_k}$ remains the same. In Proposition~\ref{prop:pseudoconjugacyconditions}, $\hat{T}$ should be replaced by $\hat{S}$; 
      the statement is otherwise unchanged, and the proof is completely analogous. 

  Finally, expansion of our system  $(X_k,f_{k+1})_{k=0}^{\infty}$ follows similarly as in Proposition~\ref{prop:fkexpanding}. Indeed,
    the proof is slightly simpler, since the set $A_k$ has uniformly bounded diameter by choice of $M_k=10$. 
 \begin{proof}[Proof of Theorem~\ref{thm:boundedexistenceBlog}]
    The claimed properties of the Julia continuum $J_{\s}(F)$ follow 
     from the construction and Lemma~\ref{lem:pullingback2}, just as 
     in Section~\ref{sec:arclikeexistence}. Recall that
    $y_0$ corresponds to the point $(0\mapsfrom 0\mapsfrom\dots)$ in $\uarrow{Y}$, and hence to 
    a point $z_0\in\Jsh(F)$ with $\re F^{n_k}(z_0)\leq M_k + 1 = 11$ for all $k$. 
    By Proposition~\ref{prop:boundedorbits}, $\Jsh(F)$ contains a unique non-escaping point, which has bounded orbit, so $z_0$ must have bounded orbit. (Alternatively, 
    Observation~\ref{obs:expansion2} implies directly that $\re F^n(z_0) < 9$ for all
    $n\geq 0$.) 

    Furthermore, the tract $T$ has bounded
    decorations: this  
    follows from our additional assumption on the domains $U_k^j$ formulated 
    at the end of step I3, together with 
    Proposition~\ref{prop:decorations}. The tract $S$ also has bounded decorations (which follows from Proposition~\ref{prop:decorations} or the 
    explicit formula for $F_S$), and both tracts clearly have bounded slope. This completes the proof. 
 \end{proof}

 \begin{proof}[Proof of Theorem~\ref{thm:boundedexistence}]
    In precisely the same manner as in Theorem~\ref{thm:allinone}, we can realise all the relevant continua as bounded-address Julia continua of the same function,
    using the second part of Proposition~\ref{prop:countable}. 
     In the construction, we should now replace~\eqref{eqn:boundedMtilde} by
\[          F_S^{-N_{k_1}}(F_ k^{-1}(F_S^{-N_{k_2}}(F_k^{-1}( \dots  (F_S^{-N_k}(F_k^{-1}(\widetilde{M}_k)))\dots )))) \geq k, \]
  where $k_1 < k_2 <  \dots < k_{\ell}= k$ are chosen such that
        $k_{i}= (k_{i+1})_*$ and $(k_1)_*=0$. 
 \end{proof}

\begin{rmk}[Alternative proof of Theorem~\ref{thm:mainarclike}]\label{rmk:alternativeproof}
  Using Theorem~\ref{thm:boundedexistence}, we can  
    give an alternative proof of (the second half of) Theorem~\ref{thm:mainarclike}. 

   Indeed,
     let $X$ be  any arc-like continuum having a terminal point $x_1$. Then there  is an  arc-like continuum 
     $Y\supset X$ having a terminal point $x_0$ such that $Y$ is irreducible between $x_0$ and  $x_1$. 
     (Indeed, $X$ can be  written as the inverse limit  of a sequence of continuous surjective functions
        $g_k\colon [0,1]\to[0,1]$ with $g_k(1)=1$. We can extend each $\tilde{g}_k$ to a map
      $[-1,1]\to[-1,1]$ fixing $-1$, and the inverse limit  of these extensions is the desired continuum $Y$. 

   Since the function $f$ from Theorem~\ref{thm:boundedexistence} has bounded  slope,
     it  follows from Theorem~\ref{thm:subsetsasJuliacontinua} that every arc-like continuum having a
     terminal point is also realised as a Julia continuum of $f$. 

   A key difference between the two proofs is that the Julia continua from Theorem~\ref{thm:subsetsasJuliacontinua}
     escape uniformly  to infinity, so the examples in Section~\ref{sec:onepointuniform} cannot be obtained 
     in this manner. Furthermore, the function constructed in our original proof has only  one  tract, but
     does not have  bounded decorations, while the construction of Theorem~\ref{thm:boundedexistence} has
    bounded decorations,  but also requires two tracts. Recall  from Corollary~\ref{cor:unboundedrequired}
     that it is impossible to achieve both at the same time if we wish to realise all possible arc-like continua with
     terminal points as Julia continue of the same function.
\end{rmk}

\section{Periodic Julia continua}\label{sec:periodicexistence}
  We now construct periodic Julia continua, proving Theorem \ref{thm:periodicexistence}. 

  \begin{thm}[Constructing invariant Julia continua]\label{thm:periodicexistenceprecise}
    Let $\hat{Y}$ be a continuum, and suppose that there are 
     $y_0,y_1\in \hat{Y}$ such that $\hat{Y}$ is a Rogers continuum from
     $y_0$ to $y_1$. 
     There exist a logarithmic tract $T$ with $\overline{T}\subset\HH$ and a 
      conformal  isomorphism $F\colon T\to\HH$ with $F(\infty)=\infty$ such that 
     \[ \hat{C} \defeq  \{z\in T\colon  F^n(z)\in T\text{ for all $n\geq 0$} \} \cup \{\infty\}\]
    is homeomorphic to  $\hat{Y}$, with $\infty$ corresponding to $y_1$, and the
    unique finite fixed point of $F$ corresponding to $y_0$. 
  
  Moreover,  $T$ can be chosen to have bounded slope and bounded decorations. 
  \end{thm}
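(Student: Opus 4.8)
The plan is to adapt the construction used in Section~\ref{sec:arclikeexistence} (for Theorem~\ref{thm:arclikeexistenceBlog}) to the \emph{autonomous}, single-bonding-map setting. By the definition of a Rogers continuum, we are given a surjective continuous $h\colon[0,1]\to[0,1]$ with $h(t)<t$ for $0<t<1$, and $\hat{Y}=\invlim(h,[0,1])\cup\{\infty\mapsfrom\infty\mapsfrom\dots\}$, with $y_0$ corresponding to $0\mapsfrom 0\mapsfrom\dots$ and $y_1$ to $1\mapsfrom 1\mapsfrom\dots$. So the inverse system on the ``model'' side is generated by the \emph{single} map $h$; I want a single tract $T$ and a conformal isomorphism $F\colon T\to\HH$ such that $F_T^{-1}\colon\overline{T}\to\overline{T}$, together with a suitable ``blown-up'' metric, forms an expanding inverse system that is pseudo-conjugate to $(h,[0,1])$ in the sense of Definition~\ref{defn:pseudoconjugacy}. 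Applying Proposition~\ref{prop:conjugacyprinciple} then gives a homeomorphism between the invariant Julia continuum $\hat{C}$ and $\hat{Y}=\invlim(h)$, and Observation~\ref{obs:convergingtotheconjugacy} (plus the remark following it) identifies $\infty$ with $y_1$ and the unique finite fixed point of $F$ (which exists and is unique by Proposition~\ref{prop:boundedorbits}, applied to the periodic address $TTT\dots$) with $y_0$.

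First I would set up the tract. As in Section~\ref{sec:arclikeexistence}, I take $T$ inside a horizontal strip $\{|\,\Ima z\,|<\pi\}$, symmetric in the real axis so that the normalized isomorphism $F\colon T\to\HH$ restricts to an increasing self-map of a real ray $[a,\infty)$; this will carry the unique finite fixed point. The tract consists of a central strip together with a \emph{self-similar} sequence of side channels $U_1,U_2,\dots$ accumulating at $\infty$, each $U_k$ shaped so that the appropriate branch of $F^{-1}$ restricted to $U_k$ mimics one application of $h$ on a fixed partition scale. The crucial new point compared with Section~\ref{sec:arclikeexistence} is that all the bonding maps are now \emph{the same} map $h$, and the orbit (with address $TTT\dots$) visits side channel $U_k$ at time $n_k$. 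Since $h(t)<t$ for $t\in(0,1)$, every point of $\hat{Y}$ other than $y_1$ escapes toward the fixed coordinate under forward iteration of the induced map; this matches the dynamics on $\hat{C}$ because all points except $\infty$ return near the fixed point infinitely often, which is exactly the setting of Proposition~\ref{prop:boundedorbits} for a periodic address. I would use $\phi\colon[0,\infty]\to[0,1]$ (a homeomorphism on some $[M,\widetilde M]$, constant outside) to define $\psi(z)=\phi(\Ima\,{\text{--}}$ no: $\psi(z)=\phi(\Rea z)$, collapsing the irrelevant parts of $T$, precisely as in the earlier construction; and the metric $d_X$ on $X=\hat{T}$ is the modified Euclidean metric of~\eqref{eqn:dXk}. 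The blow-up constant $\gamma$ on the $Y$-side is chosen via Observation~\ref{obs:obtainingexpandingsystems} to make $(h,[0,1])$ expanding, and the $N_k$ (hence the geometry of $U_k$) are chosen large enough that $F^{N_k}$ is strongly expanding and transports the interval $[M,\widetilde M]$ to the right of all previously constructed channels, exactly mirroring steps I1--I5.

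The heart of the argument is then a verbatim adaptation of Lemma~\ref{lem:pullingback} and Propositions~\ref{prop:pseudoconjugacyconditions} and~\ref{prop:fkexpanding}: one shows $|\psi(f(w))-h(\psi(w))|$ is uniformly small (with $f=F_T^{-1}$ composed with the appropriate shift), that $\psi$ is surjective and satisfies the approximate-continuity and approximate-injectivity bounds, and that the resulting inverse system $(X,f)$ is expanding. Because the construction is now autonomous and the model side uses a single bonding map, once I have the pseudo-conjugacy, the remark after Observation~\ref{obs:convergingtotheconjugacy} tells me $\theta$ is a genuine conjugacy between $F_T^{-1}$ on $\hat{C}$ and $\tilde h$ on $\hat{Y}$, and $\theta$ maps the finite fixed point $z_0$ (with $\phi(\Rea z_0)=0$) to $0\mapsfrom 0\mapsfrom\dots=y_0$ and $\infty$ to $y_1$.

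Finally, \textbf{the part requiring genuine care is the bounded-slope and bounded-decorations claim}, i.e.\ the ``moreover''. Bounded slope is straightforward: as in Section~\ref{sec:arclikeexistence}, $T$ is built inside a fixed strip around the positive real axis, so a horizontal ray witnesses bounded slope. Bounded decorations is the real obstacle. I would verify it using Proposition~\ref{prop:decorations}: I must check that every sufficiently large point of $\overline{T}$ can be joined to \emph{both} boundary components $\gamma^+,\gamma^-$ of $\partial T$ by arcs of uniformly bounded hyperbolic diameter in $\HH$. This forces the side channels $U_k$ to be attached to the central strip via ``short'' connecting arcs, and requires the total geometric depth of each $U_k$ (in the sense controlled in step I3 by properties~\ref{I3item:Ukhyperbolicdistance}--\ref{I3item:Ukrealparts}) to be bounded independently of $k$ in the hyperbolic metric. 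Since $h$ is a fixed map (not varying with $k$), the number $m_k$ of quadrilaterals making up $U_k$ can be taken constant, which is what makes uniform boundedness achievable here in a way that is \emph{not} available for the universal function of Theorem~\ref{thm:mainarclike} (compare Corollary~\ref{cor:unboundedrequired}, which explains why a single tract with bounded decorations cannot realise all arc-like continua — there is no contradiction, because here we only realise the Rogers continua, each requiring its own function). I would therefore allocate most of the detailed work to arranging the $U_k$ and the arcs $C_k^j$ so that the hypotheses of Proposition~\ref{prop:decorations}\ref{item:boundedconnections} hold with a single constant, the rest of the proof being a routine transcription of Sections~\ref{sec:arclikeexistence} and~\ref{sec:arcliketracts}.
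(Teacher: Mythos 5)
There is a genuine gap, and it is structural: you are trying to transcribe the Section~\ref{sec:arclikeexistence} construction (central strip plus side channels, with $N_k$ ``idle'' iterates spent traversing the central strip before hitting the $k$-th channel), but this is incompatible with the requirement that $\hat{C}$ be \emph{invariant}. For an invariant continuum the address is $TTT\dots$ and the model inverse system is the autonomous $(\overline{T}, F_T^{-1})$, so every single iterate of $F_T^{-1}$ must correspond to exactly one application of $h$. There is no room for a block $F^{N_k}$ of transport steps: if $F^{N_k}$ were used to carry $[M,\widetilde M]$ to the right of the earlier channels, the conjugacy would relate a varying-length composition $F^{-(N_k+1)}$ to $h$, and the inverse limit of that non-autonomous system is \emph{not} $\hat{C}$ (it corresponds to an address $T\,0^{N_1}\,T\,0^{N_2}\dots$, which is not periodic). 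You correctly state the target system $(\overline{T},F_T^{-1})\simeq ([0,1],h)$ in the second paragraph, but the machinery you import from steps I1--I5 silently assumes the non-autonomous freedom of Section~\ref{sec:arclikeexistence}. In particular Lemma~\ref{lem:pullingback} is \emph{not} a ``verbatim adaptation'' -- its proof rests on $N_k$ being large enough that $F^{-N_k}$ crushes hyperbolic distances, and no such contraction margin is available when the only allowed step is a single $F^{-1}$.

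The paper's actual proof resolves this by a genuinely different tract geometry, which your proposal does not touch. There is no central strip and no side channels: $T=\bigcup_j U_j\cup\interior(C_j)$ is a single \emph{chain} of quadrilaterals linked by cross-cuts, and the map $\psi(z)=\phi(\re z)$ uses a \emph{homeomorphism} $\phi\colon[4,\infty)\to[0,1)$ (nothing is collapsed, since in the autonomous setting every level of $T$ must carry information). What replaces the $N_k$ is the Rogers property $g(x)<x$, exploited through the cascade sequence $(\tau_k)$ with $g(\tau_k)<g(\tau_{k'})$ for $k<k'$ and, crucially, $g(\tau_{k+3})=\tau_k$: this fixes the ``reach'' of each $U_k$ back toward $B_{k-3}=[0,\tau_{k-3}]$ and, via Observation~\ref{obs:periodicexpansion}, makes $([0,1),g)$ an expanding system with respect to a metric assembled block-by-block on the $A_k=[\tau_{k-1},\tau_k]$ -- all without any help from idle iterates. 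Your bounded-decorations argument also misfires: $m_k$ is \emph{not} constant in the paper (step I1 forces $m_k\to\infty$ as the partitions $\Xi_{k-3}$ refine); bounded decorations holds because $T$ is a chain of uniformly well-shaped quadrilaterals (condition~\ref{item:periodicI3boundeddecorations} plus Proposition~\ref{prop:decorations}), not because the channel depth stays bounded.
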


 Let $g\colon [0,1]\to[0,1]$ be a map below the
    identity;  that is, $g$ fixes $0$ and  $1$, $g(x)<x$ for $0<x<1$. 
  Let $(\tau_k)_{k= 0}^{\infty}$  be a strictly increasing sequence in $(0,1)$ such that
    \begin{enumequ}
     \item $g(x)<g(\tau_k)$ for all $x<\tau_k$ and\label{item:peak}
     \item $g(\tau_{k+3}) = \tau_k$  \label{item:cascade}
   \end{enumequ}
   for all $k\geq 0$. For example, such a sequence can be defined by  
     \begin{align*}
     \tau(a) &\defeq \min\{\tau \in [0,1]\colon g(\tau)=a\} \quad (a\in [0,1]), \\
    \tau_0 \defeq  \tau(1/2), \quad \tau_1 &\defeq 
          \tau\left(\frac{1+\tau_0}{3}\right), \quad \tau_2\defeq \tau\left(\frac{1+4\tau_0}{6}\right)\quad\text{and}\quad
          \tau_{k+3}\defeq \tau(\tau_k) \quad (k\geq 0). \end{align*}
      This sequence has the desired properties since $\tau(a)>a$ for all $a\in(0,1)$ and since $\tau(a)$ is 
       strictly increasing in $a$. 
       Observe that $\tau_k\to 1$, as $1$ is the only positive fixed point of $g$. 
      For convenience, we set $\tau_{k-3}\defeq g(\tau_k)$ for $k=1,2$. 
 
  Our goal is to construct $T$ and $F$ as in Theorem~\ref{thm:periodicexistence}, with  
    $\hat{Y} \defeq \invlim([0,1],g)$ being the Rogers continuum defined by $g$.  The fact that 
     \begin{equation} 
         g(x)\leq g(\tau_k)= \tau_{k-3} < \tau_{k-2}, \qquad \tau_{k-1}\leq x\leq \tau_k,\qquad k \geq 1, \end{equation}
     allows us to construct the tract
     $T$ in stages, with the $k$-th piece, for $k\geq 1$, representing 
     \begin{equation}
    g\colon A_k\to B_{k-3}, \qquad\text{where}\quad A_k \defeq  [\tau_{k-1},\tau_{k}], \quad B_{k-3} \defeq [0, \tau_{k-3} ]. 
    \end{equation}
  The same fact also allows us to express $g|_{[0,1)}$ as an expanding system, to  which we  will be  able
   to apply  Proposition~\ref{prop:conjugacyprinciple}. Indeed, let $(\gamma_k)_{k=1}^{\infty}$ be  a sequence
    of numbers $\gamma_k\geq 1$.  We will consider metrics $d_Y$ on $Y\defeq [0,1)$ with the following properties.
    \begin{enumequ}
       \item  When restricted  to  $B_0$, $d_Y$ is some metric $d_0$ equivalent to the Euclidean metric, and
            the $d_Y$-length of  $B_0$ is $\ell_0 \defeq \ell_{d_Y}(B_0)\geq 1$.\label{item:B0metric}
     \item  For every $k\geq 1$, the metric $d_Y$ is a constant multiple of the
          Euclidean metric on $A_k$, and $\ell_{d_Y}(A_k)= \gamma_k$.\label{item:Akmetric}
    \end{enumequ}

 \begin{obs}[Expanding metric for  $g$]\label{obs:periodicexpansion}
    Let $g\colon [0,1]\to[0,1]$ be a map below the identity with $(\tau_k)_{k=-2}^{\infty}$ as above. Let 
     $d_0$ be a metric on $B_0$ as in~\ref{item:B0metric}. 
    
    Then there are functions $\Gamma_k\colon [1,\infty)\to [1,\infty)$, for $k\geq 1$, such that
      $\Gamma_k(\gamma)>\gamma$ for all $\gamma\in [1,\infty)$ and such that the following property holds. 
       Suppose that $(\gamma_k)_{k=1}^{\infty}$ is a sequence satisfying 
         $\gamma_k\geq \Gamma_k(\gamma_{k-1})$ for all $k\geq 1$, and that $d_Y$ is the metric on $Y=[0,1)$ that agrees with $d_0$ on $B_0$ and
        satisfies~\ref{item:Akmetric}. Then 
        the inverse system $g\colon Y\to Y$ is expanding with respect to the metric $d_Y$, with constants $\lambda=2$ and $K=8\ell_0$. 
 \end{obs}
 \begin{proof}
%
       We may define $\Gamma_k(\gamma) >\gamma$ such that the following holds whenever
       $\gamma_1,\dots,\gamma_{k-1}\leq \gamma$ and $\gamma_k \geq \Gamma_k(\gamma)$:  if 
       $x,y\in A_k$ with $d_Y(x,y)\leq 1$, then  
         \begin{equation}\label{eqn:iteratedshrinking} d_Y(g^n(x),g^n(y))\leq \frac{1}{\max(k,8)} \end{equation} 
       for all $n\geq 1$. 
       It is enough to prove this when $\gamma_1=\dots=\gamma_{k-1}=\gamma$ (decreasing one or several of the $\gamma_i$ for $i<k$ does not affect 
       $d_Y(x,y)$, and 
          $d_Y(g^n(x),g^n(y))$ will decrease or remain the same). Since $g^n\to 0$ uniformly on $A_k$, there exists $n_0$ such that 
          $\diam_Y( g^n(A_k)) \leq 1/\max(k,8)$ for $n\geq n_0$. So~\eqref{eqn:iteratedshrinking} holds automatically
          for such $n$. On the other hand, for $1\leq n < n_0$, by uniform continuity of
          $g^n|_{A_k}$ there is some $\delta_n$ such that~\eqref{eqn:iteratedshrinking}
          holds when $\lvert x - y\rvert < \delta_n$. Choosing $\Gamma_k(\gamma)\geq \max_{n < n_0} 1/\delta_n$ ensures that the latter holds when $d_Y(x,y)\leq 1$. 
          
   Now suppose that $(\gamma_k)_{k=0}^{\infty}$ is a sequence with $\gamma_k\geq \Gamma_k(\gamma_{k-1})$ for $k\geq 1$. 
     We first show that condition~\ref{item:expandingbackwardsshrinking} of Definition~\ref{defn:expandingsystem} holds for
     $d_Y$ and $g$. We first prove this when $\Delta=1$, and $x$ and $y$ both belong either to $B_0$, or to a common $A_k$. Let $\eps>0$. If
     $x,y\in A_k$ with $k\geq k_0\defeq \bigl\lceil 1/\eps\bigr\rceil$, 
         then~\eqref{eqn:iteratedshrinking} implies that $d_Y(g^n(x),g^n(y))\leq \eps$ for all $n\geq 1$. On the other hand,
     $g^n|_{B_{k_0}}\to 0$ uniformly. So there exists $n_0\geq 1$, independent of $x$ and $y$,
     such that $d_Y(g^n(x),g^n(y))\leq \eps$ for $n\geq n_0$. This proves the claim
     under our assumption on $x$, $y$ and $\Delta$. The general case follows since every interval of $d_Y$-diameter $\leq \Delta$ can be written as the union of
     at most $2\Delta$ adjacent intervals of the form we just treated.
     
    Now let 
      $x,y\in [0,1)$; we  must  establish that condition~\eqref{eqn:expandinginversesystem} of Definition~\ref{defn:expandingsystem} holds. 
      We may suppose that $x<y$; write $I\defeq [x,y]$. Then, as in the preceding paragraph, we may
      write $I$ as the union of at most $2\cdot \lceil \ell_{d_Y}(I)\rceil$ adjacent intervals with disjoint interiors, where each interval is either 
      equal to $I\cap B_0$, or is contained in one of the $A_k$ and has $d_Y$-length at most $1$. We have $\ell_{d_Y}(g(I\cap B_0))\leq \ell_0$.
      On the other hand, if $J$ is an interval of the second type, then $\ell_{d_Y}(g(J)) \leq 1/8$ by~\eqref{eqn:iteratedshrinking}. Hence 
\[      d_{Y}(g(x),g(y)) \leq \ell_0 + \frac{2\lceil \ell_{d_Y}(I)\rceil}{8} < 2\ell_0 + 
               \frac{1}{4} d_Y(x,y) \leq 
               \frac{\max(8\ell_0,d_Y(x,y))}{2}. \qedhere\]   
 \end{proof}

\begin{figure}
 \begin{center}
   \def\svgwidth{\textwidth}
   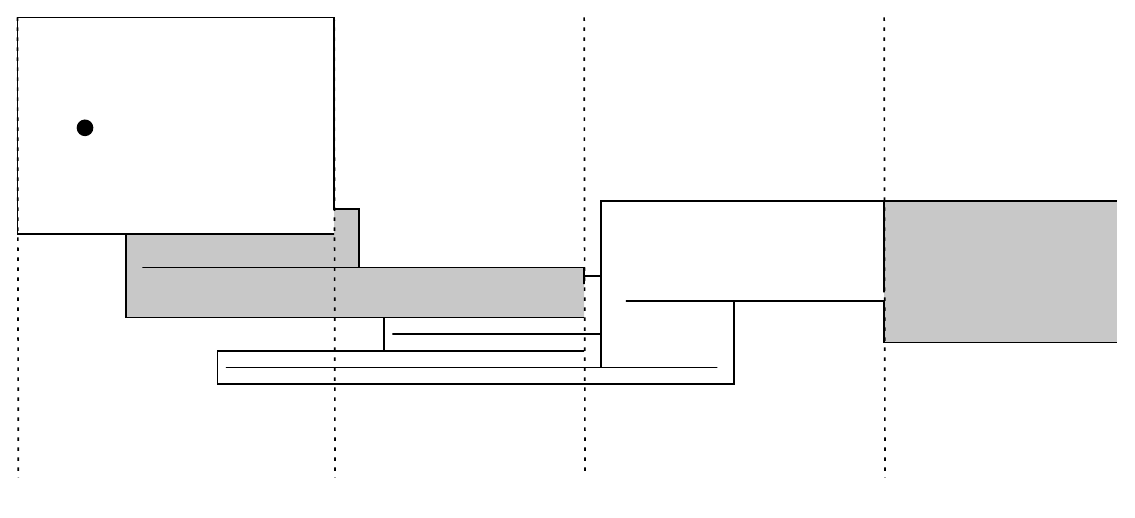
 \end{center}
 \caption{The shape of the tract $T$ used to construct a prescribed invariant Julia continuum.\label{fig:tract_definition-periodic}}
\end{figure}

 \subsection*{Description of $F$ and $T$}
      Our tract  $T$ is given as a disjoint union of the form
    \begin{equation}\label{eqn:inv-tract} T = \bigcup_{j\geq 0} U_j \cup \interior(C_{j}) \subset \{ x + iy\colon  x > 4, |y|<\pi \}. \end{equation}
    (See Figure~\ref{fig:tract_definition-periodic}.) Here $(C_j)_{j\geq 0}$ is a sequence of cross-cuts of $T$, with $C_{j-1}$ and $C_{j}$ bounding the subdomain $U_j$ in $T$ for $j\geq 1$. 
    Furthermore, 
    \begin{align} \notag U_0 &= \{ x + iy\colon   4 < x < R_0, |y|<\pi/2\},  \\
         U_j &\subset \left\{x + iy\colon   4 < x < R_{j}, -\pi/2\cdot \left(1 + \frac{j}{j+1}\right) < y< \pi/2\right\}\quad(j\geq 1),   
                  \quad\text{and} \\
           \notag C_{j} &\subset \{z\colon \re z = R_j\} \cap \partial U_j \cap  \partial U_{j+1}, 
   \end{align}
    where $(R_j)_{j=0}^{\infty}$ is a strictly (and rapidly) increasing sequence of numbers $R_j\geq 10$, and the 
     domains $U_j$ converge uniformly to $\infty$ as $j\to\infty$.

   The conformal isomorphism $F\colon T\to\HH$ is uniquely determined by requiring $F(5)=5$ and $F(\infty)=\infty$. 
   For $k\geq 0$, we also define \emph{partial tracts}
   \begin{equation} \label{eqn:inv-partial-tract} T_k \defeq  \bigcup_{j=0}^k U_j \cup \bigcup_{j=0}^{k-1} \interior(C_{j}) \end{equation}
    (observe that  $T_k$ is bounded). The associated conformal isomorphisms 
    $F_k\colon T_k\to\HH$ are determined by requiring $F_k(5)=5$ and  $F_k(\zeta_k)=\infty$, 
    where $\zeta_k\in C_k$ is defined
    as part of the construction. 
    
    In analogy with Proposition~\ref{prop:caratheodoryconvergence2}, 
     we require a result about the approximation of 
     $F$ by the functions $F_k$. To state it, let us introduce 
     the following terminology. Suppose that $K\geq 0$ and that
     $T_{K}$ is a partial tract as above, with associated function $F_{K}$. 
    Then we denote by $\F_{>K}(F_K)$ the set that contains
     all functions 
     $F$ for which $T_K$ is the $K$-th partial tract
     and for which $\zeta_K\in C_K$, as well as their $k$-th 
     approximations $F_k$ for $k>K$.  
     
     That is, if $F\in \F_{>K}(F_K)$, 
     then $F$ is a conformal isomorphism $F\colon T\to \HH$, where
     $T$ is either of the form~\eqref{eqn:inv-tract} or of the form~\eqref{eqn:inv-partial-tract} for some $k>K$, and where (in either case)   
     the choices of $(R_j)_{j=0}^{K}$, $(U_j)_{j=0}^{K}$ and
     $(C_j)_{j=0}^{K-1}$ are the same as for $T_K$. 
     Furthermore, $\zeta_k \in C_{K}$, $F(5)=5$ and
     $F$
     extends continuously to a point $\zeta$ with $F(\zeta)=\infty$, where
     $\zeta=\infty$ if $T$ is of the form~\eqref{eqn:inv-tract}, or
     $\zeta\in \partial T\cap \{z\colon \re z = R_k\}$ if 
     it is of the form~\eqref{eqn:inv-partial-tract}.

 \begin{prop}[Approximation by partial tracts]\label{prop:periodiccloseness}
   Let $K\geq 0$, and let $F_K$, $T_K$ and $\zeta_K$ be as above. 
   Let $R>0$. 
   
    Then there is an arc $\tilde{C}_K\subset \partial T_K\cap \{z\colon \re z = R_K\}$ 
      with $\zeta_K\in \tilde{C}_K$ such that 
        \[ \dist_{T_K}( F_K^{-1}(z) , F^{-1}(z) )  \leq \frac{1}{4} \]
     for all $F\in \F_{>K}(F_K)$ with $C_K = C_K(F) \subset \tilde{C}_K$ 
     and all $z\in\C$ with $4\leq \re z \leq R$ and 
     $\lvert \im z\rvert \leq \pi$. 
 \end{prop}
 \begin{proof}[Sketch of proof] 
 The proof is similar to that of
  Proposition~\ref{prop:caratheodoryconvergence2}. By definition,
  the tracts $T(F)$ of $F\in \F(F_K)$ converge to $T_K$ with respect
  to Carath\'eodory kernel convergence (with respect to the base point $5$)
  as the diameter of $C_K = C_K(F)$
  tends to zero. So again $ F^{-1}\circ \alpha_F \to F_K^{-1}$, 
  where $\alpha_F$ is a suitable
  M\"obius transformation fixing $5$. Let
  $Q\subset T_K$ 
   be a quadrilateral of large modulus one of whose ``vertical'' sides contains
  $\zeta$ in its interior, whose ``horizontal'' sides are on the line segment
  at real part $R_K$, and such that $Q$ separates $5$ from $\zeta$ in $T_K$.

 Let $R>0$ be so large that $Q$ separates $F_K^{-1}(R)$ from
  $5$. When $F\in\F(F_K)$ with $\diam(C_K(F))$ sufficiently small, 
  $Q$ also separates $F^{-1}(\alpha_F(R))$ from $5$. As in the proof
  of Proposition~\ref{prop:caratheodoryconvergence2}, it follows that 
  $\lvert \alpha_F(R)\rvert$ is large, with the lower bound depending only
  on the modulus of $Q$. We conclude that 
  $\alpha_F\to\id$ uniformly as $\diam C_K(F)\to 0$, as desired. 
\end{proof}
    
\subsection*{Recursive construction}
  The definition of the domains $U_k$ and  arcs $C_k$ is carried out in a similar spirit to those in  Sections~\ref{sec:arclikeexistence} 
    and~\ref{sec:boundedexistence}. As part of the construction, we obtain a homeomorphism
    \[ \phi\colon [4,\infty) \to [0,1), \]
    where 
     \[ \phi|_{[R_{k+2},R_{k+3}]}\colon [R_{k+2},R_{k+3}] \to A_k \]
    is defined in the $(k+2)$-th step of the construction. This homeomorphism will satisfy 
     $\phi(R_k)=\tau_{k-3}$ for all $k\geq 1$.  
   The construction ensures that 
    \begin{equation}
      \psi\colon \overline{T} \to [0,1); \qquad \psi(z) \defeq \phi(\re z) \end{equation}
    is a pseudo-conjugacy between the systems
     $(\overline{T}, F^{-1})$, with the Euclidean metric, and $([0,1),g)$,  with a metric 
     $d_Y$ as defined above, where $(\gamma_k)_{k=0}^{\infty}$ chosen recursively during the construction.

  The $\gamma_k$ are chosen to be a natural numbers. The metric $d_0$ on $B_0$
    is chosen to be a linear multiple of the Euclidean metric on each of the
     intervals $B_{-2}=[0,\tau_{-2}]$,  $[\tau_{-2},\tau_{-1}]$ and  $[\tau_{-1},\tau_0]$, with  each
     interval having integer length. 
    The choice of $d_Y$ on $B_k$ (for $k\geq -2$) determines 
    a finite set 
      \[ \Xi_k\defeq \{\xi\in B_k\colon d_Y(0,\xi)\in\Z\} \supset \{0,\tau_{-2},\dots , \tau_k\} \] 
    that partitions $B_{k}$ into intervals of
    unit $d_Y$-length. 

 To anchor the recursion, set
     \[R_{-1} \defeq 10,\quad R_{0}\defeq 14 \quad\text{and} \quad R_1\defeq 15.  \]
   Define $\phi\colon [4,R_1] \to [0, \tau_{-2}]$ to be 
     linear with $\phi(4)=0$ and  $\phi(R_1)=\tau_{-2}$, and define the metric  $d_Y$ on
      $[0,\tau_{-2}]$ by 
       \begin{equation}
           d_Y(x,y)\defeq 5|\phi^{-1}(x) - \phi^{-1}(y)|.
        \end{equation}
    (So $[0,\tau_{-2}]$ has $d_Y$-length $55$.) 
     Also set
    $\zeta_0 \defeq R_0$. 
%
%

 Now 
   the inductive step proceeds as follows. Let $k\geq  1$. 
   Assume that the domains $U_i$ are chosen for $i<k$, and the arcs $C_i$ for $i<k-1$. 
   The point  $\zeta_{k-1}$ will also have  been chosen, so $F_{k-1}$ is defined. 
   Moreover, $R_i$ is defined for $i\leq k$, an order-preserving homeomorphism 
     $\phi\colon [4,R_k] \to [0,\tau_{k-3}]$ has been constructed,
    and the metric $d_Y$ has been defined  on $B_{k-3}$.
    A finite set
    \[  \Omega_{i} = 
        \{ \tau_{i-1} = \omega_i^0 < \omega_i^1 <  \dots < \omega_i^{m_i}= \tau_{i}\} \]
       will also have been chosen for  $1\leq i<k$. 
       To avoid consideration of special cases in the case of $k=1,2$, we also
       define $\Omega_{-1}$ and $\Omega_0$ by setting
       $m_{-1}\dots m_0\defeq 1$, 
       $\omega_{-1}^0\defeq \tau_{-2}$, $\omega_{-1}^1 \defeq \omega_0^0 \defeq
       \tau_{-1}$ and $\omega_0^1\defeq \tau_0$. 

       As 
       in~\eqref{eqn:Ukdefn},
     each domain $U_i$ (for $1\leq i < k$) is given by a finite union
      \begin{equation}\label{eqn:Ui_periodic}
       U_i = \bigcup_{j=1}^{m_i} U_i^j \cup \bigcup_{j=1}^{m_i-1} C_i^j \end{equation}
    of Jordan domains $U_i^j$ and arcs $C_i^j$ connecting them, together with
    arcs $C_i^0=C_{i-1}$ and $C_i^{m_i}\supset C_i$. 
     For $i=-1,0$, these are given by 
    \begin{align*}
        U_{-1}^1 &\defeq \{x+iy\colon 5<x<R_{-1}, |y|<\pi/2\}, \\
                U_0^1&\defeq \{x+iy\colon R_{-1}<x<R_0, |y|<\pi/2\},  \\
         C_{-1}^0 &\defeq \{ 5 + iy\colon |y|\leq  \pi/2\}, \quad
         C_{-1}^1\defeq C_0^0 \defeq\{ R_{-1} + iy\colon |y|\leq\pi/2\}, \quad\text{and}\quad \\
         C_0^1 &\defeq \{ R_0 + iy\colon |y|\leq\pi/2\}. 
     \end{align*}

 It turns out to be convenient to slightly change the order of steps compared with those in  
  Section~\ref{sec:arclikeexistence};
  more precisely, we begin the inductive step with the analogue of step I5. To  emphasise the 
  similarity with previous sections, we shall retain the same numbering. 

\textbf{I5.} For $j=0,\dots, m_{k-2}$, define 
       \begin{equation}\label{eqn:periodiceta}
    \eta_{k-2}^j\defeq \min\{ x\in [5,\infty)\colon F_{k-1}^{-1}(x)\in C_{k-2}^j \}.  
       \end{equation} 
      We set $R_{k+1}\defeq \eta_{k-2}^{m_{k-2}}$.
       We now extend $\phi$ to $[R_{k},R_{k+1}]$ in a piecewise linear manner such that 
       \[ \phi(\eta_{k-2}^j) = \omega_{k-2}^j \] 
       for $j=1,\dots,m_{k-2}$. 
       We shall see in the proof of Theorem~\ref{thm:periodicexistenceprecise} below that 
           \begin{equation}\label{eqn:eta1k}
                 \eta^1_{k-2}>R_{k}.
           \end{equation}
  Hence this does indeed define a homeomorphism $[R_{k},R_{k+1}]\to [\tau_{k-3},\tau_{k-2}]$. 

   We now define the metric $d_Y$ on $I\defeq [\tau_{k-3},\tau_{k-2}]$ to be a constant multiple
     of the Euclidean metric, subject to the following requirements.  Note that $I= A_{k-2}$ for $k\geq 3$.
    \begin{enumequ}
       \item $\ell_{d_Y}(I)$ is a positive integer. 
      \item  If  $\tilde{I}\subset I$ has $\ell_{d_Y}(\tilde{I})\leq 1$,  then 
                 $\diam(\phi^{-1}(\tilde{I})) <1/4$. 
       \item If $k\geq 3$ (so that the restriction $d_0$ of $d_Y$ to $B_0$ has been chosen), then 
         $\gamma_{k-2} \defeq \ell_{d_Y}( I ) \geq \Gamma_{k-2}(\gamma_{k-3})$,
            where $\Gamma_{k-2}$ is the function from 
      Observation~\ref{obs:periodicexpansion}.\label{item:periodicgamma}
    \end{enumequ}

 \textbf{I1.} As in~\ref{I1item:Omegaimage}, we choose $m_k$ and $\Omega_k$
  such that, for $1\leq j\leq m_k$, the image 
     $g([\omega_k^{j-1},\omega_k^j])$ contains at most one element of $\Xi_{k-3}\setminus\{0\}$. 

 \textbf{I2.} 
   Choose an arc $\tilde{C}_{k-1}\subset C_{k-1}^{m_{k-1}}$ with $\zeta_{k-1}\in \interior(\tilde{C}_{k-1})$ by applying 
    Proposition~\ref{prop:periodiccloseness} to $T_{k-1}$. Here we use
    $R=R_{k+1}$. 

  \textbf{I3.} We now define the domain  $U_k$, of the form~\eqref{eqn:Ui_periodic}, very  similarly to step I3 in  Section~\ref{sec:arclikeexistence}.
     For $j\in\{1,\dots,m_k\}$, let $I_k^j$ be the smallest closed interval bounded 
      by two points of $\Xi_{k-3}$ 
      whose relative interior in $[0,\tau_{k-3}]$ contains $g([\omega_k^{j-1},\omega_k^j])$. 
       We also define $\tilde{I}_k^j \defeq \phi^{-1}(I_k^j)$.
       (In contrast to  Section~\ref{sec:arclikeexistence},
       there is no need for a different definition when $j=m_k$.) Recall that $g(\omega_k^{m_k}) = g(\tau_k) = \tau_{k-3}$, so 
       the right end-point of $\tilde{I}_k^{m_k}$ is $\phi^{-1}(\tau_{k-3})=R_k$. 
     
    Requirement~\ref{I3item:Cj} remains unchanged from Section~\ref{sec:arclikeexistence},~as does \ref{I3item:sizeofUk}. 
     Condition~\ref{I3item:Cm} is replaced by
    \begin{enumequ}
     \item
      $C_{k-1} \defeq C_k^0\subset \tilde{C}_{k-1}$ and 
      $C_k^{m_k} \subset \{ R_k + iy\colon |y|\leq \pi  \}$. 
    \end{enumequ}
      Requirements~\ref{I3item:Ukhyperbolicdistance} and~\ref{I3item:Ukrealparts} are slightly changed as follows, for each $1\leq j \leq m_k$. 
     \begin{enumequ}
        \item $\dist_{T_k}(C_k^{j-1},C_k^j) \geq 1$, and the same holds for the tract of any $F\in \F(F_k)$ with $C_k(F)\subset C_k^{m_k}$.%
            \label{item:periodichyperbolicdistance}
        \item Suppose that $\alpha$ is a geodesic segment of $T_k$ 
          that connects $C_k^{j-1}$ and $C_k^j$. Then $\dist_{T_k}(z, U_k^j) \leq 1/4$ 
          for all $z\in\alpha$. (I.e., $\alpha$ cannot protrude too far from the cross-cuts $C_k^{j-1}$ and $C_k^j$.)
          The same holds for the tract of any $F\in \F(F_k)$ with $C_k(F)\subset C_k^{m_k}$. \label{item:periodicgeodesics}
     \end{enumequ}
     (Both of these can be achieved by choosing the arcs sufficiently small.) 
     As in step I3 of Section~\ref{sec:boundedexistence}, we also require the following condition to ensure the bounded decorations condition. 
    \begin{enumequ}
      \item 
          Each $U_k^j$ is a Jordan domain, and every point of $U_k^j$ 
         can be connected to both
         components of $\partial U_k^j \setminus (C_k^{j-1} \cup C_k^j)$
         by a curve in $U_k^j$ of diameter at most $2\pi$.  
\label{item:periodicI3boundeddecorations}
    \end{enumequ}  
   We also choose $\zeta_k\in \interior(C_k^{m_k})$, completing the description of $T_k$ and $F_k$. 

 \textbf{I4.} There is no analogue of step I4, and the inductive construction is complete.

\begin{proof}[Proof of Theorem \ref{thm:periodicexistenceprecise}]
  We first verify that the construction indeed guarantees~\eqref{eqn:eta1k}. 
    For $k=1$, this follows from the standard estimate~\eqref{eqn:standardestimate}. So suppose that
     $k\geq 2$, and let 
     $4\leq x \leq R_{k}$. Then $F_{k-2}^{-1}(x)\in  T_{k-3}$ by definition  of $R_k$. Furthermore, 
      $\dist_{T_{k-2}}(F_{k-1}^{-1}(x),F_{k-2}^{-1}(x))\leq 1/4$ by step I2.
     Hence, by~\ref{item:periodichyperbolicdistance}, 
      $F_{k-1}^{-1}(x)\not\in C_{k-2}^1$ for these values of $x$.  In particular, $\eta^1_{k-2}>R_{k}$, 
      as required. 

 Now let $T$ be the tract we have just defined. Then $T$ clearly has bounded slope. The tract also has bounded
   decorations
   by Proposition \ref{prop:decorations} and~\ref{item:periodicI3boundeddecorations}.

  The (autonomous) inverse system generated by $(\overline{T},F^{-1})$ is 
     expanding when equipped with
     the Euclidean metric (using an analogue of Observations~\ref{obs:expansion} and~\ref{obs:expansion2}). The system $([0,1),g)$ with 
     the metric $d_Y$ is expanding
    by~\ref{item:periodicgamma}.

  We claim that $\psi\colon \overline{T} \to [0,1)$ is a pseudo-conjugacy between the two systems.
    To establish the pseudo-conjugacy relation, let $w\in\overline{T}$; we claim that 
     \begin{equation} \dist_Y( g(\psi(w)), \psi(F^{-1}(w))) \leq \max(4,\ell_0).\label{eqn:periodicpc}\end{equation}
     Set $\omega \defeq \psi(w)$, $x\defeq \re  w$ and  $z\defeq F^{-1}(w)$.  
       Set $k=0$ if $\omega\in  B_0$, and otherwise
       let $k$ be such that $\omega\in A_k$. Then $x\leq R_{k+3}$, and hence 
      \begin{equation}
        \dist_T(F_{k+1}^{-1}(x), F^{-1}(x)) \leq 
         \dist_{T_{k+1}}(F_{k+1}^{-1}(x), F^{-1}(x)) \leq 1/4\label{eqn:pcproof1}
      \end{equation}
       by I2.  Since $F\colon T\to\HH$ is a conformal isomorphism, also 
      \begin{equation}
        \dist_{T}(z,F^{-1}(x)) \leq \pi/x.\label{eqn:pcproof2}
      \end{equation}

     First suppose that $k=0$. Then 
      $g(\psi(w))\in B_0$ and $x \leq R_{3}$. Thus $F_1^{-1}(x)\in U_0$ by definition, and
      $F^{-1}(w)\in T_2$ by~\eqref{eqn:pcproof1} and~\eqref{eqn:pcproof2}. So 
      $\psi(z)\in B_0$, and~\eqref{eqn:periodicpc} follows.
  
     Now suppose that $k\geq 1$ and 
       $x > \eta_k^0$, as defined in~\eqref{eqn:periodiceta}.  Let $j\in\{1,\dots,m_{k-1}\}$ be such that
       $\eta_k^{j-1} < x \leq \eta_k^j$. Then $F_{k+1}^{-1}(x)$ belongs to a geodesic segment of $T_k$ connecting 
      $C_{k}^{j-1}$ and $C_{k}^{j}$, and hence by~\ref{item:periodicgeodesics},
      \[ \dist_{T}( F_{k+1}^{-1}(x), U_k^j) \leq 1/4. \]
       Note that $x\geq R_0\geq 10$, and hence \eqref{eqn:pcproof1} and~\eqref{eqn:pcproof2},
       together with~\ref{item:periodichyperbolicdistance}, imply that 
        $z$ either belongs to $U^j_k$, or to one of the quadrilaterals adjacent to $U^j_k$. 
       Recall that $g(\omega)\in \tilde{I}^j_k$, which has  $d_Y$-length at most $2$. It follows that
       $d_Y(g(\omega), \psi(z))\leq 3$, as desired. 

   Finally, suppose that  $R_{k+2} \leq x \leq \eta_k^0$. For $5\leq \zeta \leq \eta_k^0$, we have
      $F_{k+1}^{-1}(\zeta)\in T_{k-1}$ by definition of $\eta_k^0$. Moreover,
       $\dist_{T_{k}}(F_{k}^{-1}(R_{k+2}), F_{k+1}^{-1}(R_{k+2}))\leq 1/4$ by step I2, and hence
        $F_{k+1}^{-1}(R_{k+2})\in U_{k-1}^{m_{k-1}}$. 

       So if $\rho\in [5,\infty)$ is minimal with 
          $F_{k+1}^{-1}(\rho) \in  C_{k-1}^{m_{k-1}-1}$, then $\rho \leq R_{k+2}\leq x$. Thus $x$
          belongs to a geodesic of $T_{k+1}$ connecting the two  cross-cuts bounding $U_{k-1}^{m_{k-1}}$.
       By~\ref{item:periodicgeodesics}, we have
         $\dist_T(x,U_{k-1}^{m_{k-1}})\leq 1/4$. 
         Arguing as above, we conclude that $d_Y(g(\omega), \psi(z))\leq 4$, and the proof of~\eqref{eqn:periodicpc} is
         complete. 
      
    Surjectivity  of $\psi$ is obvious, and the remaining two conditions follow as in Proposition~\ref{prop:pseudoconjugacyconditions}.
     So the two inverse limits are homeomorphic, and the homeomorphism extends to their one-point
      compactifications, $\hat{C}$ and $\hat{Y}$, with $\infty$ corresponding to $y_1$.  Moreover,
      by Observation~\ref{obs:convergingtotheconjugacy}, the unique fixed point of $F$ in $C$ must
      correspond  to $y_0$. This completes the proof.
\end{proof}

\begin{proof}[Proof of Theorem \ref{thm:periodicexistence}]
 The fact that every invariant Julia continuum of a function with bounded-slope tracts is of the required form was already established in 
   Theorem \ref{thm:invariantarclike}. Conversely, by Theorem \ref{thm:realization}, we can realise the invariant Julia continuum
   from Theorem \ref{thm:periodicexistenceprecise} by a disjoint-type entire function of bounded slope. 
\end{proof}

\begin{proof}[Proof of Theorem \ref{thm:pseudoarcs}]
  By a classical result of Henderson \cite{hendersonpseudoarc}, the pseudo-arc can be written as an inverse limit of a single
   function $f\colon [0,1]\to[0,1]$ with $f(x)<x$ for all $x\in (0,1)$. 
   By Theorem \ref{thm:periodicexistenceprecise}, there is a 
   disjoint-type 
   function $F\in\BlogP$, having only one tract $T$, such that the invariant Julia continuum in $T$ is a pseudo-arc. 
   Furthermore, this tract has bounded slope and bounded decorations, and hence Corollary~\ref{cor:pseudoarcs}
    implies that every Julia continuum is a pseudo-arc.

  Finally, by Theorem \ref{thm:realization}, there is a disjoint-type function $f\in\B$ with the same property.
\end{proof}

\section{Constructing examples with a finite number of tracts and singular values}\label{sec:classS}

We now turn to Theorem \ref{thm:Stracts}. So far, we have shown that each of the 
  examples in question can be constructed in the class $\B$, with the desired number
  of tracts (one or two). With the exception of Theorem \ref{thm:pseudoarcs}, in which we require control over \emph{all} Julia continua,
  Theorem \ref{thm:realization} also shows that they can be constructed in the class $\classS$, but with a potentially infinite number of
  tracts.

In order to show that we can also realise our examples in the class $\classS$, without 
  having to introduce additional tracts, we use Bishop's results from \cite{bishopfolding},
  rather than those from~\cite{bishopclassSmodels}. 
  Let $G$ be an infinite locally bounded tree in the plane. Following Bishop, we say that
   $G$ has \emph{bounded geometry} if the following hold.
\begin{enumerate}[(i)]
   \item The edges of $G$ are $C^2$, with uniform bounds.\label{item:C2bounds}
   \item The angles between adjacent edges are bounded uniformly away from zero.\label{item:boundedangles}
   \item Adjacent edges have uniformly comparable lengths.\label{item:adjacentedges}
   \item For non-adjacent edges $e$ and $f$, we have
        $\diam(e) \leq C\cdot \dist(e,f)$ for some constant $C$ depending only on $G$.\label{item:nonadjacent}
\end{enumerate}
Bishop's theorem \cite[Theorem 1.1]{bishopfolding} is as follows:
\begin{thm}[Construction of entire functions with two singular values]\label{thm:bishop}
  Suppose that $G$ has bounded geometry. Suppose furthermore that 
    $\tau\colon \C\setminus G\to \HH$ is holomorphic with the following properties.
   \begin{enumerate}[(1)]
    \item For every component $\Omega_j$ of $\C\setminus G$, 
                $\tau\colon \Omega_j\to \HH$ is a conformal isomorphism whose inverse
                $\sigma_j$ extends continuously to the closure of $\HH$ with $\sigma_j(\infty)=\infty$.
    \item For every (open) edge $e\in G$, and each $j$, every component of
          $\sigma_j^{-1}(e)\subset \partial \HH$ has length at least $\pi$.\label{item:taulength}
   \end{enumerate}

  Then there is an entire function $f\in\classS$ and a quasiconformal map $\phi$ such that
      $f\circ \phi = \cosh\circ \tau$ on the complement of the set
      \[ G(r_0) \defeq  \bigcup_{e\in G} \{z\in\C\colon  \dist(z,e) < r_0\diam(e)\}. \]
     (Here $r_0$ is a universal constant, and the union is over all edges of $G$.)

   The only critical points of $f$ are $\pm 1$ and $f$ has no asymptotic values. If $d$ is such that $G$ has no
     vertices of valence greater than $d$, then $f$ has no critical points of degree greater than $4d$.
\end{thm}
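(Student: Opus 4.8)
The plan is to follow Bishop's method of \emph{quasiconformal folding}, which is how this result is established in \cite{bishopfolding}; below I outline the structure of that argument. The starting point is the \emph{model map} $h \defeq \cosh\circ\tau$, which is holomorphic on $\C\setminus G$. On each complementary component $\Omega_j$ the branch $\sigma_j$ of $\tau^{-1}$ carries $\partial\HH = i\R$ onto a union of edges of $G$, and $\cosh$ collapses $i\R$ (with multiplicity) onto the slit $[-1,1]$; consequently the two one-sided boundary values of $h$ along a common edge $e$ generally disagree. The entire content of condition~\ref{item:taulength} is that each boundary arc lying over an edge has length at least $\pi$ --- just enough room to fold that arc onto $[-1,1]$ and absorb the mismatch in a controlled fold, without altering $h$ outside a thin neighbourhood $G(r_0)$ of the tree.

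The heart of the construction is the local replacement of $h$ inside $G(r_0)$, performed edge by edge and vertex by vertex. Near an open edge $e$ one builds an explicit quasiconformal folding homeomorphism of a standard neighbourhood of $e$ which reconciles the two one-sided boundary values of $h$; the bounded-geometry conditions on the lengths, angles and curvatures of the edges guarantee that its dilatation is bounded by a universal constant. Near a vertex $v$ of valence $m$ one builds a quasiconformal model handling the $m$ fold-lines meeting at $v$; this is the step that creates critical points, and tracking the local degree through the (degree-two) $\cosh$-factor and the fold shows that each such critical point has degree at most a fixed multiple of $m$, hence at most $4d$ overall. The separation estimate~\ref{item:nonadjacent} ensures that the ``bubbles'' $\{z\colon \dist(z,e)<r_0\diam(e)\}$ over distinct edges are pairwise disjoint and uniformly separated, so that these local modifications do not interfere; they therefore patch together into a single quasiregular map $F\colon\C\to\C$ with $F=h$ on $\C\setminus G(r_0)$ and with Beltrami coefficient $\mu_F$ supported in $G(r_0)$ and satisfying $\|\mu_F\|_\infty<1$.

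It remains to straighten $F$. By the measurable Riemann mapping theorem there is a quasiconformal $\phi\colon\C\to\C$ with $\overline{\partial}\phi=\mu_F\,\partial\phi$, and then $f\defeq F\circ\phi^{-1}$ is holomorphic, i.e.\ entire, and satisfies $f\circ\phi=F=\cosh\circ\tau$ on $\C\setminus G(r_0)$. The only critical points of $\cosh$ lie over $\pi i\Z\subset\partial\HH$, with critical values $\pm 1$; under the folding these become precisely the critical points of $f$ sitting over the vertices of $G$, the folding introduces no new singular values, and $\cosh$ (hence $f$) has no finite asymptotic values. Therefore $S(f)=\{-1,1\}$ consists of the two critical values, with critical-point degrees bounded by $4d$ as claimed.

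The main obstacle is the explicit local folding construction, particularly near vertices: one must produce quasiconformal models of \emph{uniformly} bounded dilatation that simultaneously reconcile the multivalued boundary behaviour of $\cosh\circ\tau$, keep the singular set equal to $\{\pm 1\}$, and control the degrees of the resulting critical points in terms of the valence. Verifying that the bounded-geometry hypotheses (i)--(iv) are exactly what is needed for these local pieces to assemble into a global quasiregular map with $\|\mu\|_\infty<1$ is the technical core of the argument, and is carried out in detail in \cite{bishopfolding}.
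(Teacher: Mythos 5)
The paper does not prove this theorem: it is quoted verbatim as Bishop's result from \cite{bishopfolding} (restated in \cite{bishopclassSmodels}) and used as a black box, so there is no internal proof to compare your attempt against. Your sketch accurately describes the high-level structure of Bishop's quasiconformal-folding argument --- model map $\cosh\circ\tau$, local folding near edges and vertices with uniformly bounded dilatation via the bounded-geometry hypotheses, straightening by the measurable Riemann mapping theorem --- and, like the paper, correctly defers the technical core to the cited reference.
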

We note that the maximal dilatation of $\phi$ is bounded by a constant depending only on the 
  bounded geometry bounds of $G$; we will not require this fact. 
\begin{rmk}[Smaller bounds on the degree]\label{rmk:bishopbound}
  This is the theorem as stated in \cite[Theorem~3.1]{bishopclassSmodels}. All trees considered in this
    section will have maximal degree at most $4$, leading to a bound of $16$ on the degree of critical points.
    As discussed in
    \cite{bishopclassSmodels}, the bound of $4d$ can be reduced to $\max(4,d)$ by modifying the 
      construction in
       \cite{bishopclassSmodels}, and replacing $\tau$ by an integer multiple. This 
       leads to a bound of $4$ on the critical points of our functions, as remarked in Section~\ref{sec:intro2}. 
\end{rmk}

 In order to use this result to obtain the desired examples, there are two steps that we need to take. 
  \begin{itemize}
    \item We will modify the construction of the domains $\T$ of our functions,
        to obtain a union $\tilde{\T}$ of ``tracts'' such that $\C\setminus \exp(\tilde{T})$ is 
         a bounded-geometry tree.
       
       Along with these tracts, we  construct $\tilde{F}\colon \tilde{\T}\to \HH$,
        conformal on each component of $\tilde{\T}$, such that
        the map $\tau(\exp(z))\defeq \tilde{F}(z)$ satisfies the hypotheses of Theorem~\ref{thm:bishop}. 

       When we consider the restriction $F$ of  $\tilde{F}$ to the preimage of a suitable right half-plane,
          we will obtain a disjoint-type function in $\BlogP$, still having the desired properties of the original  construction.

     Observe that $\overline{\T}$ will need to fill out the entire plane, which means 
        that we have to ensure that the individual tracts each fill out a complete
          horizontal strip of height  $2\pi$.
   \item Once this is achieved, let $f_0$ be the function from Theorem \ref{thm:bishop}, and set 
           $f \defeq  \lambda f_0$, where $\lambda$ is chosen sufficiently small to ensure that $f$ is of disjoint type. 
           The functions
           $f$ and $g \defeq  \cosh\circ\tau$ are no longer necessarily ``quasiconformally equivalent'' near infinity 
             in the sense 
           of~\cite{boettcher}, and hence we cannot conclude that they are quasiconformally equivalent near their 
            Julia sets.
          However, if the set $G(r_0)$ is disjoint from the orbit of the Julia continuum $\CH$ of $g$ under consideration, 
            then the
          arguments from \cite{boettcher} still apply to show that there is a corresponding Julia continuum of $f$ 
           homeomorphic to $\CH$. 

     Indeed, if we lift $\phi$ to a map $\Phi$ in logarithmic coordinates, then 
         $\Phi$ does not move a point $\zeta$ by more than a bounded hyperbolic distance,
          provided $\re \zeta$ is sufficiently large. 
          If $F_0$ is a logarithmic transform of $f$ and
          $F_0\circ \Phi = \tilde{F}$ on the orbit of a Julia continuum of $\tilde{F}$
          (which is guaranteed when the above
          condition on $G(r_0)$ is satisfied), then we easily obtain a homeomorphism between this continuum
           and a corresponding one for $F_0$, using the conjugacy principle from Section~\ref{sec:conjugacy}.
           Compare \cite[Lemma 2.6]{boettcher}.

       We remark that the set $G(r_0)$ can in fact be replaced by a smaller set 
         $V_{\mathcal{I}}$ (compare \cite[Lemma 1.2]{bishopfolding}), and 
         under the right conditions this set will be automatically disjoint from the \emph{bounded-address} continua 
         that we construct in Theorems
         \ref{thm:boundedexistence} and \ref{thm:periodicexistence}. However, some care is still required in the case 
          of Theorem
         \ref{thm:mainarclike}.
  \end{itemize}
 
 We shall now discuss in some more detail how to ensure these properties in the case of 
  Theorem~\ref{thm:arclikeexistenceBlog} and  Proposition~\ref{prop:arclikeexistenceprecise}. 
  The techniques and estimates are very similar to those used in Bishop's examples. Hence we shall focus on
  the ideas rather than give precise estimates.

\begin{figure}
 \begin{center}
   \def\svgwidth{\textwidth}
   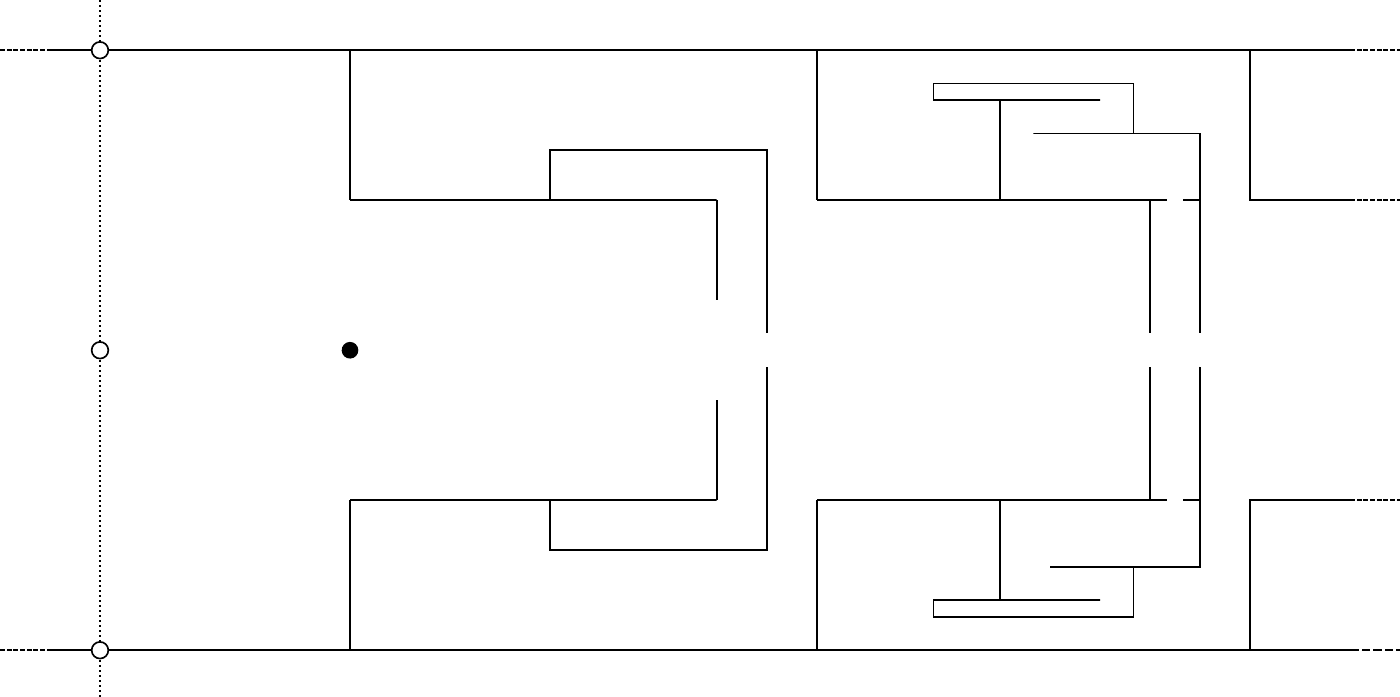
 \end{center}
 \caption{A modification $\tilde{T}$ of the tract $T$ from Figure \ref{fig:tract_definition}, which allows the function from Theorem
     \ref{thm:mainarclike} to be constructed in the class $\classS$, without additional tracts.\label{fig:tract_construction_S}}
\end{figure}

\begin{prop}
 Let the assumptions of Theorem \ref{thm:arclikeexistenceBlog} and Proposition~\ref{prop:arclikeexistenceprecise}
    be satisfied, with the
   additional requirement that $M_k \geq P$ for all $k$, where $P\geq 10$ is a certain universal constant.
 
 Then there is a 
   simply-connected domain
    $\tilde{T}\subset \{a+ib \colon  |b|\leq \pi\}$ and a conformal isomorphism
    $\tilde{F}\colon \tilde{T}\to \HH$ with the following properties.
\begin{enumerate}[(a)]
   \item The domain $T \defeq  \{ \tilde{F}^{-1}(z)-1\colon \re z > 1 \}$ and the map
              $F\colon T\to \HH; F(z) = \tilde{F}(z+1) - 1$ satisfy the conclusions of 
             Theorem~\ref{thm:arclikeexistenceBlog} and 
             Proposition~\ref{prop:arclikeexistenceprecise}.\label{item:disjointtyperestriction}
   \item $G\defeq \C\setminus \exp(\tilde{T})$  (with suitable vertices added) is a bounded-geometry tree, 
              and the function $\tau$ defined by
              $\tau(\exp(z)) \defeq  \tilde{F}(z)$ satisfies the hypotheses of Theorem \ref{thm:bishop}.\label{item:bishop}
   \item The hyperbolic distance (in $T$) between the orbit of the constructed Julia continuum and the set $\exp^{-1}(G(r_0))$ is
             bounded from below. \label{item:Gr0}
\end{enumerate}
\end{prop}
\begin{proof}[Sketch of proof]
  We slightly modify the construction of the tract $T$ from the proof of Theorem~\ref{thm:arclikeexistenceBlog} 
    as shown in Figure \ref{fig:tract_construction_S}. That is,
    the tract now also contains the half-strip $\{\re z < 0, |\im z|<\pi\}$, and furthermore
    additional ``chambers'' $V_k$ are added around the domains $U_k$, attached between real parts 
     $R_k+1$ and $R_k+2$. This is done to ensure that $\tilde{T}$ is dense in the 
    strip $\{|\im z | < \pi\}$ and the 
    complement of $\exp(\tilde{T})$ is an infinite tree in the plane. We also add a second additional ``iris'' 
    at real part $R_k+1$, closing the tract up to a small opening size $\tilde{\chi}_k$. This is chosen in step I4, after $\xi_k$ has 
    been chosen, to make sure that $F_k^{-1}(x + 2\pi is(k) )$ does not enter the chamber $V_k$ for any $x\geq 0$, and hence~\ref{I4item:wholegeodesic} still holds.

   The function
     $\tilde{F}$ is chosen such that $\tilde{F}(p)=p$ and $\tilde{F}'(p)>0$, where $p$ is a sufficiently large
     universal constant, and $P\geq p+4$. If $p$ is sufficiently large, this ensures that 
     $T\subset \{z\in\HH\colon \re z> 4\}$, and in particular 
     the function $F$ defined in~\ref{item:disjointtyperestriction} is of disjoint type. We can now
     carry out the construction precisely  as in Section~\ref{sec:arclikeexistence}, 
     so that~\ref{item:disjointtyperestriction} holds. 
     However, some additional properties will be required in the recursive construction to ensure that~\ref{item:bishop} and~\ref{item:Gr0} hold for an appropriate
    partition of $G$ into edges.

  We next discuss how to subdivide the edges in such a way as to obtain a bounded-geometry tree. 
    As the
    exponential map is conformal, we can carry  out this discussion in logarithmic coordinates, i.e.\ for the
    boundary of $\tilde{T}$.        In these, all edges will be straight lines, and 
     meet at angles that are multiples of $\pi/2$. 
Everything will be carried out in a manner symmetric to the real axis, so that 
    edges on the horizontal lines at imaginary parts $\pm \pi$ are compatible when mapped by  the exponential map. 
     The upper  and  lower boundaries of the part of $\tilde{T}$ in  the left half-plane will be taken  to  be 
     edges themselves (they map to the single edge $[-1,0]$ after projection by the exponential map). 
    With such choices, $\exp(\partial\tilde{T})$ will indeed be an infinite locally bounded planar tree 
     satisfying~\ref{item:C2bounds} and~\ref{item:boundedangles} in the definition of a bounded-geometry tree. It remains to explain how to choose
     the subdivision such that~\ref{item:adjacentedges} and~\ref{item:nonadjacent} also hold.

     Along the horizontal edges of the central strip and the upper and lower
    boundaries of the tract, we use edges whose length is uniformly bounded from below and above. 
     Where there is an  iris that closes off a gap up to  a small opening
    (at the real parts $R_k$, the arcs $C_k$, and the arcs $C_k^j$ constructed when defining $U_k$), 
    we  can use edges whose length decreases geometrically, so that the edges adjacent to  an opening
    are comparable in length to the size of the opening. For example, for the irises at real part $R_k$, the
    edges adjacent to the boundary of the central strip have length bounded from below, while the two  edges adjacent
    to the gap have length comparable to $\chi_k$. (More precisely, in order to be able to ensure~\ref{item:Gr0}
    in the following,  these edges should be shorter than, but  still comparable to, $\chi_k/(4r_0)$.) 

   Recall that, when constructing the domain $U_k$, we may choose the 
    domains $U_k^j$ as rectangles, sitting on top  of each other with $U_k^{m_k}$ adjacent to the
     central strip, and  $U_k^1$ closest to the upper boundary. The width of these rectangles can be chosen
     to be bounded from below. We may also choose the height of $U_k^{m_k}$ to be  bounded from below,
     while the height of $U_k^j$ decrease geometrically as $j$ changes from $m_k$ to  $1$, in such a way that the total height of $U_k$ is bounded by
     $\pi/4$, independently of the number $m_k$ of domains $U_k^j$. Furthermore, we can choose a subdivision such that the
    length of edges used in $\partial U_k^j$ decrease geometrically in $j$, and such that
    edges are only ever adjacent to edges that are comparable in size.  
    Since the imaginary parts of $U_k$ are bounded by $3\pi/4$, the size of the gap between the upper boundary of
     $\tilde{T}$ and the final domain  $U_k$ is bounded from below; ensures
     that condition~\ref{item:nonadjacent}
     from the definition of bounded-geometry trees is satisfied.
  
   In this way, we have shown how to construct $\tilde{T}$ such that $\C\setminus\exp(\tilde{T})$ can be given the structure of a bounded-geometry
     tree $\tilde{G}$. The map $\tau$ in Theorem~\ref{thm:bishop} is given precisely by the  map $z\mapsto \tilde{F}(\Log z)$. 
     We must show that the choices can additionally be made so that the $\tau$-length condition~\ref{item:taulength} in Theorem~\ref{thm:bishop}
     is also satisfied, as well as~\ref{item:Gr0}. As we will see, the first requires us to choose $\chi_k$ sufficiently small in step I4, 
      and the second to modify the partition $\tilde{G}$ of $\partial \tilde{T}$ on the domain
     $U_k^1$, depending on this choice of $\chi_k$. 
     
   First, it is straightforward to check that all edges of $\tilde{G}$ on the boundary of the central strip have
     $\tau$-length at least $\pi$, provided that $p$ was chosen large enough. Furthermore,
     it follows for the vertical edges leading to an iris in  the main part of the tract, using the properties of our partition and 
      simple estimates on harmonic measure. 
  
    By choosing $\chi_k$ sufficiently small, we can also ensure that each edge in the domains $U_j$ and $V_j$ is mapped to a sufficiently 
      large interval under $\tilde{F}$, and hence~\ref{item:taulength} holds. Indeed, the subdivision
      into edges that we discussed is independent of the choice of $\chi_k$, and as $\chi_k$ becomes small, the length of the
      image of each edge becomes large. (This is the same argument as in step I4 of the inductive construction in
      Section~\ref{sec:arclikeexistence}, considering the harmonic measure of the edge as viewed from $R_k+1$.) 
      Hence we can ensure~\ref{item:taulength} for the partition of $\tilde{G}$ by choosing $\chi_k$ sufficiently small. 

   It remains to address~\ref{item:Gr0}. Let $\CH=C\cup\{\infty\}$ be the constructed Julia continuum. 
    Then $F^n(C)$ lies within a bounded hyperbolic distance (in $T$ resp.\ $T+2\pi is(k)$) of
     \begin{itemize}
       \item the geodesic $[p,\infty)$ when
           $n\neq n_k,n_k-1$, 
       \item the geodesic $[p,\infty)+2\pi i s(k)$ when $n=n_k$, and
        \item the geodesic $F^{-1}([M_k,\infty)+2\pi i s(k))$when $n=n_k-1$. 
    \end{itemize}
   Hence, by construction,~\ref{item:Gr0} holds automatically for $F^n(C)$ when $n\neq n_k-1$ (provided the 
     initial lengths of edges were chosen sufficiently small). Furthermore, it is not difficult to see that the
     geodesic in the remaining case stays away sufficiently far from the boundary to ensure that~\ref{item:Gr0} holds (again assuming the edges were chosen sufficiently small),
     except possibly for the part of $F^{n_k-1}(C)$ contained in the domain $U_k^1$.

   To address the situation in $U_k^1$, we need to modify the partition $\tilde{G}$. Indeed, as $\chi_k\to 0$, the point
     $z_k\defeq F^{-1}(M_k+2\pi i  s(k))$ will tend closer and close to the boundary of $\tilde{T}$. Hence we can choose the final partition of $\partial U_k^1$ only once
     the value of $\chi_k$ has been fixed. This is done by subdividing the edges of $\tilde{G}$ in $U_k^1$ in such a way that they shrink as they approach
     $F_k^{-1}(2\pi i s(k))$, up to a length that agrees with $\dist(z_k,\partial U_k^1)$ up to a small universal multiplicative constant 
     (depending on $r_0$). This ensures that~\ref{item:Gr0} also holds for the final part of the geodesic. If $p$, and hence $M_k$, was chosen large enough, 
     then the $\tau$-length of the resulting edges (whose harmonic measure seen from $z_k$ is bounded from below by a universal constant) is also 
     at least $\pi$. The tract $\tilde{T}$ with the resulting partition of $\partial\tilde{T}$ into a graph $G$, then satisfies all of the conditions stated in the proposition. 
\end{proof}
  
As the construction for Theorem~\ref{thm:arclikeexistenceBlog} is used to construct the examples
    in Theorems~\ref{thm:nonuniform},~\ref{thm:mainarclike},~\ref{thm:nonescapingaccessible} and~\ref{thm:onepointuniform},
    all of these can be constructed in class $\classS$ with a single tract, two critical values, no finite
    asymptotic values, and no critical points of degree greater than $16$
    (resp.\ 4 after taking account of the modifications in Remark~\ref{rmk:bishopbound}).
   
The construction for Theorem~\ref{thm:boundedexistence} is similar,
   modifying the proof in Section~\ref{sec:boundedexistence}.
   Here we should choose a partition of the boundary of the strip 
   $S$ into edges first, such that these edges have length at least $\pi$ when 
   mapped forward under the conformal map $F_S$, and 
   $R_k^-$ is chosen sufficiently large so that the boundary of the domain $U_k$ 
   can be subdivided in such a way to ensure bounded geometry.
   This is clearly possible because the set $\Omega_k$, and hence the number of rectangles in $U_k$, 
    is known before the value
     $R_k^-$ is chosen. We should also make sure that the domain $U_k$ fills out the piece of the strip between 
    real parts
    $R_k^-$ and $R_k^+$ (unlike in Figure \ref{fig:tract_definition-bounded}), and, after $U_k$ is chosen, 
    reduce the size
    of the cross-cut $C_k^-$ in order to ensure $\tau$-length at least $\pi$ for each of the edges.

  Now we turn to Theorem~\ref{thm:periodicexistence}, as  
   established in  Section~\ref{sec:periodicexistence}. The construction here 
    is a little more delicate, because the domain $U_k$, for large $k$, can 
   potentially reach very far back to the left. More precisely, for fixed $k$, there may be some large values of $\tilde{k}$ such that
   $g(A_{\tilde{k}})$ contains points of $A_k$. 
    If the set $\Omega_{\tilde{k}}$ (which is not known at the time that $R_{k-3}$ is chosen) is very large, then potentially there may be 
   many pieces intersecting the line $\{\re z = R_{k-3}\}$, and hence it may be difficult to control the size of the corresponding edges of the tree here. 

  To resolve this problem, we observe that~--- similarly to Proposition~\ref{prop:countable}~--- we may assume that the function $g$ is piecewise linear on the interval $[0,1)$, with countably many
   points of nonlinearity, which accumulate only at $1$, and everywhere locally non-constant. This means that we can 
   choose the domain
   $U_k$ to consist (essentially, and apart from a number of ``irises'' opening at the arcs  $C_k^j$) 
    of finitely many rectangles, one for each interval of monotonicity of $g$ in the 
   interval $[\theta_{k+1}, \theta_{k+2}]$. This means that the shape of the tract depends only on the function $g$; only the way that the
   tract is stretched along the real axis varies inductively. 
   This observation allows us to carry out the desired construction.
   Again, once the piece $U_k$ is chosen, we should shrink the opening size of the cross-cut $C_{k-1}$ so that
   edges have $\tau$-length at least $\pi$. This also allows us to ensure that the sequence $(R_k)$ grows sufficiently rapidly. 

 It remains to consider Theorem~\ref{thm:pseudoarcs}. The preceding discussion leads to a disjoint-type entire 
    function of the
    desired form (having one tract,  two critical  values, no asymptotic values, and no critical points of
     multiplicity five or  greater) having a pseudo-arc Julia continuum. 
    It is easy to see that the entire function obtained in the 
    preceding discussion still satisfies the bounded decorations condition. 
   By Corollary~\ref{cor:pseudoarcs} all Julia continua are pseudo-arcs, as desired.

\bibliographystyle{amsalpha}
\bibliography{../../Biblio/biblio}

\end{document}